\newcommand{\esssup}{\mathop{\mathrm{ess}\:\mathrm{sup}}\limits}
\def\C{\mathbb C}
\def\x{\mathbf x}
\def\D{\mathbf D}
\def\1{\mathbf 1}
\def\1{\bold 1}
\def\eps{\varepsilon}
\theoremstyle{theorem}
\newtheorem{theorem}{Theorem}[section]
\newtheorem{proposition}[theorem]{Proposition}
\newtheorem{lemma}[theorem]{Lemma}
\newtheorem{condition}[theorem]{Condition}
\newtheorem{remark}[theorem]{Remark}
\newtheorem{corollary}[theorem]{Corollary}
\numberwithin{equation}{section}
\begin{document}

\title[Homogenization of initial boundary value problems]{Homogenization of initial boundary value problems
for parabolic systems \\ with periodic coefficients}

\author{Yu.~M.~Meshkova and T.~A.~Suslina}

\thanks{Supported by RFBR (grant no.~14-01-00760).
The first author is supported by the Chebyshev Laboratory  (Department of Mathematics and Mechanics, St.~Petersburg State University)  under RF Government grant 11.G34.31.0026 and JSC \glqq Gazprom Neft\grqq.}

\keywords{Periodic differential operators, parabolic systems, homogenization, operator error estimates}

\address{Chebyshev Laboratory, St. Petersburg State University, 14th Line, 29b, Saint Petersburg, 199178, Russia}
\email{juliavmeshke@yandex.ru}

 \address{St. Petersburg State University, Department of Physics, Ul'yanovskaya 3, Petrodvorets,
St.~Petersburg, 198504, Russia}

\email{suslina@list.ru}

\subjclass[2000]{Primary 35B27}

\begin{abstract}
Let $\mathcal{O} \subset \mathbb{R}^d$ be a bounded domain of class $C^{1,1}$.
In the Hilbert space $L_2(\mathcal{O};\mathbb{C}^n)$,
we consider matrix elliptic second order differential operators $\mathcal{A}_{D,\varepsilon}$ and $\mathcal{A}_{N,\varepsilon}$
with the Dirichlet or Neumann boundary condition on $\partial \mathcal{O}$, respectively.
Here $\varepsilon>0$ is the small parameter. The coefficients of the operators are periodic and depend on
$\mathbf{x}/\varepsilon$. The behavior of the operator $e^{-\mathcal{A}_{\dag ,\varepsilon}t}$, $\dag =D,N$,
for small $\varepsilon$ is studied.
It is shown that, for fixed $t>0$, the operator $e^{-\mathcal{A}_{\dag ,\varepsilon}t}$
converges in the $L_2$-operator norm to $e^{-\mathcal{A}_{\dag}^0 t}$, as $\varepsilon \to 0$.
Here $\mathcal{A}_{\dag}^0$ is the effective operator with constant coefficients.
For the norm of the difference of the operators $e^{-\mathcal{A}_{\dag ,\varepsilon}t}$ and $e^{-\mathcal{A}_{\dag}^0 t}$
a sharp order estimate (of order $O(\varepsilon)$) is obtained.
Also, we find approximation for the exponential $e^{-\mathcal{A}_{\dag ,\varepsilon}t}$
in the $(L_2\rightarrow H^1)$-norm with error estimate of order $O(\varepsilon ^{1/2})$;
in this approximation, a corrector is taken into account.
The results are applied to homogenization of solutions of initial boundary value problems for parabolic systems.
\end{abstract}

\maketitle

\section*{Introduction}

The paper concerns homogenization theory of periodic differential operators (DO's).
A broad literature is devoted to homogenization theory.
First of all, we mention the books \cite{BaPa,BeLP,ZhKO} and \cite{Sa}.

\subsection*{0.1. The class of operators.}
Let $\mathcal{O}\subset \mathbb{R}^d$ be a bounded domain with the boundary of class $C^{1,1}$.
We study selfadjoint strongly elliptic second order operators  $\mathcal{A}_{D ,\varepsilon }$ and
$\mathcal{A}_{N ,\varepsilon }$ acting in the space $L_2(\mathcal{O};\mathbb{C}^n)$ and depending on the small parameter
$\varepsilon >0$. Formally, the operators $\mathcal{A}_{D ,\varepsilon }$ and
$\mathcal{A}_{N ,\varepsilon }$ are given by the differential expression  $b(\mathbf{D})^*g(\mathbf{x}/\varepsilon)b(\mathbf{D})$
with the Dirichlet or Neumann condition on $\partial \mathcal{O}$, respectively.
Here $g(\mathbf{x})$ is a Hermitian $(m\times m)$-matrix-valued function assumed to be bounded, positive definite and periodic
with respect to the lattice $\Gamma\subset\mathbb{R}^d$.
The operator $b(\mathbf{D})$ is an $(m\times n)$-matrix first order DO with constant coefficients.
It is assumed that $m \geqslant n$. The symbol $b(\boldsymbol{\xi})$ is subject to some condition ensuring
strong ellipticity of the operators under consideration.
The simplest example is the scalar elliptic divergence type operator
$-{\rm div}\,g(\mathbf{x}/\varepsilon)\nabla$ (the acoustics operator).

\textit{Our goal} is to find approximations in various operator norms for the operator exponential $e^{-\mathcal{A}_{\dag ,\varepsilon }t}$, $\dag =D,N$,
for small  $\varepsilon$ and positive $t$ and to apply the results to homogenization of solutions of the parabolic systems.

\subsection*{0.2. Operator error estimates. Survey.}
In a series of papers \hbox{[BSu1--3]} by M.~Sh.~Birman and T.~A.~Suslina
an operator-theoretic approach to homogenization problems was suggested.
Operators ${A}_\varepsilon = b(\mathbf{D})^* g(\mathbf{x}/\varepsilon)b(\mathbf{D})$
acting in $L_2(\mathbb{R}^d;\mathbb{C}^n)$ were studied.
In [BSu1], it was shown that, as $\varepsilon \to 0$,
the resolvent $(A_\varepsilon +I)^{-1}$ converges in the operator norm in $L_2(\mathbb{R}^d;\mathbb{C}^n)$
to the resolvent $(A^0+I)^{-1}$ of the effective operator $A^0 = b(\mathbf{D})^*g^0 b(\mathbf{D})$.
Here $g^0$ is a constant positive effective matrix. The following error estimate was obtained:
\begin{equation}
\label{0.1}
\Vert ({A}_{\varepsilon } +I)^{-1} - ({A}^0 +I)^{-1}\Vert _{L_2(\mathbb{R}^d)\rightarrow
L_2(\mathbb{R}^d)}\leqslant {C} \varepsilon.
\end{equation}
In [BSu3], approximation of the resolvent  $(A_\varepsilon +I)^{-1}$
in the norm of operators acting from $L_2(\mathbb{R}^d;\mathbb{C}^n)$ to the Sobolev space
$H^1(\mathbb{R}^d;\mathbb{C}^n)$ was obtained:
\begin{equation}
\label{0.2}
\Vert ({A}_{\varepsilon } +I)^{-1} - ({A}^0 +I)^{-1} - \varepsilon K(\varepsilon)\Vert _{L_2(\mathbb{R}^d)\rightarrow
H^1(\mathbb{R}^d)}\leqslant {C} \varepsilon.
\end{equation}
Here $K(\varepsilon)$ is the so called corrector; it contains rapidly oscillating factors, and so depends on
$\varepsilon$. We have $\|\varepsilon K(\varepsilon)\| _{L_2\to H^1} = O(1)$.

Estimates (\ref{0.1}), (\ref{0.2}) are order-sharp; the constants in estimates are controlled explicitly in terms of the
initial problem data. The results of this kind are called \textit{operator error estimates}. The method of [BSu1--3] was based on
the scaling transformation, the Floquet-Bloch theory and the analytic perturbation theory.

To parabolic problems in $\mathbb{R}^d$ this method was applied in [Su1--3], [V], [VSu], and [M].
In [Su1,2], it was shown that for fixed  $t>0$
the operator exponential $e^{- A_\varepsilon t}$ converges in the $L_2(\mathbb{R}^d;\mathbb{C}^n)$-operator norm to
$e^{-A^0 t}$, as $\varepsilon \to 0$; and the following estimate was obtained:
\begin{equation}
\label{0.3}
\Vert e^{-A_\varepsilon t}-e^{-A^0 t}\Vert _{L_2(\mathbb{R}^d)\rightarrow L_2(\mathbb{R}^d)}
\leqslant {C} \varepsilon (t+\varepsilon ^2)^{-1/2}.
\end{equation}
In [Su3], approximation of the exponential  $e^{- A_\varepsilon t}$
in the $(L_2\rightarrow H^1)$-norm with corrector taken into account was obtained:
\begin{equation}
\label{0.4}
\Vert e^{-A_\varepsilon t}-e^{-A^0 t}-\varepsilon {\mathcal K}(t;\varepsilon )\Vert _{L_2(\mathbb{R}^d)\rightarrow H^1(\mathbb{R}^d)}\leqslant
{C}\varepsilon (t^{-1}+t^{-1/2}),\quad t\geqslant \varepsilon ^2.
\end{equation}

A different approach to operator error estimates was suggested by V.~V.~Zhikov.
In [Zh1,2], [ZhPas1], estimates of the form (\ref{0.1}) and (\ref{0.2}) were obtained for
the acoustics operator and the elasticity operator.
In \hbox{[ZhPas2]}, estimates of the form  (\ref{0.3}) and (\ref{0.4}) were proved for the scalar operator
$A_\varepsilon =-\textnormal{div}\,g(\mathbf{x}/\varepsilon)\nabla$.

Operator error estimates have been also studied for  boundary value problems  in a bounded domain; see [ZhPas1], [Gr1,2], and [KeLiS].
(A more detailed survey can be found in  [Su6,8].)

For the class of matrix elliptic operators
$\mathcal{A}_{D,\varepsilon}$ and $\mathcal{A}_{N,\varepsilon}$ that we consider, homogenization problems were studied in recent papers
[PSu1,2], [Su4--8]. In [PSu1,2] and [Su4,5],
approximations of the operator $\mathcal{A}_{D,\varepsilon}^{-1}$ were obtained.
In [Su6], approximation for the resolvent of $\mathcal{A}_{N,\varepsilon}$ was found.
The method of the cited papers was based on application of the results for the problem in ${\mathbb R}^d$, introduction of the boundary
layer correction term, and a careful analysis of this term. Some technical tricks, in particular, using the Steklov smoothing,
were borrowed from [ZhPas1].

For the present paper, the most important are the results of [Su7,8], where approximations of the resolvents
$(\mathcal{A}_{D,\varepsilon} - \zeta I)^{-1}$ and $(\mathcal{A}_{N,\varepsilon}- \zeta I)^{-1}$ with two-parametric error estimates
(with respect to $\varepsilon$ and $\zeta$) were obtained. Let us dwell on the main results of  [Su7,8].
For $\zeta \in \mathbb{C}\setminus \mathbb{R}_+$, $\vert \zeta \vert \geqslant 1$, and
$0< \varepsilon \leqslant \varepsilon_0$ (where $\varepsilon_0$ is sufficiently small) we have
\begin{align}
\label{0.5}
\begin{split}
\Vert &(\mathcal{A}_{\dag ,\varepsilon}-\zeta I)^{-1}-(\mathcal{A}_\dag ^0 -\zeta I)^{-1}\Vert _{L_2(\mathcal{O})\rightarrow L_2(\mathcal{O})}
\leqslant {C}(\phi) \left(\varepsilon \vert \zeta \vert ^{-1/2}+\varepsilon ^2\right),
\end{split}
\\
\label{0.6}
\begin{split}
\Vert &(\mathcal{A}_{\dag ,\varepsilon}-\zeta I)^{-1}-(\mathcal{A}_\dag ^0 -\zeta I)^{-1} -
\varepsilon K_\dag (\varepsilon ;\zeta)\Vert _{L_2(\mathcal{O})\rightarrow H^1(\mathcal{O})}\\
&\leqslant {C}(\phi)\left(\varepsilon ^{1/2}\vert \zeta \vert^{-1/4}+\varepsilon \right).
\end{split}
\end{align}
Here $\dag =D,N$, $\phi = {\rm arg}\, \zeta$; $\mathcal{A}_\dag ^0$ is the effective operator given by the expression
$b(\mathbf{D})^*g^0 b(\mathbf{D})$ with the Dirichlet or Neumann condition; $K_\dag (\varepsilon ;\zeta)$ is the corresponding corrector.
These estimates are uniform with respect to $\phi$ in any domain of the form
$\lbrace \zeta \in \mathbb{C}\setminus \mathbb{R}_+,\,\vert \zeta \vert \geqslant 1,\,
\phi_0 \leqslant \phi \leqslant 2\pi -\phi _0\rbrace$ with arbitrarily small $\phi_0 >0$.
Note that for fixed $\zeta$ estimate (\ref{0.5}) is of order $O(\varepsilon)$; the order is the same as in estimate
(\ref{0.1}) in ${\mathbb R}^d$. At the same time, estimate (\ref{0.6}) is of order $O(\varepsilon^{1/2})$; the order is worse than
in estimate (\ref{0.2}) in ${\mathbb R}^d$. This is caused by the boundary influence.

\subsection*{0.3. Main results.} Main results of the paper are approximations of the operator exponential
$e^{-\mathcal{A}_{\dag ,\varepsilon }t}$, $\dag =D,N$, in the $L_2(\mathcal{O};\mathbb{C}^n)$-operator norm
and in the norm of operators acting from $L_2(\mathcal{O};\mathbb{C}^n)$ to $H^1(\mathcal{O};\mathbb{C}^n)$,
with the following twoparametric (with respect to $\varepsilon$ and $t$) error estimates:
\begin{align}
\label{Th_L2_vved}
\Vert &e^{-\mathcal{A}_{\dag ,\varepsilon }t} -e^{-\mathcal{A}_{\dag}^0 t}\Vert _{L_2(\mathcal{O})\rightarrow
L_2(\mathcal{O})}\leqslant {C} \varepsilon (t+\varepsilon ^2)^{-1/2} e^{-c_\dag t},\quad t\geqslant 0,
\\
\label{Th_korr_vved}
\begin{split}
\Vert & e^{-\mathcal{A}_{\dag ,\varepsilon }t} -e^{-\mathcal{A}_{\dag}^0 t} -\varepsilon \mathcal{K}_\dag (t;\varepsilon )\Vert _{L_2(\mathcal{O})\rightarrow H^1(\mathcal{O})}
\leqslant {C} \varepsilon ^{1/2}t^{-3/4}e^{-c_{\dag} t},\quad
 t\geqslant \varepsilon ^2,
\end{split}
\end{align}
$ 0< \varepsilon \leqslant \varepsilon _0$, $\dag =D,N$. Here $\mathcal{K}_\dag (t;\varepsilon )$ is the corresponding corrector.
Besides  (\ref{Th_korr_vved}), we obtain approximation of the operators
$g(\mathbf{x}/\varepsilon)b(\mathbf{D}) e^{-\mathcal{A}_{\dag ,\varepsilon }t}$ (corresponding to the ,,fluxes'') in the $(L_2 \to L_2)$-norm.

For bounded values of $t$, the order of estimate (\ref{Th_L2_vved})
is sharp and coincides with the order of estimate (\ref{0.3}) for the problem in ${\mathbb R}^d$.
The order of estimate \eqref{Th_korr_vved} deteriorates compared with (\ref{0.4}), which is explained
by the boundary influence.

Let us explain the origin of the factor $e^{-c_{\dag} t}$ in  (\ref{Th_L2_vved}) and \eqref{Th_korr_vved}.
In estimates  (\ref{0.3}) and (\ref{0.4}) for the problem in ${\mathbb R}^d$ such factor is absent,
because the point $\lambda=0$ is the bottom of the (continuous) spectra of the operators $A_\varepsilon$ and $A^0$.
The spectra of the operators $\mathcal{A}_{\dag ,\varepsilon }$ and $\mathcal{A}_{\dag}^0$ are discrete.
For $\dag =D$ these operators are positive definite, and one can take $c_D$ equal to
any positive number lying to the left of their spectra.
For $\dag =N$ the number $\lambda =0$ is an eigenvalue of these operators, but in the difference
$e^{-\mathcal{A}_{N ,\varepsilon }t} -e^{-\mathcal{A}_{N}^0 t}$
the parts of the operators in the kernel $Z= {\rm Ker}\, \mathcal{A}_{N ,\varepsilon }= {\rm Ker}\, \mathcal{A}_{N}^0$
cancel out; one can take $c_N$ equal to any positive number lying to the left of
the first nonnegative eigenvalues of the operators $\mathcal{A}_{N ,\varepsilon }$ and $\mathcal{A}_{N}^0$.

In general case, the corrector $\mathcal{K}_\dag (t;\varepsilon )$ contains an auxiliary smoothing operator.
We distinguish conditions under which it is possible to use the standard corrector (without the smoothing operator).

Moreover, for a strictly interior subdomain $\mathcal{O}'$ of the domain $\mathcal{O}$
we find approximation of the exponential $e^{-\mathcal{A}_{\dag ,\varepsilon }t}$ in the
$(L_2(\mathcal{O})\to H^1(\mathcal{O}'))$-operator norm with the following error estimate
\begin{equation}
\label{0.9}
\begin{split}
\Vert & e^{-\mathcal{A}_{\dag ,\varepsilon }t} -e^{-\mathcal{A}_{\dag}^0 t} -\varepsilon \mathcal{K}_\dag (t;\varepsilon )\Vert _{L_2(\mathcal{O})\rightarrow H^1(\mathcal{O}')}\\
&\leqslant {C}(\delta) \varepsilon t^{-1} e^{-c_{\dag} t},\quad t\geqslant \varepsilon ^2,
\quad 0< \varepsilon \leqslant \varepsilon_0.
\end{split}
\end{equation}
Here the constant depends on $\delta = {\rm dist}\, \{ \mathcal{O}'; \partial \mathcal{O}\}$.
For bounded values of $t$ the order of estimate (\ref{0.9}) is the same as in estimate
(\ref{0.4}) for the problem in ${\mathbb R}^d$.
This once again shows that deterioration of the order of estimate
\eqref{Th_korr_vved} is caused by the boundary layer effect.

\subsection*{0.4. Method.} We explain the method of investigation on the example of estimate \eqref{Th_L2_vved} in the
case of the Dirichlet boundary condition. Let $\gamma$ be a positively oriented contour in the complex plane
enclosing the spectra of the operators $\mathcal{A}_{D,\varepsilon}$ and $\mathcal{A}_D^0$. We have
$$
e^{-\mathcal{A}_{D,\varepsilon}t}=-\frac{1}{2\pi i}\int _\gamma e^{-\zeta t}(\mathcal{A}_{D,\varepsilon}-\zeta I)^{-1}\,d\zeta ,\quad t>0.
$$
The exponential of the effective operator satisfies a similar identity. Hence,
\begin{equation}
\label{0.10}
e^{-\mathcal{A}_{D,\varepsilon}t}-e^{-\mathcal{A}_D^0t}=-\frac{1}{2\pi i}\int _\gamma e^{-\zeta t}\left((\mathcal{A}_{D,\varepsilon}-\zeta I)^{-1}-(\mathcal{A}_D^0 -\zeta I)^{-1}\right)\,d\zeta .
\end{equation}
Applying the results of [Su8], we obtain approximation of the resolvent for $\zeta \in\gamma$,
and then use representation \eqref{0.10}. This leads to  \eqref{Th_L2_vved}.
Note that the character of dependence of the right-hand side of \eqref{0.5} on $\zeta$
for large values of $\vert \zeta\vert$ is important for us.
Approximation with corrector taken into account is obtained by the same way.

\subsection*{0.5. Application of the results to homogenization of solutions of initial boundary value problems.}
The results about approximation of the operator exponential $e^{-\mathcal{A}_{\dag,\varepsilon}t}$
are applied to the study of solutions $\mathbf{u}_{\varepsilon }(\mathbf{x},t)$
of the initial boundary value problems for the parabolic equation
\begin{equation}
\label{0.11}
\frac{\partial \mathbf{u}_{\varepsilon }(\mathbf{x},t)}{\partial t}=
-b(\mathbf{D})^*g(\mathbf{x}/\varepsilon)b(\mathbf{D})\mathbf{u}_{\varepsilon}(\mathbf{x},t) +
{\mathbf F}(\mathbf{x},t), \quad \mathbf{x}\in \mathcal{O},\, 0< t < T,
\end{equation}
with the initial condition
$\mathbf{u}_{\varepsilon }(\mathbf{x},0)=\boldsymbol{\varphi}(\mathbf{x})$
and the Dirichlet or Neumann boundary condition.
It is assumed that $\boldsymbol{\varphi}\in L_2(\mathcal{O};\mathbb{C}^n)$ and
${\mathbf F} \in L_p((0,T);L_2(\mathcal{O};\mathbb{C}^n))$ with some $p$.
Let $\mathbf{u}_0(\mathbf{x},t)$ be the solution of the corresponding effective problem.

We obtain approximations of the solution $\mathbf{u}_\varepsilon$ in various norms.
In principal order, we prove estimates of the difference $\mathbf{u}_\varepsilon -\mathbf{u}_0$
in the $L_2(\mathcal{O};\mathbb{C}^n)$-norm for fixed $t$ and in
the $L_p((0,T);L_2(\mathcal{O};\mathbb{C}^n))$-norm.
With corrector taken into account, we find approximations of the solution $\mathbf{u}_\varepsilon$ in $H^1(\mathcal{O};\mathbb{C}^n)$
for fixed $t$ and in $L_p((0,T);H^1(\mathcal{O};\mathbb{C}^n))$.
Moreover, for a strictly interior subdomain $\mathcal{O}'$ we obtain sharper error estimates for approximation of the
solution $\mathbf{u}_\varepsilon$ in $H^1(\mathcal{O}';\mathbb{C}^n)$ (for fixed $t$) and in $L_p((0,T);H^1(\mathcal{O}';\mathbb{C}^n))$.

\subsection*{0.6. Plan of the paper.} The paper consists of two chapters and Appendix. In Chapter 1 (\S\S1--4)
we study operators with the Dirichlet boundary condition. Chapter 2 (\S\S5--8) is devoted to operators with the Neumann
boundary condition. In \S1, the class of operators $\mathcal{A}_{D,\varepsilon}$ is described, the effective operator
is defined, and the Steklov smoothing operator is introduced. In \S2,  we formulate the results
about the resolvent of the operator $\mathcal{A}_{D,\varepsilon}$.
In \S3, the main results of the paper for the case of the Dirichlet boundary condition are obtained, namely,
approximations for the operator $e^{-\mathcal{A}_{D,\varepsilon}t}$ in the $(L_2\rightarrow L_2)$- and $(L_2\rightarrow H^1)$-operator
norms are found. Also, approximation of the ,,flux''
$g(\mathbf{x}/\varepsilon)b(\mathbf{D})e^{-\mathcal{A}_{D,\varepsilon}t}$ is obtained.
In \S4, the results of \S3 about approximation of the operator exponential are applied to homogenization of solutions of the first
initial boundary value problem for the parabolic equation (\ref{0.11}).

The order of presentation in Chapter~2 is similar to that of Chapter~1.
In \S5, the class of operators $\mathcal{A}_{N,\varepsilon}$ is described and the effective operator is introduced.
In \S6, the results  about the resolvent of the operator $\mathcal{A}_{N,\varepsilon}$ are given.
On the basis of these results, in \S7 we obtain approximations for the operator exponential $e^{-\mathcal{A}_{N,\varepsilon}t}$
and the corresponding fluxes. These are the main results of the paper for the case of the Neumann boundary condition.
In \S8, these results are applied to homogenization of solutions of the second initial boundary value problem for the equation \eqref{0.11}.

The proofs of the statements about removing the smoothing
operator in the corrector in the case of additional smoothness of the boundary and in the case
of estimates in a strictly interior subdomain are given in Appendix (\S 9).

\subsection*{0.7. Notation.}
Let $\mathfrak{H}$ and $\mathfrak{H}_*$ be complex
separable Hilbert spaces. The symbols $(\cdot,\cdot)_{\mathfrak{H}}$ and $\|\cdot\|_{\mathfrak{H}}$
stand for the inner product and the norm in $\mathfrak{H}$;
the symbol $\|\cdot\|_{\mathfrak{H} \to \mathfrak{H}_*}$
denotes the norm of a linear continuous operator acting from $\mathfrak{H}$ to $\mathfrak{H}_*$.

The symbols $\langle \cdot, \cdot \rangle$ and $|\cdot|$ stand for the inner product
and the norm in $\mathbb{C}^n$; $\mathbf{1} = \mathbf{1}_n$ is the identity $(n\times n)$-matrix.
If $a$ is an $m\times n$-matrix, then $\vert a\vert$ denotes the norm of $a$ viewed as a linear operator from $\mathbb{C}^n$ to $\mathbb{C}^m$.
We use the notation $\mathbf{x} = (x_1,\dots,x_d)\in \mathbb{R}^d$, $iD_j = \partial_j = \partial/\partial x_j$,
$j=1,\dots,d$, $\mathbf{D} = -i \nabla = (D_1,\dots,D_d)$.

The $L_2$-class of $\mathbb{C}^n$-valued functions in a domain ${\mathcal O} \subset \mathbb{R}^d$
is denoted by $L_2({\mathcal O};\mathbb{C}^n)$.
The Sobolev classes of $\mathbb{C}^n$-valued functions in a domain ${\mathcal O} \subset \mathbb{R}^d$
are denoted by $H^s({\mathcal O};\mathbb{C}^n)$.
By $H^1_0(\mathcal{O};\mathbb{C}^n)$ we denote the closure of $C_0^\infty(\mathcal{O};\mathbb{C}^n)$
in $H^1(\mathcal{O};\mathbb{C}^n)$.
If $n=1$, we write simply $L_2({\mathcal O})$, $H^s({\mathcal O})$, but sometimes
we use such abbreviated notation also for spaces of vector-valued or matrix-valued functions.
The symbol $L_p((0,T);\mathfrak{H})$, $1\leqslant p\leqslant\infty$, stands for the $L_p$-space of
$\mathfrak{H}$-valued functions on the interval $(0,T)$.

We denote $\mathbb{R}_+= [0,\infty)$. By $C$, $\mathcal{C}$, $\mathrm{C}$, $\mathfrak{C}$, $\mathscr{C}$, $c$, and $\mathfrak{c}$
(possibly, with indices and marks) we denote various constants in estimates.

A brief communication on the results of the present paper is published in \cite{MSu}.

\section*{Chapter~1. Homogenization of the first initial boundary value problem for parabolic systems}

\section*{\S 1. The class of operators $\mathcal{A}_{D,\varepsilon}$. The effective operator}
\setcounter{section}{1}
\setcounter{equation}{0}
\subsection*{1.1. The class of operators.} Let $\Gamma \subset \mathbb{R}^d$ be a lattice generated by the basis
${\mathbf a}_1, \dots, {\mathbf a}_d$:
$$
\Gamma = \{ {\mathbf a}\in \mathbb{R}^d:\  {\mathbf a}= \sum_{j=1}^d \nu_j {\mathbf a}_j,\ \nu_j \in {\mathbb Z} \}.
$$
Let $\Omega$ be the elementary cell of the lattice  $\Gamma$, i.~e.,
$$
\Omega = \{ {\mathbf x}\in \mathbb{R}^d:\  {\mathbf x}= \sum_{j=1}^d \tau_j {\mathbf a}_j,\ -\frac{1}{2}< \tau_j < \frac{1}{2} \}.
$$
We denote $\vert \Omega \vert =\textrm{meas}\, \Omega$.
The basis ${\mathbf b}_1, \dots, {\mathbf b}_d$ in $\mathbb{R}^d$ dual to the basis ${\mathbf a}_1, \dots, {\mathbf a}_d$
is defined by the relations $\langle {\mathbf b}_j, {\mathbf a}_k \rangle = \delta_{jk}$.
This basis generates a lattice $\widetilde{\Gamma}$ dual to the lattice $\Gamma$.
We use the following notation
\begin{equation}
\label{r}
r_0 = \frac{1}{2} \min_{0 \ne {\mathbf b}\in \widetilde{\Gamma}} |{\mathbf b}|,\quad r_1 = \frac{1}{2} {\rm diam}\, \Omega.
\end{equation}

Below $\widetilde{H}^1(\Omega)$ stands for the space of functions in $H^1(\Omega)$ whose  $\Gamma$-periodic extension
to $\mathbb{R}^d$ belongs to $H^1_{\textrm{loc}}(\mathbb{R}^d)$.
For any $\Gamma$-periodic function $f(\mathbf{x})$, $\mathbf{x}\in \mathbb{R}^d$, denote
$$
f ^\varepsilon (\mathbf{x}):=f(\varepsilon ^{-1}\mathbf{x}),\quad \varepsilon >0.
$$

Let $\mathcal{O}\subset \mathbb{R}^d$ be a bounded domain with the boundary of class $C^{1,1}$.
In the space $L_2(\mathcal{O};\mathbb{C}^n)$, consider the operator $\mathcal{A}_{D,\varepsilon}$
given formally by the differential expression
\begin{equation*}
\mathcal{A}_{D,\varepsilon} =b(\mathbf{D})^*g^\varepsilon (\mathbf{x})b(\mathbf{D})
\end{equation*}
with the Dirichlet condition on $\partial \mathcal{O}$. Here $g(\mathbf{x})$
is a measurable Hermitian $(m\times m)$-matrix-valued function  (in general, with complex entries)
assumed to be periodic with respect to the lattice $\Gamma$, uniformly positive definite and bounded.
Next, $b(\mathbf{D})$ is an $(m\times n)$-matrix first order DO of the form
\begin{equation}
\label{b(D)=}
b(\mathbf{D})=\sum \limits _{l=1}^d b_lD_l,
\end{equation}
where $b_l$ are constant matrices (in general, with complex entries).
The symbol $b(\boldsymbol{\xi})=\sum _{l=1}^d b_l\xi _l$, $\boldsymbol{\xi}\in \mathbb{R}^d$,
corresponds to the operator $b(\mathbf{D})$.  It is assumed that $m\geqslant n$ and
\begin{equation}
\label{rank b in R^d}
\textrm{rank}\,b(\boldsymbol{\xi})=n,\quad  0\neq \boldsymbol{\xi}\in\mathbb{R}^d.
\end{equation}
This is equivalent to the existence of the constants $\alpha _0$ and $\alpha _1$ such that
\begin{equation}
\label{<b^*b<}
\alpha _0\mathbf{1}_n\leqslant b(\boldsymbol{\theta})^*b(\boldsymbol{\theta})\leqslant \alpha _1\mathbf{1}_n,\quad \boldsymbol{\theta}\in\mathbb{S}^{d-1},\quad 0<\alpha _0\leqslant \alpha _1<\infty .
\end{equation}
From (\ref{b(D)=}) and (\ref{<b^*b<}) it follows that
\begin{equation}
\label{|b_l|<=}
\vert b_l\vert \leqslant \alpha _1^{1/2},\quad l=1,\dots ,d.
\end{equation}
The precise definition of the operator $\mathcal{A}_{D,\varepsilon}$ is given in terms of the quadratic form
\begin{equation*}
a_{D,\varepsilon} [\mathbf{u},\mathbf{u}]=\int _{\mathcal{O}} \left\langle g^\varepsilon (\mathbf{x})b(\mathbf{D})\mathbf{u},b(\mathbf{D})\mathbf{u}\right\rangle \,d\mathbf{x},\quad \mathbf{u}\in H^1_0(\mathcal{O};\mathbb{C}^n).
\end{equation*}
Under the above assumptions, we have
\begin{equation}
\label{<a_D,e<}
c_0\int _\mathcal{O}\vert \mathbf{D}\mathbf{u}\vert ^2\,d\mathbf{x}\leqslant a_{D,\varepsilon}[\mathbf{u},\mathbf{u}]\leqslant c_1\int _\mathcal{O}\vert \mathbf{D}\mathbf{u}\vert ^2\,d\mathbf{x},\quad \mathbf{u}\in H^1_0(\mathcal{O};\mathbb{C}^n),
\end{equation}
where $c_0=\alpha_0\Vert g^{-1}\Vert ^{-1}_{L_\infty}$ and $c_1=\alpha _1\Vert g\Vert _{L_\infty}$.
It is easy to check these estimates extending
$\mathbf{u}$ by zero to $\mathbb{R}^d\setminus \mathcal{O}$, using the Fourier transformation, and taking  (\ref{<b^*b<}) into account.
Thus, the form $a_{D,\varepsilon}$ is closed in $L_2(\mathcal{O};\mathbb{C}^n)$.
By the Friedrichs inequality and the lower estimate \eqref{<a_D,e<}, the form $a_{D,\varepsilon}$
is positive definite:
\begin{equation}
\label{a_D,e>=}
a_{D,\varepsilon}[\mathbf{u},\mathbf{u}]\geqslant c_* \Vert \mathbf{u}\Vert ^2_{L_2(\mathcal{O})},\quad\mathbf{u}\in H^1_0(\mathcal{O};\mathbb{C}^n),\quad c_*=c_0(\mathrm{diam}\,\mathcal{O})^{-2}.
\end{equation}

\subsection*{1.2. The effective matrix and its properties.}
To describe the effective operator, we need to introduce the effective matrix $g^0$.
Let an $(n\times m)$-matrix-valued function $\Lambda(\mathbf{x})$ be
a weak $\Gamma$-periodic solution of the problem
\begin{equation}
\label{Lambda_problem}
b(\mathbf{D})^*g(\mathbf{x})(b(\mathbf{D})\Lambda (\mathbf{x})+\mathbf{1}_m)=0,\quad \int _\Omega \Lambda (\mathbf{x})\,d\mathbf{x}=0.
\end{equation}
Denote
\begin{equation}
\label{tilde g}
\widetilde{g}(\mathbf{x}):=g(\mathbf{x})(b(\mathbf{D})\Lambda(\mathbf{x})+\mathbf{1}_m).
\end{equation}
The \textit{effective matrix} $g^0$ of size $m\times m$ is defined by the relation
\begin{equation}
\label{g^0}
g^0=\vert \Omega \vert ^{-1}\int _\Omega \widetilde{g}(\mathbf{x})\,d\mathbf{x}.
\end{equation}
It turns out that $g^0$ is positive definite.

We need the following properties of the effective matrix checked in \cite[Chapter~3, Theorem~1.5]{BSu}.

\begin{proposition} The effective matrix satisfies the following estimates
\begin{equation}
\label{Foigt-Reiss}
\underline{g}\leqslant g^0\leqslant \overline{g}.
\end{equation}
Here
\begin{equation*}
\overline{g}=\vert \Omega \vert ^{-1}\int _\Omega g(\mathbf{x})\,d\mathbf{x},\quad \underline{g}=\biggl( \vert \Omega \vert ^{-1}\int _\Omega g(\mathbf{x})^{-1}\,d\mathbf{x}\biggr) ^{-1}.
\end{equation*}
If $m=n$, then the effective matrix $g^0$ coincides with $\underline{g}$.
\end{proposition}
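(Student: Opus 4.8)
The plan is to bracket $g^0$ between two constant matrices by means of two complementary variational principles — a ``primal'' one for $g^0$ itself and a ``dual'' one for $(g^0)^{-1}$ — and then to test each of them against constant competitors. \emph{Step 1 (primal principle and the bound $g^0\le\overline{g}$).} First I would prove that for every $\xi\in\mathbb{C}^m$
\begin{equation}
\label{primal-id}
\langle g^0\xi,\xi\rangle=|\Omega|^{-1}\min_{\psi\in\widetilde{H}^1(\Omega;\mathbb{C}^n)}\int_\Omega\bigl\langle g(\mathbf{x})\bigl(b(\mathbf{D})\psi+\xi\bigr),b(\mathbf{D})\psi+\xi\bigr\rangle\,d\mathbf{x},
\end{equation}
with the minimum attained at $\psi=\Lambda\xi$. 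To this end I would write an arbitrary competitor as $\psi=\Lambda\xi+\phi$, expand the integrand, and use the weak form of the cell equation \eqref{Lambda_problem} with the periodic test function $\phi$: the cross term vanishes, and what remains is $\langle g^0\xi,\xi\rangle|\Omega|+\int_\Omega\langle g\,b(\mathbf{D})\phi,b(\mathbf{D})\phi\rangle\,d\mathbf{x}$, the first term being identified by means of \eqref{tilde g}, \eqref{g^0} (adding once more the vanishing term with test function $\Lambda\xi$). Putting $\psi=0$ in \eqref{primal-id} gives $\langle g^0\xi,\xi\rangle\le|\Omega|^{-1}\int_\Omega\langle g\xi,\xi\rangle\,d\mathbf{x}=\langle\overline{g}\xi,\xi\rangle$, whence $g^0\le\overline{g}$. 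The same identity, combined with the fact that the mean value of $b(\mathbf{D})\Lambda\xi+\xi$ is $\xi$ and with the lower bound for $g$, also yields $\langle g^0\xi,\xi\rangle\ge\|g^{-1}\|_{L_\infty}^{-1}|\xi|^2$, so that $g^0$ is positive definite and $(g^0)^{-1}$ exists.

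\emph{Step 2 (dual principle and the bound $g^0\ge\underline{g}$).} Next I would bring in the ``flux'' $\widetilde{g}$ from \eqref{tilde g}. By \eqref{Lambda_problem} its columns satisfy $b(\mathbf{D})^*\widetilde{g}=0$, by \eqref{g^0} its mean value is $g^0$, and $g^{-1}\widetilde{g}=b(\mathbf{D})\Lambda+\mathbf{1}_m$ has mean value $\mathbf{1}_m$. Using these facts I would establish the dual formula: for every $\zeta\in\mathbb{C}^m$
\begin{equation}
\label{dual-id}
\langle(g^0)^{-1}\zeta,\zeta\rangle=|\Omega|^{-1}\min_{\mathbf{v}}\int_\Omega\langle g(\mathbf{x})^{-1}\mathbf{v},\mathbf{v}\rangle\,d\mathbf{x},
\end{equation}
the minimum taken over $\Gamma$-periodic $\mathbb{C}^m$-valued $L_2$-fields $\mathbf{v}$ with $b(\mathbf{D})^*\mathbf{v}=0$ and mean value $\zeta$, and attained at $\mathbf{v}_*=\widetilde{g}\,(g^0)^{-1}\zeta$. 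The verification mirrors Step 1: writing $\mathbf{v}=\mathbf{v}_*+\mathbf{w}$ with $\mathbf{w}$ periodic, $b(\mathbf{D})^*\mathbf{w}=0$, of mean zero, the cross term $\int_\Omega\langle(b(\mathbf{D})\Lambda+\mathbf{1}_m)(g^0)^{-1}\zeta,\mathbf{w}\rangle\,d\mathbf{x}$ splits into $\int_\Omega\langle\Lambda(g^0)^{-1}\zeta,b(\mathbf{D})^*\mathbf{w}\rangle\,d\mathbf{x}=0$ and $|\Omega|\langle(g^0)^{-1}\zeta,\overline{\mathbf{w}}\rangle=0$, while the value at $\mathbf{v}_*$ equals $\langle(g^0)^{-1}\zeta,\zeta\rangle$ by \eqref{primal-id} applied with $\xi=(g^0)^{-1}\zeta$. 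Since the constant field $\mathbf{v}\equiv\zeta$ is admissible in \eqref{dual-id}, we get $\langle(g^0)^{-1}\zeta,\zeta\rangle\le|\Omega|^{-1}\int_\Omega\langle g^{-1}\zeta,\zeta\rangle\,d\mathbf{x}=\langle\underline{g}^{-1}\zeta,\zeta\rangle$, i.e.\ $(g^0)^{-1}\le\underline{g}^{-1}$, which is equivalent to $g^0\ge\underline{g}$. Together with Step 1 this proves \eqref{Foigt-Reiss}.

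\emph{Step 3 (the case $m=n$).} When $m=n$, condition \eqref{rank b in R^d} means that $b(\boldsymbol{\xi})$ is invertible for $\boldsymbol{\xi}\ne0$; expanding in Fourier series and using \eqref{<b^*b<} one checks that $b(\mathbf{D})$ maps $\widetilde{H}^1(\Omega;\mathbb{C}^n)$ \emph{onto} the subspace of $\Gamma$-periodic $L_2$-fields with zero mean value. Consequently, in \eqref{primal-id} the field $\boldsymbol{\eta}:=b(\mathbf{D})\psi+\xi$ runs through \emph{all} periodic fields with mean $\xi$, and the minimization reduces to minimizing $\int_\Omega\langle g\boldsymbol{\eta},\boldsymbol{\eta}\rangle\,d\mathbf{x}$ subject to the single linear constraint $\overline{\boldsymbol{\eta}}=\xi$. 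This is solved pointwise: the Euler--Lagrange condition is $g(\mathbf{x})\boldsymbol{\eta}(\mathbf{x})=\mathbf{c}$ with a constant Lagrange multiplier $\mathbf{c}\in\mathbb{C}^n$, so $\boldsymbol{\eta}=g^{-1}\mathbf{c}$ and $\mathbf{c}=\underline{g}\xi$, giving the minimal value $|\Omega|^{-1}\int_\Omega\langle\mathbf{c},g^{-1}\mathbf{c}\rangle\,d\mathbf{x}=\langle\underline{g}\xi,\xi\rangle$. Hence $\langle g^0\xi,\xi\rangle=\langle\underline{g}\xi,\xi\rangle$ for all $\xi$, i.e.\ $g^0=\underline{g}$.

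The solvability of \eqref{Lambda_problem} in $\widetilde{H}^1(\Omega;\mathbb{C}^n)$ and the density arguments that legitimize the test functions used in Steps 1--2 are routine. The one genuinely substantive point is the dual variational principle \eqref{dual-id} — recognizing that, among all periodic $b(\mathbf{D})^*$-free fields with a prescribed mean value, it is precisely the flux $\widetilde{g}(g^0)^{-1}\zeta$ that minimizes the $g^{-1}$-energy; this is the heart of \cite[Chapter~3, Theorem~1.5]{BSu}.
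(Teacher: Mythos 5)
Your proof is correct: the primal variational identity gives $g^0\le\overline{g}$ by testing with $\psi=0$, the dual principle for $(g^0)^{-1}$ (with the flux $\widetilde{g}(g^0)^{-1}\zeta$ as minimizer and the constant field as competitor) gives $(g^0)^{-1}\le\underline{g}^{-1}$, hence $g^0\ge\underline{g}$, and for $m=n$ the surjectivity of $b(\mathbf{D})$ onto mean-zero periodic fields reduces the primal problem to a pointwise one with value $\langle\underline{g}\xi,\xi\rangle$. The paper itself does not prove this proposition but cites \cite[Chapter~3, Theorem~1.5]{BSu}, and your argument is in substance the standard one given there.
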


For specific DO's, estimates (\ref{Foigt-Reiss}) are known in homogenization theory as the Voight-Reuss bracketing.
Now we distinguish the cases where one of the inequalities in (\ref{Foigt-Reiss}) becomes an identity
(see \cite[Chapter~3, Propositions~1.6~and~1.7]{BSu}).

\begin{proposition} The identity $g^0 =\overline{g}$ is equivalent to the relations
\begin{equation}
\label{overline-g}
b(\D)^* {\mathbf g}_k(\x) =0,\ \ k=1,\dots,m,
\end{equation}
where ${\mathbf g}_k(\x)$, $k=1,\dots,m,$ are the columns of the matrix $g(\x)$.
\end{proposition}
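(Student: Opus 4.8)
The plan is to argue entirely at the level of the cell problem (\ref{Lambda_problem}) and the matrix $g^0$, since the claimed equivalence involves no boundary data and no $\varepsilon$.

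First I would record the standard energy representation of $g^0$. Testing equation (\ref{Lambda_problem}) columnwise against $\Lambda$ and integrating by parts over $\Omega$ (using $\Gamma$-periodicity) yields $\int_\Omega (b(\D)\Lambda)^* \widetilde g\,d\x = 0$; adding $\int_\Omega \widetilde g\,d\x$ and invoking (\ref{tilde g})--(\ref{g^0}) gives
\begin{equation*}
g^0 = |\Omega|^{-1}\int_\Omega (b(\D)\Lambda(\x) + \mathbf{1}_m)^* g(\x)\,(b(\D)\Lambda(\x) + \mathbf{1}_m)\,d\x.
\end{equation*}
Consequently, for any constant vector $\v\in\mathbb{C}^m$,
\begin{equation*}
\langle g^0\v,\v\rangle = |\Omega|^{-1}\int_\Omega \langle g(\x)\,(b(\D)\Lambda(\x)\v + \v),\, b(\D)\Lambda(\x)\v + \v\rangle\,d\x = J_\v[\Lambda\v],
\end{equation*}
where $J_\v[\w] := |\Omega|^{-1}\int_\Omega \langle g(\x)(b(\D)\w + \v), b(\D)\w + \v\rangle\,d\x$ for $\w\in\widetilde H^1(\Omega;\mathbb{C}^n)$. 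The weak form of (\ref{Lambda_problem}) is exactly the Euler--Lagrange equation of $J_\v$ evaluated at $\w = \Lambda\v$, so $\Lambda\v$ minimizes $J_\v$; moreover, by the strict convexity of $\boldsymbol\zeta\mapsto\langle g(\x)\boldsymbol\zeta,\boldsymbol\zeta\rangle$ (positive definiteness of $g$), the combination $b(\D)\w$ at the minimum is uniquely determined.

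Next I would exploit that $\w = 0$ is an admissible competitor with $J_\v[0] = \langle\overline g\v,\v\rangle$. Thus $\langle g^0\v,\v\rangle\le\langle\overline g\v,\v\rangle$ for all $\v$ (this recovers the upper bound in (\ref{Foigt-Reiss})), with equality for a given $\v$ precisely when $\w = 0$ is a minimizer of $J_\v$, i.e. when $b(\D)\Lambda\v = 0$. Hence $g^0 = \overline g$ as Hermitian matrices, equivalently $\langle g^0\v,\v\rangle = \langle\overline g\v,\v\rangle$ for all $\v$, holds if and only if $b(\D)\Lambda\v = 0$ for every $\v\in\mathbb{C}^m$, that is, $b(\D)\Lambda \equiv 0$. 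If $b(\D)\Lambda \equiv 0$, substituting into (\ref{Lambda_problem}) gives $b(\D)^* g(\x) = b(\D)^* g(\x)(b(\D)\Lambda(\x) + \mathbf{1}_m) = 0$, which is precisely (\ref{overline-g}). Conversely, if (\ref{overline-g}) holds, i.e. $b(\D)^* g(\x) = 0$, then $\Lambda \equiv 0$ solves (\ref{Lambda_problem}) and, by uniqueness of the zero-mean periodic solution, $\Lambda = 0$; then $\widetilde g = g$ and $g^0 = |\Omega|^{-1}\int_\Omega g\,d\x = \overline g$. This closes the loop.

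The main obstacle I expect is keeping the matrix-valued bookkeeping straight and, in particular, justifying rigorously the passage from ``$g^0 = \overline g$'' to ``$b(\D)\Lambda\v = 0$ for every $\v$'': this rests on the uniqueness of the minimizing combination $b(\D)\w$ for $J_\v$ (not of $\w$ itself, since $\Ker b(\D)$ may be nontrivial), which in turn uses only the positivity of $g$ and the convexity of $J_\v$. The remaining ingredients --- the energy identity for $g^0$, the weak formulation of (\ref{Lambda_problem}), and the uniqueness of $\Lambda$ --- are standard and are already available from the setup preceding the proposition.
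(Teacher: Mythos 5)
Your argument is correct. The paper itself does not prove this proposition but quotes it from \cite[Chapter~3]{BSu}, and your reasoning --- the energy representation of $g^0$, the variational characterization $\langle g^0\mathbf{v},\mathbf{v}\rangle=\min_{\mathbf{w}\in\widetilde H^1}J_{\mathbf{v}}[\mathbf{w}]$ with the competitor $\mathbf{w}=0$, the equality case forcing $b(\mathbf{D})\Lambda=0$ by uniqueness of the minimizing flux, and then $b(\mathbf{D})^*g=0$ via \eqref{Lambda_problem} (with the converse using uniqueness of the zero-mean periodic solution) --- is essentially the standard argument behind that citation, equivalent to the direct identity $\overline{g}-g^0=|\Omega|^{-1}\int_\Omega (b(\mathbf{D})\Lambda)^*\,g\,(b(\mathbf{D})\Lambda)\,d\mathbf{x}\geqslant 0$.
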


\begin{proposition} The identity $g^0 =\underline{g}$ is equivalent to the relations
\begin{equation}
\label{underline-g}
{\mathbf l}_k(\x) = {\mathbf l}_k^0 + b(\D) {\mathbf w}_k,\ \ {\mathbf l}_k^0\in \C^m,\ \
{\mathbf w}_k \in \widetilde{H}^1(\Omega;\C^m),\ \ k=1,\dots,m,
\end{equation}
where ${\mathbf l}_k(\x)$, $k=1,\dots,m,$ are the columns of the matrix $g(\x)^{-1}$.
\end{proposition}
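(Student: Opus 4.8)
\emph{Plan of proof.} The plan is to read off the equivalence from two variational descriptions of quadratic forms. The first ingredient is the standard variational formula for the effective matrix: for every $\mathbf{c}\in\C^m$,
\[
\langle g^0\mathbf{c},\mathbf{c}\rangle=\min_{\mathbf{v}\in\widetilde{H}^1(\Omega;\C^n)}|\Omega|^{-1}\int_\Omega\langle g(\x)(b(\D)\mathbf{v}+\mathbf{c}),\,b(\D)\mathbf{v}+\mathbf{c}\rangle\,d\x ,
\]
the minimum being attained at $\mathbf{v}=\Lambda\mathbf{c}$. This follows from (\ref{Lambda_problem})--(\ref{g^0}): writing $\widetilde g=g(b(\D)\Lambda+\mathbf{1}_m)$, the equation for $\Lambda$ says $b(\D)^{*}\widetilde g=0$, so upon splitting the second argument of the form the contribution pairing against $b(\D)\Lambda\mathbf{c}$ integrates to zero, leaving $|\Omega|^{-1}\int_\Omega\langle\widetilde g\,\mathbf{c},\mathbf{c}\rangle\,d\x=\langle g^0\mathbf{c},\mathbf{c}\rangle$. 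The second ingredient is the elementary identity
\[
\langle\underline{g}\,\mathbf{c},\mathbf{c}\rangle=\min\bigl\{|\Omega|^{-1}\int_\Omega\langle g(\x)\mathbf{f},\mathbf{f}\rangle\,d\x:\ \mathbf{f}\in L_2(\Omega;\C^m),\ |\Omega|^{-1}\int_\Omega\mathbf{f}\,d\x=\mathbf{c}\bigr\} ,
\]
whose unique minimizer is $\mathbf{f}=g^{-1}\underline{g}\,\mathbf{c}$ (Cauchy--Schwarz, the form being strictly convex since $g$ is uniformly positive definite).

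Next I would observe that, since $\int_\Omega b(\D)\mathbf{v}\,d\x=0$ for $\Gamma$-periodic $\mathbf{v}$, every competitor $b(\D)\mathbf{v}+\mathbf{c}$ in the first formula has mean value $\mathbf{c}$ and hence is admissible in the second. Comparing the two minima therefore gives $g^0\geq\underline{g}$ (the lower bound in (\ref{Foigt-Reiss})), and, more importantly, shows that $g^0=\underline{g}$ holds if and only if, for every $\mathbf{c}$, the optimal competitor $b(\D)\Lambda\mathbf{c}+\mathbf{c}$ of the first problem realizes the minimum in the second. By uniqueness of the minimizer of the second problem this amounts to $b(\D)\Lambda\mathbf{c}+\mathbf{c}=g(\x)^{-1}\underline{g}\,\mathbf{c}$ for all $\mathbf{c}\in\C^m$. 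Substituting $\underline{g}^{-1}\mathbf{c}$ for $\mathbf{c}$ turns this into $g(\x)^{-1}\mathbf{c}=\underline{g}^{-1}\mathbf{c}+b(\D)(\Lambda\underline{g}^{-1}\mathbf{c})$, and letting $\mathbf{c}$ run through the standard basis of $\C^m$ one obtains precisely (\ref{underline-g}), with $\mathbf{l}_k^0$ the columns of $\underline{g}^{-1}$ and $\mathbf{w}_k$ the columns of $\Lambda\underline{g}^{-1}$ (which lie in $\widetilde{H}^1(\Omega;\C^n)$). Equivalently, this shows that $g^0=\underline{g}$ is the same as $\widetilde g(\x)\equiv g^0$.

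For the converse I would start from (\ref{underline-g}), rewritten as $g(\x)^{-1}=L^0+b(\D)W$, where $L^0$ is the constant $(m\times m)$-matrix with columns $\mathbf{l}_k^0$ and $W$ is the matrix with columns $\mathbf{w}_k$. Integrating over $\Omega$ and using $\int_\Omega b(\D)W\,d\x=0$ forces $L^0=|\Omega|^{-1}\int_\Omega g^{-1}\,d\x=\underline{g}^{-1}$, whence $g(\x)^{-1}\underline{g}\,\mathbf{c}=\mathbf{c}+b(\D)(W\underline{g}\,\mathbf{c})$ for every $\mathbf{c}\in\C^m$. Thus $\mathbf{v}_{\mathbf{c}}:=W\underline{g}\,\mathbf{c}$ is an admissible competitor in the first formula with $b(\D)\mathbf{v}_{\mathbf{c}}+\mathbf{c}=g^{-1}\underline{g}\,\mathbf{c}$, and evaluating the first functional at $\mathbf{v}_{\mathbf{c}}$ gives $|\Omega|^{-1}\int_\Omega\langle\underline{g}\,\mathbf{c},g^{-1}\underline{g}\,\mathbf{c}\rangle\,d\x=\langle\underline{g}\,\mathbf{c},\mathbf{c}\rangle$. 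Hence $\langle g^0\mathbf{c},\mathbf{c}\rangle\leq\langle\underline{g}\,\mathbf{c},\mathbf{c}\rangle$ for all $\mathbf{c}$, which together with the already established $g^0\geq\underline{g}$ yields $g^0=\underline{g}$.

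The step I expect to require the most care is the use of \emph{uniqueness} of the minimizer $g^{-1}\underline{g}\,\mathbf{c}$ of the second variational problem, since this is precisely what converts the coincidence of the two minima into the pointwise identity $b(\D)\Lambda\mathbf{c}+\mathbf{c}=g^{-1}\underline{g}\,\mathbf{c}$ from which the column representation (\ref{underline-g}) is read off. The remaining points — the two variational formulas, the vanishing of $\int_\Omega b(\D)(\cdot)\,d\x$ by periodicity, and the bookkeeping between $g^{-1}$, its columns, and the constant Hermitian matrices $\underline{g}$, $\underline{g}^{-1}$ — are routine.
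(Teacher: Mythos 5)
Your argument is correct. Note that the paper itself gives no proof of this proposition — it is quoted from [BSu1, Chapter~3, Proposition~1.7] — and the double variational characterization you use (the minimization formula for $\langle g^0\mathbf{c},\mathbf{c}\rangle$ over periodic correctors versus the constrained minimization defining $\langle \underline{g}\,\mathbf{c},\mathbf{c}\rangle$, with equality forced onto the unique minimizer $g^{-1}\underline{g}\,\mathbf{c}$) is precisely the standard route taken there, so there is nothing to add on substance. One small remark: dimensional bookkeeping forces $\mathbf{w}_k\in\widetilde{H}^1(\Omega;\mathbb{C}^n)$ (since $b(\mathbf{D})$ maps $\mathbb{C}^n$-valued functions to $\mathbb{C}^m$-valued ones), which is what you produce via the columns of $\Lambda\underline{g}^{-1}$; the $\mathbb{C}^m$ in the statement as printed is a typo.
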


Obviously, (\ref{Foigt-Reiss}) implies the following estimates for the norms of the matrices $g^0$ and $(g^0)^{-1}$:
\begin{equation}
\label{|g^0|, |g^0^_1|<=}
\vert g^0\vert \leqslant \Vert g\Vert _{L_\infty},\quad \vert (g^0)^{-1}\vert \leqslant \Vert g^{-1}\Vert _{L_\infty}.
\end{equation}

\subsection*{1.3. The effective operator.}
The selfadjoint operator $\mathcal{A}_D^0$ in $L_2(\mathcal{O};\mathbb{C}^n)$
generated by the quadratic form
\begin{equation*}
a_{D}^0 [\mathbf{u},\mathbf{u}]=\int _{\mathcal{O}} \left\langle g^0 b(\mathbf{D})\mathbf{u},b(\mathbf{D})\mathbf{u}\right\rangle \,d\mathbf{x},\quad \mathbf{u}\in H^1_0(\mathcal{O};\mathbb{C}^n),
\end{equation*}
is called the \textit{effective operator} for the operator $\mathcal{A}_{D,\varepsilon}$
From (\ref{|g^0|, |g^0^_1|<=}) it follows that the form $a_D^0$ satisfies the following estimates
similar to (\ref{<a_D,e<}) and \eqref{a_D,e>=}:
\begin{align}
\label{<a_D,0<}
&c_0\int _\mathcal{O}\vert \mathbf{D}\mathbf{u}\vert ^2\,d\mathbf{x}\leqslant a_D^0[\mathbf{u},\mathbf{u}]\leqslant c_1\int _\mathcal{O}\vert \mathbf{D}\mathbf{u}\vert ^2\,d\mathbf{x},\quad \mathbf{u}\in H^1_0(\mathcal{O};\mathbb{C}^n),
\\
\label{a_D,0>=}
&a_D^0[\mathbf{u},\mathbf{u}]\geqslant c_*\Vert \mathbf{u}\Vert ^2_{L_2(\mathcal{O})},\quad\mathbf{u}\in H^1_0(\mathcal{O};\mathbb{C}^n).
\end{align}

By the condition $\partial \mathcal{O}\in C^{1,1}$, the operator $\mathcal{A}_D^0$
is given by the differential expression $b(\mathbf{D})^*g^0b(\mathbf{D})$
on the domain $H^1_0(\mathcal{O};\mathbb{C}^n)\cap H^2(\mathcal{O};\mathbb{C}^n)$.
The inverse operator satisfies
\begin{equation}
\label{A0^-1}
\Vert \left( \mathcal{A}_D^0\right)^{-1}\Vert _{L_2(\mathcal{O})\rightarrow H^2(\mathcal{O})}\leqslant\widehat{c}.
\end{equation}
Here the constant $\widehat{c}$ depends only on $\alpha _0$, $\alpha _1$, $\Vert g\Vert _{L_\infty}$, $\Vert g^{-1}\Vert _{L_\infty}$,
and the domain $\mathcal{O}$.
To justify this fact, we note that the operator $b(\mathbf{D})^*g^0 b(\mathbf{D})$
is a strongly elliptic matrix operator (with constant coefficients) and refer to
the theorems about regularity of solutions of strongly elliptic equations  (see, e.~g., \cite[Chapter~4]{McL}).

\begin{remark}
\textnormal{Instead of condition $\partial \mathcal{O}\in C^{1,1}$, one can impose the following implicit condition on the domain:
a bounded domain $\mathcal{O}$ with Lipschitz boundary is such that
estimate \eqref{A0^-1} is satisfied. For such domain the results of Chapter 1 remain valid
(except for the results of Subsection~3.6, Theorem 4.3, and Propositions~4.13 and 4.20, where
an additional smoothness of the boundary is required). In the case of scalar elliptic operators wide conditions
on   $\partial \mathcal{O}$ sufficient for estimate \eqref{A0^-1} can be found in
[KoE] and [MaSh, Chapter 7] (in particular, it suffices that $\partial \mathcal{O}\in C^{\alpha}$, $\alpha >3/2$).
}
\end{remark}

\subsection*{1.4. Properties of the matrix-valued function $\Lambda({\mathbf x})$.}
The matrix-valued function $\Lambda({\mathbf x})$ belongs to $\widetilde{H}^1(\Omega)$.
The following estimate was checked in  [BSu2, (6.28) and Subsection~7.3]:
\begin{equation}
\label{M}
\|\Lambda\|_{H^1(\Omega)} \leqslant M:=\left( |\Omega| (1+ (2r_0)^{-2})m \alpha_0^{-1} \|g\|_{L_\infty}
\|g^{-1}\|_{L_\infty}\right)^{1/2},
\end{equation}
where $r_0$ is defined by \eqref{r}.

We need the following statement proved in [PSu2, Lemma~2.3].

\begin{lemma} Suppose that the matrix-valued function $\Lambda(\mathbf{x})$ is a $\Gamma$-periodic solution of the problem \textnormal{(\ref{Lambda_problem})}. Then for any $\mathbf{v} \in C_0^\infty({\mathbb R}^d;{\mathbb C}^m)$
we have
\begin{equation*}
\begin{split}
\int_{{\mathbb R}^d} | ({\mathbf D}\Lambda)^\varepsilon(\mathbf{x}) \mathbf{v}(\mathbf{x})|^2\,d{\mathbf x}
&\leqslant
\beta_1 \int_{{\mathbb R}^d} |  \mathbf{v}(\mathbf{x})|^2\,d{\mathbf x} \\
&+
\beta_2 \varepsilon^2  \int_{{\mathbb R}^d} | \Lambda^\varepsilon(\mathbf{x})|^2 |{\mathbf D}\mathbf{v}(\mathbf{x})|^2\,d{\mathbf x}.
\end{split}
\end{equation*}
The constants  $\beta_1$ and $\beta_2$ depend only on $m$, $d$, $\alpha_0$, $\alpha_1$,
$\|g\|_{L_\infty}$, and $\|g^{-1}\|_{L_\infty}$.
\end{lemma}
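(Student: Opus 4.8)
The plan is to reduce the statement to a purely local estimate on the cell $\Omega$ and then to sum over translated cells, using the standard Poincaré-type inequality for periodic functions. First I would rewrite the problem \eqref{Lambda_problem} in weak form: for every $\Gamma$-periodic $\boldsymbol\eta \in \widetilde{H}^1(\Omega;\mathbb{C}^n)$,
\begin{equation*}
\int_\Omega \langle g(\mathbf{x})(b(\mathbf{D})\Lambda(\mathbf{x})+\mathbf{1}_m), b(\mathbf{D})\boldsymbol\eta(\mathbf{x})\rangle\, d\mathbf{x} = 0.
\end{equation*}
Rescaling $\mathbf{x}\mapsto \varepsilon^{-1}\mathbf{x}$, this gives an identity for $\Lambda^\varepsilon$ on each cube $\varepsilon(\mathbf{k}+\Omega)$, $\mathbf{k}\in\mathbb{Z}^d$ (after identifying $\Gamma$ with $\mathbb{Z}^d$ via the basis $\mathbf{a}_1,\dots,\mathbf{a}_d$, which only affects constants through $d$, $r_0$). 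The point is to test this with $\boldsymbol\eta = $ (a local version of) $\Lambda^\varepsilon |\mathbf{v}|^2$ — more precisely, with the product $\Lambda^\varepsilon$ times a scalar cutoff built from $\mathbf{v}$ — so that the term $\int |({\mathbf D}\Lambda)^\varepsilon \mathbf{v}|^2$ appears on the left after using the uniform positive definiteness of $g$; this is the Caccioppoli-type idea adapted to the corrector.

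The key steps, in order, are: (1) fix a cell $Q_\varepsilon^{\mathbf{k}} := \varepsilon(\mathbf{k}+\Omega)$; on it, use the weak equation with a test function of the form $\Lambda^\varepsilon \psi$ where $\psi$ is a suitable scalar function comparable to $|\mathbf{v}|^2$ (one has to be a little careful since $\mathbf{v}$ is $\mathbb{C}^m$-valued while $\Lambda^\varepsilon$ acts on the left — so the natural object is $(b(\mathbf{D})\Lambda^\varepsilon + \mathbf{1})$ paired against $g$ times itself weighted by $|\mathbf{v}|^2$). (2) Expand $b(\mathbf{D})(\Lambda^\varepsilon |\mathbf{v}|^2)$ by the Leibniz rule: one term carries $({\mathbf D}\Lambda)^\varepsilon |\mathbf{v}|^2$, the other carries $\Lambda^\varepsilon$ times derivatives of $|\mathbf{v}|^2$, i.e. terms like $\Lambda^\varepsilon \overline{\mathbf v}\,{\mathbf D}\mathbf v$; the factor $\varepsilon$ from the chain rule on $\Lambda^\varepsilon$ is what produces the $\varepsilon^2$ in front of the second term on the right. (3) Apply the ellipticity bounds $\|g^{-1}\|_{L_\infty}^{-1}\mathbf{1}\le g\le \|g\|_{L_\infty}\mathbf{1}$ together with \eqref{<b^*b<} and \eqref{|b_l|<=} to absorb the good term and to bound the cross terms by Cauchy–Schwarz with a small parameter. (4) Sum over $\mathbf{k}\in\mathbb{Z}^d$: since $\mathbf v$ has compact support only finitely many terms are nonzero, and the local $L_2$ norms reassemble into the global integrals in the statement. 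The constant $\beta_1$ then depends on the ellipticity constants, $\alpha_0,\alpha_1,m,d$ through the Poincaré constant on $\Omega$ and the commutator bookkeeping, and $\beta_2$ likewise.

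The main obstacle I expect is the bookkeeping in step (2)–(3): the test function $\Lambda^\varepsilon |\mathbf v|^2$ is not compactly supported in a single cell, so one cannot work cell-by-cell with zero boundary data; instead one must either (a) use the $\Gamma$-periodicity of $\Lambda$ directly in the weak formulation on $\mathbb{R}^d$, testing against the globally defined $C_0^\infty$ object $\Lambda^\varepsilon \cdot(\text{something quadratic in }\mathbf v)$ — legitimate because $\Lambda^\varepsilon\mathbf v^2 \in H^1$ with compact support — or (b) argue via Steklov/smoothing. Option (a) is cleanest: plug $\mathbf{w}(\mathbf{x}) = \varepsilon\, (b(\mathbf{D})\Lambda)^{\varepsilon,*}$-type expression is not needed; rather, one integrates the rescaled identity $b(\mathbf{D})^* g^\varepsilon(b(\mathbf{D})\Lambda^\varepsilon + \mathbf{1}) = 0$ against a vector test function $\boldsymbol\Phi$ supported near $\operatorname{supp}\mathbf v$, chosen so that $b(\mathbf{D})\boldsymbol\Phi$ reproduces $({\mathbf D}\Lambda)^\varepsilon|\mathbf v|^2$ modulo lower-order terms. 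Getting the algebra of this reproduction right, and checking that the ``lower-order'' remainder is genuinely controlled by the right-hand side of the Lemma with the stated $\varepsilon^2$-weight, is the technical heart; everything else is Cauchy–Schwarz and the uniform bounds already recorded in \eqref{<b^*b<}, \eqref{|b_l|<=}, and \eqref{|g^0|, |g^0^_1|<=}.
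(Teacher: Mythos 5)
The paper itself gives no proof of this lemma (it is quoted from [PSu2, Lemma~2.3]), so your sketch has to stand on its own, and as written it has a genuine gap at its central point. Testing the weak equation \eqref{Lambda_problem} with a weight built from $\Lambda^\varepsilon$ and $\mathbf{v}$ and then using the uniform positive definiteness of $g$ can only produce the energy expression in $b(\mathbf{D})\Lambda$, i.e.\ it controls $\int |(b(\mathbf{D})\Lambda)^\varepsilon \mathbf{v}|^2\,d\mathbf{x}$ --- not the full-gradient quantity $\int |(\mathbf{D}\Lambda)^\varepsilon \mathbf{v}|^2\,d\mathbf{x}$ that the lemma asserts. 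There is no pointwise inequality $|b(\mathbf{D})u|\geqslant c|\mathbf{D}u|$; the symbol bound \eqref{<b^*b<} upgrades $b(\mathbf{D})$ to the full gradient only after Fourier transform, through $\int_{\mathbb{R}^d}|\mathbf{D}u|^2\,d\mathbf{x} \leqslant \alpha_0^{-1}\int_{\mathbb{R}^d}|b(\mathbf{D})u|^2\,d\mathbf{x}$ for $u\in H^1(\mathbb{R}^d;\mathbb{C}^n)$ (this is exactly how \eqref{<a_D,e<} is justified in the paper). The standard route is therefore: write $(D_j\Lambda)^\varepsilon \mathbf{v}=\varepsilon D_j(\Lambda^\varepsilon\mathbf{v})-\varepsilon\Lambda^\varepsilon D_j\mathbf{v}$, apply the global coercivity to the compactly supported function $u=\Lambda^\varepsilon\mathbf{v}$, expand $b(\mathbf{D})(\Lambda^\varepsilon\mathbf{v})$ by the Leibniz rule, and only then use a Caccioppoli-type bound (from the equation, with test functions built columnwise from $\Lambda^\varepsilon$ and the components of $\mathbf{v}$) for $\int |(b(\mathbf{D})\Lambda)^\varepsilon \mathbf{v}|^2\,d\mathbf{x}$. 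Your proposal never isolates this conversion step, and your cell-by-cell plan is in fact incompatible with it: on a single cell $\varepsilon(\mathbf{k}+\Omega)$, without boundary conditions, the inequality $\int|\mathbf{D}u|^2\leqslant C\int|b(\mathbf{D})u|^2$ is simply false under \eqref{rank b in R^d} alone (compare Condition 5.1 and Proposition 5.2, which are needed precisely to get coercivity, and then only up to a lower-order term, on a bounded domain), so summing local estimates cannot recover the full-gradient bound.

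A smaller but telling symptom: your constants would inherit the Poincar\'e constant of $\Omega$ and the parameter $r_0$ through the identification of $\Gamma$ with $\mathbb{Z}^d$, whereas the lemma asserts that $\beta_1,\beta_2$ depend only on $m$, $d$, $\alpha_0$, $\alpha_1$, $\|g\|_{L_\infty}$, $\|g^{-1}\|_{L_\infty}$; the argument via the global Fourier coercivity and the equation for $\Lambda$ uses no cell geometry at all, which is a good consistency check on the intended proof. Finally, be careful with the test-function algebra: $\mathbf{v}$ is $\mathbb{C}^m$-valued while the admissible test functions for \eqref{Lambda_problem} are $\mathbb{C}^n$-valued, so the natural objects are products of the columns of $\Lambda^\varepsilon$ with the scalar components of $\mathbf{v}$ (equivalently, $\Lambda^\varepsilon\mathbf{v}$ itself), not $\Lambda^\varepsilon|\mathbf{v}|^2$.
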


Lemma 1.5 directly implies the following corollary.

\begin{corollary} Suppose that the assumptions of Lemma {\rm 1.5} are satisfied and $\Lambda\in L_\infty$.
Then for any $\mathbf{v} \in H^1({\mathbb R}^d;{\mathbb C}^m)$ we have
$\Lambda^\varepsilon \mathbf{v} \in H^1({\mathbb R}^d;{\mathbb C}^n)$ and
\begin{equation*}
\begin{split}
\|\Lambda^\varepsilon \mathbf{v} \|_{H^1({\mathbb R}^d)}^2
\leqslant
2 \beta_1 \varepsilon^{-2} \| \mathbf{v} \|_{L_2({\mathbb R}^d)}^2
+2 (1+\beta_2) \| \Lambda\|_{L_\infty}^2 \| \mathbf{v} \|_{H^1({\mathbb R}^d)}^2.
\end{split}
\end{equation*}
\end{corollary}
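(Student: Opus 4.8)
The plan is to deduce the inequality directly from Lemma 1.5, first checking membership in $H^1$ by approximation and then estimating each term. First I would recall that $\Lambda^\varepsilon \mathbf{v}$ is certainly in $L_2(\mathbb{R}^d;\mathbb{C}^n)$ whenever $\Lambda \in L_\infty$ and $\mathbf{v}\in L_2$, with $\|\Lambda^\varepsilon \mathbf{v}\|_{L_2} \leqslant \|\Lambda\|_{L_\infty}\|\mathbf{v}\|_{L_2}$ (periodicity makes $\|\Lambda^\varepsilon\|_{L_\infty} = \|\Lambda\|_{L_\infty}$). For the gradient, the Leibniz rule gives, at least formally,
\begin{equation*}
{\mathbf D}(\Lambda^\varepsilon \mathbf{v}) = ({\mathbf D}\Lambda)^\varepsilon \mathbf{v} + \Lambda^\varepsilon {\mathbf D}\mathbf{v},
\end{equation*}
where the first term carries an extra factor $\varepsilon^{-1}$ hidden in the scaling $({\mathbf D}\Lambda)^\varepsilon(\mathbf x) = ({\mathbf D}\Lambda)(\varepsilon^{-1}\mathbf x)$; more precisely ${\mathbf D}(\Lambda^\varepsilon) = \varepsilon^{-1}({\mathbf D}\Lambda)^\varepsilon$, so the first term is actually $\varepsilon^{-1}({\mathbf D}\Lambda)^\varepsilon \mathbf v$. (This is consistent with the $\varepsilon^{-2}$ on the right-hand side of the claimed bound.)

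Next I would justify this rigorously. For $\mathbf{v}\in C_0^\infty(\mathbb{R}^d;\mathbb{C}^m)$ the product $\Lambda^\varepsilon \mathbf{v}$ lies in $H^1$ because $\Lambda \in \widetilde H^1(\Omega)\cap L_\infty$ and $\mathbf v$ is smooth and compactly supported; then the distributional Leibniz identity above holds, and squaring, integrating, and using $(a+b)^2 \leqslant 2a^2 + 2b^2$ together with $\|\Lambda^\varepsilon {\mathbf D}\mathbf v\|_{L_2}^2 \leqslant \|\Lambda\|_{L_\infty}^2 \|{\mathbf D}\mathbf v\|_{L_2}^2$ gives
\begin{equation*}
\|{\mathbf D}(\Lambda^\varepsilon \mathbf v)\|_{L_2(\mathbb{R}^d)}^2
\leqslant 2\varepsilon^{-2}\int_{\mathbb{R}^d} |({\mathbf D}\Lambda)^\varepsilon \mathbf v|^2\,d\mathbf x
+ 2\|\Lambda\|_{L_\infty}^2 \|{\mathbf D}\mathbf v\|_{L_2(\mathbb{R}^d)}^2.
\end{equation*}
Applying Lemma 1.5 to the first term on the right (which, after the $\varepsilon^{-2}$ prefactor, contributes $2\beta_1 \varepsilon^{-2}\|\mathbf v\|_{L_2}^2 + 2\beta_2 \|\Lambda\|_{L_\infty}^2 \|{\mathbf D}\mathbf v\|_{L_2}^2$ since $|\Lambda^\varepsilon| \leqslant \|\Lambda\|_{L_\infty}$), and adding the $L_2$-bound on $\Lambda^\varepsilon \mathbf v$ itself, I obtain the stated inequality for $\mathbf v\in C_0^\infty$, with the constant $2(1+\beta_2)\|\Lambda\|_{L_\infty}^2$ collecting the two $\|{\mathbf D}\mathbf v\|_{L_2}^2$-contributions plus the $L_2$-term (bounding $\|\Lambda\|_{L_\infty}^2\|\mathbf v\|_{L_2}^2$ crudely by the $H^1$-norm squared).

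Finally I would pass from $C_0^\infty$ to general $\mathbf v\in H^1(\mathbb{R}^d;\mathbb{C}^m)$ by density: take $\mathbf v_j \to \mathbf v$ in $H^1$, note that $\Lambda^\varepsilon \mathbf v_j \to \Lambda^\varepsilon \mathbf v$ in $L_2$, and observe that the uniform bound just proved shows $\{\Lambda^\varepsilon \mathbf v_j\}$ is bounded in $H^1$; hence a subsequence converges weakly in $H^1$, the weak limit must be $\Lambda^\varepsilon \mathbf v$, and the norm inequality survives passage to the (weak) limit by lower semicontinuity of the norm. The only mild subtlety — and the one point I would be most careful about — is the bookkeeping of the $\varepsilon$-powers in the scaling relation ${\mathbf D}(\Lambda^\varepsilon) = \varepsilon^{-1}({\mathbf D}\Lambda)^\varepsilon$ and making sure the application of Lemma 1.5 is to $({\mathbf D}\Lambda)^\varepsilon \mathbf v$ with the factor $\varepsilon^{-2}$ pulled out correctly; everything else is routine.
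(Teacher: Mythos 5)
Your proposal is correct and is exactly the argument the paper has in mind: it gives no proof, stating only that Lemma 1.5 ``directly implies'' the corollary, and the intended route is precisely your Leibniz decomposition $\mathbf{D}(\Lambda^\varepsilon\mathbf{v})=\varepsilon^{-1}(\mathbf{D}\Lambda)^\varepsilon\mathbf{v}+\Lambda^\varepsilon\mathbf{D}\mathbf{v}$, the bound $|\Lambda^\varepsilon|\leqslant\|\Lambda\|_{L_\infty}$ in the second term of Lemma~1.5, and a density pass from $C_0^\infty$ to $H^1$. Your constant bookkeeping also reproduces the stated bound exactly.
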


\subsection*{1.5. The Steklov smoothing.}
Let  $S_\varepsilon$ be the operator in $L_2(\mathbb{R}^d;\mathbb{C}^m)$ given by
\begin{equation}
\label{S_e}
(S_\varepsilon \mathbf{u})(\mathbf{x})=\vert \Omega \vert ^{-1}\int _\Omega \mathbf{u}(\mathbf{x}-\varepsilon \mathbf{z})\,d\mathbf{z}
\end{equation}
and called the \textit{Steklov smoothing operator.} Note that
$\Vert S_\varepsilon\Vert _{L_2(\mathbb{R}^d)\rightarrow L_2(\mathbb{R}^d)}\leqslant 1$.
Let $s\geqslant 0$. Obviously, $\mathbf{D}^\alpha S_\varepsilon \mathbf{u}=S_\varepsilon \mathbf{D}^\alpha \mathbf{u}$
for $\mathbf{u}\in H^s(\mathbb{R}^d;\mathbb{C}^m)$ and any multiindex $\alpha$ such that $\vert \alpha \vert \leqslant s$.
The operator $S_\eps$ is a continuous mapping of $H^s(\mathbb{R}^d;\mathbb{C}^m)$ to $H^s(\mathbb{R}^d;\mathbb{C}^m)$,
and $\Vert S_\varepsilon\Vert _{H^s(\mathbb{R}^d)\rightarrow H^s(\mathbb{R}^d)}\leqslant 1$.

We need the following properties of the operator (\ref{S_e}) (see \cite[Lemmas~1.1~and~1.2]{ZhPas} or
\cite[Propositions~3.1~and~3.2]{PSu}).

\begin{lemma} Let $f(\mathbf{x})$ be a $\Gamma$-periodic function in $\mathbb{R}^d$ such that $f\in L_2(\Omega)$.
Let $[f^\varepsilon]$ be the operator of multiplication by the function $f^\varepsilon (\mathbf{x})$.
Then the operator $[f^\varepsilon]S_\varepsilon$ is continuous in $L_2(\mathbb{R}^d;\mathbb{C}^m)$, and
\begin{equation*}
\Vert [f^\varepsilon ]S_\varepsilon\Vert _{L_2(\mathbb{R}^d)\rightarrow L_2(\mathbb{R}^d)}\leqslant \vert \Omega\vert ^{-1/2}\Vert f\Vert _{L_2(\Omega)}.
\end{equation*}
\end{lemma}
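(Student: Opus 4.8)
The plan is to estimate the square of the operator norm by a direct computation. For $\mathbf{u}\in L_2(\R^d;\C^m)$ write
\[
\Vert [f^\varepsilon]S_\varepsilon\mathbf{u}\Vert_{L_2(\R^d)}^2=\int_{\R^d}\vert f^\varepsilon(\mathbf{x})\vert^2\,\vert (S_\varepsilon\mathbf{u})(\mathbf{x})\vert^2\,d\mathbf{x}.
\]
The value $(S_\varepsilon\mathbf{u})(\mathbf{x})$ is the mean of $\mathbf{u}(\mathbf{x}-\varepsilon\mathbf{z})$ over $\mathbf{z}\in\Omega$ against the probability measure $\vert\Omega\vert^{-1}\,d\mathbf{z}$, so Jensen's inequality (convexity of $t\mapsto\vert t\vert^2$) yields $\vert (S_\varepsilon\mathbf{u})(\mathbf{x})\vert^2\leqslant\vert\Omega\vert^{-1}\int_\Omega\vert\mathbf{u}(\mathbf{x}-\varepsilon\mathbf{z})\vert^2\,d\mathbf{z}$. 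After substituting this bound all integrands below are nonnegative, so Tonelli's theorem justifies every interchange of the order of integration.

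Next I would decouple $\mathbf{u}$ from the oscillating weight by two affine changes of variables, $\mathbf{z}'=\varepsilon\mathbf{z}$ and then $\mathbf{y}=\mathbf{x}-\mathbf{z}'$, together with Fubini, arriving at
\[
\Vert [f^\varepsilon]S_\varepsilon\mathbf{u}\Vert_{L_2(\R^d)}^2\leqslant\vert\Omega\vert^{-1}\varepsilon^{-d}\int_{\R^d}\vert\mathbf{u}(\mathbf{y})\vert^2\left(\int_{\varepsilon\Omega}\vert f^\varepsilon(\mathbf{y}+\mathbf{z}')\vert^2\,d\mathbf{z}'\right)d\mathbf{y}.
\]
The crux is the inner integral. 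Here one uses that $f^\varepsilon$ is periodic with respect to the scaled lattice $\varepsilon\Gamma$ and that $\varepsilon\Omega$ is a fundamental domain of $\varepsilon\Gamma$; hence the integral of the $\varepsilon\Gamma$-periodic function $\mathbf{z}'\mapsto\vert f^\varepsilon(\mathbf{y}+\mathbf{z}')\vert^2$ over $\varepsilon\Omega$ does not depend on the shift $\mathbf{y}$ and equals $\int_{\varepsilon\Omega}\vert f^\varepsilon(\mathbf{z}')\vert^2\,d\mathbf{z}'$. Rescaling $\mathbf{w}=\varepsilon^{-1}\mathbf{z}'$ turns this into $\varepsilon^d\int_\Omega\vert f(\mathbf{w})\vert^2\,d\mathbf{w}=\varepsilon^d\Vert f\Vert_{L_2(\Omega)}^2$, which is finite since $f\in L_2(\Omega)$.

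Plugging this back in, the factors $\varepsilon^{\pm d}$ cancel and
\[
\Vert [f^\varepsilon]S_\varepsilon\mathbf{u}\Vert_{L_2(\R^d)}^2\leqslant\vert\Omega\vert^{-1}\Vert f\Vert_{L_2(\Omega)}^2\,\Vert\mathbf{u}\Vert_{L_2(\R^d)}^2,
\]
which is exactly the asserted estimate; in particular $[f^\varepsilon]S_\varepsilon$ is bounded, hence continuous, on $L_2(\R^d;\C^m)$. The only step deserving a word of care is the translation invariance of that inner integral, i.e. the elementary fact that $\int_{\mathcal P}F=\int_{\mathcal P}F(\,\cdot+\mathbf{h})$ for any fundamental domain $\mathcal P$ of a lattice, any lattice-periodic $F$, and any $\mathbf{h}$; everything else is Jensen's inequality, Tonelli's theorem, and linear changes of variable. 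I therefore expect no genuine obstacle — this is a short computation.
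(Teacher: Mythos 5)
Your argument is correct: Jensen's inequality for the averaging operator, Tonelli, the two linear changes of variables, and the shift-invariance of the integral of the $\varepsilon\Gamma$-periodic function $|f^\varepsilon|^2$ over the fundamental domain $\varepsilon\Omega$ give exactly the stated bound, with the powers of $\varepsilon$ cancelling as you indicate. The paper does not prove this lemma itself but quotes it from [ZhPas, Lemma~1.1] and [PSu, Proposition~3.1], and your computation is precisely the standard argument used there, so there is nothing to add.
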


\begin{lemma}
Let $r_1$ be defined by \textnormal{(\ref{r})}. Then for any ${\mathbf v}\in H^1(\mathbb{R}^d;\mathbb{C}^m)$ we have
\begin{equation*}
\Vert (S_\varepsilon -I) {\mathbf v} \Vert _{L_2(\mathbb{R}^d)}
\leqslant  \varepsilon r_1 \Vert {\mathbf D}{\mathbf v}\Vert _{L_2(\mathbb{R}^d)}.
\end{equation*}
\end{lemma}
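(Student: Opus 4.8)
The plan is to first reduce to smooth compactly supported functions by density, and then to estimate the difference inside the average by a first‑order Taylor expansion. Since $\|S_\varepsilon\|_{L_2(\mathbb{R}^d)\to L_2(\mathbb{R}^d)}\leqslant 1$, the operator $S_\varepsilon-I$ is bounded on $L_2(\mathbb{R}^d;\mathbb{C}^m)$, so both sides of the asserted inequality are continuous with respect to the $H^1(\mathbb{R}^d)$‑norm of $\mathbf{v}$; hence it suffices to prove the estimate for $\mathbf{v}\in C_0^\infty(\mathbb{R}^d;\mathbb{C}^m)$ and pass to the limit. For such $\mathbf{v}$ I would write the difference under the average via the fundamental theorem of calculus in the parameter $t$:
$$(S_\varepsilon\mathbf{v})(\mathbf{x})-\mathbf{v}(\mathbf{x})=|\Omega|^{-1}\int_\Omega\bigl(\mathbf{v}(\mathbf{x}-\varepsilon\mathbf{z})-\mathbf{v}(\mathbf{x})\bigr)\,d\mathbf{z}=-\varepsilon|\Omega|^{-1}\int_\Omega\int_0^1\sum_{j=1}^d z_j(\partial_j\mathbf{v})(\mathbf{x}-t\varepsilon\mathbf{z})\,dt\,d\mathbf{z}.$$

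Next I would take the $L_2(\mathbb{R}^d)$‑norm in $\mathbf{x}$, apply the generalized Minkowski inequality to move the norm inside the integrals over $\Omega$ and over $(0,1)$, and use the pointwise Cauchy--Schwarz bound $\bigl|\sum_j z_j(\partial_j\mathbf{v})(\mathbf{y})\bigr|\leqslant|\mathbf{z}|\,|\nabla\mathbf{v}(\mathbf{y})|$ together with translation invariance of Lebesgue measure, which gives $\|\nabla\mathbf{v}(\cdot-t\varepsilon\mathbf{z})\|_{L_2(\mathbb{R}^d)}=\|\nabla\mathbf{v}\|_{L_2(\mathbb{R}^d)}=\|\mathbf{D}\mathbf{v}\|_{L_2(\mathbb{R}^d)}$. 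This yields
$$\|(S_\varepsilon-I)\mathbf{v}\|_{L_2(\mathbb{R}^d)}\leqslant\varepsilon|\Omega|^{-1}\Bigl(\int_\Omega|\mathbf{z}|\,d\mathbf{z}\Bigr)\|\mathbf{D}\mathbf{v}\|_{L_2(\mathbb{R}^d)}.$$

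It then remains to observe the elementary geometric bound $|\mathbf{z}|\leqslant r_1$ for all $\mathbf{z}\in\Omega$: since $\Omega=\{\sum_j\tau_j\mathbf{a}_j:|\tau_j|<1/2\}$ is symmetric under $\mathbf{z}\mapsto-\mathbf{z}$, both $\mathbf{z}$ and $-\mathbf{z}$ lie in $\Omega$, hence $2|\mathbf{z}|=|\mathbf{z}-(-\mathbf{z})|\leqslant\mathrm{diam}\,\Omega=2r_1$. Therefore $\int_\Omega|\mathbf{z}|\,d\mathbf{z}\leqslant r_1|\Omega|$, and the previous display becomes exactly $\|(S_\varepsilon-I)\mathbf{v}\|_{L_2(\mathbb{R}^d)}\leqslant\varepsilon r_1\|\mathbf{D}\mathbf{v}\|_{L_2(\mathbb{R}^d)}$, as claimed. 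An equivalent route is to pass to the Fourier side: with $\sigma(\boldsymbol\xi):=|\Omega|^{-1}\int_\Omega e^{-i\langle\boldsymbol\xi,\mathbf{z}\rangle}\,d\mathbf{z}$ one has $\widehat{(S_\varepsilon-I)\mathbf{v}}(\boldsymbol\xi)=(\sigma(\varepsilon\boldsymbol\xi)-1)\widehat{\mathbf{v}}(\boldsymbol\xi)$, and $|\sigma(\boldsymbol\xi)-1|\leqslant|\Omega|^{-1}\int_\Omega|e^{-i\langle\boldsymbol\xi,\mathbf{z}\rangle}-1|\,d\mathbf{z}\leqslant|\boldsymbol\xi|\,|\Omega|^{-1}\int_\Omega|\mathbf{z}|\,d\mathbf{z}\leqslant r_1|\boldsymbol\xi|$, so Plancherel's identity finishes the proof immediately. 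There is no genuine obstacle here; the only point deserving a line of justification is the bound $|\mathbf{z}|\leqslant r_1$ on $\Omega$, which relies on the cell $\Omega$ being centered at the origin.
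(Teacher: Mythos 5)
Your proof is correct and complete: the reduction to $C_0^\infty$ by density, the identity $\mathbf{v}(\mathbf{x}-\varepsilon\mathbf{z})-\mathbf{v}(\mathbf{x})=-\varepsilon\int_0^1\sum_j z_j(\partial_j\mathbf{v})(\mathbf{x}-t\varepsilon\mathbf{z})\,dt$, the Minkowski--Cauchy--Schwarz estimate with translation invariance, and the bound $|\mathbf{z}|\leqslant r_1$ on $\Omega$ (which indeed uses only the central symmetry of the cell and $\mathrm{diam}\,\Omega=2r_1$) fit together without gaps, and the Fourier-side variant via the symbol $\sigma(\varepsilon\boldsymbol\xi)$ is equally valid. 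Note that the paper itself gives no proof of this lemma: it is quoted from [ZhPas, Lemmas 1.1 and 1.2] and [PSu, Propositions 3.1 and 3.2], so there is no in-paper argument to compare with; your reasoning is essentially the standard argument of those references (the multiplier computation $\widehat{(S_\varepsilon-I)\mathbf v}=(\sigma(\varepsilon\boldsymbol\xi)-1)\widehat{\mathbf v}$ with $|\sigma(\boldsymbol\xi)-1|\leqslant r_1|\boldsymbol\xi|$ is the route taken there), while your Taylor/Minkowski version is a slightly more elementary real-variable alternative that avoids Plancherel.
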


\section*{\S 2. Preliminaries. \\Approximation of the resolvent of the operator $\mathcal{A}_{D,\varepsilon}$}
\setcounter{section}{2}
\setcounter{equation}{0}
\setcounter{theorem}{0}

We choose the numbers $\varepsilon _0, \varepsilon _1\in (0,1]$ according to the following condition.
\begin{condition}
The number $\varepsilon _0\in(0,1]$ is such that the strip
\begin{equation*}
(\partial \mathcal{O})_{\varepsilon _0} :=\lbrace \mathbf{x}\in \mathbb{R}^d :\textnormal{dist}\,\lbrace \mathbf{x} ;\partial \mathcal{O}\rbrace <\varepsilon _0 \rbrace
\end{equation*}
can be covered by a finite number of open sets admitting diffeomorphisms
of class $C^{0,1}$ rectifying the boundary $\partial \mathcal{O}$.
Let $\varepsilon _1=\varepsilon _0(1+r_1)^{-1}$, where $2r_1=\textnormal{diam}\,\Omega$.
\end{condition}
Obviously, $\varepsilon _1$ depends only on the domain $\mathcal{O}$ and the lattice $\Gamma$.

Note that Condition 2.1 would be provided only by the condition that $\partial {\mathcal O}$ is Lipschitz;
we have imposed a stronger restriction $\partial {\mathcal O}\in C^{1,1}$ in order to ensure estimate \eqref{A0^-1}.

\subsection*{2.1. Approximation of the resolvent of the operator $\mathcal{A}_{D,\varepsilon}$ in the $L_2(\mathcal{O};\mathbb{C}^n)$-operator
norm} We need the results about the behaviour of the resolvent $(\mathcal{A}_{D,\varepsilon}-\zeta I)^{-1}$ obtained in
[Su7,8].  In \cite[Theorem 4.1]{Su14-2},  the principal term of approximation of the resolvent was found
for $\zeta\in \mathbb{C}\setminus \mathbb{R}_+$ such that $\vert \zeta\vert \geqslant 1$. Besides, in \cite[Theorem 8.1]{Su14-2}
the $(L_2\rightarrow L_2)$-estimate for the difference of the resolvents
$(\mathcal{A}_{D,\varepsilon}-\zeta I)^{-1}$ and $(\mathcal{A}_D^0-\zeta I)^{-1}$ was obtained for
$\zeta\in\mathbb{C}\setminus [c_\circ,\infty)$, where $c_\circ$ is a common lower bound of the operators $\mathcal{A}_{D,\varepsilon}$ and $\mathcal{A}_D^0$. Using \eqref{a_D,e>=} and \eqref{a_D,0>=},
we take $c_\circ$ equal to $c_*= c_0(\textnormal{diam}\,\mathcal{O})^{-2}$.
Below the following notation is systematically used, where $\varphi \in (0,2\pi)$:
\begin{equation}
\label{c(phi)}
c(\varphi)=\begin{cases} \vert \sin \varphi \vert ^{-1}, &\varphi \in (0,\pi /2)\cup (3\pi /2,2\pi),\\
1,&\varphi\in [\pi/2 ,3\pi /2 ] .
\end{cases}
\end{equation}

For convenience of further references, the following set of parameters is called the \textit{initial data}:
\begin{equation}
\label{data}
\begin{aligned}
&m,\ d,\ \alpha _0,\ \alpha _1,\ \Vert g\Vert _{L_\infty},\ \Vert g^{-1}\Vert _{L_\infty};
\\
&\textnormal{the parameters of the lattice}\ \Gamma; \ \  \textnormal{the domain} \ \mathcal{O}.
\end{aligned}
\end{equation}

\begin{theorem}[\cite{Su14-2}] Suppose that  $\mathcal{O}\subset \mathbb{R}^d$ is a bounded domain with the boundary of class $C^{1,1}$.
Suppose that the matrix-valued function $g(\mathbf{x})$ and DO $b(\mathbf{D})$ satisfy the assumptions of Subsection~\textnormal{1.1}.
Let $\mathcal{A}_{D,\varepsilon}$ and $\mathcal{A}_{D}^0$ be the operators defined in Subsections~\textnormal{1.1, 1.3}.
Suppose that the number $\varepsilon_1$ is subject to Condition~\textnormal{2.1}.

\noindent $1^\circ$. Let $\zeta \in \mathbb{C}\setminus\mathbb{R}_+$, $\zeta =\vert \zeta \vert e^{i\phi}$, $\phi\in (0,2\pi)$,
and $\vert \zeta \vert \geqslant 1$. Then for $0<\varepsilon\leqslant\varepsilon _1$ we have
\begin{align*}
\Vert (\mathcal{A}_{D,\varepsilon }-\zeta I)^{-1}-(\mathcal{A}_D^0-\zeta I)^{-1}\Vert _{L_2(\mathcal{O})\rightarrow L_2(\mathcal{O})}\leqslant
C_1c(\phi)^5(\vert \zeta \vert ^{-1/2}\varepsilon +\varepsilon ^2).
\end{align*}

\noindent $2^\circ$. Now, let $\zeta \in \mathbb{C}\setminus [c_*,\infty)$, where
$c_*=\alpha _0\Vert g^{-1}\Vert ^{-1}_{L_\infty}(\textnormal{diam}\,\mathcal{O})^{-2}$.
We put $\zeta -c_*=\vert \zeta -c_*\vert e^{i\psi}$, $\psi\in (0,2\pi)$, and denote
\begin{equation*}
\rho _*(\zeta)=\begin{cases}
c(\psi)^2\vert \zeta -c_*\vert ^{-2}, &\vert \zeta - c_*\vert <1,\\
c(\psi)^2, &\vert \zeta -c_*\vert \geqslant 1.
\end{cases}
\end{equation*}
Then for $0<\varepsilon\leqslant \varepsilon _1$ we have
\begin{align}
\label{Th_Su_small_zeta}
&\Vert (\mathcal{A}_{D,\varepsilon }-\zeta I)^{-1}-(\mathcal{A}_D^0-\zeta I)^{-1}\Vert _{L_2(\mathcal{O})\rightarrow L_2(\mathcal{O})}\leqslant
C_2\rho _*(\zeta )\varepsilon .
\end{align}
The constants $C_1$ and $C_2$ depend only on the initial data \textnormal{\eqref{data}}.
\end{theorem}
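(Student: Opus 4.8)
The plan is to derive part $2^\circ$ from part $1^\circ$ by resolvent algebra and to prove part $1^\circ$ by the boundary layer method; all of this is carried out in detail in \cite{Su14-2}, so what follows is only a sketch of the scheme. For the reduction, write $B_\varepsilon(\zeta):=(\mathcal{A}_{D,\varepsilon}-\zeta I)^{-1}$ and $B^0(\zeta):=(\mathcal{A}_D^0-\zeta I)^{-1}$, and fix the reference point $\zeta_0=-1$, which is admissible for $1^\circ$ (there $c(\phi)=1$). Since $\mathcal{A}_{D,\varepsilon}$ and $\mathcal{A}_D^0$ are selfadjoint with spectra in $[c_*,\infty)$ by \eqref{a_D,e>=} and \eqref{a_D,0>=}, one has $\Vert B_\varepsilon(\zeta)\Vert,\Vert B^0(\zeta)\Vert\leqslant c(\psi)\vert\zeta-c_*\vert^{-1}$. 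The first resolvent identity yields the exact relation
\begin{equation*}
B_\varepsilon(\zeta)-B^0(\zeta)=\bigl(I+(\zeta-\zeta_0)B_\varepsilon(\zeta)\bigr)\bigl(B_\varepsilon(\zeta_0)-B^0(\zeta_0)\bigr)\bigl(I+(\zeta-\zeta_0)B^0(\zeta)\bigr).
\end{equation*}
Passing to norms, bounding $\Vert B_\varepsilon(\zeta_0)-B^0(\zeta_0)\Vert\leqslant C\varepsilon$ by part $1^\circ$, and using the resolvent bounds above together with the elementary inequality $\vert\zeta-\zeta_0\vert\leqslant C(c_*)\max\{\vert\zeta-c_*\vert,1\}$ on $\mathbb{C}\setminus[c_*,\infty)$, one obtains precisely the weight $\rho_*(\zeta)$ of \eqref{Th_Su_small_zeta}. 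Hence it remains to prove $1^\circ$.

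Fix $\mathbf{F}\in L_2(\mathcal{O};\mathbb{C}^n)$ and put $\mathbf{u}_\varepsilon=B_\varepsilon(\zeta)\mathbf{F}$, $\mathbf{u}_0=B^0(\zeta)\mathbf{F}$. From $(\mathcal{A}_D^0-\zeta I)\mathbf{u}_0=\mathbf{F}$, by taking real and imaginary parts of the identity $a_D^0[\mathbf{u}_0]-\zeta\Vert\mathbf{u}_0\Vert^2_{L_2}=(\mathbf{F},\mathbf{u}_0)$ and using \eqref{<a_D,0<}, one gets the a priori bounds $\Vert\mathbf{u}_0\Vert_{L_2}\leqslant c(\phi)\vert\zeta\vert^{-1}\Vert\mathbf{F}\Vert$ and $\Vert\mathbf{u}_0\Vert_{H^1}\leqslant C c(\phi)\vert\zeta\vert^{-1/2}\Vert\mathbf{F}\Vert$, while $\mathcal{A}_D^0\mathbf{u}_0=\mathbf{F}+\zeta\mathbf{u}_0$ combined with \eqref{A0^-1} gives $\Vert\mathbf{u}_0\Vert_{H^2}\leqslant C c(\phi)\Vert\mathbf{F}\Vert$ (as $(1+\vert\zeta\vert)\vert\zeta\vert^{-1}\leqslant2$ for $\vert\zeta\vert\geqslant1$); the same bounds hold for $\mathbf{u}_\varepsilon$ by \eqref{<a_D,e<}. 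Next, letting $\widetilde{\mathbf{u}}_0$ be a bounded $H^2$-extension of $\mathbf{u}_0$ to $\mathbb{R}^d$ and $\theta_\varepsilon$ a smooth cutoff equal to $1$ on the strip $(\partial\mathcal{O})_{\varepsilon}$ with $\vert\mathbf{D}\theta_\varepsilon\vert\leqslant C\varepsilon^{-1}$ (available for $\varepsilon\leqslant\varepsilon_1$ by Condition~2.1), one forms the corrected first approximation
\begin{equation*}
\widetilde{\mathbf{v}}_\varepsilon:=\mathbf{u}_0+\varepsilon(1-\theta_\varepsilon)\Lambda^\varepsilon S_\varepsilon b(\mathbf{D})\widetilde{\mathbf{u}}_0\in H^1_0(\mathcal{O};\mathbb{C}^n),
\end{equation*}
for which $\Vert\widetilde{\mathbf{v}}_\varepsilon-\mathbf{u}_0\Vert_{L_2}\leqslant C c(\phi)\vert\zeta\vert^{-1/2}\varepsilon\Vert\mathbf{F}\Vert$ by Lemma 1.7 and \eqref{M}.

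Substituting $\widetilde{\mathbf{v}}_\varepsilon$ into $a_{D,\varepsilon}[\widetilde{\mathbf{v}}_\varepsilon,\boldsymbol{\eta}]-\zeta(\widetilde{\mathbf{v}}_\varepsilon,\boldsymbol{\eta})-(\mathbf{F},\boldsymbol{\eta})$, $\boldsymbol{\eta}\in H^1_0(\mathcal{O};\mathbb{C}^n)$, and using the cell equation \eqref{Lambda_problem} together with the definitions \eqref{tilde g}, \eqref{g^0} of $\widetilde{g}$ and $g^0$ so that the leading terms cancel, one is left with a discrepancy that splits into: (i) commutator-type terms containing $(S_\varepsilon-I)b(\mathbf{D})\widetilde{\mathbf{u}}_0$ and $\varepsilon\mathbf{D}$ acting on $\Lambda^\varepsilon$, controlled via Lemmas 1.5, 1.7, 1.8 and Corollary 1.6 by $C c(\phi)^{\ell}(\vert\zeta\vert^{-1/2}\varepsilon+\varepsilon^2)\Vert\mathbf{F}\Vert\,\Vert b(\mathbf{D})\boldsymbol{\eta}\Vert_{L_2}$ with a fixed power $\ell$; and (ii) terms supported in $(\partial\mathcal{O})_{\varepsilon}$ coming from $\theta_\varepsilon$, controlled by the boundary strip estimate $\int_{(\partial\mathcal{O})_{\varepsilon}}\bigl(\vert\mathbf{D}\mathbf{u}_0\vert^2+\varepsilon^{-2}\vert\mathbf{u}_0\vert^2\bigr)\,d\mathbf{x}\leqslant C\varepsilon\Vert\mathbf{u}_0\Vert_{H^2}^2$ (valid since $\partial\mathcal{O}$ is Lipschitz and $\mathbf{u}_0\in H^2$ vanishes on $\partial\mathcal{O}$) together with the $H^2$-bound above, which gives the contribution $\leqslant C c(\phi)^2\varepsilon^2\Vert\mathbf{F}\Vert\,\Vert b(\mathbf{D})\boldsymbol{\eta}\Vert_{L_2}$. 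Choosing $\boldsymbol{\eta}=\mathbf{u}_\varepsilon-\widetilde{\mathbf{v}}_\varepsilon$, taking real and imaginary parts of the resulting identity, and using the lower bound in \eqref{<a_D,e<} together with the a priori bounds for $\mathbf{u}_\varepsilon$, one arrives at $\Vert\mathbf{u}_\varepsilon-\widetilde{\mathbf{v}}_\varepsilon\Vert_{L_2}\leqslant C c(\phi)^5(\vert\zeta\vert^{-1/2}\varepsilon+\varepsilon^2)\Vert\mathbf{F}\Vert$; combining this with the bound for $\Vert\widetilde{\mathbf{v}}_\varepsilon-\mathbf{u}_0\Vert_{L_2}$ proves $1^\circ$.

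The main obstacle is not the architecture of the argument but the uniform bookkeeping of the powers of $c(\phi)$ and of the small parameter: one must verify that each commutator term and each boundary strip term generates only the admissible factors $\vert\zeta\vert^{-1/2}\varepsilon$ and $\varepsilon^2$ and no power of $c(\phi)$ exceeding the fifth, with every constant expressed solely through the initial data \eqref{data}. This rests on the global $H^2$-regularity \eqref{A0^-1}, on the Steklov smoothing estimates of Subsection~1.5, and on a careful analysis of the boundary layer $(\partial\mathcal{O})_{\varepsilon}$; the complete verification is given in \cite{Su14-2}.
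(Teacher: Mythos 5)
This statement is Theorem~2.2 of the paper, which the authors do not prove at all: it is imported verbatim from \cite{Su14-2} (Theorems 4.1 and 8.1 there), so your decision to defer the technical details to that reference is consistent with what the paper itself does. Your reduction of part $2^\circ$ to part $1^\circ$ via the first resolvent identity anchored at $\zeta_0=-1$, using $\Vert B_\dag(\zeta)\Vert\leqslant c(\psi)\vert\zeta-c_*\vert^{-1}$ to produce the weight $\rho_*(\zeta)$, is sound and is essentially the mechanism of \S 8 of \cite{Su14-2}.

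There is, however, a genuine gap in your sketch of $1^\circ$. The direct energy argument you describe --- substituting $\widetilde{\mathbf{v}}_\varepsilon$ into the sesquilinear form and testing against $\boldsymbol{\eta}=\mathbf{u}_\varepsilon-\widetilde{\mathbf{v}}_\varepsilon$ --- cannot deliver an $L_2$ bound of order $\vert\zeta\vert^{-1/2}\varepsilon+\varepsilon^2$. The culprit is your claim that the boundary-strip terms contribute only $C c(\phi)^2\varepsilon^2\Vert\mathbf{F}\Vert\,\Vert b(\mathbf{D})\boldsymbol{\eta}\Vert_{L_2}$. When $b(\mathbf{D})$ hits the cutoff $\theta_\varepsilon$, the factor $\varepsilon\vert\mathbf{D}\theta_\varepsilon\vert=O(1)$ leaves you with $\Vert\Lambda^\varepsilon S_\varepsilon b(\mathbf{D})\widetilde{\mathbf{u}}_0\Vert_{L_2((\partial\mathcal{O})_\varepsilon)}$, and the strip estimate together with $\Vert\mathbf{u}_0\Vert_{H^2}\leqslant Cc(\phi)\Vert\mathbf{F}\Vert$ gives only $O(\varepsilon^{1/2})\Vert\mathbf{F}\Vert$ for this quantity --- this is exactly why the companion $(L_2\to H^1)$ estimate (Theorem~2.3) has the worse order $\varepsilon^{1/2}\vert\zeta\vert^{-1/4}+\varepsilon$. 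Feeding an $O(\varepsilon^{1/2})$ discrepancy into the energy identity yields $\Vert\boldsymbol{\eta}\Vert_{L_2}=O(\varepsilon^{1/2})$, not the claimed $O(\varepsilon)$. The missing idea is the duality step: one must pair $\bigl(B_\varepsilon(\zeta)-B^0(\zeta)\bigr)\mathbf{F}$ against an arbitrary $\mathbf{G}$ and insert the corrector approximations for the parameter $\zeta$ and for its conjugate $\overline{\zeta}$ (equivalently, for the adjoint problem), so that the two $O(\varepsilon^{1/2})$ boundary-layer contributions multiply to give $O(\varepsilon)$ in the $(L_2\to L_2)$ norm. Without this step the architecture you describe proves a weaker estimate than the one asserted.
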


\subsection*{2.2. Approximation of the resolvent of the operator $\mathcal{A}_{D,\varepsilon}$ in the norm of operators acting from
$L_2(\mathcal{O};\mathbb{C}^n)$ to $H^1(\mathcal{O};\mathbb{C}^n)$}
Approximation of the resolvent $(\mathcal{A}_{D,\varepsilon}-\zeta I)^{-1}$ in the $(L_2\rightarrow H^1)$-norm was obtained in
Theorems 4.2 and 8.1 from \cite{Su14-2}. In order to formulate the result, we introduce the corrector $K_D(\eps ;\zeta)$.
For this, we fix a linear continuous extension operator
\begin{equation}
\label{P_O H^1, H^2}
\begin{split}
P_\mathcal{O}: H^s(\mathcal{O};\mathbb{C}^n)\rightarrow H^s(\mathbb{R}^d;\mathbb{C}^n),\quad s \geqslant 0.
\end{split}
\end{equation}
Such a ,,universal'' extension operator exists for any bounded domain with Lipschitz boundary (see \cite{St} or [R]).
Herewith,
\begin{equation}
\label{PO}
\| P_{\mathcal O} \|_{H^s({\mathcal O}) \to H^s({\mathbb R}^d)} \leqslant C_{\mathcal O}^{(s)},
\end{equation}
where the constant $C_{\mathcal O}^{(s)}$ depends only on $s$ and the domain ${\mathcal O}$.
By $R_\mathcal{O}$ we denote the operator of restriction of functions in $\mathbb{R}^d$ onto the domain $\mathcal{O}$.
We put
\begin{equation}
\label{K_D(e)}
K_D(\eps ;\zeta)=R_\mathcal{O}[\Lambda ^\varepsilon] S_\varepsilon b(\mathbf{D})P_\mathcal{O}(\mathcal{A}_D^0-\zeta I)^{-1}.
\end{equation}
The corrector $K_D(\eps ;\zeta)$ is a continuous mapping of $L_2(\mathcal{O};\mathbb{C}^n)$ to $H^1(\mathcal{O};\mathbb{C}^n)$.
Indeed, the operator $b(\mathbf{D})P_\mathcal{O}(\mathcal{A}_D^0-\zeta I)^{-1}$ is continuous from
$L_2(\mathcal{O};\mathbb{C}^n)$ to $H^1(\mathbb{R}^d;\mathbb{C}^m)$. By Lemma~1.7 and relation
$\Lambda \in \widetilde{H}^1(\Omega)$, it is easily seen that the operator
$[\Lambda^\varepsilon ]S_\varepsilon $ is continuous from $H^1(\mathbb{R}^d;\mathbb{C}^m)$ to $H^1(\mathbb{R}^d;\mathbb{C}^n)$.

\begin{theorem}[\cite{Su14-2}] Suppose that the assumptions of Theorem~\textnormal{2.2} are satisfied.
Let $K_D(\eps; \zeta)$ be the operator~\textnormal{(\ref{K_D(e)})}, and let
$\widetilde{g}$ be the matrix-valued function~\textnormal{(\ref{tilde g})}.

\noindent $1^\circ$. Let $\zeta \in \mathbb{C}\setminus\mathbb{R}_+$, $\zeta =\vert \zeta \vert e^{i\phi}$, $\phi\in (0,2\pi)$,
and $\vert \zeta \vert \geqslant 1$. Then for $0<\varepsilon\leqslant\varepsilon _1$ we have
\begin{align*}
\begin{split}
\Vert &(\mathcal{A}_{D,\varepsilon}-\zeta I)^{-1}-(\mathcal{A}_D^0-\zeta I)^{-1}-\varepsilon K_D(\varepsilon;\zeta )\Vert _{L_2(\mathcal{O})\rightarrow H^1(\mathcal{O})}\\
&\leqslant
C_3c(\phi)^2\vert \zeta \vert ^{-1/4}\varepsilon ^{1/2}+C_4c(\phi)^4\varepsilon ,
\end{split}
\\
\begin{split}
\Vert &g^\varepsilon b(\mathbf{D})(\mathcal{A}_{D,\varepsilon}-\zeta I)^{-1}-\widetilde{g}^\varepsilon S_\varepsilon b(\mathbf{D})P_\mathcal{O}(\mathcal{A}_D^0-\zeta I)^{-1}\Vert _{L_2(\mathcal{O})\rightarrow L_2(\mathcal{O})}\\
&\leqslant
\widetilde{C}_3c(\phi)^2\vert \zeta \vert ^{-1/4}\varepsilon ^{1/2}+\widetilde{C}_4c(\phi)^4\varepsilon .
\end{split}
\end{align*}
\noindent $2^\circ$. Now, let $\zeta \in \mathbb{C}\setminus [c_*,\infty)$. Then for $0<\varepsilon\leqslant \varepsilon _1$ we have
\begin{align}
\label{24a}
&\Vert (\mathcal{A}_{D,\varepsilon}-\zeta I)^{-1}-(\mathcal{A}_D^0-\zeta I)^{-1}-\varepsilon K_D(\varepsilon;\zeta )\Vert _{L_2(\mathcal{O})\rightarrow H^1(\mathcal{O})}\leqslant
C_5\rho _*(\zeta)\varepsilon ^{1/2},\\
\label{24b}
\begin{split}
&\Vert g^\varepsilon b(\mathbf{D})(\mathcal{A}_{D,\varepsilon}-\zeta I)^{-1}-\widetilde{g}^\varepsilon S_\varepsilon b(\mathbf{D})P_\mathcal{O}(\mathcal{A}_D^0-\zeta I)^{-1}\Vert _{L_2(\mathcal{O})\rightarrow L_2(\mathcal{O})}\\
&\leqslant \widetilde{C}_5\rho _*(\zeta)\varepsilon ^{1/2}.
\end{split}
\end{align}
The constants $C_3$, $C_4$, $\widetilde{C}_3$, $\widetilde{C}_4$, $C_5$, and $\widetilde{C}_5$ depend only on the initial data~\textnormal{\eqref{data}}.
\end{theorem}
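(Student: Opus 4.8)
The plan is to reduce the assertion to a discrepancy estimate for a suitable approximate solution, following the boundary-layer technique behind Theorem~2.2, and to carry the dependence on the spectral parameter $\zeta$ through every step. Fix $\mathbf{F}\in L_2(\mathcal{O};\mathbb{C}^n)$ and put $\mathbf{u}_\varepsilon=(\mathcal{A}_{D,\varepsilon}-\zeta I)^{-1}\mathbf{F}$, $\mathbf{u}_0=(\mathcal{A}_D^0-\zeta I)^{-1}\mathbf{F}$. By the description of $\Dom\mathcal{A}_D^0$ together with \eqref{A0^-1} and the resolvent identity, $\mathbf{u}_0\in H^2(\mathcal{O};\mathbb{C}^n)\cap H^1_0(\mathcal{O};\mathbb{C}^n)$ and $\|\mathbf{u}_0\|_{H^2(\mathcal{O})}\lesssim c(\phi)\|\mathbf{F}\|_{L_2(\mathcal{O})}$. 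Writing $\mathbf{v}_\varepsilon:=\mathbf{u}_0+\varepsilon R_{\mathcal{O}}[\Lambda^\varepsilon]S_\varepsilon b(\mathbf{D})P_{\mathcal{O}}\mathbf{u}_0=\mathbf{u}_0+\varepsilon K_D(\varepsilon;\zeta)\mathbf{F}$, part~$1^\circ$ amounts to the bound $\|\mathbf{u}_\varepsilon-\mathbf{v}_\varepsilon\|_{H^1(\mathcal{O})}\lesssim\bigl(c(\phi)^2|\zeta|^{-1/4}\varepsilon^{1/2}+c(\phi)^4\varepsilon\bigr)\|\mathbf{F}\|_{L_2(\mathcal{O})}$. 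The route is to compute the discrepancy of $\mathbf{v}_\varepsilon$ as a weak solution, i.e.\ to estimate the functional $\boldsymbol\eta\mapsto a_{D,\varepsilon}[\mathbf{v}_\varepsilon,\boldsymbol\eta]-\zeta(\mathbf{v}_\varepsilon,\boldsymbol\eta)_{L_2(\mathcal{O})}-(\mathbf{F},\boldsymbol\eta)_{L_2(\mathcal{O})}$ on $H^1_0(\mathcal{O};\mathbb{C}^n)$, and then to invoke the lower estimate \eqref{<a_D,e<} together with $\|(\mathcal{A}_{D,\varepsilon}-\zeta I)^{-1}\|_{L_2\to L_2}\leqslant c(\phi)|\zeta|^{-1}$ to convert this into the required $H^1$-estimate for $\mathbf{u}_\varepsilon-\mathbf{v}_\varepsilon$.

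The heart of the matter is the boundary layer. The term $\varepsilon\Lambda^\varepsilon S_\varepsilon b(\mathbf{D})P_{\mathcal{O}}\mathbf{u}_0$ is $O(\varepsilon)$ in $L_2(\mathbb{R}^d)$ but only $O(1)$ in $H^1(\mathbb{R}^d)$ --- differentiation of the rapidly oscillating factor $\Lambda^\varepsilon$ produces a factor $\varepsilon^{-1}$ --- and it does not vanish on $\partial\mathcal{O}$, so $\mathbf{v}_\varepsilon\notin H^1_0(\mathcal{O};\mathbb{C}^n)$. One therefore fixes a cutoff $\theta_\varepsilon\in C^\infty$ equal to $1$ outside the strip $(\partial\mathcal{O})_\varepsilon$, vanishing on $\partial\mathcal{O}$, with $|\nabla\theta_\varepsilon|\lesssim\varepsilon^{-1}$, and replaces $\mathbf{v}_\varepsilon$ by the corrected approximation in which the corrector is multiplied by $\theta_\varepsilon$; this restores membership in $H^1_0$ at the price of new terms, all supported in $(\partial\mathcal{O})_\varepsilon$, each carrying one of the factors $\Lambda^\varepsilon$, $(\mathbf{D}\Lambda)^\varepsilon$, $\nabla\theta_\varepsilon$, times $b(\mathbf{D})\mathbf{u}_0$ or its Steklov smoothing. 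These are estimated by Lemma~1.7 (which tolerates $\Lambda\in L_2(\Omega)$ precisely because of the smoothing $S_\varepsilon$), Corollary~1.6, Lemma~1.8, the bound \eqref{M} for $\|\Lambda\|_{H^1(\Omega)}$, and the elementary inequality $\|\mathbf{w}\|_{L_2((\partial\mathcal{O})_\varepsilon\cap\mathcal{O})}\lesssim\varepsilon^{1/2}\|\mathbf{w}\|_{H^1(\mathcal{O})}$; the measure $O(\varepsilon)$ of the strip is exactly what degrades the order from $O(\varepsilon)$ in $\mathbb{R}^d$ to $O(\varepsilon^{1/2})$ here. The interior part of the discrepancy is handled as in $\mathbb{R}^d$, using the defining equation \eqref{Lambda_problem} for $\Lambda$ and Lemma~1.8 to replace $S_\varepsilon$ by the identity up to $O(\varepsilon)$.

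Assembling these pieces gives the discrepancy bound and hence part~$1^\circ$ for $|\zeta|\geqslant1$, the powers of $c(\phi)$ arising from the repeated use of $\|(\mathcal{A}_D^0-\zeta I)^{-1}\|_{L_2\to H^2}$, $\|(\mathcal{A}_{D,\varepsilon}-\zeta I)^{-1}\|_{L_2\to L_2}$ and the boundary-strip estimates. The flux estimate is proved in parallel: one writes $g^\varepsilon b(\mathbf{D})\mathbf{u}_\varepsilon-\widetilde{g}^\varepsilon S_\varepsilon b(\mathbf{D})P_{\mathcal{O}}\mathbf{u}_0=g^\varepsilon b(\mathbf{D})(\mathbf{u}_\varepsilon-\mathbf{v}_\varepsilon)+\bigl(g^\varepsilon b(\mathbf{D})(\varepsilon\Lambda^\varepsilon S_\varepsilon b(\mathbf{D})P_{\mathcal{O}}\mathbf{u}_0)+g^\varepsilon S_\varepsilon b(\mathbf{D})P_{\mathcal{O}}\mathbf{u}_0-\widetilde{g}^\varepsilon S_\varepsilon b(\mathbf{D})P_{\mathcal{O}}\mathbf{u}_0\bigr)$ and observes that, by the very definition \eqref{tilde g} of $\widetilde{g}$, the bracketed expression reduces, via the product rule for $b(\mathbf{D})$ and the commutation $b(\mathbf{D})S_\varepsilon=S_\varepsilon b(\mathbf{D})$, to terms of order $\varepsilon$ (controlled by Lemma~1.7) plus boundary-strip terms of order $\varepsilon^{1/2}$. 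Finally, part~$2^\circ$ follows by combining part~$1^\circ$ with a separate treatment near $\zeta=0$: for $\zeta\in\mathbb{C}\setminus[c_*,\infty)$ both operators are bounded below by $c_*>0$, so $\|(\mathcal{A}_{D,\varepsilon}-\zeta I)^{-1}\|,\ \|(\mathcal{A}_D^0-\zeta I)^{-1}\|\leqslant c(\psi)|\zeta-c_*|^{-1}$; feeding these bounds into the same scheme (or transferring the estimate from $|\zeta|\geqslant1$ by the resolvent identity and a corresponding expansion of $K_D(\varepsilon;\zeta)$) yields \eqref{24a} and \eqref{24b} with the factor $\rho_*(\zeta)$.

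The main obstacle, as already signalled, is the boundary-layer bookkeeping: constructing a corrected approximation that lies in $H^1_0(\mathcal{O};\mathbb{C}^n)$, isolating exactly the terms supported in $(\partial\mathcal{O})_\varepsilon$, and estimating each of them \emph{uniformly in} $\zeta$ --- in particular keeping the powers of $c(\phi)$ and the precise shape of $\rho_*(\zeta)$ sharp, since it is this $\zeta$-dependence that will later be integrated against $e^{-\zeta t}$ over the contour $\gamma$ in the passage to the parabolic semigroup.
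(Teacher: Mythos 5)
First, a point of bookkeeping: the paper does not prove this statement. Theorem~2.3 is imported verbatim from Theorems~4.2 and 8.1 of \cite{Su14-2}, so there is no internal proof to compare against; the most one can say is that your route --- first-order approximation $\mathbf{v}_\varepsilon=\mathbf{u}_0+\varepsilon\Lambda^\varepsilon S_\varepsilon b(\mathbf{D})P_{\mathcal O}\mathbf{u}_0$, restoration of the Dirichlet condition by a cutoff supported in the strip $(\partial\mathcal{O})_\varepsilon$, discrepancy estimate via the form, Steklov smoothing to tolerate $\Lambda\in \widetilde H^1(\Omega)$ only --- is exactly the method the paper attributes to [PSu1,2], [Su4--8] in \S0.2, with the loss from $O(\varepsilon)$ to $O(\varepsilon^{1/2})$ coming from the $O(\varepsilon)$ measure of the boundary strip. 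So the architecture of your reconstruction is right.

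There is, however, one concrete gap on precisely the point you flag as ``the main obstacle'' but do not resolve: the factor $\vert\zeta\vert^{-1/4}$ in the term $C_3 c(\phi)^2\vert\zeta\vert^{-1/4}\varepsilon^{1/2}$. The strip inequality you invoke, $\Vert\mathbf{w}\Vert_{L_2((\partial\mathcal{O})_\varepsilon\cap\mathcal{O})}\lesssim\varepsilon^{1/2}\Vert\mathbf{w}\Vert_{H^1(\mathcal{O})}$, applied to $\mathbf{w}=b(\mathbf{D})\mathbf{u}_0$ gives only $\varepsilon^{1/2}\Vert\mathbf{u}_0\Vert_{H^2(\mathcal{O})}\lesssim\varepsilon^{1/2}c(\phi)\Vert\mathbf{F}\Vert_{L_2(\mathcal{O})}$, i.e.\ an estimate with no decay in $\vert\zeta\vert$. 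That weaker bound would, after integration against $e^{-\zeta t}$ over $\gamma$, yield $\varepsilon^{1/2}t^{-1}$ instead of $\varepsilon^{1/2}t^{-3/4}$ in Theorem~3.3, which is the whole point of the two-parametric formulation. To recover $\vert\zeta\vert^{-1/4}$ you need the bilinear refinement $\Vert\mathbf{w}\Vert^2_{L_2((\partial\mathcal{O})_\varepsilon\cap\mathcal{O})}\lesssim\varepsilon\Vert\mathbf{w}\Vert_{H^1(\mathcal{O})}\Vert\mathbf{w}\Vert_{L_2(\mathcal{O})}$ combined with the energy bound $\Vert b(\mathbf{D})\mathbf{u}_0\Vert_{L_2(\mathcal{O})}\lesssim c(\phi)\vert\zeta\vert^{-1/2}\Vert\mathbf{F}\Vert_{L_2(\mathcal{O})}$ (from $\Vert(\mathcal{A}_D^0)^{1/2}(\mathcal{A}_D^0-\zeta I)^{-1}\Vert\leqslant\sup_{\mu\geqslant0}\mu^{1/2}\vert\mu-\zeta\vert^{-1}$); squaring out then gives $\varepsilon\, c(\phi)^2\vert\zeta\vert^{-1/2}\Vert\mathbf{F}\Vert^2$ and hence the stated $\varepsilon^{1/2}c(\phi)\vert\zeta\vert^{-1/4}\Vert\mathbf{F}\Vert$ for the boundary-layer contribution. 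Without making this interpolation explicit, your argument proves a genuinely weaker theorem. The remaining steps (the flux identity via \eqref{tilde g}, and the transfer to $\zeta\in\mathbb{C}\setminus[c_*,\infty)$ by a resolvent identity, producing $\rho_*(\zeta)$) are consistent with what the cited reference does.
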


\begin{remark}
\textnormal{In Theorems 2.2 and 2.3, the number
$c_*=\alpha _0\Vert g^{-1}\Vert ^{-1}_{L_\infty}(\textnormal{diam}\,\mathcal{O})^{-2}$
can be replaced by any common lower bound $c_\circ$
of the operators $\mathcal{A}_{D,\varepsilon}$ and $\mathcal{A}_D^0$. Let $\kappa >0$ be arbitrarily small number.
Due to the resolvent convergence, if $\varepsilon$ is sufficiently small, one can take $c_\circ = \lambda _1^0-\kappa$,
where $\lambda _1^0$ is the first eigenvalue of the operator $\mathcal{A}_D^0$.
Under such choice of $c_\circ$ the constants in estimates
(\ref{Th_Su_small_zeta}), (\ref{24a}), and (\ref{24b}) will depend on $\kappa$.
}
\end{remark}

\subsection*{2.3. The case where $\Lambda \in L_\infty$.} It turns out that,
under some additional assumption on the matrix-valued function $\Lambda(\mathbf{x})$, the smoothing operator
$S_\varepsilon$ in the corrector can be removed.

\begin{condition} Suppose that the $\Gamma$-periodic solution $\Lambda(\mathbf{x})$ of problem
\textnormal{(\ref{Lambda_problem})} is bounded: $\Lambda \in L_\infty$.
\end{condition}

We introduce the operator
\begin{equation}
\label{K_D^0(e)}
K_D^0(\varepsilon ;\zeta )=[\Lambda ^\varepsilon]b(\mathbf{D})(\mathcal{A}_D^0-\zeta I)^{-1}.
\end{equation}
By (\ref{A0^-1}), the operator $b(\mathbf{D})(\mathcal{A}_D^0-\zeta I)^{-1}$ is continuous from
$L_2(\mathcal{O};\mathbb{C}^n)$ to $H^1(\mathcal{O};\mathbb{C}^m)$.
Applying the extension operator $P_\mathcal{O}$ and Corollary~1.6, we check that under Condition~2.5 the operator
$[\Lambda ^\varepsilon]$ of multiplication by the matrix-valued function $\Lambda ^\varepsilon (\mathbf{x})$ is continuous from
$H^1(\mathcal{O};\mathbb{C}^m)$ to $H^1(\mathcal{O};\mathbb{C}^n)$. Hence, the operator (\ref{K_D^0(e)}) is a continuous mapping of $L_2(\mathcal{O};\mathbb{C}^n)$ to $H^1(\mathcal{O};\mathbb{C}^n)$.
The following result was obtained in \cite[Theorems~6.1~and~8.3]{Su14-2}.

\begin{theorem}[\cite{Su14-2}] Suppose that the assumptions of Theorem~\textnormal{2.2} are satisfied.
Suppose that the matrix-valued function $\Lambda(\mathbf{x})$ satisfies Condition~\textnormal{2.5}.
Let $K_D^0(\varepsilon ;\zeta)$ be the operator~\textnormal{(\ref{K_D^0(e)})}. Let $\widetilde{g}$ be the matrix-valued function~\textnormal{\eqref{tilde g}}.

\noindent $1^\circ$. Let $\zeta \in \mathbb{C}\setminus\mathbb{R}_+$, $\zeta =\vert \zeta \vert e^{i\phi}$, $\phi\in (0,2\pi)$, and
$\vert \zeta \vert \geqslant 1$. Then for $0<\varepsilon\leqslant\varepsilon _1$ we have
\begin{align*}
\begin{split}
\Vert &(\mathcal{A}_{D,\varepsilon}-\zeta I)^{-1}-(\mathcal{A}_D^0-\zeta I)^{-1}-\varepsilon K_D^0(\varepsilon;\zeta )\Vert _{L_2(\mathcal{O})\rightarrow H^1(\mathcal{O})}\\
&\leqslant
C_3c(\phi)^2\vert \zeta \vert ^{-1/4}\varepsilon ^{1/2}+C_6c(\phi)^4\varepsilon ,
\end{split}
\\
\begin{split}
\Vert &g^\varepsilon b(\mathbf{D})(\mathcal{A}_{D,\varepsilon}-\zeta I)^{-1}-\widetilde{g}^\varepsilon b(\mathbf{D}) (\mathcal{A}_D^0-\zeta I)^{-1}\Vert _{L_2(\mathcal{O})\rightarrow L_2(\mathcal{O})}\\
&\leqslant
\widetilde{C}_3c(\phi)^2\vert \zeta \vert ^{-1/4}\varepsilon ^{1/2}+\widetilde{C}_6c(\phi)^4\varepsilon
.
\end{split}
\end{align*}

\noindent $2^\circ$. Now, let $\zeta \in \mathbb{C}\setminus [c_*,\infty)$. Then for $0<\varepsilon\leqslant \varepsilon _1$ we have
\begin{align*}
&\Vert (\mathcal{A}_{D,\varepsilon}-\zeta I)^{-1}-(\mathcal{A}_D^0-\zeta I)^{-1}-\varepsilon K_D^0(\varepsilon;\zeta )\Vert _{L_2(\mathcal{O})\rightarrow H^1(\mathcal{O})}\leqslant
C_7\rho _*(\zeta)\varepsilon ^{1/2},\\
&\Vert g^\varepsilon b(\mathbf{D})(\mathcal{A}_{D,\varepsilon}-\zeta I)^{-1}-\widetilde{g}^\varepsilon  b(\mathbf{D})(\mathcal{A}_D^0-\zeta I)^{-1}\Vert _{L_2(\mathcal{O})\rightarrow L_2(\mathcal{O})}\leqslant \widetilde{C}_7\rho _*(\zeta)\varepsilon ^{1/2}.
\end{align*}
The constants $C_3$ and $\widetilde{C}_3$ are the same as in Theorem~\textnormal{2.3}. The constants $C_6$, $\widetilde{C}_6$, $C_7$, and $\widetilde{C}_7$ depend only on the initial data~\textnormal{\eqref{data}} and $\Vert \Lambda \Vert _{L_\infty}$.
\end{theorem}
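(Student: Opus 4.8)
\emph{Plan.} The idea is to derive Theorem 2.7 from Theorem 2.3 by estimating the difference of the two correctors $\varepsilon K_D(\varepsilon;\zeta)$ and $\varepsilon K_D^0(\varepsilon;\zeta)$ in the $(L_2\to H^1)$-norm, and, in parallel, the difference of the two versions of the ``effective flux''. The point is the algebraic observation that, since $P_\mathcal{O}$ is an extension operator, the function $b(\mathbf{D})P_\mathcal{O}(\mathcal{A}_D^0-\zeta I)^{-1}\mathbf{v}$ coincides on $\mathcal{O}$ with $b(\mathbf{D})(\mathcal{A}_D^0-\zeta I)^{-1}\mathbf{v}$ for every $\mathbf{v}\in L_2(\mathcal{O};\mathbb{C}^n)$; hence $R_\mathcal{O}[\Lambda^\varepsilon]b(\mathbf{D})P_\mathcal{O}(\mathcal{A}_D^0-\zeta I)^{-1}=K_D^0(\varepsilon;\zeta)$, so that
$$
K_D(\varepsilon;\zeta)-K_D^0(\varepsilon;\zeta)=R_\mathcal{O}[\Lambda^\varepsilon](S_\varepsilon-I)\,b(\mathbf{D})P_\mathcal{O}(\mathcal{A}_D^0-\zeta I)^{-1},
$$
and, with $\widetilde{g}$ in place of $\Lambda$, the difference of the two effective fluxes has exactly the same form. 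Thus everything reduces to estimating $\varepsilon[\Lambda^\varepsilon](S_\varepsilon-I)\boldsymbol\psi$ in $H^1(\mathbb{R}^d)$ and $\widetilde{g}^\varepsilon(S_\varepsilon-I)\boldsymbol\psi$ in $L_2(\mathbb{R}^d)$, where $\boldsymbol\psi:=b(\mathbf{D})P_\mathcal{O}(\mathcal{A}_D^0-\zeta I)^{-1}\mathbf{v}\in H^1(\mathbb{R}^d;\mathbb{C}^m)$.

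For the corrector I would apply Corollary~1.6 (which is available precisely because of Condition~2.5) with the function $(S_\varepsilon-I)\boldsymbol\psi$, obtaining
$$
\|\varepsilon[\Lambda^\varepsilon](S_\varepsilon-I)\boldsymbol\psi\|_{H^1(\mathbb{R}^d)}^2\leqslant 2\beta_1\|(S_\varepsilon-I)\boldsymbol\psi\|_{L_2(\mathbb{R}^d)}^2+2(1+\beta_2)\|\Lambda\|_{L_\infty}^2\varepsilon^2\|(S_\varepsilon-I)\boldsymbol\psi\|_{H^1(\mathbb{R}^d)}^2 .
$$
By Lemma~1.9 the first term is at most $2\beta_1\varepsilon^2r_1^2\|\mathbf{D}\boldsymbol\psi\|_{L_2}^2$, and since $\|S_\varepsilon\|_{H^1\to H^1}\leqslant1$ the second is at most $8(1+\beta_2)\|\Lambda\|_{L_\infty}^2\varepsilon^2\|\boldsymbol\psi\|_{H^1}^2$; hence $\|\varepsilon[\Lambda^\varepsilon](S_\varepsilon-I)\boldsymbol\psi\|_{H^1(\mathbb{R}^d)}\leqslant C\varepsilon\|\boldsymbol\psi\|_{H^1(\mathbb{R}^d)}$. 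For the flux I would first note that, writing $\widetilde{g}=g+g\,b(\mathbf{D})\Lambda$ and using $\Lambda\in L_\infty$ together with Lemma~1.5, one gets $\|\widetilde{g}^\varepsilon\mathbf{w}\|_{L_2(\mathbb{R}^d)}\leqslant C(\|\mathbf{w}\|_{L_2}+\varepsilon\|\mathbf{D}\mathbf{w}\|_{L_2})$ for $\mathbf{w}\in H^1(\mathbb{R}^d;\mathbb{C}^m)$; applied to $\mathbf{w}=(S_\varepsilon-I)\boldsymbol\psi$ (again via Lemma~1.9 and $\|S_\varepsilon\|_{H^1\to H^1}\leqslant1$) this yields $\|\widetilde{g}^\varepsilon(S_\varepsilon-I)\boldsymbol\psi\|_{L_2(\mathbb{R}^d)}\leqslant C\varepsilon\|\boldsymbol\psi\|_{H^1(\mathbb{R}^d)}$.

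It remains to control $\|\boldsymbol\psi\|_{H^1(\mathbb{R}^d)}$ with the correct dependence on $\zeta$. Using \eqref{PO} and $|b_l|\leqslant\alpha_1^{1/2}$ we obtain $\|\boldsymbol\psi\|_{H^1(\mathbb{R}^d)}\leqslant C\|(\mathcal{A}_D^0-\zeta I)^{-1}\mathbf{v}\|_{H^2(\mathcal{O})}$; to bound the latter I would use $(\mathcal{A}_D^0-\zeta I)^{-1}=(\mathcal{A}_D^0)^{-1}\bigl(I+\zeta(\mathcal{A}_D^0-\zeta I)^{-1}\bigr)$ (legitimate since $\mathcal{A}_D^0\geqslant c_*>0$), invoke \eqref{A0^-1}, and combine it with the selfadjoint resolvent bound $\|(\mathcal{A}_D^0-\zeta I)^{-1}\|_{L_2\to L_2}\leqslant\mathrm{dist}(\zeta,[c_*,\infty))^{-1}$; an elementary estimate gives $\mathrm{dist}(\zeta,[c_*,\infty))\geqslant|\zeta|\,c(\phi)^{-1}$ for $|\zeta|\geqslant1$, and the analogous bound with $\psi=\mathrm{arg}(\zeta-c_*)$, $|\zeta-c_*|$ and $\rho_*(\zeta)$ for $\zeta\in\mathbb{C}\setminus[c_*,\infty)$. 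This produces $\|(\mathcal{A}_D^0-\zeta I)^{-1}\|_{L_2\to H^2}\leqslant Cc(\phi)$ in the first regime and $\leqslant C\rho_*(\zeta)^{1/2}$ in the second. Altogether $\|\varepsilon(K_D-K_D^0)\|_{L_2\to H^1}$ and the corresponding flux difference are $\leqslant Cc(\phi)\varepsilon\leqslant Cc(\phi)^4\varepsilon$ in case $1^\circ$, and $\leqslant C\rho_*(\zeta)^{1/2}\varepsilon\leqslant C\rho_*(\zeta)\varepsilon^{1/2}$ in case $2^\circ$ (since $0<\varepsilon\leqslant1\leqslant\rho_*(\zeta)$). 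Adding these to the estimates of Theorem~2.3 (with enlarged constants $C_6,\widetilde C_6,C_7,\widetilde C_7$ absorbing $C_4,\widetilde C_4$ and the new terms) gives all four inequalities of Theorem~2.7. I expect the main obstacle to be bookkeeping rather than conceptual: one must make sure that $\widetilde{g}^\varepsilon$ — only $L_2(\Omega)$-periodic, not bounded — still acts boundedly on the relevant $H^1$-functions (this is exactly where Lemma~1.5 and Condition~2.5 are used), and that the $\zeta$-dependence generated by the resolvent of $\mathcal{A}_D^0$ does not exceed the powers of $c(\phi)$, resp. $\rho_*(\zeta)$, already present in Theorem~2.3.
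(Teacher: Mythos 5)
Your proposal is correct, but note that the paper does not prove this statement at all: it is quoted verbatim from [Su8] (\cite[Theorems 6.1 and 8.3]{Su14-2}), so there is nothing in the text to compare against line by line. What you have written is a self-contained derivation of Theorem 2.6 from Theorem 2.3, and it is sound. The algebraic identity $K_D(\varepsilon;\zeta)-K_D^0(\varepsilon;\zeta)=R_\mathcal{O}[\Lambda^\eps](S_\eps-I)b(\D)P_\mathcal{O}(\mathcal{A}_D^0-\zeta I)^{-1}$ is exactly right (since $b(\D)P_\mathcal{O}u=b(\D)u$ on $\mathcal{O}$), Corollary 1.6 applied to $(S_\eps-I)\bpsi$ together with the smoothing estimate for $S_\eps-I$ (that is Lemma 1.8, not ``Lemma 1.9'' -- a harmless misnumbering) gives the $O(\eps)\|\bpsi\|_{H^1}$ bound for the corrector difference, and the decomposition $\wt g=g(b(\D)\Lambda+\1_m)$ combined with Lemma 1.5 and Condition 2.5 handles the flux difference. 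Your resolvent bookkeeping also checks out: $\mathrm{dist}(\zeta,[c_*,\infty))\geqslant|\zeta|c(\phi)^{-1}$ for $|\zeta|\geqslant1$ and $\geqslant|\zeta-c_*|c(\psi)^{-1}$ in general, which via $(\mathcal{A}_D^0-\zeta I)^{-1}=(\mathcal{A}_D^0)^{-1}(I+\zeta(\mathcal{A}_D^0-\zeta I)^{-1})$ and \eqref{A0^-1} yields $\|(\mathcal{A}_D^0-\zeta I)^{-1}\|_{L_2\to H^2}\leqslant Cc(\phi)$, resp.\ $\leqslant C\rho_*(\zeta)^{1/2}$ (using $\rho_*\geqslant1$), and the absorptions $c(\phi)\eps\leqslant c(\phi)^4\eps$ and $\rho_*^{1/2}\eps\leqslant\rho_*\eps^{1/2}$ are legitimate and correctly leave $C_3$, $\widetilde C_3$ untouched. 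It is worth observing that your strategy is precisely the one the paper itself deploys in the Appendix (\S 9, proof of Lemma 3.8) for the analogous time-dependent correctors; the difference is that there Condition 2.5 is not assumed, so the multiplicative bound for $[\Lambda^\eps]$ costs $H^{d/2}$-regularity of the effective solution (hence the extra boundary smoothness and the $t^{-d/4-1/2}$ factor), whereas under Condition 2.5 the $H^2$-bound \eqref{A0^-1} suffices and no loss in $\zeta$ occurs.
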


In some cases Condition 2.5 is valid automatically. The next statement was proved in \cite[Lemma~8.7]{BSu06};
see also Remark~8.8 in \cite{BSu06}.

\begin{proposition} Condition \textnormal{2.5} is a fortiori valid if at least one of the following
assumptions is fulfilled:

\noindent $1^\circ$ dimension does not exceed two, i.~e., $d\leqslant 2;$

\noindent $2^\circ$
dimension $d$ is arbitrary, and the operator has the form
$\mathcal{A}_{D,\varepsilon }=\mathbf{D}^*g^\varepsilon (\mathbf{x})\mathbf{D}$, where the matrix $g(\mathbf{x})$
has real entries\textnormal{;}

\noindent $3^\circ$ dimension $d$ is arbitrary, and the effective matrix $g^0$ satisfies $g^0=\underline{g}$, i.~e.,
relations \textnormal{(\ref{underline-g})} hold.
\end{proposition}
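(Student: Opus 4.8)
We must prove that, under each of the hypotheses $1^\circ$--$3^\circ$, the $\Gamma$-periodic solution $\Lambda(\mathbf x)$ of problem $(\ref{Lambda_problem})$ belongs to $L_\infty$; since $\Lambda$ is periodic, it suffices to establish $\Lambda\in L_\infty(\Omega)$ in each case. The starting point is that the columns $\Lambda_k\in\widetilde H^1(\Omega;\mathbb C^n)$, $k=1,\dots,m$, of $\Lambda$ satisfy
\begin{equation*}
b(\mathbf D)^* g(\mathbf x)\, b(\mathbf D)\Lambda_k(\mathbf x) = - b(\mathbf D)^* \mathbf g_k(\mathbf x),\qquad \int_\Omega \Lambda_k\,d\mathbf x =0,
\end{equation*}
where $\mathbf g_k$ is the $k$-th column of $g$ (this is $(\ref{Lambda_problem})$ written columnwise, since the columns of $\mathbf 1_m$ are the standard basis vectors of $\mathbb C^m$). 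Thus each $\Lambda_k$ is a periodic weak solution of a strongly elliptic second order system with bounded measurable coefficients whose right-hand side is the ,,divergence'' of the bounded function $-\mathbf g_k$; boundedness of $\Lambda$ will always follow from an $L_\infty$ (or Hölder) bound for periodic solutions of such a system.

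\emph{Cases $1^\circ$ and $2^\circ$ --- regularity theory.} For $d=1$ the embedding $\widetilde H^1(\Omega)\hookrightarrow C(\overline\Omega)$ already gives $\Lambda\in L_\infty$. For $d=2$ the plan is to invoke the self-improving integrability estimate of Meyers type: combining the Caccioppoli inequality for the system $b(\mathbf D)^* g\, b(\mathbf D)$ (whose coercivity rests on $(\ref{<b^*b<})$) with the Gehring lemma, one obtains an exponent $p>2$, depending only on the initial data $(\ref{data})$, with $\mathbf D\Lambda_k\in L_p(\Omega)$, hence $\Lambda_k\in W^{1,p}(\Omega)$, $p>2=d$; the Sobolev embedding $W^{1,p}(\Omega)\hookrightarrow C^{0,1-2/p}(\overline\Omega)$ then yields $\Lambda\in L_\infty$. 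In case $2^\circ$ one has $b(\mathbf D)=\mathbf D$ and $g$ real, so the equation for $\Lambda_k$ is of the form $-\operatorname{div}\!\big(g(\mathbf x)\nabla\Lambda_k\big)=\operatorname{div}\mathbf g_k$ with a real, bounded, uniformly elliptic coefficient matrix and a bounded right-hand side in divergence form; the plan is to apply the De Giorgi--Nash--Moser local boundedness estimate (Moser iteration), which is available precisely for such scalar real equations, and then to use periodicity. Reality of $g$ is what makes the passage to scalar real equations legitimate; for general complex systems in $d\geqslant3$ this step, and the conclusion, may fail, which is why only these cases occur.

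\emph{Case $3^\circ$ --- an explicit formula for $\Lambda$.} By the characterization $(\ref{underline-g})$ of the identity $g^0=\underline g$, the columns of $g^{-1}$ have the form $\mathbf l_k=\mathbf l_k^0+b(\mathbf D)\mathbf w_k$; collecting these into matrices, $g^{-1}=L^0+b(\mathbf D)W$ with $L^0$ constant and $W\in\widetilde H^1(\Omega)$. Averaging over $\Omega$ and using periodicity of $W$ gives $L^0=\overline{g^{-1}}=\underline g^{\,-1}$. Put $\Lambda':=W\underline g-|\Omega|^{-1}\int_\Omega W\underline g\,d\mathbf x$; then $b(\mathbf D)\Lambda'=(b(\mathbf D)W)\underline g=(g^{-1}-L^0)\underline g=g^{-1}\underline g-\mathbf 1_m$, whence $b(\mathbf D)^* g\,(b(\mathbf D)\Lambda'+\mathbf 1_m)=b(\mathbf D)^*\underline g=0$ and $\int_\Omega\Lambda'\,d\mathbf x=0$. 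By uniqueness of the zero-mean $\Gamma$-periodic solution of $(\ref{Lambda_problem})$ (the kernel of $b(\mathbf D)^* g\, b(\mathbf D)$ on $\widetilde H^1$ being the constants, by $(\ref{<b^*b<})$) we get $\Lambda=\Lambda'$, and moreover the matrix-valued function $(\ref{tilde g})$ satisfies $\widetilde g=g\,(b(\mathbf D)\Lambda+\mathbf 1_m)\equiv\underline g$, a constant; in particular $b(\mathbf D)\Lambda=g^{-1}\underline g-\mathbf 1_m\in L_\infty$. To pass from this to $\Lambda\in L_\infty$, note that $b(\mathbf D)^* b(\mathbf D)$ is a constant-coefficient strongly elliptic operator (by $(\ref{<b^*b<})$ its symbol is $\geqslant\alpha_0|\boldsymbol\xi|^2\mathbf 1_n$) and $b(\mathbf D)^* b(\mathbf D)\Lambda_k=b(\mathbf D)^*\big(b(\mathbf D)\Lambda_k\big)$ is the divergence of an $L_p(\Omega)$ function for every $p<\infty$; elliptic regularity on the torus (Fourier multipliers / Calderón--Zygmund) gives $\Lambda_k\in W^{1,p}(\Omega)$ for all $p<\infty$, and Sobolev embedding with $p>d$ gives $\Lambda\in L_\infty$.

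\emph{Main obstacle.} The only non-elementary ingredient is the higher-integrability estimate needed in case $1^\circ$ for $d=2$: establishing the reverse Hölder inequality for $\mathbf D\Lambda_k$ from the Caccioppoli inequality and Gehring's lemma, with the gained exponent $p>2$ controlled by the ellipticity constants alone. Case $2^\circ$ is then a routine application of the De Giorgi--Nash--Moser theory once the reduction to scalar real divergence-form equations is carried out, and case $3^\circ$ is purely algebraic once $(\ref{underline-g})$ and the uniqueness of the corrector are in hand.
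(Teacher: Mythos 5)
Your argument is correct; note that the paper itself gives no proof of this proposition but simply quotes it from [BSu2, Lemma~8.7], and your reconstruction follows essentially the same route as that reference: Sobolev embedding and Meyers-type higher integrability for $d\leqslant 2$, De Giorgi--Nash--Moser for real scalar divergence-form equations in case $2^\circ$, and in case $3^\circ$ the identification (via (\ref{underline-g}) and uniqueness of the zero-mean periodic corrector) of $\widetilde g$ with the constant matrix $\underline g$, followed by the constant-coefficient multiplier argument giving $\Lambda\in W^{1,p}$ for all $p<\infty$. All the individual steps (Caccioppoli plus Gehring for the system in $d=2$, the reduction to a real scalar equation with divergence-form data in $L_p$, $p>d$, and the $L_p$-boundedness of the multiplier $\xi_j\,(b(\boldsymbol\xi)^*b(\boldsymbol\xi))^{-1}b(\boldsymbol\xi)^*$) are sound.
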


\begin{remark}
\textnormal{
Under the assumptions of assertion $2^\circ$ the norm $\|\Lambda\|_{L_\infty}$ is controlled in terms of $d$,
$\|g\|_{L_\infty}$, $\|g^{-1}\|_{L_\infty}$, and $\Gamma$.
Under the assumptions of assertion $3^\circ$ the norm $\|\Lambda\|_{L_\infty}$ is controlled in terms of $d$, $m$, $n$, $\alpha_0$, $\alpha_1$,
$\|g\|_{L_\infty}$, $\|g^{-1}\|_{L_\infty}$, and the parameters of the lattice $\Gamma$.}
\end{remark}

\subsection*{2.4. Estimates in a strictly interior subdomain}
In a strictly interior subdomain $\mathcal{O}'$ of the domain $\mathcal{O}$ it is possible to improve error estimates in $H^1$.
The following result was proved in Theorems~7.1 and 8.6 from \cite{Su14-2}.

\begin{theorem}[\cite{Su14-2}]
Suppose that the assumptions of Theorem~\textnormal{2.3} are satisfied. Let $\mathcal{O}'$ be a strictly interior subdomain of the domain $\mathcal{O}$.
Denote $\delta :=\textnormal{dist}\,\lbrace \mathcal{O}';\partial\mathcal{O}\rbrace$.

\noindent $1^\circ$. Let $\zeta \in \mathbb{C}\setminus\mathbb{R}_+$ and $\vert \zeta \vert \geqslant 1$. Then for
$0<\varepsilon\leqslant\varepsilon _1$ we have
\begin{align*}
\begin{split}
\Vert &(\mathcal{A}_{D,\varepsilon}-\zeta I)^{-1}-(\mathcal{A}_D^0-\zeta I)^{-1}-\varepsilon K_D(\varepsilon;\zeta )\Vert _{L_2(\mathcal{O})\rightarrow H^1(\mathcal{O}')}\\
&\leqslant
(C_8\delta ^{-1}+C_9)c(\phi)^6\varepsilon ,
\end{split}
\\
\begin{split}
\Vert &g^\varepsilon b(\mathbf{D})(\mathcal{A}_{D,\varepsilon}-\zeta I)^{-1}-\widetilde{g}^\varepsilon S_\varepsilon b(\mathbf{D}) P_\mathcal{O}(\mathcal{A}_D^0-\zeta I)^{-1}\Vert _{L_2(\mathcal{O})\rightarrow L_2(\mathcal{O}')}\\
&\leqslant
(\widetilde{C}_8 \delta ^{-1}+\widetilde{C}_9 )c(\phi)^6\varepsilon.
\end{split}
\end{align*}

\noindent $2^\circ$. Let $\zeta \in \mathbb{C}\setminus [c_*,\infty)$. Denote
$\widehat{\rho}(\zeta):= c(\psi)\rho _*(\zeta)+c(\psi)^{5/2}\rho _*(\zeta )^{3/4}$.
For $0<\varepsilon\leqslant \varepsilon _1$ we have
\begin{align*}
\begin{split}
\Vert &(\mathcal{A}_{D,\varepsilon}-\zeta I)^{-1}-(\mathcal{A}_D^0-\zeta I)^{-1}-\varepsilon K_D(\varepsilon;\zeta )\Vert _{L_2(\mathcal{O})\rightarrow H^1(\mathcal{O}')}\\
&\leqslant
\left( C_{10}\delta ^{-1}  \widehat{\rho}(\zeta)
+C_{11}c(\psi)^{1/2}\rho _*(\zeta)^{5/4}\right)\varepsilon ,
\end{split}\\
\begin{split}
\Vert &g^\varepsilon b(\mathbf{D})(\mathcal{A}_{D,\varepsilon}-\zeta I)^{-1}-\widetilde{g}^\varepsilon S_\varepsilon b(\mathbf{D})P_\mathcal{O}(\mathcal{A}_D^0-\zeta I)^{-1}\Vert _{L_2(\mathcal{O})\rightarrow L_2(\mathcal{O}')}\\
&\leqslant
\left( \widetilde{C}_{10} \delta ^{-1} \widehat{\rho}(\zeta)
+\widetilde{C}_{11}c(\psi)^{1/2}\rho _*(\zeta)^{5/4}\right)\varepsilon.
\end{split}
\end{align*}
The constants $C_8$, $C_9$, $\widetilde{C}_8$, $\widetilde{C}_9$, $C_{10}$, $C_{11}$, $\widetilde{C}_{10}$, and
$\widetilde{C}_{11}$ depend only on the initial data~\textnormal{\eqref{data}}.
\end{theorem}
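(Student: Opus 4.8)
The plan is to derive the interior estimates from the global $(L_2\to L_2)$- and $(L_2\to H^1)$-resolvent estimates of Theorems~2.2 and~2.3 by a Caccioppoli (interior regularity) argument, exploiting that the only obstruction to the optimal order in $H^1(\mathcal{O})$ is a boundary-layer term living in an $O(\varepsilon)$-neighbourhood of $\partial\mathcal{O}$. Abbreviate $\mathbf{R}_\varepsilon:=\mathbf{u}_\varepsilon-\mathbf{u}_0-\varepsilon K_D(\varepsilon;\zeta)\mathbf{F}$, where $\mathbf{u}_\varepsilon:=(\mathcal{A}_{D,\varepsilon}-\zeta I)^{-1}\mathbf{F}$ and $\mathbf{u}_0:=(\mathcal{A}_D^0-\zeta I)^{-1}\mathbf{F}$; on $\mathcal{O}$ one has $K_D(\varepsilon;\zeta)\mathbf{F}=[\Lambda^\varepsilon]S_\varepsilon b(\mathbf{D})\mathbf{u}_0$. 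The first ingredient is the $L_2$-smallness of $\mathbf{R}_\varepsilon$: by Theorem~2.2 and the $(L_2\to L_2)$-boundedness of $K_D(\varepsilon;\zeta)$ --- which follows from Lemma~1.7 applied to $[\Lambda^\varepsilon]S_\varepsilon$, the bound \eqref{M}, and \eqref{A0^-1} together with standard resolvent estimates for $\mathcal{A}_D^0$ --- one has $\|\mathbf{R}_\varepsilon\|_{L_2(\mathcal{O})}\le C\,(\text{$\zeta$-factor})\,\varepsilon\,\|\mathbf{F}\|_{L_2(\mathcal{O})}$, with $\zeta$-factor equal to $c(\phi)^5$ (decaying like $|\zeta|^{-1/2}$ for $|\zeta|\ge1$) in part~$1^\circ$ and to $\rho_*(\zeta)$ in part~$2^\circ$.

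The second ingredient is that $\mathbf{R}_\varepsilon$ satisfies a good equation in the interior. From the construction underlying Theorems~2.2--2.3, in which $\mathbf{u}_\varepsilon$ is approximated in $H^1_0(\mathcal{O};\mathbb{C}^n)$ by $\mathbf{u}_0+\varepsilon[\Lambda^\varepsilon]S_\varepsilon b(\mathbf{D})\mathbf{u}_0-\mathbf{w}_\varepsilon$ with a boundary-layer term $\mathbf{w}_\varepsilon$ solving $(\mathcal{A}_{D,\varepsilon}-\zeta I)\mathbf{w}_\varepsilon=0$ in $\mathcal{O}$ and $\mathbf{w}_\varepsilon\big|_{\partial\mathcal{O}}=\varepsilon\big([\Lambda^\varepsilon]S_\varepsilon b(\mathbf{D})\mathbf{u}_0\big)\big|_{\partial\mathcal{O}}$, one obtains that $\mathbf{R}_\varepsilon$ satisfies, in $\mathcal{O}$ and in the weak (distributional) sense,
\begin{equation*}
(\mathcal{A}_{D,\varepsilon}-\zeta I)\,\mathbf{R}_\varepsilon=b(\mathbf{D})^*\mathbf{q}_\varepsilon+\mathbf{h}_\varepsilon,\qquad \|\mathbf{q}_\varepsilon\|_{L_2(\mathcal{O})}+\|\mathbf{h}_\varepsilon\|_{L_2(\mathcal{O})}\le C\,(\text{$\zeta$-factor})\,\varepsilon\,\|\mathbf{F}\|_{L_2(\mathcal{O})},
\end{equation*}
the oscillating matrix $\Lambda^\varepsilon$ entering only through $b(\mathbf{D})^*\widetilde{g}^\varepsilon=b(\mathbf{D})^*g^\varepsilon(b(\mathbf{D})\Lambda^\varepsilon+\mathbf{1}_m)$ and through $S_\varepsilon$, which together keep $\mathbf{q}_\varepsilon,\mathbf{h}_\varepsilon$ of order $\varepsilon$ (as in the $\mathbb{R}^d$-theory, cf. \cite{BSu}; see also Lemmas~1.7, 1.8). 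Note that $\mathbf{R}_\varepsilon$ is \emph{not} in $H^1_0(\mathcal{O};\mathbb{C}^n)$, its trace on $\partial\mathcal{O}$ being $-\varepsilon(K_D(\varepsilon;\zeta)\mathbf{F})\big|_{\partial\mathcal{O}}$; but the argument below is purely interior and insensitive to boundary values.

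Now fix a cutoff $\chi\in C_0^\infty(\mathcal{O})$ with $\chi\equiv1$ on $\mathcal{O}'$, $\operatorname{supp}\chi\subset\{\mathbf{x}:\operatorname{dist}\{\mathbf{x};\mathcal{O}'\}<\delta/2\}$, and $|\nabla\chi|\le C\delta^{-1}$. Testing the interior equation against $\chi^2\overline{\mathbf{R}_\varepsilon}$, reducing by a standard commutator manipulation to $a_{D,\varepsilon}[\chi\mathbf{R}_\varepsilon,\chi\mathbf{R}_\varepsilon]$ up to lower-order terms, using the lower bound in \eqref{<a_D,e<} (equivalently, the coercivity \eqref{<b^*b<} of $b(\mathbf{D})$, which needs no regularity of $g^\varepsilon$), and applying the Cauchy--Schwarz and Young inequalities to absorb the commutator and the $b(\mathbf{D})^*\mathbf{q}_\varepsilon$ contributions, one obtains a Caccioppoli-type bound
\begin{equation*}
\|\mathbf{D}\mathbf{R}_\varepsilon\|_{L_2(\mathcal{O}')}\le C\bigl(\delta^{-1}+|\zeta|^{1/2}+1\bigr)\|\mathbf{R}_\varepsilon\|_{L_2(\mathcal{O})}+C\delta^{-1}\bigl(\|\mathbf{q}_\varepsilon\|_{L_2(\mathcal{O})}+\|\mathbf{h}_\varepsilon\|_{L_2(\mathcal{O})}\bigr)
\end{equation*}
(the $|\zeta|$-term coming from $\zeta\|\chi\mathbf{R}_\varepsilon\|_{L_2}^2$ and, when $\operatorname{Re}\zeta>0$, treated using also the imaginary part of the identity). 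Since the $H^1(\mathcal{O}')$-norm of $\mathbf{R}_\varepsilon$ is $\|\mathbf{R}_\varepsilon\|_{L_2(\mathcal{O}')}+\|\mathbf{D}\mathbf{R}_\varepsilon\|_{L_2(\mathcal{O}')}$ and $\|\mathbf{R}_\varepsilon\|_{L_2(\mathcal{O}')}\le\|\mathbf{R}_\varepsilon\|_{L_2(\mathcal{O})}$, substituting the two ingredients above gives the asserted $(L_2(\mathcal{O})\to H^1(\mathcal{O}'))$-estimate. The flux estimate follows by the same route: on $\mathcal{O}'$ one has $|g^\varepsilon b(\mathbf{D})\mathbf{R}_\varepsilon|\le\|g\|_{L_\infty}|b(\mathbf{D})\mathbf{R}_\varepsilon|$, so the Caccioppoli bound controls the leading part, while the remaining part is exactly the bulk term already estimated in \eqref{24b}.

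The genuine difficulty is the bookkeeping of the dependence on $\zeta$, namely producing the factors $c(\phi)^6$ in part~$1^\circ$ and $\widehat\rho(\zeta)=c(\psi)\rho_*(\zeta)+c(\psi)^{5/2}\rho_*(\zeta)^{3/4}$, $c(\psi)^{1/2}\rho_*(\zeta)^{5/4}$ in part~$2^\circ$. For $|\zeta|\ge1$ the $|\zeta|^{1/2}$ loss in the Caccioppoli bound is compensated by the $|\zeta|^{-1/2}$ decay of the $L_2$-smallness, and the step from $c(\phi)^5$ to $c(\phi)^6$ comes from the $\mathcal{A}_D^0$-resolvent estimates used in bounding $\mathbf{q}_\varepsilon$ and the corrector. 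For $\zeta$ near $c_*$ the terms multiplying $\delta^{-1}$ have to be estimated by interpolating the global $L_2(\mathcal{O})$-bound (order $\rho_*(\zeta)\varepsilon$, Theorem~2.2) with the global $H^1(\mathcal{O})$-bound (order $\rho_*(\zeta)\varepsilon^{1/2}$, estimate \eqref{24a}) via $\|\cdot\|_{L_2}^{1/2}\|\cdot\|_{H^1}^{1/2}$, while the trace theorem applied to $b(\mathbf{D})\mathbf{u}_0$ (entering through the boundary trace of $\mathbf{w}_\varepsilon$) forces a further interpolation between $\|\mathbf{u}_0\|_{H^1(\mathcal{O})}$ and $\|\mathbf{u}_0\|_{H^2(\mathcal{O})}$, hence a fractional power of the $\mathcal{A}_D^0$-resolvent norm; these two interpolations are what produce the half-integer exponents $3/4$ and $5/4$ and the extra powers of $c(\psi)$. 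Uniformity of all constants in $\varepsilon$ rests on using only \eqref{<a_D,e<}/\eqref{<b^*b<} in the Caccioppoli step and on the mapping properties of $[\Lambda^\varepsilon]S_\varepsilon$ from Lemma~1.7 and Corollary~1.6; the remaining computations are routine adaptations of the whole-space and bounded-domain arguments.
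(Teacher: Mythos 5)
This statement is Theorem~2.9 of the paper, and the paper itself contains no proof of it: it is imported verbatim from \cite{Su14-2} (``The following result was proved in Theorems~7.1 and 8.6 from \cite{Su14-2}''), so there is nothing in this text to compare your argument against line by line. That said, your overall strategy --- form the remainder $\mathbf{R}_\eps$, observe that it satisfies an equation whose right-hand side is $O(\eps)$ in a ``divergence plus $L_2$'' sense, and upgrade the global $(L_2\to L_2)$ smallness of Theorem~2.2 to an interior $H^1$ bound via a Caccioppoli inequality with a cutoff adapted to $\delta$ --- is the standard route and is the one used in the cited source (there the Caccioppoli step is applied to the boundary-layer term, which solves the homogeneous equation in $\mathcal{O}$, rather than to $\mathbf{R}_\eps$ itself; the two variants are essentially equivalent).

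There are, however, two concrete gaps. First, your claim that ``the $|\zeta|^{1/2}$ loss in the Caccioppoli bound is compensated by the $|\zeta|^{-1/2}$ decay of the $L_2$-smallness'' fails as stated: the bound of Theorem~2.2($1^\circ$) is $C_1c(\phi)^5(\eps|\zeta|^{-1/2}+\eps^2)$, and the $\eps^2$ term does not decay in $|\zeta|$, so after multiplication by $|\zeta|^{1/2}$ one is left with $\eps^2|\zeta|^{1/2}$, which exceeds $\eps$ once $|\zeta|>\eps^{-2}$; the same problem arises from the source term $\zeta\,\eps K_D(\eps;\zeta)\mathbf{F}$, whose $L_2$-norm is of order $\eps|\zeta|^{1/2}$. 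The regime $|\zeta|>\eps^{-2}$ must be disposed of separately by the trivial resolvent bounds $\|(\mathcal{A}_{D,\eps}-\zeta I)^{-1}\|_{L_2\to H^1}\leqslant C c(\phi)|\zeta|^{-1/2}$ (and likewise for $\mathcal{A}_D^0$ and the corrector), under which each term of $\mathbf{R}_\eps$ is already $O(\eps)$ in $H^1(\mathcal{O})$; you never mention this. Second, in part $2^\circ$ the one concrete mechanism you propose is inconsistent with the statement you are proving: interpolating the $L_2$ bound of order $\rho_*(\zeta)\eps$ from \eqref{Th_Su_small_zeta} with the $H^1$ bound of order $\rho_*(\zeta)\eps^{1/2}$ from \eqref{24a} via $\|\cdot\|_{L_2}^{1/2}\|\cdot\|_{H^1}^{1/2}$ produces order $\rho_*(\zeta)\eps^{3/4}$, not the asserted $O(\eps)$. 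The fractional powers $\rho_*(\zeta)^{3/4}$ and $\rho_*(\zeta)^{5/4}$ in $\widehat{\rho}(\zeta)$ come from interpolation in the $\zeta$-dependence of the boundary-layer data (through the standard estimate of an $H^1$ function on an $\eps$-neighbourhood of $\partial\mathcal{O}$, $\|v\|^2_{L_2((\partial\mathcal{O})_\eps)}\leqslant C\eps\|v\|_{H^1}\|v\|_{L_2}$), not from lowering the power of $\eps$; as written, this part of your argument is asserted rather than derived.
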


In the case where $\Lambda \in L_\infty$ a similar result holds with a simpler corrector
$K_D^0(\varepsilon;\zeta)$. This was shown in Theorems 7.2 and 8.7 from \cite{Su14-2}.

\begin{theorem}[\cite{Su14-2}]
Suppose that the assumptions of Theorem \textnormal{2.9} are satisfied.
Suppose that the matrix-valued function $\Lambda(\mathbf{x})$ satisfies Condition \textnormal{2.5}.
Let $K_D^0(\varepsilon ;\zeta)$ be the operator \eqref{K_D^0(e)}.

\noindent $1^\circ$. Let $\zeta \in \mathbb{C}\setminus\mathbb{R}_+$ and $\vert \zeta \vert \geqslant 1$.
Then for $0<\varepsilon\leqslant\varepsilon _1$ we have
\begin{align*}
\begin{split}
\Vert &(\mathcal{A}_{D,\varepsilon}-\zeta I)^{-1}-(\mathcal{A}_D^0-\zeta I)^{-1}-\varepsilon K_D^0(\varepsilon;\zeta )\Vert _{L_2(\mathcal{O})\rightarrow H^1(\mathcal{O}')}\\
&\leqslant
( {C}_8\delta ^{-1}+\check{C}_9)c(\phi)^6\varepsilon ,
\end{split}
\\
\begin{split}
\Vert &g^\varepsilon b(\mathbf{D})(\mathcal{A}_{D,\varepsilon}-\zeta I)^{-1}-\widetilde{g}^\varepsilon b(\mathbf{D}) (\mathcal{A}_D^0-\zeta I)^{-1}\Vert _{L_2(\mathcal{O})\rightarrow L_2(\mathcal{O}')}\\
&\leqslant
(\widetilde{C}_8 \delta ^{-1}+ \widehat{C}_9)c(\phi)^6\varepsilon.
\end{split}
\end{align*}

\noindent $2^\circ$. For $\zeta \in \mathbb{C}\setminus [c_*,\infty)$  and $0<\varepsilon\leqslant \varepsilon _1$ we have
\begin{align*}
\begin{split}
\Vert &(\mathcal{A}_{D,\varepsilon}-\zeta I)^{-1}-(\mathcal{A}_D^0-\zeta I)^{-1}-\varepsilon K_D^0(\varepsilon;\zeta )\Vert _{L_2(\mathcal{O})\rightarrow H^1(\mathcal{O}')}\\
&\leqslant
\left( C_{10}\delta ^{-1}  \widehat{\rho}(\zeta)
+\check{C}_{11}c(\psi)^{1/2}\rho _*(\zeta)^{5/4}\right)\varepsilon ,
\end{split}\\
\begin{split}
\Vert &g^\varepsilon b(\mathbf{D})(\mathcal{A}_{D,\varepsilon}-\zeta I)^{-1}-\widetilde{g}^\varepsilon  b(\mathbf{D})(\mathcal{A}_D^0-\zeta I)^{-1}\Vert _{L_2(\mathcal{O})\rightarrow L_2(\mathcal{O}')}\\
&\leqslant
\left( \widetilde{C}_{10}\delta ^{-1} \widehat{\rho}(\zeta)
+\widehat{C}_{11}c(\psi)^{1/2}\rho _*(\zeta)^{5/4}\right)\varepsilon .
\end{split}
\end{align*}
The constants $C_8$, $\widetilde{C}_8$, ${C}_{10}$, and $\widetilde{C}_{10}$ are the same as in Theorem \textnormal{2.9}.
The constants $\check{C}_9$, $\widehat{C}_9$, $\check{C}_{11}$, and $\widehat{C}_{11}$ depend only on the initial data \textnormal{\eqref{data}}
and $\Vert \Lambda \Vert _{L_\infty}$.
\end{theorem}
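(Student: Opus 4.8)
The plan is to imitate the passage from Theorem~2.3 to Theorem~2.6, but to start from the interior resolvent estimates of Theorem~2.9 instead of from the global ones. Thus it is enough to compare the two correctors $K_D(\varepsilon;\zeta)$ and $K_D^0(\varepsilon;\zeta)$, and the two flux approximants $\widetilde{g}^\varepsilon S_\varepsilon b(\mathbf{D})P_\mathcal{O}(\mathcal{A}_D^0-\zeta I)^{-1}$ and $\widetilde{g}^\varepsilon b(\mathbf{D})(\mathcal{A}_D^0-\zeta I)^{-1}$: once one checks that, under Condition~2.5, the corresponding differences are $O(\varepsilon)$ in the $(L_2(\mathcal{O})\to H^1(\mathcal{O}))$- and $(L_2(\mathcal{O})\to L_2(\mathcal{O}))$-operator norms, with $\zeta$-dependence no worse than the right-hand sides of Theorem~2.9, the four estimates of the theorem follow by the triangle inequality after restriction of the norms to $\mathcal{O}'$ (which only makes them smaller). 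Note that these discrepancy estimates are in fact valid on all of $\mathcal{O}$, not only on $\mathcal{O}'$, so that no information on $\delta$ enters them; the factor $\delta^{-1}$ in the conclusion comes entirely from Theorem~2.9.

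For the correctors I would argue as follows. Since $P_\mathcal{O}$ is an extension operator and $b(\mathbf{D})$ is a first order differential operator, $(b(\mathbf{D})P_\mathcal{O}\mathbf{u})|_{\mathcal{O}}=b(\mathbf{D})\mathbf{u}$, hence on $\mathcal{O}$
\[
K_D(\varepsilon;\zeta)-K_D^0(\varepsilon;\zeta)=R_\mathcal{O}[\Lambda^\varepsilon](S_\varepsilon-I)b(\mathbf{D})P_\mathcal{O}(\mathcal{A}_D^0-\zeta I)^{-1}.
\]
Given $\mathbf{f}\in L_2(\mathcal{O};\mathbb{C}^n)$, put $\mathbf{v}:=(S_\varepsilon-I)b(\mathbf{D})P_\mathcal{O}(\mathcal{A}_D^0-\zeta I)^{-1}\mathbf{f}\in H^1(\mathbb{R}^d;\mathbb{C}^m)$. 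Under Condition~2.5, Corollary~1.6 gives $\varepsilon\Vert[\Lambda^\varepsilon]\mathbf{v}\Vert_{H^1(\mathbb{R}^d)}\leqslant C(\Vert\mathbf{v}\Vert_{L_2(\mathbb{R}^d)}+\varepsilon\Vert\mathbf{v}\Vert_{H^1(\mathbb{R}^d)})$, and by Lemma~1.8 together with $\Vert S_\varepsilon\Vert_{H^1\to H^1}\leqslant1$ the right-hand side does not exceed $C'\varepsilon\Vert b(\mathbf{D})P_\mathcal{O}(\mathcal{A}_D^0-\zeta I)^{-1}\mathbf{f}\Vert_{H^1(\mathbb{R}^d)}$; here $C$ and $C'$ depend only on the initial data and $\Vert\Lambda\Vert_{L_\infty}$. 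Using \eqref{PO}, the boundedness of $b(\mathbf{D})$ from $H^2(\mathbb{R}^d)$ to $H^1(\mathbb{R}^d)$, the identity $\mathcal{A}_D^0(\mathcal{A}_D^0-\zeta I)^{-1}=I+\zeta(\mathcal{A}_D^0-\zeta I)^{-1}$, the regularity bound \eqref{A0^-1}, and the standard spectral estimates for the resolvent of the non-negative self-adjoint operator $\mathcal{A}_D^0$, one bounds the last norm by $C''\varkappa(\zeta)\Vert\mathbf{f}\Vert_{L_2(\mathcal{O})}$, where $\varkappa(\zeta)$ can be taken equal to $c(\phi)$ in the range $\vert\zeta\vert\geqslant1$ and to $\rho_*(\zeta)^{1/2}$ in the range $\zeta\in\mathbb{C}\setminus[c_*,\infty)$. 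Hence $\varepsilon\Vert K_D(\varepsilon;\zeta)-K_D^0(\varepsilon;\zeta)\Vert_{L_2(\mathcal{O})\to H^1(\mathcal{O})}\leqslant\mathfrak{C}\varepsilon\varkappa(\zeta)$; since $c(\phi)\leqslant c(\phi)^6$ and, because $\rho_*(\zeta)\geqslant1$, $\rho_*(\zeta)^{1/2}\leqslant c(\psi)^{1/2}\rho_*(\zeta)^{5/4}$, this is absorbed into the right-hand sides of Theorem~2.9.

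The flux discrepancy is the hard part. On $\mathcal{O}$ the relevant difference equals $R_\mathcal{O}\widetilde{g}^\varepsilon(S_\varepsilon-I)b(\mathbf{D})P_\mathcal{O}(\mathcal{A}_D^0-\zeta I)^{-1}$; since $\widetilde{g}$ is only square-summable over the cell, one cannot estimate $\widetilde{g}^\varepsilon$ against an arbitrary $L_2$-function, and one has to use that $(S_\varepsilon-I)b(\mathbf{D})(\mathcal{A}_D^0-\zeta I)^{-1}\mathbf{f}$ is not merely small but carries the local-averaging structure of $S_\varepsilon$. Combining this structure with the boundedness of $[\widetilde{g}^\varepsilon]S_\varepsilon$ from Lemma~1.7, and using the $H^2(\mathcal{O})$-regularity \eqref{A0^-1} of $(\mathcal{A}_D^0-\zeta I)^{-1}\mathbf{f}$ as above, one obtains a bound of the form $\mathfrak{C}'\varepsilon\varkappa(\zeta)\Vert\mathbf{f}\Vert_{L_2(\mathcal{O})}$, again dominated by the right-hand sides of Theorem~2.9. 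Adding the two discrepancy bounds to the estimates of Theorem~2.9 and restricting the norms to $\mathcal{O}'$ then yields the four estimates of the theorem, the new constants $\check{C}_9$, $\widehat{C}_9$, $\check{C}_{11}$, $\widehat{C}_{11}$ acquiring a dependence on $\Vert\Lambda\Vert_{L_\infty}$ through Corollary~1.6. I expect making the flux step quantitative — extracting enough cancellation between the oscillation of the merely $L_2(\Omega)$-valued $\widetilde{g}$ and the smoothing $S_\varepsilon$ — to be the main obstacle; the remaining steps amount to a routine recombination of estimates already at our disposal.
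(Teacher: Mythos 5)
First, a point of reference: the paper does not prove this theorem at all --- it is quoted from Theorems 7.2 and 8.7 of [Su8] --- so there is no in-paper argument to compare yours against. Judged on its own, your overall strategy (triangle inequality starting from Theorem 2.9, plus $O(\varepsilon)$ bounds for the discrepancies between the two correctors and between the two flux approximants, valid on all of $\mathcal{O}$ and hence on $\mathcal{O}'$) is the right one, and your treatment of the corrector difference is essentially complete and correct: the identity $K_D(\varepsilon;\zeta)-K_D^0(\varepsilon;\zeta)=R_\mathcal{O}[\Lambda^\varepsilon](S_\varepsilon-I)b(\mathbf{D})P_\mathcal{O}(\mathcal{A}_D^0-\zeta I)^{-1}$ combined with Corollary 1.6, Lemma 1.8, \eqref{PO}, \eqref{A0^-1} and the spectral bound $\Vert(\mathcal{A}_D^0-\zeta I)^{-1}\Vert_{L_2\to L_2}\leqslant c(\phi)\vert\zeta\vert^{-1}$ (resp.\ $c(\psi)\vert\zeta-c_*\vert^{-1}$) does yield $\mathfrak{C}\varepsilon c(\phi)$, resp.\ $\mathfrak{C}\varepsilon\rho_*(\zeta)^{1/2}$, and these are absorbed into the right-hand sides of Theorem 2.9 exactly as you say.

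The flux step, however, is a genuine gap, and the tool you point to cannot close it. The discrepancy $\widetilde{g}^\varepsilon(S_\varepsilon-I)b(\mathbf{D})P_\mathcal{O}(\mathcal{A}_D^0-\zeta I)^{-1}$ contains the \emph{unsmoothed} term $\widetilde{g}^\varepsilon b(\mathbf{D})(\mathcal{A}_D^0-\zeta I)^{-1}$; since $\widetilde{g}$ is only in $L_2(\Omega)$ per cell, Lemma 1.7 says nothing about it, and no ``local-averaging structure'' of the other summand helps, because the obstruction sits entirely in the term from which $S_\varepsilon$ is absent. The missing idea is the explicit decomposition $\widetilde{g}=g\,b(\mathbf{D})\Lambda+g$ together with the chain rule $(b(\mathbf{D})\Lambda)^\varepsilon v=\varepsilon\, b(\mathbf{D})(\Lambda^\varepsilon v)-\varepsilon\sum_{l}b_l\Lambda^\varepsilon D_l v$. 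Applied with $v=b(\mathbf{D})(\mathcal{A}_D^0-\zeta I)^{-1}\mathbf{f}$ this gives the identity $g^\varepsilon b(\mathbf{D})\bigl(\varepsilon K_D^0(\varepsilon;\zeta)\mathbf{f}\bigr)+g^\varepsilon v-\widetilde{g}^\varepsilon v=\varepsilon\, g^\varepsilon\sum_{l}b_l\Lambda^\varepsilon D_l v$, whose $L_2(\mathcal{O})$-norm is at most $\varepsilon\, d^{1/2}\alpha_1^{1/2}\Vert g\Vert_{L_\infty}\Vert\Lambda\Vert_{L_\infty}\Vert v\Vert_{H^1(\mathcal{O})}\leqslant C\varepsilon\,\varkappa(\zeta)\Vert\mathbf{f}\Vert_{L_2(\mathcal{O})}$ by \eqref{A0^-1} and the spectral bounds above. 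Consequently the flux estimate of the theorem follows from its first ($H^1(\mathcal{O}')$) estimate by writing $g^\varepsilon b(\mathbf{D})(\mathcal{A}_{D,\varepsilon}-\zeta I)^{-1}-\widetilde{g}^\varepsilon b(\mathbf{D})(\mathcal{A}_D^0-\zeta I)^{-1}=g^\varepsilon b(\mathbf{D})\bigl[(\mathcal{A}_{D,\varepsilon}-\zeta I)^{-1}-(\mathcal{A}_D^0-\zeta I)^{-1}-\varepsilon K_D^0(\varepsilon;\zeta)\bigr]+\varepsilon\, g^\varepsilon\sum_l b_l\Lambda^\varepsilon D_l v$ and using \eqref{b(D)=}, \eqref{|b_l|<=}; this --- not Lemma 1.7 --- is where Condition 2.5 enters the flux bound. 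Without this (or the equivalent manipulation of $\widetilde{g}^\varepsilon(S_\varepsilon-I)$ directly) your argument for the second and fourth estimates is not a proof.
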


\section*{\S 3. Approximation of the exponential $e^{-\mathcal{A}_{D,\varepsilon}t}$}
\setcounter{section}{3}
\setcounter{equation}{0}
\setcounter{theorem}{0}

\subsection*{3.1. The properties of the operator exponential.}
We start with the following simple statement about estimates for the operator exponentials
$e^{-\mathcal{A}_{D,\varepsilon}t}$ and $e^{-\mathcal{A}_{D}^0t}$ in various operator norms.

\begin{lemma}
Suppose that the assumptions of Theorem \textnormal{2.2} are satisfied.
Then for $t>0$ and $\varepsilon >0$ we have
\begin{align}
\label{3.1}
&\Vert e^{-\mathcal{A}_{D,\varepsilon}t}\Vert _{L_2(\mathcal{O})\rightarrow L_2(\mathcal{O})}
\leqslant e^{-c_* t},
\\
\label{3.2}
&\Vert e^{-\mathcal{A}_{D,\varepsilon}t}\Vert _{L_2(\mathcal{O})\rightarrow H^1(\mathcal{O})}
\leqslant (c_0^{-1}+ c_*^{-1})^{1/2} t^{-1/2} e^{-c_* t/2},
\\
\label{3.3}
&\Vert e^{-\mathcal{A}_{D}^0 t}\Vert _{L_2(\mathcal{O})\rightarrow L_2(\mathcal{O})}
\leqslant e^{-c_* t},
\\
\label{3.4}
&\Vert e^{-\mathcal{A}_{D}^0 t}\Vert _{L_2(\mathcal{O})\rightarrow H^1(\mathcal{O})}
\leqslant (c_0^{-1}+ c_*^{-1})^{1/2}  t^{-1/2} e^{-c_* t/2},
\\
\label{3.5}
&\Vert e^{-\mathcal{A}_{D}^0 t}\Vert _{L_2(\mathcal{O})\rightarrow H^2(\mathcal{O})}
\leqslant \widehat{c} \,t^{-1} e^{-c_* t/2}.
\end{align}
\end{lemma}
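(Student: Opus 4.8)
The five estimates are standard consequences of the spectral theorem applied to the nonnegative selfadjoint operators $\mathcal{A}_{D,\varepsilon}$ and $\mathcal{A}_D^0$, combined with the quadratic form bounds recorded in \S1 and the elliptic regularity estimate \eqref{A0^-1}. Throughout, write $\mathcal{A}$ for either $\mathcal{A}_{D,\varepsilon}$ or $\mathcal{A}_D^0$; by \eqref{a_D,e>=}, \eqref{a_D,0>=} the spectrum of $\mathcal{A}$ lies in $[c_*,\infty)$, so $\|e^{-\mathcal{A}t}\|_{L_2\to L_2}=\sup_{\lambda\ge c_*}e^{-\lambda t}=e^{-c_* t}$ by functional calculus; this gives \eqref{3.1} and \eqref{3.3} at once.

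For \eqref{3.2} and \eqref{3.4}, the point is to estimate $\|\mathbf{u}\|_{H^1(\mathcal{O})}$ in terms of the form. First, by the Friedrichs inequality and the lower bound in \eqref{<a_D,e<} (resp.\ \eqref{<a_D,0<}), one has the norm equivalence $\|\mathbf{u}\|_{H^1(\mathcal{O})}^2 \le (c_0^{-1}+c_*^{-1})\bigl(a_D[\mathbf{u},\mathbf{u}] + c_* \|\mathbf{u}\|_{L_2}^2\bigr)$ — more precisely $\|\mathbf{D}\mathbf{u}\|_{L_2}^2 \le c_0^{-1} a_D[\mathbf{u},\mathbf{u}]$ and $\|\mathbf{u}\|_{L_2}^2 \le c_*^{-1} a_D[\mathbf{u},\mathbf{u}]$, so that $\|\mathbf{u}\|_{H^1}^2 = \|\mathbf{D}\mathbf{u}\|_{L_2}^2 + \|\mathbf{u}\|_{L_2}^2 \le (c_0^{-1}+c_*^{-1}) a_D[\mathbf{u},\mathbf{u}] = (c_0^{-1}+c_*^{-1}) \|\mathcal{A}^{1/2}\mathbf{u}\|_{L_2}^2$. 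Apply this with $\mathbf{u} = e^{-\mathcal{A}t}\mathbf{f}$:
\begin{equation*}
\|e^{-\mathcal{A}t}\mathbf{f}\|_{H^1(\mathcal{O})}^2 \le (c_0^{-1}+c_*^{-1}) \|\mathcal{A}^{1/2} e^{-\mathcal{A}t}\mathbf{f}\|_{L_2}^2 \le (c_0^{-1}+c_*^{-1}) \Bigl(\sup_{\lambda\ge c_*} \lambda e^{-2\lambda t}\Bigr) \|\mathbf{f}\|_{L_2}^2.
\end{equation*}
Since $\sup_{\lambda\ge c_*}\lambda e^{-2\lambda t} \le \sup_{\lambda\ge 0}\lambda e^{-\lambda t}e^{-\lambda t} \le (et)^{-1}e^{-c_* t} \le t^{-1}e^{-c_* t}$ (using $\lambda e^{-\lambda t}\le (et)^{-1}$ and $e^{-\lambda t}\le e^{-c_* t}$ on $\lambda\ge c_*$), taking square roots yields \eqref{3.2} and \eqref{3.4}.

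For \eqref{3.5}, I would use the regularity estimate \eqref{A0^-1}, which gives $\|\mathbf{u}\|_{H^2(\mathcal{O})} \le \widehat{c}\,\|\mathcal{A}_D^0\mathbf{u}\|_{L_2}$ for $\mathbf{u}\in \Dom \mathcal{A}_D^0$ (apply \eqref{A0^-1} to $\mathbf{u}=(\mathcal{A}_D^0)^{-1}(\mathcal{A}_D^0\mathbf{u})$). With $\mathbf{u}=e^{-\mathcal{A}_D^0 t}\mathbf{f}$ and functional calculus,
\begin{equation*}
\|e^{-\mathcal{A}_D^0 t}\mathbf{f}\|_{H^2(\mathcal{O})} \le \widehat{c}\,\|\mathcal{A}_D^0 e^{-\mathcal{A}_D^0 t}\mathbf{f}\|_{L_2} \le \widehat{c}\,\Bigl(\sup_{\lambda\ge c_*}\lambda e^{-\lambda t}\Bigr)\|\mathbf{f}\|_{L_2}.
\end{equation*}
Bounding $\sup_{\lambda\ge c_*}\lambda e^{-\lambda t} = \sup_{\lambda\ge c_*}\lambda e^{-\lambda t/2}e^{-\lambda t/2} \le (2/(et))\cdot e^{-c_* t/2} \le t^{-1}e^{-c_* t/2}$ gives \eqref{3.5}. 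None of these steps is a real obstacle; the only mild subtlety is being slightly careful with the elementary scalar maxima so that the numerical constants come out in the stated clean form, and recalling that \eqref{A0^-1} legitimately upgrades to the operator bound $\|\cdot\|_{H^2}\le \widehat{c}\|\mathcal{A}_D^0\cdot\|_{L_2}$ on the domain. I would present the argument by first doing the two $L_2\to L_2$ bounds, then the form-estimate lemma and the $H^1$ bounds, then the regularity-based $H^2$ bound.
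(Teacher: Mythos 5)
Your proof is correct and follows essentially the same route as the paper: the $L_2\to L_2$ bounds come from the spectral bound $\sigma(\mathcal{A})\subset[c_*,\infty)$, the $H^1$ bounds from the form inequalities $\|\mathbf{u}\|_{H^1}^2\le (c_0^{-1}+c_*^{-1})\|\mathcal{A}^{1/2}\mathbf{u}\|_{L_2}^2$ combined with $\sup_{\mu\ge c_*}\mu^{1/2}e^{-\mu t}\le t^{-1/2}e^{-c_*t/2}$, and the $H^2$ bound from \eqref{A0^-1} together with $\sup_{\mu\ge c_*}\mu e^{-\mu t}\le t^{-1}e^{-c_*t/2}$. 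The only differences are cosmetic (you work with squared norms and split the exponential slightly differently), so nothing further is needed.
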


\begin{proof}
Estimates \eqref{3.1} and \eqref{3.3} follow directly from \eqref{a_D,e>=} and \eqref{a_D,0>=}.

Combining (\ref{<a_D,e<}) and (\ref{a_D,e>=}), we conclude that
\begin{equation*}
\Vert \mathbf{u}\Vert ^2_{H^1(\mathcal{O})}\leqslant (c_0^{-1}+c_*^{-1}) \Vert \mathcal{A}_{D,\varepsilon}^{1/2}\mathbf{u}\Vert ^2_{L_2(\mathcal{O})},\quad \mathbf{u}\in H^1_0(\mathcal{O};\mathbb{C}^n).
\end{equation*}
Hence,
\begin{equation}
\label{3.6}
\Vert e^{-\mathcal{A}_{D,\varepsilon}t} \Vert _{L_2(\mathcal{O})\rightarrow H^1(\mathcal{O})}\leqslant
(c_0^{-1} + c_*^{-1})^{1/2}
\Vert \mathcal{A}_{D,\varepsilon}^{1/2} e^{-\mathcal{A}_{D,\varepsilon}t} \Vert _{L_2(\mathcal{O})\rightarrow L_2(\mathcal{O})}.
\end{equation}
Next, by \eqref{a_D,e>=},
\begin{equation}
\label{3.7}
\begin{split}
\Vert &\mathcal{A}_{D,\varepsilon}^{1/2} e^{-\mathcal{A}_{D,\varepsilon}t} \Vert _{L_2(\mathcal{O})\rightarrow L_2(\mathcal{O})}
\leqslant \sup_{\mu \geqslant c_*} \mu^{1/2} e^{-\mu t}\\
&\leqslant e^{-c_{*}t/2}\sup_{\mu \geqslant 0} \mu^{1/2} e^{-\mu t/2}\leqslant t^{-1/2}e^{-c_{*}t/2}.
\end{split}
\end{equation}
Relations (\ref{3.6}) and (\ref{3.7}) imply \eqref{3.2}.
Similarly, combining the inequality
\begin{equation}
\label{3.8}
\Vert (\mathcal{A}_{D}^0)^{1/2} e^{-\mathcal{A}_{D}^0 t} \Vert _{L_2(\mathcal{O})\rightarrow L_2(\mathcal{O})}
\leqslant t^{-1/2}e^{-c_{*}t/2},
\end{equation}
\eqref{<a_D,0<}, and \eqref{a_D,0>=}, we obtain \eqref{3.4}.

By \eqref{A0^-1},
\begin{equation}
\label{3.8a}
\begin{split}
\Vert  e^{-\mathcal{A}_{D}^0 t} \Vert _{L_2(\mathcal{O})\rightarrow H^2(\mathcal{O})}&\leqslant
\widehat{c}
\Vert \mathcal{A}_{D}^0 e^{-\mathcal{A}_{D}^0 t} \Vert _{L_2(\mathcal{O})\rightarrow L_2(\mathcal{O})}
\leqslant \widehat{c} \sup_{\mu \geqslant c_*} \mu e^{-\mu t}
\\
&\leqslant \widehat{c} e^{-c_{*}t/2} \sup_{\mu \geqslant 0} \mu e^{-\mu t/2}\leqslant
\widehat{c}\, t^{-1}e^{-c_{*}t/2}.
\end{split}
\end{equation}
This implies (\ref{3.5}).
\end{proof}

\subsection*{3.2. Approximation of the operator exponential in the  $L_2(\mathcal{O};\mathbb{C}^n)$-operator norm.}
Our goal in this subsection is to prove the following theorem.

\begin{theorem}
Suppose that the assumptions of Theorem \textnormal{2.2} are satisfied.
Then for \hbox{$0<\varepsilon \leqslant \varepsilon_1$} we have
\begin{equation}
\label{Th_exp_L_2}
\Vert e^{-\mathcal{A}_{D,\varepsilon}t}-e^{-\mathcal{A}_D^0t}\Vert _{L_2(\mathcal{O})\rightarrow L_2(\mathcal{O})}\leqslant C_{12}\varepsilon  (t+\varepsilon ^2)^{-1/2}e^{-c_* t/2},\quad t\geqslant 0.
\end{equation}
The constant $C_{12}$ depends only on the initial data \textnormal{\eqref{data}}.
\end{theorem}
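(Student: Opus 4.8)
The plan is to pass from the exponentials to the resolvents by the Cauchy‑type identity
$$
e^{-\mathcal{A}_{D,\varepsilon}t}-e^{-\mathcal{A}_D^0 t}=-\frac{1}{2\pi i}\int_\gamma e^{-\zeta t}\bigl((\mathcal{A}_{D,\varepsilon}-\zeta I)^{-1}-(\mathcal{A}_D^0-\zeta I)^{-1}\bigr)\,d\zeta ,
$$
valid for any positively oriented contour $\gamma$ encircling the common spectral set $[c_*,\infty)$ along which $e^{-\zeta t}$ decays fast enough for absolute convergence, and then to plug in the resolvent estimates of Theorem~2.2. First I would dispose of small $t$: for $0<t\le\varepsilon^2$ one has $\varepsilon(t+\varepsilon^2)^{-1/2}\ge 2^{-1/2}$, so \eqref{Th_exp_L_2} follows at once from $\|e^{-\mathcal{A}_{D,\varepsilon}t}\|_{L_2\to L_2}+\|e^{-\mathcal{A}_D^0 t}\|_{L_2\to L_2}\le 2e^{-c_*t}\le 2e^{-c_*t/2}$, cf.~\eqref{3.1}, \eqref{3.3}. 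Thus it remains to treat $t\ge\varepsilon^2$.

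For $t\ge\varepsilon^2$ I would fix a half-angle $\phi_0\in(0,\pi/2)$ and take $\gamma$ to be the boundary of the sector $\{\zeta:|\arg(\zeta-c_*/2)|<\phi_0\}$, i.e.\ the two rays $\zeta=c_*/2+se^{\pm i\phi_0}$, $s\ge 0$; this sector contains $[c_*,\infty)$. Since $\operatorname{Re}\zeta=c_*/2+s\cos\phi_0$ on $\gamma$, we have $|e^{-\zeta t}|=e^{-c_*t/2}e^{-st\cos\phi_0}$; this is the source of the factor $e^{-c_*t/2}$, and, together with the resolvent bound $\|(\mathcal{A}_{D,\varepsilon}-\zeta I)^{-1}\|\le 1/\mathrm{dist}(\zeta,[c_*,\infty))$ (and likewise for $\mathcal{A}_D^0$), it guarantees convergence. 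I would split $\gamma=\gamma'\cup\gamma''$ with $\gamma'=\gamma\cap\{|\zeta|\le R\}$ and $\gamma''=\gamma\cap\{|\zeta|>R\}$ for a fixed constant $R$ (say $R=c_*+1$). Two elementary facts make the bookkeeping work: on $\gamma'$ the quantity $\rho_*(\zeta)$ of Theorem~2.2 is bounded by a constant (the ray stays at distance $\ge(c_*/2)\sin\phi_0$ from $c_*$, and $\arg(\zeta-c_*)$ stays away from $0$ and $2\pi$ whenever $\operatorname{Re}(\zeta-c_*)>0$), while on $\gamma''$ the angle $\arg\zeta$ is bounded away from $0$ and $2\pi$, so $c(\arg\zeta)^5$ is bounded there.

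On $\gamma'$ I would apply assertion $2^\circ$ of Theorem~2.2 and bound the contribution by
$$
\mathrm{const}\cdot\varepsilon\,e^{-c_*t/2}\int_0^{s_R}e^{-st\cos\phi_0}\,ds\le\mathrm{const}\cdot\varepsilon\,e^{-c_*t/2}\min\Bigl(s_R,\tfrac{1}{t\cos\phi_0}\Bigr)\le\mathrm{const}\cdot\varepsilon\,t^{-1/2}e^{-c_*t/2},
$$
$s_R$ being the length of $\gamma'$ and the last step using $\min(a,b)\le\sqrt{ab}$. On $\gamma''$ I would apply assertion $1^\circ$ of Theorem~2.2; using $|\zeta|\ge s\cos\phi_0$, the main term is bounded by $\mathrm{const}\cdot\varepsilon\int_0^\infty e^{-c_*t/2}e^{-st\cos\phi_0}(s\cos\phi_0)^{-1/2}\,ds=\mathrm{const}\cdot\varepsilon\,t^{-1/2}e^{-c_*t/2}$, and the $\varepsilon^2$-term by $\mathrm{const}\cdot\varepsilon^2 t^{-1}e^{-c_*t/2}\le\mathrm{const}\cdot\varepsilon\,t^{-1/2}e^{-c_*t/2}$, the last inequality being exactly where $t\ge\varepsilon^2$ enters. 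Adding the contributions of $\gamma'$ and $\gamma''$, and using $t^{-1/2}\le\sqrt2\,(t+\varepsilon^2)^{-1/2}$ for $t\ge\varepsilon^2$, yields \eqref{Th_exp_L_2}.

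The conceptual content is already contained in Theorem~2.2; the only real choice here is the contour, and the step that needs care — which I expect to be the main obstacle — is getting the \emph{sharp} rate $(t+\varepsilon^2)^{-1/2}$ rather than a weaker power of $t$. This forces one to combine the $\zeta$-uniform bound $2^\circ$ near the bottom of the spectrum with the $|\zeta|^{-1/2}$-decaying bound $1^\circ$ far out, while simultaneously keeping the blow‑up factors $c(\arg\zeta)$ and $c(\arg(\zeta-c_*))$ under control; this is precisely why $\gamma$ should be a sector with vertex strictly to the left of $c_*$ rather than a contour hugging the half-line $[c_*,\infty)$.
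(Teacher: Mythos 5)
Your proposal is correct and follows essentially the same route as the paper: the contour representation over a sector with vertex at $c_*/2$ (the paper takes half-angle $\pi/4$), assertion $2^\circ$ of Theorem~2.2 on the bounded part of the contour where $\rho_*(\zeta)$ is uniformly bounded, assertion $1^\circ$ on the unbounded part where $|\sin\arg\zeta|$ is bounded below, the reduction $\varepsilon^2 t^{-1}\leqslant\varepsilon t^{-1/2}$ for $t\geqslant\varepsilon^2$, and the trivial exponential bound for $t\leqslant\varepsilon^2$. The only differences are bookkeeping (you estimate the two arcs separately, while the paper first coarsens the near-vertex bound into the form $C(|\zeta|^{-1/2}\varepsilon+\varepsilon^2)$ and integrates once).
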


\begin{proof}
The proof is based on Theorem 2.2 and the representations for exponentials of the operators $\mathcal{A}_{D,\varepsilon}$ and $\mathcal{A}_D^0$
in terms of the contour integrals of the corresponding resolvents.

We have (see, e.~g., \cite[Chapter~IX, \S 1.6]{K})
\begin{equation}
\label{2.1a}
e^{-\mathcal{A}_{D,\varepsilon}t}=-\frac{1}{2\pi i}\int _\gamma e^{-\zeta t}(\mathcal{A}_{D,\varepsilon}-\zeta I)^{-1}\,d\zeta ,\quad t>0.
\end{equation}
Here $\gamma$ is the positively oriented contour in the complex plane  enclosing the spectrum of $\mathcal{A}_{D,\varepsilon}$.
By (\ref{a_D,e>=}) and (\ref{a_D,0>=}), the spectra of the operators $\mathcal{A}_{D,\varepsilon}$ and $\mathcal{A}_D^0$
lie on the ray $[c_*,\infty)$. Therefore, we take
\begin{equation*}
\begin{split}
\gamma& =\lbrace \zeta \in\mathbb{C} : \textnormal{Im}\,\zeta \geqslant 0,\, \textnormal{Re}\,\zeta =\textnormal{Im}\,\zeta +c_*/2\rbrace \\
&\cup \lbrace \zeta \in \mathbb{C} : \textnormal{Im}\,\zeta \leqslant 0,\, \textnormal{Re}\,\zeta =-\textnormal{Im}\,\zeta +c_*/2\rbrace .
\end{split}
\end{equation*}
The exponential of the effective operator $\mathcal{A}_D^0$ admits a representation similar to (\ref{2.1a})
with the same contour $\gamma$. Hence,
\begin{equation}
\label{raznost exp=int}
e^{-\mathcal{A}_{D,\varepsilon}t}-e^{-\mathcal{A}_D^0t}=-\frac{1}{2\pi i}\int _\gamma e^{-\zeta t}\left((\mathcal{A}_{D,\varepsilon}-\zeta I)^{-1}-(\mathcal{A}_D^0-\zeta I)^{-1}\right)\,d\zeta .
\end{equation}

Using Theorem 2.2, we estimate the difference of the resolvents for $\zeta \in \gamma$ uniformly with respect to $ \textnormal{arg}\,\zeta$.
Recall the notation $\psi =\textnormal{arg}\,(\zeta -c_*)$.
Note that for $\zeta \in \gamma$ and $\psi =\pi/2$ or $\psi =3\pi/2$ we have $\vert \zeta \vert =\sqrt{5} c_*/2$.
We apply assertion~$2^\circ$ of Theorem~2.2 for $\zeta \in \gamma$ such that
$\vert \zeta \vert \leqslant \check{c}:= \max \lbrace 1; \sqrt{5}c_* /2\rbrace $.
Obviously, $\psi \in (\pi/4,7\pi/4)$ and $\rho _*(\zeta)\leqslant 2 \max \lbrace 1; 8c_*^{-2}\rbrace $ on the contour $\gamma$.
Therefore, for $0<\varepsilon \leqslant \varepsilon _1 $ relation \eqref{Th_Su_small_zeta} implies that
\begin{equation}
\label{ots_zeta_levee_c2}
\begin{split}
\Vert &(\mathcal{A}_{D,\varepsilon}-\zeta I)^{-1}-(\mathcal{A}_D^0-\zeta I)^{-1}\Vert _{L_2\rightarrow L_2}\leqslant 2C_2\max \lbrace 1; 8c_*^{-2}\rbrace \varepsilon
\\
&\leqslant C_{12}' \left(\vert \zeta \vert ^{-1/2}\varepsilon +\varepsilon ^2\right),\quad
 \zeta \in \gamma ,\; \vert \zeta \vert \leqslant \check{c},
\end{split}
\end{equation}
where $C_{12}' =  2C_2\max \lbrace 1; 8c_*^{-2}\rbrace \check{c}^{1/2}$.
(The last step is simply coarsening of the estimate.)

For other $\zeta \in \gamma $ we have $\vert \sin \phi \vert \geqslant 5^{-1/2}$, and, by assertion $1^\circ$ of Theorem 2.2,
\begin{equation}
\label{ots_zeta_pravee_c2}
\begin{split}
\Vert (\mathcal{A}_{D,\varepsilon}-\zeta I)^{-1}-(\mathcal{A}_D^0-\zeta I)^{-1}\Vert _{L_2\rightarrow L_2}\leqslant
C_{12}'' (\vert \zeta \vert ^{-1/2}\varepsilon +\varepsilon ^2),\\
\zeta \in \gamma ,\; \vert \zeta \vert > \check{c} ,\; 0<\varepsilon \leqslant \varepsilon _1,
\end{split}
\end{equation}
where $C_{12}'' =  5^{5/2}C_1$.
As a result, combining (\ref{ots_zeta_levee_c2}) and (\ref{ots_zeta_pravee_c2}), for $0<\varepsilon \leqslant \varepsilon _1$ we have
\begin{equation}
\label{Raznost res na gamma}
\Vert (\mathcal{A}_{D,\varepsilon}-\zeta I)^{-1}-(\mathcal{A}_D^0-\zeta I)^{-1}\Vert _{L_2\rightarrow L_2}\leqslant\widehat{C}_{12}(\vert \zeta \vert ^{-1/2}\varepsilon +\varepsilon ^2),\quad \zeta \in \gamma ,
\end{equation}
where $\widehat{C}_{12}=\max\lbrace C_{12}'; C_{12}'' \rbrace$.

From (\ref{raznost exp=int}) and (\ref{Raznost res na gamma}) it follows that
\begin{equation*}
\Vert e^{-\mathcal{A}_{D,\varepsilon} t}-e^{-\mathcal{A}_D^0 t }\Vert _{L_2(\mathcal{O})\rightarrow L_2(\mathcal{O})}\leqslant 2\pi ^{-1}\widehat{C}_{12} \left(\varepsilon t^{-1/2}\Gamma (1/2) +\varepsilon ^2 t^{-1}\right) e^{-c_*t/2}.
\end{equation*}
Note that for $t\geqslant \varepsilon ^2$ we have $\varepsilon^2 t^{-1}\leqslant\varepsilon t^{-1/2}$.
Thus, using the identity $\Gamma (1/2)=\pi ^{1/2}$, we obtain
\begin{equation}
\label{Ots.3 L_2}
\begin{split}
\Vert e^{-\mathcal{A}_{D,\varepsilon}t}-e^{-\mathcal{A}_D^0t}\Vert _{L_2(\mathcal{O})\rightarrow L_2(\mathcal{O})}&\leqslant
4\pi ^{-1/2}\widehat{C}_{12}\varepsilon t^{-1/2}e^{-c_*t/2}
\\
&\leqslant \check{C}_{12}\varepsilon (t+\varepsilon^2)^{-1/2} e^{-c_*t/2}
,\quad t\geqslant \varepsilon ^2,
\end{split}
\end{equation}
where $\check{C}_{12}=4\sqrt{2}\pi ^{-1/2}\widehat{C}_{12}$.
For $t\leqslant \varepsilon ^2$ we apply \eqref{3.1} and \eqref{3.3}:
\begin{equation}
\label{Ots.4 L_2}
\Vert e^{-\mathcal{A}_{D,\varepsilon}t}-e^{-\mathcal{A}_D^0t}\Vert _{L_2(\mathcal{O})\rightarrow L_2(\mathcal{O})}\leqslant
2e^{-c_*t}\leqslant 2\sqrt{2} \varepsilon (t+\varepsilon^2)^{-1/2} e^{-c_*t/2},\quad t\leqslant \varepsilon ^2.
\end{equation}
Relations (\ref{Ots.3 L_2}) and (\ref{Ots.4 L_2}) imply the required inequality
(\ref{Th_exp_L_2}) with $C_{12} = \max \lbrace \check{C}_{12}; 2\sqrt{2} \rbrace $.
\end{proof}

\subsection*{3.3. Approximation of the operator $e^{-\mathcal{A}_{D,\varepsilon}t}$ in the $(L_2 \to H^1)$-norm.}
We introduce the \textit{corrector}
\begin{equation}
\label{K_D(t,e)}
\mathcal{K}_D(t;\varepsilon)=R_\mathcal{O}[\Lambda ^\varepsilon]S_\varepsilon b(\mathbf{D})P_\mathcal{O}e^{-\mathcal{A}_D^0t}.
\end{equation}
By (\ref{3.5}), for $t>0$
the operator $e^{-\mathcal{A}_D^0 t}$ is continuous from
$L_2(\mathcal{O};\mathbb{C}^n)$ to $H^2(\mathcal{O};\mathbb{C}^n)$. Hence, the operator $b({\mathbf D})P_\mathcal{O} e^{-\mathcal{A}_D^0 t}$
is continuous from $L_2(\mathcal{O};\mathbb{C}^n)$ to $H^1(\mathbb{R}^d;\mathbb{C}^m)$. As has been already mentioned,
the operator $[\Lambda ^\varepsilon]S_\varepsilon$ is continuous from $H^1(\mathbb{R}^d;\mathbb{C}^m)$ to $H^1(\mathbb{R}^d;\mathbb{C}^n)$.
Thus, the operator $\mathcal{K}_D(t;\varepsilon)$ is a continuous mapping of $L_2(\mathcal{O};\mathbb{C}^n)$ to $H^1(\mathcal{O};\mathbb{C}^n)$.

\begin{theorem}
Suppose that the assumptions of Theorem \textnormal{2.2} are satisfied. Let $\mathcal{K}_D(t;\varepsilon)$ be the operator \textnormal{(\ref{K_D(t,e)})}. Let $\widetilde{g}$ be the matrix-valued function \textnormal{(\ref{tilde g})}.
Then for \hbox{$0<\varepsilon\leqslant\varepsilon _1$} and $t>0$ we have
\begin{align}
\label{Th_exp_korrector}
\begin{split}
\Vert & e^{-\mathcal{A}_{D,\varepsilon}t}-e^{-\mathcal{A}_D^0t}-\varepsilon \mathcal{K}_D(t;\varepsilon)\Vert _{L_2(\mathcal{O})\rightarrow H^1(\mathcal{O})}\\
&\leqslant C_{13}( \varepsilon ^{1/2}t^{-3/4}+\varepsilon t^{-1})e^{-c_* t/2},
\end{split}\\
\begin{split}
\label{Th_exp_potok}
\Vert & g^\varepsilon b(\mathbf{D})e^{-\mathcal{A}_{D,\varepsilon}t}-\widetilde{g}^\varepsilon S_\varepsilon b(\mathbf{D})P_\mathcal{O}e^{-\mathcal{A}_D^0 t}\Vert _{L_2(\mathcal{O})\rightarrow L_2(\mathcal{O})}\\
&\leqslant \widetilde{C}_{13}( \varepsilon ^{1/2}t^{-3/4}+\varepsilon t^{-1})e^{-c_* t/2} .
\end{split}
\end{align}
The constants $C_{13}$ and $\widetilde{C}_{13}$ depend only on the initial data \textnormal{\eqref{data}}.
\end{theorem}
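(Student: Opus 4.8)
The plan is to mimic the contour-integral argument used in the proof of Theorem 3.2, but now starting from Theorem 2.3 (the $(L_2\to H^1)$ resolvent estimate with corrector) rather than Theorem 2.2, and to track carefully the dependence on $|\zeta|$ of the resolvent bound. First I would record the integral representation
\begin{equation*}
e^{-\mathcal{A}_{D,\varepsilon}t}-e^{-\mathcal{A}_D^0t}-\varepsilon \mathcal{K}_D(t;\varepsilon)
=-\frac{1}{2\pi i}\int_\gamma e^{-\zeta t}\bigl((\mathcal{A}_{D,\varepsilon}-\zeta I)^{-1}-(\mathcal{A}_D^0-\zeta I)^{-1}-\varepsilon K_D(\varepsilon;\zeta)\bigr)\,d\zeta,
\end{equation*}
which holds because $\mathcal{K}_D(t;\varepsilon)=-\frac{1}{2\pi i}\int_\gamma e^{-\zeta t}K_D(\varepsilon;\zeta)\,d\zeta$ (the corrector $K_D(\varepsilon;\zeta)$ depends on $\zeta$ only through the factor $(\mathcal{A}_D^0-\zeta I)^{-1}$, and the other operators in its definition commute with the integral), with $\gamma$ the same parabolic-type contour $\{\mathrm{Re}\,\zeta=|\mathrm{Im}\,\zeta|+c_*/2\}$ used before. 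An identical representation holds for the flux difference $g^\varepsilon b(\mathbf{D})e^{-\mathcal{A}_{D,\varepsilon}t}-\widetilde g^\varepsilon S_\varepsilon b(\mathbf{D})P_\mathcal{O}e^{-\mathcal{A}_D^0t}$, using the second estimate in Theorem 2.3.

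Next I would estimate the integrand on $\gamma$. For $|\zeta|\le\check c:=\max\{1;\sqrt5 c_*/2\}$ I apply assertion $2^\circ$ of Theorem 2.3 (estimate \eqref{24a}): since $\psi=\arg(\zeta-c_*)\in(\pi/4,7\pi/4)$ and $\rho_*(\zeta)$ is bounded by a constant depending only on the initial data on this part of $\gamma$, I get a bound of the form $C\varepsilon^{1/2}\le C'\varepsilon^{1/2}|\zeta|^{-1/4}$ (coarsening, exactly as in \eqref{ots_zeta_levee_c2}). For $|\zeta|>\check c$ I apply assertion $1^\circ$ of Theorem 2.3: here $|\sin\phi|\ge5^{-1/2}$, so $c(\phi)\le\sqrt5$, and the bound becomes $C(|\zeta|^{-1/4}\varepsilon^{1/2}+\varepsilon)$. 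Combining, on all of $\gamma$ I obtain
\begin{equation*}
\Vert(\mathcal{A}_{D,\varepsilon}-\zeta I)^{-1}-(\mathcal{A}_D^0-\zeta I)^{-1}-\varepsilon K_D(\varepsilon;\zeta)\Vert_{L_2\to H^1}\le \widehat C_{13}\bigl(|\zeta|^{-1/4}\varepsilon^{1/2}+\varepsilon\bigr),\quad \zeta\in\gamma,
\end{equation*}
and the analogous bound for the flux.

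Then I would plug this into the integral representation and estimate. Parametrizing $\gamma$ and using $|e^{-\zeta t}|\le e^{-c_*t/2}e^{-|\mathrm{Im}\,\zeta|t}$, the contribution of the $|\zeta|^{-1/4}\varepsilon^{1/2}$ term produces, after the change of variables and using $\int_0^\infty s^{-1/4}e^{-st}\,ds=\Gamma(3/4)t^{-3/4}$, a term $C\varepsilon^{1/2}t^{-3/4}e^{-c_*t/2}$; the contribution of the $\varepsilon$ term produces $C\varepsilon t^{-1}e^{-c_*t/2}$ (using $\int_0^\infty e^{-st}\,ds=t^{-1}$). This yields exactly \eqref{Th_exp_korrector}, and the same computation applied to the flux representation yields \eqref{Th_exp_potok}. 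The mildly delicate point — more bookkeeping than genuine obstacle — is verifying that $\mathcal{K}_D(t;\varepsilon)$ as defined in \eqref{K_D(t,e)} really is the contour integral of $K_D(\varepsilon;\zeta)$, i.e. that one may pull the bounded operators $R_\mathcal{O}[\Lambda^\varepsilon]S_\varepsilon b(\mathbf{D})P_\mathcal{O}$ outside the integral; this follows from their $\zeta$-independence together with continuity of $b(\mathbf{D})P_\mathcal{O}(\mathcal{A}_D^0-\zeta I)^{-1}$ from $L_2(\mathcal{O})$ into $H^1(\mathbb{R}^d)$ and the integrability of the resolvent along $\gamma$ (using \eqref{3.5} to bound $e^{-\mathcal{A}_D^0t}$ into $H^2$, and convergence of the contour integral for the effective exponential). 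Apart from that, the only thing to watch is the uniformity of the constants $C_3,C_4,C_5,\widetilde C_3,\widetilde C_4,\widetilde C_5$ in $\phi$ (resp. $\psi$) on $\gamma$, which is guaranteed by the explicit $c(\phi)$-dependence in Theorem 2.3.
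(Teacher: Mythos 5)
Your proposal is correct and follows essentially the same route as the paper: the contour-integral representation \eqref{exp -exp-K=int} over the same contour $\gamma$, the uniform bound $\widehat C_{13}(|\zeta|^{-1/4}\varepsilon^{1/2}+\varepsilon)$ on $\gamma$ obtained by combining assertion $2^\circ$ of Theorem 2.3 (with the coarsening via $\check c^{1/4}$) for $|\zeta|\leqslant\check c$ with assertion $1^\circ$ for $|\zeta|>\check c$, and the final integration producing $\Gamma(3/4)t^{-3/4}$ and $t^{-1}$, together with the identical treatment of the flux via \eqref{tozd potoki D int}. Your explicit verification that the corrector $\mathcal{K}_D(t;\varepsilon)$ equals the contour integral of $K_D(\varepsilon;\zeta)$ is a point the paper leaves implicit, and it is handled correctly.
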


\begin{proof}
 As in the proof of Theorem 3.2, we shall use representations for the operators $e^{-\mathcal{A}_{D,\varepsilon}t}$ and
  $e^{-\mathcal{A}_D^0 t}$ in terms of the integrals of the corresponding resolvents
  over the contour $\gamma$. We have
\begin{equation}
\label{exp -exp-K=int}
\begin{split}
&e^{-\mathcal{A}_{D,\varepsilon}t}-e^{-\mathcal{A}_D^0t}-\varepsilon \mathcal{K}_D(t;\varepsilon)\\
&=-\frac{1}{2 \pi i}\int _\gamma e^{-\zeta t}\left( (\mathcal{A}_{D,\varepsilon}-\zeta I)^{-1}-(\mathcal{A}_D^0-\zeta I)^{-1}-\varepsilon K_D(\varepsilon;\zeta )\right) \,d\zeta ,
\end{split}
\end{equation}
where $K_D(\varepsilon;\zeta )$ is the operator (\ref{K_D(e)}).

Similarly to  \eqref{ots_zeta_levee_c2}--\eqref{Raznost res na gamma}, from Theorem 2.3 it follows that
\begin{equation}
\label{Th Su 14 zeta in gamma}
\begin{split}
 \Vert &(\mathcal{A}_{D,\varepsilon}-\zeta I)^{-1}-(\mathcal{A}_D^0-\zeta I)^{-1}-\varepsilon K_D(\varepsilon ;\zeta )\Vert _{L_2\rightarrow H^1}\\
 &\leqslant
 \widehat{C}_{13}\left(\varepsilon ^{1/2}\vert \zeta \vert ^{-1/4}+\varepsilon\right) ,\quad
 \zeta \in \gamma ,\quad 0<\varepsilon \leqslant\varepsilon _1,
 \end{split}
\end{equation}
with the constant $\widehat{C}_{13}=\max \lbrace C_{13}'; C_{13}'' \rbrace$, where $C_{13}'= 2C_5
\max \lbrace 1; 8c_*^{-2}\rbrace \check{c}^{1/4}$, $C_{13}''= \max \lbrace 5C_3;25C_4\rbrace$.
Relations (\ref{exp -exp-K=int}) and (\ref{Th Su 14 zeta in gamma}) imply the required estimate (\ref{Th_exp_korrector})
with the constant $C_{13}=2\pi ^{-1}\Gamma (3/4) \widehat{C}_{13}$.

In a similar way, inequality (\ref{Th_exp_potok}) is deduced from the identity
\begin{equation}
\label{tozd potoki D int}
\begin{split}
&g^\varepsilon b(\mathbf{D})e^{-\mathcal{A}_{D,\varepsilon }t}-\widetilde{g}^\varepsilon S_\varepsilon b(\mathbf{D})P_\mathcal{O}e^{-\mathcal{A}_D^0 t}\\
&=-\frac{1}{2\pi i}\int _\gamma e^{-\zeta t} \left( g^\varepsilon b(\mathbf{D})(\mathcal{A}_{D,\varepsilon}-\zeta I)^{-1}-\widetilde{g}^\varepsilon S_\varepsilon b(\mathbf{D})P_\mathcal{O} (\mathcal{A}_D^0-\zeta I)^{-1}\right) \,d\zeta
\end{split}
\end{equation}
and Theorem 2.3.
\end{proof}

\begin{remark}\textnormal{Suppose that $\lambda _1^0$ is the first eigenvalue of the operator $\mathcal{A}_D^0$.
Due to the resolvent convergence, for sufficiently small $\kappa >0$ there exists a number $\varepsilon _*$, depending on
   $\kappa $ and on the problem data, such that for \hbox{$0<\varepsilon\leqslant \varepsilon _*$} the first eigenvalue
   $\lambda _{1,\varepsilon}$ of the operator $\mathcal{A}_{D,\varepsilon}$ satisfies $\lambda _{1,\varepsilon} >\lambda _1^0-\kappa /2$.
   Therefore, one can transform the contour of integration so that
   it will intersect the real axis at the point $\lambda _1^0-\kappa $ (instead of $c_*/2$).
   By this way, one can obtain estimates of the form  (\ref{Th_exp_L_2}), (\ref{Th_exp_korrector}), and (\ref{Th_exp_potok})
with $e^{-c_* t/2}$ replaced by $e^{-(\lambda _1^0 -\kappa )t}$.
The constants in such estimates will depend on $\kappa$.
}
\end{remark}

\subsection*{3.4. Estimates for small $t$.}
Note that for $0< t < \eps ^2$ it makes no sense to apply estimates (\ref{Th_exp_korrector}) and (\ref{Th_exp_potok}),
since it is better to use the following simple statement
(which is valid, however, for all $t>0$).

\begin{proposition}
Suppose that the assumptions of Theorem \textnormal{2.2} are satisfied. Then for $t>0$ and $\varepsilon >0$ we have
\begin{align}
\label{small_t}
&\Vert e^{-\mathcal{A}_{D,\varepsilon}t}- e^{-\mathcal{A}_D^0 t}\Vert _{L_2(\mathcal{O})\rightarrow H^1(\mathcal{O})}
\leqslant C_{14} t^{-1/2} e^{-c_* t/2},
\\
\label{small_t_potok}
&\Vert g^\varepsilon b({\mathbf D}) e^{-\mathcal{A}_{D,\varepsilon}t}  \Vert _{L_2(\mathcal{O})\rightarrow L_2(\mathcal{O})}
\leqslant \widetilde{C}_{14} t^{-1/2} e^{-c_* t/2},
\\
\label{small_t_potok_0}
&\Vert g^0 b({\mathbf D}) e^{-\mathcal{A}_{D}^0 t}  \Vert _{L_2(\mathcal{O})\rightarrow L_2(\mathcal{O})}
\leqslant \widetilde{C}_{14} t^{-1/2} e^{-c_* t/2},
\end{align}
where $C_{14}=2\alpha _0^{-1/2}\Vert g^{-1}\Vert _{L_\infty}^{1/2}\left(1+(\textnormal{diam}\,
\mathcal{O})^2\right)^{1/2},$ $\widetilde{C}_{14}=\Vert g\Vert _{L_\infty}^{1/2}$.
\end{proposition}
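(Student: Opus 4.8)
The plan is to obtain all three inequalities directly from Lemma~3.1 and the quadratic-form bounds of \S1, with no new ingredients. For \eqref{small_t} I would use the triangle inequality
\begin{equation*}
\Vert e^{-\mathcal{A}_{D,\varepsilon}t}- e^{-\mathcal{A}_D^0 t}\Vert _{L_2(\mathcal{O})\rightarrow H^1(\mathcal{O})}
\leqslant \Vert e^{-\mathcal{A}_{D,\varepsilon}t}\Vert _{L_2(\mathcal{O})\rightarrow H^1(\mathcal{O})}
+\Vert e^{-\mathcal{A}_D^0 t}\Vert _{L_2(\mathcal{O})\rightarrow H^1(\mathcal{O})}
\end{equation*}
and then apply \eqref{3.2} and \eqref{3.4}, which bound the right-hand side by $2(c_0^{-1}+c_*^{-1})^{1/2}t^{-1/2}e^{-c_*t/2}$. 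It then remains only to identify the constant: since $c_0=\alpha_0\Vert g^{-1}\Vert_{L_\infty}^{-1}$ and $c_*=c_0(\textnormal{diam}\,\mathcal{O})^{-2}$, we get $c_0^{-1}+c_*^{-1}=\alpha_0^{-1}\Vert g^{-1}\Vert_{L_\infty}\bigl(1+(\textnormal{diam}\,\mathcal{O})^2\bigr)$, so $2(c_0^{-1}+c_*^{-1})^{1/2}=C_{14}$, which gives \eqref{small_t}.

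For the flux estimates \eqref{small_t_potok} and \eqref{small_t_potok_0} the key observation is the elementary pointwise inequality $|g(\mathbf{x})\boldsymbol{\zeta}|^2\leqslant\Vert g\Vert_{L_\infty}\langle g(\mathbf{x})\boldsymbol{\zeta},\boldsymbol{\zeta}\rangle$ for $\boldsymbol{\zeta}\in\mathbb{C}^m$, valid because $g(\mathbf{x})$ is Hermitian positive definite with $g(\mathbf{x})\leqslant\Vert g\Vert_{L_\infty}\mathbf{1}_m$ (substitute $\boldsymbol{\zeta}=g(\mathbf{x})^{-1/2}\boldsymbol{\eta}$); the analogous inequality holds for $g^0$ thanks to $|g^0|\leqslant\Vert g\Vert_{L_\infty}$ from \eqref{|g^0|, |g^0^_1|<=}. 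Taking $\boldsymbol{\zeta}=b(\mathbf{D})\mathbf{u}$ and integrating over $\mathcal{O}$ yields $\Vert g^\varepsilon b(\mathbf{D})\mathbf{u}\Vert_{L_2(\mathcal{O})}^2\leqslant\Vert g\Vert_{L_\infty}\,a_{D,\varepsilon}[\mathbf{u},\mathbf{u}]=\Vert g\Vert_{L_\infty}\Vert\mathcal{A}_{D,\varepsilon}^{1/2}\mathbf{u}\Vert_{L_2(\mathcal{O})}^2$ for $\mathbf{u}\in H^1_0(\mathcal{O};\mathbb{C}^n)$, and hence
\begin{equation*}
\Vert g^\varepsilon b(\mathbf{D})e^{-\mathcal{A}_{D,\varepsilon}t}\Vert_{L_2(\mathcal{O})\to L_2(\mathcal{O})}
\leqslant\Vert g\Vert_{L_\infty}^{1/2}\Vert\mathcal{A}_{D,\varepsilon}^{1/2}e^{-\mathcal{A}_{D,\varepsilon}t}\Vert_{L_2(\mathcal{O})\to L_2(\mathcal{O})}.
\end{equation*}
Now \eqref{3.7} bounds the last factor by $t^{-1/2}e^{-c_*t/2}$, which gives \eqref{small_t_potok} with $\widetilde{C}_{14}=\Vert g\Vert_{L_\infty}^{1/2}$. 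The proof of \eqref{small_t_potok_0} is word for word the same, with $a_{D,\varepsilon}$, $\mathcal{A}_{D,\varepsilon}$, and \eqref{3.7} replaced by $a_D^0$, $\mathcal{A}_D^0$, and \eqref{3.8}.

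I do not expect any genuine obstacle: the statement is a bookkeeping corollary of facts already established. The only two points deserving a line of justification are the identification $2(c_0^{-1}+c_*^{-1})^{1/2}=C_{14}$ and the pointwise matrix inequality $|g\boldsymbol{\zeta}|^2\leqslant\Vert g\Vert_{L_\infty}\langle g\boldsymbol{\zeta},\boldsymbol{\zeta}\rangle$; everything else is an immediate combination of the triangle inequality, the spectral theorem (already used in the form \eqref{3.7} and \eqref{3.8}), and the form inequalities of \S1 together with \eqref{|g^0|, |g^0^_1|<=}.
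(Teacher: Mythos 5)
Your proof is correct and follows essentially the same route as the paper: the triangle inequality with \eqref{3.2}, \eqref{3.4} and the explicit formulas for $c_0$, $c_*$ for the first estimate, and the bound $\Vert g^\eps b(\D) e^{-\mathcal{A}_{D,\varepsilon}t}\Vert \leqslant \Vert g\Vert_{L_\infty}^{1/2}\Vert \mathcal{A}_{D,\varepsilon}^{1/2} e^{-\mathcal{A}_{D,\varepsilon}t}\Vert$ combined with \eqref{3.7}, \eqref{3.8} for the flux estimates. The only difference is that you spell out the constant identification and the pointwise matrix inequality, which the paper leaves implicit.
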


\begin{proof}
Estimate \eqref{small_t} follows from \eqref{3.2}, \eqref{3.4}, and expressions for the constants $c_0$ and $c_*$.

Next, we have
$$
\Vert g^\eps b(\D) e^{-\mathcal{A}_{D,\varepsilon}t} \Vert _{L_2(\mathcal{O})\rightarrow L_2(\mathcal{O})}\leqslant \Vert g\Vert _{L_\infty}^{1/2}\Vert \mathcal{A}_{D,\varepsilon}^{1/2} e^{-\mathcal{A}_{D,\varepsilon}t}\|_{L_2(\mathcal{O})\rightarrow L_2(\mathcal{O})}.
$$
Combining this with (\ref{3.7}), we obtain (\ref{small_t_potok}). Taking (\ref{|g^0|, |g^0^_1|<=}) and  \eqref{3.8} into account,
it is easy to check (\ref{small_t_potok_0}) in a similar way.
\end{proof}

\subsection*{3.5. The case where $\Lambda \in L_\infty$.} Now, assume in addition that the matrix-valued function
$\Lambda (\mathbf{x})$ satisfies Condition 2.5. In this case it is possible to remove the smoothing operator  $S_\varepsilon$ in the corrector
  and to use the following corrector
\begin{equation}
\label{K_D^0(t,e)}
\mathcal{K}_D^0(t;\varepsilon)=[\Lambda ^\varepsilon]b(\mathbf{D})e^{-\mathcal{A}_D^0 t} .
\end{equation}
For $t>0$ the operator $b(\mathbf{D})e^{-\mathcal{A}_D^0t}$ is continuous from $L_2(\mathcal{O};\mathbb{C}^n)$ to $H^1(\mathcal{O};\mathbb{C}^m)$. As has been already mentioned in Subsection~2.3,  under Condition~2.5 the operator $[\Lambda ^\varepsilon ]$ is continuous from  $H^1(\mathcal{O};\mathbb{C}^m)$ to $H^1(\mathcal{O};\mathbb{C}^n)$. Hence, the operator $\mathcal{K}_D^0(t;\varepsilon)$
is a continuous mapping of  $L_2(\mathcal{O};\mathbb{C}^n)$ to $H^1(\mathcal{O};\mathbb{C}^n)$.

\begin{theorem}
Suppose that the assumptions of Theorem \textnormal{2.2} are satisfied. Suppose that the matrix-valued function $\Lambda(\mathbf{x})$
satisfies Condition~\textnormal{2.5}. Let $\mathcal{K}_D^0(t;\varepsilon)$ be the operator \textnormal{(\ref{K_D^0(t,e)})}.
Then for $0<\varepsilon \leqslant \varepsilon _1$ and $t>0$ we have
\begin{align}
\label{th3.5}
&\Vert e^{-\mathcal{A}_{D,\varepsilon}t}-e^{-\mathcal{A}_D^0t}-\varepsilon \mathcal{K}_D^0(t;\varepsilon)\Vert _{L_2(\mathcal{O})\rightarrow H^1(\mathcal{O})}\leqslant C_{15}(\varepsilon ^{1/2}t^{-3/4}+\varepsilon t^{-1})
e^{-c_{*}t/2},
\\
\label{th3.5_2}
&\Vert g^\varepsilon b(\mathbf{D})e^{-\mathcal{A}_{D,\varepsilon}t}-\widetilde{g}^\varepsilon b(\mathbf{D})e^{-\mathcal{A}_D^0t}\Vert _{L_2(\mathcal{O})\rightarrow L_2(\mathcal{O})}\leqslant \widetilde{C}_{15}(\varepsilon ^{1/2}t^{-3/4}+\varepsilon t^{-1})
e^{-c_{*}t/2},
\end{align}
where $\widetilde{g}$ is the matrix-valued function \textnormal{(\ref{tilde g})}.
The constants $C_{15}$ and $\widetilde{C}_{15}$ depend only on the initial data
\textnormal{\eqref{data}} and $\Vert \Lambda \Vert _{L_\infty}$.
\end{theorem}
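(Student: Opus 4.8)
The plan is to follow the proof of Theorem 3.4 line by line, using Theorem 2.6 in place of Theorem 2.3. Condition 2.5 is exactly what makes the simpler corrector $\mathcal{K}_D^0(t;\varepsilon)$ meaningful: under it $[\Lambda^\varepsilon]$ is continuous from $H^1(\mathcal{O};\mathbb{C}^m)$ to $H^1(\mathcal{O};\mathbb{C}^n)$, as explained in Subsection 2.3, and it is also the standing hypothesis of Theorem 2.6.

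The first step is the contour representation. I take the same contour $\gamma$ as in the proofs of Theorems 3.2 and 3.4. Since $\mathcal{K}_D^0(t;\varepsilon)=[\Lambda^\varepsilon]b(\mathbf{D})e^{-\mathcal{A}_D^0t}$, $K_D^0(\varepsilon;\zeta)=[\Lambda^\varepsilon]b(\mathbf{D})(\mathcal{A}_D^0-\zeta I)^{-1}$, and the fixed operator $[\Lambda^\varepsilon]b(\mathbf{D})$ is continuous from $H^2(\mathcal{O};\mathbb{C}^n)$ to $H^1(\mathcal{O};\mathbb{C}^n)$ (see Subsection 2.3), it may be pulled out of the norm-convergent integral, which gives
\begin{equation*}
\begin{split}
&e^{-\mathcal{A}_{D,\varepsilon}t}-e^{-\mathcal{A}_D^0t}-\varepsilon \mathcal{K}_D^0(t;\varepsilon)\\
&=-\frac{1}{2\pi i}\int_\gamma e^{-\zeta t}\left((\mathcal{A}_{D,\varepsilon}-\zeta I)^{-1}-(\mathcal{A}_D^0-\zeta I)^{-1}-\varepsilon K_D^0(\varepsilon;\zeta)\right)\,d\zeta .
\end{split}
\end{equation*}

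The second step is to bound the integrand on $\gamma$ uniformly in $\arg\zeta$, exactly as in \eqref{ots_zeta_levee_c2}--\eqref{Raznost res na gamma}. For $\zeta\in\gamma$ with $\vert\zeta\vert\leqslant\check{c}:=\max\{1;\sqrt{5}\,c_*/2\}$ I apply assertion $2^\circ$ of Theorem 2.6 together with the bounds $\psi\in(\pi/4,7\pi/4)$ and $\rho_*(\zeta)\leqslant 2\max\{1;8c_*^{-2}\}$ valid on $\gamma$, and then coarsen the resulting $O(\varepsilon)$ estimate to the form $\varepsilon^{1/2}\vert\zeta\vert^{-1/4}+\varepsilon$ using $\vert\zeta\vert\leqslant\check{c}$ and $\varepsilon\leqslant\varepsilon_1\leqslant 1$; for $\zeta\in\gamma$ with $\vert\zeta\vert>\check{c}$ I apply assertion $1^\circ$ with $c(\phi)\leqslant\sqrt{5}$. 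Combining these gives, for $0<\varepsilon\leqslant\varepsilon_1$,
\begin{equation*}
\Vert(\mathcal{A}_{D,\varepsilon}-\zeta I)^{-1}-(\mathcal{A}_D^0-\zeta I)^{-1}-\varepsilon K_D^0(\varepsilon;\zeta)\Vert_{L_2\to H^1}\leqslant\widehat{C}_{15}\left(\varepsilon^{1/2}\vert\zeta\vert^{-1/4}+\varepsilon\right),\quad\zeta\in\gamma,
\end{equation*}
with $\widehat{C}_{15}$ depending only on the initial data and $\Vert\Lambda\Vert_{L_\infty}$. The third step is to substitute this into the integral and parametrize $\gamma$ by $\textnormal{Im}\,\zeta$: since $\textnormal{Re}\,\zeta=\vert\textnormal{Im}\,\zeta\vert+c_*/2$ we have $\vert e^{-\zeta t}\vert\leqslant e^{-c_*t/2}e^{-t\vert\textnormal{Im}\,\zeta\vert}$ and $\vert\zeta\vert\geqslant\vert\textnormal{Im}\,\zeta\vert$, so the integral of $\vert e^{-\zeta t}\vert\,\vert\zeta\vert^{-1/4}$ over $\gamma$ is controlled by $\Gamma(3/4)\,t^{-3/4}e^{-c_*t/2}$ and that of $\vert e^{-\zeta t}\vert$ by $t^{-1}e^{-c_*t/2}$ (the arclength factor being absorbed into the constant). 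This yields \eqref{th3.5} with an explicit constant $C_{15}$ of the same form as $C_{13}$ in Theorem 3.4.

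The flux estimate \eqref{th3.5_2} is obtained in the same way from the identity
\begin{equation*}
\begin{split}
&g^\varepsilon b(\mathbf{D})e^{-\mathcal{A}_{D,\varepsilon}t}-\widetilde{g}^\varepsilon b(\mathbf{D})e^{-\mathcal{A}_D^0t}\\
&=-\frac{1}{2\pi i}\int_\gamma e^{-\zeta t}\left(g^\varepsilon b(\mathbf{D})(\mathcal{A}_{D,\varepsilon}-\zeta I)^{-1}-\widetilde{g}^\varepsilon b(\mathbf{D})(\mathcal{A}_D^0-\zeta I)^{-1}\right)\,d\zeta
\end{split}
\end{equation*}
and the flux bounds of Theorem 2.6. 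I do not expect any genuine obstacle here: the proof is an exact parallel of that of Theorem 3.4, and the only points needing (routine) care are the bookkeeping of the constants and the justification of the contour-integral manipulations — absolute convergence in the $(L_2\to H^1)$- and $(L_2\to L_2)$-operator norms and the permission to extract the fixed operator $[\Lambda^\varepsilon]b(\mathbf{D})$ from under the integral sign.
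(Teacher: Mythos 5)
Your proposal is correct and coincides with the paper's own (one-line) proof, which simply says to repeat the argument of Theorem 3.3 with Theorem 2.6 in place of Theorem 2.3; you have merely written out the details (contour representation, uniform bound of the integrand on $\gamma$, integration) that the paper leaves implicit. The only cosmetic slip is that the theorem whose proof you are paralleling is Theorem 3.3, not 3.4.
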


The proof of Theorem 3.6 is similar to that of Theorem~3.3.
The difference is that instead of Theorem~2.3 one should apply Theorem~2.6.

\subsection*{3.6. The case of smooth boundary.}
It is also possible to remove the smoothing operator $S_\eps$ in the corrector
under the assumption of additional smoothness of the boundary.
In this subsection we consider the case where $d \geqslant 3$, because
for $d \leqslant 2$ Theorem 3.6 is applicable (see Proposition~2.7($1^\circ$)).

\begin{lemma}
Suppose that $k \geqslant 2$ is integer. Let ${\mathcal O} \subset {\mathbb R}^d$ be a bounded domain with
the boundary $\partial {\mathcal O}$ of class $C^{k-1,1}$.
Then for $t>0$ the operator $e^{-\mathcal{A}^0_{D}t}$ is continuous from $L_2({\mathcal O};{\mathbb C}^n)$
to $H^s({\mathcal O};{\mathbb C}^n)$, $0\leqslant s \leqslant k$, and the following estimate is valid:
\begin{equation}
\label{*.1}
\|e^{-\mathcal{A}^0_{D}t}\|_{L_2({\mathcal O}) \to H^s({\mathcal O})}
\leqslant \widehat{\mathrm C}_s  t^{-s/2} e^{-c_{*}t/2} ,\quad t>0.
\end{equation}
The constant $\widehat{\mathrm C}_s$ depends only on
$s$,  $\alpha _0$, $\alpha _1$, $\Vert g\Vert _{L_\infty}$, $\Vert g^{-1}\Vert _{L_\infty}$, and the domain $\mathcal{O}$.
\end{lemma}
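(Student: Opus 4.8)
The plan is to argue as in the derivation of \eqref{3.5}, combining elliptic regularity for the constant-coefficient effective operator with the spectral theorem, and then to pass to the remaining (odd integer and non-integer) values of $s$ by interpolation. Throughout, $\mathcal{A}_D^0 = b(\mathbf{D})^* g^0 b(\mathbf{D})$ is a strongly elliptic second order system with \emph{constant} coefficients whose ellipticity constants are controlled by $\alpha_0$, $\alpha_1$, $\|g\|_{L_\infty}$, and $\|g^{-1}\|_{L_\infty}$ (cf. \eqref{|g^0|, |g^0^_1|<=}).

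The first and main step is to establish the uniform embedding
\[
\|v\|_{H^s(\mathcal{O})} \leqslant \mathrm{c}_s\, \|(\mathcal{A}_D^0)^{s/2} v\|_{L_2(\mathcal{O})}, \qquad v \in \Dom(\mathcal{A}_D^0)^{s/2}, \quad 0\leqslant s \leqslant k,
\]
with $\mathrm{c}_s$ depending only on the data listed in the statement. For $s=0$ it is trivial, and for $s=1$ it follows from \eqref{<a_D,0<}, \eqref{a_D,0>=} together with the identity $\Dom(\mathcal{A}_D^0)^{1/2}=H^1_0(\mathcal{O};\mathbb{C}^n)$ (which in particular gives $\|(\mathcal{A}_D^0)^{-1/2}\|_{L_2(\mathcal{O})\to H^1(\mathcal{O})}\leqslant (c_0^{-1}+c_*^{-1})^{1/2}$, as already used for \eqref{3.4}). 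For an even integer $s=2l\leqslant k$ the bound amounts to $\|(\mathcal{A}_D^0)^{-l}\|_{L_2(\mathcal{O})\to H^{2l}(\mathcal{O})}\leqslant \mathrm{c}_{2l}$, obtained by iterating the elliptic regularity estimate $\|(\mathcal{A}_D^0)^{-1}f\|_{H^{j+2}(\mathcal{O})}\leqslant C_j\|f\|_{H^j(\mathcal{O})}$ for $0\leqslant j\leqslant k-2$; this is legitimate since $\partial\mathcal{O}\in C^{k-1,1}$, and it rests on the regularity theory for strongly elliptic systems with constant coefficients (see \cite[Chapter~4]{McL}), exactly as in the justification of \eqref{A0^-1}. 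For an odd integer $s=2l+1\leqslant k$ one writes $(\mathcal{A}_D^0)^{-(2l+1)/2}=(\mathcal{A}_D^0)^{-l}(\mathcal{A}_D^0)^{-1/2}$ and uses that $(\mathcal{A}_D^0)^{-1/2}$ maps $L_2(\mathcal{O};\mathbb{C}^n)$ into $H^1_0(\mathcal{O};\mathbb{C}^n)$, while $(\mathcal{A}_D^0)^{-l}$ maps $H^1_0(\mathcal{O};\mathbb{C}^n)$ into $H^{2l+1}(\mathcal{O};\mathbb{C}^n)$ by the same iterated elliptic regularity (the needed smoothness $\partial\mathcal{O}\in C^{2l,1}$ is covered by $C^{k-1,1}$). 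Finally, for non-integer $s\in(0,k)$ one interpolates: the scales $\{\Dom(\mathcal{A}_D^0)^{\alpha}\}$ and $\{H^\alpha(\mathcal{O})\}$ are both complex interpolation scales (for the latter we use $\partial\mathcal{O}\in C^{k-1,1}$ and $\lceil s\rceil\leqslant k$), so the embedding at $\lfloor s\rfloor$ and $\lceil s\rceil$ interpolates to the embedding at $s$, with $\mathrm{c}_s\leqslant \mathrm{c}_{\lfloor s\rfloor}^{1-\theta}\mathrm{c}_{\lceil s\rceil}^{\theta}$.

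Given this, the conclusion is immediate from the spectral theorem, as in the proof of Lemma~\textnormal{3.1}. For $t>0$ the operator $e^{-\mathcal{A}_D^0 t}$ maps $L_2(\mathcal{O};\mathbb{C}^n)$ into $\Dom(\mathcal{A}_D^0)^{s/2}$, and
\[
\|e^{-\mathcal{A}_D^0 t}\|_{L_2(\mathcal{O})\to H^s(\mathcal{O})} \leqslant \mathrm{c}_s\, \|(\mathcal{A}_D^0)^{s/2}e^{-\mathcal{A}_D^0 t}\|_{L_2(\mathcal{O})\to L_2(\mathcal{O})} \leqslant \mathrm{c}_s \sup_{\mu\geqslant c_*}\mu^{s/2}e^{-\mu t}.
\]
Just as in \eqref{3.7}, \eqref{3.8a}, one has $\sup_{\mu\geqslant c_*}\mu^{s/2}e^{-\mu t}\leqslant e^{-c_* t/2}\sup_{\mu\geqslant 0}\mu^{s/2}e^{-\mu t/2}=(s/e)^{s/2}\,t^{-s/2}e^{-c_* t/2}$, which yields \eqref{*.1} with $\widehat{\mathrm C}_s=\mathrm{c}_s(s/e)^{s/2}$ for $s>0$ and $\widehat{\mathrm C}_0=1$.

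The main obstacle is precisely the uniform embedding $\Dom(\mathcal{A}_D^0)^{s/2}\hookrightarrow H^s(\mathcal{O})$ for all real $s\in[0,k]$: for even integer $s$ it is the iterated elliptic regularity already invoked for \eqref{A0^-1}, but extending it to half-integer and non-integer $s$ requires matching the operator-domain interpolation scale with the Sobolev scale on a $C^{k-1,1}$ domain and keeping track of the dependence of the constants on the listed data only. Everything else reduces to the elementary bound $\sup_{\mu\geqslant c_*}\mu^{s/2}e^{-\mu t}\leqslant (s/e)^{s/2}t^{-s/2}e^{-c_* t/2}$ already exploited in Lemma~\textnormal{3.1}.
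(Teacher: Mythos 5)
Your proposal is correct and follows essentially the same route as the paper: reduce to integer $s$ by interpolation, bound $(\mathcal{A}_D^0)^{-s/2}$ from $L_2(\mathcal{O};\mathbb{C}^n)$ to $H^s(\mathcal{O};\mathbb{C}^n)$ via iterated elliptic regularity combined with $(\mathcal{A}_D^0)^{-1/2}\colon L_2\to H^1_0$ for the odd case, and finish with the spectral bound $\sup_{\mu\geqslant c_*}\mu^{s/2}e^{-\mu t}\leqslant (s/e)^{s/2}t^{-s/2}e^{-c_*t/2}$. The only (immaterial) difference is that the paper interpolates the final estimate \eqref{*.1} between integer values of $s$, whereas you interpolate the embedding $\Dom(\mathcal{A}_D^0)^{s/2}\hookrightarrow H^s(\mathcal{O})$ first; both are valid.
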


\begin{proof}
It suffices to check estimate \eqref{*.1} for integer $s\in [0,k]$; then the result for non-integer $s$ follows by interpolation.
For $s=0,1,2$ estimate \eqref{*.1} has been already proved (see Lemma~3.1).

So, let $s$ be integer and let $2 \leqslant s \leqslant k$.
We apply theorems about regularity of solutions of strongly elliptic equations (see, e.~g., [McL, Chapter~4]). By these theorems,
the operator $({\mathcal A}_D^0)^{-1}$ is continuous from
$H^\sigma({\mathcal O};{\mathbb C}^n)$ to $H^{\sigma+2}({\mathcal O};{\mathbb C}^n)$ if $\partial {\mathcal O} \in C^{\sigma+1,1}$.
Here $\sigma\geqslant 0$ is integer. Note also that the operator
$({\mathcal A}_D^0)^{-1/2}$ is continuous from $L_2({\mathcal O};{\mathbb C}^n)$ to
$H^{1}({\mathcal O};{\mathbb C}^n)$.
It follows that, under the assumptions of Lemma, for integer $s \in [2,k]$
the operator $({\mathcal A}_D^0)^{-s/2}$ is continuous from $L_2({\mathcal O};{\mathbb C}^n)$ to
$H^{s}({\mathcal O};{\mathbb C}^n)$. Herewith,
\begin{equation}
\label{*.2}
\| (\mathcal{A}^0_{D})^{-s/2}\|_{L_2({\mathcal O}) \to H^s({\mathcal O})} \leqslant \widetilde{\mathrm C}_s,
\end{equation}
where the constant $\widetilde{\mathrm C}_s$ depends only on
$s$,  $\alpha _0$, $\alpha _1$, $\Vert g\Vert _{L_\infty}$, $\Vert g^{-1}\Vert _{L_\infty}$, and the domain $\mathcal{O}$.
From \eqref{*.2} it follows that
\begin{equation}
\label{3.32a}
\begin{split}
\|e^{-\mathcal{A}^0_{D}t}\|_{L_2({\mathcal O}) \to H^s({\mathcal O})}
&\leqslant \widetilde{\mathrm C}_s
\| (\mathcal{A}^0_{D})^{s/2}e^{-\mathcal{A}^0_{D}t}\|_{L_2({\mathcal O}) \to L_2({\mathcal O})}
\leqslant
\widetilde{\mathrm C}_s \sup_{x \geqslant c_*} x^{s/2} e^{-xt}
\\
&\leqslant
\widetilde{\mathrm C}_s  t^{-s/2} e^{-c_{*}t/2} \sup_{\alpha \geqslant 0} \alpha^{s/2} e^{-\alpha/2}
\leqslant
\widehat{\mathrm C}_s  t^{-s/2} e^{-c_{*}t/2},
\end{split}
\end{equation}
where $\widehat{\mathrm C}_s=\widetilde{\mathrm C}_s  (s/e)^{s/2}$.
\end{proof}

Using the properties of the matrix-valued function $\Lambda({\mathbf x})$ and the operator $S_\eps$,
it is possible to deduce the estimate for the difference of the correctors \eqref{K_D(t,e)} and \eqref{K_D^0(t,e)}
from Lemma~3.7 under the assumption of additional smoothness of the boundary.

\begin{lemma}
Let $d \geqslant 3$.
Let ${\mathcal O} \subset {\mathbb R}^d$ be a bounded domain with the boundary
$\partial {\mathcal O}$ of class $C^{d/2,1}$ if $d$ is even and of class $C^{(d+1)/2,1}$ if $d$ is odd.
Let $\mathcal{K}_D(t;\varepsilon)$ be the operator \eqref{K_D(t,e)}, and let
$\mathcal{K}_D^0(t;\varepsilon)$ be the operator \eqref{K_D^0(t,e)}. Then
\begin{equation}
\label{*.3}
\| \mathcal{K}_D(t;\varepsilon) - \mathcal{K}_D^0(t;\varepsilon)
\|_{L_2({\mathcal O}) \to H^1({\mathcal O})}
\leqslant \check{\mathrm C}_d   (t^{-1} +  t^{-d/4 -1/2}) e^{-c_{*}t/2},\quad t>0.
\end{equation}
The constant $\check{\mathrm C}_d$ depends only on $d$,  $m$,
$\alpha _0$, $\alpha _1$, $\Vert g\Vert _{L_\infty}$, $\Vert g^{-1}\Vert _{L_\infty}$, and the domain $\mathcal{O}$.
\end{lemma}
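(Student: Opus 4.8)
The plan is to exploit the obvious cancellation between the two correctors. Since $b(\mathbf{D})$ is a local first order operator and $P_\mathcal{O}$ extends functions from $\mathcal{O}$ to $\mathbb{R}^d$, one has $b(\mathbf{D})e^{-\mathcal{A}_D^0 t}=R_\mathcal{O}b(\mathbf{D})P_\mathcal{O}e^{-\mathcal{A}_D^0 t}$ on $\mathcal{O}$, so that
\begin{equation*}
\mathcal{K}_D(t;\varepsilon)-\mathcal{K}_D^0(t;\varepsilon)=R_\mathcal{O}\,[\Lambda^\varepsilon]\,(S_\varepsilon-I)\,b(\mathbf{D})P_\mathcal{O}e^{-\mathcal{A}_D^0 t}.
\end{equation*}
Writing $\Phi_t:=b(\mathbf{D})P_\mathcal{O}e^{-\mathcal{A}_D^0 t}$ and $\mathbf{w}_t:=(S_\varepsilon-I)\Phi_t\mathbf{f}$, $\mathbf{f}\in L_2(\mathcal{O};\mathbb{C}^n)$, and using $\|R_\mathcal{O}\cdot\|_{H^1(\mathcal{O})}\le\|\cdot\|_{H^1(\mathbb{R}^d)}$, I would reduce \eqref{*.3} to $\|[\Lambda^\varepsilon]\mathbf{w}_t\|_{H^1(\mathbb{R}^d)}\le C(t^{-1}+t^{-d/4-1/2})e^{-c_* t/2}\|\mathbf{f}\|$. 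Replacing $P_\mathcal{O}$ by its composition with a fixed cutoff equal to $1$ on $\mathcal{O}$ (which does not change the corrector on $\mathcal{O}$), I may assume the ranges of $\Phi_t$ and $\mathbf{w}_t$ consist of functions supported in a fixed bounded set $K\supset\overline{\mathcal{O}}$.

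Next I would expand $\mathbf{D}(\Lambda^\varepsilon\mathbf{w}_t)=\varepsilon^{-1}(\mathbf{D}\Lambda)^\varepsilon\mathbf{w}_t+\Lambda^\varepsilon\mathbf{D}\mathbf{w}_t$ and apply Lemma~1.5 (extended to $\mathbf{w}_t$ by density, using that $\mathbf{w}_t$ lies in a fixed higher Sobolev space), obtaining $\varepsilon^{-1}\|(\mathbf{D}\Lambda)^\varepsilon\mathbf{w}_t\|_{L_2}\le\beta_1^{1/2}\varepsilon^{-1}\|\mathbf{w}_t\|_{L_2}+\beta_2^{1/2}(\int|\Lambda^\varepsilon|^2|\mathbf{D}\mathbf{w}_t|^2)^{1/2}$. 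Thus $\|[\Lambda^\varepsilon]\mathbf{w}_t\|_{H^1}$ is dominated by $\varepsilon^{-1}\|\mathbf{w}_t\|_{L_2}$ together with several terms of the form $\|\Lambda^\varepsilon\boldsymbol{\psi}\|_{L_2}$, $\boldsymbol{\psi}\in\{\mathbf{w}_t,\mathbf{D}\mathbf{w}_t\}$. Since Condition~2.5 is not assumed, $\Lambda^\varepsilon$ cannot be pulled out in $L_\infty$; instead, because $\Lambda\in\widetilde{H}^1(\Omega)$ and $d\ge3$, the Sobolev embedding $H^1(\Omega)\hookrightarrow L_{2d/(d-2)}(\Omega)$, the bound \eqref{M}, and the elementary scaling estimate for $L_p$-norms of periodic functions over a fixed bounded set give $\|\Lambda^\varepsilon\|_{L_{2d/(d-2)}(K)}\le C$ uniformly in $\varepsilon$; Hölder's inequality with exponents $\tfrac{2d}{d-2}$ and $d$ then bounds each $\|\Lambda^\varepsilon\boldsymbol{\psi}\|_{L_2}$ by $C\|\boldsymbol{\psi}\|_{L_d(\mathbb{R}^d)}$.

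It remains to estimate $\|\mathbf{w}_t\|_{L_d}$, $\|\mathbf{D}\mathbf{w}_t\|_{L_d}$ and $\varepsilon^{-1}\|\mathbf{w}_t\|_{L_2}$. Using $\|S_\varepsilon\|_{L_q\to L_q}\le1$ for all $q$ and that $S_\varepsilon$ commutes with $\mathbf{D}$, I would pass to $\Phi_t\mathbf{f}$ and $\mathbf{D}\Phi_t\mathbf{f}$, then invoke Lemma~3.7 with $s=\tfrac d2+1$ — admissible for both parities of $d$ precisely under the stated smoothness of $\partial\mathcal{O}$ — to get $\|e^{-\mathcal{A}_D^0 t}\mathbf{f}\|_{H^{d/2+1}(\mathcal{O})}\le C\,t^{-d/4-1/2}e^{-c_* t/2}\|\mathbf{f}\|$, whence $\Phi_t\mathbf{f}\in H^{d/2}(\mathbb{R}^d)$ and $\mathbf{D}\Phi_t\mathbf{f}\in H^{d/2-1}(\mathbb{R}^d)$ with the same $t$-decay. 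The critical embeddings $H^{d/2}(\mathbb{R}^d)\hookrightarrow L_d(\mathbb{R}^d)$ and $H^{d/2-1}(\mathbb{R}^d)\hookrightarrow L_d(\mathbb{R}^d)$ then produce all the $t^{-d/4-1/2}e^{-c_* t/2}$ contributions. For the last term I would use Lemma~1.9, $\varepsilon^{-1}\|\mathbf{w}_t\|_{L_2}\le r_1\|\mathbf{D}\Phi_t\mathbf{f}\|_{L_2}$, followed by Lemma~3.1 (estimate \eqref{3.5}), bounding $\|\mathbf{D}\Phi_t\mathbf{f}\|_{L_2}\le C\|e^{-\mathcal{A}_D^0 t}\mathbf{f}\|_{H^2(\mathcal{O})}\le C\,t^{-1}e^{-c_* t/2}\|\mathbf{f}\|$; this is the source of the $t^{-1}$ summand. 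Collecting everything and using $0<\varepsilon\le\varepsilon_1\le1$ yields \eqref{*.3}.

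The main obstacle is precisely the treatment of the unbounded factor $\Lambda^\varepsilon$ in the absence of Condition~2.5. The device that makes it work — controlling $\Lambda^\varepsilon$ only in $L_{2d/(d-2)}$ and pairing it, via Lemma~1.5 and Hölder, against $L_d$-norms of the (smoothed) effective profile — is effective, but the requirement that $\mathbf{D}\Phi_t\mathbf{f}$ belong to $L_d$ forces the use of the critical embedding $H^{d/2-1}\hookrightarrow L_d$, hence of the $H^{d/2+1}$-smoothing of $e^{-\mathcal{A}_D^0 t}$; this is exactly why the precise boundary regularity ($C^{d/2,1}$ for even $d$, $C^{(d+1)/2,1}$ for odd $d$) is imposed, and why the exponent $d/4+1/2$ — and not a better one — appears in \eqref{*.3}. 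A minor but necessary point is to arrange, via the cutoff mentioned above, that all periodic-scaling estimates for $\Lambda^\varepsilon$ are applied only over the fixed bounded region $K$.
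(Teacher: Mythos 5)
Your proposal is correct and follows essentially the same route as the paper: the cancellation reduces the difference of the correctors to $R_\mathcal{O}\,[\Lambda^\varepsilon](S_\varepsilon-I)b(\mathbf{D})P_\mathcal{O}e^{-\mathcal{A}_D^0t}$, the unbounded factor $\Lambda^\varepsilon$ is handled exactly as in the paper's Lemma~9.1 (the embedding $H^1(\Omega)\hookrightarrow L_{2d/(d-2)}(\Omega)$ together with \eqref{M}, H\"older against $L_d$, and the critical embedding $H^{d/2-1}(\mathbb{R}^d)\hookrightarrow L_d(\mathbb{R}^d)$ --- the paper merely organizes this cell by cell over the lattice, which also dispenses with your cutoff), and the two summands $t^{-1}$ and $t^{-d/4-1/2}$ come, as in the paper, from Lemma~1.8 combined with \eqref{3.5} and from Lemma~3.7 with $s=d/2+1$, respectively. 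Two trivial corrections: the $(S_\varepsilon-I)$ estimate is Lemma~1.8 (there is no Lemma~1.9), and your cutoff must equal $1$ on a fixed neighbourhood of $\overline{\mathcal{O}}$ rather than merely on $\mathcal{O}$, since $S_\varepsilon$ samples values at distance up to $2r_1\varepsilon$ outside $\mathcal{O}$.
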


Lemma~3.8 and Theorem~3.3 imply the following result.

\begin{theorem}
Suppose that the assumptions of Theorem~\textnormal{2.2} are satisfied, and let $d \geqslant 3$.
Suppose that the domain ${\mathcal O}$ satisfies the assumptions of Lemma~\textnormal{3.8}.
Let $\mathcal{K}_D^0(t;\varepsilon)$ be the operator~\textnormal{(\ref{K_D^0(t,e)})}.
Let $\widetilde{g}$ be the matrix-valued function~\textnormal{(\ref{tilde g})}.
Then for $0<\varepsilon \leqslant \varepsilon _1$ and $t>0$ we have
\begin{align}
\label{*.4}
\begin{split}
&\Vert e^{-\mathcal{A}_{D,\varepsilon}t}-e^{-\mathcal{A}_D^0t}-\varepsilon \mathcal{K}_D^0(t;\varepsilon)\Vert _{L_2(\mathcal{O})\rightarrow H^1(\mathcal{O})}
\\
&\leqslant
{\mathrm C}^*_{d}(\varepsilon ^{1/2}t^{-3/4}+\varepsilon t^{-1} + \varepsilon t^{-d/4 -1/2})
e^
{-c_{*}t/2},
\end{split}
\\
\label{*.5}
\begin{split}
&\Vert g^\varepsilon b(\mathbf{D})e^{-\mathcal{A}_{D,\varepsilon}t}-\widetilde{g}^\varepsilon b(\mathbf{D})e^{-\mathcal{A}_D^0t}\Vert _{L_2(\mathcal{O})\rightarrow L_2(\mathcal{O})}
\\
&\leqslant
\widetilde{\mathrm C}^*_{d}(\varepsilon ^{1/2}t^{-3/4}+\varepsilon t^{-1} + \varepsilon t^{-d/4 -1/2})
e^{-c_{*}t/2}.
\end{split}
\end{align}
The constants ${\mathrm C}^*_{d}$ and $\widetilde{\mathrm C}^*_{d}$ depend only on the initial data
\textnormal{\eqref{data}}.
\end{theorem}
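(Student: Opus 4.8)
The strategy is simply to replace, in Theorem~3.3, the corrector $\mathcal{K}_D(t;\varepsilon)$ of \eqref{K_D(t,e)} (which contains the Steklov smoothing operator $S_\varepsilon$) by the simpler corrector $\mathcal{K}_D^0(t;\varepsilon)$ of \eqref{K_D^0(t,e)}, and to pay for this replacement with the estimate of Lemma~3.8 for the difference of the two correctors. Concretely, I would write
\begin{equation*}
e^{-\mathcal{A}_{D,\varepsilon}t}-e^{-\mathcal{A}_D^0t}-\varepsilon \mathcal{K}_D^0(t;\varepsilon) = \bigl(e^{-\mathcal{A}_{D,\varepsilon}t}-e^{-\mathcal{A}_D^0t}-\varepsilon \mathcal{K}_D(t;\varepsilon)\bigr) + \varepsilon\bigl(\mathcal{K}_D(t;\varepsilon) - \mathcal{K}_D^0(t;\varepsilon)\bigr)
\end{equation*}
and estimate the two summands on the right separately in the $(L_2(\mathcal{O})\to H^1(\mathcal{O}))$-norm.

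For the first summand I would invoke estimate \eqref{Th_exp_korrector} of Theorem~3.3, which gives the bound $C_{13}(\varepsilon^{1/2}t^{-3/4}+\varepsilon t^{-1})e^{-c_*t/2}$ for all $t>0$ and $0<\varepsilon\leqslant\varepsilon_1$. For the second summand I would use Lemma~3.8, which applies since $d\geqslant 3$ and $\mathcal{O}$ has the required boundary regularity: multiplying \eqref{*.3} by $\varepsilon$ yields $\varepsilon\|\mathcal{K}_D(t;\varepsilon)-\mathcal{K}_D^0(t;\varepsilon)\|_{L_2(\mathcal{O})\to H^1(\mathcal{O})}\leqslant \check{\mathrm{C}}_d\,\varepsilon(t^{-1}+t^{-d/4-1/2})e^{-c_*t/2}$. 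Adding the two estimates and merging the two $\varepsilon t^{-1}$-terms gives \eqref{*.4} with ${\mathrm{C}}^*_d=C_{13}+\check{\mathrm{C}}_d$, which depends only on the initial data \eqref{data}, since the same is true of $C_{13}$ and $\check{\mathrm{C}}_d$. No lower restriction on $t$ is needed, because both Theorem~3.3 and Lemma~3.8 are valid for all $t>0$.

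For the flux estimate \eqref{*.5} I would argue in the same way, using that $b(\mathbf{D})e^{-\mathcal{A}_D^0t}=R_\mathcal{O}b(\mathbf{D})P_\mathcal{O}e^{-\mathcal{A}_D^0t}$ on $\mathcal{O}$ (as $P_\mathcal{O}$ is an extension operator), so that
\begin{equation*}
g^\varepsilon b(\mathbf{D})e^{-\mathcal{A}_{D,\varepsilon}t}-\widetilde{g}^\varepsilon b(\mathbf{D})e^{-\mathcal{A}_D^0t} = \bigl(g^\varepsilon b(\mathbf{D})e^{-\mathcal{A}_{D,\varepsilon}t}-\widetilde{g}^\varepsilon S_\varepsilon b(\mathbf{D})P_\mathcal{O}e^{-\mathcal{A}_D^0t}\bigr) + R_\mathcal{O}[\widetilde{g}^\varepsilon](S_\varepsilon-I)b(\mathbf{D})P_\mathcal{O}e^{-\mathcal{A}_D^0t}.
\end{equation*}
The first summand is bounded by \eqref{Th_exp_potok} of Theorem~3.3. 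The second summand is the exact analogue of $\mathcal{K}_D(t;\varepsilon)-\mathcal{K}_D^0(t;\varepsilon)=R_\mathcal{O}[\Lambda^\varepsilon](S_\varepsilon-I)b(\mathbf{D})P_\mathcal{O}e^{-\mathcal{A}_D^0t}$ with the periodic $L_2(\Omega)$-matrix $\widetilde{g}$ in the role of $\Lambda$; hence, by the very argument proving Lemma~3.8 (Lemma~1.8 for $[\widetilde{g}^\varepsilon]S_\varepsilon$, Lemma~1.9 for $S_\varepsilon-I$, and Lemma~3.7 for the higher Sobolev regularity of $e^{-\mathcal{A}_D^0t}$ provided by the smoothness of $\partial\mathcal{O}$), and as carried out in the Appendix, it obeys a bound of the form $C\,\varepsilon(t^{-1}+t^{-d/4-1/2})e^{-c_*t/2}$ with $C$ depending only on the initial data. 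Adding the two gives \eqref{*.5} with $\widetilde{\mathrm{C}}^*_d=\widetilde{C}_{13}+C$.

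This combination step is routine; the substance of the theorem sits entirely in Lemma~3.8 and its flux counterpart, and that is where the main obstacle lies. The delicate point there is the bookkeeping of Sobolev exponents: estimating $[\Lambda^\varepsilon](S_\varepsilon-I)\mathbf{w}$ in $H^1$ requires differentiating the product, which produces the singular factor $\varepsilon^{-1}(\mathbf{D}\Lambda)^\varepsilon$; to absorb it one needs $\mathbf{w}=b(\mathbf{D})P_\mathcal{O}e^{-\mathcal{A}_D^0t}$ to lie in a Sobolev class of order $\sim d/2$, which by Lemma~3.7 forces $\partial\mathcal{O}\in C^{d/2,1}$ for even $d$ and $\partial\mathcal{O}\in C^{(d+1)/2,1}$ for odd $d$, and which, again by Lemma~3.7, costs the factor $t^{-d/4-1/2}$ — precisely the origin of the extra term $\varepsilon t^{-d/4-1/2}$ in \eqref{*.4} and \eqref{*.5}.
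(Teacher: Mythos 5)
Your proof of \eqref{*.4} coincides with the paper's: one writes the left-hand side as $\bigl(e^{-\mathcal{A}_{D,\varepsilon}t}-e^{-\mathcal{A}_D^0t}-\varepsilon \mathcal{K}_D(t;\varepsilon)\bigr)+\varepsilon\bigl(\mathcal{K}_D(t;\varepsilon)-\mathcal{K}_D^0(t;\varepsilon)\bigr)$, applies \eqref{Th_exp_korrector} and \eqref{*.3}, and gets ${\mathrm C}^*_d=C_{13}+\check{\mathrm C}_d$; this part is correct and is the same argument.

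For \eqref{*.5} you choose a different decomposition, and it has a genuine gap. You must bound $R_\mathcal{O}[\widetilde{g}^\varepsilon](S_\varepsilon-I)b(\mathbf{D})P_\mathcal{O}e^{-\mathcal{A}_D^0t}$ in the $(L_2\to L_2)$-norm and assert that this follows ``by the very argument proving Lemma~3.8'' with $\widetilde{g}$ in the role of $\Lambda$. That argument, however, rests on the multiplicative property of $\Lambda$ (Lemma~9.1 and Corollary~9.2), which uses $\Lambda\in \widetilde{H}^1(\Omega)$ and the embedding $H^1(\Omega)\subset L_{2d/(d-2)}(\Omega)$; for $\widetilde{g}=g(b(\mathbf{D})\Lambda+\mathbf{1}_m)$ one only knows $\widetilde{g}\in L_2(\Omega)$. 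Multiplication by $\widetilde{g}^\varepsilon$ \emph{without} the smoothing $S_\varepsilon$ is not a bounded operation on $L_2$ (only $[\widetilde{g}^\varepsilon]S_\varepsilon$ is controlled, by Lemma~1.7), and to dominate $[\widetilde{g}^\varepsilon]$ one would need the other factor $(S_\varepsilon-I)b(\mathbf{D})P_\mathcal{O}e^{-\mathcal{A}_D^0t}$ to lie in $L_\infty$, i.e.\ in $H^s$ with $s>d/2$; under the assumed boundary smoothness Lemma~3.7 gives $b(\mathbf{D})P_\mathcal{O}e^{-\mathcal{A}_D^0t}(\cdot,t)$ only in $H^{d/2}$ for even $d$, so the required embedding fails at the endpoint, and in any case no such argument is actually carried out. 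The paper avoids this entirely: it deduces \eqref{*.5} from \eqref{*.4} by applying $g^\varepsilon b(\mathbf{D})$ (at the cost of a factor $\|g\|_{L_\infty}(d\alpha_1)^{1/2}$) and then using the Leibniz identity
\begin{equation*}
g^\varepsilon b(\mathbf{D})\bigl(\varepsilon \Lambda^\varepsilon b(\mathbf{D})e^{-\mathcal{A}_D^0t}\bigr)
= g^\varepsilon (b(\mathbf{D})\Lambda)^\varepsilon b(\mathbf{D})e^{-\mathcal{A}_D^0t}
+ \varepsilon \sum_{i,j=1}^d g^\varepsilon b_i \Lambda^\varepsilon b_j D_iD_j e^{-\mathcal{A}_D^0t};
\end{equation*}
by \eqref{tilde g} the first term together with $g^\varepsilon b(\mathbf{D})e^{-\mathcal{A}_D^0t}$ reconstructs $\widetilde{g}^\varepsilon b(\mathbf{D})e^{-\mathcal{A}_D^0t}$, while the second term is estimated by $C\varepsilon t^{-d/4-1/2}e^{-c_*t/2}$ using Corollary~9.2 (boundedness of $[\Lambda^\varepsilon]:H^{d/2-1}(\mathcal{O})\to L_2(\mathcal{O})$) and Lemma~3.7. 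Replacing your treatment of the flux term by this identity repairs the proof; as it stands, the bound you claim for the $[\widetilde{g}^\varepsilon](S_\varepsilon-I)$-term is unjustified.
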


The proofs of Lemma~3.8 and Theorem~3.9 are postponed until Appendix (see \S 9) in order
not to overload the main exposition.
Clearly, it is convenient to apply Theorem 3.9 if $t$ is separated from zero.
For small values of $t$ the order of the factor $(\varepsilon ^{1/2}t^{-3/4}+\varepsilon t^{-1} + \varepsilon t^{-d/4 -1/2})$
grows with dimension. This is ,,charge'' for removing the smoothing operator.

\begin{remark}
\textnormal{In Lemma~3.8, instead of the smoothness condition on $\partial \mathcal{O}$, one could
impose an implicit condition on the domain:
suppose that a bounded domain $\mathcal{O}$ with Lipschitz boundary is such that
estimate \eqref{*.2} with $s=d/2+1$ is valid.
In such domain all the statements of Lemma~3.8 and Theorem~3.9 are true.}
\end{remark}

\subsection*{3.7. Special cases.}
The case where the corrector is equal to zero is distinguished by the condition $g^0=\overline{g}$.
By Proposition~1.2, in this case relations (\ref{overline-g}) are satisfied, and then the periodic solution $\Lambda$ of problem (\ref{Lambda_problem}) is equal to zero.
Applying Theorem~3.3, we obtain the following statement.

\begin{proposition}
Suppose that the assumptions of Theorem \textnormal{3.3} are satisfied.
Let $g^0=\overline{g}$, i.~e. relations \textnormal{(\ref{overline-g})} are satisfied.
Then $\mathcal{K}_D(t;\varepsilon)=0$, and for $0<\varepsilon\leqslant \varepsilon _1$ we have
$$
\Vert  e^{-\mathcal{A}_{D,\varepsilon}t}-e^{-\mathcal{A}_D^0t}
\Vert _{L_2(\mathcal{O})\rightarrow H^1(\mathcal{O})}\leqslant C_{13} (\varepsilon ^{1/2}t^{-3/4}+\varepsilon t^{-1}) e^{-c_{*}t/2},
\ \  t>0.
$$
\end{proposition}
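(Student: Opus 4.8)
The plan is to obtain the statement as an immediate specialization of Theorem~3.3, the only new ingredient being the observation that the condition $g^0=\overline g$ forces the cell solution $\Lambda$, and hence the corrector, to vanish. First I would recall Proposition~1.2: the identity $g^0=\overline g$ is equivalent to the relations $b(\mathbf{D})^*\mathbf{g}_k(\mathbf{x})=0$, $k=1,\dots,m$, for the columns $\mathbf{g}_k$ of $g(\mathbf{x})$; in other words $b(\mathbf{D})^*g(\mathbf{x})=0$ in the sense of distributions, column by column.

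Next I would show that under this condition the $\Gamma$-periodic solution $\Lambda$ of the cell problem \eqref{Lambda_problem} is identically zero. Substituting $\Lambda\equiv 0$ into \eqref{Lambda_problem} gives $b(\mathbf{D})^*g(\mathbf{x})\bigl(b(\mathbf{D})\cdot 0+\mathbf{1}_m\bigr)=b(\mathbf{D})^*g(\mathbf{x})=0$, which holds by \eqref{overline-g}, while the normalization $\int_\Omega 0\,d\mathbf{x}=0$ is trivially satisfied. Since the weak $\Gamma$-periodic $\widetilde H^1(\Omega)$-solution of \eqref{Lambda_problem} with zero mean over $\Omega$ is unique (this is the standard coercivity/uniqueness fact underlying the definition of $\Lambda$, and is the only point that is not purely formal), we conclude $\Lambda=0$, and consequently $\widetilde g=g$ by \eqref{tilde g}. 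In particular $\Lambda^\varepsilon=0$, so the corrector $\mathcal{K}_D(t;\varepsilon)=R_{\mathcal O}[\Lambda^\varepsilon]S_\varepsilon b(\mathbf{D})P_{\mathcal O}e^{-\mathcal{A}_D^0 t}$ defined by \eqref{K_D(t,e)} is the zero operator.

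It then remains to invoke Theorem~3.3: estimate \eqref{Th_exp_korrector} holds with the term $\varepsilon\mathcal{K}_D(t;\varepsilon)$ replaced by $0$, which yields exactly
\[
\|e^{-\mathcal{A}_{D,\varepsilon}t}-e^{-\mathcal{A}_D^0 t}\|_{L_2(\mathcal{O})\to H^1(\mathcal{O})}\le C_{13}\bigl(\varepsilon^{1/2}t^{-3/4}+\varepsilon t^{-1}\bigr)e^{-c_* t/2},\qquad t>0,
\]
with the same constant $C_{13}$ as in Theorem~3.3. There is essentially no obstacle: the entire analytic content sits in the already-established Theorem~3.3, and the present proposition is merely its reading in the degenerate case $g^0=\overline g$, where the algebraic structure makes the corrector disappear. (If desired, the same argument applied to \eqref{Th_exp_potok} with $\widetilde g=g$ gives the corresponding statement for the fluxes, but that is not part of the assertion.)
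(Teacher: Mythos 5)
Your proposal is correct and follows the paper's own argument exactly: Proposition~1.2 gives relations \eqref{overline-g}, these force the zero-mean periodic cell solution $\Lambda$ of \eqref{Lambda_problem} to vanish (you just spell out the uniqueness step the paper leaves implicit), hence $\mathcal{K}_D(t;\varepsilon)=0$, and the estimate is then \eqref{Th_exp_korrector} of Theorem~3.3 with the corrector term dropped.
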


If $g^0 = \underline{g}$, by Proposition~1.3, relations (\ref{underline-g}) are satisfied.
Then the matrix (\ref{tilde g}) is constant: $\widetilde{g}(\x)=g^0 = \underline{g}$.
Besides, by Proposition~2.7($3^\circ$), in this case Condition~2.5 is satisfied.
Applying Theorem~3.6, we arrive at the following statement.

\begin{proposition}
Suppose that the assumptions of Theorem~\textnormal{2.2} are satisfied.
Let $g^0=\underline{g}$, i.~e., relations~\textnormal{(\ref{underline-g})} are satisfied.
Then for $0 < \eps \leqslant \eps_1$ and $t>0$ we have
$$
\Vert g^\eps b(\D) e^{-\mathcal{A}_{D,\varepsilon}t}- g^0 b(\D) e^{-\mathcal{A}_D^0t}
\Vert _{L_2(\mathcal{O})\rightarrow L_2(\mathcal{O})}\leqslant \widetilde{C}_{15} (\varepsilon ^{1/2}t^{-3/4}+\varepsilon t^{-1}) e^{-c_{*}t/2}.
$$
\end{proposition}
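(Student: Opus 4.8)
The plan is to derive the estimate as an immediate specialization of Theorem~3.6. First I would verify that the hypotheses of Theorem~3.6 are met under the assumption $g^0=\underline g$. By Proposition~1.3, the equality $g^0=\underline g$ is equivalent to the relations \eqref{underline-g}, which are precisely what is assumed here; and by Proposition~2.7($3^\circ$) this equality forces Condition~2.5 to hold, so that $\Lambda\in L_\infty$. Hence Theorem~3.6 applies and estimate \eqref{th3.5_2} is at our disposal. Moreover, by Remark~2.8 (case~$3^\circ$) the norm $\Vert\Lambda\Vert_{L_\infty}$ is itself controlled by the initial data \eqref{data}, so the constant $\widetilde{C}_{15}$ furnished by Theorem~3.6 depends only on \eqref{data}, as claimed.

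Next I would use the elementary fact, recorded just above the statement, that when $g^0=\underline g$ the matrix-valued function \eqref{tilde g} is constant and coincides with $g^0$: $\widetilde g(\x)\equiv g^0$. (Indeed, \eqref{underline-g} implies, via \eqref{Lambda_problem}, that $b(\D)\Lambda(\x)+\mathbf{1}_m=g(\x)^{-1}g^0$ for a.e.\ $\x\in\Omega$, whence $\widetilde g=g\cdot(g^{-1}g^0)\equiv g^0$; see \cite[Chapter~3]{BSu}.) Consequently $\widetilde g^\varepsilon(\x)=g^0$ identically, and therefore the term $\widetilde g^\varepsilon b(\D)e^{-\mathcal{A}_D^0 t}$ occurring in \eqref{th3.5_2} equals $g^0 b(\D)e^{-\mathcal{A}_D^0 t}$.

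Substituting this identity into \eqref{th3.5_2}, its left-hand side becomes exactly $\Vert g^\varepsilon b(\D)e^{-\mathcal{A}_{D,\varepsilon}t}-g^0 b(\D)e^{-\mathcal{A}_D^0 t}\Vert_{L_2(\mathcal{O})\to L_2(\mathcal{O})}$, while the right-hand side is $\widetilde{C}_{15}(\varepsilon^{1/2}t^{-3/4}+\varepsilon t^{-1})e^{-c_* t/2}$. This is the asserted inequality, valid for all $0<\varepsilon\le\varepsilon_1$ and $t>0$, and the argument is complete.

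There is essentially no obstacle in this proof: the statement is a direct corollary of Theorem~3.6 combined with an algebraic identity for the effective matrix. The only point one must be slightly careful about is the constancy of $\widetilde g$ under the hypothesis $g^0=\underline g$; but this is a classical property of the effective matrix (already invoked in the excerpt preceding the proposition), so no genuine difficulty arises.
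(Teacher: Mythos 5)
Your proof is correct and follows exactly the paper's route: Proposition~1.3 and Proposition~2.7($3^\circ$) to verify Condition~2.5, the observation that $\widetilde{g}\equiv g^0$ when $g^0=\underline{g}$, and then estimate \eqref{th3.5_2} of Theorem~3.6 with $\widetilde{g}^\varepsilon$ replaced by the constant $g^0$. The only nitpick is that Remark~2.8($3^\circ$) controls $\|\Lambda\|_{L_\infty}$ in terms of the data \eqref{data} together with $n$, so strictly speaking $\widetilde{C}_{15}$ may also depend on $n$ --- but the proposition makes no claim about the constant, so this is immaterial.
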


\subsection*{3.8. Estimates in a strictly interior subdomain.}
Applying Theorem~2.9, we improve error estimates in a strictly interior subdomain.

\begin{theorem}
Suppose that the assumptions of Theorem~\textnormal{3.3} are satisfied. Let $\mathcal{O}'$
be a strictly interior subdomain of the domain $\mathcal{O}$, and let $\delta =\textnormal{dist}\,\lbrace\mathcal{O}';\partial \mathcal{O}\rbrace$. Then for $0<\varepsilon\leqslant\varepsilon _1$ and $t>0$
we have
\begin{equation}
\label{th3.8}
\Vert  e^{-\mathcal{A}_{D,\varepsilon}t}-e^{-\mathcal{A}_D^0t}-\varepsilon \mathcal{K}_D(t;\varepsilon)\Vert _{L_2(\mathcal{O})\rightarrow H^1(\mathcal{O}')}\leqslant (C_{16}\delta ^{-1}+ C_{17} )\varepsilon t^{-1}e^{-c_{*}t/2},
\end{equation}
\begin{equation*}
\begin{split}
\Vert& g^\varepsilon b(\mathbf{D})e^{-\mathcal{A}_{D,\varepsilon}t}-\widetilde{g}^\varepsilon S_\varepsilon b(\mathbf{D})P_\mathcal{O}e^{-\mathcal{A}_D^0 t}\Vert _{L_2(\mathcal{O})\rightarrow L_2(\mathcal{O}')}
\\
&\leqslant (\widetilde{C}_{16} \delta ^{-1}+\widetilde{C}_{17})\varepsilon t^{-1}e^{-c_{*}t/2} .
\end{split}
\end{equation*}
The constants $C_{16}$, ${C}_{17}$, $\widetilde{C}_{16}$, and $\widetilde{C}_{17}$
depend only on the initial data \textnormal{\eqref{data}}.
\end{theorem}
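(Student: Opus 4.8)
The plan is to reproduce the scheme of the proofs of Theorems~3.2 and~3.3, with the global resolvent approximations of Theorems~2.2 and~2.3 replaced by the interior approximations of Theorem~2.9. I would keep the same contour $\gamma$ as in Subsection~3.2, namely the union of the two rays $\{\mathrm{Re}\,\zeta=\mathrm{Im}\,\zeta+c_*/2,\ \mathrm{Im}\,\zeta\geqslant0\}$ and $\{\mathrm{Re}\,\zeta=-\mathrm{Im}\,\zeta+c_*/2,\ \mathrm{Im}\,\zeta\leqslant0\}$, which encloses the spectra of $\mathcal{A}_{D,\varepsilon}$ and $\mathcal{A}_D^0$. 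The contour identity established in the proof of Theorem~3.3 (as an identity of operators $L_2(\mathcal{O};\mathbb{C}^n)\to H^1(\mathcal{O};\mathbb{C}^n)$), restricted to $\mathcal{O}'$, gives
\begin{equation*}
e^{-\mathcal{A}_{D,\varepsilon}t}-e^{-\mathcal{A}_D^0t}-\varepsilon\mathcal{K}_D(t;\varepsilon)=-\frac{1}{2\pi i}\int_\gamma e^{-\zeta t}\left((\mathcal{A}_{D,\varepsilon}-\zeta I)^{-1}-(\mathcal{A}_D^0-\zeta I)^{-1}-\varepsilon K_D(\varepsilon;\zeta)\right)d\zeta ,
\end{equation*}
and an analogous representation, starting from the identity for the fluxes used in the proof of Theorem~3.3, holds for $g^\varepsilon b(\mathbf{D})e^{-\mathcal{A}_{D,\varepsilon}t}-\widetilde{g}^\varepsilon S_\varepsilon b(\mathbf{D})P_\mathcal{O}e^{-\mathcal{A}_D^0t}$.

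The main step is a bound for the integrand that is uniform for $\zeta\in\gamma$ in the $(L_2(\mathcal{O})\to H^1(\mathcal{O}'))$-norm. Here I would use that on $\gamma$ all the auxiliary quantities occurring in Theorem~2.9 are bounded by constants depending only on $c_*$: for $|\zeta|>\check{c}:=\max\{1;\sqrt5\,c_*/2\}$ one has $|\sin\phi|\geqslant5^{-1/2}$, hence $c(\phi)\leqslant\sqrt5$; and for every $\zeta\in\gamma$ an elementary computation (minimizing $(s-c_*/2)^2+s^2$ over $s\geqslant0$) gives $|\zeta-c_*|\geqslant c_*/(2\sqrt2)$ and $\psi=\arg(\zeta-c_*)\in(\pi/4,7\pi/4)$, so that $c(\psi)\leqslant\sqrt2$, $\rho_*(\zeta)\leqslant2\max\{1;8c_*^{-2}\}$, and hence $\widehat{\rho}(\zeta)$ is controlled as well. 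Applying assertion~$1^\circ$ of Theorem~2.9 on the part of $\gamma$ with $|\zeta|>\check{c}$, assertion~$2^\circ$ on the bounded part with $|\zeta|\leqslant\check{c}$, and coarsening exactly as in the proof of Theorem~3.3, I would obtain the $\zeta$-independent estimate
\begin{equation*}
\|(\mathcal{A}_{D,\varepsilon}-\zeta I)^{-1}-(\mathcal{A}_D^0-\zeta I)^{-1}-\varepsilon K_D(\varepsilon;\zeta)\|_{L_2(\mathcal{O})\to H^1(\mathcal{O}')}\leqslant(\widehat{C}_{16}\delta^{-1}+\widehat{C}_{17})\varepsilon,\qquad\zeta\in\gamma,
\end{equation*}
with $\widehat{C}_{16},\widehat{C}_{17}$ depending only on the initial data, together with the analogous bound $(\widehat{C}_{16}'\delta^{-1}+\widehat{C}_{17}')\varepsilon$ for the flux term.

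It would then remain to insert these bounds under the integral sign: parametrizing $\gamma$ by $\zeta=(s+c_*/2)\pm is$, $s\geqslant0$, one has $|e^{-\zeta t}|=e^{-(s+c_*/2)t}$ and $|d\zeta|=\sqrt2\,ds$, whence $\int_\gamma|e^{-\zeta t}|\,|d\zeta|=2\sqrt2\,t^{-1}e^{-c_*t/2}$, and multiplying by the $\zeta$-uniform bounds produces $\|e^{-\mathcal{A}_{D,\varepsilon}t}-e^{-\mathcal{A}_D^0t}-\varepsilon\mathcal{K}_D(t;\varepsilon)\|_{L_2(\mathcal{O})\to H^1(\mathcal{O}')}\leqslant(C_{16}\delta^{-1}+C_{17})\varepsilon t^{-1}e^{-c_*t/2}$ with, e.g., $C_{16}=\pi^{-1}\sqrt2\,\widehat{C}_{16}$, and similarly for the flux. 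I do not anticipate a real obstacle: the only matters needing care are the elementary verification that $c(\phi),c(\psi),\rho_*,\widehat{\rho}$ remain bounded on the fixed contour $\gamma$, and the convergence of the $(L_2(\mathcal{O})\to H^1(\mathcal{O}'))$-valued contour integral, which is immediate from assertion~$1^\circ$ of Theorem~2.9 and the exponential decay of $e^{-\zeta t}$ on $\gamma$. It is worth stressing that, unlike in Theorems~3.2 and~3.3, the interior bound of Theorem~2.9 carries no negative power of $|\zeta|$; this is exactly why the time factor here is $t^{-1}$ rather than $t^{-1/2}$ or $t^{-3/4}$, confirming that the deterioration in Theorem~3.3 is caused by the boundary.
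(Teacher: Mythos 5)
Your proposal is correct and follows essentially the same route as the paper: the authors prove Theorem 3.13 by combining the contour identities \eqref{exp -exp-K=int} and \eqref{tozd potoki D int} with Theorem 2.9, using precisely the bounds $c(\phi)\leqslant 5^{1/2}$ for $|\zeta|>\check{c}$, $c(\psi)\leqslant 2^{1/2}$ and $\rho_*(\zeta)\leqslant 2\max\{1;8c_*^{-2}\}$ on $\gamma$ (checked in the proof of Theorem 3.2), and then integrating $e^{-\zeta t}$ along $\gamma$ to produce the factor $t^{-1}e^{-c_*t/2}$. Your elementary verifications (the distance estimate $|\zeta-c_*|\geqslant c_*/(2\sqrt{2})$, the range of $\psi$, and the value of $\int_\gamma|e^{-\zeta t}|\,|d\zeta|$) correctly fill in the details the paper omits.
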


\begin{proof}
The proof is based on application of Theorem 2.9 and identities \eqref{exp -exp-K=int},  \eqref{tozd potoki D int}.
Besides,  we use estimates $c(\psi)\leqslant 2^{1/2}$ and $\rho _*(\zeta)\leqslant 2\max \lbrace 1;8c_*^{-2}\rbrace$
for $\zeta\in \gamma$, and estimate $c(\phi)\leqslant 5^{1/2}$ for $\zeta \in \gamma$ such that $\vert \zeta \vert > \check{c} $.
These estimates have been checked in the proof of Theorem~3.2. We omit the details.
\end{proof}

The following result is checked similarly with the help of Theorem 2.10.

\begin{theorem}
Suppose that the assumptions of Theorem~\textnormal{3.13} are satisfied.
Suppose that the matrix-valued function $\Lambda (\mathbf{x})$ satisfies Condition~\textnormal{2.5}.
Let $\mathcal{K}_D^0(t;\varepsilon)$ be the operator~\eqref{K_D^0(t,e)}. Then for $0<\varepsilon\leqslant\varepsilon _1$ and $t>0$
we have
\begin{align*}
\Vert  &e^{-\mathcal{A}_{D,\varepsilon}t}-e^{-\mathcal{A}_D^0t}-\varepsilon \mathcal{K}_D^0(t;\varepsilon)\Vert _{L_2(\mathcal{O})\rightarrow H^1(\mathcal{O}')}\leqslant (C_{16}\delta ^{-1}+ \check{C}_{17} )\varepsilon t^{-1}e^{-c_{*}t/2},\\
\Vert &g^\varepsilon b(\mathbf{D})e^{-\mathcal{A}_{D,\varepsilon}t}-\widetilde{g}^\varepsilon  b(\mathbf{D})e^{-\mathcal{A}_D^0 t}\Vert _{L_2(\mathcal{O})\rightarrow L_2(\mathcal{O}')}
\leqslant (\widetilde{C}_{16} \delta ^{-1}+ \widehat{C}_{17})\varepsilon t^{-1}e^{-c_{*}t/2} .
\end{align*}
The constants $C_{16}$ and $\widetilde{C}_{16}$ are the same as in Theorem  \textnormal{3.13}.
The constants $\check{C}_{17}$ and $\widehat{C}_{17}$ depend only on the initial data \textnormal{\eqref{data}}
and $\Vert \Lambda \Vert _{L_\infty}$.
\end{theorem}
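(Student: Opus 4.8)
The plan is to transcribe the proof of Theorem~3.13 almost verbatim, replacing the resolvent approximations of Theorem~2.9 by those of Theorem~2.10 (valid under Condition~2.5), which furnish the simpler corrector $K_D^0(\varepsilon;\zeta)$ of \eqref{K_D^0(e)} in place of $K_D(\varepsilon;\zeta)$. First I would record the contour representations. With $\gamma$ the contour used in the proof of Theorem~3.2 (the two rays $\mathrm{Re}\,\zeta=|\mathrm{Im}\,\zeta|+c_*/2$), the identity \eqref{2.1a}, its analogue for $\mathcal{A}_D^0$, and the definition \eqref{K_D^0(t,e)} of $\mathcal{K}_D^0(t;\varepsilon)=[\Lambda^\varepsilon]b(\mathbf{D})e^{-\mathcal{A}_D^0 t}$ give
\begin{equation*}
\begin{split}
&e^{-\mathcal{A}_{D,\varepsilon}t}-e^{-\mathcal{A}_D^0t}-\varepsilon\mathcal{K}_D^0(t;\varepsilon)\\
&=-\frac{1}{2\pi i}\int_\gamma e^{-\zeta t}\bigl((\mathcal{A}_{D,\varepsilon}-\zeta I)^{-1}-(\mathcal{A}_D^0-\zeta I)^{-1}-\varepsilon K_D^0(\varepsilon;\zeta)\bigr)\,d\zeta,
\end{split}
\end{equation*}
together with the analogous identity for the flux difference $g^\varepsilon b(\mathbf{D})e^{-\mathcal{A}_{D,\varepsilon}t}-\widetilde{g}^\varepsilon b(\mathbf{D})e^{-\mathcal{A}_D^0 t}$, with $g^\varepsilon b(\mathbf{D})(\mathcal{A}_{D,\varepsilon}-\zeta I)^{-1}-\widetilde{g}^\varepsilon b(\mathbf{D})(\mathcal{A}_D^0-\zeta I)^{-1}$ under the integral sign.

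Next I would estimate the integrand on $\gamma$ uniformly in $\mathrm{arg}\,\zeta$, splitting $\gamma$ into $\{\zeta\in\gamma:|\zeta|\le\check{c}\}$, $\check{c}=\max\{1;\sqrt{5}c_*/2\}$, and its complement in $\gamma$, exactly as in the proof of Theorem~3.2. As established there, on $\gamma$ one has $c(\psi)\le\sqrt{2}$ and $\rho_*(\zeta)\le2\max\{1;8c_*^{-2}\}$ (hence $\widehat{\rho}(\zeta)$, $c(\psi)^{1/2}\rho_*(\zeta)^{5/4}$, etc.\ are all bounded by constants depending only on $c_*$), while $c(\phi)\le\sqrt{5}$ for $\zeta\in\gamma$ with $|\zeta|>\check{c}$. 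Since all of $\gamma$ lies in $\mathbb{C}\setminus[c_*,\infty)$, assertion~$2^\circ$ of Theorem~2.10 applies on $\{|\zeta|\le\check{c}\}$, and assertion~$1^\circ$ (where $|\zeta|\ge\check{c}\ge1$) applies on $\{|\zeta|>\check{c}\}$; inserting the above bounds yields, for $0<\varepsilon\le\varepsilon_1$,
\begin{equation*}
\begin{split}
\|&(\mathcal{A}_{D,\varepsilon}-\zeta I)^{-1}-(\mathcal{A}_D^0-\zeta I)^{-1}-\varepsilon K_D^0(\varepsilon;\zeta)\|_{L_2(\mathcal{O})\to H^1(\mathcal{O}')}\\
&\le(C_{16}\delta^{-1}+\mathcal{C})\varepsilon,\qquad\zeta\in\gamma,
\end{split}
\end{equation*}
and the analogous $L_2(\mathcal{O}')$ bound for the flux integrand. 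Here $C_{16}$ is the same constant as in Theorem~3.13, because the $\delta^{-1}$-carrying constants $C_8,C_{10}$ of Theorem~2.10 coincide with those of Theorem~2.9, and $\mathcal{C}$ depends only on the initial data~\eqref{data} and $\|\Lambda\|_{L_\infty}$.

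Finally I would substitute these bounds into the two contour integrals and use $\frac{1}{2\pi}\int_\gamma|e^{-\zeta t}|\,|d\zeta|=\frac{\sqrt{2}}{\pi}\,t^{-1}e^{-c_*t/2}$ (a direct parametrization of the two rays of $\gamma$), which produces the asserted estimates with $\check{C}_{17}=\frac{\sqrt{2}}{\pi}\mathcal{C}$ and the corresponding constant for the flux. I do not expect a genuine obstacle here: the argument is a routine transcription of the proof of Theorem~3.13 with the $\Lambda\in L_\infty$ corrector. The one point demanding care is the bookkeeping of constants — one must keep the $\delta^{-1}$-dependent contributions of Theorem~2.10 separate from the $\delta$-free ones throughout the coarsening on $\gamma$, so that the final bound has the form $(C_{16}\delta^{-1}+\check{C}_{17})\varepsilon t^{-1}e^{-c_*t/2}$ rather than merely $(\mathrm{const})\,\delta^{-1}\varepsilon t^{-1}e^{-c_*t/2}$; since every factor $c(\psi)$, $c(\phi)$, $\rho_*(\zeta)$, $\widehat{\rho}(\zeta)$ occurring in Theorem~2.10 is bounded on $\gamma$ by a constant depending only on $c_*$, this is straightforward.
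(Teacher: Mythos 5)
Your proposal is correct and follows exactly the route the paper intends: the paper proves this theorem by remarking that it is "checked similarly [to Theorem 3.13] with the help of Theorem 2.10," and your write-up simply fills in that transcription — the contour identity with $K_D^0(\varepsilon;\zeta)$, the bounds $c(\psi)\leqslant\sqrt 2$, $\rho_*(\zeta)\leqslant 2\max\{1;8c_*^{-2}\}$ on $\gamma$ and $c(\phi)\leqslant\sqrt 5$ for $|\zeta|>\check c$, and the evaluation of $\int_\gamma|e^{-\zeta t}|\,|d\zeta|$. Your bookkeeping of the $\delta^{-1}$ terms (relying on $C_8$, $C_{10}$, $\widetilde C_8$, $\widetilde C_{10}$ being common to Theorems 2.9 and 2.10) correctly yields that $C_{16}$, $\widetilde C_{16}$ coincide with those of Theorem 3.13.
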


Note that it is possible to remove the smoothing operator~$S_\eps$ from the corrector in estimates of Theorem~3.13
without Condition~2.5. Now we consider the case where $d \geqslant 3$ (because in the opposite case, by
Proposition~2.7($1^\circ$), Theorem~3.14 is applicable).
We know that for $t>0$ the operator $e^{-\mathcal{A}^0_{D}t}$ is continuous from $L_2({\mathcal O};{\mathbb C}^n)$
to $H^2({\mathcal O};{\mathbb C}^n)$, and estimate~\eqref{3.5} holds.
Besides, the following property of ,,interior regularity'' is true:
for $t>0$ the operator $e^{-\mathcal{A}^0_{D}t}$ is continuous from $L_2({\mathcal O};{\mathbb C}^n)$
to $H^s({\mathcal O}';{\mathbb C}^n)$ for any integer $s \geqslant 3$. We have
\begin{equation}
\label{apriori}
\|e^{-\mathcal{A}^0_{D}t}\|_{L_2({\mathcal O}) \to H^s({\mathcal O}')}
\leqslant {\mathrm C}'_s  t^{-1/2}(\delta^{-2} + t^{-1})^{(s-1)/2} e^{-c_{*}t/2} ,\quad t>0.
\end{equation}
The constant ${\mathrm C}'_s$ depends on $s$, $d$, $\alpha _0$, $\alpha _1$, $\Vert g\Vert _{L_\infty}$, $\Vert g^{-1}\Vert _{L_\infty}$, and the
domain $\mathcal{O}$. For scalar parabolic equations the property of ,,interior regularity''
was obtained in \cite{LSoUr} (see Chapter~3, \S\,12). In a similar way it can be checked for the operator
$\mathcal{A}^0_{D}$; it is easy to deduce the qualified estimates \eqref{apriori} using that
the derivatives ${\mathbf D}^\alpha {\mathbf u}_0$ (where ${\mathbf u}_0 = e^{-\mathcal{A}^0_{D}t} {\boldsymbol \varphi}$ with $\boldsymbol{\varphi}\in L_2(\mathcal{O};\mathbb{C}^n)$) satisfy the parabolic equation
$\partial {\mathbf D}^\alpha {\mathbf u}_0/ \partial t = - b({\mathbf D})^* g^0 b({\mathbf D}) {\mathbf D}^\alpha {\mathbf u}_0$.
One should multiply this equation by $\chi^2 {\mathbf D}^\alpha {\mathbf u}_0$ and integrate over the cylinder
${\mathcal O}\times (0,t)$. Here $\chi$ is a smooth cut-off function which equals zero near the  walls and the bottom
of the cylinder. A standard analysis of the corresponding integral identity together with
the already known estimates of Lemma~3.1 yield estimates~\eqref{apriori}.

The following statement is deduced from \eqref{apriori} by using
the properties of the matrix-valued function $\Lambda({\mathbf x})$ and the operator $S_\eps$.

\begin{lemma}
Suppose that the assumptions of Theorem \textnormal{3.13} are satisfied, and let $d\geqslant 3$.
Let $\mathcal{K}_D^0(t;\varepsilon)$ be the operator~\eqref{K_D^0(t,e)}.
Let $r_1$ be defined by \textnormal{(\ref{r})}. Then for $0<\varepsilon \leqslant (4r_1)^{-1}\delta$ and
$t>0$ we have
\begin{equation}
\label{K_minus_K}
\| \mathcal{K}_D(t;\varepsilon) - \mathcal{K}_D^0(t;\varepsilon)
\|_{L_2({\mathcal O}) \to H^1({\mathcal O}')}
\leqslant {\mathrm C}''_d   (t^{-1} +  t^{-1/2}(\delta^{-2} + t^{-1})^{d/4})e^{-c_{*}t/2} .
\end{equation}
The constant ${{\mathrm C}}''_d$ depends only on $d$, $\alpha _0$, $\alpha _1$, $\Vert g\Vert _{L_\infty}$,
$\Vert g^{-1}\Vert _{L_\infty}$, and the domain $\mathcal{O}$.
\end{lemma}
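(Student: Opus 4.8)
The plan is to localize, reduce the difference of the two correctors (applied to a fixed $\boldsymbol{\varphi}\in L_2(\mathcal{O};\mathbb{C}^n)$) to a single Steklov-type term on $\mathcal{O}'$, and then bound that term in $H^1(\mathcal{O}')$ by means of Lemma~1.5 (which plays the role of the unavailable bound $\Lambda\in L_\infty$), the Sobolev embedding $\widetilde{H}^1(\Omega)\hookrightarrow L_{2d/(d-2)}(\Omega)$ (this is where $d\geqslant 3$ is used), and the interior regularity estimate~\eqref{apriori}.

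\textit{Reduction.} Put $\mathbf{u}_0:=e^{-\mathcal{A}_D^0 t}\boldsymbol{\varphi}$ and $\mathbf{w}:=b(\mathbf{D})\mathbf{u}_0$; by interior regularity $\mathbf{u}_0$ and $\mathbf{w}$ are $C^\infty$ on a neighbourhood of $\overline{\mathcal{O}'}$ contained in $\mathcal{O}$. Since $\varepsilon\leqslant(4r_1)^{-1}\delta$, for $\mathbf{x}\in\mathcal{O}'$ the operator $S_\varepsilon$ averages only over points within distance $\varepsilon r_1\leqslant\delta/4$ of $\mathcal{O}'$, all lying in $\mathcal{O}$, where $P_\mathcal{O}\mathbf{u}_0$ coincides with $\mathbf{u}_0$; hence on $\mathcal{O}'$
\[
\bigl(\mathcal{K}_D(t;\varepsilon)-\mathcal{K}_D^0(t;\varepsilon)\bigr)\boldsymbol{\varphi}=[\Lambda^\varepsilon](S_\varepsilon-I)\mathbf{w},
\]
and it suffices to bound $\|[\Lambda^\varepsilon](S_\varepsilon-I)\mathbf{w}\|_{H^1(\mathcal{O}')}$ by the right-hand side of~\eqref{K_minus_K} times $\|\boldsymbol{\varphi}\|_{L_2(\mathcal{O})}$.

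\textit{The $H^1$-estimate.} The rule I would follow is: a ``bare'' Steklov difference $(S_\varepsilon-I)h$ is controlled, via Lemma~1.8, by $\varepsilon\,\|\mathbf{D}h\|_{L_2}$; but whenever $[\Lambda^\varepsilon]$ (or $|\Lambda^\varepsilon|^2$) multiplies a Steklov difference, one instead splits $(S_\varepsilon-I)h=S_\varepsilon h-h$ and uses Lemma~1.7 for $\|[\Lambda^\varepsilon]S_\varepsilon h\|_{L_2}$ and Hölder's inequality $\|[\Lambda^\varepsilon]h\|_{L_2}\leqslant\|\Lambda^\varepsilon\|_{L_{2d/(d-2)}}\|h\|_{L_d}$ (with $\|\Lambda^\varepsilon\|_{L_{2d/(d-2)}}$ bounded uniformly in $\varepsilon$ on bounded sets by scaling and~\eqref{M}). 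Applying this to $[\Lambda^\varepsilon](S_\varepsilon-I)\mathbf{w}$ settles the $L_2(\mathcal{O}')$-norm. For the gradient, $\mathbf{D}\bigl([\Lambda^\varepsilon](S_\varepsilon-I)\mathbf{w}\bigr)=\varepsilon^{-1}(\mathbf{D}\Lambda)^\varepsilon(S_\varepsilon-I)\mathbf{w}+\Lambda^\varepsilon(S_\varepsilon-I)\mathbf{D}\mathbf{w}$ (using $S_\varepsilon\mathbf{D}=\mathbf{D}S_\varepsilon$); the second summand is treated by the same rule. The dangerous first summand I would handle by applying Lemma~1.5 to $\mathbf{v}:=\chi(S_\varepsilon-I)\mathbf{w}\in C_0^\infty(\mathbb{R}^d;\mathbb{C}^m)$, where $\chi$ is a cut-off that is $1$ on $\mathcal{O}'$, supported in the $\delta/4$-neighbourhood of $\mathcal{O}'$, with $\|\mathbf{D}\chi\|_{L_\infty}\leqslant C\delta^{-1}$: the $\beta_1\|\mathbf{v}\|_{L_2}^2$-term is $\leqslant C\varepsilon^2\|\mathbf{D}\mathbf{w}\|_{L_2}^2$ by Lemma~1.8, and in the $\beta_2\varepsilon^2\int|\Lambda^\varepsilon|^2|\mathbf{D}\mathbf{v}|^2$-term every factor $|\Lambda^\varepsilon|^2$ multiplies a Steklov difference and is handled by the rule above; both contributions, after the multiplication by $\varepsilon^{-1}$, are finite. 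Collecting everything, $\|[\Lambda^\varepsilon](S_\varepsilon-I)\mathbf{w}\|_{H^1(\mathcal{O}')}\leqslant C\bigl(\|\mathbf{D}\mathbf{w}\|_{L_2(\mathcal{O})}+\|\mathbf{D}\mathbf{w}\|_{L_d(\mathcal{O}'')}+\delta^{-1}\|\mathbf{w}\|_{L_2(\mathcal{O})}+\delta^{-1}\|\mathbf{w}\|_{L_d(\mathcal{O}'')}\bigr)$, where $\mathcal{O}''$ is a slightly larger strictly interior subdomain.

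\textit{Inserting the regularity bounds, and the main difficulty.} The $L_2$-terms are dominated by $\|\mathbf{u}_0\|_{H^2(\mathcal{O})}$ and $\delta^{-1}\|\mathbf{u}_0\|_{H^1(\mathcal{O})}$, i.e. by $C(t^{-1}+\delta^{-1}t^{-1/2})e^{-c_*t/2}\|\boldsymbol{\varphi}\|_{L_2}$ via~\eqref{3.5},~\eqref{3.4}. For the $L_d$-terms one uses the (scale-invariant) embedding $H^{d/2-1}(\mathbb{R}^d)\hookrightarrow L_d(\mathbb{R}^d)$, so that $\|\mathbf{D}\mathbf{w}\|_{L_d(\mathcal{O}'')}\leqslant C\|\mathbf{u}_0\|_{H^{d/2+1}(\mathcal{O}''')}$ and $\|\mathbf{w}\|_{L_d(\mathcal{O}'')}\leqslant C\|\mathbf{u}_0\|_{H^{d/2}(\mathcal{O}''')}$ on a still larger interior subdomain $\mathcal{O}'''$; then~\eqref{apriori}, supplemented by~\eqref{3.4}--\eqref{3.5} and interpolation for non-integer orders, bounds these by $Ct^{-1/2}(\delta^{-2}+t^{-1})^{d/4}e^{-c_*t/2}\|\boldsymbol{\varphi}\|_{L_2}$ and $Ct^{-1/2}(\delta^{-2}+t^{-1})^{d/4-1/2}e^{-c_*t/2}\|\boldsymbol{\varphi}\|_{L_2}$, respectively. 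Since $\delta\leqslant\mathrm{diam}\,\mathcal{O}$ and $d\geqslant 3$ (so $d/4\geqslant 1/2$), elementary inequalities absorb $\delta^{-1}t^{-1/2}$ and $\delta^{-1}(\delta^{-2}+t^{-1})^{d/4-1/2}$ into $t^{-1/2}(\delta^{-2}+t^{-1})^{d/4}$, giving~\eqref{K_minus_K}. The crux of the whole argument is that $\Lambda$ need not be bounded, so $[\Lambda^\varepsilon]$ is not $H^1\to H^1$ continuous and Corollary~1.6 is useless; everything must be passed through Lemma~1.5 and the embedding $\widetilde{H}^1(\Omega)\hookrightarrow L_{2d/(d-2)}(\Omega)$, and it is precisely this that forces $L_d$-norms of the derivatives of $e^{-\mathcal{A}_D^0 t}\boldsymbol{\varphi}$ and hence, through the exponent $(s-1)/2$ in~\eqref{apriori} at $s\approx d/2+1$, the dimension-dependent factor $(\delta^{-2}+t^{-1})^{d/4}$. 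The remaining work is bookkeeping of the nested subdomains $\mathcal{O}'\subset\mathcal{O}''\subset\mathcal{O}'''\subset\mathcal{O}$ and of the constraint $\varepsilon\leqslant(4r_1)^{-1}\delta$, which renders the Steklov averaging, the cut-off, and the extension operator harmless on $\mathcal{O}'$.
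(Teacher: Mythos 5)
Your proposal is correct and follows essentially the same route as the paper: the paper localizes with the same cut-off, packages your H\"older--Sobolev control of the unbounded $\Lambda^\varepsilon$ (exponents $2d/(d-2)$ and $d$, embedding $H^{d/2-1}\hookrightarrow L_d$, Lemma~1.5 for $(\mathbf{D}\Lambda)^\varepsilon$) into a single multiplicative estimate $\|\Lambda^\varepsilon \mathbf{v}\|_{H^1}\leqslant C\varepsilon^{-1}\|\mathbf{v}\|_{L_2}+C\|\mathbf{v}\|_{H^{d/2}}$, and then feeds in Lemma~1.8 and the interior regularity bound \eqref{apriori} at order $d/2+1$ exactly as you do, obtaining the same two terms $t^{-1}$ and $t^{-1/2}(\delta^{-2}+t^{-1})^{d/4}$. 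The only differences are organizational (inlined estimates versus a standalone multiplicative lemma, and the extension/nested-subdomain bookkeeping you already flag).
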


The following result is deduced from Lemma 3.15 and Theorem 3.13.

\begin{theorem}
Suppose that the assumptions of Theorem \textnormal{3.13} are satisfied, and let $d \geqslant 3$.
Let $\mathcal{K}_D^0(t;\varepsilon)$ be the operator~\eqref{K_D^0(t,e)}.
Then for $0<\varepsilon\leqslant \min\{\varepsilon _1;(4r_1)^{-1}\delta\}$ and $t>0$ we have
\begin{align}
\label{3.26}
&\Vert  e^{-\mathcal{A}_{D,\varepsilon}t}-e^{-\mathcal{A}_D^0t}-\varepsilon \mathcal{K}_D^0(t;\varepsilon)\Vert _{L_2(\mathcal{O})\rightarrow H^1(\mathcal{O}')}\leqslant
{\mathrm C}_d \eps h_d(\delta;t) e^{-c_{*}t/2},
\\
\label{3.27}
&\Vert g^\varepsilon b(\mathbf{D})e^{-\mathcal{A}_{D,\varepsilon}t}-\widetilde{g}^\varepsilon
b(\mathbf{D}) e^{-\mathcal{A}_D^0 t}\Vert _{L_2(\mathcal{O})\rightarrow L_2(\mathcal{O}')}
\leqslant
\widetilde{{\mathrm C}}_d \eps h_d(\delta;t) e^{-c_{*}t/2}.
\end{align}
Here $h_d(\delta;t)$ is defined by
\begin{equation}
\label{h_d(delta;t)}
h_d(\delta;t) = t^{-1}(\delta^{-1}+1) +  t^{-1/2}(\delta^{-2} + t^{-1})^{d/4}.
\end{equation}
The constants ${{\mathrm C}}_d$ and $\widetilde{{\mathrm C}}_d$
depend only on the initial data \textnormal{\eqref{data}}.
\end{theorem}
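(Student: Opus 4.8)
The plan is to obtain Theorem~3.16 directly from Theorem~3.13 and Lemma~3.15 by the triangle inequality, passing from the smoothed corrector $\mathcal{K}_D(t;\varepsilon)$ to $\mathcal{K}_D^0(t;\varepsilon)$. I would work on the range $0<\varepsilon\leqslant\min\{\varepsilon_1;(4r_1)^{-1}\delta\}$, which is exactly the intersection of the ranges of those two statements. The restriction $\varepsilon\leqslant(4r_1)^{-1}\delta$ is used for the following locality fact: for $\mathbf{x}\in\mathcal{O}'$ the Steklov mean $(S_\varepsilon\mathbf{w})(\mathbf{x})$ depends only on the values of $\mathbf{w}$ on $\{|\mathbf{y}-\mathbf{x}|\leqslant\varepsilon r_1\}$, which lies in $\mathcal{O}$ since $\varepsilon r_1\leqslant\delta/4$. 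Hence on $\mathcal{O}'$ one has $S_\varepsilon b(\mathbf{D})P_\mathcal{O}e^{-\mathcal{A}_D^0 t}=S_\varepsilon\bigl(b(\mathbf{D})e^{-\mathcal{A}_D^0 t}\bigr)$, and therefore
\begin{equation*}
\bigl(\mathcal{K}_D(t;\varepsilon)-\mathcal{K}_D^0(t;\varepsilon)\bigr)\boldsymbol{\varphi}
=\Lambda^\varepsilon(S_\varepsilon-I)b(\mathbf{D})e^{-\mathcal{A}_D^0 t}\boldsymbol{\varphi}
\quad\text{on }\mathcal{O}'.
\end{equation*}

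For \eqref{3.26} I would write
\begin{equation*}
\begin{split}
&e^{-\mathcal{A}_{D,\varepsilon}t}-e^{-\mathcal{A}_D^0t}-\varepsilon \mathcal{K}_D^0(t;\varepsilon)\\
&=\bigl(e^{-\mathcal{A}_{D,\varepsilon}t}-e^{-\mathcal{A}_D^0t}-\varepsilon \mathcal{K}_D(t;\varepsilon)\bigr)
+\varepsilon\bigl(\mathcal{K}_D(t;\varepsilon)-\mathcal{K}_D^0(t;\varepsilon)\bigr),
\end{split}
\end{equation*}
bound the first summand in the $(L_2(\mathcal{O})\to H^1(\mathcal{O}'))$-norm by \eqref{th3.8} of Theorem~3.13 and the second by \eqref{K_minus_K} of Lemma~3.15. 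Since $\delta^{-1}+1\geqslant1$, the sum of the two right-hand sides does not exceed $(C_{16}+C_{17}+\mathrm{C}''_d)\,\varepsilon\,h_d(\delta;t)\,e^{-c_*t/2}$ with $h_d$ as in \eqref{h_d(delta;t)}; this is \eqref{3.26}. The flux estimate is obtained by the same splitting, subtracting the second estimate of Theorem~3.13 and controlling the remaining term as described next.

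For the flux, the remaining term to control is $\widetilde{g}^\varepsilon(S_\varepsilon-I)b(\mathbf{D})e^{-\mathcal{A}_D^0 t}$ on $\mathcal{O}'$. Since Condition~2.5 is not assumed, $\widetilde{g}$ need not be bounded, so I would not estimate it directly but reduce it to quantities involving $\Lambda$ only. From \eqref{tilde g} and the Leibniz rule, for any smooth $\mathbf{h}$,
\begin{equation*}
\widetilde{g}^\varepsilon\mathbf{h}=g^\varepsilon\mathbf{h}+\varepsilon\, g^\varepsilon b(\mathbf{D})(\Lambda^\varepsilon\mathbf{h})-\varepsilon\, g^\varepsilon\sum_{l=1}^{d} b_l\,\Lambda^\varepsilon D_l\mathbf{h};
\end{equation*}
applying this with $\mathbf{h}=S_\varepsilon b(\mathbf{D})e^{-\mathcal{A}_D^0 t}\boldsymbol{\varphi}$ and with $\mathbf{h}=b(\mathbf{D})e^{-\mathcal{A}_D^0 t}\boldsymbol{\varphi}$, subtracting, and using $\Lambda^\varepsilon S_\varepsilon b(\mathbf{D})e^{-\mathcal{A}_D^0 t}\boldsymbol{\varphi}=\mathcal{K}_D(t;\varepsilon)\boldsymbol{\varphi}$ and $\Lambda^\varepsilon b(\mathbf{D})e^{-\mathcal{A}_D^0 t}\boldsymbol{\varphi}=\mathcal{K}_D^0(t;\varepsilon)\boldsymbol{\varphi}$ on $\mathcal{O}'$, I obtain there
\begin{equation*}
\begin{split}
\widetilde{g}^\varepsilon(S_\varepsilon-I)b(\mathbf{D})e^{-\mathcal{A}_D^0 t}\boldsymbol{\varphi}
&=g^\varepsilon(S_\varepsilon-I)b(\mathbf{D})e^{-\mathcal{A}_D^0 t}\boldsymbol{\varphi}
+\varepsilon\, g^\varepsilon b(\mathbf{D})\bigl(\mathcal{K}_D(t;\varepsilon)\boldsymbol{\varphi}-\mathcal{K}_D^0(t;\varepsilon)\boldsymbol{\varphi}\bigr)\\
&\quad-\varepsilon\, g^\varepsilon\sum_{l=1}^{d} b_l\,\Lambda^\varepsilon(S_\varepsilon-I)D_l b(\mathbf{D})e^{-\mathcal{A}_D^0 t}\boldsymbol{\varphi}.
\end{split}
\end{equation*}
The first term on the right is $O(\varepsilon t^{-1})$ by Lemma~1.8 and \eqref{3.5}; the second is at most $\varepsilon\alpha_1^{1/2}\|g\|_{L_\infty}$ times the quantity bounded in \eqref{K_minus_K} of Lemma~3.15; the third (``one extra derivative'') term is estimated by the same device used in the proof of Lemma~3.15 — the $L_q$-analogue of Lemma~1.8 via Minkowski's inequality, the embedding $\Lambda\in\widetilde{H}^1(\Omega)\subset L_{2d/(d-2)}(\Omega)$ together with \eqref{M}, and the interior-regularity estimate \eqref{apriori} applied to $\mathbf{D}^2 b(\mathbf{D})e^{-\mathcal{A}_D^0 t}$ on a slightly larger interior subdomain. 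Adding everything back with the bound from Theorem~3.13 and simplifying as in the previous paragraph gives \eqref{3.27}.

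The one genuinely non-routine point is this last term: since an extra $\mathbf{D}$ falls on $e^{-\mathcal{A}_D^0 t}$, the interior-regularity step produces (roughly) a factor $\varepsilon^2 t^{-1/2}(\delta^{-2}+t^{-1})^{d/4+1/2}$, which is a half-power of $(\delta^{-2}+t^{-1})$ worse than the target term in $\varepsilon h_d(\delta;t)$. This surplus is absorbed by splitting into $t\geqslant\varepsilon^2$ and $0<t<\varepsilon^2$ exactly as in the proof of Theorem~3.2: in the first range one has $\varepsilon(\delta^{-2}+t^{-1})^{1/2}\leqslant\varepsilon\delta^{-1}+\varepsilon t^{-1/2}\leqslant(4r_1)^{-1}+1$, which eats the extra power; in the second the asserted bound follows from the elementary estimates of Lemma~3.1 and Proposition~3.5. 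All constants depend only on the initial data \eqref{data}, since those inherited from Theorem~3.13 and Lemma~3.15 do.
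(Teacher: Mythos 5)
Your derivation of \eqref{3.26} is exactly the paper's: triangle inequality combining \eqref{th3.8} with \eqref{K_minus_K}, on the common range $0<\eps\leqslant\min\{\eps_1;(4r_1)^{-1}\delta\}$. For the flux estimate \eqref{3.27} you take a genuinely different route. The paper does not go back to the flux estimate of Theorem~3.13 at all: it applies $g^\eps b(\D)$ to the already-proved $H^1(\mathcal O')$-approximation \eqref{3.26}, and then uses the commutation identity \eqref{*.6a},
\begin{equation*}
g^\varepsilon b({\mathbf D}) \bigl(\varepsilon \Lambda^\varepsilon b({\mathbf D})  e^{-\mathcal{A}_D^0t} \bigr)
=g^\varepsilon (b({\mathbf D})  \Lambda)^\varepsilon b({\mathbf D})  e^{-\mathcal{A}_D^0t}
+ \varepsilon \sum_{i,j=1}^d g^\varepsilon b_i  \Lambda^\varepsilon b_j D_i D_j  e^{-\mathcal{A}_D^0t},
\end{equation*}
whose first term reassembles $\widetilde g^\eps b(\D)e^{-\mathcal{A}_D^0 t}$ and whose second term is bounded by $C^{(9)}\eps t^{-1/2}(\delta^{-2}+t^{-1})^{d/4}e^{-c_*t/2}$ via the cut-off $\chi$, Corollary~9.2 ($\|\Lambda^\eps\mathbf v\|_{L_2}\lesssim\|\mathbf v\|_{H^{l-1}}$, $l=d/2$) and \eqref{apriori} at order $l+1$ (see \eqref{*.105}). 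Your Leibniz expansion of $\widetilde g^\eps(S_\eps-I)b(\D)\mathbf u_0$ is an algebraically equivalent rearrangement of the same identity, and your locality observation justifying $S_\eps b(\D)P_{\mathcal O}=S_\eps b(\D)$ on $\mathcal O'$ is correct; what the paper's ordering buys is that the only term needing the multiplicative property of $\Lambda$ carries the prefactor $\eps$ from the outset and requires no gain from $S_\eps-I$.

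That is precisely where your argument has a soft spot. For your third term $\eps\, g^\eps\sum_l b_l\Lambda^\eps(S_\eps-I)D_lb(\D)\mathbf u_0$ you choose to extract a second factor of $\eps$ from $S_\eps-I$ at the cost of one more derivative on $\mathbf u_0$, producing the surplus $\eps(\delta^{-2}+t^{-1})^{1/2}$, and you absorb it for $t\geqslant\eps^2$ correctly. But for $0<t<\eps^2$ your fallback --- ``the elementary estimates of Lemma~3.1 and Proposition~3.5'' --- does not cover what remains: Proposition~3.5 bounds $g^\eps b(\D)e^{-\mathcal A_{D,\eps}t}$ and $g^0b(\D)e^{-\mathcal A_D^0t}$, but says nothing about $\Lambda^\eps$ paired with second derivatives of $e^{-\mathcal A_D^0t}$, nor about $\widetilde g^\eps b(\D)e^{-\mathcal A_D^0t}$ (recall $\widetilde g$ is not assumed bounded without Condition~2.5), so neither the offending term nor the whole flux difference is controlled by those lemmas alone on $(0,\eps^2)$. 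The fix is simply not to extract the extra $\eps$: localize with $\chi$, bound $S_\eps-I$ by $2$ in the $H^{l-1}(\mathcal O'')$-norm using Lemmas~9.3 and~9.4, and apply Corollary~9.2 together with \eqref{3.5} and \eqref{apriori} at order $l+1$; this lands exactly on $\eps t^{-1/2}(\delta^{-2}+t^{-1})^{d/4}$ for all $t>0$, with no case split, which is how the paper treats its analogous term.
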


Proofs of Lemma 3.15 and Theorem 3.16 are postponed until Appendix (see \S 9) in order
not to overload the main exposition.
Clearly, it is convenient to apply Theorem 3.16 if $t$ is separated from zero.
For small values of $t$ the order of the factor $h_d(\delta;t)$
grows with dimension. This is ,,charge'' for removing the smoothing operator.

\section*{\S 4. Homogenization of solutions of the first initial boundary value problem for parabolic systems}
\setcounter{section}{4}
\setcounter{equation}{0}
\setcounter{theorem}{0}

Now we apply the results of \S 3 to homogenization of solutions of the first initial boundary value problem for parabolic systems.

\subsection*{4.1. The problem for the homogeneous parabolic equation.}
We start with the homogeneous equation.
We are interested in the behaviour for small $\varepsilon$ of the generalized solution $\mathbf{u}_\varepsilon$ of the problem
\begin{equation}
\label{problem}
\begin{cases}
\frac{\partial \mathbf{u}_\varepsilon(\mathbf{x},t)}{\partial t }=-b(\mathbf{D})^*g^\varepsilon (\mathbf{x})b(\mathbf{D})
\mathbf{u}_\varepsilon (\mathbf{x},t),\quad \mathbf{x}\in \mathcal{O},\quad t>0,\\
\mathbf{u}_\varepsilon (\mathbf{x},0)=\boldsymbol{\varphi}(\mathbf{x}),\quad \mathbf{x}\in \mathcal{O},\\
\mathbf{u}_\varepsilon (\cdot ,t)\vert _{\partial \mathcal{O}}=0,\quad t>0,
\end{cases}
\end{equation}
where $\boldsymbol{\varphi}\in L_2(\mathcal{O};\mathbb{C}^n)$. Then $\mathbf{u}_\varepsilon (\cdot ,t)=e^{-\mathcal{A}_{D,\varepsilon}t}\boldsymbol{\varphi}$.

Let $\mathbf{u}_0(\mathbf{x} ,t)$ be the solution of the so called  \glqq homogenized\grqq \, problem
\begin{equation}
\label{eff.problem}
\begin{cases}\frac{\partial \mathbf{u}_0(\mathbf{x},t)}{\partial t}=-b(\mathbf{D})^*g^0b(\mathbf{D})
\mathbf{u}_0(\mathbf{x} ,t),\quad \mathbf{x}\in \mathcal{O},\quad t>0,\\
\mathbf{u}_0(\mathbf{x},0)=\boldsymbol{\varphi}(\mathbf{x}),\quad \mathbf{x}\in \mathcal{O},\\
\mathbf{u}_0(\cdot ,t)\vert _{\partial \mathcal{O}}=0,\quad t>0.
\end{cases}
\end{equation}
Then $\mathbf{u}_0(\cdot ,t)=e^{-\mathcal{A}_D^0t}\boldsymbol{\varphi}$, and Theorems~3.2~and~3.3 directly imply
the following result.

\begin{theorem}
Suppose that the assumptions of Theorem \textnormal{2.2} are satisfied. Let $\mathbf{u}_\varepsilon$ be the solution of
problem~\textnormal{(\ref{problem})}, and let $\mathbf{u}_0$ be the solution of problem~\textnormal{(\ref{eff.problem})},
where $\boldsymbol{\varphi}\in L_2(\mathcal{O};\mathbb{C}^n)$. Then for $0<\varepsilon \leqslant \varepsilon _1$ and $t\geqslant 0$
we have
\begin{equation*}
\Vert \mathbf{u}_\varepsilon (\cdot ,t)-\mathbf{u}_0(\cdot ,t)\Vert _{L_2(\mathcal{O})}\leqslant C_{12}
\varepsilon (t+\varepsilon ^2)^{-1/2}e^{-c_{*} t/2}\Vert \boldsymbol{\varphi}\Vert _{L_2(\mathcal{O})}.
\end{equation*}
Suppose that the matrix-valued function $\Lambda (\mathbf{x})$ is the $\Gamma$-periodic solution of problem~\textnormal{(\ref{Lambda_problem})}. Let 
$\widetilde{\mathbf u}_0(\cdot,t)=P_\mathcal{O} {\mathbf u}_0(\cdot,t)$, where
$P_\mathcal{O}$ is the extension operator~\textnormal{(\ref{P_O H^1, H^2})}. Let $S_\varepsilon$ be the smoothing operator~\textnormal{(\ref{S_e})}.
Then for $0<\varepsilon \leqslant \varepsilon _1$ and $t>0$ we have
\begin{equation*}
\begin{split}
\Vert &\mathbf{u}_\varepsilon (\cdot ,t)- \mathbf{u}_0(\cdot ,t) - \varepsilon \Lambda ^\varepsilon S_\varepsilon b(\mathbf{D}) \widetilde{\mathbf{u}}_0(\cdot ,t)\Vert _{H^1(\mathcal{O})}\\
&\leqslant C_{13}(\varepsilon ^{1/2}t^{-3/4}+\varepsilon t^{-1}) e^{-c_{*} t/2}\Vert \boldsymbol{\varphi}\Vert _{L_2(\mathcal{O})}.
\end{split}
\end{equation*}
Let $\widetilde{g}$ be the matrix-valued function \textnormal{(\ref{tilde g})}. Then the flux $\mathbf{p}_\varepsilon:=g^\varepsilon b(\mathbf{D})\mathbf{u}_\varepsilon$ admits the following approximation for $0<\varepsilon \leqslant \varepsilon _1$ and $t>0$:
\begin{equation*}
\begin{split}
\Vert \mathbf{p}_\varepsilon (\cdot ,t)-\widetilde{g}^\varepsilon S_\varepsilon b(\mathbf{D}) \widetilde{\mathbf{u}}_0(\cdot ,t)\Vert _{L_2(\mathcal{O})}
\leqslant \widetilde{C}_{13}
(\varepsilon^{1/2}t^{-3/4}+\varepsilon t^{-1}) e^{-c_{*} t/2}\Vert \boldsymbol{\varphi}\Vert _{L_2(\mathcal{O})}.
\end{split}
\end{equation*}
\end{theorem}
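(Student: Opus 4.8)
The plan is to deduce the three estimates directly from the operator-norm bounds of Theorems~3.2 and~3.3, applied to the single vector $\boldsymbol{\varphi}$. As recorded in Subsection~4.1, the generalized solutions are given by the semigroups, $\mathbf{u}_\varepsilon(\cdot,t)=e^{-\mathcal{A}_{D,\varepsilon}t}\boldsymbol{\varphi}$ and $\mathbf{u}_0(\cdot,t)=e^{-\mathcal{A}_D^0 t}\boldsymbol{\varphi}$; in particular $\mathbf{u}_0(\cdot,t)\in H^2(\mathcal{O};\mathbb{C}^n)$ for $t>0$ by~\eqref{3.5}, so the correctors below are well defined.

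First I would treat the $L_2$-estimate: writing $\mathbf{u}_\varepsilon(\cdot,t)-\mathbf{u}_0(\cdot,t)=\bigl(e^{-\mathcal{A}_{D,\varepsilon}t}-e^{-\mathcal{A}_D^0 t}\bigr)\boldsymbol{\varphi}$ and applying~\eqref{Th_exp_L_2} of Theorem~3.2 gives the claimed bound with the same constant $C_{12}$.

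Next, for the $H^1$-approximation with corrector, the key is to recognize $\varepsilon\Lambda^\varepsilon S_\varepsilon b(\mathbf{D})\widetilde{\mathbf u}_0(\cdot,t)$ as $\varepsilon\,\mathcal{K}_D(t;\varepsilon)\boldsymbol{\varphi}$: by definition $\widetilde{\mathbf u}_0(\cdot,t)=P_\mathcal{O}\mathbf{u}_0(\cdot,t)=P_\mathcal{O}e^{-\mathcal{A}_D^0 t}\boldsymbol{\varphi}$, and the corrector~\eqref{K_D(t,e)} is precisely $\mathcal{K}_D(t;\varepsilon)=R_\mathcal{O}[\Lambda^\varepsilon]S_\varepsilon b(\mathbf{D})P_\mathcal{O}e^{-\mathcal{A}_D^0 t}$, so the two expressions agree when applied to $\boldsymbol{\varphi}$ (the restriction $R_\mathcal{O}$ being implicit once one works with functions on $\mathcal{O}$). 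Hence $\mathbf{u}_\varepsilon-\mathbf{u}_0-\varepsilon\Lambda^\varepsilon S_\varepsilon b(\mathbf{D})\widetilde{\mathbf u}_0=\bigl(e^{-\mathcal{A}_{D,\varepsilon}t}-e^{-\mathcal{A}_D^0 t}-\varepsilon\mathcal{K}_D(t;\varepsilon)\bigr)\boldsymbol{\varphi}$, and~\eqref{Th_exp_korrector} yields the bound with constant $C_{13}$. The flux estimate follows in the same manner: $\mathbf{p}_\varepsilon(\cdot,t)=g^\varepsilon b(\mathbf{D})e^{-\mathcal{A}_{D,\varepsilon}t}\boldsymbol{\varphi}$ while $\widetilde{g}^\varepsilon S_\varepsilon b(\mathbf{D})\widetilde{\mathbf u}_0(\cdot,t)=\widetilde{g}^\varepsilon S_\varepsilon b(\mathbf{D})P_\mathcal{O}e^{-\mathcal{A}_D^0 t}\boldsymbol{\varphi}$, so~\eqref{Th_exp_potok} applies verbatim, giving the constant $\widetilde{C}_{13}$.

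I expect no real obstacle here: the theorem is a pointwise-in-$\boldsymbol{\varphi}$ rephrasing of the operator results of~\S3. The only items deserving a sentence of care are the identification of the generalized solutions with the semigroup action (standard, and already noted in the text) and the bookkeeping that $\widetilde{\mathbf u}_0$ and the corrector involve $P_\mathcal{O}$, $S_\varepsilon$, and $[\Lambda^\varepsilon]$ composed in exactly the order appearing in~\eqref{K_D(t,e)} and in the flux term of Theorem~3.3.
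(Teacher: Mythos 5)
Your proposal is correct and coincides with the paper's own argument: the paper simply observes that $\mathbf{u}_\varepsilon(\cdot,t)=e^{-\mathcal{A}_{D,\varepsilon}t}\boldsymbol{\varphi}$, $\mathbf{u}_0(\cdot,t)=e^{-\mathcal{A}_D^0 t}\boldsymbol{\varphi}$, and that Theorems~3.2 and~3.3 applied to $\boldsymbol{\varphi}$ directly yield the three estimates, exactly as you describe.
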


Under the assumption that $\Lambda \in L_\infty$ Theorem 3.6 implies the following result.

\begin{theorem}
Suppose that the assumptions of Theorem~\textnormal{4.1} are satisfied.
Suppose that the matrix-valued function $\Lambda (\mathbf{x})$ satisfies Condition~\textnormal{2.5}.
Then for $0<\varepsilon \leqslant \varepsilon _1$ and $t>0$ we have
\begin{align*}
\Vert &\mathbf{u}_\varepsilon (\cdot ,t)-\mathbf{u}_0(\cdot ,t)-\varepsilon \Lambda ^\varepsilon b(\mathbf{D})\mathbf{u}_0(\cdot ,t)\Vert _{H^1(\mathcal{O})}\\
&\leqslant C_{15}(\varepsilon ^{1/2}t^{-3/4}+\varepsilon t^{-1}) e^{-c_{*} t/2}\Vert \boldsymbol{\varphi}\Vert _{L_2(\mathcal{O})},\\
\Vert & \mathbf{p}_\varepsilon (\cdot ,t)-\widetilde{g}^\varepsilon b(\mathbf{D})\mathbf{u}_0(\cdot ,t)\Vert _{L_2(\mathcal{O})}
\leqslant \widetilde{C}_{15} (\varepsilon ^{1/2}t^{-3/4}+\varepsilon t^{-1}) e^{-c_{*} t/2}\Vert \boldsymbol{\varphi}\Vert _{L_2(\mathcal{O})}.
\end{align*}
\end{theorem}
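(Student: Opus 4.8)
The plan is to obtain the statement as an immediate specialization of the operator-norm estimates of Theorem~3.6 to the initial datum $\boldsymbol{\varphi}$. First I would recall from Subsection~4.1 that $\mathbf{u}_\varepsilon(\cdot,t) = e^{-\mathcal{A}_{D,\varepsilon}t}\boldsymbol{\varphi}$ solves problem~\eqref{problem} and $\mathbf{u}_0(\cdot,t) = e^{-\mathcal{A}_D^0 t}\boldsymbol{\varphi}$ solves the effective problem~\eqref{eff.problem}, so that $\mathbf{u}_\varepsilon(\cdot,t) - \mathbf{u}_0(\cdot,t) = (e^{-\mathcal{A}_{D,\varepsilon}t} - e^{-\mathcal{A}_D^0 t})\boldsymbol{\varphi}$. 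Under Condition~2.5 the operator $\mathcal{K}_D^0(t;\varepsilon)$ of \eqref{K_D^0(t,e)} acts by $\mathcal{K}_D^0(t;\varepsilon)\boldsymbol{\varphi} = [\Lambda^\varepsilon]b(\mathbf{D})e^{-\mathcal{A}_D^0 t}\boldsymbol{\varphi} = \Lambda^\varepsilon b(\mathbf{D})\mathbf{u}_0(\cdot,t)$, which is precisely the first-order corrector term written in the statement; likewise $\mathbf{p}_\varepsilon(\cdot,t) = g^\varepsilon b(\mathbf{D})e^{-\mathcal{A}_{D,\varepsilon}t}\boldsymbol{\varphi}$ and $\widetilde{g}^\varepsilon b(\mathbf{D})\mathbf{u}_0(\cdot,t) = \widetilde{g}^\varepsilon b(\mathbf{D})e^{-\mathcal{A}_D^0 t}\boldsymbol{\varphi}$. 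Note also that the hypotheses of Theorem~4.1 together with Condition~2.5 are exactly the hypotheses of Theorem~3.6.

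With these identifications in place, the first asserted inequality follows by applying estimate \eqref{th3.5} of Theorem~3.6 to $\boldsymbol{\varphi}$: for $0<\varepsilon\leqslant\varepsilon_1$ and $t>0$,
$$\Vert \mathbf{u}_\varepsilon(\cdot,t) - \mathbf{u}_0(\cdot,t) - \varepsilon\Lambda^\varepsilon b(\mathbf{D})\mathbf{u}_0(\cdot,t)\Vert _{H^1(\mathcal{O})} \leqslant \Vert e^{-\mathcal{A}_{D,\varepsilon}t} - e^{-\mathcal{A}_D^0 t} - \varepsilon\mathcal{K}_D^0(t;\varepsilon)\Vert _{L_2(\mathcal{O})\rightarrow H^1(\mathcal{O})}\,\Vert \boldsymbol{\varphi}\Vert _{L_2(\mathcal{O})},$$
and the operator norm on the right is bounded by $C_{15}(\varepsilon^{1/2}t^{-3/4} + \varepsilon t^{-1})e^{-c_* t/2}$. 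The second asserted inequality is obtained in the same way from estimate \eqref{th3.5_2} of Theorem~3.6, giving the factor $\widetilde{C}_{15}(\varepsilon^{1/2}t^{-3/4} + \varepsilon t^{-1})e^{-c_* t/2}$ in front of $\Vert\boldsymbol{\varphi}\Vert_{L_2(\mathcal{O})}$.

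There is no essential obstacle in this argument: all the analytic work --- the contour-integral representation of the exponentials, the resolvent estimates of Theorem~2.6 on the contour $\gamma$, and the passage to the exponential --- has already been carried out in the proof of Theorem~3.6, and the only remaining step is the trivial passage from an operator-norm bound $\Vert T_\varepsilon(t)\Vert_{L_2(\mathcal{O})\to H^1(\mathcal{O})}\leqslant \dots$ to the pointwise bound $\Vert T_\varepsilon(t)\boldsymbol{\varphi}\Vert_{H^1(\mathcal{O})}\leqslant\dots\,\Vert\boldsymbol{\varphi}\Vert_{L_2(\mathcal{O})}$. Consequently the constants $C_{15}$ and $\widetilde{C}_{15}$ coincide with those of Theorem~3.6 and depend only on the initial data \eqref{data} and on $\Vert\Lambda\Vert_{L_\infty}$.
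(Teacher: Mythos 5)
Your proposal is correct and follows exactly the paper's route: Theorem~4.2 is stated in the paper as an immediate consequence of Theorem~3.6, obtained by applying the operator-norm estimates \eqref{th3.5} and \eqref{th3.5_2} to the initial datum $\boldsymbol{\varphi}$ via $\mathbf{u}_\varepsilon(\cdot,t)=e^{-\mathcal{A}_{D,\varepsilon}t}\boldsymbol{\varphi}$, $\mathbf{u}_0(\cdot,t)=e^{-\mathcal{A}_D^0 t}\boldsymbol{\varphi}$, and $\mathcal{K}_D^0(t;\varepsilon)\boldsymbol{\varphi}=\Lambda^\varepsilon b(\mathbf{D})\mathbf{u}_0(\cdot,t)$. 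Nothing is missing.
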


In the case of sufficiently smooth boundary from Theorem 3.9 we obtain the following result.

\begin{theorem}
Suppose that the assumptions of Theorem~\textnormal{4.1} are satisfied, and let
$d\geqslant 3$. Suppose that $\partial \mathcal{O}\in C^{d/2,1}$ if $d$ is even and $\partial\mathcal{O}\in C^{(d+1)/2,1}$ if
$d$ is odd.  Then for $0<\varepsilon \leqslant \varepsilon _1$ and $t>0$ we have
\begin{align*}
\begin{split}
\Vert &\mathbf{u}_\varepsilon (\cdot ,t)-\mathbf{u}_0(\cdot ,t)-\varepsilon \Lambda ^\varepsilon b(\mathbf{D})\mathbf{u}_0(\cdot ,t)\Vert _{H^1(\mathcal{O})}\\
&\leqslant \mathrm{C}_d^*(\varepsilon ^{1/2}t^{-3/4}+\varepsilon t^{-1}+\varepsilon t^{-d/4-1/2})e^{-c_*t/2}\Vert \boldsymbol{\varphi}\Vert _{L_2(\mathcal{O})},
\end{split}
\\
\begin{split}
\Vert &\mathbf{p}_\varepsilon (\cdot ,t)-\widetilde{g}^\varepsilon b(\mathbf{D})\mathbf{u}_0(\cdot ,t)\Vert _{L_2(\mathcal{O})}\\
&\leqslant \widetilde{\mathrm{C}}_d^*(\varepsilon ^{1/2}t^{-3/4}+\varepsilon t^{-1}+\varepsilon t^{-d/4-1/2})e^{-c_*t/2}\Vert \boldsymbol{\varphi}\Vert _{L_2(\mathcal{O})}.
\end{split}
\end{align*}
\end{theorem}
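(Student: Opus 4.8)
The plan is to read the statement off directly from the operator-norm bounds \eqref{*.4} and \eqref{*.5} of Theorem~3.9, applied to the initial datum $\boldsymbol{\varphi}$. First I would recall the representations already fixed in Subsection~4.1: the solution of problem~\eqref{problem} is $\mathbf{u}_\varepsilon(\cdot,t)=e^{-\mathcal{A}_{D,\varepsilon}t}\boldsymbol{\varphi}$ and the solution of the homogenized problem~\eqref{eff.problem} is $\mathbf{u}_0(\cdot,t)=e^{-\mathcal{A}_D^0 t}\boldsymbol{\varphi}$. Next I would identify the corrector term with the operator $\mathcal{K}_D^0(t;\varepsilon)$ of \eqref{K_D^0(t,e)}: by definition $\varepsilon\Lambda^\varepsilon b(\mathbf{D})\mathbf{u}_0(\cdot,t)=\varepsilon[\Lambda^\varepsilon]b(\mathbf{D})e^{-\mathcal{A}_D^0 t}\boldsymbol{\varphi}=\varepsilon\mathcal{K}_D^0(t;\varepsilon)\boldsymbol{\varphi}$. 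Likewise the flux is $\mathbf{p}_\varepsilon(\cdot,t)=g^\varepsilon b(\mathbf{D})\mathbf{u}_\varepsilon(\cdot,t)=g^\varepsilon b(\mathbf{D})e^{-\mathcal{A}_{D,\varepsilon}t}\boldsymbol{\varphi}$, and the comparison term is $\widetilde{g}^\varepsilon b(\mathbf{D})\mathbf{u}_0(\cdot,t)=\widetilde{g}^\varepsilon b(\mathbf{D})e^{-\mathcal{A}_D^0 t}\boldsymbol{\varphi}$.

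Second, I would verify that Theorem~3.9 applies: the hypotheses of Theorem~4.1 are precisely those of Theorem~2.2 together with $\boldsymbol{\varphi}\in L_2(\mathcal{O};\mathbb{C}^n)$, while the requirement $d\geqslant 3$ and the smoothness $\partial\mathcal{O}\in C^{d/2,1}$ for $d$ even (resp. $\partial\mathcal{O}\in C^{(d+1)/2,1}$ for $d$ odd) are exactly the conditions imposed in Lemma~3.8, hence in Theorem~3.9. Applying the operator inequality \eqref{*.4} to the vector $\boldsymbol{\varphi}$ then gives
\[
\|\mathbf{u}_\varepsilon(\cdot,t)-\mathbf{u}_0(\cdot,t)-\varepsilon\Lambda^\varepsilon b(\mathbf{D})\mathbf{u}_0(\cdot,t)\|_{H^1(\mathcal{O})}\leqslant {\mathrm C}^*_d(\varepsilon^{1/2}t^{-3/4}+\varepsilon t^{-1}+\varepsilon t^{-d/4-1/2})e^{-c_* t/2}\|\boldsymbol{\varphi}\|_{L_2(\mathcal{O})},
\]
and applying \eqref{*.5} to $\boldsymbol{\varphi}$ gives the stated bound for $\mathbf{p}_\varepsilon(\cdot,t)-\widetilde{g}^\varepsilon b(\mathbf{D})\mathbf{u}_0(\cdot,t)$, with the constants ${\mathrm C}^*_d$ and $\widetilde{\mathrm C}^*_d$ carried over verbatim from Theorem~3.9.

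There is no genuine obstacle at this stage: all the analytic work — the contour-integral representation of the exponentials over $\gamma$, the two-parameter resolvent estimates of \S\,2 (Theorems~2.2 and~2.3), and the comparison \eqref{*.3} between the smoothed corrector $\mathcal{K}_D(t;\varepsilon)$ and the corrector $\mathcal{K}_D^0(t;\varepsilon)$ without smoothing (Lemma~3.8, whose proof rests on Lemma~3.7 and on the mapping properties of $[\Lambda^\varepsilon]$ and $S_\varepsilon$, and is deferred to the Appendix) — is already packaged inside Theorem~3.9. What remains is only the bookkeeping that turns the operator-norm statement into a statement about the solution $\mathbf{u}_\varepsilon$ of \eqref{problem} and its flux, which is immediate once the identifications above are in place.
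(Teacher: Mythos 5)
Your argument is exactly the paper's: Theorem~4.3 is stated there as an immediate consequence of Theorem~3.9, obtained by applying the operator-norm bounds \eqref{*.4} and \eqref{*.5} to $\boldsymbol{\varphi}$ after the identifications $\mathbf{u}_\varepsilon(\cdot,t)=e^{-\mathcal{A}_{D,\varepsilon}t}\boldsymbol{\varphi}$, $\mathbf{u}_0(\cdot,t)=e^{-\mathcal{A}_D^0t}\boldsymbol{\varphi}$, and $\varepsilon\Lambda^\varepsilon b(\mathbf{D})\mathbf{u}_0(\cdot,t)=\varepsilon\mathcal{K}_D^0(t;\varepsilon)\boldsymbol{\varphi}$. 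Your verification of the hypotheses (that the smoothness assumptions match those of Lemma~3.8 and hence Theorem~3.9) is correct, and nothing further is needed.
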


Special cases are distinguished with the help of Propositions 3.11 and 3.12.

\begin{proposition}
Suppose that the assumptions of Theorem \textnormal{4.1} are satisfied.

\noindent
$1^\circ$. If $g^0=\overline{g}$, i.~e., relations~\textnormal{(\ref{overline-g})} are satisfied, then for $0< \eps \leqslant \eps_1$ and $t>0$
we have
$$
\Vert \mathbf{u}_\varepsilon (\cdot ,t)-\mathbf{u}_0(\cdot ,t)\Vert _{H^1(\mathcal{O})}\leqslant C_{13}(\varepsilon^{1/2} t^{-3/4}+\varepsilon t^{-1}) e^{-c_{*} t/2}
\Vert \boldsymbol{\varphi}\Vert _{L_2(\mathcal{O})}.
$$

\noindent
$2^\circ$. If $g^0=\underline{g}$, i.~e., relations \textnormal{(\ref{underline-g})} are satisfied,
then for \hbox{$0< \eps \leqslant \eps_1$} and $t>0$ we have
$$
\Vert \mathbf{p}_\varepsilon (\cdot ,t)-\mathbf{p}_0(\cdot ,t)\Vert _{L_2(\mathcal{O})}\leqslant\widetilde{C}_{15}(\varepsilon^{1/2} t^{-3/4}+\varepsilon t^{-1})e^{-c_{*} t/2}
\Vert \boldsymbol{\varphi}\Vert _{L_2(\mathcal{O})},
$$
where $\mathbf{p}_0 = g^0 b(\D) \mathbf{u}_0$.
\end{proposition}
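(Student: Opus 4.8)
The plan is to obtain both assertions as immediate consequences of the operator-norm estimates established in Propositions~3.11 and 3.12, once the solutions are written in terms of operator exponentials. As recorded at the start of Subsection~4.1, the generalized solutions of problems~\eqref{problem} and \eqref{eff.problem} are $\mathbf{u}_\varepsilon(\cdot,t)=e^{-\mathcal{A}_{D,\varepsilon}t}\boldsymbol{\varphi}$ and $\mathbf{u}_0(\cdot,t)=e^{-\mathcal{A}_D^0t}\boldsymbol{\varphi}$; hence $\mathbf{p}_\varepsilon(\cdot,t)=g^\varepsilon b(\mathbf{D})e^{-\mathcal{A}_{D,\varepsilon}t}\boldsymbol{\varphi}$ and, since $g^0$ is constant, $\mathbf{p}_0(\cdot,t)=g^0 b(\mathbf{D})e^{-\mathcal{A}_D^0t}\boldsymbol{\varphi}$. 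Thus every norm appearing in the statement is dominated by the corresponding operator norm times $\Vert\boldsymbol{\varphi}\Vert_{L_2(\mathcal{O})}$, and it remains only to invoke the relevant operator estimates.

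For assertion~$1^\circ$ I would first note that, by Proposition~1.2, the condition $g^0=\overline{g}$ is equivalent to relations~\eqref{overline-g}, and under these relations the $\Gamma$-periodic solution $\Lambda$ of~\eqref{Lambda_problem} vanishes, so that the corrector $\mathcal{K}_D(t;\varepsilon)$ of~\eqref{K_D(t,e)} is identically zero. Then I would apply Proposition~3.11, which in this case estimates $\Vert e^{-\mathcal{A}_{D,\varepsilon}t}-e^{-\mathcal{A}_D^0t}\Vert_{L_2(\mathcal{O})\rightarrow H^1(\mathcal{O})}$ by $C_{13}(\varepsilon^{1/2}t^{-3/4}+\varepsilon t^{-1})e^{-c_*t/2}$, so that the claimed bound follows from
\begin{equation*}
\Vert \mathbf{u}_\varepsilon(\cdot,t)-\mathbf{u}_0(\cdot,t)\Vert_{H^1(\mathcal{O})}\leqslant \Vert e^{-\mathcal{A}_{D,\varepsilon}t}-e^{-\mathcal{A}_D^0t}\Vert_{L_2(\mathcal{O})\rightarrow H^1(\mathcal{O})}\Vert\boldsymbol{\varphi}\Vert_{L_2(\mathcal{O})}.
\end{equation*}

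For assertion~$2^\circ$ I would use Proposition~1.3 together with Proposition~2.7($3^\circ$): the condition $g^0=\underline{g}$ is equivalent to relations~\eqref{underline-g}, under which the matrix-valued function~\eqref{tilde g} reduces to the constant $\widetilde{g}=g^0=\underline{g}$, and moreover Condition~2.5 is automatically fulfilled. Hence Proposition~3.12 applies, and combining it with
\begin{equation*}
\Vert \mathbf{p}_\varepsilon(\cdot,t)-\mathbf{p}_0(\cdot,t)\Vert_{L_2(\mathcal{O})}\leqslant \Vert g^\varepsilon b(\mathbf{D})e^{-\mathcal{A}_{D,\varepsilon}t}-g^0 b(\mathbf{D})e^{-\mathcal{A}_D^0t}\Vert_{L_2(\mathcal{O})\rightarrow L_2(\mathcal{O})}\Vert\boldsymbol{\varphi}\Vert_{L_2(\mathcal{O})}
\end{equation*}
yields the required inequality with the constant $\widetilde{C}_{15}$.

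There is no real obstacle here: the proposition merely repackages Propositions~3.11 and 3.12 in the language of initial boundary value problems. The only points deserving a remark are the two structural facts used above — that $\Lambda$, and hence $\mathcal{K}_D(t;\varepsilon)$, vanishes when $g^0=\overline{g}$, and that $\widetilde{g}$ is constant while Condition~2.5 holds when $g^0=\underline{g}$ — and both are supplied verbatim by Propositions~1.2, 1.3, and 2.7. The constants $C_{13}$ and $\widetilde{C}_{15}$ are inherited unchanged from Propositions~3.11 and 3.12, so they depend only on the initial data~\eqref{data}.
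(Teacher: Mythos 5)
Your argument is correct and is exactly the route the paper takes: the proposition is obtained by applying the operator-norm estimates of Propositions~3.11 and~3.12 to $\boldsymbol{\varphi}$, using $\mathbf{u}_\varepsilon(\cdot,t)=e^{-\mathcal{A}_{D,\varepsilon}t}\boldsymbol{\varphi}$, $\mathbf{u}_0(\cdot,t)=e^{-\mathcal{A}_D^0t}\boldsymbol{\varphi}$, together with the structural facts from Propositions~1.2, 1.3, and 2.7($3^\circ$). Nothing is missing.
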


Approximation of the solutions in a strictly interior subdomain~$\mathcal{O}'$ of the domain~$\mathcal{O}$
is deduced from Theorem 3.13.

\begin{theorem}
Suppose that the assumptions of Theorem \textnormal{4.1} are satisfied.
Let $\mathcal{O}'$ be a strictly interior subdomain of the domain $\mathcal{O}$.
Denote $\delta :=\textnormal{dist}\,\lbrace \mathcal{O}';\partial \mathcal{O}\rbrace$.
Then for  $0<\varepsilon\leqslant\varepsilon _1$ and $t>0$ we have
\begin{align*}
\Vert &\mathbf{u}_\varepsilon (\cdot ,t)-\mathbf{u}_0(\cdot ,t)-\varepsilon\Lambda ^\varepsilon S_\varepsilon b(\mathbf{D})\widetilde{\mathbf{u}}_0(\cdot ,t)\Vert _{H^1(\mathcal{O}')}\\
&\leqslant (C_{16}\delta ^{-1}+C_{17})\varepsilon t^{-1}e^{-c_{*}t/2}\Vert \boldsymbol{\varphi}\Vert _{L_2(\mathcal{O})},\\
\Vert &\mathbf{p}_\varepsilon (\cdot ,t)-\widetilde{g}^\varepsilon S_\varepsilon b(\mathbf{D})\widetilde{\mathbf{u}}_0(\cdot ,t)\Vert _{L_2(\mathcal{O}')}\leqslant (\widetilde{C}_{16}\delta ^{-1}+ \widetilde{C}_{17})\varepsilon t^{-1}e^{-c_{*}t/2}\Vert \boldsymbol{\varphi}\Vert _{L_2(\mathcal{O})}.
\end{align*}
\end{theorem}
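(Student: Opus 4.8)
The plan is to obtain the statement as a direct specialization of the operator-norm estimates of Theorem 3.13, evaluated at the initial datum $\boldsymbol{\varphi}$. First I would record that the generalized solutions of problems \eqref{problem} and \eqref{eff.problem} are $\mathbf{u}_\varepsilon(\cdot,t)=e^{-\mathcal{A}_{D,\varepsilon}t}\boldsymbol{\varphi}$ and $\mathbf{u}_0(\cdot,t)=e^{-\mathcal{A}_D^0t}\boldsymbol{\varphi}$, so that $\widetilde{\mathbf{u}}_0(\cdot,t)=P_\mathcal{O}\mathbf{u}_0(\cdot,t)=P_\mathcal{O}e^{-\mathcal{A}_D^0t}\boldsymbol{\varphi}$. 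Comparing with definition \eqref{K_D(t,e)}, this yields the identity $\varepsilon\mathcal{K}_D(t;\varepsilon)\boldsymbol{\varphi}=\varepsilon R_\mathcal{O}\Lambda^\varepsilon S_\varepsilon b(\mathbf{D})\widetilde{\mathbf{u}}_0(\cdot,t)$; after restriction to the strictly interior subdomain $\mathcal{O}'$ the operator $R_\mathcal{O}$ is immaterial, and this is exactly the corrector term $\varepsilon\Lambda^\varepsilon S_\varepsilon b(\mathbf{D})\widetilde{\mathbf{u}}_0(\cdot,t)$ appearing in the statement.

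With these identifications the first inequality is obtained by applying the operator $e^{-\mathcal{A}_{D,\varepsilon}t}-e^{-\mathcal{A}_D^0t}-\varepsilon\mathcal{K}_D(t;\varepsilon)$ to $\boldsymbol{\varphi}$, measuring the result in $H^1(\mathcal{O}')$, and invoking the first bound of Theorem 3.13 together with the trivial estimate $\Vert\boldsymbol{\varphi}\Vert_{L_2(\mathcal{O})}$ on the right-hand side. For the flux I would write $\mathbf{p}_\varepsilon(\cdot,t)=g^\varepsilon b(\mathbf{D})\mathbf{u}_\varepsilon(\cdot,t)=g^\varepsilon b(\mathbf{D})e^{-\mathcal{A}_{D,\varepsilon}t}\boldsymbol{\varphi}$ and observe that $\widetilde{g}^\varepsilon S_\varepsilon b(\mathbf{D})\widetilde{\mathbf{u}}_0(\cdot,t)=\widetilde{g}^\varepsilon S_\varepsilon b(\mathbf{D})P_\mathcal{O}e^{-\mathcal{A}_D^0t}\boldsymbol{\varphi}$; the second inequality then follows by applying the operator $g^\varepsilon b(\mathbf{D})e^{-\mathcal{A}_{D,\varepsilon}t}-\widetilde{g}^\varepsilon S_\varepsilon b(\mathbf{D})P_\mathcal{O}e^{-\mathcal{A}_D^0t}$ to $\boldsymbol{\varphi}$ and using the second bound of Theorem 3.13, measured in $L_2(\mathcal{O}')$.

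There is no genuine obstacle here: the entire content of this theorem is already packaged in Theorem 3.13, and the proof reduces to unwinding the identifications $\mathbf{u}_\varepsilon=e^{-\mathcal{A}_{D,\varepsilon}t}\boldsymbol{\varphi}$, $\mathbf{u}_0=e^{-\mathcal{A}_D^0t}\boldsymbol{\varphi}$, $\widetilde{\mathbf{u}}_0=P_\mathcal{O}\mathbf{u}_0$, $\mathbf{p}_\varepsilon=g^\varepsilon b(\mathbf{D})\mathbf{u}_\varepsilon$ and checking that the corrector applied to $\boldsymbol{\varphi}$ reproduces the displayed term on $\mathcal{O}'$. The only point worth stating explicitly is that, since $\mathcal{O}'$ is strictly interior to $\mathcal{O}$, one may read the corrector term with or without the ambient restriction $R_\mathcal{O}$, which is why the two displays in the statement involve $\Lambda^\varepsilon$ rather than $R_\mathcal{O}\Lambda^\varepsilon$.
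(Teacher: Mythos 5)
Your proposal is correct and coincides with the paper's argument: Theorem 4.5 is stated there as a direct consequence of Theorem 3.13 via exactly the identifications $\mathbf{u}_\varepsilon(\cdot,t)=e^{-\mathcal{A}_{D,\varepsilon}t}\boldsymbol{\varphi}$, $\mathbf{u}_0(\cdot,t)=e^{-\mathcal{A}_D^0t}\boldsymbol{\varphi}$, $\widetilde{\mathbf{u}}_0=P_\mathcal{O}\mathbf{u}_0$, and $\mathbf{p}_\varepsilon=g^\varepsilon b(\mathbf{D})\mathbf{u}_\varepsilon$ that you spell out. Your remark about the restriction operator $R_\mathcal{O}$ being immaterial on $\mathcal{O}'$ is a correct, if minor, clarification.
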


In the case where $\Lambda\in L_\infty$ Theorem 3.14 implies the following result.

\begin{theorem}
Suppose that the assumptions of Theorem \textnormal{4.5} are satisfied.
Suppose that the matrix-valued function $\Lambda (\mathbf{x})$ satisfies Condition~\textnormal{2.5}.
Then for \hbox{$0<\varepsilon\leqslant\varepsilon _1$} and $t>0$ we have
\begin{align*}
\Vert &\mathbf{u}_\varepsilon (\cdot ,t)-\mathbf{u}_0(\cdot ,t)-\varepsilon\Lambda ^\varepsilon  b(\mathbf{D})\mathbf{u}_0(\cdot ,t)\Vert _{H^1(\mathcal{O}')}\\
&\leqslant (C_{16}\delta ^{-1}+ \check{C}_{17})\varepsilon t^{-1}e^{-c_{*}t/2}\Vert \boldsymbol{\varphi}\Vert _{L_2(\mathcal{O})},\\
\Vert &\mathbf{p}_\varepsilon (\cdot ,t)-\widetilde{g}^\varepsilon  b(\mathbf{D})\mathbf{u}_0(\cdot ,t)\Vert _{L_2(\mathcal{O}')}\leqslant (\widetilde{C}_{16} \delta ^{-1}+ \widehat{C}_{17})\varepsilon t^{-1}e^{-c_{*}t/2}\Vert \boldsymbol{\varphi}\Vert _{L_2(\mathcal{O})}.
\end{align*}
\end{theorem}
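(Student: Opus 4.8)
The plan is to obtain Theorem~4.7 as an immediate specialization of the operator‑norm estimates of Theorem~3.14 to the initial datum $\boldsymbol{\varphi}$. First I would record the solution formulas: for $\boldsymbol{\varphi}\in L_2(\mathcal{O};\mathbb{C}^n)$ the generalized solution of problem~\eqref{problem} is $\mathbf{u}_\varepsilon(\cdot,t)=e^{-\mathcal{A}_{D,\varepsilon}t}\boldsymbol{\varphi}$ and the solution of the homogenized problem~\eqref{eff.problem} is $\mathbf{u}_0(\cdot,t)=e^{-\mathcal{A}_D^0t}\boldsymbol{\varphi}$, whence the flux is $\mathbf{p}_\varepsilon(\cdot,t)=g^\varepsilon b(\mathbf{D})e^{-\mathcal{A}_{D,\varepsilon}t}\boldsymbol{\varphi}$. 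Next I would rewrite the corrector term by means of definition~\eqref{K_D^0(t,e)}: since $\mathcal{K}_D^0(t;\varepsilon)=[\Lambda^\varepsilon]b(\mathbf{D})e^{-\mathcal{A}_D^0t}$, one has $\varepsilon\Lambda^\varepsilon b(\mathbf{D})\mathbf{u}_0(\cdot,t)=\varepsilon\mathcal{K}_D^0(t;\varepsilon)\boldsymbol{\varphi}$, and the approximating flux equals $\widetilde{g}^\varepsilon b(\mathbf{D})e^{-\mathcal{A}_D^0t}\boldsymbol{\varphi}$.

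Then I would verify that the hypotheses of Theorem~3.14 are precisely those in force here: the assumptions of Theorem~4.5 repackage those of Theorem~3.13 (which themselves rest on Theorem~2.2), together with the choice of a strictly interior subdomain $\mathcal{O}'$ and $\delta=\mathrm{dist}\{\mathcal{O}';\partial\mathcal{O}\}$, while Condition~2.5 on $\Lambda$ is assumed in addition; this is exactly the hypothesis set of Theorem~3.14. Applying that theorem yields, for $0<\varepsilon\leqslant\varepsilon_1$ and $t>0$, the bound $(C_{16}\delta^{-1}+\check{C}_{17})\varepsilon t^{-1}e^{-c_*t/2}$ for $\|e^{-\mathcal{A}_{D,\varepsilon}t}-e^{-\mathcal{A}_D^0t}-\varepsilon\mathcal{K}_D^0(t;\varepsilon)\|_{L_2(\mathcal{O})\to H^1(\mathcal{O}')}$ and the bound $(\widetilde{C}_{16}\delta^{-1}+\widehat{C}_{17})\varepsilon t^{-1}e^{-c_*t/2}$ for $\|g^\varepsilon b(\mathbf{D})e^{-\mathcal{A}_{D,\varepsilon}t}-\widetilde{g}^\varepsilon b(\mathbf{D})e^{-\mathcal{A}_D^0t}\|_{L_2(\mathcal{O})\to L_2(\mathcal{O}')}$.

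Finally I would apply these operator inequalities to the vector $\boldsymbol{\varphi}$, using $\|T\boldsymbol{\varphi}\|_{H^1(\mathcal{O}')}\leqslant\|T\|_{L_2(\mathcal{O})\to H^1(\mathcal{O}')}\|\boldsymbol{\varphi}\|_{L_2(\mathcal{O})}$ and the analogous inequality for the flux, and combine them with the identifications of the first paragraph to read off the two asserted estimates, with the same constants $C_{16},\widetilde{C}_{16}$ as in Theorem~3.13 and with $\check{C}_{17},\widehat{C}_{17}$ depending only on the initial data~\eqref{data} and $\|\Lambda\|_{L_\infty}$. I do not expect any genuine obstacle: the statement is a restatement of Theorem~3.14 in the language of solutions, and all the substantive analysis—the contour‑integral representation of the operator exponentials, the interior resolvent estimates of Theorem~2.10, and the removal of the Steklov smoothing under Condition~2.5—has already been carried out upstream. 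The only care required is the bookkeeping of hypotheses, in particular keeping Condition~2.5 active so that the simpler corrector $\mathcal{K}_D^0$ (rather than $\mathcal{K}_D$ with smoothing) is legitimate, and the elementary passage from operator norms to norms of images of $\boldsymbol{\varphi}$.
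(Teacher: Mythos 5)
Your proposal is correct and coincides with the paper's own (implicit) argument: the paper simply states that Theorem~3.14 implies this result, i.e., one applies the operator-norm estimates of Theorem~3.14 to the initial datum $\boldsymbol{\varphi}$ after identifying $\mathbf{u}_\varepsilon(\cdot,t)=e^{-\mathcal{A}_{D,\varepsilon}t}\boldsymbol{\varphi}$, $\mathbf{u}_0(\cdot,t)=e^{-\mathcal{A}_D^0 t}\boldsymbol{\varphi}$, and $\varepsilon\Lambda^\varepsilon b(\mathbf{D})\mathbf{u}_0(\cdot,t)=\varepsilon\mathcal{K}_D^0(t;\varepsilon)\boldsymbol{\varphi}$. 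Your bookkeeping of hypotheses (Condition~2.5 justifying the simpler corrector) and of the constants is consistent with the paper.
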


Finally, for $d\geqslant 3$ Theorem~3.16 implies the following statement.

\begin{theorem}
Suppose that the assumptions of Theorem \textnormal{4.5} are satisfied, and let $d\geqslant 3$.
Then for $0<\varepsilon \leqslant \min\lbrace \varepsilon _1;(4r_1)^{-1}\delta \rbrace$ and $t>0$ we have
\begin{align*}
&\Vert \mathbf{u}_\varepsilon (\cdot ,t)-\mathbf{u}_0(\cdot ,t)-\varepsilon \Lambda ^\varepsilon b(\mathbf{D})\mathbf{u}_0(\cdot ,t)\Vert _{H^1(\mathcal{O}')}
\leqslant \mathrm{C}_d\varepsilon h_d(\delta ;t)e^{-c_*t/2}\Vert \boldsymbol{\varphi}\Vert _{L_2(\mathcal{O})},
\\
&\Vert \mathbf{p}_\varepsilon (\cdot ,t)-\widetilde{g}^\varepsilon b(\mathbf{D})\mathbf{u}_0(\cdot ,t)\Vert _{L_2(\mathcal{O}')}
\leqslant \widetilde{\mathrm{C}}_d \varepsilon h_d(\delta ;t)e^{-c_*t/2}\Vert \boldsymbol{\varphi}\Vert _{L_2(\mathcal{O})}.
\end{align*}
Here $h_d(\delta ;t)$ is given by~ \eqref{h_d(delta;t)}.
\end{theorem}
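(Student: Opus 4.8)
The plan is to obtain Theorem~4.8 as an immediate consequence of the operator estimates \eqref{3.26} and \eqref{3.27} of Theorem~3.16, once the solutions are identified with operator exponentials. Recall from Subsection~4.1 that the generalized solutions of problems \eqref{problem} and \eqref{eff.problem} are given by
\begin{equation*}
\mathbf{u}_\varepsilon(\cdot,t) = e^{-\mathcal{A}_{D,\varepsilon}t}\boldsymbol{\varphi},\qquad
\mathbf{u}_0(\cdot,t) = e^{-\mathcal{A}_D^0 t}\boldsymbol{\varphi},
\end{equation*}
and that, by \eqref{K_D^0(t,e)}, $\varepsilon\,\Lambda^\varepsilon b(\mathbf{D})\mathbf{u}_0(\cdot,t) = \varepsilon\,[\Lambda^\varepsilon] b(\mathbf{D}) e^{-\mathcal{A}_D^0 t}\boldsymbol{\varphi} = \varepsilon\,\mathcal{K}_D^0(t;\varepsilon)\boldsymbol{\varphi}$.

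First I would check that the hypotheses of Theorem~3.16 hold. Theorem~3.16 requires the assumptions of Theorem~3.13 (which in turn rest on those of Theorem~3.3 and Theorem~2.2) together with $d\geqslant 3$; the assumptions of Theorem~4.5 are those of Theorem~4.1, hence of Theorem~2.2, and $d\geqslant 3$ is assumed here, so all structural hypotheses on $g$, $b(\mathbf{D})$, the lattice $\Gamma$ and the domain $\mathcal{O}$ coincide. In particular no smoothness of $\partial\mathcal{O}$ beyond $C^{1,1}$ is needed, since Theorem~3.16 uses only the interior regularity estimate \eqref{apriori}, not global regularity of the effective semigroup.

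Then, writing
\begin{equation*}
\mathbf{u}_\varepsilon(\cdot,t) - \mathbf{u}_0(\cdot,t) - \varepsilon\,\Lambda^\varepsilon b(\mathbf{D})\mathbf{u}_0(\cdot,t)
= \bigl(e^{-\mathcal{A}_{D,\varepsilon}t} - e^{-\mathcal{A}_D^0 t} - \varepsilon\,\mathcal{K}_D^0(t;\varepsilon)\bigr)\boldsymbol{\varphi},
\end{equation*}
I would take the $H^1(\mathcal{O}')$-norm of both sides and apply \eqref{3.26}, which is valid precisely for $0<\varepsilon\leqslant\min\{\varepsilon_1;(4r_1)^{-1}\delta\}$ and $t>0$; this yields the first inequality with $\mathrm{C}_d$ equal to the constant in \eqref{3.26}. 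For the flux I would argue in the same way: since $\mathbf{p}_\varepsilon = g^\varepsilon b(\mathbf{D})\mathbf{u}_\varepsilon$ and $\widetilde{g}^\varepsilon b(\mathbf{D})\mathbf{u}_0(\cdot,t) = \widetilde{g}^\varepsilon b(\mathbf{D}) e^{-\mathcal{A}_D^0 t}\boldsymbol{\varphi}$, we have
\begin{equation*}
\mathbf{p}_\varepsilon(\cdot,t) - \widetilde{g}^\varepsilon b(\mathbf{D})\mathbf{u}_0(\cdot,t)
= \bigl(g^\varepsilon b(\mathbf{D}) e^{-\mathcal{A}_{D,\varepsilon}t} - \widetilde{g}^\varepsilon b(\mathbf{D}) e^{-\mathcal{A}_D^0 t}\bigr)\boldsymbol{\varphi},
\end{equation*}
so that \eqref{3.27} gives the second inequality with $\widetilde{\mathrm{C}}_d$ taken from \eqref{3.27}; the function $h_d(\delta;t)$ is the one defined in \eqref{h_d(delta;t)}.

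There is no genuine obstacle here: the statement is a routine reformulation of the operator estimates of Theorem~3.16 in terms of solutions of the initial boundary value problem. The only points requiring attention are the bookkeeping of the hypothesis chain (Theorem~4.5 $\Rightarrow$ Theorem~4.1 $\Rightarrow$ Theorem~2.2, matching exactly what Theorem~3.16 needs) and the observation that the admissible range of $\varepsilon$ is inherited unchanged from Theorem~3.16. The substantive work — the interior parabolic regularity estimate \eqref{apriori} and the bound \eqref{K_minus_K} on the difference of the two correctors, which underlie Theorem~3.16 — has already been prepared in Subsection~3.8 and is carried out in the Appendix.
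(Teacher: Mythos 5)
Your proposal is correct and coincides with the paper's own (one-line) derivation: the theorem is obtained by applying the operator estimates \eqref{3.26} and \eqref{3.27} of Theorem~3.16 to $\boldsymbol{\varphi}$, after identifying $\mathbf{u}_\varepsilon(\cdot,t)=e^{-\mathcal{A}_{D,\varepsilon}t}\boldsymbol{\varphi}$, $\mathbf{u}_0(\cdot,t)=e^{-\mathcal{A}_D^0t}\boldsymbol{\varphi}$, and $\varepsilon\Lambda^\varepsilon b(\mathbf{D})\mathbf{u}_0(\cdot,t)=\varepsilon\mathcal{K}_D^0(t;\varepsilon)\boldsymbol{\varphi}$. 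Your bookkeeping of the hypothesis chain and of the admissible range of $\varepsilon$ is accurate.
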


\subsection*{4.2. The problem for nonhomogeneous parabolic equation.
Approximation of solutions in $L_2({\mathcal O};{\mathbb C}^n)$.} Now, we consider the problem
\begin{equation}
\label{nonhomogeneous_problem}
\begin{cases}
\frac{\partial \mathbf{u}_\varepsilon(\mathbf{x},t)}{\partial t }=-b(\mathbf{D})^*g^\varepsilon (\mathbf{x})b(\mathbf{D})
\mathbf{u}_\varepsilon (\mathbf{x},t)+\mathbf{F}(\mathbf{x},t),\quad \mathbf{x}\in \mathcal{O},\quad 0<t< T,\\
\mathbf{u}_\varepsilon (\mathbf{x},0)=\boldsymbol{\varphi}(\mathbf{x}),\quad \mathbf{x}\in \mathcal{O},\\
\mathbf{u}_\varepsilon (\cdot ,t)\vert _{\partial \mathcal{O}}=0,\quad 0<t < T,
\end{cases}
\end{equation}
where $\boldsymbol{\varphi}\in L_2(\mathcal{O};\mathbb{C}^n)$ and
 $\mathbf{F}\in\mathfrak{H}_p(T):= L_p((0,T);L_2(\mathcal{O};\mathbb{C}^n))$, $0<T\leqslant \infty$, for some $1\leqslant p\leqslant \infty$. Then
\begin{equation}
\label{u_e=}
\mathbf{u}_\varepsilon (\cdot ,t)=e^{-\mathcal{A}_{D,\varepsilon}t}\boldsymbol{\varphi}(\cdot)+\int _0^t e^{-\mathcal{A}_{D,\varepsilon}(t-\widetilde{t})}\mathbf{F}(\cdot ,\widetilde{t})\,d\widetilde{t}.
\end{equation}

The corresponding effective problem is given by
\begin{equation}
\label{nonhomogeneous_eff_problem}
\begin{cases}
\frac{\partial \mathbf{u}_0(\mathbf{x},t)}{\partial t }=-b(\mathbf{D})^*g^0b(\mathbf{D})
\mathbf{u}_0 (\mathbf{x},t)+\mathbf{F}(\mathbf{x},t),
\quad \mathbf{x}\in \mathcal{O},\quad 0<t < T,\\
\mathbf{u}_0 (\mathbf{x},0)=\boldsymbol{\varphi}(\mathbf{x}),\quad \mathbf{x}\in \mathcal{O},\\
\mathbf{u}_0 (\cdot ,t)\vert _{\partial \mathcal{O}}=0,\quad 0<t< T.
\end{cases}
\end{equation}
Similarly to  (\ref{u_e=}), we have
\begin{equation}
\label{u^0=}
\mathbf{u}_0 (\cdot ,t)=e^{-\mathcal{A}_D^0t}\boldsymbol{\varphi}(\cdot)+\int _0^t e^{-\mathcal{A}_D^0(t-\widetilde{t})}\mathbf{F}(\cdot ,\widetilde{t})\,d\widetilde{t}.
\end{equation}
Subtracting (\ref{u^0=}) from (\ref{u_e=}) and applying Theorem 3.2, we conclude that
\begin{equation}
\label{u_e-u_0<= I+}
\Vert \mathbf{u}_\varepsilon (\cdot ,t)-\mathbf{u}_0(\cdot ,t)\Vert _{L_2(\mathcal{O})}\leqslant C_{12}\varepsilon (t+\varepsilon ^2)^{-1/2}
e^{-c_{*} t/2}\Vert \boldsymbol{\varphi}\Vert _{L_2(\mathcal{O})}+C_{12} \eps \mathcal{L}(\varepsilon;t;{\mathbf F}),
\end{equation}
\begin{equation}
\label{I(e,t)=}
\begin{split}
\mathcal{L}(\varepsilon ; t; {\mathbf F})&:=  \int _0^t e^{-c_{*}(t-\widetilde{t})/2} (\varepsilon ^2 +t-\widetilde{t})^{-1/2}\Vert \mathbf{F}(\cdot ,\widetilde{t})\Vert _{L_2(\mathcal{O})}\,d\widetilde {t},
\end{split}
\end{equation}
for \hbox{$0< \eps \leqslant \eps_1$} and $t>0$.

For $1< p \leqslant \infty$ the following result is true.

\begin{theorem} Suppose that the assumptions of Theorem \textnormal{2.2} are satisfied.
Let $\mathbf{u}_\varepsilon$ be the solution of problem \textnormal{(\ref{nonhomogeneous_problem})},
and let $\mathbf{u}_0$ be the solution of problem \textnormal{(\ref{nonhomogeneous_eff_problem})} for $\boldsymbol{\varphi}\in L_2(\mathcal{O};\mathbb{C}^n)$ and $\mathbf{F}\in \mathfrak{H}_p(T)$, $0<T\leqslant \infty$, with some $1<p\leqslant \infty$.
Then for $0<\varepsilon \leqslant \varepsilon _1$ and $0<t < T$ we have
\begin{equation}
\label{th4.6}
\Vert \mathbf{u}_\varepsilon (\cdot,t)-\mathbf{u}_0(\cdot ,t)\Vert _{L_2(\mathcal{O})}\leqslant C_{12}\varepsilon (t+\varepsilon ^2)^{-1/2}e^{-c_{*}t/2}\Vert \boldsymbol{\varphi}\Vert _{L_2(\mathcal{O})}+
c_p \theta (\varepsilon ,p)\Vert \mathbf{F}\Vert _{\mathfrak{H}_p(t)}.
\end{equation}
Here $\theta (\varepsilon ,p)$ is given by
\begin{equation}
\label{theta (varepsilon ,p)}
\theta (\varepsilon ,p)=\begin{cases}
\varepsilon ^{2-2/p}, &1<p<2 ,\\
\varepsilon (\vert\ln \varepsilon \vert +1)^{1/2},&p=2 ,\\
\varepsilon , &2<p\leqslant \infty .
\end{cases}
\end{equation}
The constant $c_p$ depends only on the initial data \textnormal{\eqref{data}} and $p$.
\end{theorem}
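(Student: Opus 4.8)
First I would start from estimate \eqref{u_e-u_0<= I+}, in which the term carrying $\boldsymbol{\varphi}$ already has the desired form, so that it remains only to bound the convolution-type quantity $\eps\,\mathcal{L}(\eps;t;\mathbf{F})$ from \eqref{I(e,t)=} by $c_p\,\theta(\eps,p)\,\|\mathbf{F}\|_{\mathfrak{H}_p(t)}$ for $0<\eps\leqslant\eps_1$ and $0<t<T$. Let $p'$ be the conjugate exponent, $1/p+1/p'=1$ (with $p'=1$ when $p=\infty$). Applying H\"older's inequality in the variable $\widetilde{t}$ to the integral in \eqref{I(e,t)=} and substituting $s=t-\widetilde{t}$, I get
\begin{equation*}
\mathcal{L}(\eps;t;\mathbf{F})\leqslant J_{p'}(\eps,t)^{1/p'}\,\|\mathbf{F}\|_{\mathfrak{H}_p(t)},\qquad
J_{p'}(\eps,t):=\int_0^t e^{-c_* p' s/2}\,(\eps^2+s)^{-p'/2}\,ds ,
\end{equation*}
so the whole problem reduces to estimating the scalar integral $J_{p'}(\eps,t)$, uniformly in $t\in(0,T)$.

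For $1<p<2$ one has $p'>2$, hence $p'/2>1$; bounding the exponential by $1$ gives $J_{p'}(\eps,t)\leqslant\int_0^\infty(\eps^2+s)^{-p'/2}\,ds=(p'/2-1)^{-1}\eps^{\,2-p'}$, so that $J_{p'}(\eps,t)^{1/p'}\leqslant C_p\,\eps^{\,2/p'-1}$ and $\eps\,\mathcal{L}(\eps;t;\mathbf{F})\leqslant C_p\,\eps^{\,2/p'}\|\mathbf{F}\|_{\mathfrak{H}_p(t)}=C_p\,\eps^{\,2-2/p}\|\mathbf{F}\|_{\mathfrak{H}_p(t)}$, which is the bound claimed in this range. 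For $2<p\leqslant\infty$ one has $p'<2$, so $p'/2<1$; here I keep the exponential and use that $\int_0^\infty e^{-c_* p' s/2}s^{-p'/2}\,ds$ is a convergent integral of Gamma type, equal to a constant $C(p,c_*)$ (for $p=\infty$ the H\"older step is trivial and this is simply $\int_0^\infty e^{-c_* s/2}s^{-1/2}\,ds$). Thus $J_{p'}(\eps,t)^{1/p'}\leqslant C_p$ and $\eps\,\mathcal{L}(\eps;t;\mathbf{F})\leqslant C_p\,\eps\|\mathbf{F}\|_{\mathfrak{H}_p(t)}$.

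The borderline value $p=p'=2$ is the only one requiring a splitting of $J_2(\eps,t)=\int_0^t e^{-c_* s}(\eps^2+s)^{-1}\,ds$: on $s\leqslant1$ one has $\int_0^1(\eps^2+s)^{-1}\,ds=\ln(1+\eps^{-2})\leqslant C(|\ln\eps|+1)$, while on $s\geqslant1$ one has $\int_1^\infty e^{-c_* s}(\eps^2+s)^{-1}\,ds\leqslant C$, so $J_2(\eps,t)\leqslant C(|\ln\eps|+1)$ and $\eps\,\mathcal{L}(\eps;t;\mathbf{F})\leqslant C\,\eps(|\ln\eps|+1)^{1/2}\|\mathbf{F}\|_{\mathfrak{H}_2(t)}$. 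Collecting the three estimates and substituting into \eqref{u_e-u_0<= I+} yields \eqref{th4.6}, with $c_p$ built from $C_{12}$ and the constant of the corresponding range, hence depending only on the initial data \eqref{data} and on $p$. The only genuinely delicate point is the case $p=2$: the naive bound $\int_0^\infty(\eps^2+s)^{-1}\,ds$ diverges, so one must simultaneously exploit the exponential decay at large $s$ and the precise logarithmic growth of $\int_0^1(\eps^2+s)^{-1}\,ds$ as $\eps\to0$, and this is exactly what produces the extra factor $(|\ln\eps|+1)^{1/2}$ in $\theta(\eps,2)$; everything else is a routine H\"older estimate combined with explicit evaluation of elementary integrals.
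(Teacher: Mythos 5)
Your proposal is correct and follows essentially the same route as the paper: the same starting estimate \eqref{u_e-u_0<= I+}, the same H\"older reduction to the scalar integral $\mathcal{I}_{p'}(\varepsilon;t)$, and the same three-way case analysis (drop the exponential for $1<p<2$, Gamma-type integral for $2<p\leqslant\infty$, split at $\tau=1$ for the borderline $p=2$). No gaps.
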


\begin{proof}
By the H\"older inequality, from (\ref{I(e,t)=}) we deduce
\begin{equation}
\label{p=infty}
\mathcal{L}(\varepsilon ; t; {\mathbf F}) \leqslant \Vert \mathbf{F}\Vert _{\mathfrak{H}_p (t)}
\mathcal{I}_{p'}(\varepsilon ; t)^{1/p'},
\end{equation}
\begin{equation}
\label{I gelder 1<p<infty}
\mathcal{I}_{p'}(\varepsilon ; t):=
\int _0^t(\varepsilon ^2 +\tau)^{-p'/2}e^{-c_{*}p'\tau /2}\,d\tau.
\end{equation}
Here $p^{-1}+(p')^{-1}=1$ (if $p=\infty$, then $p'=1$).
For $1<p<2$, estimating the exponential under the integral sign in (\ref{I gelder 1<p<infty}) by 1, we have
\begin{equation}
\label{1<p<2}
\mathcal{I}_{p'}(\varepsilon ;t)\leqslant (p'/2 -1)^{-1} \varepsilon ^{2-p'},\quad 1<p<2.
\end{equation}
For $p=2$ the integral in (\ref{I gelder 1<p<infty})
does not exceed the sum of the integrals $\int_0^1 (\tau + \eps^2)^{-1}\,d\tau$ and $\int_1^\infty e^{-c_* \tau} \,d\tau$.
Hence,
\begin{equation}
\label{I p=2}
\mathcal{I}_2(\varepsilon ;t)\leqslant 2 \vert \ln\varepsilon \vert + \ln 2 + c_*^{-1}
\leqslant \max \{ 2; \ln 2 + c_*^{-1}\} (\vert \ln\varepsilon \vert+1).
\end{equation}
For $2<p \leqslant \infty$ ($1\leqslant p'<2$) we have
\begin{equation}
\label{I 2<p}
\mathcal{I}_{p'}(\varepsilon ;t)\leqslant
\int _0^\infty \tau^{-p'/2}e^{-c_{*}p'\tau /2}\,d\tau =
(c_* p'/2)^{p'/2-1} \Gamma (1-p'/2),\quad 2<p\leqslant \infty.
\end{equation}

Combining (\ref{u_e-u_0<= I+}), (\ref{p=infty}), and (\ref{1<p<2})--(\ref{I 2<p}), we arrive at (\ref{th4.6}). Herewith,
\begin{equation}
\label{c_p}
c_p =\begin{cases}
 C_{12}(p'/2-1)^{-1/p'}, &1<p<2 ,\\
C_{12} \max \{ \sqrt{2}; (\ln 2 + c_*^{-1})^{1/2}\}, &p=2 ,\\
 C_{12}(c_* p'/2)^{1/2-1/p'} (\Gamma (1-p'/2))^{1/p'}, &2<p<  \infty, \\
 C_{12}(2\pi)^{1/2} c_*^{-1/2}, & p = \infty.
\end{cases}
\end{equation}
\end{proof}

\subsection*{4.3. Convergence of the solutions of nonhomogeneous equation in $L_p((0,T);L_2(\mathcal{O};\mathbb{C}^n))$.}
Now, let $1\leqslant p<\infty$.
Using relations \eqref{u_e=} and \eqref{u^0=}, positive definiteness of the operators $\mathcal{A}_{D,\varepsilon}$ and $\mathcal{A}_D^0$, and
the H\"older inequality, it is easy to check that
for $\mathbf{F}\in \mathfrak{H}_p(T)$ the solutions $\mathbf{u}_\varepsilon$ and $\mathbf{u}_0$ also belong to $\mathfrak{H}_p(T)$.
Let us prove the following result.

\begin{theorem}
Suppose that the assumptions of Theorem \textnormal{2.2} are satisfied.
Let $\mathbf{u}_\varepsilon$ be the solution of problem \textnormal{(\ref{nonhomogeneous_problem})}, and let $\mathbf{u}_0$
be the solution of problem \textnormal{(\ref{nonhomogeneous_eff_problem})} for $\boldsymbol{\varphi}\in L_2(\mathcal{O};\mathbb{C}^n)$ and $\mathbf{F}\in \mathfrak{H}_p(T)$, $0<T\leqslant \infty$, with some $1\leqslant p< \infty$. Then
the solutions $\mathbf{u}_\varepsilon$ converge to $\mathbf{u}_0$ in the $\mathfrak{H}_p(T)$-norm, as $\varepsilon \rightarrow 0$.
For $0<\varepsilon\leqslant \varepsilon _1$ we have
\begin{equation*}
\Vert \mathbf{u}_\varepsilon -\mathbf{u}_0\Vert _{\mathfrak{H}_p(T)}\leqslant
c_{p'}\Theta (\varepsilon,p)\Vert \boldsymbol{\varphi}\Vert _{L_2(\mathcal{O})}+ C_{18}\varepsilon \Vert \mathbf{F}\Vert _{\mathfrak{H}_p(T)}.
\end{equation*}
Here $\Theta (\varepsilon,p)=\theta (\varepsilon,p')$ is given by
\begin{equation}
\label{vartheta (varepsilon,p)}
\Theta (\varepsilon,p)=\begin{cases}
\varepsilon ,&1\leqslant p<2,\\
\varepsilon (\vert \ln \varepsilon \vert +1)^{1/2},&p=2,\\
\varepsilon ^{2/p} ,&2<p<\infty;
\end{cases}
\end{equation}
$C_{18}=C_{12} (2\pi )^{1/2} c_*^{-1/2}${\rm ;}
$c_{p'}$ is equal to $c_p$ \textnormal{(}see \textnormal{\eqref{c_p})} with $p$ replaced by $p'$, where \textnormal{$p^{-1}+(p')^{-1}=1$}.
\end{theorem}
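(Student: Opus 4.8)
The plan is to subtract \eqref{u^0=} from \eqref{u_e=}, estimate the two resulting summands separately in $\mathfrak{H}_p(T)$, and add. One has
\[
\mathbf{u}_\varepsilon(\cdot,t)-\mathbf{u}_0(\cdot,t)=\bigl(e^{-\mathcal{A}_{D,\varepsilon}t}-e^{-\mathcal{A}_D^0t}\bigr)\boldsymbol{\varphi}+\int_0^t\bigl(e^{-\mathcal{A}_{D,\varepsilon}(t-\widetilde t)}-e^{-\mathcal{A}_D^0(t-\widetilde t)}\bigr)\mathbf{F}(\cdot,\widetilde t)\,d\widetilde t .
\]
For the first summand, Theorem~3.2 yields the pointwise-in-$t$ bound $C_{12}\varepsilon(t+\varepsilon^2)^{-1/2}e^{-c_*t/2}\|\boldsymbol{\varphi}\|_{L_2(\mathcal{O})}$; raising this to the power $p$ and integrating over $(0,T)$ — which we enlarge to $(0,\infty)$, harmlessly because of the exponential factor — reduces the matter to bounding $\varepsilon\,\mathcal{I}_p(\varepsilon)^{1/p}$, where $\mathcal{I}_p(\varepsilon):=\int_0^\infty(\tau+\varepsilon^2)^{-p/2}e^{-c_*p\tau/2}\,d\tau$. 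This is exactly the integral $\mathcal{I}_{p'}$ from the proof of Theorem~4.6 with $p$ in place of $p'$, so the same three-case analysis applies: for $1\le p<2$ drop $\varepsilon^2$ and use $\Gamma(1-p/2)$; for $p=2$ split $\int_0^1+\int_1^\infty$ to produce the logarithm; for $2<p<\infty$ use $\int_0^\infty(\tau+\varepsilon^2)^{-p/2}\,d\tau=(p/2-1)^{-1}\varepsilon^{2-p}$. This gives $\varepsilon\,\mathcal{I}_p(\varepsilon)^{1/p}\le(c_{p'}/C_{12})\,\Theta(\varepsilon,p)$ with $\Theta(\varepsilon,p)=\theta(\varepsilon,p')$ as in \eqref{vartheta (varepsilon,p)} and with the constant matching \eqref{c_p} after the substitution $p\mapsto p'$.

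For the second summand, Theorem~3.2 again gives the pointwise bound
\[
\Bigl\|\int_0^t\bigl(e^{-\mathcal{A}_{D,\varepsilon}(t-\widetilde t)}-e^{-\mathcal{A}_D^0(t-\widetilde t)}\bigr)\mathbf{F}(\cdot,\widetilde t)\,d\widetilde t\Bigr\|_{L_2(\mathcal{O})}\le C_{12}\varepsilon\,\mathcal{L}(\varepsilon;t;\mathbf{F}),
\]
with $\mathcal{L}$ as in \eqref{I(e,t)=}. I would then observe that $t\mapsto\mathcal{L}(\varepsilon;t;\mathbf{F})$ is the convolution on $(0,\infty)$ of the kernel $K_\varepsilon(s)=e^{-c_*s/2}(\varepsilon^2+s)^{-1/2}\mathbf{1}_{\{s>0\}}$ with the scalar function $\widetilde t\mapsto\|\mathbf{F}(\cdot,\widetilde t)\|_{L_2(\mathcal{O})}$, extended by zero outside $(0,T)$. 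By Young's inequality $\|\mathcal{L}(\varepsilon;\cdot;\mathbf{F})\|_{L_p(0,T)}\le\|K_\varepsilon\|_{L_1(0,\infty)}\|\mathbf{F}\|_{\mathfrak{H}_p(T)}$, and $\|K_\varepsilon\|_{L_1(0,\infty)}\le\int_0^\infty s^{-1/2}e^{-c_*s/2}\,ds=(2\pi)^{1/2}c_*^{-1/2}$. Hence the second summand is bounded by $C_{12}(2\pi)^{1/2}c_*^{-1/2}\varepsilon\|\mathbf{F}\|_{\mathfrak{H}_p(T)}=C_{18}\varepsilon\|\mathbf{F}\|_{\mathfrak{H}_p(T)}$. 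Adding the two estimates gives the asserted inequality, and since $\Theta(\varepsilon,p)\to0$ and $\varepsilon\to0$ as $\varepsilon\to0$ (the membership $\mathbf{u}_\varepsilon,\mathbf{u}_0\in\mathfrak{H}_p(T)$ having been recorded before the theorem), the convergence $\mathbf{u}_\varepsilon\to\mathbf{u}_0$ in $\mathfrak{H}_p(T)$ follows.

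No step is genuinely hard; the whole argument is bookkeeping built on Theorem~3.2. The only delicate point is the three-case evaluation of $\varepsilon\,\mathcal{I}_p(\varepsilon)^{1/p}$, especially the borderline $p=2$, where the integral diverges only logarithmically in $\varepsilon$ and must be split as $\int_0^1+\int_1^\infty$ to recover the factor $(\vert\ln\varepsilon\vert+1)^{1/2}$ in $\Theta(\varepsilon,2)$ together with the precise constant $c_2$ of \eqref{c_p}. Alternatively, the second summand could be handled by H\"older's inequality exactly as in the proof of Theorem~4.6, but Young's inequality is cleaner and yields the $\varepsilon$-rate with the stated constant $C_{18}$ directly.
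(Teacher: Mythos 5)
Your argument is correct and coincides in essence with the paper's proof: the same decomposition of $\mathbf{u}_\varepsilon-\mathbf{u}_0$ via \eqref{u_e=} and \eqref{u^0=}, the pointwise bound from Theorem 3.2, and the same three-case evaluation of the integral controlling the $\boldsymbol{\varphi}$-term, yielding $c_{p'}\Theta(\varepsilon,p)$. For the $\mathbf{F}$-term, your appeal to Young's convolution inequality with $\Vert K_\varepsilon\Vert_{L_1(0,\infty)}\leqslant (2\pi/c_*)^{1/2}$ is just a packaged form of the H\"older-plus-Fubini computation the paper writes out explicitly (with $p=1$ treated separately), so the two proofs are the same in substance.
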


\begin{proof}
By (\ref{u_e-u_0<= I+}),
\begin{equation}
\label{70}
\Vert \textbf{u}_\varepsilon -\textbf{u}_0\Vert _{\mathfrak{H}_p(T)} \leqslant C_{12}\varepsilon
\Vert \boldsymbol{\varphi}\Vert _{L_2(\mathcal{O})}
{\mathcal I}_p(\varepsilon;T)^{1/p}+ C_{12}\varepsilon
\left( \int_0^T {\mathcal L}(\varepsilon; t; {\mathbf F})^p\, dt \right)^{1/p},
\end{equation}
where ${\mathcal I}_p(\varepsilon;T)= \int_0^T (\varepsilon ^2+t)^{-p/2} e^{-p c_{*}t/2}\,dt$
(the notation agrees with  (\ref{I gelder 1<p<infty})).
Using estimates (\ref{1<p<2})--(\ref{I 2<p}) (with $p'$ replaced by $p$), we see that the first summand in the right-hand side
of (\ref{70}) does not exceed
$c_{p'}\Theta (\varepsilon,p)\Vert \boldsymbol{\varphi}\Vert _{L_2(\mathcal{O})}$.

Now, we consider the second summand in  (\ref{70}).
For $p=1$, substituting (\ref{I(e,t)=}) and changing the order of integration, we obtain
 \begin{equation*}
\begin{split}
\int_0^T {\mathcal L}(\varepsilon; t; \mathbf{F})\,dt &=
\int_0^T d\widetilde{t}\,\Vert \mathbf{F}(\cdot ,\widetilde{t})\Vert _{L_2(\mathcal{O})}
\int   _{\widetilde{t}}^T dt\, e^{-c_{*}(t-\widetilde{t})/2}(\varepsilon ^2+t-\widetilde{t})^{-1/2}
\\
&\leqslant (2/c_*)^{1/2} \Gamma(1/2) \Vert \mathbf{F} \Vert _{\mathfrak{H}_1(T)}
= (2\pi/c_*)^{1/2}  \Vert \mathbf{F} \Vert _{\mathfrak{H}_1(T)}.
\end{split}
\end{equation*}
For $1<p<\infty$ we apply the H\"older inequality to (\ref{I(e,t)=}):
\begin{equation}
\label{93}
\begin{split}
{\mathcal L}(\varepsilon; t; \mathbf{F}) &\leqslant
\left(\int  _0^t e^{-c_{*}(t-\widetilde{t})/2}(\varepsilon^2 + t-\widetilde{t})^{-1/2}\,d\widetilde{t}\right)^{1/p'}
\\
&\times
 \left(\int  _0^t e^{-c_{*}(t-\widetilde{t})/2}(\varepsilon^2 + t-\widetilde{t})^{-1/2}\Vert \mathbf{F}(\cdot ,\widetilde{t})\Vert _{L_2(\mathcal{O})}^p\,d\widetilde{t}\right)^{1/p}.
\end{split}
\end{equation}
The integral in the first parenthesis on the right does not exceed $(2\pi/c_*)^{1/2}$.
Using (\ref{93}) and changing the order of integration, we have
\begin{equation*}
\begin{split}
&\int_0^T {\mathcal L}(\varepsilon; t; \mathbf{F})^p \,dt
\\
&\leqslant (2\pi/c_*)^{p/2p'}
\int  _0^T d\widetilde{t} \, \Vert \mathbf{F}(\cdot ,\widetilde{t})\Vert _{L_2(\mathcal{O})}^p \int_{\widetilde{t}}^T d{t}\, e^{-c_{*}(t-\widetilde{t})/2}(\varepsilon^2+ t-\widetilde{t})^{-1/2}
\\
&\leqslant (2\pi/c_*)^{p/2} \Vert \mathbf{F}\Vert _{\mathfrak{H}_p(T)}^p, \quad 1< p <\infty.
\end{split}
\end{equation*}
As a result, the second summand in the right-hand side of  (\ref{70})
is estimated by $C_{18} \varepsilon \Vert \mathbf{F}\Vert _{\mathfrak{H}_p(T)}$.
\end{proof}

\begin{remark}\textnormal{For $\boldsymbol{\varphi}=0$ and $\mathbf{F}\in \mathfrak{H}_\infty (T)$
it is possible to prove also convergence of the solutions in $\mathfrak{H}_\infty (T)$. In this case Theorem 4.8 implies that
\begin{equation*}
\Vert \textbf{u}_\varepsilon -\textbf{u}_0\Vert _{\mathfrak{H}_\infty (T)}\leqslant c_\infty \varepsilon
\Vert \mathbf{F}\Vert _{\mathfrak{H}_\infty (T)},\quad 0<\varepsilon \leqslant\varepsilon _1.
\end{equation*}
}
\end{remark}

\subsection*{4.4. The problem for nonhomogeneous parabolic equation. Approximation of the solutions in $H^1({\mathcal O};{\mathbb C}^n)$.}
Now we obtain approximation of the solution of problem (\ref{nonhomogeneous_problem}) in $H^1({\mathcal O};{\mathbb C}^n)$
with the help of Theorem 3.3. Herewith, we have to assume that $2< p \leqslant \infty$.
A difficulty arises in approximation of the integral term in (\ref{u_e=}), since estimate (\ref{Th_exp_korrector})
deteriorates for small values of $t$.
\textit{Assuming that} $t\geqslant \eps ^2$, we represent the integral in (\ref{u_e=}) as the sum of two integrals,
over the intervals $(0,t-\eps ^2)$ and $(t-\eps ^2,t)$;
the first one is estimated with the help of (\ref{Th_exp_korrector}),
and the second one is estimated by using (\ref{small_t}).

We introduce the notation
\begin{equation}
\label{u_ptichka}
{\mathbf w}_\eps(\cdot,t) = e^{-\mathcal{A}_D^0 \varepsilon ^2} \mathbf{u}_0(\cdot,t-\eps ^2),
\end{equation}
where ${\mathbf u}_0$ is the solution of problem (\ref{nonhomogeneous_eff_problem}).
By (\ref{u^0=}),
\begin{equation}
\label{u_ptichka=}
{\mathbf w}_\eps(\cdot,t) = e^{-\mathcal{A}_D^0 t}\boldsymbol{\varphi}(\cdot)+
\int _0^{t-\eps ^2} e^{-\mathcal{A}_{D}^0 (t-\widetilde{t})}\mathbf{F}(\cdot ,\widetilde{t})\,d\widetilde{t}.
\end{equation}
Note that ${\mathbf w}_\eps(\cdot,t)$ is the value at the point $t$ of
the solution of the first initial boundary value problem of the form
(\ref{nonhomogeneous_eff_problem}) on $(0,t)$ with the new right-hand side ${\mathbf F}_\eps$,
where ${\mathbf F}_\eps(\cdot,\tau)= {\mathbf F}(\cdot,\tau)$ for $0< \tau < t-\eps^2$
and ${\mathbf F}_\eps(\cdot,\tau)= 0$ for $t-\eps^2< \tau < t$.

\begin{theorem}
Suppose that the assumptions of Theorem \textnormal{2.2} are satisfied.
Let $\mathbf{u}_\varepsilon$ be the solution of problem \textnormal{(\ref{nonhomogeneous_problem})},
and let $\mathbf{u}_0$ be the solution of problem \textnormal{(\ref{nonhomogeneous_eff_problem})} for
$\boldsymbol{\varphi}\in L_2(\mathcal{O};\mathbb{C}^n)$ and $\mathbf{F}\in \mathfrak{H}_p(T)$, $0<T\leqslant \infty$,
with some $2<p\leqslant \infty$.
Let ${\mathbf w}_\eps$ be defined by \textnormal{(\ref{u_ptichka})}.
Suppose that $\Lambda (\mathbf{x})$ is the $\Gamma$-periodic solution of problem \textnormal{(\ref{Lambda_problem})}.
Let $P_\mathcal{O}$ be the extension operator \textnormal{(\ref{P_O H^1, H^2})},
and let $S_\varepsilon$ be the operator \textnormal{(\ref{S_e})}.
We put $\widetilde{\mathbf{w}}_\varepsilon =P_\mathcal{O}\mathbf{w}_\varepsilon $.
Let ${\mathbf p}_\eps = g^\eps b(\D){\mathbf u}_\eps$, and let
$\widetilde{g}$ be the matrix-valued function \eqref{tilde g}.
Then for $0<\varepsilon \leqslant \varepsilon _1$ and $\eps ^2\leqslant t < T$ we have
\begin{align}
\label{th4.9}
\begin{split}
\|& {\mathbf u}_\eps(\cdot,t) - {\mathbf u}_0(\cdot,t)
- \eps \Lambda^\eps S_\eps b(\D) \widetilde{ {\mathbf w}}_\eps(\cdot,t)\|_{H^1({\mathcal O})}
\\
&\leqslant
2C_{13} \eps^{1/2} t^{-3/4} e^{-c_{*} t/2} \| {\boldsymbol \varphi}\|_{L_2({\mathcal O})}
+ \check{c}_p \rho(\eps,p) \| {\mathbf F} \|_{\mathfrak{H}_p(t)},
\end{split}
\\
\label{th4.9potok}
\begin{split}
\|& {\mathbf p}_\eps(\cdot,t) - \widetilde{g}^\eps S_\eps b(\D) \widetilde{ {\mathbf w}}_\eps(\cdot,t)\|_{L_2({\mathcal O})}
\\
&\leqslant
2\widetilde{C}_{13} \eps^{1/2} t^{-3/4} e^{-c_{*} t/2} \| {\boldsymbol \varphi}\|_{L_2({\mathcal O})}
+ \widetilde{c}_p \rho(\eps,p) \| {\mathbf F} \|_{\mathfrak{H}_p(t)}.
\end{split}
\end{align}
Here
\begin{equation}
\label{rho}
\rho (\varepsilon ,p)=\begin{cases}
\varepsilon ^{1-2/p}, &2<p<4,
\\
\varepsilon ^{1/2}(\vert \ln \varepsilon \vert +1)^{3/4}, &p=4,
\\
\varepsilon ^{1/2},  &4<p\leqslant \infty .
\end{cases}
\end{equation}
The constants $\check{c}_p$ and $\widetilde{c}_p$ depend only on the initial data \textnormal{\eqref{data}} and $p$.
\end{theorem}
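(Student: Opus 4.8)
The plan is to combine the Duhamel representations \eqref{u_e=}, \eqref{u^0=} with the identity \eqref{u_ptichka=} and to split the time integral at the moment $t-\eps^2$, exactly as in the discussion preceding the statement. By linearity of $P_{\mathcal O}$ and of the effective semigroup, \eqref{u_ptichka=} gives
\[
\eps\Lambda^\eps S_\eps b(\D)\widetilde{\mathbf{w}}_\eps(\cdot,t)=\eps\mathcal{K}_D(t;\eps)\boldsymbol{\varphi}+\eps\int_0^{t-\eps^2}\mathcal{K}_D(t-\widetilde t;\eps)\mathbf{F}(\cdot,\widetilde t)\,d\widetilde t,
\]
where $\mathcal{K}_D(\cdot;\eps)$ is the operator \eqref{K_D(t,e)}. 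Subtracting \eqref{u^0=} and this identity from \eqref{u_e=}, I would split ${\mathbf u}_\eps-{\mathbf u}_0-\eps\Lambda^\eps S_\eps b(\D)\widetilde{\mathbf{w}}_\eps$ into three pieces: $(I)$, the contribution $\bigl(e^{-\mathcal{A}_{D,\eps}t}-e^{-\mathcal{A}_D^0 t}-\eps\mathcal{K}_D(t;\eps)\bigr)\boldsymbol{\varphi}$; $(II)$, the integral over $(0,t-\eps^2)$ of $\bigl(e^{-\mathcal{A}_{D,\eps}(t-\widetilde t)}-e^{-\mathcal{A}_D^0(t-\widetilde t)}-\eps\mathcal{K}_D(t-\widetilde t;\eps)\bigr)\mathbf{F}(\cdot,\widetilde t)$; and $(III)$, the integral over $(t-\eps^2,t)$ of $\bigl(e^{-\mathcal{A}_{D,\eps}(t-\widetilde t)}-e^{-\mathcal{A}_D^0(t-\widetilde t)}\bigr)\mathbf{F}(\cdot,\widetilde t)$. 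For the flux one defines $(I')$, $(II')$, $(III')$ analogously, replacing $e^{-\mathcal{A}_{D,\eps}\tau}$ by $g^\eps b(\D)e^{-\mathcal{A}_{D,\eps}\tau}$ and $e^{-\mathcal{A}_D^0\tau}-\eps\mathcal{K}_D(\tau;\eps)$ by $\widetilde g^\eps S_\eps b(\D)P_{\mathcal O}e^{-\mathcal{A}_D^0\tau}$, the piece $(III')$ having no effective term (because $\widetilde{\mathbf{w}}_\eps$ involves only $\widetilde t<t-\eps^2$).

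Next I would estimate $(I)$ by Theorem~2.2's companion, Theorem~3.3: $\|(I)\|_{H^1}\le C_{13}(\eps^{1/2}t^{-3/4}+\eps t^{-1})e^{-c_*t/2}\|\boldsymbol{\varphi}\|_{L_2}$; since $t\ge\eps^2$ forces $\eps t^{-1}=\eps^{1/2}t^{-3/4}\cdot\eps^{1/2}t^{-1/4}\le\eps^{1/2}t^{-3/4}$, this yields the $2C_{13}\eps^{1/2}t^{-3/4}e^{-c_*t/2}\|\boldsymbol{\varphi}\|_{L_2}$ term of \eqref{th4.9}, and \eqref{Th_exp_potok} handles $(I')$ in the same way. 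For $(II)$, each $\widetilde t\in(0,t-\eps^2)$ satisfies $t-\widetilde t\ge\eps^2$, so Theorem~3.3 together with the same absorption gives the pointwise bound $2C_{13}\eps^{1/2}(t-\widetilde t)^{-3/4}e^{-c_*(t-\widetilde t)/2}\|\mathbf{F}(\cdot,\widetilde t)\|_{L_2}$; integrating and applying H\"older in $\widetilde t$ with exponents $p,p'$ reduces $(II)$ to the one-variable integral
\[
\int_{\eps^2}^{t}\tau^{-3p'/4}e^{-c_*p'\tau/2}\,d\tau .
\]
For $(III)$ I would instead use estimate \eqref{small_t} of Proposition~3.5, which after H\"older reduces $(III)$ to $\int_0^{\eps^2}\tau^{-p'/2}e^{-c_*p'\tau/2}\,d\tau$; for $(III')$ one uses \eqref{small_t_potok}, leading to the same one-variable integral.

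The heart of the argument is the evaluation of these two integrals in the three regimes of $p$. Since $p'=p/(p-1)$, the exponent $3p'/4$ equals $1$ precisely at $p=4$, lies in $(1,3/2)$ for $2<p<4$, and in $[3/4,1)$ for $4<p\le\infty$; hence $\int_{\eps^2}^{t}\tau^{-3p'/4}e^{-c_*p'\tau/2}\,d\tau$ is $O(\eps^{2-3p'/2})$ for $2<p<4$ (the lower cutoff is essential), $O(|\ln\eps|+1)$ for $p=4$, and $O(1)$ for $4<p\le\infty$. Taking the $1/p'$-power and multiplying by $\eps^{1/2}$ produces $\eps^{1-2/p}$, $\eps^{1/2}(|\ln\eps|+1)^{3/4}$, $\eps^{1/2}$ respectively, that is, $\rho(\eps,p)$ from \eqref{rho}. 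For $(III)$, since $p'/2<1$ when $p>2$, one has $\int_0^{\eps^2}\tau^{-p'/2}e^{-c_*p'\tau/2}\,d\tau\le\int_0^{\eps^2}\tau^{-p'/2}\,d\tau=O(\eps^{2-p'})$, whose $1/p'$-power is $O(\eps^{1-2/p})\le\rho(\eps,p)$ in every range. Summing $(I)$--$(III)$ (and $(I')$--$(III')$) and absorbing the numerical constants, the $\Gamma$-values, and the powers of $c_*$ and $p'$ into $\check c_p$ and $\widetilde c_p$ gives \eqref{th4.9} and \eqref{th4.9potok}. The only genuinely delicate point is this exponent bookkeeping: recognizing that $p=4$ is the borderline at which $\tau^{-3p'/4}$ stops being integrable at the origin, so that the truncation at $\eps^2$ is exactly what converts the resulting divergence into the logarithmic factor of $\rho(\eps,4)$ and into the power $\eps^{1-2/p}$ for $2<p<4$; the rest is a routine application of Theorem~3.3, Proposition~3.5, and H\"older's inequality.
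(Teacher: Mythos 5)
Your proposal is correct and follows essentially the same route as the paper: the same Duhamel decomposition into the $\boldsymbol{\varphi}$-term, the integral over $(0,t-\eps^2)$, and the integral over $(t-\eps^2,t)$, with the first two pieces handled by Theorem 3.3 (estimates \eqref{Th_exp_korrector}, \eqref{Th_exp_potok}) plus the absorption $\eps t^{-1}\leqslant \eps^{1/2}t^{-3/4}$ for $t\geqslant\eps^2$, the last piece by Proposition 3.5, and the same H\"older reduction to $\int_{\eps^2}^{t}\tau^{-3p'/4}e^{-c_*p'\tau/2}\,d\tau$ with the identical three-regime bookkeeping producing $\rho(\eps,p)$. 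No gaps.
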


\begin{proof}
By (\ref{u_e=}), (\ref{u^0=}), and (\ref{u_ptichka=}), we have
\begin{equation}
\label{315}
\begin{aligned}
\|& {\mathbf u}_\eps(\cdot,t) - {\mathbf u}_0(\cdot,t)
- \eps \Lambda^\eps S_\eps b(\D) \widetilde{ {\mathbf w}}_\eps(\cdot,t)\|_{H^1({\mathcal O})}
\\
&\leqslant \left\| \left( e^{-\mathcal{A}_{D,\eps} t} - e^{-\mathcal{A}_{D}^0 t} - \eps {\mathcal K}_D(t;\eps) \right) {\boldsymbol{\varphi}}\right\|_{H^1({\mathcal O})}
\\
&+ \int_0^{t-\eps ^2 } \left\| \left( e^{-\mathcal{A}_{D,\eps} (t-\widetilde{t})}
- e^{-\mathcal{A}_{D}^0 (t-\widetilde{t})} - \eps {\mathcal K}_D(t-\widetilde{t};\eps) \right) {\mathbf F}(\cdot,\widetilde{t})\right\|_{H^1({\mathcal O})}\, d\widetilde{t}
\\
&+ \int_{t-\eps ^2}^t\left \| \left( e^{-\mathcal{A}_{D,\eps} (t-\widetilde{t})}
- e^{-\mathcal{A}_{D}^0 (t-\widetilde{t})} \right) {\mathbf F}(\cdot,\widetilde{t})\right\|_{H^1({\mathcal O})}\, d\widetilde{t}.
\end{aligned}
\end{equation}
Denote the consecutive summands in the right-hand side of
(\ref{315}) by $\mathcal{J}_1(t;\eps; {\boldsymbol \varphi})$, $\mathcal{J}_2(t;\eps; {\mathbf F})$,
and $\mathcal{J}_3(t;\eps; {\mathbf F})$.
From (\ref{Th_exp_korrector}) and the estimate $\varepsilon t^{-1}\leqslant \varepsilon ^{1/2}t^{-3/4}$,
which is valid for $t\geqslant \varepsilon ^2$, it follows that
\begin{equation}
\label{J_1}
{\mathcal J}_1(t;\eps;{\boldsymbol \varphi}) \leqslant
2C_{13} \eps^{1/2} t^{-3/4} e^{-c_{*} t/2} \| {\boldsymbol \varphi}\|_{L_2({\mathcal O})},\ \ 0< \eps \leqslant \eps_1.
\end{equation}
Let us estimate $\mathcal{J}_2(t;\eps; {\mathbf F})$.
For $\widetilde{t}\in (0,t-\varepsilon ^2)$ we have $t-\widetilde{t}\geqslant \varepsilon ^2$, whence
$\varepsilon (t-\widetilde{t})^{-1}\leqslant \varepsilon ^{1/2}(t-\widetilde{t})^{-3/4}$. Then, by (\ref{Th_exp_korrector}),
\begin{equation}
\label{J_2}
\mathcal{J}_2(t;\eps; {\mathbf F}) \leqslant 2C_{13} \eps^{1/2}
\int_0^{t-\eps ^2} (t- \widetilde{t})^{-3/4} e^{-c_{*} (t-\widetilde{t})/2}
\| {\mathbf F}(\cdot, \widetilde{t})\|_{L_2({\mathcal O})}\,d\widetilde{t},
\end{equation}
for $0< \eps \leqslant \eps_1$.
By the H\"older inequality, from (\ref{J_2}) we obtain
\begin{equation}
\label{J_22}
\begin{split}
\mathcal{J}_2(t;\eps; {\mathbf F}) &\leqslant 2C_{13} \eps^{1/2}
 \| {\mathbf F}\|_{{\mathfrak H}_p(t)} I_{p'}(t;\eps)^{1/p'},
\\
 I_{p'}(t;\eps)&:=
\int_{\eps^2}^{t} \tau^{-3p'/4} e^{-c_{*} p' \tau/2} \,d\tau.
\end{split}
\end{equation}
(For $p=\infty$ and $p'=1$ the inequality remains true.)

Now, we estimate $I_{p'}(t;\eps)$.
If $2<p<4$, then $3p'/4 > 1$, whence
\begin{equation}
\label{I-2_2<p<4}
I_{p'}(t;\eps) \leqslant
\int_{\varepsilon^2}^\infty \tau^{-3p'/4}\,d\tau =
 \nu_p \varepsilon ^{2-3p'/2},\quad 2<p<4.
\end{equation}
If $p=4$, then $p'=4/3$ and
\begin{equation}
\label{I2_p=4}
I_{4/3}(t;\eps)
\leqslant \int _{\varepsilon ^2}^1 \tau ^{-1}\,d \tau +\int _1^\infty e^{-2c_{*}\tau /3}\,d\tau
\leqslant 2\vert \ln \varepsilon \vert + \frac{3}{2 c_{*}}
 \leqslant \nu_4 (\vert \ln \varepsilon \vert +1).
\end{equation}
Finally, if $4<p \leqslant \infty$, then $3p'/4 <1$, and
\begin{equation}
\label{I2_1319}
I_{p'}(t;\eps)\leqslant
\int_0^{\infty} \tau^{-3p'/4} e^{-c_{*} p' \tau/2} \,d\tau =
\nu_p,\quad 4<p \leqslant \infty.
\end{equation}
Here
\begin{equation}
\label{nu_p}
\nu_p=\begin{cases}
(3p'/4-1)^{-1}, &2<p<4,
\\
\max \{ 2; 3(2c_*)^{-1}\}, &p=4,
\\
(c_* p'/2)^{3p'/4-1} \Gamma(1- 3p'/4),  &4<p\leqslant \infty .
\end{cases}
\end{equation}
As a result, relations (\ref{J_22})--(\ref{I2_1319}) imply that
\begin{equation}
\label{J_2_p_infty}
\mathcal{J}_2(t;\eps; {\mathbf F}) \leqslant 2C_{13} \nu_p^{1/p'} \rho(\eps,p)\Vert \mathbf{F}\Vert _{\mathfrak{H}_p (t)},
\quad 2 < p \leqslant \infty,
\end{equation}
where $\rho(\eps,p)$ is defined by (\ref{rho}).

We proceed to the term ${\mathcal J}_3(t;\eps;{\mathbf F})$. Taking \eqref{small_t} into account, we have
\begin{equation}
\label{J_3}
{\mathcal J}_3 (t;\eps;{\mathbf F}) \leqslant C_{14} \int_{t-\eps ^2}^t (t- \widetilde{t})^{-1/2} \| {\mathbf F}(\cdot, \widetilde{t})\|_{L_2({\mathcal O})}\,d\widetilde{t},\quad 0< \eps \leqslant 1.
\end{equation}
By the H\"older inequality, we arrive at
\begin{equation}
\label{J_3<=}
{\mathcal J}_3 (t;\eps;{\mathbf F}) \leqslant C_{14} (1-p'/2)^{-1/p'} \varepsilon^{1-2/p}
\| {\mathbf F} \|_{\mathfrak{H}_p(t)},\ \ 0< \eps \leqslant 1 ,
\end{equation}
which is true for all $2< p \leqslant \infty$.
Note that, if $ p\geqslant 4$, then $\varepsilon^{1-2/p} \leqslant \varepsilon^{1/2}$.

Combining (\ref{315}), (\ref{J_1}), (\ref{J_2_p_infty}), and (\ref{J_3<=}), we arrive at (\ref{th4.9})
with $ \check{c}_p= 2 C_{13} \nu_p^{1/p'} + C_{14}(1-p'/2)^{-1/p'}$.

It remains to check (\ref{th4.9potok}).
According to (\ref{u_e=}) and (\ref{u_ptichka=}),
\begin{equation}
\label{323}
\begin{aligned}
\|& {\mathbf p}_\eps(\cdot,t) - \widetilde{g}^\eps  S_\eps b(\D) \widetilde{ {\mathbf w}}_\eps(\cdot,t) \|_{L_2({\mathcal O})}
\\
&\leqslant \left\| \left( g^\eps b(\D) e^{-\mathcal{A}_{D,\eps} t} -
\widetilde{g}^\eps  S_\eps b(\D) P_{\mathcal O} e^{-\mathcal{A}_{D}^0 t} \right) {\boldsymbol{\varphi}}\right\|_{L_2({\mathcal O})}
\\
&+ \int_0^{t-\eps ^2}\left \| \left( g^\eps b(\D) e^{-\mathcal{A}_{D,\eps} (t-\widetilde{t})}
- \widetilde{g}^\eps  S_\eps b(\D) P_{\mathcal O} e^{-\mathcal{A}_{D}^0 (t-\widetilde{t})} \right)
{\mathbf F}(\cdot,\widetilde{t})\right\|_{L_2({\mathcal O})}\, d\widetilde{t}
\\
&+ \int_{t-\eps ^2}^t\left \| g^\eps b(\D) e^{-\mathcal{A}_{D,\eps} (t-\widetilde{t})}
{\mathbf F}(\cdot,\widetilde{t})\right\|_{L_2({\mathcal O})}\, d\widetilde{t}.
\end{aligned}
\end{equation}
The first two summands in (\ref{323}) are estimated by using
 (\ref{Th_exp_potok}), and the third one is estimated with the help of (\ref{small_t_potok}).
 (Cf. (\ref{J_1})--(\ref{J_3<=}).) This yields (\ref{th4.9potok}) with the constant
$ \widetilde{c}_p= 2 \widetilde{C}_{13} \nu_p^{1/p'} + \widetilde{C}_{14}(1-p'/2)^{-1/p'}$.
\end{proof}

Similarly to the proof of Theorem 4.11, under condition $\Lambda \in L_\infty$ Theorem 3.6 and Proposition 3.5
imply the following result.

\begin{theorem}
Suppose that the assumptions of Theorem \textnormal{4.11} are satisfied.
Suppose that the matrix-valued function $\Lambda (\mathbf{x})$ satisfies Condition \textnormal{2.5}.
Then for \hbox{$0<\varepsilon \leqslant \varepsilon _1$} and $\eps ^2 \leqslant t < T$ we have
\begin{align*}
\begin{split}
\|& {\mathbf u}_\eps(\cdot,t) - {\mathbf u}_0(\cdot,t)
- \eps \Lambda^\eps b(\D) {\mathbf w}_\eps(\cdot,t)\|_{H^1({\mathcal O})}
\\
&\leqslant
2C_{15} \eps^{1/2} t^{-3/4}
 e^{-c_{*} t/2} \| {\boldsymbol \varphi}\|_{L_2({\mathcal O})}
+ c_p' \rho(\eps,p) \| {\mathbf F} \|_{\mathfrak{H}_p(t)},
\end{split}
\\
\begin{split}
\|& {\mathbf p}_\eps(\cdot,t) - \widetilde{g}^\eps b(\D) {\mathbf w}_\eps(\cdot,t)\|_{L_2({\mathcal O})}
\\
&\leqslant
2\widetilde{C}_{15} \eps^{1/2} t^{-3/4} e^{-c_{*} t/2} \| {\boldsymbol \varphi}\|_{L_2({\mathcal O})}
+ c_p'' {\rho}(\eps,p) \| {\mathbf F} \|_{\mathfrak{H}_p(t)}.
\end{split}
\end{align*}
The constants ${c}_p'$ and ${c}_p''$ depend only on the initial data \textnormal{\eqref{data}}, $\|\Lambda\|_{L_\infty}$, and  $p$.
\end{theorem}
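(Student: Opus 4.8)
The plan is to copy the proof of Theorem~4.11 almost verbatim, replacing the smoothing corrector $\mathcal{K}_D(\cdot;\eps)$ by the simpler corrector $\mathcal{K}_D^0(t;\eps)=[\Lambda^\eps]b(\D)e^{-\mathcal{A}_D^0 t}$ throughout, which is legitimate under Condition~2.5 because (as recorded in Subsection~3.5) $\mathcal{K}_D^0(t;\eps)$ is then bounded from $L_2(\mathcal{O};\mathbb{C}^n)$ to $H^1(\mathcal{O};\mathbb{C}^n)$ for every $t>0$. First I would write out the Duhamel formulas \eqref{u_e=}, \eqref{u^0=} for $\mathbf{u}_\eps$, $\mathbf{u}_0$ and the representation \eqref{u_ptichka=} for $\mathbf{w}_\eps$, and note the elementary identity $\eps\Lambda^\eps b(\D)\mathbf{w}_\eps(\cdot,t)=\eps\mathcal{K}_D^0(t;\eps)\boldsymbol{\varphi}+\eps\int_0^{t-\eps^2}\mathcal{K}_D^0(t-\widetilde{t};\eps)\mathbf{F}(\cdot,\widetilde{t})\,d\widetilde{t}$, which follows since $b(\D)$ commutes with the $\widetilde{t}$-integral and $[\Lambda^\eps]$ is multiplication in $\mathbf{x}$. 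Subtracting, the quantity $\mathbf{u}_\eps-\mathbf{u}_0-\eps\Lambda^\eps b(\D)\mathbf{w}_\eps$ splits exactly as in \eqref{315} into the initial-data term $\mathcal{J}_1$, the integral term $\mathcal{J}_2$ over $(0,t-\eps^2)$ carrying the corrector, and the integral term $\mathcal{J}_3$ over $(t-\eps^2,t)$ with no corrector, all with $\mathcal{K}_D^0$ in place of $\mathcal{K}_D$.

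Next I would estimate the three pieces. For $\mathcal{J}_1$ I would apply Theorem~3.6, estimate \eqref{th3.5}, and absorb $\eps t^{-1}$ into $\eps^{1/2}t^{-3/4}$ using $t\ge\eps^2$, obtaining $\mathcal{J}_1\le 2C_{15}\eps^{1/2}t^{-3/4}e^{-c_*t/2}\|\boldsymbol{\varphi}\|_{L_2(\mathcal{O})}$. For $\mathcal{J}_2$, since $t-\widetilde{t}\ge\eps^2$ on the range of integration the same substitution turns \eqref{th3.5} into a bound of the form $2C_{15}\eps^{1/2}(t-\widetilde{t})^{-3/4}e^{-c_*(t-\widetilde{t})/2}$, after which the H\"older inequality together with the integral bounds \eqref{I-2_2<p<4}--\eqref{I2_1319} of Theorem~4.11 (the quantities $I_{p'}$ and $\nu_p$) gives $\mathcal{J}_2\le 2C_{15}\nu_p^{1/p'}\rho(\eps,p)\|\mathbf{F}\|_{\mathfrak{H}_p(t)}$ with $\rho$ as in \eqref{rho}. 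For $\mathcal{J}_3$ I would invoke the crude but uniformly valid bound \eqref{small_t} of Proposition~3.5 and apply H\"older exactly as in \eqref{J_3}--\eqref{J_3<=}, obtaining $C_{14}(1-p'/2)^{-1/p'}\eps^{1-2/p}\|\mathbf{F}\|_{\mathfrak{H}_p(t)}$, which for $2<p\le\infty$ is $\le C_{14}(1-p'/2)^{-1/p'}\rho(\eps,p)\|\mathbf{F}\|_{\mathfrak{H}_p(t)}$. Summing these three bounds yields the first inequality with $c_p'=2C_{15}\nu_p^{1/p'}+C_{14}(1-p'/2)^{-1/p'}$.

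For the flux estimate I would start from the decomposition \eqref{323} of $\mathbf{p}_\eps-\widetilde{g}^\eps b(\D)\mathbf{w}_\eps$, now with $\widetilde{g}^\eps b(\D)e^{-\mathcal{A}_D^0(\cdot)}$ in place of $\widetilde{g}^\eps S_\eps b(\D)P_\mathcal{O}e^{-\mathcal{A}_D^0(\cdot)}$, control the first two summands by the flux estimate \eqref{th3.5_2} of Theorem~3.6 (again using $t-\widetilde{t}\ge\eps^2$ in the integral term), and control the third summand by the flux estimate \eqref{small_t_potok} of Proposition~3.5; assembling these with the same H\"older arguments as for $\mathcal{J}_2$, $\mathcal{J}_3$ gives the second inequality with $c_p''=2\widetilde{C}_{15}\nu_p^{1/p'}+\widetilde{C}_{14}(1-p'/2)^{-1/p'}$.

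I do not expect any genuine obstacle here: the single point that requires care is the one already dealt with in Theorem~4.11, namely that the corrector estimate \eqref{th3.5} contains the term $\eps(t-\widetilde{t})^{-1}$, which is not integrable in $\widetilde{t}$ near $\widetilde{t}=t$, so that the integral in \eqref{u_e=} must be split at $t-\eps^2$; on $(0,t-\eps^2)$ the inequality $\eps(t-\widetilde{t})^{-1}\le\eps^{1/2}(t-\widetilde{t})^{-3/4}$ restores integrability, while on $(t-\eps^2,t)$ one falls back on \eqref{small_t} (resp.\ \eqref{small_t_potok}), whose $(t-\widetilde{t})^{-1/2}$ singularity is integrable. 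Everything else is pure bookkeeping, identical to the proof of Theorem~4.11 with Theorem~3.6 and Proposition~3.5 playing the roles that Theorem~3.3 and Proposition~3.5 played there.
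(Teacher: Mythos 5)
Your proposal is correct and is exactly the argument the paper intends: the paper dispatches this theorem by remarking that it follows "similarly to the proof of Theorem 4.11" from Theorem 3.6 and Proposition 3.5, and your write-up fills in precisely those details — the same splitting of the Duhamel integral at $t-\eps^2$, with \eqref{th3.5}, \eqref{th3.5_2} replacing \eqref{Th_exp_korrector}, \eqref{Th_exp_potok} and the small-$t$ bounds \eqref{small_t}, \eqref{small_t_potok} handling the interval $(t-\eps^2,t)$. The resulting constants $c_p'=2C_{15}\nu_p^{1/p'}+C_{14}(1-p'/2)^{-1/p'}$ and $c_p''=2\widetilde{C}_{15}\nu_p^{1/p'}+\widetilde{C}_{14}(1-p'/2)^{-1/p'}$ match the pattern of $\check{c}_p$, $\widetilde{c}_p$ in Theorem~4.11.
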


In the case of additional smoothness of the boundary one can apply Theorem 3.9.
However, because of the strong growth of the right-hand sides in estimates \eqref{*.4} and \eqref{*.5}
for small $t$, we obtain a meaningful result only in the threedimensional case and only for $p=\infty$.
The proof is similar to that of  Theorem 4.11.

\begin{proposition}
Suppose that the assumptions of Theorem \textnormal{4.11} are satisfied, and let $d=3$, $p=\infty$.
Suppose that $\partial {\mathcal O} \in C^{2,1}$.
Then for $0<\varepsilon \leqslant \varepsilon _1$ and $\eps ^2 \leqslant t < T$ we have
\begin{equation*}
\begin{aligned}
\|& {\mathbf u}_\eps(\cdot,t) - {\mathbf u}_0(\cdot,t)
- \eps \Lambda^\eps b(\D) {\mathbf w}_\eps(\cdot,t)\|_{H^1({\mathcal O})}
\\
&\leqslant
2 {\mathrm C}_{3}^* \left(\eps^{1/2} t^{-3/4}+ \eps t^{-5/4}\right)
 e^{-c_{*} t/2} \| {\boldsymbol \varphi}\|_{L_2({\mathcal O})}
+ c' \eps^{1/2} \| {\mathbf F} \|_{\mathfrak{H}_\infty(t)},
\end{aligned}
\end{equation*}
\begin{equation*}
\begin{aligned}
\|& {\mathbf p}_\eps(\cdot,t) - \widetilde{g}^\eps b(\D) {\mathbf w}_\eps(\cdot,t)\|_{L_2({\mathcal O})}
\\
&\leqslant
2 \widetilde{\mathrm C}_{3}^* \left(\eps^{1/2} t^{-3/4}+ \eps t^{-5/4}\right)
 e^{-c_{*} t/2} \| {\boldsymbol \varphi}\|_{L_2({\mathcal O})}
+ \widetilde{c}' \eps^{1/2} \| {\mathbf F} \|_{\mathfrak{H}_\infty(t)}.
\end{aligned}
\end{equation*}
The constants ${c}'$ and $\widetilde{c}'$ depend only on the initial data \textnormal{\eqref{data}}.
\end{proposition}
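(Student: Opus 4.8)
The plan is to repeat the proof of Theorem~4.11 with two modifications: use Theorem~3.9 in place of Theorem~3.3 (legitimate here, since $d=3$ and $\partial{\mathcal O}\in C^{2,1}$, so the hypotheses of Lemma~3.8 are met), and use the corrector ${\mathcal K}_D^0(t;\eps)$ from~\eqref{K_D^0(t,e)} instead of ${\mathcal K}_D(t;\eps)$, so that no extension operator intervenes and the corrector term becomes $\eps\Lambda^\eps b(\D){\mathbf w}_\eps(\cdot,t)$. For $d=3$ the factor appearing in~\eqref{*.4} and~\eqref{*.5} is $\eps^{1/2}t^{-3/4}+\eps t^{-1}+\eps t^{-5/4}$, and since $\eps t^{-1}\leqslant\eps^{1/2}t^{-3/4}$ whenever $t\geqslant\eps^2$, it suffices to work with $\eps^{1/2}t^{-3/4}+\eps t^{-5/4}$.

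First I would write, using~\eqref{u_e=}, \eqref{u^0=}, \eqref{u_ptichka=}, and the identity $\eps\Lambda^\eps b(\D){\mathbf w}_\eps(\cdot,t)=\eps{\mathcal K}_D^0(t;\eps){\boldsymbol\varphi}+\int_0^{t-\eps^2}\eps{\mathcal K}_D^0(t-\widetilde t;\eps){\mathbf F}(\cdot,\widetilde t)\,d\widetilde t$, the bound of the left-hand side of the first inequality by a sum ${\mathcal J}_1+{\mathcal J}_2+{\mathcal J}_3$ organised exactly as in~\eqref{315}: ${\mathcal J}_1=\|(e^{-{\mathcal A}_{D,\eps}t}-e^{-{\mathcal A}_D^0 t}-\eps{\mathcal K}_D^0(t;\eps)){\boldsymbol\varphi}\|_{H^1({\mathcal O})}$; ${\mathcal J}_2$ is the integral over $(0,t-\eps^2)$ of the same expression with $t$ replaced by $t-\widetilde t$ and ${\boldsymbol\varphi}$ by ${\mathbf F}(\cdot,\widetilde t)$; and ${\mathcal J}_3=\int_{t-\eps^2}^t\|(e^{-{\mathcal A}_{D,\eps}(t-\widetilde t)}-e^{-{\mathcal A}_D^0(t-\widetilde t)}){\mathbf F}(\cdot,\widetilde t)\|_{H^1({\mathcal O})}\,d\widetilde t$. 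Then ${\mathcal J}_1$ is estimated directly by~\eqref{*.4}, which gives $2{\mathrm C}_3^*(\eps^{1/2}t^{-3/4}+\eps t^{-5/4})e^{-c_*t/2}\|{\boldsymbol\varphi}\|_{L_2({\mathcal O})}$. For ${\mathcal J}_2$ one has $t-\widetilde t\geqslant\eps^2$ throughout the interval of integration, so~\eqref{*.4} applies to the integrand; pulling out $\|{\mathbf F}\|_{\mathfrak{H}_\infty(t)}$ (here $p=\infty$) and substituting $\tau=t-\widetilde t$ yields ${\mathcal J}_2\leqslant 2{\mathrm C}_3^*\|{\mathbf F}\|_{\mathfrak{H}_\infty(t)}\int_{\eps^2}^t(\eps^{1/2}\tau^{-3/4}+\eps\tau^{-5/4})e^{-c_*\tau/2}\,d\tau$, and since $\int_0^\infty\tau^{-3/4}e^{-c_*\tau/2}\,d\tau<\infty$ (exponent $3/4<1$) while $\int_{\eps^2}^\infty\eps\tau^{-5/4}\,d\tau=4\eps^{1/2}$, one gets ${\mathcal J}_2\leqslant C\eps^{1/2}\|{\mathbf F}\|_{\mathfrak{H}_\infty(t)}$. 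For ${\mathcal J}_3$, estimate~\eqref{small_t} gives ${\mathcal J}_3\leqslant C_{14}\|{\mathbf F}\|_{\mathfrak{H}_\infty(t)}\int_0^{\eps^2}\tau^{-1/2}\,d\tau=2C_{14}\eps\|{\mathbf F}\|_{\mathfrak{H}_\infty(t)}\leqslant 2C_{14}\eps^{1/2}\|{\mathbf F}\|_{\mathfrak{H}_\infty(t)}$ for $\eps\leqslant1$. Adding the three bounds produces the first inequality with a constant $c'$ depending only on the initial data (built from ${\mathrm C}_3^*$, the absolute constant $\int_0^\infty\tau^{-3/4}e^{-c_*\tau/2}\,d\tau$, and $C_{14}$). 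The flux inequality is obtained identically, using~\eqref{*.5} for the ${\boldsymbol\varphi}$-term and the $(0,t-\eps^2)$-integral and~\eqref{small_t_potok} for the $(t-\eps^2,t)$-integral, yielding the constant $\widetilde c'$.

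The step I expect to be the real obstruction — and the reason the statement is confined to $d=3$ and $p=\infty$ — is the estimate of ${\mathcal J}_2$. The worst singularity in~\eqref{*.4} and~\eqref{*.5} is the term $\eps\,\tau^{-d/4-1/2}$ produced by removing the smoothing operator; after the Duhamel convolution against ${\mathbf F}$ it contributes $\eps\int_{\eps^2}^\infty\tau^{-d/4-1/2}\,d\tau\sim\eps\,(\eps^2)^{1/2-d/4}=\eps^{2-d/2}$, which stays bounded (and small) only for $d\leqslant3$. Moreover, for $p<\infty$ the Hölder step in the estimate of ${\mathcal J}_2$ replaces the exponent $d/4+1/2$ by $(d/4+1/2)p'$, and even for $d=3$ this forces $p'$ close to $1$, i.e.\ $p$ large; the choice $p=\infty$ is what makes all the exponents come out cleanly. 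Apart from this point, the proof is a routine adaptation of the computations already carried out in the proof of Theorem~4.11.
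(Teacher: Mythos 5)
Your proposal is correct and follows exactly the route the paper intends: the paper gives no separate argument for Proposition~4.13 beyond the remark that the proof is similar to that of Theorem~4.11 with Theorem~3.9 in place of Theorem~3.3, which is precisely your decomposition into ${\mathcal J}_1+{\mathcal J}_2+{\mathcal J}_3$ with \eqref{*.4}, \eqref{*.5} handling the first two terms and \eqref{small_t}, \eqref{small_t_potok} the third. Your diagnosis of why $d=3$ and $p=\infty$ are forced (the $\eps\,\tau^{-d/4-1/2}$ singularity must be integrable from $\eps^2$ with a bound $O(\eps^{1/2})$) also matches the paper's own explanation.
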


Now we distinguish special cases.

\begin{proposition}
Suppose that the assumptions of Theorem \textnormal{4.11} are satisfied.

\noindent
$1^\circ$. If $g^0=\overline{g}$, i.~e., relations \textnormal{(\ref{overline-g})} are satisfied,
then for $0< \eps \leqslant \eps_1$ and $\varepsilon ^2\leqslant t<T$ we have
$$
\Vert \mathbf{u}_\varepsilon (\cdot ,t)-\mathbf{u}_0(\cdot ,t)\Vert _{H^1(\mathcal{O})}\leqslant 2C_{13}\varepsilon^{1/2} t^{-3/4}
e^{-c_{*} t/2}
\Vert \boldsymbol{\varphi}\Vert _{L_2(\mathcal{O})}
+ \check{c}_p \rho(\eps,p) \| {\mathbf F}\|_{\mathfrak{H}_p(t)}.
$$

\noindent
$2^\circ$. If $g^0=\underline{g}$, i.~e., relations \textnormal{(\ref{underline-g})} are satisfied, then for
\hbox{$0< \eps \leqslant \eps_1$} and $\varepsilon ^2\leqslant t<T$ we have
\begin{equation}
\label{ppp}
\Vert \mathbf{p}_\varepsilon (\cdot ,t)-\mathbf{p}_0(\cdot ,t)\Vert _{L_2(\mathcal{O})}\leqslant 2\widetilde{C}_{15}\varepsilon^{1/2} t^{-3/4}
e^{-c_{*} t/2} \Vert \boldsymbol{\varphi}\Vert _{L_2(\mathcal{O})}
+ c_p''' {\rho}(\eps,p) \| {\mathbf F}\|_{\mathfrak{H}_p(t)},
\end{equation}
where $\mathbf{p}_0 = g^0 b(\D) \mathbf{u}_0$.
The constant $ {c}_p'''$ depends on the initial data \textnormal{\eqref{data}},  and also on $n$ and  $p$.
\end{proposition}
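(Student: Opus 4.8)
The plan is to deduce both assertions from Theorems~4.11 and 4.13 by using the structural consequences of equality in the Voight--Reuss bracketing, plus one elementary estimate of the ``tail'' integral.

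For $1^\circ$ I would first note that $g^0=\overline g$ forces, by Proposition~1.2, the relations (\ref{overline-g}); as observed in Subsection~3.7 this makes the $\Gamma$-periodic solution $\Lambda$ of problem (\ref{Lambda_problem}) identically zero. Hence the corrector term $\eps\Lambda^\eps S_\eps b(\D)\widetilde{\mathbf w}_\eps(\cdot,t)$ occurring in estimate (\ref{th4.9}) of Theorem~4.11 vanishes, and (\ref{th4.9}) is literally the asserted bound on $\|\mathbf u_\eps(\cdot,t)-\mathbf u_0(\cdot,t)\|_{H^1(\mathcal O)}$ with the very same constant $\check c_p$. No further argument is needed in this case.

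For $2^\circ$ I would argue as follows. Since $g^0=\underline g$, Proposition~1.3 yields the relations (\ref{underline-g}), so the matrix-valued function $\widetilde g$ of (\ref{tilde g}) is constant and equals $g^0=\underline g$; moreover, by Proposition~2.7($3^\circ$), Condition~2.5 holds, so $\Lambda\in L_\infty$ and Theorem~4.13 applies. Because $\widetilde g^\eps b(\D)\mathbf w_\eps=g^0 b(\D)\mathbf w_\eps$, the flux estimate of Theorem~4.13 becomes
\begin{equation*}
\bigl\|\mathbf p_\eps(\cdot,t)-g^0 b(\D)\mathbf w_\eps(\cdot,t)\bigr\|_{L_2(\mathcal O)}
\leqslant 2\widetilde C_{15}\,\eps^{1/2}t^{-3/4}e^{-c_*t/2}\|\boldsymbol\varphi\|_{L_2(\mathcal O)}
+c_p''\,\rho(\eps,p)\,\|\mathbf F\|_{\mathfrak H_p(t)}.
\end{equation*}
It then remains to replace $\mathbf w_\eps$ by $\mathbf u_0$ in the flux. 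Using (\ref{u^0=}) and (\ref{u_ptichka=}) we have $\mathbf u_0(\cdot,t)-\mathbf w_\eps(\cdot,t)=\int_{t-\eps^2}^{t}e^{-\mathcal A_D^0(t-\widetilde t)}\mathbf F(\cdot,\widetilde t)\,d\widetilde t$; applying $g^0 b(\D)$, invoking bound (\ref{small_t_potok_0}) for $g^0 b(\D)e^{-\mathcal A_D^0\tau}$, and then using the H\"older inequality exactly as in the derivation of (\ref{J_3<=}) gives a bound of the form $\widetilde C_{14}(1-p'/2)^{-1/p'}\eps^{1-2/p}\|\mathbf F\|_{\mathfrak H_p(t)}$ for $2<p\leqslant\infty$. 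Since $\eps^{1-2/p}\leqslant\rho(\eps,p)$ for every $2<p\leqslant\infty$ (equality for $2<p<4$, a harmless extra logarithm absorbed at $p=4$, and $\eps^{1-2/p}\leqslant\eps^{1/2}=\rho(\eps,p)$ for $p>4$), the triangle inequality yields (\ref{ppp}) with $c_p'''=c_p''+\widetilde C_{14}(1-p'/2)^{-1/p'}$. The fact that $c_p'''$ depends on $n$ (besides the initial data and $p$) enters only through $\|\Lambda\|_{L_\infty}$, which under assertion $3^\circ$ of Proposition~2.7 is controlled in terms of $d,m,n,\alpha_0,\alpha_1,\|g\|_{L_\infty},\|g^{-1}\|_{L_\infty}$ and the lattice, see Remark~2.8.

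The only computational point is the last one --- estimating the mismatch between $\mathbf w_\eps$ and $\mathbf u_0$ in the flux of assertion $2^\circ$ --- and even this is routine, being a verbatim repetition of the estimate of the term $\mathcal J_3$ in the proof of Theorem~4.11; so I do not anticipate any real obstacle in this proposition.
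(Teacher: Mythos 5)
Your argument is correct and is essentially the paper's own: assertion $1^\circ$ is treated identically ($\Lambda=0$, so Theorem 4.11 applies verbatim), and for $2^\circ$ the paper performs the same Duhamel splitting directly on $\mathbf{p}_\eps-\mathbf{p}_0$ --- Proposition 3.12 for the term with $\boldsymbol\varphi$ and for the integral over $(0,t-\eps^2)$, and the bounds \eqref{small_t_potok}, \eqref{small_t_potok_0} with H\"older for the tail over $(t-\eps^2,t)$ --- which is exactly what your route through the $\Lambda\in L_\infty$ flux estimate (with $\widetilde g=g^0$) plus the $\mathbf w_\eps\to\mathbf u_0$ tail correction amounts to, with a constant of the same form. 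One bookkeeping fix: the $\Lambda\in L_\infty$ result you invoke is Theorem 4.12, not 4.13 (Proposition 4.13 is the $d=3$, $p=\infty$ statement).
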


\begin{proof}
Assertion $1^\circ$ follows directly from Theorem 4.11, since $\Lambda=0$ if $g^0=\overline{g}$.

Let us prove assertion $2^\circ$. By (\ref{u_e=}) and (\ref{u^0=}),
\begin{equation}
\label{329}
\begin{aligned}
\|& {\mathbf p}_\eps(\cdot,t) - {\mathbf p}_0(\cdot,t)\|_{L_2({\mathcal O})}
\\
&\leqslant \left\| \left( g^\eps b(\D) e^{-\mathcal{A}_{D,\eps} t} - g^0 b(\D)e^{-\mathcal{A}_{D}^0 t} \right) {\boldsymbol{\varphi}}\right\|_{L_2({\mathcal O})}
\\
&+ \int_0^{t} \left\| \left( g^\eps b(\D) e^{-\mathcal{A}_{D,\eps} (t-\widetilde{t})}
- g^0 b(\D)e^{-\mathcal{A}_{D}^0 (t-\widetilde{t})} \right) {\mathbf F}(\cdot,\widetilde{t})\right\|_{L_2({\mathcal O})}\, d\widetilde{t}.
\end{aligned}
\end{equation}
The first term on the right is estimated with the help of Proposition 3.12.
The integral in the right-hand side of (\ref{329}) can be represented as the sum of two integrals, over the intervals $(0,t-\eps ^2)$
and $(t-\eps ^2,t)$. The first one is estimated by using Proposition 3.12 (cf. (\ref{J_2})--(\ref{J_2_p_infty})).
To estimate the second one, we apply (\ref{small_t_potok}) and (\ref{small_t_potok_0}) (cf. (\ref{J_3}), (\ref{J_3<=})).
This yields estimate (\ref{ppp}) with
$ {c}_p'''= 2 \widetilde{C}_{15} \nu_p^{1/p'} + 2 \widetilde{C}_{14} (1- p'/2)^{-1/p'}$.
\end{proof}

\subsection*{4.5. Approximation of the solutions of nonhomogeneous equation in a strictly interior subdomain.}
Similarly to the proof of Theorem 4.11, applying Theorem 3.13 and Proposition 3.5,
it is easy to obtain the following result.

\begin{theorem}
Suppose that the assumptions of Theorem \textnormal{4.11} are satisfied.
Let $\mathcal{O}'$ be a strictly interior subdomain of the domain $\mathcal{O}$. Denote $\delta =\textnormal{dist}\,\lbrace \mathcal{O}'; \partial \mathcal{O}\rbrace$. Then for $0<\varepsilon \leqslant \varepsilon _1$ and $ \varepsilon ^2\leqslant t<T$ we have
\begin{align*}
\begin{split}
&\Vert\mathbf{u}_\varepsilon (\cdot ,t)-\mathbf{u}_0(\cdot ,t)-\varepsilon \Lambda ^\varepsilon S_\varepsilon b(\mathbf{D})\widetilde{\mathbf{w}}_\varepsilon (\cdot ,t)\Vert _{H^1(\mathcal{O}')}\\
&\leqslant (C_{16}\delta ^{-1}+ C_{17})\varepsilon t^{-1}e^{-c_{*}t/2}\Vert \boldsymbol{\varphi}\Vert _{L_2(\mathcal{O})}
+(k_p \delta^{-1}+ k_p')\sigma (\varepsilon ,p)\Vert \mathbf{F}\Vert _{\mathfrak{H}_p(t)},
\end{split}
\\
\begin{split}
&\Vert \mathbf{p}_\varepsilon (\cdot,t)-\widetilde{g}^\varepsilon S_\varepsilon b(\mathbf{D})\widetilde{\mathbf{w}}_\varepsilon (\cdot ,t)\Vert _{L_2(\mathcal{O}')}\\
&\leqslant (\widetilde{C}_{16} \delta ^{-1}+ \widetilde{C}_{17})\varepsilon t^{-1}e^{-c_{*}t/2}\Vert \boldsymbol{\varphi}\Vert _{L_2(\mathcal{O})}+(\widetilde{k}_p \delta^{-1}+ \widetilde{k}_p'){\sigma} (\varepsilon ,p) \Vert \mathbf{F}\Vert _{\mathfrak{H}_p(t)}.
\end{split}
\end{align*}
Here $\sigma (\varepsilon ,p)$ is given by
\begin{equation}
\label{sigma (varepsilon ,p,delta)}
\sigma (\varepsilon ,p) =
\begin{cases}
\varepsilon ^{1-2/p} , &2<p<\infty ,
\\
\varepsilon\left(\vert \ln \varepsilon \vert +1\right) ,  &p=\infty .
\end{cases}
\end{equation}
The constants $k_p$, $k_p'$, $\widetilde{k}_p$, and $\widetilde{k}_p'$ depend only on the initial data \textnormal{\eqref{data}} and  $p$.
\end{theorem}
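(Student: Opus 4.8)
The plan is to repeat, almost verbatim, the scheme of the proof of Theorem~\textnormal{4.11}, with the global $(L_2\to H^1)$-bounds of Theorem~3.3 replaced by the interior bounds of Theorem~3.13 (applicable under the standing assumptions, since those of Theorem~4.11 contain those of Theorem~2.2) and with the short-time bounds of Proposition~3.5 kept on the boundary-layer interval $(t-\eps^2,t)$. First I would recall the Duhamel representations \eqref{u_e=}, \eqref{u^0=} and \eqref{u_ptichka=}. By linearity of $P_\mathcal{O}$, formula \eqref{u_ptichka=} shows that, after restriction to $\mathcal{O}'$,
\[
\Lambda^\varepsilon S_\varepsilon b(\mathbf{D})\widetilde{\mathbf{w}}_\varepsilon(\cdot,t)
= \mathcal{K}_D(t;\varepsilon)\boldsymbol{\varphi}
+ \int_0^{t-\eps^2} \mathcal{K}_D(t-\widetilde t;\varepsilon)\mathbf{F}(\cdot,\widetilde t)\,d\widetilde t,
\]
where $\mathcal{K}_D$ is the operator \eqref{K_D(t,e)}. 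Subtracting \eqref{u^0=} from \eqref{u_e=} and inserting this corrector splits $\mathbf{u}_\varepsilon(\cdot,t)-\mathbf{u}_0(\cdot,t)-\varepsilon\Lambda^\varepsilon S_\varepsilon b(\mathbf{D})\widetilde{\mathbf{w}}_\varepsilon(\cdot,t)$, exactly as in \eqref{315}, into three terms: $\mathcal{J}_1$ (the corrected exponential difference acting on $\boldsymbol{\varphi}$), $\mathcal{J}_2$ (its integral over $(0,t-\eps^2)$ against $\mathbf{F}$), and $\mathcal{J}_3$ (the plain exponential difference integrated over $(t-\eps^2,t)$ against $\mathbf{F}$).

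For $\mathcal{J}_1$ I would apply \eqref{th3.8} directly, obtaining $\mathcal{J}_1\leqslant(C_{16}\delta^{-1}+C_{17})\varepsilon t^{-1}e^{-c_* t/2}\|\boldsymbol{\varphi}\|_{L_2(\mathcal{O})}$, which is the first term of the asserted estimate. For $\mathcal{J}_2$, estimate \eqref{th3.8} with $t$ replaced by $t-\widetilde t\geqslant\eps^2$ bounds the integrand by $(C_{16}\delta^{-1}+C_{17})\varepsilon(t-\widetilde t)^{-1}e^{-c_*(t-\widetilde t)/2}\|\mathbf{F}(\cdot,\widetilde t)\|_{L_2(\mathcal{O})}$, and the H\"older inequality (as in \eqref{J_22}) reduces matters to the integral $\int_{\eps^2}^t\tau^{-p'}e^{-c_*p'\tau/2}\,d\tau$, where $p^{-1}+(p')^{-1}=1$. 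For $2<p<\infty$, so $1<p'<2$, this integral is $O(\eps^{2-2p'})$, whence after raising to the power $1/p'$ and multiplying by $\varepsilon$ one gets the order $\eps^{1-2/p}$; for $p=\infty$, so $p'=1$, the integral is $O(|\ln\eps|+1)$, giving the order $\eps(|\ln\eps|+1)$. This reproduces $\sigma(\varepsilon,p)$ of \eqref{sigma (varepsilon ,p,delta)}, the factor $(C_{16}\delta^{-1}+C_{17})$ turning into $(k_p\delta^{-1}+k_p')$. For $\mathcal{J}_3$ I would use $\|\cdot\|_{H^1(\mathcal{O}')}\leqslant\|\cdot\|_{H^1(\mathcal{O})}$ together with \eqref{small_t}, exactly as in \eqref{J_3}--\eqref{J_3<=}, which contributes a further term of order $\eps^{1-2/p}$ carrying no factor $\delta^{-1}$; this is absorbed into $(k_p\delta^{-1}+k_p')\sigma(\varepsilon,p)$. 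Adding $\mathcal{J}_1+\mathcal{J}_2+\mathcal{J}_3$ gives the first inequality.

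The flux estimate is obtained identically from the analogue of \eqref{323}: the $\boldsymbol{\varphi}$-term and the integral over $(0,t-\eps^2)$ are controlled by the flux part of Theorem~3.13, and the integral over $(t-\eps^2,t)$ by \eqref{small_t_potok}, producing the constants $\widetilde{k}_p$, $\widetilde{k}_p'$. I do not expect any genuinely new difficulty: the one point needing care is the bookkeeping of the conjugate exponents in the H\"older step for $\mathcal{J}_2$ that produces $\sigma(\varepsilon,p)$ (note that, unlike in Theorem~4.11, the only critical exponent here is $p'=1$, i.e.\ $p=\infty$), and the observation that the boundary-layer term $\mathcal{J}_3$ requires only the global short-time bound of Proposition~3.5 restricted to $\mathcal{O}'$ and therefore does not inherit the factor $\delta^{-1}$.
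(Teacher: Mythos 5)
Your proposal is correct and follows exactly the route the paper intends: the paper proves Theorem 4.15 by stating that it is obtained "similarly to the proof of Theorem 4.11, applying Theorem 3.13 and Proposition 3.5," which is precisely your decomposition into $\mathcal{J}_1,\mathcal{J}_2,\mathcal{J}_3$ with the interior bound \eqref{th3.8} on the main terms and \eqref{small_t} on the boundary-layer interval. Your exponent bookkeeping ($\varepsilon\cdot\varepsilon^{2/p'-2}=\varepsilon^{1-2/p}$ for $2<p<\infty$, logarithmic loss only at $p'=1$) correctly reproduces $\sigma(\varepsilon,p)$, so nothing is missing.
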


Finally, in the case where $\Lambda \in L_\infty$ Theorem 3.14 implies the following result.

\begin{theorem}
Suppose that the matrix-valued function $\Lambda (\mathbf{x})$ satisfies Condition \textnormal{2.5}.
Under the assumptions of Theorem \textnormal{4.15} for $0<\varepsilon\leqslant\varepsilon _1$ and $\varepsilon^2\leqslant t<T$ we have
\begin{align*}
\begin{split}
&\Vert\mathbf{u}_\varepsilon (\cdot ,t)-\mathbf{u}_0(\cdot ,t)-\varepsilon
\Lambda ^\varepsilon  b(\mathbf{D})\mathbf{w}_\varepsilon (\cdot ,t)\Vert _{H^1(\mathcal{O}')}\\
&\leqslant (C_{16}\delta ^{-1}+\check{C}_{17})\varepsilon t^{-1}e^{-c_{*}t/2}\Vert \boldsymbol{\varphi}\Vert _{L_2(\mathcal{O})}+
(k_p \delta^{-1} + \check{k}_p') \sigma (\varepsilon ,p)\Vert \mathbf{F}\Vert _{\mathfrak{H}_p(t)},
\end{split}
\\
\begin{split}
&\Vert \mathbf{p}_\varepsilon (\cdot,t)-\widetilde{g}^\varepsilon  b(\mathbf{D})\mathbf{w}_\varepsilon (\cdot ,t)\Vert _{L_2(\mathcal{O}')}\\
&\leqslant (\widetilde{C}_{16}\delta ^{-1}+\widehat{C}_{17}) \varepsilon t^{-1}e^{-c_{*}t/2}\Vert \boldsymbol{\varphi}\Vert _{L_2(\mathcal{O})}+
(\widetilde{k}_p \delta^{-1} + \widehat{k}_p')  \sigma (\varepsilon ,p)\Vert \mathbf{F}\Vert _{\mathfrak{H}_p(t)}.
\end{split}
\end{align*}
The constants $k_p$ and  $\widetilde{k}_p$ are the same as in Theorem \textnormal{4.15}.
The constants $\check{k}_p'$ and $\widehat{k}_p'$ depend only on the initial data
\textnormal{\eqref{data}}, $\|\Lambda\|_{L_\infty}$, and $p$.
\end{theorem}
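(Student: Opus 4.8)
The plan is to repeat the scheme of the proof of Theorem~4.11 (which also underlies Theorem~4.15), now with the corrector $\mathcal{K}_D(t;\varepsilon)$ replaced throughout by $\mathcal{K}_D^0(t;\varepsilon)$, with Theorem~3.14 invoked in place of Theorem~3.13, and with Proposition~3.5 retained for the contribution of the small time increments. Starting from the Duhamel formulas \eqref{u_e=}, \eqref{u^0=} and from \eqref{u_ptichka=}, and using that $t\geqslant\varepsilon^2$, I would bound the left-hand side of the first asserted inequality by a sum $\mathcal{J}_1+\mathcal{J}_2+\mathcal{J}_3$ formed exactly as in the splitting \eqref{315}, but with $\mathcal{K}_D$ replaced by $\mathcal{K}_D^0$ and with every $H^1(\mathcal{O})$-norm there replaced by the $H^1(\mathcal{O}')$-norm; the analogous decomposition for the flux is built from \eqref{u_e=} and \eqref{u_ptichka=}, cf.\ \eqref{323}, using $g^\varepsilon b(\mathbf{D})$ and $\widetilde{g}^\varepsilon b(\mathbf{D})$ in place of $g^\varepsilon b(\mathbf{D})$ and $\widetilde{g}^\varepsilon S_\varepsilon b(\mathbf{D})P_\mathcal{O}$.

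For $\mathcal{J}_1$, Theorem~3.14 applies directly (it holds for all $t>0$) and gives $\mathcal{J}_1\leqslant(C_{16}\delta^{-1}+\check{C}_{17})\varepsilon t^{-1}e^{-c_* t/2}\|\boldsymbol{\varphi}\|_{L_2(\mathcal{O})}$, which is precisely the $\boldsymbol{\varphi}$-term of the claim. For $\mathcal{J}_2$, since $t-\widetilde{t}\geqslant\varepsilon^2$ on the interval of integration, Theorem~3.14 applies pointwise under the integral sign, so that
\begin{equation*}
\mathcal{J}_2\leqslant(C_{16}\delta^{-1}+\check{C}_{17})\,\varepsilon\int_0^{t-\varepsilon^2}(t-\widetilde{t})^{-1}e^{-c_*(t-\widetilde{t})/2}\|\mathbf{F}(\cdot,\widetilde{t})\|_{L_2(\mathcal{O})}\,d\widetilde{t}.
\end{equation*}
The substitution $\tau=t-\widetilde{t}$ followed by the H\"older inequality with exponents $p,p'$ reduces the matter to bounding $\int_{\varepsilon^2}^{t}\tau^{-p'}e^{-c_* p'\tau/2}\,d\tau$: for $2<p<\infty$ this integral is $O(\varepsilon^{2-2p'})$, whence, taking its $p'$th root and multiplying by the outer factor $\varepsilon$, $\mathcal{J}_2$ is of order $\varepsilon^{1-2/p}\|\mathbf{F}\|_{\mathfrak{H}_p(t)}$; for $p=\infty$ ($p'=1$) the integral is $O(|\ln\varepsilon|+1)$, giving $\mathcal{J}_2$ of order $\varepsilon(|\ln\varepsilon|+1)\|\mathbf{F}\|_{\mathfrak{H}_\infty(t)}$. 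In all cases $\mathcal{J}_2\leqslant(k_p\delta^{-1}+\cdots)\,\sigma(\varepsilon,p)\,\|\mathbf{F}\|_{\mathfrak{H}_p(t)}$ with $\sigma$ as in \eqref{sigma (varepsilon ,p,delta)} and with the coefficient of $\delta^{-1}$ a fixed multiple of $C_{16}$. For $\mathcal{J}_3$, I would use \eqref{small_t} (whose right-hand side dominates the $H^1(\mathcal{O}')$-norm), substitute $\tau=t-\widetilde{t}\in(0,\varepsilon^2)$, and apply H\"older once more; since $p'/2<1$ for $2<p\leqslant\infty$, the factor $\int_0^{\varepsilon^2}\tau^{-p'/2}\,d\tau$ is finite, and $\mathcal{J}_3$ contributes a term of order $\varepsilon^{1-2/p}\leqslant\sigma(\varepsilon,p)$, absorbed into the $\delta$-independent constant. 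Summing $\mathcal{J}_1+\mathcal{J}_2+\mathcal{J}_3$ yields the first inequality; the flux inequality is obtained in exactly the same way, using the flux part of Theorem~3.14 on the first two pieces and \eqref{small_t_potok} on the last one.

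It remains only to remark that $k_p$ and $\widetilde{k}_p$ coincide with the constants of Theorem~4.15 because the coefficient of $\delta^{-1}$ in Theorem~3.14 is the same ($C_{16}$, resp.\ $\widetilde{C}_{16}$) as in Theorem~3.13, whereas only the $\delta$-independent constants change (from $C_{17}$, $\widetilde{C}_{17}$ to $\check{C}_{17}$, $\widehat{C}_{17}$), which is why $\check{k}_p'$ and $\widehat{k}_p'$ now also depend on $\|\Lambda\|_{L_\infty}$. The argument brings in no idea beyond those already in the proof of Theorem~4.11; the only steps requiring a little care are the H\"older bookkeeping in $\mathcal{J}_2$ and $\mathcal{J}_3$ — verifying that the powers of $\varepsilon$ collapse precisely to $\sigma(\varepsilon,p)$ of \eqref{sigma (varepsilon ,p,delta)} and that the logarithmic factor shows up only at $p=\infty$ — together with tracking which constants carry the singular factor $\delta^{-1}$. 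I expect this last, essentially routine, bookkeeping to be the main obstacle.
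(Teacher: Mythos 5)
Your proposal is correct and follows exactly the route the paper intends: the paper derives Theorem 4.16 from Theorem 3.14 together with Proposition 3.5 by the same Duhamel splitting used for Theorems 4.11 and 4.15, which is precisely your $\mathcal{J}_1+\mathcal{J}_2+\mathcal{J}_3$ decomposition with $\mathcal{K}_D$ replaced by $\mathcal{K}_D^0$. Your H\"older bookkeeping for $\mathcal{J}_2$ and $\mathcal{J}_3$ (yielding $\sigma(\varepsilon,p)$, with the logarithm only at $p=\infty$) and your tracking of which constants carry $\delta^{-1}$ and which acquire a dependence on $\|\Lambda\|_{L_\infty}$ are both accurate.
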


\begin{remark}
\textnormal{1)
For $p \geqslant 4$ the coefficient at $\Vert \mathbf{F}\Vert _{\mathfrak{H}_p(t)}$
in estimates of Theorems 4.15 and 4.16 has the order (in $\eps$) sharper
than the similar coefficient in estimates of Theorems 4.11 and 4.12.
If $2< p <4$, these coefficients are of the same order $\eps^{1-2/p}$, so that there is no benefit in transition to
a strictly interior subdomain.
2) Application of Theorem 3.16 to approximation of the solutions of nonhomogeneous equation in the class $H^1({\mathcal O}'; {\mathbb C}^n)$
does not lead to interesting results, because of the strong growth of the factor $h_d(\delta;t)$ in estimates \eqref{3.26} and \eqref{3.27}
for small $t$.}
\end{remark}

\subsection*{4.6. Approximation of the solutions of nonhomogeneous equation in $L_p((0,T);H^1(\mathcal{O};\mathbb{C}^n))$.} Denote $\mathfrak{G}_p(T):=L_p((0,T);H^1(\mathcal{O};\mathbb{C}^n))$.

\begin{theorem}
Suppose that the assumptions of Theorem \textnormal{2.2} are satisfied.
Let $\mathbf{u}_\varepsilon$ and $\mathbf{u}_0$ be the solutions of problems \textnormal{(\ref{nonhomogeneous_problem})} and
 \textnormal{(\ref{nonhomogeneous_eff_problem})}, respectively, for $\boldsymbol{\varphi}\in L_2(\mathcal{O};\mathbb{C}^n)$ and $\mathbf{F}\in \mathfrak{H}_p(T)$, $0<T\leqslant \infty$.
For $\varepsilon ^2\leqslant t<T$ let ${\mathbf w}_\eps (\cdot ,t)$ be the function defined by \textnormal{(\ref{u_ptichka})}.
For $0\leqslant t<\varepsilon ^2$ we put $\mathbf{w}_\varepsilon (\cdot ,t)=0$.
Suppose that $\Lambda (\mathbf{x})$ is the $\Gamma$-periodic solution of problem \textnormal{(\ref{Lambda_problem})}.
Let $P_\mathcal{O}$ be the extension operator \textnormal{(\ref{P_O H^1, H^2})}, and let $S_\varepsilon$ be the operator
\textnormal{(\ref{S_e})}. Denote $\widetilde{\mathbf{w}}_\varepsilon =P_\mathcal{O}\mathbf{w}_\varepsilon $.
Let $\mathbf{p}_\varepsilon =g^\varepsilon b(\mathbf{D})\mathbf{u}_\varepsilon$, and let $\widetilde{g}$ be the matrix-valued
function \eqref{tilde g}.

\noindent
$1^\circ.$ Let $1\leqslant p < 2$. Then for $0<\varepsilon\leqslant \varepsilon _1$ we have
\begin{align}
\label{th4.14}
\begin{split}
&\Vert \mathbf{u}_\varepsilon -\mathbf{u}_0-\varepsilon \Lambda ^\varepsilon S_\varepsilon b(\mathbf{D})\widetilde{\mathbf{w}}_\varepsilon\Vert _{\mathfrak{G}_p(T)}
\leqslant \kappa_p \alpha (\varepsilon ,p)\Vert \boldsymbol{\varphi}\Vert _{L_2(\mathcal{O})}+C_{19}\varepsilon ^{1/2}\Vert \mathbf{F}\Vert _{\mathfrak{H}_p(T)},
\end{split}
\\
&\Vert \mathbf{p}_\varepsilon -\widetilde{g}^\varepsilon S_\varepsilon b(\mathbf{D})\widetilde{\mathbf{w}}_\varepsilon \Vert _{L_p((0,T);L_2(\mathcal{O}))}\leqslant \widetilde{\kappa}_p {\alpha}(\varepsilon ,p)\Vert \boldsymbol{\varphi}\Vert _{L_2(\mathcal{O})}+\widetilde{C}_{19}\varepsilon ^{1/2}\Vert \mathbf{F}\Vert _{\mathfrak{H}_p(T)}.\nonumber
\end{align}
Here $\alpha (\varepsilon ,p)= \rho (\varepsilon ,p')$ is defined by
\begin{equation}
\label{alpha(e,p)}
\alpha (\varepsilon ,p)=\begin{cases}
\varepsilon ^{1/2}, &1\leqslant p<4/3,\\
\varepsilon ^{1/2}(\vert \ln \varepsilon \vert +1)^{3/4}, & p=4/3,\\
\varepsilon ^{2/p-1}, &4/3<p<2.
\end{cases}
\end{equation}
The constants $C_{19}$ and $\widetilde{C}_{19}$
depend only on the initial data \textnormal{\eqref{data}}.
The constants ${\kappa} _p$ and $\widetilde{\kappa} _p$
depend on the initial data \textnormal{\eqref{data}} and $p$.

\noindent
$2^\circ.$ Let $\boldsymbol{\varphi}=0$ and $1\leqslant p \leqslant \infty$.
Then for $0<\varepsilon\leqslant \varepsilon _1$ we have
\begin{align}
\label{Prop.3.16}
&\Vert \mathbf{u}_\varepsilon -\mathbf{u}_0-\varepsilon \Lambda ^\varepsilon S_\varepsilon b(\mathbf{D})\widetilde{\mathbf{w}}_\varepsilon \Vert _{\mathfrak{G}_p(T)}\leqslant
C_{19}\varepsilon ^{1/2}\Vert \mathbf{F}\Vert _{\mathfrak{H}_p(T)},
\\
&\Vert \mathbf{p}_\varepsilon -\widetilde{g}^\varepsilon S_\varepsilon b(\mathbf{D})\widetilde{\mathbf{w}}_\varepsilon \Vert _{L_p((0,T);L_2(\mathcal{O}))}\leqslant \widetilde{C}_{19}\varepsilon ^{1/2}\Vert \mathbf{F}\Vert _{\mathfrak{H}_p(T)}.\nonumber
\end{align}
\end{theorem}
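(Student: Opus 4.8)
The plan is to obtain both estimates by integrating in $t\in(0,T)$ the pointwise‑in‑$t$ bounds already available from Theorem~\textnormal{3.3} and Proposition~\textnormal{3.5}, following the scheme of the proof of Theorem~\textnormal{4.11}, and to organize the contribution of $\mathbf{F}$ as a convolution so that Young's inequality applies. First I would split $(0,T)$ into $(0,\varepsilon^2)$ and $[\varepsilon^2,T)$. On $[\varepsilon^2,T)$ one uses \eqref{u_e=}, \eqref{u^0=}, \eqref{u_ptichka=} and the decomposition \eqref{315} (and \eqref{323} for the flux): the $\boldsymbol{\varphi}$‑part is $\le 2C_{13}\varepsilon^{1/2}t^{-3/4}e^{-c_*t/2}\Vert\boldsymbol{\varphi}\Vert_{L_2(\mathcal{O})}$ by \eqref{J_1} (using $\varepsilon t^{-1}\le\varepsilon^{1/2}t^{-3/4}$ for $t\ge\varepsilon^2$), while the $\mathbf{F}$‑part is at most $\int_0^t\kappa(t-\widetilde t;\varepsilon)\Vert\mathbf{F}(\cdot,\widetilde t)\Vert_{L_2(\mathcal{O})}\,d\widetilde t$ with the glued kernel
\begin{equation*}
\kappa(\tau;\varepsilon)=\begin{cases}
2C_{13}\,\varepsilon^{1/2}\tau^{-3/4}e^{-c_*\tau/2}, & \tau\ge\varepsilon^2,\\
C_{14}\,\tau^{-1/2}, & 0<\tau<\varepsilon^2,
\end{cases}
\end{equation*}
obtained by combining \eqref{Th_exp_korrector} on the sub‑integral over $\widetilde t\in(0,t-\varepsilon^2)$ (there $\tau=t-\widetilde t>\varepsilon^2$) with \eqref{small_t} on the sub‑integral over $\widetilde t\in(t-\varepsilon^2,t)$ (there $\tau<\varepsilon^2$). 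On $(0,\varepsilon^2)$ one has $\mathbf{w}_\varepsilon=0$, so the quantity to bound is just $\mathbf{u}_\varepsilon-\mathbf{u}_0$; by \eqref{small_t} its $H^1$‑norm does not exceed $C_{14}t^{-1/2}e^{-c_*t/2}\Vert\boldsymbol{\varphi}\Vert_{L_2(\mathcal{O})}+\int_0^t\kappa(t-\widetilde t;\varepsilon)\Vert\mathbf{F}(\cdot,\widetilde t)\Vert_{L_2(\mathcal{O})}\,d\widetilde t$, where again $\tau=t-\widetilde t<\varepsilon^2$ so the same kernel serves. This yields a bound valid for all $t\in(0,T)$ whose $\mathbf{F}$‑part is the convolution $(\kappa(\cdot;\varepsilon)*\Vert\mathbf{F}(\cdot,\cdot)\Vert_{L_2(\mathcal{O})})(t)$.

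Next I would take $L_p$‑norms in $t$. For the $\mathbf{F}$‑term, Young's convolution inequality bounds it by $\Vert\kappa(\cdot;\varepsilon)\Vert_{L_1(0,\infty)}\Vert\mathbf{F}\Vert_{\mathfrak{H}_p(T)}$ for every $1\le p\le\infty$, and
\begin{equation*}
\Vert\kappa(\cdot;\varepsilon)\Vert_{L_1}\le C_{14}\int_0^{\varepsilon^2}\tau^{-1/2}\,d\tau+2C_{13}\varepsilon^{1/2}\Bigl(\int_{\varepsilon^2}^1\tau^{-3/4}\,d\tau+\int_1^\infty e^{-c_*\tau/2}\,d\tau\Bigr)\le \mathrm{const}\cdot\varepsilon^{1/2},
\end{equation*}
which gives the term $C_{19}\varepsilon^{1/2}\Vert\mathbf{F}\Vert_{\mathfrak{H}_p(T)}$; since this step needs no restriction on $p$, it already settles assertion~$2^\circ$, where $\boldsymbol{\varphi}=0$. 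For the $\boldsymbol{\varphi}$‑term one estimates directly $\bigl(\int_0^{\varepsilon^2}(C_{14}t^{-1/2}e^{-c_*t/2})^p\,dt\bigr)^{1/p}+\bigl(\int_{\varepsilon^2}^T(2C_{13}\varepsilon^{1/2}t^{-3/4}e^{-c_*t/2})^p\,dt\bigr)^{1/p}$: the first integral is finite precisely for $p<2$ and is $O(\varepsilon^{2/p-1})$, while the second, splitting $\int_{\varepsilon^2}^\infty=\int_{\varepsilon^2}^1+\int_1^\infty$ and distinguishing $3p/4<1$, $=1$, $>1$, is $O(\varepsilon^{1/2})$ for $p<4/3$, $O(\varepsilon^{1/2}(|\ln\varepsilon|+1)^{3/4})$ for $p=4/3$, and $O(\varepsilon^{2/p-1})$ for $4/3<p<2$. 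Taking the maximum of the two reproduces exactly $\alpha(\varepsilon,p)$ of \eqref{alpha(e,p)}, giving assertion~$1^\circ$ for $\mathbf{u}_\varepsilon$. The flux estimates follow by running the same argument on \eqref{323}, with \eqref{Th_exp_korrector}, \eqref{small_t} replaced by \eqref{Th_exp_potok}, \eqref{small_t_potok} (constants $\widetilde C_{13}$, $\widetilde C_{14}$) and using that $\widetilde g^\varepsilon S_\varepsilon b(\mathbf{D})\widetilde{\mathbf{w}}_\varepsilon=0$ for $t<\varepsilon^2$, so that there only $g^\varepsilon b(\mathbf{D})e^{-\mathcal{A}_{D,\varepsilon}(t-\widetilde t)}$ has to be controlled via \eqref{small_t_potok}.

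The argument is essentially bookkeeping, and the one place that genuinely requires care is making the two splittings compatible: the Duhamel integral is cut at $\widetilde t=t-\varepsilon^2$ while the time interval is cut at $t=\varepsilon^2$, and one must verify that on each resulting piece the glued kernel $\kappa$ is indeed dominated by the estimate (\eqref{Th_exp_korrector} or \eqref{small_t}, respectively \eqref{Th_exp_potok} or \eqref{small_t_potok}) that is actually available there. The restriction $p<2$ in~$1^\circ$ is intrinsic: in $H^1$ the $\boldsymbol{\varphi}$‑contribution near $t=0$ behaves like $t^{-1/2}$, which is $p$‑integrable on $(0,\varepsilon^2)$ only for $p<2$, whereas in~$2^\circ$ ($\boldsymbol{\varphi}=0$) this term is absent and all $p\in[1,\infty]$ are admissible. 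Nothing else is technically hard beyond these checks and the elementary integrals above.
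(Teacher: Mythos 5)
Your proposal is correct and follows essentially the same route as the paper: the same splitting of $(0,T)$ at $t=\varepsilon^2$ and of the Duhamel integral at $\widetilde t=t-\varepsilon^2$, with \eqref{Th_exp_korrector}, \eqref{Th_exp_potok} used where $t-\widetilde t\geqslant\varepsilon^2$ and \eqref{small_t}, \eqref{small_t_potok} elsewhere, and direct $L_p$-integration of the $\boldsymbol{\varphi}$-terms giving $\alpha(\varepsilon,p)$. Your appeal to Young's inequality for the glued kernel $\kappa(\cdot;\varepsilon)$ is just a repackaging of the paper's H\"older-plus-Fubini computation (including the $p=\infty$ case treated there by esssup), so no substantive difference remains.
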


\begin{proof}
First, we assume that $T\geqslant \varepsilon^2$.
We have
\begin{equation}
\label{Th3.15_proof_1}
\begin{split}
&\Vert \mathbf{u}_\varepsilon -\mathbf{u}_0-\varepsilon \Lambda ^\varepsilon S_\varepsilon b(\mathbf{D})\widetilde{\mathbf{w}}_\varepsilon\Vert _{\mathfrak{G}_p(T)}
\leqslant
\mathcal{T}_1(\varepsilon;p;\boldsymbol{\varphi}) + \mathcal{T}_2(\varepsilon;p;\boldsymbol{\varphi})
\\
&+\mathcal{T}_3(\varepsilon;p;{\mathbf F})+\mathcal{T}_4(\varepsilon;p;{\mathbf F})
+\mathcal{T}_5(\varepsilon;p;{\mathbf F}),
\end{split}
\end{equation}
where
\begin{align}
\label{T1}
&\mathcal{T}_1(\varepsilon;p;\boldsymbol{\varphi})^p:= \int _0^{\varepsilon ^2}dt\left\Vert \bigl(e^{-\mathcal{A}_{D,\varepsilon}t}-e^{-\mathcal{A}_D^0 t}\bigr)\boldsymbol{\varphi}\right\Vert ^p _{H^1},
\\
&\mathcal{T}_2(\varepsilon;p;\boldsymbol{\varphi})^p:= \int _{\varepsilon ^2}^T dt \left\Vert \bigl(e^{-\mathcal{A}_{D,\varepsilon}t}-e^{-\mathcal{A}_D^0 t}-\varepsilon \mathcal{K}_D(t;\varepsilon)\bigr)\boldsymbol{\varphi}
\right\Vert ^p_{H^1},\nonumber
\\
&\mathcal{T}_3(\varepsilon;p;{\mathbf F})^p:= \int _0^{\varepsilon ^2}dt\biggl\Vert \int _0^t d\widetilde{t}\bigl(e^{-\mathcal{A}_{D,\varepsilon}(t-\widetilde{t})}-e^{-\mathcal{A}_D^0(t-\widetilde{t})}\bigr)\mathbf{F}(\cdot ,\widetilde{t})\biggr\Vert ^p_{H^1},\nonumber
\\
\begin{split}
&\mathcal{T}_4(\varepsilon;p;{\mathbf F})^p:=
\\
 &\int _{\varepsilon ^2}^{T}dt\biggl\Vert \int _0^{t-\varepsilon ^2} d\widetilde{t}\bigl(e^{-\mathcal{A}_{D,\varepsilon}(t-\widetilde{t})}-e^{-\mathcal{A}_D^0(t-\widetilde{t})}-\varepsilon \mathcal{K}_D(t-\widetilde{t};\varepsilon)\bigr)\mathbf{F}(\cdot ,\widetilde{t})\biggr\Vert ^p_{H^1},
\end{split}\nonumber
\\
&\mathcal{T}_5(\varepsilon;p;{\mathbf F})^p:=\int _{\varepsilon ^2}^Tdt\biggl \Vert \int _{t-\varepsilon ^2}^td\widetilde{t}\bigl( e^{-\mathcal{A}_{D,\varepsilon}(t-\widetilde{t})}-e^{-\mathcal{A}_D^0(t-\widetilde{t})}\bigr)\mathbf{F}(\cdot ,\widetilde{t})\biggr\Vert ^p_{H^1}.\nonumber
\end{align}

The integral in \eqref{T1} converges for $1\leqslant p<2$. Applying \eqref{small_t}, we find
\begin{equation}
\label{mathcal_J_1}
\mathcal{T}_1(\varepsilon ;p;\boldsymbol{\varphi})^p
\leqslant
C_{14}^p \Vert \boldsymbol{\varphi}\Vert ^p_{L_2({\mathcal O})}
\int _0^{\varepsilon ^2} t^{-p/2}\,dt = C_{14}^p(1-p/2)^{-1}\varepsilon ^{2-p}\Vert \boldsymbol{\varphi}\Vert ^p_{L_2({\mathcal O})}.
\end{equation}

Next, by \eqref{Th_exp_korrector},
\begin{equation}
\label{T2}
\mathcal{T}_2(\varepsilon ;p;\boldsymbol{\varphi})^p\leqslant
(2C_{13})^p\varepsilon ^{p/2}\Vert \boldsymbol{\varphi}\Vert ^p_{L_2({\mathcal O})}
\int _{\varepsilon ^2}^T t^{-3p/4}e^{-p c_{*}t/2} \,dt.
\end{equation}
The integral in the right-hand side of (\ref{T2}) has been already considered; this is  $I_p(T;\eps)$ (see (\ref{J_22})).
We have $I_p(T;\eps) \leqslant \nu_{p'}$ for $1 \leqslant p < 4/3$;
$I_{4/3}(T;\eps) \leqslant \nu_{4}(|\ln \eps| +1)$;
$I_p(T;\eps) \leqslant \nu_{p'} \eps^{2-3p/2}$ for $4/3 < p <2$.
Together with (\ref{mathcal_J_1}) and (\ref{T2}) this implies
 \begin{equation}
\label{J1+J2<=}
\mathcal{T}_1(\varepsilon ;p;\boldsymbol{\varphi})+
\mathcal{T}_2(\varepsilon ;p;\boldsymbol{\varphi})
\leqslant \kappa_p \alpha (\varepsilon ,p)\Vert \boldsymbol{\varphi}\Vert _{L_2},\quad 1\leqslant p<2,
\end{equation}
where $\kappa _p =C_{14}(1-p/2)^{-1/p}+2C_{13}\nu^{1/p}_{p'}$.

By the H\"older inequality and (\ref{small_t}),
\begin{equation*}
\begin{aligned}
&\mathcal{T}_3 (\varepsilon ;p;{\mathbf F})^p
\leqslant
 C_{14}^p \int _0^{\varepsilon ^2} dt \left(\int _0^t d\widetilde{t} ( t-\widetilde{t})^{-1/2}
 \Vert \mathbf{F}(\cdot ,\widetilde{t})\Vert_{L_2}\right)^p
 \\
&\leqslant
 C_{14}^p\int _0^{\varepsilon ^2} dt \left(\int _0^t ( t-\widetilde{t}) ^{-1/2}\Vert \mathbf{F}(\cdot ,\widetilde{t})\Vert ^p_{L_2}\,d\widetilde{t}\right)
 \left(\int _0 ^t (t-\widetilde{t})^{-1/2}\,d\widetilde{t}\right)^{p/p'}.
 \end{aligned}
\end{equation*}
For $0\leqslant t\leqslant \varepsilon ^2$ we have
$\int _0 ^t(t-\widetilde{t})^{-1/2}\,d\widetilde{t} \leqslant 2\varepsilon $. Whence, changing the order of integration in
the resulting integral, we obtain
\begin{equation}
\label{J3^p<=}
\begin{split}
\mathcal{T}_3(\varepsilon ;p;{\mathbf F})^p &\leqslant
C_{14}^p (2\varepsilon )^{p/p'}\int _0^{\varepsilon ^2} d\widetilde{t}\,\Vert \mathbf{F}(\cdot ,\widetilde{t})\Vert ^p_{L_2}\int _{\widetilde{t}}^{\varepsilon ^2}dt\,(t-\widetilde{t})^{-1/2}
\\
&\leqslant C_{14}^p (2\varepsilon )^{1+p/p'}  \Vert \mathbf{F}\Vert ^p_{\mathfrak{H}_p(T)}.
\end{split}
\end{equation}
The term $\mathcal{T}_5(\varepsilon ;p;{\mathbf F})$ is estimated in a similar way:
\begin{equation}
\label{J5<=}
\mathcal{T}_5(\varepsilon ;p;{\mathbf F})^p\leqslant C_{14}^p(2\varepsilon)^{1+p/p'}\Vert \mathbf{F}\Vert ^p_{\mathfrak{H}_p(T)}.
\end{equation}

It remains to estimate $\mathcal{T}_4(\varepsilon;p;{\mathbf F})$. By \eqref{Th_exp_korrector} and the H\"older inequality,
\begin{equation*}
\begin{split}
&\mathcal{T}_4(\varepsilon ;p;{\mathbf F})^p
\\
&\leqslant
(2C_{13})^p \varepsilon ^{p/2} \int _{\varepsilon ^2}^T dt
\biggl(\int _0^{t-\varepsilon ^2}d\widetilde{t}\,(t-\widetilde{t})^{-3/4}e^{-c_{*}(t-\widetilde{t})/2}
\Vert \mathbf{F}(\cdot ,\widetilde{t})\Vert_{L_2}\biggr)^p
\\
&\leqslant
(2C_{13})^p \varepsilon ^{p/2}\int _{\varepsilon ^2}^T dt
\biggl(\int _0^{t-\varepsilon ^2}d\widetilde{t}\,(t-\widetilde{t})^{-3/4}e^{-c_{*}(t-\widetilde{t})/2}\Vert \mathbf{F}(\cdot ,\widetilde{t})\Vert ^p_{L_2}\biggr)\\
&\times
\biggl(\int _0^{t-\varepsilon ^2}d\widetilde{t}\,(t-\widetilde{t})^{-3/4}e^{-c_{*}(t-\widetilde{t})/2}\biggr)^{p/p'}.
\end{split}
\end{equation*}
The last integral in the parenthesis does not exceed $(c_*/2)^{-1/4}\Gamma(1/4)$.
Changing the order of integration in the resulting integral, we see that
\begin{equation}
\label{J4<=}
\mathcal{T}_4(\varepsilon ;p; {\mathbf F})^p \leqslant
(2C_{13})^p \varepsilon ^{p/2} \left( (c_*/2)^{-1/4}\Gamma (1/4)\right)^{1+p/p'}\Vert \mathbf{F}\Vert ^p_{\mathfrak{H}_p(T)}.
\end{equation}

Combining \eqref{Th3.15_proof_1} and \eqref{J1+J2<=}--\eqref{J4<=}, we obtain estimate (\ref{th4.14})
(under the assumptions of assertion $1^\circ$)
for $T\geqslant \varepsilon ^2$.
Herewith, $C_{19}=4C_{14}+2C_{13}(c_* /2)^{-1/4}\Gamma (1/4)$.
Obviously, in the case where $0<T<\varepsilon ^2$ estimates simplify and rely only on Proposition 3.5.

Now we assume that $\boldsymbol{\varphi}=0$ and $1\leqslant p \leqslant \infty$.
For $1\leqslant p <\infty$ an analog of estimate \eqref{Th3.15_proof_1} holds with
$\mathcal{T}_1(\varepsilon ;p;\boldsymbol{\varphi})=\mathcal{T}_2(\varepsilon ;p;\boldsymbol{\varphi})=0$.
Note that estimates \eqref{J3^p<=}--\eqref{J4<=} remain true for all $1\leqslant p<\infty$, which implies
(\ref{Prop.3.16}) in this case.

For $p=\infty$ we have
\begin{equation}
\label{Prop3.16_proof_1}
\begin{split}
&\Vert \mathbf{u}_\varepsilon -\mathbf{u}_0-\varepsilon \Lambda ^\varepsilon S_\varepsilon b(\mathbf{D})\widetilde{\mathbf{w}}_\varepsilon \Vert _{\mathfrak{G}_\infty(T)} =
\max\{ \esssup\limits _{t \in (0,\eps^2)} \Vert \mathbf{u}_\varepsilon(\cdot,t) -\mathbf{u}_0(\cdot,t) \Vert _{H^1(\mathcal{O})};
\\
 &\esssup\limits _{t \in (\eps^2, T)} \Vert \mathbf{u}_\varepsilon(\cdot,t) -\mathbf{u}_0(\cdot,t)
-\varepsilon \Lambda ^\varepsilon S_\varepsilon b(\mathbf{D})\widetilde{\mathbf{w}}_\varepsilon(\cdot,t)
\Vert _{H^1(\mathcal{O})} \}.
\end{split}
\end{equation}
By Theorem 4.11 for $p=\infty$,
\begin{equation}
\label{Prop3.16_proof_2}
 \esssup\limits _{t \in (\eps^2, T)} \Vert \mathbf{u}_\varepsilon(\cdot,t) -\mathbf{u}_0(\cdot,t)
-\varepsilon \Lambda ^\varepsilon S_\varepsilon b(\mathbf{D})\widetilde{\mathbf{w}}_\varepsilon(\cdot,t)
\Vert _{H^1(\mathcal{O})} \leqslant \check{c}_\infty \eps^{1/2}\Vert \mathbf{F}\Vert _{\mathfrak{H}_\infty (T)},
\end{equation}
where $\check{c}_\infty = 2 C_{13} (c_*/2)^{-1/4}\Gamma(1/4) + 2 C_{14}$.
Taking \eqref{small_t} into account, we have
\begin{equation}
\label{Prop3.16_proof_3}
\begin{split}
&\esssup\limits _{t \in (0,\eps^2)} \Vert \mathbf{u}_\varepsilon(\cdot,t) -\mathbf{u}_0(\cdot,t) \Vert _{H^1(\mathcal{O})}
\\
&= \esssup\limits _{t \in (0,\eps^2)}
\biggl\Vert \int _0^t  \bigl( e^{-\mathcal{A}_{D,\varepsilon}(t-\widetilde{t})}-e^{-\mathcal{A}_D^0(t-\widetilde{t})}\bigr)
\mathbf{F}(\cdot,\widetilde{t}) d \widetilde{t} \biggr\Vert_{H^1(\mathcal{O})}
\\
&\leqslant
 C_{14}   \esssup\limits _{t \in (0,\eps^2)} \int _0^t (t-\widetilde{t})^{-1/2}\Vert \mathbf{F}(\cdot ,\widetilde{t})\Vert _{L_2}\,d\widetilde{t}
\leqslant 2\varepsilon C_{14}\Vert \mathbf{F}\Vert _{\mathfrak{H}_\infty (T)}.
\end{split}
\end{equation}

Relations (\ref{Prop3.16_proof_1})--(\ref{Prop3.16_proof_3}) imply (\ref{Prop.3.16}) for $p=\infty$.

The assertion concerning the fluxes $\mathbf{p}_\varepsilon$ can be obtained in a similar way
with the help of estimates \eqref{Th_exp_potok} and \eqref{small_t_potok}.
\end{proof}

Under the additional Condition 2.5 on the matrix-valued function $\Lambda (\mathbf{x})$
the following theorem is true. Its proof is completely analogous to that of Theorem
4.18; the difference is that instead of \eqref{Th_exp_korrector} and \eqref{Th_exp_potok}
one should use (\ref{th3.5}) and (\ref{th3.5_2}).

\begin{theorem}
Suppose that the assumptions of Theorem {\rm 4.18} and Condition {\rm 2.5} are satisfied.

\noindent
$1^\circ.$ Let $1\leqslant p < 2$. Then for $0<\varepsilon\leqslant \varepsilon _1$ we have
\begin{align*}
&\Vert \mathbf{u}_\varepsilon -\mathbf{u}_0-\varepsilon \Lambda ^\varepsilon b(\mathbf{D}){\mathbf{w}}_\varepsilon\Vert _{\mathfrak{G}_p(T)}
\leqslant \varkappa_p \alpha (\varepsilon ,p)\Vert \boldsymbol{\varphi}\Vert _{L_2(\mathcal{O})}+C_{20}\varepsilon ^{1/2}\Vert \mathbf{F}\Vert _{\mathfrak{H}_p(T)},
\\
&\Vert \mathbf{p}_\varepsilon -\widetilde{g}^\varepsilon  b(\mathbf{D}){\mathbf{w}}_\varepsilon \Vert _{L_p((0,T);L_2(\mathcal{O}))}\leqslant \widetilde{\varkappa}_p {\alpha}(\varepsilon ,p)\Vert \boldsymbol{\varphi}\Vert _{L_2(\mathcal{O})}+\widetilde{C}_{20}\varepsilon ^{1/2}\Vert \mathbf{F}\Vert _{\mathfrak{H}_p(T)}.
\end{align*}
The constants $C_{20}$ and $\widetilde{C}_{20}$ depend only on the initial data
\textnormal{\eqref{data}} and $\|\Lambda\|_{L_\infty}$.
The constants ${\varkappa} _p$ and $\widetilde{\varkappa} _p$ depend on the same parameters and $p$.

\noindent
$2^\circ.$ Let $\boldsymbol{\varphi}=0$ and $1\leqslant p \leqslant \infty$.
Then for $0<\varepsilon\leqslant \varepsilon _1$ we have
\begin{align*}
&\Vert \mathbf{u}_\varepsilon -\mathbf{u}_0-\varepsilon \Lambda ^\varepsilon b(\mathbf{D}){\mathbf{w}}_\varepsilon \Vert _{\mathfrak{G}_p(T)}\leqslant
C_{20}\varepsilon ^{1/2}\Vert \mathbf{F}\Vert _{\mathfrak{H}_p(T)},
\\
&\Vert \mathbf{p}_\varepsilon -\widetilde{g}^\varepsilon  b(\mathbf{D}){\mathbf{w}}_\varepsilon \Vert _{L_p((0,T);L_2(\mathcal{O}))}\leqslant \widetilde{C}_{20}\varepsilon ^{1/2}\Vert \mathbf{F}\Vert _{\mathfrak{H}_p(T)}.
\end{align*}
\end{theorem}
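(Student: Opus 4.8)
The plan is to repeat, essentially verbatim, the argument used to establish Theorem~4.18, replacing the corrector $\mathcal{K}_D(t;\varepsilon)$ by $\mathcal{K}_D^0(t;\varepsilon)=[\Lambda^\varepsilon]b(\mathbf{D})e^{-\mathcal{A}_D^0 t}$ and invoking Theorem~3.6 wherever Theorem~3.3 was used. First I would treat the case $T\geqslant\varepsilon^2$. Starting from representations \eqref{u_e=}, \eqref{u^0=} and the analogue of \eqref{u_ptichka=} for $\mathbf{w}_\varepsilon$, I would bound the $\mathfrak{G}_p(T)$-norm of $\mathbf{u}_\varepsilon-\mathbf{u}_0-\varepsilon\Lambda^\varepsilon b(\mathbf{D})\mathbf{w}_\varepsilon$ by a sum of five terms built exactly as the $\mathcal{T}_j$ of \eqref{T1}: the contribution of $\boldsymbol{\varphi}$ on $(0,\varepsilon^2)$, the contribution of $\boldsymbol{\varphi}$ on $(\varepsilon^2,T)$, and the contribution of $\mathbf{F}$ split as $\int_0^{\varepsilon^2}$, as $\int_{\varepsilon^2}^T\int_0^{t-\varepsilon^2}$, and as $\int_{\varepsilon^2}^T\int_{t-\varepsilon^2}^t$. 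The only change in the decomposition is that the corrector term is now $\varepsilon\Lambda^\varepsilon b(\mathbf{D})\mathbf{w}_\varepsilon$ rather than $\varepsilon\Lambda^\varepsilon S_\varepsilon b(\mathbf{D})\widetilde{\mathbf{w}}_\varepsilon$.

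Next I would estimate each term. The terms $\mathcal{T}_1$ and the two $\int_0^{\varepsilon^2}$-type terms $\mathcal{T}_3$, $\mathcal{T}_5$ involve no corrector and are bounded exactly as in the proof of Theorem~4.18 using only Proposition~3.5 (estimate~\eqref{small_t}); these bounds are literally unchanged. For $\mathcal{T}_2$ and $\mathcal{T}_4$ I would apply estimate~\eqref{th3.5} in place of~\eqref{Th_exp_korrector}; since the right-hand side of~\eqref{th3.5} has precisely the same shape $(\varepsilon^{1/2}t^{-3/4}+\varepsilon t^{-1})e^{-c_*t/2}$, the ensuing H\"older-inequality manipulations and the bounds on the integrals $I_p(T;\varepsilon)$ (already collected in the proof of Theorem~4.18, with the three regimes $1\leqslant p<4/3$, $p=4/3$, $4/3<p<2$) carry over word for word. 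Summing the five contributions gives the first inequality of assertion~$1^\circ$ with a constant $\varkappa_p$ of the form $C_{14}(1-p/2)^{-1/p}+2C_{15}\nu_{p'}^{1/p}$ and $C_{20}$ of the form $4C_{14}+2C_{15}(c_*/2)^{-1/4}\Gamma(1/4)$; the flux inequality is obtained in the same way from~\eqref{th3.5_2} and~\eqref{small_t_potok}. The case $0<T<\varepsilon^2$ is simpler and uses only Proposition~3.5.

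For assertion~$2^\circ$, with $\boldsymbol{\varphi}=0$ the terms $\mathcal{T}_1$ and $\mathcal{T}_2$ vanish, and for $1\leqslant p<\infty$ the estimates of $\mathcal{T}_3$, $\mathcal{T}_4$, $\mathcal{T}_5$ remain valid on the whole range and yield the $\varepsilon^{1/2}$-bound directly. The case $p=\infty$ I would handle as in the proof of Theorem~4.18: split the essential supremum into $t\in(0,\varepsilon^2)$, bounded via~\eqref{small_t}, and $t\in(\varepsilon^2,T)$, bounded via Theorem~4.12 with $p=\infty$; the fluxes are treated identically with~\eqref{small_t_potok} and the flux part of Theorem~4.12. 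I do not expect any genuine obstacle here: the one and only new ingredient is Theorem~3.6 replacing Theorem~3.3, and it has the same structural form. The only point requiring attention — and hence the ``hard'' part, such as it is — is constant bookkeeping: one must record that all constants now depend solely on the initial data~\eqref{data}, on $\|\Lambda\|_{L_\infty}$, and on $p$, which follows immediately because Theorem~3.6 carries exactly this dependence and Proposition~3.5 depends only on~\eqref{data}.
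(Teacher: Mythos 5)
Your proposal is correct and coincides with the paper's intended argument: the paper itself proves Theorem 4.19 by declaring it "completely analogous to that of Theorem 4.18," with \eqref{th3.5} and \eqref{th3.5_2} replacing \eqref{Th_exp_korrector} and \eqref{Th_exp_potok}, which is exactly the substitution you carry out. Your constant bookkeeping ($C_{13}\mapsto C_{15}$ in $\varkappa_p$ and $C_{20}$, dependence on \eqref{data}, $\|\Lambda\|_{L_\infty}$, and $p$) and your treatment of the $p=\infty$ case via Theorem 4.12 are also consistent with the paper.
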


In the case of additional smoothness of the boundary one can apply Theorem 3.9.
As well as in Proposition 4.13, we obtain a meaningful result only in the threedimensional case.
Its proof is similar to that of Theorem  4.18.

\begin{proposition}
Suppose that the assumptions of Theorem \textnormal{4.18} are satisfied, and let $d=3$.
Suppose that $\partial {\mathcal O} \in C^{2,1}$.

\noindent
$1^\circ.$ Let $1\leqslant p < 4/3$. Then for $0<\varepsilon\leqslant \varepsilon _1$ we have
\begin{align*}
\begin{split}
&\Vert \mathbf{u}_\varepsilon -\mathbf{u}_0-\varepsilon \Lambda ^\varepsilon b(\mathbf{D}){\mathbf{w}}_\varepsilon\Vert _{\mathfrak{G}_p(T)}
\leqslant
\varkappa'_p \varepsilon^{2/p - 3/2} \Vert \boldsymbol{\varphi}\Vert _{L_2(\mathcal{O})}+
C_{21}\varepsilon ^{1/2}\Vert \mathbf{F}\Vert _{\mathfrak{H}_p(T)},
\end{split}
\\
&\Vert \mathbf{p}_\varepsilon -\widetilde{g}^\varepsilon  b(\mathbf{D}){\mathbf{w}}_\varepsilon \Vert _{L_p((0,T);L_2(\mathcal{O}))}\leqslant \widetilde{\varkappa}'_p  \varepsilon^{2/p - 3/2} \Vert \boldsymbol{\varphi}\Vert _{L_2(\mathcal{O})}+
\widetilde{C}_{21}\varepsilon ^{1/2}\Vert \mathbf{F}\Vert _{\mathfrak{H}_p(T)}.
\end{align*}
The constants $C_{21}$ and $\widetilde{C}_{21}$
depend only on the initial data \textnormal{\eqref{data}}.
The constants ${\varkappa} _p'$ and $\widetilde{\varkappa} _p'$
depend on the same parameters and $p$.

\noindent
$2^\circ.$ Let $\boldsymbol{\varphi}=0$, and let $1\leqslant p \leqslant \infty$.
Then for $0<\varepsilon\leqslant \varepsilon _1$ we have
\begin{align*}
&\Vert \mathbf{u}_\varepsilon -\mathbf{u}_0-\varepsilon \Lambda ^\varepsilon b(\mathbf{D}){\mathbf{w}}_\varepsilon \Vert _{\mathfrak{G}_p(T)}\leqslant
C_{21}\varepsilon ^{1/2}\Vert \mathbf{F}\Vert _{\mathfrak{H}_p(T)},
\\
&\Vert \mathbf{p}_\varepsilon -\widetilde{g}^\varepsilon  b(\mathbf{D}){\mathbf{w}}_\varepsilon \Vert _{L_p((0,T);L_2(\mathcal{O}))}\leqslant \widetilde{C}_{21}\varepsilon ^{1/2}\Vert \mathbf{F}\Vert _{\mathfrak{H}_p(T)}.
\end{align*}
\end{proposition}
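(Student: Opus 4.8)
The plan is to repeat verbatim the scheme of the proof of Theorem~4.18, the only change being that, since $d=3$ and $\partial\mathcal{O}\in C^{2,1}$, the approximation of the exponential with the smoothed corrector (Theorems~3.3, 3.6) may be replaced by the one with the ``nonsmoothed'' corrector $\mathcal{K}_D^0(t;\varepsilon)$ furnished by Theorem~3.9; for $d=3$ the factor on the right of \eqref{*.4} (and of \eqref{*.5}) is $\varepsilon^{1/2}t^{-3/4}+\varepsilon t^{-1}+\varepsilon t^{-5/4}$. First I would express $\mathbf{u}_\varepsilon$, $\mathbf{u}_0$ through \eqref{u_e=}, \eqref{u^0=} and $\mathbf{w}_\varepsilon$ through \eqref{u_ptichka=}, and then bound the $\mathfrak{G}_p(T)$-norm of $\mathbf{u}_\varepsilon-\mathbf{u}_0-\varepsilon\Lambda^\varepsilon b(\mathbf{D})\mathbf{w}_\varepsilon$ by a sum of five terms $\mathcal{T}_1,\dots,\mathcal{T}_5$, exactly as in the proof of Theorem~4.18: the contributions of $\boldsymbol\varphi$ on $(0,\varepsilon^2)$ and on $(\varepsilon^2,T)$, and the contributions of the Duhamel integral on $(0,\varepsilon^2)$, on $(\varepsilon^2,T)$ with inner integration up to $t-\varepsilon^2$, and on $(\varepsilon^2,T)$ with inner integration over $(t-\varepsilon^2,t)$. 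The terms $\mathcal{T}_1,\mathcal{T}_3,\mathcal{T}_5$ only involve $e^{-\mathcal{A}_{D,\varepsilon}\tau}-e^{-\mathcal{A}_D^0\tau}$ on short time scales; they are handled exactly as in Theorem~4.18 by means of \eqref{small_t} and turn out to be $O(\varepsilon^{2/p-1})\|\boldsymbol\varphi\|_{L_2(\mathcal{O})}$ and $O(\varepsilon)\|\mathbf{F}\|_{\mathfrak{H}_p(T)}$, hence negligible against the bounds that follow.

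The real work is in $\mathcal{T}_2$ and $\mathcal{T}_4$, which carry the corrector and are controlled by \eqref{*.4} (and \eqref{*.5} for the fluxes). For $\mathcal{T}_2$ I would use \eqref{*.4} and then, for $1\le p<2$,
\begin{equation*}
\mathcal{T}_2(\varepsilon;p;\boldsymbol\varphi)^p\le C\,\varepsilon^{p}\,\|\boldsymbol\varphi\|_{L_2(\mathcal{O})}^p\int_{\varepsilon^2}^{\infty}\bigl(\varepsilon^{-1/2}t^{-3/4}+t^{-5/4}\bigr)^p e^{-pc_*t/2}\,dt,
\end{equation*}
and observe that, after taking $p$-th roots, the dominant contribution comes from the $t^{-5/4}$ part, namely from $\varepsilon^{p}\int_{\varepsilon^2}^{\infty}t^{-5p/4}\,dt=c\,\varepsilon^{2-3p/2}$, so that $\mathcal{T}_2=O(\varepsilon^{2/p-3/2})\|\boldsymbol\varphi\|_{L_2(\mathcal{O})}$. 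This is exactly the factor $\varkappa'_p\varepsilon^{2/p-3/2}$ appearing in assertion~$1^\circ$, and it dominates $\mathcal{T}_1$; moreover it tends to zero only when $2/p-3/2>0$, i.e.\ when $p<4/3$, which is precisely what forces the range $1\le p<4/3$ in $1^\circ$. For $\mathcal{T}_4$ I would apply \eqref{*.4} under the integral sign --- legitimate because $t-\widetilde t\ge\varepsilon^2$ on the relevant range --- and then Young's inequality for convolutions with the kernel $k(\tau)=(\varepsilon^{1/2}\tau^{-3/4}+\varepsilon\tau^{-1}+\varepsilon\tau^{-5/4})e^{-c_*\tau/2}$ on $(\varepsilon^2,\infty)$; the only point is that $\int_{\varepsilon^2}^{\infty}\varepsilon^{1/2}\tau^{-3/4}e^{-c_*\tau/2}\,d\tau\le C\varepsilon^{1/2}$, $\int_{\varepsilon^2}^{\infty}\varepsilon\tau^{-1}e^{-c_*\tau/2}\,d\tau\le C\varepsilon(|\ln\varepsilon|+1)$, and $\int_{\varepsilon^2}^{\infty}\varepsilon\tau^{-5/4}\,d\tau=4\varepsilon^{1/2}$, so that $\|k\|_{L_1}\le C\varepsilon^{1/2}$ and $\mathcal{T}_4\le C\varepsilon^{1/2}\|\mathbf{F}\|_{\mathfrak{H}_p(T)}$. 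Collecting the five bounds yields assertion~$1^\circ$; the flux estimate is obtained in the same way from \eqref{*.5}, \eqref{small_t_potok}, \eqref{small_t_potok_0}.

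For assertion~$2^\circ$ with $\boldsymbol\varphi=0$ the terms $\mathcal{T}_1,\mathcal{T}_2$ disappear, and for $1\le p<\infty$ the bounds above for $\mathcal{T}_3,\mathcal{T}_4,\mathcal{T}_5$ already give $\|\mathbf{u}_\varepsilon-\mathbf{u}_0-\varepsilon\Lambda^\varepsilon b(\mathbf{D})\mathbf{w}_\varepsilon\|_{\mathfrak{G}_p(T)}\le C_{21}\varepsilon^{1/2}\|\mathbf{F}\|_{\mathfrak{H}_p(T)}$, the relevant convolution-kernel bound $\|k\|_{L_1}\le C\varepsilon^{1/2}$ being uniform in $p$. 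For $p=\infty$ I would split the essential supremum over $(0,T)$ into the suprema over $(0,\varepsilon^2)$ and over $(\varepsilon^2,T)$: on the first interval $\mathbf{w}_\varepsilon=0$ and \eqref{small_t} together with $\int_0^t(t-\widetilde t)^{-1/2}\,d\widetilde t\le 2\varepsilon$ give $O(\varepsilon)\|\mathbf{F}\|_{\mathfrak{H}_\infty(T)}$, and on the second the pointwise estimate of Proposition~4.13 (applied with $\boldsymbol\varphi=0$) gives $c'\varepsilon^{1/2}\|\mathbf{F}\|_{\mathfrak{H}_\infty(T)}$; the case of the fluxes is identical, using \eqref{small_t_potok} and the flux part of Proposition~4.13.

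I expect the main obstacle to be precisely the bookkeeping of the extra singular factor $\varepsilon\tau^{-5/4}$ --- the ``charge'' for removing the smoothing operator in Theorem~3.9 --- all of whose consequences must be tracked down to the time scale $\tau=\varepsilon^2$, both in the $\boldsymbol\varphi$-term (where it produces the order $\varepsilon^{2/p-3/2}$) and in the Duhamel term (where integrability of $\tau^{-5/4}$ on $(\varepsilon^2,\infty)$ yields exactly $\varepsilon^{1/2}$). In dimension $d$ this factor is $\varepsilon\tau^{-d/4-1/2}$, so $\int_{\varepsilon^2}^{\infty}\varepsilon\tau^{-d/4-1/2}\,d\tau$ is of order $\varepsilon^{2-d/2}$, which tends to $0$ --- and hence gives a useful bound on the $\mathbf{F}$-term --- only for $d=3$; this is why the statement is formulated only in the three-dimensional case, and the accompanying restriction $p<4/3$ in part~$1^\circ$ is exactly the condition $2/p-3/2>0$ needed for the $\boldsymbol\varphi$-term to vanish in the limit.
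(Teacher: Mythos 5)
Your proposal is correct and follows exactly the route the paper intends: the authors omit the details, saying only that the proof is ``similar to that of Theorem 4.18'', and your argument is precisely that scheme with \eqref{Th_exp_korrector} replaced by \eqref{*.4} (and \eqref{Th_exp_potok} by \eqref{*.5}), including the correct identification of the dominant contributions $\varepsilon^p\int_{\varepsilon^2}^\infty t^{-5p/4}\,dt\sim\varepsilon^{2-3p/2}$ in the $\boldsymbol\varphi$-term and $\int_{\varepsilon^2}^\infty\varepsilon\tau^{-5/4}\,d\tau=4\varepsilon^{1/2}$ in the Duhamel term, which explain the restrictions $p<4/3$ and $d=3$.
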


\subsection*{4.7. Approximation of the solutions in $L_p((0,T);H^1(\mathcal{O}';\mathbb{C}^n))$.}
By analogy with Theorem 4.18, one can check next result with the help of Theorem 3.13 and Proposition 3.5.

\begin{theorem}
Suppose that the assumptions of Theorem {\rm 4.18} are satisfied.
Let $\mathcal{O}'$ be a strictly interior subdomain of the domain $\mathcal{O}$, and let $\delta =\textnormal{dist}\,\lbrace \mathcal{O}';\partial \mathcal{O}\rbrace $.

\noindent
$1^\circ .$ Let $1\leqslant p<2$. Then for $0<\varepsilon \leqslant \varepsilon _1$ we have
\begin{align*}
\begin{split}
\Vert &\mathbf{u}_\varepsilon -\mathbf{u}_0-\varepsilon \Lambda ^\varepsilon S_\varepsilon b(\mathbf{D})\widetilde{\mathbf{w}}_\varepsilon \Vert _{L_p((0,T);H^1(\mathcal{O}'))}\\
&\leqslant (k_{p'}\delta ^{-1}+k'_{p'}) \tau(\varepsilon,p)
\Vert \boldsymbol{\varphi}\Vert _{L_2(\mathcal{O})}+
(C_{22}' \delta ^{-1}+C''_{22} )\varepsilon (\vert \ln \varepsilon\vert +1)\Vert \mathbf{F}\Vert _{\mathfrak{H}_p(T)},
\end{split}
\\
\begin{split}
\Vert &\mathbf{p}_\varepsilon -\widetilde{g}^\varepsilon S_\varepsilon b(\mathbf{D})\widetilde{\mathbf{w}}_\varepsilon \Vert _{L_p((0,T);L_2(\mathcal{O}'))}\\
&\leqslant (\widetilde{k}_{p'}\delta ^{-1}+\widetilde{k}_{p'}') \tau(\varepsilon,p) \Vert \boldsymbol{\varphi}\Vert _{L_2(\mathcal{O})}
+ (\widetilde{C}_{22}' \delta ^{-1}+\widetilde{C}''_{22} )\varepsilon (\vert \ln \varepsilon \vert +1)\Vert \mathbf{F}\Vert _{\mathfrak{H}_p(T)}.
\end{split}
\end{align*}
Here $\tau(\varepsilon,p)=\sigma(\varepsilon,p')$ is defined by
\begin{equation}
\label{tau (varepsilon ,p)}
\tau (\varepsilon ,p) =
\begin{cases}
\varepsilon ^{2/p-1} , &1<p<2 ,
\\
\varepsilon\left(\vert \ln \varepsilon \vert +1\right) ,  &p=1.
\end{cases}
\end{equation}

\noindent
$2^\circ .$ Let $\boldsymbol{\varphi}=0$, and let $1\leqslant p\leqslant \infty$. Then for $0<\varepsilon \leqslant \varepsilon _1$ we have
\begin{align*}
\begin{split}
\Vert &\mathbf{u}_\varepsilon -\mathbf{u}_0-\varepsilon \Lambda ^\varepsilon S_\varepsilon b(\mathbf{D})\widetilde{\mathbf{w}}_\varepsilon \Vert _{L_p((0,T);H^1(\mathcal{O}'))}\\
&\leqslant (C_{22}' \delta ^{-1}+C''_{22} )
\varepsilon (\vert \ln \varepsilon\vert +1)\Vert \mathbf{F}\Vert _{\mathfrak{H}_p(T)},
\end{split}
\\
\Vert &\mathbf{p}_\varepsilon -\widetilde{g}^\varepsilon S_\varepsilon b(\mathbf{D})\widetilde{\mathbf{w}}_\varepsilon \Vert _{L_p((0,T);L_2(\mathcal{O}'))}
\leqslant
(\widetilde{C}_{22}' \delta ^{-1}+\widetilde{C}''_{22} )
\varepsilon (\vert \ln \varepsilon \vert +1)\Vert \mathbf{F}\Vert _{\mathfrak{H}_p(T)}.
\end{align*}
The constants ${C}_{22}'$, ${C}_{22}''$, $\widetilde{C}_{22}'$, and $\widetilde{C}_{22}''$
depend only on the initial data \textnormal{\eqref{data}}.
The constants  $k_{p'}$, $k_{p'}'$, $\widetilde{k}_{p'}$, and $\widetilde{k}'_{p'}$ are the same as in Theorem {\rm 4.15}
\textnormal{(}with $p$ replaced by $p'$\textnormal{)}.
\end{theorem}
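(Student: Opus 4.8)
The plan is to follow the scheme of the proof of Theorem~4.18, with the global $(L_2\to H^1)$-estimate \eqref{Th_exp_korrector} replaced by the interior estimate \eqref{th3.8} of Theorem~3.13, and with Proposition~3.5 used for the contributions of small time. Assume first that $T\geqslant\varepsilon^2$. Representing $\mathbf{u}_\varepsilon$ and $\mathbf{u}_0$ by \eqref{u_e=} and \eqref{u^0=} and using \eqref{u_ptichka=}, I would bound the left-hand side of the first inequality of assertion~$1^\circ$ by five terms exactly as in \eqref{Th3.15_proof_1}: the contribution $\mathcal{T}_1$ of $\boldsymbol{\varphi}$ on the interval $(0,\varepsilon^2)$ (no corrector), the contribution $\mathcal{T}_2$ of $\boldsymbol{\varphi}$ on $(\varepsilon^2,T)$ (with corrector), and the contributions $\mathcal{T}_3,\mathcal{T}_4,\mathcal{T}_5$ of $\mathbf{F}$ obtained by splitting each inner integral $\int_0^t$ into $\int_0^{t-\varepsilon^2}$ (with corrector) and $\int_{t-\varepsilon^2}^t$ (without corrector).

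For $\mathcal{T}_1$ I would use \eqref{small_t}, so that $\mathcal{T}_1(\varepsilon;p;\boldsymbol{\varphi})\leqslant C_{14}\|\boldsymbol{\varphi}\|_{L_2(\mathcal{O})}\bigl(\int_0^{\varepsilon^2}t^{-p/2}\,dt\bigr)^{1/p}$, which converges for $1\leqslant p<2$ and is $O(\varepsilon^{2/p-1}\|\boldsymbol{\varphi}\|_{L_2(\mathcal{O})})$. For $\mathcal{T}_2$ I would use \eqref{th3.8}, giving $\mathcal{T}_2(\varepsilon;p;\boldsymbol{\varphi})\leqslant (C_{16}\delta^{-1}+C_{17})\varepsilon\|\boldsymbol{\varphi}\|_{L_2(\mathcal{O})}\bigl(\int_{\varepsilon^2}^T t^{-p}e^{-pc_*t/2}\,dt\bigr)^{1/p}$; for $1<p<2$ the integral is $O(\varepsilon^{2-2p})$, so the term is $O(\varepsilon^{2/p-1})$, while for $p=1$ the integral is $O(|\ln\varepsilon|+1)$ and the term is $O(\varepsilon(|\ln\varepsilon|+1))$. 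Adding the two contributions gives $(k_{p'}\delta^{-1}+k'_{p'})\tau(\varepsilon,p)\|\boldsymbol{\varphi}\|_{L_2(\mathcal{O})}$ with $\tau(\varepsilon,p)=\sigma(\varepsilon,p')$ as in \eqref{tau (varepsilon ,p)} and \eqref{sigma (varepsilon ,p,delta)}; identifying the resulting constants with $k_{p'},k'_{p'}$ from Theorem~4.15 is routine.

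For the terms involving $\mathbf{F}$ the key point is that the relevant time kernels are integrable and have small $L_1$-norm. For $\mathcal{T}_4$, the estimate \eqref{th3.8} gives the pointwise bound $\|(e^{-\mathcal{A}_{D,\varepsilon}(t-\widetilde t)}-e^{-\mathcal{A}_D^0(t-\widetilde t)}-\varepsilon\mathcal{K}_D(t-\widetilde t;\varepsilon))\mathbf{F}(\cdot,\widetilde t)\|_{H^1(\mathcal{O}')}\leqslant (C_{16}\delta^{-1}+C_{17})\varepsilon(t-\widetilde t)^{-1}e^{-c_*(t-\widetilde t)/2}\|\mathbf{F}(\cdot,\widetilde t)\|_{L_2(\mathcal{O})}$ for $\widetilde t<t-\varepsilon^2$; since $\int_{\varepsilon^2}^{\infty}\tau^{-1}e^{-c_*\tau/2}\,d\tau\leqslant 2|\ln\varepsilon|+\mathrm{const}$, the H\"older inequality of the type \eqref{93} followed by a change of the order of integration (equivalently, Young's convolution inequality) yields $\mathcal{T}_4(\varepsilon;p;\mathbf{F})\leqslant (C_{22}'\delta^{-1}+C_{22}'')\varepsilon(|\ln\varepsilon|+1)\|\mathbf{F}\|_{\mathfrak{H}_p(T)}$ uniformly in $1\leqslant p\leqslant\infty$. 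For $\mathcal{T}_3$ and $\mathcal{T}_5$ I would use \eqref{small_t}; here the kernel $\tau^{-1/2}e^{-c_*\tau/2}$ restricted to $(0,\varepsilon^2)$ has $L_1$-norm at most $2\varepsilon$, so both terms are $O(\varepsilon\|\mathbf{F}\|_{\mathfrak{H}_p(T)})$, which is absorbed into $\varepsilon(|\ln\varepsilon|+1)\|\mathbf{F}\|_{\mathfrak{H}_p(T)}$. Collecting these bounds gives assertion~$1^\circ$ for $T\geqslant\varepsilon^2$; the case $0<T<\varepsilon^2$ is simpler and rests only on Proposition~3.5, while assertion~$2^\circ$ is the special case $\boldsymbol{\varphi}=0$ (so $\mathcal{T}_1=\mathcal{T}_2=0$), the value $p=\infty$ being treated by passing to $\esssup$ over $t$ as in \eqref{Prop3.16_proof_1}--\eqref{Prop3.16_proof_3}. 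The assertions for the flux $\mathbf{p}_\varepsilon$ follow in the same manner from the flux part of Theorem~3.13 together with \eqref{small_t_potok} on the short interval $(t-\varepsilon^2,t)$.

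The main obstacle is purely one of bookkeeping: one must keep the $\delta^{-1}$-dependence visible throughout, check that the logarithmic factor $\varepsilon(|\ln\varepsilon|+1)$ coming from the singular kernel $\tau^{-1}$ at the cutoff $\varepsilon^2$ is uniform in $p$ and dominates the short-time contributions, and match the factor $\tau(\varepsilon,p)=\sigma(\varepsilon,p')$ in the $\boldsymbol{\varphi}$-part with the one in Theorem~4.15. There is no analytic difficulty beyond the convolution estimate already implicit in the proof of Theorem~4.18.
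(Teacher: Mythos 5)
Your proposal is correct and coincides with the route the paper itself indicates for this theorem: repeat the five-term decomposition from the proof of Theorem 4.18, replacing the global estimate \eqref{Th_exp_korrector} by the interior bound \eqref{th3.8} of Theorem 3.13 for the corrector terms and using Proposition 3.5 on the short time intervals, the kernel $\varepsilon\tau^{-1}e^{-c_*\tau/2}$ on $(\varepsilon^2,\infty)$ producing the $\varepsilon(|\ln\varepsilon|+1)$ factor and the $\boldsymbol{\varphi}$-part giving $\tau(\varepsilon,p)=\sigma(\varepsilon,p')$ with the constants of Theorem 4.15 taken at $p'$. Your exponent computations and the convolution (Young/H\"older--Fubini) argument match what the paper's sketched proof requires, so no gap.
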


Finally, in the case where $\Lambda \in L_\infty$ Theorem 3.14 and Proposition 3.5 imply the following result.

\begin{theorem}
Suppose that the assumptions of Theorem {\rm 4.21} are satisfied.
Suppose that the matrix-valued function $\Lambda (\mathbf{x})$ satisfies Condition {\rm 2.5}.

\noindent
$1^\circ .$ Let $1\leqslant p<2$. Then for $0<\varepsilon \leqslant \varepsilon _1$ we have
\begin{align*}
\begin{split}
\Vert &\mathbf{u}_\varepsilon -\mathbf{u}_0-\varepsilon \Lambda ^\varepsilon  b(\mathbf{D})\mathbf{w}_\varepsilon \Vert _{L_p((0,T);H^1(\mathcal{O}'))}\\
&\leqslant (k_{p'}\delta ^{-1}+\check{k}'_{p'}) \tau(\varepsilon,p) \Vert \boldsymbol{\varphi}\Vert _{L_2(\mathcal{O})}+
(C'_{22} \delta ^{-1}+\check{C}''_{22} )\varepsilon (\vert \ln \varepsilon\vert +1)\Vert \mathbf{F}\Vert _{\mathfrak{H}_p(T)},
\end{split}
\\
\begin{split}
\Vert &\mathbf{p}_\varepsilon -\widetilde{g}^\varepsilon  b(\mathbf{D})\mathbf{w}_\varepsilon \Vert _{L_p((0,T);L_2(\mathcal{O}'))}\\
&\leqslant (\widetilde{k}_{p'}\delta ^{-1}+\widehat{k}_{p'}')  \tau(\varepsilon,p) \Vert \boldsymbol{\varphi}\Vert _{L_2(\mathcal{O})}
+(\widetilde{C}'_{22} \delta ^{-1}+\widehat{C}''_{22} )\varepsilon (\vert \ln \varepsilon \vert +1)\Vert \mathbf{F}\Vert _{\mathfrak{H}_p(T)}.
\end{split}
\end{align*}

\noindent
$2^\circ .$ Let $\boldsymbol{\varphi}=0$, and let $1\leqslant p\leqslant \infty$. Then for $0<\varepsilon \leqslant \varepsilon _1$
we have
\begin{align*}
\begin{split}
\Vert &\mathbf{u}_\varepsilon -\mathbf{u}_0-\varepsilon \Lambda ^\varepsilon  b(\mathbf{D})\mathbf{w}_\varepsilon \Vert _{L_p((0,T);H^1(\mathcal{O}'))}\\
&\leqslant (C'_{22} \delta ^{-1}+\check{C}''_{22} )
\varepsilon (\vert \ln \varepsilon\vert +1)\Vert \mathbf{F}\Vert _{\mathfrak{H}_p(T)},
\end{split}
\\
\Vert &\mathbf{p}_\varepsilon -\widetilde{g}^\varepsilon  b(\mathbf{D})\mathbf{w}_\varepsilon \Vert _{L_p((0,T);L_2(\mathcal{O}'))}
\leqslant (\widetilde{C}'_{22} \delta ^{-1}+\widehat{C}''_{22} )
\varepsilon (\vert \ln \varepsilon \vert +1)\Vert \mathbf{F}\Vert _{\mathfrak{H}_p(T)}.
\end{align*}
The constants ${C}_{22}'$ and $\widetilde{C}_{22}'$ are the same as in Theorem {\rm 4.21}.
The constants $\check{C}_{22}''$ and $\widehat{C}_{22}''$ depend only on the initial data \textnormal{\eqref{data}} and $\|\Lambda\|_{L_\infty}$.
The constants  $k_{p'}$, $\check{k}_{p'}'$, $\widetilde{k}_{p'}$, and $\widehat{k}'_{p'}$ are the same as in Theorem {\rm 4.16}
\textnormal{(}with $p$ replaced by $p'$\textnormal{)}.
\end{theorem}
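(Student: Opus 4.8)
The plan is to repeat, with only notational changes, the proof of Theorem~4.18 and of its interior-subdomain version Theorem~4.21: the sole difference is that under Condition~2.5 the exponential estimates are drawn from Theorem~3.14 rather than from Theorem~3.13, so that the corrector $\mathcal{K}_D(t;\varepsilon)$ is replaced throughout by $\mathcal{K}_D^0(t;\varepsilon)=[\Lambda^\varepsilon]b(\mathbf{D})e^{-\mathcal{A}_D^0 t}$ and $\widetilde{\mathbf{w}}_\varepsilon$ by $\mathbf{w}_\varepsilon$.

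First I would take $T\geqslant\varepsilon^2$ and use \eqref{u_e=}, \eqref{u^0=}, and \eqref{u_ptichka=} to write $\mathbf{u}_\varepsilon-\mathbf{u}_0-\varepsilon\Lambda^\varepsilon b(\mathbf{D})\mathbf{w}_\varepsilon$ as a sum $\mathcal{T}_1+\dots+\mathcal{T}_5$ of exactly the shape of \eqref{Th3.15_proof_1}--\eqref{T1}, with the $H^1(\mathcal{O})$-norm replaced by $H^1(\mathcal{O}')$ and $\mathcal{K}_D$ replaced by $\mathcal{K}_D^0$. Here $\mathcal{T}_1,\mathcal{T}_2$ carry the contribution of $\boldsymbol{\varphi}$ on $(0,\varepsilon^2)$ resp.\ $(\varepsilon^2,T)$, while $\mathcal{T}_3,\mathcal{T}_5$ resp.\ $\mathcal{T}_4$ carry the contribution of the Duhamel integral over the thin slabs of width $\varepsilon^2$ resp.\ over the bulk interval. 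I would bound $\mathcal{T}_1,\mathcal{T}_3,\mathcal{T}_5$ verbatim as in the proof of Theorem~4.18 by means of \eqref{small_t} (Proposition~3.5), using that on a time interval of length $\leqslant\varepsilon^2$ the $H^1(\mathcal{O}')$-norm is majorized by the $H^1(\mathcal{O})$-norm; this produces contributions of order $\varepsilon^{2/p-1}\|\boldsymbol{\varphi}\|_{L_2}$ and $\varepsilon\|\mathbf{F}\|_{\mathfrak{H}_p(T)}$, already within the claimed bounds. For $\mathcal{T}_2$ and $\mathcal{T}_4$ I would invoke the interior estimate of Theorem~3.14, which bounds the relevant difference of operators at time $s$ by $(C_{16}\delta^{-1}+\check{C}_{17})\varepsilon s^{-1}e^{-c_{*}s/2}$; applying H\"older's inequality in $\widetilde t$ and estimating the resulting convolution integrals---precisely those handled in \eqref{I gelder 1<p<infty}--\eqref{I 2<p} and in the proof of Theorem~4.21---I would obtain contributions of order $\tau(\varepsilon,p)\|\boldsymbol{\varphi}\|_{L_2}$ and $\varepsilon(|\ln\varepsilon|+1)\|\mathbf{F}\|_{\mathfrak{H}_p(T)}$, which dominate the previous ones and give the announced estimate, the $\delta^{-1}$-dependence of the constants being inherited from Theorem~3.14. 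The case $0<T<\varepsilon^2$ reduces directly to Proposition~3.5. This settles assertion~$1^\circ$.

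For assertion~$2^\circ$, where $\boldsymbol{\varphi}=0$, the terms $\mathcal{T}_1$ and $\mathcal{T}_2$ drop out and the bounds for $\mathcal{T}_3,\mathcal{T}_4,\mathcal{T}_5$ just obtained remain valid for all $1\leqslant p<\infty$, which gives the claim in that range; for $p=\infty$ I would split the essential supremum over $(0,T)$ into the supremum over $(0,\varepsilon^2)$, handled by \eqref{small_t} applied to the Duhamel integral as in \eqref{Prop3.16_proof_3}, and the supremum over $(\varepsilon^2,T)$, handled by Theorem~4.16 with $\boldsymbol{\varphi}=0$. The flux assertions would be proved in the same way, with the flux estimate of Theorem~3.14 and \eqref{small_t_potok} playing the roles of the estimate for $\mathcal{K}_D^0$ and of \eqref{small_t}; the dependence of $\check{C}_{22}'',\widehat{C}_{22}'',\check{k}_{p'}',\widehat{k}'_{p'}$ on $\|\Lambda\|_{L_\infty}$ is inherited from $\check{C}_{17},\widehat{C}_{17}$ in Theorem~3.14, while $k_{p'},\widetilde{k}_{p'}$ are carried over unchanged from Theorem~4.16 (with $p$ replaced by $p'$).

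I expect no genuine obstacle, since the whole argument is a transcription of the proofs of Theorems~4.18, 4.21, and 4.16. The only steps needing attention are the borderline exponents: the logarithm in $\tau(\varepsilon,1)$, which arises from evaluating $\int_{\varepsilon^2}^{1}s^{-1}\,ds=2|\ln\varepsilon|$ inside $\mathcal{T}_2$ and $\mathcal{T}_4$, and, for $p=\infty$, keeping the $\delta^{-1}$-dependence uniform in $t$ when passing to the essential supremum. Both are handled exactly as in the cited theorems, so the careful bookkeeping of the constants is the only real task.
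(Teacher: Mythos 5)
Your proposal is correct and follows exactly the route the paper intends: the paper proves Theorem 4.22 only by reference ("Theorem 3.14 and Proposition 3.5 imply the following result", by analogy with the detailed proof of Theorem 4.18 and with Theorem 4.21), and your decomposition into $\mathcal{T}_1,\dots,\mathcal{T}_5$, with Proposition 3.5 handling the thin slabs and Theorem 3.14 handling the bulk terms, is precisely that argument with the corrector $\mathcal{K}_D^0(t;\varepsilon)$ in place of $\mathcal{K}_D(t;\varepsilon)$. Your identification of the borderline computations (the logarithm from $\int_{\varepsilon^2}^1 s^{-1}\,ds$ in both $\mathcal{T}_2$ and $\mathcal{T}_4$, and the esssup splitting for $p=\infty$) and of the provenance of the constants is accurate.
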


Note that application of Theorem 3.16 to approximation of the solutions of nonhomogeneous equation in the class
$L_p((0,T); H^1({\mathcal O}';{\mathbb C}^n))$ does not lead to interesting results because of the strong growth
of the factor $h_d(\delta;t)$ in estimates \eqref{3.26} and \eqref{3.27} for small $t$.

\section*{Chapter 2. Homogenization of the second initial boundary value problem for parabolic systems}

\section*{\S 5. The class of operators $\mathcal{A}_{N,\varepsilon}$. The effective operator}
\setcounter{section}{5}
\setcounter{equation}{0}
\setcounter{theorem}{0}

\subsection*{5.1. Coercivity.} We impose an additional condition on the symbol $b(\boldsymbol{\xi})=\sum _{l=1}^d b_l\xi _l$
for $\boldsymbol{\xi}\in \mathbb{C}^d$.

\begin{condition} The matrix-valued function $b(\boldsymbol{\xi})$, $\boldsymbol{\xi}\in\mathbb{C}^d$, is such that
\begin{equation}
\label{rank b in C^d}
\textnormal{rank}\,b(\boldsymbol{\xi})=n,\quad 0\neq \boldsymbol{\xi}\in\mathbb{C}^d.
\end{equation}
\end{condition}

Note that condition (\ref{rank b in C^d}) is more restrictive than (\ref{rank b in R^d}).
According to \cite{Ne} (see Theorem~7.8 in section 3.7), Condition 5.1 \textit{is equivalent to coercivity of the form
$\Vert b(\mathbf{D})\mathbf{u}\Vert ^2_{L_2(\mathcal{O})}$ in $H^1(\mathcal{O};\mathbb{C}^n)$}.
Moreover, the following statement holds under the only condition that $\partial {\mathcal O}$ is Lipschitz.

\begin{proposition}[\cite{Ne}]
Condition \textnormal{5.1} is necessary and sufficient for existence of constants ${\mathfrak C}_1>0$ and ${\mathfrak C}_2 \geqslant 0$
such that the G\"arding-type inequality
\begin{equation}
\label{Garding}
\Vert b(\mathbf{D})\mathbf{u}\Vert ^2_{L_2(\mathcal{O})}+{\mathfrak C}_2\Vert \mathbf{u}\Vert ^2_{L_2(\mathcal{O})}\geqslant
{\mathfrak C}_1\Vert \mathbf{D}\mathbf{u}\Vert ^2_{L_2(\mathcal{O})},\quad \mathbf{u}\in H^1(\mathcal{O};\mathbb{C}^n),
\end{equation}
is satisfied.
\end{proposition}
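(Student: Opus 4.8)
\textit{Sketch of the argument.} The plan is to first reduce the Proposition to the equivalence of Condition~5.1 with \emph{coercivity} of the form $a[\mathbf{u}]=\|b(\mathbf{D})\mathbf{u}\|^2_{L_2(\mathcal{O})}$ on $H^1(\mathcal{O};\mathbb{C}^n)$, as recalled just above, and then to establish that equivalence, which is the theorem of Ne\v{c}as. The reduction is routine: since $\|\mathbf{u}\|^2_{H^1(\mathcal{O})}$ is comparable to $\|\mathbf{u}\|^2_{L_2(\mathcal{O})}+\|\mathbf{D}\mathbf{u}\|^2_{L_2(\mathcal{O})}$, an inequality of the form $a[\mathbf{u}]+\mathfrak{C}\,\|\mathbf{u}\|^2_{L_2(\mathcal{O})}\geq c\,\|\mathbf{u}\|^2_{H^1(\mathcal{O})}$ is, after transposing $\|\mathbf{u}\|^2_{L_2(\mathcal{O})}$ and readjusting constants, equivalent to \eqref{Garding}. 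Hence it suffices to prove that Condition~5.1 holds if and only if \eqref{Garding} holds for suitable $\mathfrak{C}_1>0$, $\mathfrak{C}_2\geq 0$.

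Necessity is immediate, and I would dispose of it first. If \eqref{rank b in C^d} fails, pick $0\neq\boldsymbol{\zeta}\in\mathbb{C}^d$ and $0\neq\mathbf{v}\in\mathbb{C}^n$ with $b(\boldsymbol{\zeta})\mathbf{v}=0$, and test \eqref{Garding} on the exponential solution $\mathbf{u}_\tau(\mathbf{x})=e^{i\tau\langle\boldsymbol{\zeta},\mathbf{x}\rangle}\mathbf{v}\in H^1(\mathcal{O};\mathbb{C}^n)$, $\tau>0$. Since $b(\mathbf{D})\mathbf{u}_\tau=\tau\,b(\boldsymbol{\zeta})\mathbf{v}\,e^{i\tau\langle\boldsymbol{\zeta},\mathbf{x}\rangle}=0$ while $\mathbf{D}\mathbf{u}_\tau=\tau\,\boldsymbol{\zeta}\,\mathbf{v}\,e^{i\tau\langle\boldsymbol{\zeta},\mathbf{x}\rangle}$, one has $\|\mathbf{D}\mathbf{u}_\tau\|^2_{L_2(\mathcal{O})}=\tau^2|\boldsymbol{\zeta}|^2\,\|\mathbf{u}_\tau\|^2_{L_2(\mathcal{O})}$, so \eqref{Garding} for $\mathbf{u}_\tau$ collapses, after division by $\|\mathbf{u}_\tau\|^2_{L_2(\mathcal{O})}>0$, to $\mathfrak{C}_2\geq\mathfrak{C}_1\tau^2|\boldsymbol{\zeta}|^2$, which is impossible for large $\tau$ because $\boldsymbol{\zeta}\neq 0$. (No smoothness of $\partial\mathcal{O}$ enters here.)

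For the converse I would use Ne\v{c}as's localization method. Assuming Condition~5.1, take a partition of unity $\sum_k\varphi_k^2\equiv 1$ on $\overline{\mathcal{O}}$ subordinate to a finite cover by small balls $B_k$, each either contained in $\mathcal{O}$ or centered at a point of $\partial\mathcal{O}$; since the commutator of $b(\mathbf{D})$ with multiplication by $\varphi_k$ is of order zero, it is enough to prove a G\"arding inequality for each $\varphi_k\mathbf{u}$. On an interior ball I would extend $\varphi_k\mathbf{u}$ by zero to $\mathbb{R}^d$ and invoke Plancherel together with \eqref{<b^*b<}, which gives $\|b(\mathbf{D})(\varphi_k\mathbf{u})\|^2_{L_2(\mathbb{R}^d)}=\int_{\mathbb{R}^d}|b(\boldsymbol{\xi})\widehat{\varphi_k\mathbf{u}}(\boldsymbol{\xi})|^2\,d\boldsymbol{\xi}\geq\alpha_0\,\|\mathbf{D}(\varphi_k\mathbf{u})\|^2_{L_2(\mathbb{R}^d)}$; here only the real rank condition \eqref{rank b in R^d} is used and no lower-order term is needed. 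On a boundary ball, a bi-Lipschitz change of variables flattening $\partial\mathcal{O}\cap B_k$ reduces the problem, modulo a variable-coefficient remainder handled by a perturbation and fine-cover argument, to a model half-space estimate: for $\mathbf{v}\in H^1(\mathbb{R}^d_+;\mathbb{C}^n)$ of compact support and a constant-coefficient symbol $b_0$ (which again satisfies \eqref{rank b in C^d}, being $b$ composed with an invertible linear map), one must show $\|b_0(\mathbf{D})\mathbf{v}\|^2_{L_2(\mathbb{R}^d_+)}+C\,\|\mathbf{v}\|^2_{L_2(\mathbb{R}^d_+)}\geq c\,\|\mathbf{D}\mathbf{v}\|^2_{L_2(\mathbb{R}^d_+)}$, with \emph{no} boundary condition imposed on $\{x_d=0\}$.

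The hard part is exactly this half-space estimate, and it is the only place where the \emph{complex} rank condition \eqref{rank b in C^d}, rather than merely \eqref{rank b in R^d}, is indispensable. Taking the partial Fourier transform in the tangential variables $\mathbf{x}'\mapsto\boldsymbol{\xi}'$ turns it into a family, uniform in $\boldsymbol{\xi}'\in\mathbb{R}^{d-1}$, of one-dimensional estimates on $(0,\infty)$ for the first-order ODE operator $b_0(\boldsymbol{\xi}',D_d)$ on $H^1(0,\infty;\mathbb{C}^n)$, with target $\int_0^\infty(|D_d w|^2+|\boldsymbol{\xi}'|^2|w|^2)\,dx_d$ modulo $\int_0^\infty|w|^2\,dx_d$. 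Expanding $\|b_0(\boldsymbol{\xi}',D_d)w\|^2$ and integrating by parts in $x_d$ produces a boundary term at $x_d=0$ with no definite sign that is not controlled by $\int_0^\infty|w|^2\,dx_d$; it can be absorbed precisely when the natural conormal boundary operator attached to the form $\|b_0(\mathbf{D})\cdot\|^2$ satisfies the Shapiro--Lopatinskii complementing condition relative to the (then elliptic) operator $b_0(\mathbf{D})^*b_0(\mathbf{D})$ — and both the ellipticity and the complementing condition are equivalent to $\mathrm{rank}\,b_0(\boldsymbol{\xi}',\tau)=n$ for all real $\boldsymbol{\xi}'\neq 0$ and all $\tau\in\mathbb{C}$, i.e.\ to \eqref{rank b in C^d}. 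Concretely one splits $w$ along the exponentially decaying and growing solution spaces of $b_0(\boldsymbol{\xi}',D_d)^*b_0(\boldsymbol{\xi}',D_d)w=0$ — the decaying modes existing exactly by the full-rank hypothesis — and uses them to bound $|w(0)|$. The remaining, purely technical, difficulty is the reduction to the half-space when $\partial\mathcal{O}$ is only Lipschitz, where the flattening map is bi-Lipschitz with merely bounded Jacobian; this is dealt with by the perturbation argument of \cite{Ne}.
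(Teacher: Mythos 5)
The paper does not actually prove this proposition: it is imported verbatim from Ne\v{c}as \cite{Ne} (Theorem~7.8 in \S 3.7), so there is no internal argument to compare yours against; what you have written is a reconstruction of the proof of the cited source, and it follows that source's route. Your necessity half is complete and correct: for complex $\boldsymbol{\zeta}$ the function $\mathbf{u}_\tau=e^{i\tau\langle\boldsymbol{\zeta},\mathbf{x}\rangle}\mathbf{v}$ still lies in $H^1(\mathcal{O};\mathbb{C}^n)$ because $\mathcal{O}$ is bounded, $b(\mathbf{D})$ annihilates it when $b(\boldsymbol{\zeta})\mathbf{v}=0$, and the pointwise identity $|\mathbf{D}\mathbf{u}_\tau|^2=\tau^2|\boldsymbol{\zeta}|^2|\mathbf{u}_\tau|^2$ makes \eqref{Garding} fail as $\tau\to\infty$. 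This also exhibits cleanly why the real condition \eqref{rank b in R^d} alone cannot suffice.

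For sufficiency your outline is the standard one (localization; interior balls by extension by zero, Plancherel and \eqref{<b^*b<}; boundary balls by flattening, tangential Fourier transform, and a half-line estimate whose boundary term is controlled through the decaying-mode decomposition of $b_0(\boldsymbol{\xi}',D_d)^*b_0(\boldsymbol{\xi}',D_d)$), and the architecture is right. Two caveats. A minor one: of the asserted equivalence with the complementing condition you only need one implication, namely that Condition~5.1 gives $\operatorname{rank}b_0(\boldsymbol{\xi}',\tau)=n$ for real $\boldsymbol{\xi}'\ne 0$ and complex $\tau$ (plus the case $\boldsymbol{\xi}'=0$ via $\operatorname{rank}b_0(\mathbf{e}_d)=n$), and that implication is immediate; the converse is irrelevant to the proof. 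The substantive one: the reduction of a boundary patch of a merely Lipschitz $\partial\mathcal{O}$ to the flat half-space is not ``purely technical''. A bi-Lipschitz flattening turns $b(\mathbf{D})$ into an operator with $L_\infty$ coefficients whose deviation from any constant-coefficient operator is of the order of the Lipschitz constant of the local graph, and this cannot be made small by refining the cover (a cone vertex is scale-invariant), so the freeze-and-perturb scheme does not close by itself; Ne\v{c}as's treatment of exactly this point is a genuine component of his proof, not a remainder estimate. As a self-contained proof your text is therefore incomplete at that one step; as a faithful outline of the argument of \cite{Ne}, which is all the paper itself invokes, it is accurate.
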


\begin{remark}
\textnormal{The constants ${\mathfrak C}_1$ and ${\mathfrak C}_2$ depend on the symbol $b(\boldsymbol{\xi})$
and the domain $\mathcal{O}$, but in the general case it is difficult to control these constants explicitly.
However, often for particular operators they can be found. Therefore, in what follows we refer to dependence of other constants on
${\mathfrak C}_1$ and ${\mathfrak C}_2$.}
\end{remark}

\subsection*{5.2. The class of operators.} In $L_2(\mathcal{O};\mathbb{C}^n)$, consider the operator
$\mathcal{A}_{N,\varepsilon}$ formally given by the differential expression
$b(\mathbf{D})^*g^\varepsilon(\mathbf{x})b(\mathbf{D})$ with the Neumann condition on $\partial \mathcal{O}$.
The precise definition of the operator $\mathcal{A}_{N,\varepsilon}$ is given in terms of the quadratic form
\begin{equation}
\label{a_N,e=}
a_{N,\varepsilon}[\mathbf{u},\mathbf{u}]:=\int _\mathcal{O}\langle g^\varepsilon (\mathbf{x})b(\mathbf{D})\mathbf{u},b(\mathbf{D})\mathbf{u}\rangle \,d\mathbf{x},\quad \mathbf{u}\in H^1(\mathcal{O};\mathbb{C}^n).
\end{equation}

By (\ref{b(D)=}) and (\ref{|b_l|<=}),
\begin{equation}
\label{a_N,e<}
a_{N,\varepsilon}[\mathbf{u},\mathbf{u}]\leqslant d\alpha _1\Vert g\Vert _{L_\infty}\Vert \mathbf{D}\mathbf{u}\Vert ^2_{L_2(\mathcal{O})},\quad \mathbf{u}\in H^1(\mathcal{O};\mathbb{C}^n).
\end{equation}
From (\ref{Garding}) it follows that
\begin{equation}
\label{a_N,e>}
a_{N,\varepsilon}[\mathbf{u},\mathbf{u}]\geqslant \Vert g^{-1}\Vert _{L_\infty}^{-1}\left( {\mathfrak C}_1\Vert \mathbf{D}\mathbf{u}\Vert ^2_{L_2(\mathcal{O})}- {\mathfrak C}_2\Vert \mathbf{u}\Vert ^2_{L_2(\mathcal{O})}\right),\quad \mathbf{u}\in H^1(\mathcal{O};\mathbb{C}^n).
\end{equation}
Relations (\ref{a_N,e=})--(\ref{a_N,e>}) show that the form (\ref{a_N,e=}) is closed and nonnegative.

\subsection*{5.3. The effective operator.} In $L_2(\mathcal{O};\mathbb{C}^n)$, consider the operator $\mathcal{A}_N^0$
generated by the quadratic form
\begin{equation}
\label{a_n^0=}
a_N^0[\mathbf{u},\mathbf{u}]=\int _\mathcal{O}\langle g^0 b(\mathbf{D})\mathbf{u},b(\mathbf{D})\mathbf{u}\rangle \,d\mathbf{x},\quad \mathbf{u}\in H^1(\mathcal{O};\mathbb{C}^n).
\end{equation}
Here $g^0$ is the constant effective matrix defined by (\ref{g^0}).
By (\ref{|g^0|, |g^0^_1|<=}), the form (\ref{a_n^0=}) satisfies estimates of the form (\ref{a_N,e<}) and (\ref{a_N,e>})
with the same constants.

Let $\boldsymbol{\nu} ({\mathbf x})$ be the unit external normal vector to $\partial {\mathcal O}$ at the point
${\mathbf x} \in \partial {\mathcal O}$. Let $\partial^0_{\boldsymbol{\nu}}$ be the conormal derivative, i.~e.,
$\partial^0_{\boldsymbol{\nu}} \mathbf{u}(\mathbf{x})= b(\boldsymbol{\nu}({\mathbf x}))^* g^0 b(\nabla) \mathbf{u}(\mathbf{x})$.
Since $\partial \mathcal{O}\in C^{1,1}$ and $b(\mathbf{D})^*g^0b(\mathbf{D})$ is a strongly elliptic operator (with constant coefficients),
the operator $\mathcal{A}_N^0$ can be given by the differential expression $b(\mathbf{D})^* g^0 b(\mathbf{D})$
on the domain $\{ \mathbf{u}\in H^2(\mathcal{O};\mathbb{C}^n):\ \partial^0_{\boldsymbol{\nu}} {\mathbf u}\vert_{\partial
\mathcal{O}}=0 \}$. Herewith,
\begin{equation}
\label{5.6a}
\Vert (\mathcal{A}_N^0+ I)^{-1}\Vert _{L_2(\mathcal{O})\rightarrow H^2(\mathcal{O})}\leqslant c^\circ.
\end{equation}
The constant $c^\circ$ depends only on the constants ${\mathfrak C}_1$ and ${\mathfrak C}_2$ from (\ref{Garding}),
on $\alpha _0$, $\alpha _1$, $\Vert g\Vert_{L_\infty}$, $\Vert g^{-1}\Vert _{L_\infty}$, and the domain $\mathcal{O}$.
To justify this fact, we refer to the results about regularity of the solutions of strongly elliptic systems (see, e.~g., \cite[Chapter 4]{McL}).

The following remark is similar to Remark 1.4.

\begin{remark}
\textnormal{Instead of condition $\partial \mathcal{O}\in C^{1,1}$, one could impose the following implicit
condition on the domain: a bounded domain $\mathcal{O}$ with Lipschitz boundary is such that
estimate \eqref{5.6a} is satisfied. For such domain the results of Chapter 2 remain valid (except for the results of Subsection 7.6, Theorem 8.3, and Propositions 8.12 and 8.18, where an additional smoothness of the boundary is required).
In the case of scalar elliptic operators wide conditions on  $\partial \mathcal{O}$ sufficient for estimate \eqref{5.6a} can be found in
[KoE] and [MaSh, Chapter 7] (in particular, it suffices that $\partial \mathcal{O}\in C^{\alpha}$, $\alpha >3/2$).
}
\end{remark}

\subsection*{5.4.} Denote
\begin{equation*}
Z:=\textnormal{Ker}\,b(\mathbf{D})=\lbrace \mathbf{z}\in H^1(\mathcal{O};\mathbb{C}^n) : b(\mathbf{D})\mathbf{z}=0 \rbrace .
\end{equation*}
From (\ref{Garding}) and the compactness of the embedding of $H^1(\mathcal{O};\mathbb{C}^n)$ into $L_2(\mathcal{O};\mathbb{C}^n)$
it follows that the dimension of $Z$ is finite. Obviously, $Z$ contains the $n$-dimensional subspace of constant vector-valued functions.
Denote $q=\textnormal{dim}\,Z$. We put
\begin{equation}
\label{H(O)}
\mathcal{H}(\mathcal{O}):=L_2(\mathcal{O};\mathbb{C}^n)\ominus Z.
\end{equation}
Let $\mathcal{P}$ be the orthogonal projection of $L_2(\mathcal{O};\mathbb{C}^n)$ onto $\mathcal{H}(\mathcal{O})$.
Next, let $ H^1_\perp (\mathcal{O};\mathbb{C}^n):=H^1(\mathcal{O};\mathbb{C}^n)\cap \mathcal{H}(\mathcal{O})$.
In other words,
\begin{equation*}
H^1_\perp (\mathcal{O};\mathbb{C}^n)=\lbrace \mathbf{u}\in H^1(\mathcal{O};\mathbb{C}^n) : (\mathbf{u},\mathbf{z})_{L_2(\mathcal{O})}=0, \; \forall\, \mathbf{z}\in Z\rbrace .
\end{equation*}
As shown in \cite[Proposition 9.1]{Su_SIAM},
the form $\Vert b(\mathbf{D})\mathbf{u}\Vert ^2 _{L_2(\mathcal{O})}$ defines a norm in the space
$H^1_\perp (\mathcal{O};\mathbb{C}^n)$ equivalent to the standard $H^1$-norm,
i.~e., there exists a constant $\widetilde{\mathfrak C}_1>0$ such that
\begin{equation}
\label{b(D)u>= in H_perp}
\widetilde{\mathfrak C}_1 \Vert \mathbf{u}\Vert ^2_{H^1(\mathcal{O})}\leqslant \Vert b(\mathbf{D})\mathbf{u}\Vert ^2_{L_2(\mathcal{O})},\quad \mathbf{u}\in H^1_\perp (\mathcal{O};\mathbb{C}^n).
\end{equation}

Obviously,
\begin{equation}
\label{Ker=Z}
\textnormal{Ker}\,\mathcal{A}_{N,\varepsilon}=\textnormal{Ker}\,\mathcal{A}_N^0=Z.
\end{equation}
Therefore, the orthogonal decomposition $L_2(\mathcal{O};\mathbb{C}^n)=Z\oplus \mathcal{H}(\mathcal{O})$
reduces the operators $\mathcal{A}_{N,\varepsilon}$ and $\mathcal{A}_N^0$.
 By \eqref{b(D)u>= in H_perp},
\begin{equation}
\label{<b_N,e<}
\begin{split}
a_{N,\varepsilon}[\mathbf{u},\mathbf{u}]\geqslant \Vert g^{-1}\Vert _{L_\infty}^{-1}\widetilde{\mathfrak C}_1 \Vert \mathbf{u}\Vert ^2_{H^1(\mathcal{O})},\quad \mathbf{u}\in H^1_\perp (\mathcal{O};\mathbb{C}^n),\\
a_N^0[\mathbf{u},\mathbf{u}]\geqslant \Vert g^{-1}\Vert _{L_\infty}^{-1}\widetilde{\mathfrak C}_1 \Vert \mathbf{u}\Vert ^2_{H^1(\mathcal{O})},\quad
\mathbf{u}\in H^1_\perp (\mathcal{O};\mathbb{C}^n).
\end{split}
\end{equation}

\section*{\S 6. Preliminaries. \\ Approximation of the resolvent of the operator $\mathcal{A}_{N,\varepsilon}$}
\setcounter{section}{6}
\setcounter{equation}{0}
\setcounter{theorem}{0}

\subsection*{6.1. Approximation of the resolvent of the operator $\mathcal{A}_{N,\varepsilon}$
in the $L_2(\mathcal{O};{\mathbb C}^n)$-operator norm}
 In  [Su7,8], approximation of the resolvent of the operator $(\mathcal{A}_{N,\varepsilon}-\zeta I)^{-1}$
was obtained for $\zeta \in \mathbb{C}\setminus [c_\flat ,\infty)$, $\zeta \neq 0$. Here
 $0<c_\flat \leqslant \min \lbrace \mu _{2,\varepsilon} ;\mu _2^0\rbrace$, where $\mu_{2,\varepsilon}$ and $\mu _2^0$
are the first nonzero eigenvalues of the operators $\mathcal{A}_{N,\varepsilon}$ and $\mathcal{A}_N^0$, respectively.
(If multiplicities are taken into account, they are $(q+1)$-th eigenvalues.)
By (\ref{<b_N,e<}), one can take $c_\flat = \Vert g^{-1}\Vert _{L_\infty}^{-1}\widetilde{\mathfrak C}_1 $.

For convenience of further references, the following set of parameters is called the \textit{initial data}:
\begin{equation}
\label{data1}
\begin{aligned}
&m,\ d,\ \alpha _0,\ \alpha _1,\ \Vert g\Vert _{L_\infty},\ \Vert g^{-1}\Vert _{L_\infty};
\ \
 \textnormal{the constants}\  \mathfrak{C}_1, \mathfrak{C}_2 \ \textnormal{from} \ \eqref{Garding};
\\
&\textnormal{the parameters of the lattice}\ \Gamma; \ \  \textnormal{the domain} \ \mathcal{O}.
\end{aligned}
\end{equation}
Also, we define the \textit{extended data}:
\begin{equation}
\label{data2}
\begin{aligned}
&m,\ n,\ d,\ q,\ \alpha _0,\ \alpha _1,\ \Vert g\Vert _{L_\infty},\ \Vert g^{-1}\Vert _{L_\infty};
\\
 &\textnormal{the constants}\  \mathfrak{C}_1, \mathfrak{C}_2 \ \textnormal{from} \ \eqref{Garding};
\ \
\textnormal{the constant}\  \widetilde{\mathfrak{C}}_1 \ \textnormal{from} \ \eqref{b(D)u>= in H_perp};
\\
&\textnormal{the parameters of the lattice}\ \Gamma; \ \  \textnormal{the domain} \ \mathcal{O}.
\end{aligned}
\end{equation}

The following result was obtained in Theorems 10.1 and 14.8 of \cite{Su14-2}.

\begin{theorem}[\cite{Su14-2}] Suppose that $\mathcal{O}\subset \mathbb{R}^d$ is a bounded domain with the boundary of class $C^{1,1}$. Suppose that the matrix-valued function $g(\mathbf{x})$ and DO $b(\mathbf{D})$ satisfy the assumptions of Subsection~\textnormal{1.1}.
Suppose that Condition~\textnormal{5.1} is satisfied.
Suppose that the number $\varepsilon _1\in (0,1]$ is subject to Condition~\textnormal{2.1}.

\noindent $1^\circ$. Let $\zeta \in \mathbb{C}\setminus \mathbb{R}_+$, $\zeta =\vert \zeta \vert  e^{i\phi}$, and $\vert \zeta \vert \geqslant 1$.
Then for $0<\varepsilon \leqslant \varepsilon _1$ we have
\begin{equation}
\label{th6.1_1}
\Vert (\mathcal{A}_{N,\varepsilon}-\zeta I)^{-1}-(\mathcal{A}_N^0-\zeta I)^{-1}\Vert _{L_2(\mathcal{O})\rightarrow L_2(\mathcal{O})}\leqslant \mathcal{C}_1  c(\phi)^5\left( \vert \zeta \vert ^{-1/2}\varepsilon +\varepsilon ^2\right).
\end{equation}
Here $c(\phi)$ is defined by \textnormal{(\ref{c(phi)})}. The constant $\mathcal{C}_1$ depends only on the initial data \textnormal{\eqref{data1}}.

\noindent $2^\circ$. Now, let $\zeta \in \mathbb{C}\setminus [c_\flat ,\infty)$, $\zeta \neq 0$, where $c_\flat =\Vert g^{-1}\Vert _{L_\infty}^{-1}\widetilde{\mathfrak{C}}_1$. We put $\zeta -c_\flat =\vert \zeta -c_\flat \vert e^{i\vartheta}$,
$\vartheta \in (0,2\pi)$, and denote
\begin{equation*}
\rho _\flat (\zeta)=\begin{cases}
c(\vartheta)^2\vert \zeta -c_\flat\vert ^{-2}, &\vert \zeta -c_\flat \vert <1,\\
c(\vartheta)^2, &\vert \zeta -c_\flat\vert \geqslant 1.
\end{cases}
\end{equation*}
Then for $0<\varepsilon\leqslant \varepsilon _1$ we have
\begin{equation}
\label{6.2a}
\Vert (\mathcal{A}_{N,\varepsilon}-\zeta I)^{-1}-(\mathcal{A}_N^0-\zeta I)^{-1}\Vert _{L_2(\mathcal{O})\rightarrow L_2(\mathcal{O})}\leqslant \mathcal{C}_2 \rho _\flat (\zeta)\varepsilon .
\end{equation}
The constant $\mathcal{C}_2$ depends only on the data \textnormal{\eqref{data2}}.
\end{theorem}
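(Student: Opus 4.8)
The plan is to reproduce the argument of \cite{Su14-2} (Theorems~10.1 and~14.8), where the statement is proved; conceptually it follows the operator-theoretic scheme for the problem in $\mathbb{R}^d$ combined with a boundary-layer correction adapted to the Neumann condition. The first step is to dispose of the kernel. By \eqref{Ker=Z} we have $\mathrm{Ker}\,\mathcal{A}_{N,\varepsilon}=\mathrm{Ker}\,\mathcal{A}_N^0=Z$, and the orthogonal decomposition $L_2(\mathcal{O};\mathbb{C}^n)=Z\oplus\mathcal{H}(\mathcal{O})$ reduces both operators; hence the two resolvents are block-diagonal with respect to this decomposition and their $Z$-blocks coincide (each equals $-\zeta^{-1}$ times the orthogonal projection onto $Z$), so the difference is supported on $\mathcal{H}(\mathcal{O})$ and I only need to estimate it there. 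On $\mathcal{H}(\mathcal{O})$ the coercivity bound \eqref{<b_N,e<} furnishes the common lower bound $c_\flat=\|g^{-1}\|_{L_\infty}^{-1}\widetilde{\mathfrak C}_1$, which is precisely what gives content to assertion~$2^\circ$ for $\zeta$ close to $c_\flat$ (and the kernel cancellation is also what keeps everything finite as $\zeta\to 0$).

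For assertion~$1^\circ$ I would take $\mathbf{f}\in\mathcal{H}(\mathcal{O})$, set $\mathbf{u}_0=(\mathcal{A}_N^0-\zeta I)^{-1}\mathbf{f}$, and form the first approximation $\mathbf{v}_\varepsilon=\mathbf{u}_0+\varepsilon[\Lambda^\varepsilon]S_\varepsilon b(\mathbf{D})P_\mathcal{O}\mathbf{u}_0$, where $S_\varepsilon$ is the Steklov smoothing \eqref{S_e} (needed because $\Lambda$ need not be bounded). Applying $\mathcal{A}_{N,\varepsilon}-\zeta I$ to $\mathbf{v}_\varepsilon$ and using the cell equation \eqref{Lambda_problem} for $\Lambda$, the discrepancy $(\mathcal{A}_{N,\varepsilon}-\zeta I)\mathbf{v}_\varepsilon-\mathbf{f}$ decomposes into genuinely $O(\varepsilon)$ terms plus a term localized in the $\varepsilon$-layer $(\partial\mathcal{O})_\varepsilon$, which reflects the fact that $\varepsilon[\Lambda^\varepsilon]S_\varepsilon b(\mathbf{D})\mathbf{u}_0$ fails the homogeneous Neumann condition. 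I would then subtract a boundary-layer corrector $\mathbf{w}_\varepsilon$, defined as the solution of an auxiliary Neumann problem for $\mathcal{A}_{N,\varepsilon}-\zeta I$ with data concentrated near $\partial\mathcal{O}$, and estimate it by energy inequalities. The quantitative bookkeeping rests on Lemmas~1.7 and~1.8, the bound \eqref{M} on $\|\Lambda\|_{H^1(\Omega)}$, the regularity estimate \eqref{5.6a} for $(\mathcal{A}_N^0+I)^{-1}$, and elementary bounds for $(\mathcal{A}_N^0-\zeta I)^{-1}$ and $b(\mathbf{D})(\mathcal{A}_N^0-\zeta I)^{-1}$ whose operator norms grow like powers of $c(\phi)$ from \eqref{c(phi)} and like $|\zeta|^{-1/2}$; propagating these powers through the composition yields exactly the factor $c(\phi)^5$ and the dependence $|\zeta|^{-1/2}\varepsilon+\varepsilon^2$.

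For assertion~$2^\circ$ the regime $|\zeta-c_\flat|\ge 1$ is a mild variant of $1^\circ$ (there $\rho_\flat(\zeta)=c(\vartheta)^2$), while for $|\zeta-c_\flat|<1$ I would use the resolvent identity to transfer from $\zeta$ to a fixed reference point $\zeta_0$ lying to the left of $c_\flat$ and invoke the estimate already obtained there; the two extra resolvent factors picked up in this step are responsible for the worst term $|\zeta-c_\flat|^{-2}$ in $\rho_\flat(\zeta)$. The main obstacle, as always in this circle of results, is the boundary-layer analysis: one must localize the remainder to $(\partial\mathcal{O})_\varepsilon$, control it by Hardy- and trace-type inequalities together with interpolation of the $H^2$-regularity of $\mathbf{u}_0$, and carry all of this out while keeping every constant explicit and recovering the \emph{sharp} powers of $c(\phi)$ (respectively $c(\vartheta)$) and the correct threshold at $|\zeta|=1$ (respectively $|\zeta-c_\flat|=1$). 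The Neumann conormal-derivative condition makes this step technically heavier than its Dirichlet analogue underlying Theorems~2.2 and~2.3, but introduces no new conceptual difficulty.
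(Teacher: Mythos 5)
First, a structural point: the paper does not prove this theorem. It is imported verbatim from [Su8] (Theorems 10.1 and 14.8 of \cite{Su14-2}), so there is no in-paper argument to compare yours against; one can only compare with the strategy of the cited work, which the paper itself summarizes in Subsection 0.2. Your reduction to $\mathcal{H}(\mathcal{O})$ via cancellation of the $Z$-blocks, the first approximation $\mathbf{v}_\varepsilon=\mathbf{u}_0+\varepsilon\Lambda^\varepsilon S_\varepsilon b(\mathbf{D})P_\mathcal{O}\mathbf{u}_0$ with the Steklov smoothing, the boundary-layer correction, and the passage from $\vert\zeta-c_\flat\vert\geqslant 1$ to $\vert\zeta-c_\flat\vert<1$ by the resolvent identity are all consistent with that strategy.

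There is, however, a genuine gap in how you propose to reach the $(L_2\to L_2)$ bound of order $\varepsilon\vert\zeta\vert^{-1/2}+\varepsilon^2$. The construction you describe --- apply $\mathcal{A}_{N,\varepsilon}-\zeta I$ to $\mathbf{v}_\varepsilon$, isolate the term supported in the $\varepsilon$-layer, subtract a boundary corrector $\mathbf{w}_\varepsilon$, and estimate everything by energy inequalities --- is precisely the proof of the $(L_2\to H^1)$ estimate \eqref{4.11a}, whose right-hand side is $\mathcal{C}_3 c(\phi)^2\vert\zeta\vert^{-1/4}\varepsilon^{1/2}+\mathcal{C}_4 c(\phi)^4\varepsilon$. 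Discarding the corrector from that estimate gives only $\Vert(\mathcal{A}_{N,\varepsilon}-\zeta I)^{-1}-(\mathcal{A}_N^0-\zeta I)^{-1}\Vert_{L_2\to L_2}=O(\varepsilon^{1/2}\vert\zeta\vert^{-1/4}+\varepsilon)$, which for fixed $\zeta$ is of order $\varepsilon^{1/2}$, not the order $\varepsilon$ claimed in \eqref{th6.1_1}. The missing idea is the duality (Aubin--Nitsche type) step: one considers the sesquilinear form $\bigl(\bigl((\mathcal{A}_{N,\varepsilon}-\zeta I)^{-1}-(\mathcal{A}_N^0-\zeta I)^{-1}\bigr)\mathbf{f},\mathbf{g}\bigr)_{L_2(\mathcal{O})}$, inserts the corrector approximations for \emph{both} $(\mathcal{A}_{N,\varepsilon}-\zeta I)^{-1}\mathbf{f}$ and the adjoint resolvent $(\mathcal{A}_{N,\varepsilon}-\bar{\zeta} I)^{-1}\mathbf{g}$, and exploits the fact that the two $O(\varepsilon^{1/2})$ boundary-layer contributions pair quadratically; this is what produces $\varepsilon\vert\zeta\vert^{-1/2}+\varepsilon^2$, which is exactly the square of $\varepsilon^{1/2}\vert\zeta\vert^{-1/4}+\varepsilon$. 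Without this symmetrization the energy argument cannot reach the stated order, so as written your plan proves Theorem 6.2 rather than Theorem 6.1.
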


\subsection*{6.2. Approximation of the resolvent of the operator $\mathcal{A}_{N,\varepsilon}$ in the norm of operators
acting from $L_2(\mathcal{O};{\mathbb C}^n)$ to $H^1(\mathcal{O};{\mathbb C}^n)$}

Approximation of the operator $(\mathcal{A}_{N,\varepsilon}-\zeta I)^{-1}$ with corrector taken into account was obtained in
\cite[Theorems 10.2 and 14.8]{Su14-2}. The corrector is defined similarly to \eqref{K_D(e)}:
\begin{equation}
\label{K_N(e)}
K_N(\varepsilon ;\zeta )= R_\mathcal{O}[\Lambda ^\varepsilon ] S_\varepsilon b(\mathbf{D})P_\mathcal{O}(\mathcal{A}_N^0-\zeta I)^{-1} .
\end{equation}
The operator $K_N(\varepsilon ;\zeta)$ is a continuous mapping of $L_2(\mathcal{O};\mathbb{C}^n)$ to $H^1(\mathcal{O};\mathbb{C}^n)$.

\begin{theorem}[\cite{Su14-2}]
Suppose that the assumptions of Theorem \textnormal{6.1} are satisfied. Let $K_N(\varepsilon ;\zeta )$ be the operator \eqref{K_N(e)},
where $P_\mathcal{O}$ is the extension operator \eqref{P_O H^1, H^2} and $S_\varepsilon$ is the Steklov smoothing operator \eqref{S_e}.
Let $\widetilde{g}$ be the matrix-valued function \eqref{tilde g}.

\noindent $1^\circ$. For $\zeta \in \mathbb{C}\setminus \mathbb{R}_+$, $\vert \zeta \vert \geqslant 1$, and $0<\varepsilon \leqslant \varepsilon _1$
we have
\begin{align}
\label{4.11a}
\begin{split}
\Vert &(\mathcal{A}_{N,\varepsilon}-\zeta I)^{-1}-(\mathcal{A}_N^0-\zeta I)^{-1}-\varepsilon K_N(\varepsilon ;\zeta )\Vert _{L_2(\mathcal{O})\rightarrow H^1(\mathcal{O})}\\
&\leqslant\mathcal{C}_3 c(\phi)^2\vert \zeta \vert ^{-1/4}\varepsilon ^{1/2}+\mathcal{C}_4 c(\phi)^4\varepsilon ,
\end{split}\\
\label{4.11b}
\begin{split}
\Vert &g^\varepsilon b(\mathbf{D})(\mathcal{A}_{N,\varepsilon}-\zeta I)^{-1}-\widetilde{g}^\varepsilon S_\varepsilon b(\mathbf{D})P_\mathcal{O}(\mathcal{A}_N^0-\zeta I)^{-1}\Vert _{L_2(\mathcal{O})\rightarrow L_2(\mathcal{O})}\\
&\leqslant \widetilde{\mathcal{C}}_3 c(\phi)^2\vert \zeta \vert ^{-1/4}\varepsilon ^{1/2}+\widetilde{\mathcal{C}}_4 c(\phi)^4\varepsilon .
\end{split}
\end{align}
The constants $\mathcal{C}_3$, $\mathcal{C}_4$, $\widetilde{\mathcal{C}}_3$, and $\widetilde{\mathcal{C}}_4$ depend only on the initial
data \textnormal{\eqref{data1}}.

\noindent $2^\circ$. Now, let $\zeta \in \mathbb{C}\setminus [c_\flat ,\infty)$, $\zeta \neq 0$.
Let $\mathcal{P}$ be the orthogonal projection of $L_2(\mathcal{O};\mathbb{C}^n)$ onto the subspace $\mathcal{H}(\mathcal{O})$
defined by \eqref{H(O)}. Then for $0<\varepsilon\leqslant\varepsilon _1$ we have
\begin{align}
\label{4.11c}
\begin{split}
\Vert &(\mathcal{A}_{N,\varepsilon}-\zeta I)^{-1}-(\mathcal{A}_N^0-\zeta I)^{-1}-\varepsilon K_N(\varepsilon ;\zeta )\mathcal{P}\Vert _{L_2(\mathcal{O})\rightarrow H^1(\mathcal{O})}
\leqslant\mathcal{C}_5\rho _\flat (\zeta)\varepsilon ^{1/2} ,
\end{split}\\
\label{4.11d}
\begin{split}
\Vert &g^\varepsilon b(\mathbf{D})(\mathcal{A}_{N,\varepsilon}-\zeta I)^{-1}-\widetilde{g}^\varepsilon S_\varepsilon b(\mathbf{D})P_\mathcal{O}(\mathcal{A}_N^0-\zeta I)^{-1}\mathcal{P}\Vert _{L_2(\mathcal{O})\rightarrow L_2(\mathcal{O})}\\
&\leqslant \widetilde{\mathcal{C}}_5\rho _\flat (\zeta)\varepsilon ^{1/2} .
\end{split}
\end{align}
The constants $\mathcal{C}_5$ and $\widetilde{\mathcal{C}}_5$ depend only on the data \textnormal{\eqref{data2}}.
\end{theorem}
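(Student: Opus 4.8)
Since Theorem~6.2 is established in \cite{Su14-2}, I only sketch the structure of its proof; it parallels the treatment of the Dirichlet resolvent (Theorems~2.2 and~2.3, likewise from \cite{Su14-2}), adapted to the Neumann boundary condition. For assertion~$1^\circ$ fix $\zeta\notin\mathbb{R}_+$ with $|\zeta|\geqslant 1$, put $\mathbf{u}_\varepsilon=(\mathcal{A}_{N,\varepsilon}-\zeta I)^{-1}\mathbf{F}$, $\mathbf{u}_0=(\mathcal{A}_N^0-\zeta I)^{-1}\mathbf{F}$, and $\widetilde{\mathbf{u}}_0=P_\mathcal{O}\mathbf{u}_0$. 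Using the regularity estimate \eqref{5.6a} together with a scaling argument one first records weighted a priori bounds of the shape $\|\mathbf{u}_0\|_{L_2(\mathcal{O})}\leqslant c(\phi)|\zeta|^{-1}\|\mathbf{F}\|_{L_2(\mathcal{O})}$, $\|\widetilde{\mathbf{u}}_0\|_{H^1(\mathbb{R}^d)}\leqslant C\,c(\phi)^2|\zeta|^{-1/2}\|\mathbf{F}\|_{L_2(\mathcal{O})}$, $\|\widetilde{\mathbf{u}}_0\|_{H^2(\mathbb{R}^d)}\leqslant C\,c(\phi)^4\|\mathbf{F}\|_{L_2(\mathcal{O})}$ (with suitable powers of $c(\phi)$). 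One then forms the discrepancy $\mathbf{w}_\varepsilon:=\mathbf{u}_\varepsilon-\mathbf{u}_0-\varepsilon\Lambda^\varepsilon S_\varepsilon b(\mathbf{D})\widetilde{\mathbf{u}}_0$ and applies $\mathcal{A}_{N,\varepsilon}-\zeta I$ to it: by the equations \eqref{Lambda_problem}, \eqref{tilde g}, \eqref{g^0} defining $\Lambda$, $\widetilde g$ and $g^0$, the bulk contributions cancel up to terms carrying either an explicit factor $\varepsilon$ or a factor $(S_\varepsilon-I)$, which are controlled by Lemmas~1.7, 1.8 and Corollary~1.6.

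The genuine obstruction is that the corrector term $\varepsilon\Lambda^\varepsilon S_\varepsilon b(\mathbf{D})\widetilde{\mathbf{u}}_0$ does not satisfy the conormal (Neumann) condition on $\partial\mathcal{O}$; one therefore introduces an auxiliary boundary-layer term $\mathbf{w}_\varepsilon^{\mathrm{bl}}$, built with a cutoff supported in the strip $(\partial\mathcal{O})_\varepsilon$, as in [ZhPas1], [Su6]. Since $\mes(\partial\mathcal{O})_\varepsilon=O(\varepsilon)$, trace and Poincar\'e inequalities give $\|\mathbf{w}_\varepsilon^{\mathrm{bl}}\|_{H^1(\mathcal{O})}$ and $\|g^\varepsilon b(\mathbf{D})\mathbf{w}_\varepsilon^{\mathrm{bl}}\|_{L_2(\mathcal{O})}$ bounded by $C\varepsilon^{1/2}$ times the $H^2$-norm of $\widetilde{\mathbf{u}}_0$ near $\partial\mathcal{O}$. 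This is exactly where the order $\varepsilon^{1/2}$ (rather than $\varepsilon$) enters, and why the $|\zeta|$-weight degrades from $|\zeta|^{-1/2}$ to $|\zeta|^{-1/4}$. Combining the bulk estimate, the boundary-layer estimate, and the coercivity \eqref{<b_N,e<} of $a_{N,\varepsilon}$ on $H^1_\perp(\mathcal{O};\mathbb{C}^n)$ (used to turn the $L_2$-control of $(\mathcal{A}_{N,\varepsilon}-\zeta I)\mathbf{w}_\varepsilon$ into $H^1$-control of $\mathbf{w}_\varepsilon$) yields \eqref{4.11a}; the flux estimate \eqref{4.11b} drops out of the same computation, since $g^\varepsilon b(\mathbf{D})\bigl(\mathbf{u}_0+\varepsilon\Lambda^\varepsilon S_\varepsilon b(\mathbf{D})\widetilde{\mathbf{u}}_0\bigr)=\widetilde g^\varepsilon S_\varepsilon b(\mathbf{D})\widetilde{\mathbf{u}}_0$ modulo the small terms already controlled.

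For assertion~$2^\circ$, let $\zeta\in\mathbb{C}\setminus[c_\flat,\infty)$, $\zeta\neq 0$. Here one exploits the reduction of both $\mathcal{A}_{N,\varepsilon}$ and $\mathcal{A}_N^0$ by the decomposition $L_2(\mathcal{O};\mathbb{C}^n)=Z\oplus\mathcal{H}(\mathcal{O})$ together with $Z=\Ker\mathcal{A}_{N,\varepsilon}=\Ker\mathcal{A}_N^0$ (see \eqref{Ker=Z}): on $Z$ both resolvents equal $-\zeta^{-1}I_Z$, so their difference is supported on $\mathcal{H}(\mathcal{O})$, which is why in $2^\circ$ the corrector is applied to the orthogonal projection $\mathcal{P}$. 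On $\mathcal{H}(\mathcal{O})$ the coercivity \eqref{<b_N,e<} is uniform in $\varepsilon$, so $(\mathcal{A}_{N,\varepsilon}-\zeta I)^{-1}\mathcal{P}$ and $(\mathcal{A}_N^0-\zeta I)^{-1}\mathcal{P}$ are bounded $L_2(\mathcal{O})\to H^1(\mathcal{O})$ with norms governed by $\rho_\flat(\zeta)^{1/2}$-type quantities, accounting for the $|\zeta-c_\flat|^{-2}$ growth built into $\rho_\flat$ near the bottom $c_\flat$ of the spectrum on $\mathcal{H}(\mathcal{O})$. Feeding these bounds into the scheme of $1^\circ$ for $|\zeta|$ large, and into a direct spectral estimate (cf. Theorem~6.1($2^\circ$)) for $|\zeta-c_\flat|$ small, produces \eqref{4.11c} and \eqref{4.11d}.

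The hard part of the whole argument is the quantitative control of the boundary-layer term $\mathbf{w}_\varepsilon^{\mathrm{bl}}$ with the sharp joint dependence on $\varepsilon$ and $\zeta$ (and, in $2^\circ$, the bookkeeping of the $\zeta^{-1}$ singularity on $Z$ against its compensation by $\mathcal{P}$); everything else is the Birman--Suslina scaling/Floquet machinery applied locally near $\partial\mathcal{O}$, combined with the Steklov-smoothing estimates of \S1.
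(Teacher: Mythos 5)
The paper does not prove Theorem 6.2 at all: it is quoted from \cite{Su14-2} (Theorems 10.2 and 14.8 there), so there is no internal argument for your sketch to be compared with or to diverge from. Your outline --- the discrepancy with the smoothed corrector, cancellation of bulk terms via the cell problem \eqref{Lambda_problem}, a boundary-layer correction near $\partial\mathcal{O}$ responsible for the order $\varepsilon^{1/2}$ and the weight $\vert\zeta\vert^{-1/4}$, Steklov-smoothing lemmas, and in $2^\circ$ the reduction by $Z=\Ker\mathcal{A}_{N,\varepsilon}=\Ker\mathcal{A}_N^0$ so that the corrector acts through $\mathcal{P}$ (precisely the identities \eqref{7.7b}, \eqref{7.8a} exploited later in the paper) --- is consistent with the method of \cite{Su_SIAM} and \cite{Su14-2} as the paper itself summarizes it in Subsection 0.2. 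Keep in mind, however, that what you wrote is a plan rather than a proof: the technically decisive point, the boundary-layer estimate with the stated joint dependence on $\varepsilon$ and $\zeta$ (and, in $2^\circ$, the passage to the factor $\rho_\flat(\zeta)$ near $c_\flat$), is asserted rather than derived, so for the purposes of this paper the statement still rests, exactly as in the text, on the citation to \cite{Su14-2}.
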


\begin{remark}
\textnormal{
Instead of $\Vert g^{-1}\Vert _{L_\infty}^{-1}\widetilde{\mathfrak{C}}_1$,
the number $c_\flat$ in Theorems 6.1 and 6.2
can be taken equal to any number not exceeding $\min \lbrace \mu _{2,\varepsilon}; \mu _2^0\rbrace $. Let $\kappa >0$ be arbitrarily small number. If 
$\varepsilon$ is sufficiently small, we can take $c_\flat =\mu _2^0 -\kappa$.
Then the constants in estimates will depend on $\kappa$.
}
\end{remark}

\subsection*{6.3. The case where $\Lambda\in L_\infty$.} Suppose that Condition 2.5 is satisfied.
Then the smoothing operator in the corrector can be removed. We put
\begin{equation}
\label{K_n^0(e)}
K_N^0(\varepsilon ;\zeta)=[\Lambda ^\varepsilon ]b(\mathbf{D})(\mathcal{A}_N^0-\zeta I)^{-1}.
\end{equation}
The continuity of the operator $K_N^0(\varepsilon ;\zeta)$ from $L_2(\mathcal{O};\mathbb{C}^n)$ to $H^1(\mathcal{O};\mathbb{C}^n)$ is checked similarly to the continuity of the operator \eqref{K_D^0(e)}.

The following result was proved in Theorems 12.1 and 14.9 of \cite{Su14-2}.

\begin{theorem}[\cite{Su14-2}]
Suppose that the assumptions of Theorem~\textnormal{6.1} are satisfied. Suppose that the matrix-valued function $\Lambda (\mathbf{x})$
satisfies Condition~\textnormal{2.5}. Let $K_N^0(\varepsilon ;\zeta )$ be the operator~\eqref{K_n^0(e)}.
Let $\widetilde{g}$ be the matrix-valued function \eqref{tilde g}.

\noindent $1^\circ$. Let $\zeta \in \mathbb{C}\setminus \mathbb{R}_+$, $\vert \zeta \vert \geqslant 1$.
Then for $0<\varepsilon \leqslant \varepsilon _1$ we have
\begin{align*}
\begin{split}
\Vert &(\mathcal{A}_{N,\varepsilon}-\zeta I)^{-1}-(\mathcal{A}_N^0-\zeta I)^{-1}-\varepsilon K_N^0(\varepsilon ;\zeta )\Vert _{L_2(\mathcal{O})\rightarrow H^1(\mathcal{O})}\\
&\leqslant \mathcal{C}_3 c(\phi )^2\vert \zeta \vert ^{-1/4}\varepsilon ^{1/2}+\mathcal{C}_6 c(\phi)^4\varepsilon ,
\end{split}\\
\begin{split}
\Vert &g^\varepsilon b(\mathbf{D})(\mathcal{A}_{N,\varepsilon }-\zeta I)^{-1}-\widetilde{g}^\varepsilon b(\mathbf{D})(\mathcal{A}_N^0-\zeta I)^{-1}\Vert _{L_2(\mathcal{O})\rightarrow L_2(\mathcal{O})}\\
&\leqslant \widetilde{\mathcal{C}}_3 c(\phi )^2\vert \zeta \vert ^{-1/4}\varepsilon ^{1/2}+\widetilde{\mathcal{C}}_6 c(\phi)^4\varepsilon .
\end{split}
\end{align*}
The constants $\mathcal{C}_3$ and $\widetilde{\mathcal{C}}_3$ are the same as in Theorem~\textnormal{6.2}.
The constants $\mathcal{C}_6$ and $\widetilde{\mathcal{C}}_6$ depend only on the initial data
\textnormal{\eqref{data1}} and $\Vert \Lambda\Vert_{L_\infty}$.

\noindent $2^\circ$. Now, let $\zeta \in \mathbb{C}\setminus [c_\flat ,\infty)$, $\zeta \neq 0$.
Then for $0<\varepsilon \leqslant \varepsilon _1$ we have
\begin{align*}
\begin{split}
\Vert (\mathcal{A}_{N,\varepsilon}-\zeta I)^{-1}-(\mathcal{A}_N^0-\zeta I)^{-1}-\varepsilon K_N^0(\varepsilon ;\zeta )\Vert _{L_2(\mathcal{O})\rightarrow H^1(\mathcal{O})}
\leqslant \mathcal{C}_7 \rho _\flat (\zeta )\varepsilon ^{1/2} ,
\end{split}\\
\begin{split}
\Vert g^\varepsilon b(\mathbf{D})(\mathcal{A}_{N,\varepsilon }-\zeta I)^{-1}-\widetilde{g}^\varepsilon b(\mathbf{D})(\mathcal{A}_N^0-\zeta I)^{-1}\Vert _{L_2(\mathcal{O})\rightarrow L_2(\mathcal{O})}
\leqslant \widetilde{\mathcal{C}}_7 \rho _\flat (\zeta )\varepsilon ^{1/2}.
\end{split}
\end{align*}
The constants $\mathcal{C}_7$ and $\widetilde{\mathcal{C}}_7$ depend only on the data
\textnormal{\eqref{data2}} and $\Vert \Lambda\Vert_{L_\infty}$.
\end{theorem}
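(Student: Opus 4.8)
The plan is to \emph{deduce this theorem from Theorem~6.2}, exactly as Theorem~2.6 is deduced from Theorem~2.3 in the Dirichlet chapter: Theorem~6.2 already provides the same $(L_2\to H^1)$- and flux-approximations, only with the smoothed corrector $K_N(\varepsilon;\zeta)$ of \eqref{K_N(e)}, so it suffices to control the cost of replacing it by $K_N^0(\varepsilon;\zeta)$ of \eqref{K_n^0(e)}. Put $\mathcal R(\zeta):=(\mathcal{A}_N^0-\zeta I)^{-1}$, which by \eqref{5.6a} together with the resolvent identity $\mathcal R(\zeta)=(\mathcal{A}_N^0+I)^{-1}\bigl(I+(1+\zeta)\mathcal R(\zeta)\bigr)$ maps $L_2(\mathcal O;\mathbb C^n)$ boundedly into $H^2(\mathcal O;\mathbb C^n)$; since $b(\mathbf{D})$ has constant coefficients, $S_\varepsilon$ commutes with it, and since $b(\mathbf{D})$ is local, $b(\mathbf{D})\mathcal R(\zeta)=R_{\mathcal O}b(\mathbf{D})P_{\mathcal O}\mathcal R(\zeta)$. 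Hence
\[
K_N(\varepsilon;\zeta)-K_N^0(\varepsilon;\zeta)=R_{\mathcal O}\,[\Lambda^\varepsilon]\,b(\mathbf{D})(S_\varepsilon-I)P_{\mathcal O}\mathcal R(\zeta),
\]
and likewise, with $[\widetilde{g}^\varepsilon]$ in place of $[\Lambda^\varepsilon]$ ($\widetilde{g}$ as in \eqref{tilde g}), for the difference of the two flux approximands. Away from $\partial\mathcal O$ the extension $P_{\mathcal O}$ is invisible and this is just $[\Lambda^\varepsilon]b(\mathbf{D})(S_\varepsilon-I)\mathcal R(\zeta)$, while near $\partial\mathcal O$ there is an additional boundary-layer contribution. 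A point about part~$2^\circ$: on $Z=\Ker b(\mathbf{D})$ the resolvent acts as $-\zeta^{-1}I$ by \eqref{Ker=Z}, so $b(\mathbf{D})\mathcal R(\zeta)$ annihilates $Z$ and $K_N^0(\varepsilon;\zeta)=K_N^0(\varepsilon;\zeta)\mathcal P$ automatically; as the two resolvents also agree on $Z$, nothing is lost there, and one compares with $K_N(\varepsilon;\zeta)\mathcal P$ — this keeps the estimate free of a spurious $|\zeta|^{-1}$ blow-up as $\zeta\to0$, because $\mathcal R(\zeta)\mathcal P$ lives on $\mathcal H(\mathcal O)$, where $\mathcal{A}_N^0\ge c_\flat$.

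For the interior part of the $(L_2\to H^1)$ estimate I would use Corollary~1.6, legitimate since Condition~2.5 gives $\Lambda\in L_\infty$: applied with $\mathbf v=b(\mathbf{D})(S_\varepsilon-I)\mathcal R(\zeta)\f$ and combined with Lemma~1.8 ($\|(S_\varepsilon-I)w\|_{L_2}\le\varepsilon r_1\|\mathbf{D}w\|_{L_2}$, taken at $w=b(\mathbf{D})\mathcal R(\zeta)\f\in H^1$) and contractivity of $S_\varepsilon$ on $H^1$, the $\varepsilon^{-2}$ in Corollary~1.6 cancels the $\varepsilon^2$ from Lemma~1.8 and leaves $\le C\|b(\mathbf{D})\mathcal R(\zeta)\f\|_{H^1}\le C'\|\mathcal R(\zeta)\|_{L_2\to H^2}\|\f\|_{L_2}$, with $C,C'$ depending only on the initial data and $\|\Lambda\|_{L_\infty}$. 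The norm $\|\mathcal R(\zeta)\|_{L_2\to H^2}$ is controlled by \eqref{5.6a} and the above resolvent identity together with the elementary selfadjoint bound $\|\mathcal R(\zeta)\|_{L_2\to L_2}\le c(\phi)|\zeta|^{-1}$ (and $\|\mathcal R(\zeta)\mathcal P\|$ by a $\rho_\flat(\zeta)^{1/2}$-type quantity for part~$2^\circ$), so that its contribution, multiplied by $\varepsilon$, is of the same order as the right-hand sides already occurring in Theorem~6.2 (using $0<\varepsilon\le1$, $\rho_\flat(\zeta)\ge1$). The remaining, boundary-layer contribution is the term that genuinely requires work; it is handled by the methods of \cite{Su14-2} (Steklov smoothing near $\partial\mathcal O$, the properties of $\Lambda$ and $S_\varepsilon$, and the $H^2$-regularity of $\mathcal R(\zeta)$). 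Adding all of this to Theorem~6.2 gives the assertions with the \emph{same} $\mathcal C_3$, $\widetilde{\mathcal C}_3$ and with $\mathcal C_6,\mathcal C_7$ obtained from $\mathcal C_4,\mathcal C_5$ by adding constants that depend also on $\|\Lambda\|_{L_\infty}$.

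The flux estimate is the main obstacle. Now $\widetilde{g}=g(b(\mathbf{D})\Lambda+\mathbf{1}_m)$ is in general only $\Gamma$-periodic and in $L_2(\Omega)$ — it need not be bounded even under Condition~2.5 (it reduces to the constant $g^0$ only in the case $g^0=\underline g$ of Proposition~2.7($3^\circ$)) — so Corollary~1.6 is unavailable. One must therefore first justify that $[\widetilde{g}^\varepsilon]b(\mathbf{D})\mathcal R(\zeta)$ is bounded on $L_2(\mathcal O)$ (via \eqref{5.6a}, the Sobolev embedding of $H^2$, and the higher, Meyers-type, integrability of $\widetilde{g}$), and then estimate $[\widetilde{g}^\varepsilon]b(\mathbf{D})(S_\varepsilon-I)(\cdot)$ by combining that integrability with the $H^1$-regularity of $b(\mathbf{D})\mathcal R(\zeta)$, the $L_2$-gain of Lemma~1.8, bounds of Lemma~1.7 type for $[\widetilde{g}^\varepsilon]$ composed with $\varepsilon$-scale averaging, and Sobolev interpolation; this costs at worst an arbitrarily small loss in the exponent of $\varepsilon$, which is harmless because it is absorbed by the overall prefactor $\varepsilon$. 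Feeding in the same resolvent bounds and the same boundary-layer analysis, in the same two regimes of $\zeta$, closes the argument and produces $\widetilde{\mathcal C}_6,\widetilde{\mathcal C}_7$; the uniformity in $\phi$ and the dependence on $\rho_\flat(\zeta)$ are inherited from Theorem~6.2.
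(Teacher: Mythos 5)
First, a point of comparison: the paper offers no proof of this statement at all --- it is quoted verbatim from Theorems 12.1 and 14.9 of \cite{Su14-2} --- so the only in-paper material your argument can be measured against is the analogous corrector-replacement machinery of the Appendix (Subsections 9.2 and 9.4). Your overall strategy, namely taking Theorem 6.2 as given and bounding $\varepsilon\bigl(K_N(\varepsilon;\zeta)-K_N^0(\varepsilon;\zeta)\bigr)$ and the difference of the two flux approximands, is the right one and matches that machinery. For the $(L_2\to H^1)$ part your computation is essentially correct: the identity $K_N(\varepsilon;\zeta)-K_N^0(\varepsilon;\zeta)=R_\mathcal{O}[\Lambda^\varepsilon](S_\varepsilon-I)b(\D)P_\mathcal{O}(\mathcal{A}_N^0-\zeta I)^{-1}$ together with Corollary 1.6, Lemma 1.8 and the $H^2$-bound obtained from \eqref{5.6a} and the resolvent identity gives a \emph{global} estimate on all of $\mathbb{R}^d$; there is no residual ``boundary-layer contribution that genuinely requires work''. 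Declaring that such a piece remains and is ``handled by the methods of \cite{Su14-2}'' is both unnecessary and circular, since the theorem you are proving \emph{is} Theorem 12.1 of \cite{Su14-2}. Your treatment of part $2^\circ$ via $K_N^0(\varepsilon;\zeta)=K_N^0(\varepsilon;\zeta)\mathcal{P}$ and comparison with $K_N(\varepsilon;\zeta)\mathcal{P}$ is correct.

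The genuine gap is in the flux estimate. The quantity to control, $\Vert\widetilde{g}^\varepsilon(S_\varepsilon-I)b(\D)P_\mathcal{O}(\mathcal{A}_N^0-\zeta I)^{-1}\Vert_{L_2\to L_2}$, carries \emph{no} prefactor $\varepsilon$ (unlike the corrector, the flux approximand is not multiplied by $\varepsilon$), so your closing claim that an ``arbitrarily small loss in the exponent of $\varepsilon$ \dots is absorbed by the overall prefactor $\varepsilon$'' is false, and a bound of order $\varepsilon^{1-\delta}$ would not reproduce the stated right-hand side with the \emph{same} constant $\widetilde{\mathcal{C}}_3$. Moreover, the Meyers-integrability/interpolation route you sketch is not needed: the paper already contains the right tool. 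Write $\widetilde{g}=g\,b(\D)\Lambda+g$. The $g$-part contributes $O(\varepsilon)\Vert\D\mathbf{w}\Vert_{L_2}$ by Lemma 1.8 with $\mathbf{w}=b(\D)P_\mathcal{O}(\mathcal{A}_N^0-\zeta I)^{-1}\mathbf{F}$, while for the other part Lemma 1.5 applied to $\mathbf{v}=(S_\varepsilon-I)\mathbf{w}$ gives, under Condition 2.5,
\[
\Vert(\D\Lambda)^\varepsilon(S_\varepsilon-I)\mathbf{w}\Vert_{L_2}^2\leqslant \beta_1\Vert(S_\varepsilon-I)\mathbf{w}\Vert_{L_2}^2+\beta_2\varepsilon^2\Vert\Lambda\Vert^2_{L_\infty}\Vert\D(S_\varepsilon-I)\mathbf{w}\Vert^2_{L_2}\leqslant C\varepsilon^2\Vert\D\mathbf{w}\Vert^2_{L_2},
\]
again by Lemma 1.8 and the contractivity of $S_\varepsilon$. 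This yields the clean $O(\varepsilon)\Vert(\mathcal{A}_N^0-\zeta I)^{-1}\Vert_{L_2\to H^2}$ bound (with $\mathcal{P}$ inserted in part $2^\circ$) needed to keep $\widetilde{\mathcal{C}}_3$ unchanged and to place the entire replacement cost into $\widetilde{\mathcal{C}}_6 c(\phi)^4\varepsilon$, respectively $\widetilde{\mathcal{C}}_7\rho_\flat(\zeta)\varepsilon^{1/2}$. As written, your flux argument does not establish the theorem.
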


\subsection*{6.4. Estimates in a strictly interior subdomain.} In a strictly interior subdomain $\mathcal{O}'$ of the domain $\mathcal{O}$
it is possible to obtain error estimates in $H^1$ of sharp order with respect to $\varepsilon$.
The following result was proved in Theorems 12.4 and 14.10 of \cite{Su14-2}.

\begin{theorem}[\cite{Su14-2}]
Suppose that the assumptions of Theorem~\textnormal{6.2} are satisfied.
Let $\mathcal{O}'$ be a strictly interior subdomain of the domain $\mathcal{O}$, and let $\delta =\textnormal{dist}\,\lbrace \mathcal{O}';\partial  \mathcal{O}\rbrace$.

\noindent $1^\circ$. Let $\zeta \in \mathbb{C}\setminus \mathbb{R}_+$ and $\vert \zeta \vert \geqslant 1$. Then for
$0<\varepsilon \leqslant \varepsilon _1$ we have
\begin{align*}
\begin{split}
\Vert &(\mathcal{A}_{N,\varepsilon}-\zeta I)^{-1}-(\mathcal{A}_N^0-\zeta I)^{-1}-\varepsilon K_N(\varepsilon ;\zeta)\Vert _{L_2(\mathcal{O})\rightarrow H^1(\mathcal{O}')}\\
&\leqslant (\mathcal{C}_{8}\delta ^{-1}+\mathcal{C}_{9})c(\phi)^6\varepsilon ,
\end{split}
\\
\begin{split}
\Vert &g^\varepsilon b(\mathbf{D})(\mathcal{A}_{N,\varepsilon}-\zeta I)^{-1}-\widetilde{g}^\varepsilon S_\varepsilon b(\mathbf{D})P_\mathcal{O}(\mathcal{A}_N^0-\zeta I)^{-1}\Vert _{L_2(\mathcal{O})\rightarrow L_2(\mathcal{O}')}\\
&\leqslant (\widetilde{\mathcal{C}}_{8} \delta ^{-1}+\widetilde{\mathcal{C}}_{9})c(\phi)^6\varepsilon .
\end{split}
\end{align*}
The constants $\mathcal{C}_{8}$, $\mathcal{C}_{9}$,  $\widetilde{\mathcal{C}}_{8}$, and
$\widetilde{\mathcal{C}}_{9}$ depend only on the initial data \textnormal{\eqref{data1}}.

\noindent $2^\circ$. Now, let $\zeta \in \mathbb{C}\setminus [c_\flat ,\infty)$, $\zeta \neq 0$.
Denote $\widehat{\rho} _\flat (\zeta):= c(\vartheta )\rho _\flat (\zeta)+c(\vartheta )^{5/2}\rho _\flat (\zeta )^{3/4}$.
For $0<\varepsilon\leqslant \varepsilon_1$ we have
\begin{align*}
\begin{split}
\Vert &(\mathcal{A}_{N,\varepsilon}-\zeta I)^{-1}-(\mathcal{A}_N^0-\zeta I)^{-1}-\varepsilon K_N(\varepsilon ;\zeta )\mathcal{P}\Vert _{L_2(\mathcal{O})\rightarrow H^1(\mathcal{O}')}\\
&\leqslant \left(\mathcal{C}_{10} \delta ^{-1} \widehat{\rho} _\flat (\zeta)
+\mathcal{C}_{11}c(\vartheta )^{1/2}\rho _\flat (\zeta)^{5/4}\right)\varepsilon ,
\end{split}
\\
\begin{split}
\Vert &g^\varepsilon b(\mathbf{D})(\mathcal{A}_{N,\varepsilon}-\zeta I)^{-1}-\widetilde{g}^\varepsilon S_\varepsilon b(\mathbf{D})P_\mathcal{O}(\mathcal{A}_N^0-\zeta I)^{-1}\mathcal{P}\Vert _{L_2(\mathcal{O})\rightarrow L_2(\mathcal{O}')}\\
&\leqslant \left(\widetilde{\mathcal{C}}_{10}\delta ^{-1}\widehat{\rho} _\flat (\zeta)
+\widetilde{\mathcal{C}}_{11}c(\vartheta )^{1/2}\rho _\flat (\zeta)^{5/4}\right)\varepsilon .
\end{split}
\end{align*}
The constants $\mathcal{C}_{10}$, $\mathcal{C}_{11}$, $\widetilde{\mathcal{C}}_{10}$, and $\widetilde{\mathcal{C}}_{11}$ depend only on
the data \textnormal{\eqref{data2}}.
\end{theorem}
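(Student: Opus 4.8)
The plan is to upgrade, in the interior of $\mathcal{O}$, the $O(\varepsilon^{1/2})$ estimates of Theorem~6.2 to $O(\varepsilon)$ at the price of a factor $\delta^{-1}$; the scheme is the Neumann counterpart of the argument behind Theorem~2.9. Fix $F\in L_2(\mathcal{O};\mathbb{C}^n)$ (all bounds below carry an implicit factor $\|F\|_{L_2(\mathcal{O})}$ on the right), put $u_\varepsilon=(\mathcal{A}_{N,\varepsilon}-\zeta I)^{-1}F$, $u_0=(\mathcal{A}_N^0-\zeta I)^{-1}F$ (so $u_0\in H^2(\mathcal{O};\mathbb{C}^n)$ by \eqref{5.6a}), $\widetilde u_0=P_\mathcal{O}u_0$, and
\[
w_\varepsilon:=u_\varepsilon-u_0-\varepsilon\,R_\mathcal{O}[\Lambda^\varepsilon]S_\varepsilon b(\mathbf{D})\widetilde u_0
\]
(in assertion~$2^\circ$ one inserts the projection $\mathcal{P}$, since the components in $Z=\Ker\mathcal{A}_{N,\varepsilon}=\Ker\mathcal{A}_N^0$ cancel). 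Theorems~6.1 and 6.2 already supply the global inputs $\|w_\varepsilon\|_{L_2(\mathcal{O})}\lesssim\rho_\flat(\zeta)\varepsilon$ and $\|w_\varepsilon\|_{H^1(\mathcal{O})}\lesssim\rho_\flat(\zeta)\varepsilon^{1/2}$. Using the cell identity $b(\mathbf{D})^*g(b(\mathbf{D})\Lambda+\mathbf{1}_m)=0$ together with $\mathbf{D}(\varepsilon\Lambda^\varepsilon v)=(\mathbf{D}\Lambda)^\varepsilon v+\varepsilon\Lambda^\varepsilon\mathbf{D}v$, one derives a weak identity
\[
a_{N,\varepsilon}[w_\varepsilon,\eta]-\zeta(w_\varepsilon,\eta)_{L_2(\mathcal{O})}=\int_\mathcal{O}\langle\boldsymbol{\Theta}_\varepsilon,b(\mathbf{D})\eta\rangle\,d\mathbf{x}+\varepsilon\,\ell_\varepsilon(\eta),\qquad\eta\in H^1(\mathcal{O};\mathbb{C}^n),
\]
with $|\ell_\varepsilon(\eta)|\lesssim(\ldots)\|\eta\|_{H^1}$ (the constant $\zeta$-dependent, coming from the term $\zeta\varepsilon\Lambda^\varepsilon S_\varepsilon b(\mathbf{D})\widetilde u_0$), where the flux discrepancy splits as $\boldsymbol{\Theta}_\varepsilon=\boldsymbol{\Theta}'_\varepsilon+\boldsymbol{\Theta}''_\varepsilon$: the interior part satisfies $\|\boldsymbol{\Theta}'_\varepsilon\|_{L_2(\mathcal{O})}\lesssim\varepsilon(\ldots)$ and is built from $\varepsilon\Lambda^\varepsilon$ and the Steklov error $S_\varepsilon-I$ (controlled by the properties of $S_\varepsilon$ and $[\Lambda^\varepsilon]$ collected in \S\,1), while $\boldsymbol{\Theta}''_\varepsilon$ is supported in an $O(\varepsilon)$-neighbourhood of $\partial\mathcal{O}$ — it gathers the terms where $\widetilde u_0$ departs from $u_0$ and where the conormal condition of $\mathcal{A}_N^0$ enters. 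It is $\boldsymbol{\Theta}''_\varepsilon$ alone that forces the $\varepsilon^{1/2}$ loss in Theorem~6.2, and it is invisible away from $\partial\mathcal{O}$.

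Next I would localize. Pick $\chi\in C_0^\infty(\mathcal{O})$ with $\chi\equiv1$ on $\mathcal{O}'$, $\operatorname{dist}\{\operatorname{supp}\chi;\partial\mathcal{O}\}\geq\delta/2$, and $|\mathbf{D}^\alpha\chi|\leq C_\alpha\delta^{-|\alpha|}$; since $\chi\equiv1$ on $\mathcal{O}'$ it suffices to bound $\|\chi w_\varepsilon\|_{H^1(\mathcal{O})}$. Taking $\eta=\chi^2w_\varepsilon\in H^1_0(\mathcal{O};\mathbb{C}^n)$ and commuting $\chi$ past $b(\mathbf{D})$ gives, modulo commutator terms gathered into a remainder $\mathcal{R}_\varepsilon$,
\[
a_{N,\varepsilon}[\chi w_\varepsilon,\chi w_\varepsilon]-\zeta\|\chi w_\varepsilon\|^2_{L_2(\mathcal{O})}=\int_\mathcal{O}\langle\boldsymbol{\Theta}'_\varepsilon,b(\mathbf{D})(\chi^2w_\varepsilon)\rangle\,d\mathbf{x}+\varepsilon\,\ell_\varepsilon(\chi^2w_\varepsilon)+\mathcal{R}_\varepsilon,
\]
because $\chi^2\boldsymbol{\Theta}''_\varepsilon\equiv0$ once $\varepsilon$ is small relative to $\delta$ (under the present hypothesis $\varepsilon\leq\varepsilon_1$ one first passes to an intermediate subdomain $\mathcal{O}''$ with $\operatorname{dist}\{\mathcal{O}'';\partial\mathcal{O}\}\geq\delta/2$; in the corrector-without-smoothing variants this reappears as the explicit restriction $\varepsilon\leq(4r_1)^{-1}\delta$). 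The remainder $\mathcal{R}_\varepsilon$ assembles the commutators of $\chi$ with $b(\mathbf{D})$ and with $S_\varepsilon$, each carrying a factor $\delta^{-1}$ and paired with $\|w_\varepsilon\|_{H^1(\operatorname{supp}\chi)}$, $\|w_\varepsilon\|_{L_2(\operatorname{supp}\chi)}$ or $\|\chi w_\varepsilon\|_{H^1(\mathcal{O})}$; the Steklov commutator is handled as in \S\,1.

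The energy estimate then proceeds as follows. Splitting $\chi w_\varepsilon$ along $L_2(\mathcal{O})=Z\oplus\mathcal{H}(\mathcal{O})$ and invoking \eqref{<b_N,e<}, the modulus of the left-hand side dominates $c(\vartheta)^{-2}\rho_\flat(\zeta)^{-1}\|\chi w_\varepsilon\|^2_{H^1(\mathcal{O})}$ up to a term $\lesssim\|\chi w_\varepsilon\|^2_{L_2(\mathcal{O})}$ — the same mechanism producing the weight $\rho_\flat(\zeta)$ in Theorem~6.1. Since $\|b(\mathbf{D})(\chi^2w_\varepsilon)\|_{L_2}\lesssim\|\chi w_\varepsilon\|_{H^1}+\delta^{-1}\|w_\varepsilon\|_{L_2(\operatorname{supp}\chi)}$ and $\|\boldsymbol{\Theta}'_\varepsilon\|_{L_2}\lesssim\varepsilon(\ldots)$, one absorbs $\|\chi w_\varepsilon\|^2_{H^1}$ into the left-hand side and is left with a bound of the shape
\[
\|\chi w_\varepsilon\|_{H^1(\mathcal{O})}\lesssim c(\vartheta)^2\rho_\flat(\zeta)\,\delta^{-1}\Bigl(\varepsilon(\ldots)+\|w_\varepsilon\|_{L_2(\mathcal{O})}+\|w_\varepsilon\|_{L_2(\mathcal{O})}^{1/2}\|w_\varepsilon\|_{H^1(\mathcal{O})}^{1/2}\Bigr)+(\text{analogous }\delta\text{-free terms}).
\]
Inserting the global inputs $\|w_\varepsilon\|_{L_2(\mathcal{O})}\lesssim\rho_\flat(\zeta)\varepsilon$, $\|w_\varepsilon\|_{H^1(\mathcal{O})}\lesssim\rho_\flat(\zeta)\varepsilon^{1/2}$ and performing one more interpolation against the coercivity estimate (the source of the fractional powers, exactly as in the proof of Theorem~6.1), every product of the coercivity factor $c(\vartheta)^2\rho_\flat(\zeta)$ against an $L_2$- or $H^1$-input collapses to one of the combinations $c(\vartheta)\rho_\flat(\zeta)$, $c(\vartheta)^{5/2}\rho_\flat(\zeta)^{3/4}$, $c(\vartheta)^{1/2}\rho_\flat(\zeta)^{5/4}$, each multiplied by $\delta^{-1}$ or $1$ and by $\varepsilon$ — i.e. $\bigl(\mathcal{C}_{10}\delta^{-1}\widehat\rho_\flat(\zeta)+\mathcal{C}_{11}c(\vartheta)^{1/2}\rho_\flat(\zeta)^{5/4}\bigr)\varepsilon$ of assertion~$2^\circ$. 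In assertion~$1^\circ$ one replaces $\rho_\flat(\zeta)$ by $c(\phi)^2$ and $\vartheta$ by $\phi$, the powers of $c(\phi)$ piling up to $c(\phi)^6$. The flux estimate comes on the same run: $g^\varepsilon b(\mathbf{D})u_\varepsilon-\widetilde{g}^\varepsilon S_\varepsilon b(\mathbf{D})P_\mathcal{O}u_0$, after removing the already-controlled $\varepsilon K_N(\varepsilon;\zeta)$ term, equals $g^\varepsilon b(\mathbf{D})w_\varepsilon$ plus $\boldsymbol{\Theta}'_\varepsilon$-type terms, so its $L_2(\mathcal{O}')$-norm is bounded by $\|\chi w_\varepsilon\|_{H^1}$ and $\|\boldsymbol{\Theta}'_\varepsilon\|_{L_2}$, both already estimated.

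The main obstacle is the two-layer bookkeeping. First, isolating $\boldsymbol{\Theta}''_\varepsilon$ cleanly and verifying $\chi^2\boldsymbol{\Theta}''_\varepsilon\equiv0$ despite the non-locality of $S_\varepsilon$ at scale $\varepsilon$: this is what couples $\varepsilon$ to $\delta$ and must be reconciled with the stated hypothesis $\varepsilon\leq\varepsilon_1$ (by first shrinking to $\mathcal{O}''$). Second, propagating the two-parameter weights without loss: one has to check that no product of a coercivity factor $c(\vartheta)^2\rho_\flat(\zeta)$ against an $L_2$- or $H^1$-input yields a weight worse than those appearing in $\widehat\rho_\flat(\zeta)$ and $c(\vartheta)^{1/2}\rho_\flat(\zeta)^{5/4}$, i.e. that the exponents $1,\tfrac34,\tfrac54$ are sharp in this scheme. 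The remaining ingredients — the cell identity, the Steklov and $[\Lambda^\varepsilon]$ estimates of \S\,1, the G\"arding/coercivity bounds of \S\,5, and the global inputs of Theorems~6.1--6.2 — are already in place; this also explains why $O(\varepsilon)$ in $\mathcal{O}'$ is the best one can hope for, matching the $\mathbb{R}^d$ order in \eqref{0.2}.
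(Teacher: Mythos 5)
This statement (Theorem 6.5) is not proved in the paper at all: it is imported verbatim from [Su8] (Theorems 12.4 and 14.10 of that reference), so there is no internal proof to compare against. Your scheme --- form the discrepancy $w_\varepsilon$, write its integral identity, test it against $\chi^2 w_\varepsilon$ with a cut-off $\chi$ adapted to $\mathcal{O}'$ satisfying $|\mathbf{D}^\alpha\chi|\leqslant C_\alpha\delta^{-|\alpha|}$, and feed in the global $(L_2\to L_2)$ and $(L_2\to H^1)$ inputs of Theorems 6.1--6.2 --- is indeed the standard route used in the cited source and in its Dirichlet counterpart (Theorem 2.9), so the approach is the right one; handling the kernel $Z$ via the projection $\mathcal{P}$ and the coercivity \eqref{<b_N,e<} is also correctly identified as the only genuinely Neumann-specific ingredient.

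Two points still need to be closed. First, the step ``$\chi^2$ kills the boundary-layer part of the flux discrepancy'' requires $\varepsilon\lesssim\delta$, whereas the theorem is asserted for all $0<\varepsilon\leqslant\varepsilon_1$ with $\varepsilon_1$ independent of $\delta$; passing to an intermediate subdomain $\mathcal{O}''$ does not remove this coupling. The standard repair, which you should state, is that in the complementary regime $\varepsilon\geqslant\delta^2$ one has $\varepsilon^{1/2}\leqslant\delta^{-1}\varepsilon$, so the global $O(\rho_\flat(\zeta)\varepsilon^{1/2})$ bound of Theorem 6.2 already implies the asserted interior estimate, and the localization argument is only invoked when $\varepsilon<\delta^2$ (hence $\varepsilon\ll\delta$), where the support separation is available. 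Second, the passage from your schematic energy inequality to the precise two-parameter weights $\widehat{\rho}_\flat(\zeta)=c(\vartheta)\rho_\flat(\zeta)+c(\vartheta)^{5/2}\rho_\flat(\zeta)^{3/4}$ and $c(\vartheta)^{1/2}\rho_\flat(\zeta)^{5/4}$ is exactly where the content of the proof lies and is left as ``$(\ldots)$'' bookkeeping: you show the mechanism but do not verify that each product of the coercivity factor with an $L_2$- or $H^1$-input closes up to precisely these exponents. Since you explicitly flag both issues as the remaining obstacles, this is a correct and well-aimed plan rather than a complete proof.
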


The following result was proved  in Theorems 12.5 and 14.11 of \cite{Su14-2}; it concerns the case
where $\Lambda \in L_\infty$.

\begin{theorem}[\cite{Su14-2}]
Suppose that the assumptions of Theorem \textnormal{6.5} are satisfied.
Suppose that the matrix-valued function $\Lambda (\mathbf{x})$ satisfies Condition~\textnormal{2.5}.
Let $K_N^0(\varepsilon ;\zeta)$ be the operator \eqref{K_n^0(e)}.

\noindent
$1^\circ$. Let $\zeta \in \mathbb{C}\setminus \mathbb{R}_+$, $\vert \zeta \vert \geqslant 1$.
For $0<\varepsilon \leqslant \varepsilon _1$ we have
\begin{align*}
\begin{split}
\Vert &(\mathcal{A}_{N,\varepsilon}-\zeta I)^{-1}-(\mathcal{A}_N^0-\zeta I)^{-1}-\varepsilon K_N^0(\varepsilon ;\zeta)\Vert _{L_2(\mathcal{O})\rightarrow H^1(\mathcal{O}')}\\
&\leqslant ( \mathcal{C}_{8}\delta ^{-1}+\check{\mathcal C}_{9})c(\phi)^6\varepsilon ,
\end{split}
\\
\begin{split}
\Vert &g^\varepsilon b(\mathbf{D})(\mathcal{A}_{N,\varepsilon}-\zeta I)^{-1}-\widetilde{g}^\varepsilon b(\mathbf{D})(\mathcal{A}_N^0-\zeta I)^{-1}\Vert _{L_2(\mathcal{O})\rightarrow L_2(\mathcal{O}')}\\
&\leqslant (\widetilde{\mathcal{C}}_{8}\delta ^{-1}+\widehat{\mathcal C}_{9})c(\phi)^6\varepsilon .
\end{split}
\end{align*}
The constants ${\mathcal{C}}_{8}$ and $\widetilde{\mathcal{C}}_{8}$ are the same as in Theorem~\textnormal{6.5}.
The constants  $\check{\mathcal{C}}_{9}$ and $\widehat{\mathcal{C}}_{9}$ depend only on the initial data
\textnormal{\eqref{data1}} and $\Vert \Lambda \Vert _{L_\infty}$.

\noindent
$2^\circ$. Let $\zeta \in \mathbb{C}\setminus [c_\flat ,\infty)$, $\zeta \neq 0$. For $0<\varepsilon\leqslant \varepsilon _1$
we have
\begin{align*}
\begin{split}
\Vert &(\mathcal{A}_{N,\varepsilon}-\zeta I)^{-1}-(\mathcal{A}_N^0-\zeta I)^{-1}-\varepsilon K_N^0(\varepsilon ;\zeta )\Vert _{L_2(\mathcal{O})\rightarrow H^1(\mathcal{O}')}\\
&\leqslant \left(\mathcal{C}_{10}\delta ^{-1}   \widehat{\rho} _\flat (\zeta)
+\check{\mathcal{C}}_{11}c(\vartheta )^{1/2}\rho _\flat (\zeta)^{5/4}\right)\varepsilon ,
\end{split}
\\
\begin{split}
\Vert &g^\varepsilon b(\mathbf{D})(\mathcal{A}_{N,\varepsilon}-\zeta I)^{-1}-\widetilde{g}^\varepsilon  b(\mathbf{D})(\mathcal{A}_N^0-\zeta I)^{-1}\Vert _{L_2(\mathcal{O})\rightarrow L_2(\mathcal{O}')}\\
&\leqslant \left(\widetilde{\mathcal{C}}_{10}\delta ^{-1} \widehat{\rho} _\flat (\zeta)
+\widehat{\mathcal{C}}_{11}c(\vartheta )^{1/2}\rho _\flat (\zeta)^{5/4}\right)\varepsilon .
\end{split}
\end{align*}
The constants $\mathcal{C}_{10}$ and $\widetilde{\mathcal{C}}_{10}$ are the same as in Theorem~\textnormal{6.5}.
The constants $\check{\mathcal{C}}_{11}$ and $\widehat{\mathcal{C}}_{11}$ depend only on the data
\textnormal{\eqref{data2}} and $\Vert \Lambda \Vert _{L_\infty}$.
\end{theorem}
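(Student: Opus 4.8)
The plan is to deduce this statement from Theorem~6.5, keeping the latter as a black box. The estimates asserted here differ from those of Theorem~6.5 only in that the corrector $\varepsilon K_N(\varepsilon;\zeta)\mathcal{P}$ and the flux approximant $\widetilde{g}^\varepsilon S_\varepsilon b(\mathbf{D})P_\mathcal{O}(\mathcal{A}_N^0-\zeta I)^{-1}$ are replaced by $\varepsilon K_N^0(\varepsilon;\zeta)$ and $\widetilde{g}^\varepsilon b(\mathbf{D})(\mathcal{A}_N^0-\zeta I)^{-1}$, in which neither the Steklov operator $S_\varepsilon$ nor the extension operator $P_\mathcal{O}$ occurs. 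Hence it suffices to show that, in the $L_2(\mathcal{O})\to H^1(\mathcal{O}')$ operator norm (resp. in $L_2(\mathcal{O})\to L_2(\mathcal{O}')$ for the flux), the two correctors (resp. the two flux approximants) differ by an amount of the same order in $\varepsilon$ and $\zeta$ as the error already present in Theorem~6.5; the triangle inequality then gives Theorem~6.6 with $\mathcal{C}_8,\widetilde{\mathcal{C}}_8,\mathcal{C}_{10},\widetilde{\mathcal{C}}_{10}$ unchanged and with new constants $\check{\mathcal{C}}_9,\widehat{\mathcal{C}}_9,\check{\mathcal{C}}_{11},\widehat{\mathcal{C}}_{11}$ carrying the extra dependence on $\|\Lambda\|_{L_\infty}$.

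First I would make this difference explicit. Since $b(\mathbf{D})$ is a first order differential operator with constant coefficients, $b(\mathbf{D})P_\mathcal{O}\mathbf{f}=b(\mathbf{D})\mathbf{f}$ on $\mathcal{O}$ for $\mathbf{f}\in H^1(\mathcal{O};\mathbb{C}^n)$, so $K_N^0(\varepsilon;\zeta)=R_\mathcal{O}[\Lambda^\varepsilon]b(\mathbf{D})P_\mathcal{O}(\mathcal{A}_N^0-\zeta I)^{-1}$ and therefore
\begin{equation*}
K_N(\varepsilon;\zeta)-K_N^0(\varepsilon;\zeta)=R_\mathcal{O}[\Lambda^\varepsilon](S_\varepsilon-I)\,b(\mathbf{D})P_\mathcal{O}(\mathcal{A}_N^0-\zeta I)^{-1},
\end{equation*}
with the analogous identity for the flux approximants ($[\Lambda^\varepsilon]$ replaced by $[\widetilde{g}^\varepsilon]$). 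In case $2^\circ$ a trailing $\mathcal{P}$ is appended, which changes nothing, since for $\zeta\neq0$ the operator $(\mathcal{A}_N^0-\zeta I)^{-1}$ acts as multiplication by $-\zeta^{-1}$ on $Z=\mathrm{Ker}\,b(\mathbf{D})$, so $b(\mathbf{D})(\mathcal{A}_N^0-\zeta I)^{-1}$ annihilates $Z$ and $K_N^0(\varepsilon;\zeta)=K_N^0(\varepsilon;\zeta)\mathcal{P}$ (and likewise for the flux); in case $1^\circ$ no projection occurs. Put $\mathbf{f}=(\mathcal{A}_N^0-\zeta I)^{-1}\mathcal{P}\boldsymbol{\varphi}$ (with $\mathcal{P}$ omitted in case $1^\circ$) and $\mathbf{v}=b(\mathbf{D})P_\mathcal{O}\mathbf{f}$; by \eqref{5.6a} and the boundedness \eqref{PO} of $P_\mathcal{O}$ we have $P_\mathcal{O}\mathbf{f}\in H^2(\mathbb{R}^d;\mathbb{C}^n)$, hence $\mathbf{v}$ and $(S_\varepsilon-I)\mathbf{v}$ lie in $H^1(\mathbb{R}^d;\mathbb{C}^m)$.

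The core estimate, and the only place where the hypothesis $\Lambda\in L_\infty$ is used, is the bound for $[\Lambda^\varepsilon](S_\varepsilon-I)\mathbf{v}$ and $[\widetilde{g}^\varepsilon](S_\varepsilon-I)\mathbf{v}$. Applying Corollary~1.6 to $(S_\varepsilon-I)\mathbf{v}$,
\begin{equation*}
\bigl\|[\Lambda^\varepsilon](S_\varepsilon-I)\mathbf{v}\bigr\|_{H^1(\mathbb{R}^d)}^2\leqslant 2\beta_1\varepsilon^{-2}\bigl\|(S_\varepsilon-I)\mathbf{v}\bigr\|_{L_2(\mathbb{R}^d)}^2+2(1+\beta_2)\|\Lambda\|_{L_\infty}^2\bigl\|(S_\varepsilon-I)\mathbf{v}\bigr\|_{H^1(\mathbb{R}^d)}^2,
\end{equation*}
and Lemma~1.8 — applied to $\mathbf{v}$, and, after commuting $S_\varepsilon$ with $\mathbf{D}$, to $\mathbf{D}\mathbf{v}$, together with $\|S_\varepsilon\|\leqslant1$ — gives $\|(S_\varepsilon-I)\mathbf{v}\|_{L_2}\leqslant\varepsilon r_1\|\mathbf{D}\mathbf{v}\|_{L_2}$ and $\|(S_\varepsilon-I)\mathbf{v}\|_{H^1}\leqslant C\|\mathbf{D}\mathbf{v}\|_{L_2}$, whence $\|[\Lambda^\varepsilon](S_\varepsilon-I)\mathbf{v}\|_{H^1(\mathbb{R}^d)}\leqslant C(\|\Lambda\|_{L_\infty})\|\mathbf{D}\mathbf{v}\|_{L_2(\mathbb{R}^d)}$. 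For the flux I would split $\widetilde{g}^\varepsilon=g^\varepsilon(b(\mathbf{D})\Lambda)^\varepsilon+g^\varepsilon$: the term $g^\varepsilon(S_\varepsilon-I)\mathbf{v}$ is controlled by $\|g\|_{L_\infty}\varepsilon r_1\|\mathbf{D}\mathbf{v}\|_{L_2}$, while $g^\varepsilon(b(\mathbf{D})\Lambda)^\varepsilon(S_\varepsilon-I)\mathbf{v}$ is controlled, via Lemma~1.5 applied to $(S_\varepsilon-I)\mathbf{v}$ (this is where $\Lambda\in L_\infty$ lets one bound $\varepsilon^2\int|\Lambda^\varepsilon|^2|\mathbf{D}(\cdot)|^2$ by $\varepsilon^2\|\Lambda\|_{L_\infty}^2\|\mathbf{D}(\cdot)\|_{L_2}^2$), by $C(\|g\|_{L_\infty},\|\Lambda\|_{L_\infty})\varepsilon\|\mathbf{D}\mathbf{v}\|_{L_2}$. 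Restricting to $\mathcal{O}'$, multiplying by $\varepsilon$, and using $\|\mathbf{D}\mathbf{v}\|_{L_2(\mathbb{R}^d)}\leqslant C\|P_\mathcal{O}\mathbf{f}\|_{H^2(\mathbb{R}^d)}\leqslant C'\|\mathbf{f}\|_{H^2(\mathcal{O})}$, one gets that both the corrector difference, in $L_2(\mathcal{O})\to H^1(\mathcal{O}')$, and the flux-approximant difference, in $L_2(\mathcal{O})\to L_2(\mathcal{O}')$, are $\leqslant C''\varepsilon\,\|(\mathcal{A}_N^0-\zeta I)^{-1}\mathcal{P}\|_{L_2\to H^2}$ (with $\mathcal{P}$ omitted in case $1^\circ$), the constants depending on the data and on $\|\Lambda\|_{L_\infty}$.

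It remains to insert the $\zeta$-behaviour of the $H^2$-norm of the effective resolvent. From \eqref{5.6a}, the identity $(\mathcal{A}_N^0-\zeta I)^{-1}=(\mathcal{A}_N^0+I)^{-1}\bigl(I+(1+\zeta)(\mathcal{A}_N^0-\zeta I)^{-1}\bigr)$, and the elementary bounds $\|(\mathcal{A}_N^0-\zeta I)^{-1}\|_{L_2\to L_2}\leqslant c(\phi)|\zeta|^{-1}$ for $|\zeta|\geqslant1$ and $\|(\mathcal{A}_N^0-\zeta I)^{-1}\mathcal{P}\|_{L_2\to L_2}\leqslant c(\vartheta)|\zeta-c_\flat|^{-1}$ for $\zeta\in\mathbb{C}\setminus[c_\flat,\infty)$, one obtains $\|(\mathcal{A}_N^0-\zeta I)^{-1}\|_{L_2\to H^2}\leqslant C c(\phi)$ in case $1^\circ$ and $\|(\mathcal{A}_N^0-\zeta I)^{-1}\mathcal{P}\|_{L_2\to H^2}\leqslant C\rho_\flat(\zeta)^{1/2}$ in case $2^\circ$. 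Thus the replacement error is $\leqslant C\varepsilon c(\phi)$ in case $1^\circ$ and $\leqslant C\varepsilon\rho_\flat(\zeta)^{1/2}$ in case $2^\circ$, and since $c(\phi)\leqslant c(\phi)^6$ and $\rho_\flat(\zeta)^{1/2}\leqslant c(\vartheta)^{1/2}\rho_\flat(\zeta)^{5/4}$ (because $\rho_\flat(\zeta)\geqslant1$ and $c(\vartheta)\geqslant1$), it is dominated by the right-hand side of the corresponding estimate of Theorem~6.5. Together with Theorem~6.5 and the triangle inequality this yields all four estimates of Theorem~6.6. I do not expect a deep obstacle here; the one point requiring care is keeping track of the correct powers of $c(\phi)$ (resp. of $c(\vartheta)$ and $\rho_\flat(\zeta)$) in the $H^2$-estimate for the effective resolvent, so that the replacement error does not exceed what Theorem~6.5 already provides — and the role of $\Lambda\in L_\infty$ is precisely to keep the multiplications by the unbounded periodic matrices $\mathbf{D}\Lambda$ and $\widetilde{g}$ under control (through Lemma~1.5 and Corollary~1.6), so that the factor $\varepsilon$ gained from $S_\varepsilon-I$ via Lemma~1.8 survives.
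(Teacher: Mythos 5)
Your derivation is sound, and it is worth noting that the paper itself offers no proof of this statement: Theorem~6.6 is imported verbatim from [Su8] (Theorems 12.5 and 14.11 of \cite{Su14-2}), so any argument here is necessarily a reconstruction. What you propose --- reduce to Theorem~6.5 and bound the corrector replacement $\varepsilon R_\mathcal{O}[\Lambda^\eps](S_\eps-I)b(\D)P_\mathcal{O}(\mathcal{A}_N^0-\zeta I)^{-1}$ (with the trailing $\mathcal P$ in case $2^\circ$, which you correctly show is harmless since $b(\D)(\mathcal{A}_N^0-\zeta I)^{-1}$ annihilates $Z$) --- is exactly the smoothing-removal mechanism the paper does carry out explicitly in its $\Lambda\in L_\infty$ arguments (the continuity discussion after \eqref{K_D^0(e)}, and the corrector-difference lemmas of \S 9), so your route is the natural one and almost certainly the one used in the source. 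The individual steps check out: Corollary~1.6 applied to $(S_\eps-I)\mathbf v$ together with Lemma~1.8 (and $\|S_\eps\|\leqslant 1$ on the derivatives) gives the crucial $O(\varepsilon)\|\D\mathbf v\|_{L_2}$ bound for the corrector difference in $H^1$; the split $\widetilde g=g\,b(\D)\Lambda+g$ with Lemma~1.5 handles the flux; the resolvent identity with \eqref{5.6a} gives $\|(\mathcal{A}_N^0-\zeta I)^{-1}\|_{L_2\to H^2}\leqslant C c(\phi)$ for $|\zeta|\geqslant 1$ and $\|(\mathcal{A}_N^0-\zeta I)^{-1}\mathcal P\|_{L_2\to H^2}\leqslant C\rho_\flat(\zeta)^{1/2}$ otherwise; and since $\rho_\flat(\zeta)\geqslant 1$ and $c(\vartheta)\geqslant 1$, the resulting replacement error $C\varepsilon c(\phi)$ (resp. $C\varepsilon\rho_\flat(\zeta)^{1/2}$) is indeed absorbed into the $\check{\mathcal C}_9 c(\phi)^6\varepsilon$ (resp. $\check{\mathcal C}_{11}c(\vartheta)^{1/2}\rho_\flat(\zeta)^{5/4}\varepsilon$) term without touching $\mathcal C_8$, $\widetilde{\mathcal C}_8$, $\mathcal C_{10}$, $\widetilde{\mathcal C}_{10}$. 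The only cosmetic caveats are that Lemma~1.5 is stated for $(\D\Lambda)^\eps$ and for $C_0^\infty$ test functions, so one should record the pointwise bound $|b(\D)\Lambda|\leqslant C|\D\Lambda|$ from \eqref{|b_l|<=} and the density argument under Condition~2.5 --- neither affects the conclusion.
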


\section*{\S 7. Homogenization of the operator exponential $e^{-\mathcal{A}_{N,\varepsilon}t}$}
\setcounter{section}{7}
\setcounter{equation}{0}
\setcounter{theorem}{0}

\subsection*{7.1. The properties of the operator exponential.}
We start with the following simple statement about estimates for the operators
$e^{-\mathcal{A}_{N,\varepsilon}t} {\mathcal P}$ and $e^{-\mathcal{A}_{N}^0 t} {\mathcal P}$ in various operator norms.

\begin{lemma}
Suppose that the assumptions of Theorem~\textnormal{6.1} are satisfied. Let ${\mathcal P}$ be the orthogonal
projection of $L_2(\mathcal{O};{\mathbb C}^n)$ onto the subspace \textnormal{(\ref{H(O)})}.
Then for $t>0$ and $\varepsilon >0$ we have
\begin{align}
\label{7.1}
&\Vert e^{-\mathcal{A}_{N,\varepsilon}t} {\mathcal P} \Vert _{L_2(\mathcal{O})\rightarrow L_2(\mathcal{O})}
\leqslant e^{-c_\flat t},
\\
\label{7.2}
&\Vert e^{-\mathcal{A}_{N,\varepsilon}t} {\mathcal P}\Vert _{L_2(\mathcal{O})\rightarrow H^1(\mathcal{O})}
\leqslant c_\flat^{-1/2} t^{-1/2} e^{-c_\flat t/2},
\\
\label{7.3}
&\Vert e^{-\mathcal{A}_{N}^0 t} {\mathcal P} \Vert _{L_2(\mathcal{O})\rightarrow L_2(\mathcal{O})}
\leqslant e^{-c_\flat t},
\\
\label{7.4}
&\Vert e^{-\mathcal{A}_{N}^0 t} {\mathcal P} \Vert _{L_2(\mathcal{O})\rightarrow H^1(\mathcal{O})}
\leqslant  c_\flat^{-1/2} t^{-1/2} e^{-c_\flat t/2},
\\
\label{7.5}
&\Vert e^{-\mathcal{A}_{N}^0 t} {\mathcal P} \Vert _{L_2(\mathcal{O})\rightarrow H^2(\mathcal{O})}
\leqslant \check{c}^\circ \,t^{-1} e^{-c_\flat t/2},
\end{align}
where $\check{c}^\circ= {c}^\circ (1+ c_\flat^{-1})$.
\end{lemma}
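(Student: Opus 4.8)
The plan is to follow the pattern of the proof of Lemma~3.1, the only genuinely new ingredient being the reduction by the subspace $\mathcal{H}(\mathcal{O})$. First I would recall that, by \eqref{Ker=Z} and the remark following it, the orthogonal decomposition $L_2(\mathcal{O};\mathbb{C}^n)=Z\oplus\mathcal{H}(\mathcal{O})$ reduces both $\mathcal{A}_{N,\varepsilon}$ and $\mathcal{A}_N^0$. Hence $\mathcal{P}$ commutes with $e^{-\mathcal{A}_{N,\varepsilon}t}$ and with $e^{-\mathcal{A}_N^0 t}$, and $e^{-\mathcal{A}_{N,\varepsilon}t}\mathcal{P}$ (respectively $e^{-\mathcal{A}_N^0 t}\mathcal{P}$) is the exponential of the part of the corresponding operator acting in $\mathcal{H}(\mathcal{O})$, extended by zero on $Z$. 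The form domain of the part of $\mathcal{A}_{N,\varepsilon}$ in $\mathcal{H}(\mathcal{O})$ is contained in $H^1_\perp(\mathcal{O};\mathbb{C}^n)$, so \eqref{<b_N,e<} gives $a_{N,\varepsilon}[\mathbf{u},\mathbf{u}]\geq c_\flat\|\mathbf{u}\|^2_{H^1(\mathcal{O})}\geq c_\flat\|\mathbf{u}\|^2_{L_2(\mathcal{O})}$ on $H^1_\perp(\mathcal{O};\mathbb{C}^n)$, i.e. this part is bounded below by $c_\flat I$; the same holds for $\mathcal{A}_N^0$ by the second line of \eqref{<b_N,e<}. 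Estimates \eqref{7.1} and \eqref{7.3} then follow immediately from the spectral theorem, since $\sup_{\mu\geq c_\flat}e^{-\mu t}=e^{-c_\flat t}$.

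For the $(L_2\to H^1)$-bounds I would rewrite \eqref{<b_N,e<} as $\|\mathbf{u}\|^2_{H^1(\mathcal{O})}\leq c_\flat^{-1}\|\mathcal{A}_{N,\varepsilon}^{1/2}\mathbf{u}\|^2_{L_2(\mathcal{O})}$ for $\mathbf{u}\in H^1_\perp(\mathcal{O};\mathbb{C}^n)$, whence $\|e^{-\mathcal{A}_{N,\varepsilon}t}\mathcal{P}\|_{L_2(\mathcal{O})\to H^1(\mathcal{O})}\leq c_\flat^{-1/2}\|\mathcal{A}_{N,\varepsilon}^{1/2}e^{-\mathcal{A}_{N,\varepsilon}t}\mathcal{P}\|_{L_2(\mathcal{O})\to L_2(\mathcal{O})}$. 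The last norm is at most $\sup_{\mu\geq c_\flat}\mu^{1/2}e^{-\mu t}\leq e^{-c_\flat t/2}\sup_{\mu\geq 0}\mu^{1/2}e^{-\mu t/2}\leq t^{-1/2}e^{-c_\flat t/2}$, which gives \eqref{7.2}. Estimate \eqref{7.4} is obtained in exactly the same way with $\mathcal{A}_{N,\varepsilon}$ replaced by $\mathcal{A}_N^0$.

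For \eqref{7.5} the only point requiring care is that $\mathcal{A}_N^0$ has the nontrivial kernel $Z$, so the regularity bound \eqref{5.6a} is stated for $(\mathcal{A}_N^0+I)^{-1}$ rather than for $(\mathcal{A}_N^0)^{-1}$. I would factor $e^{-\mathcal{A}_N^0 t}\mathcal{P}=(\mathcal{A}_N^0+I)^{-1}\bigl[(\mathcal{A}_N^0+I)e^{-\mathcal{A}_N^0 t}\mathcal{P}\bigr]$ and apply \eqref{5.6a} to get $\|e^{-\mathcal{A}_N^0 t}\mathcal{P}\|_{L_2(\mathcal{O})\to H^2(\mathcal{O})}\leq c^\circ\|(\mathcal{A}_N^0+I)e^{-\mathcal{A}_N^0 t}\mathcal{P}\|_{L_2(\mathcal{O})\to L_2(\mathcal{O})}\leq c^\circ\sup_{\mu\geq c_\flat}(\mu+1)e^{-\mu t}$. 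Using the elementary inequality $\mu+1\leq(1+c_\flat^{-1})\mu$ valid for $\mu\geq c_\flat$, the supremum is at most $(1+c_\flat^{-1})e^{-c_\flat t/2}\sup_{\mu\geq 0}\mu e^{-\mu t/2}\leq(1+c_\flat^{-1})t^{-1}e^{-c_\flat t/2}$, which is \eqref{7.5} with $\check{c}^\circ=c^\circ(1+c_\flat^{-1})$.

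There is no serious obstacle: the whole argument is parallel to the proof of Lemma~3.1, and the elementary suprema $\sup_{\mu\geq 0}\mu^{1/2}e^{-\mu t/2}\leq t^{-1/2}$ and $\sup_{\mu\geq 0}\mu e^{-\mu t/2}\leq t^{-1}$ are the same one-line computations used there. The two items that need to be stated cleanly are (i) that the part of each of the two operators in $\mathcal{H}(\mathcal{O})$ is bounded below by $c_\flat$, which rests on \eqref{<b_N,e<} together with the fact that $Z\oplus\mathcal{H}(\mathcal{O})$ reduces the operators, and (ii) the passage from $(\mathcal{A}_N^0+I)^{-1}$ in \eqref{5.6a} to $\mathcal{A}_N^0$ via $\mu+1\leq(1+c_\flat^{-1})\mu$ on the spectrum, which is exactly what produces the factor $\check{c}^\circ=c^\circ(1+c_\flat^{-1})$.
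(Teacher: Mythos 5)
Your proposal is correct and follows essentially the same route as the paper: \eqref{7.1}--\eqref{7.4} are obtained exactly as in Lemma~3.1 using the lower bound $c_\flat$ from \eqref{<b_N,e<} on $H^1_\perp(\mathcal{O};\mathbb{C}^n)$ together with the spectral calculus, and \eqref{7.5} rests on the same inequality $(\mu+1)\mu^{-1}\leqslant 1+c_\flat^{-1}$ on the spectrum; the paper merely packages it as a bound $\|(\mathcal{A}_N^0)^{-1}\mathcal{P}\|_{L_2\to H^2}\leqslant\check{c}^\circ$ before multiplying by $\|\mathcal{A}_N^0 e^{-\mathcal{A}_N^0 t}\mathcal{P}\|$, whereas you factor through $(\mathcal{A}_N^0+I)^{-1}$ directly — the two computations are identical and give the same constant.
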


\begin{proof}
Estimates \eqref{7.1} and \eqref{7.3} follow directly from \eqref{<b_N,e<}.

By \eqref{<b_N,e<},
\begin{equation}
\label{7.6}
\Vert e^{-\mathcal{A}_{N,\varepsilon}t}  {\mathcal P} \Vert _{L_2(\mathcal{O})\rightarrow H^1(\mathcal{O})}\leqslant
c_\flat^{-1/2}
\Vert \mathcal{A}_{N,\varepsilon}^{1/2} e^{-\mathcal{A}_{N,\varepsilon}t} {\mathcal P} \Vert _{L_2(\mathcal{O})\rightarrow L_2(\mathcal{O})}.
\end{equation}
Similarly to \eqref{3.7},
\begin{equation}
\label{7.7}
\Vert \mathcal{A}_{N,\varepsilon}^{1/2} e^{-\mathcal{A}_{N,\varepsilon}t} {\mathcal P} \Vert _{L_2(\mathcal{O})\rightarrow L_2(\mathcal{O})}
\leqslant \sup_{\mu \geqslant c_\flat} \mu^{1/2} e^{-\mu t}
\leqslant  t^{-1/2}e^{-c_{\flat}t/2}.
\end{equation}
Relations (\ref{7.6}) and (\ref{7.7}) imply \eqref{7.2}.
Similarly, from the inequality
\begin{equation}
\label{7.8}
\Vert (\mathcal{A}_{N}^0)^{1/2} e^{-\mathcal{A}_{N}^0 t} {\mathcal P}\Vert _{L_2(\mathcal{O})\rightarrow L_2(\mathcal{O})}
\leqslant t^{-1/2}e^{-c_{\flat}t/2}
\end{equation}
and \eqref{<b_N,e<} we deduce \eqref{7.4}.

Next, by \eqref{5.6a},
\begin{equation*}
\|(\mathcal{A}_{N}^0)^{-1}{\mathcal P}\Vert _{L_2(\mathcal{O})\rightarrow H^2(\mathcal{O})}
\leqslant c^\circ \sup_{\mu \geqslant c_\flat} (\mu +1) \mu^{-1}
\leqslant c^\circ (1+ c_\flat^{-1}) = \check{c}^\circ.
\end{equation*}
Similarly to \eqref{3.8a}, this yields
\begin{equation*}
\Vert  e^{-\mathcal{A}_{N}^0 t} {\mathcal P} \Vert _{L_2(\mathcal{O})\rightarrow H^2(\mathcal{O})}\leqslant
\check{c}^\circ
\Vert \mathcal{A}_{N}^0 e^{-\mathcal{A}_{N}^0 t} {\mathcal P} \Vert _{L_2(\mathcal{O})\rightarrow L_2(\mathcal{O})}
\leqslant \check{c}^\circ\, t^{-1}e^{-c_{\flat}t/2},
\end{equation*}
which proves (\ref{7.5}).
\end{proof}

\subsection*{7.2. Approximation of the operator $e^{-\mathcal{A}_{N,\varepsilon}t}$ in the $L_2(\mathcal{O};\mathbb{C}^n)$-operator norm.} Now, we prove the following result.

\begin{theorem}
Suppose that the assumptions of Theorem \textnormal{6.1} are satisfied.
Then for \hbox{$0<\varepsilon \leqslant \varepsilon _1$} we have
\begin{equation}
\label{Th exp_N L2->L2}
\Vert e^{-\mathcal{A}_{N,\varepsilon}t}-e^{-\mathcal{A}_N^0t}\Vert _{L_2(\mathcal{O})\rightarrow L_2(\mathcal{O})}\leqslant \mathcal{C}_{12} \varepsilon (t+\varepsilon ^2)^{-1/2}e^{- c_\flat t/2},\quad t\geqslant 0.
\end{equation}
The constant $\mathcal{C}_{12}$ depends only on the data \textnormal{\eqref{data2}}.
\end{theorem}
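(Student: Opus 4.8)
The plan is to follow the scheme of the proof of Theorem 3.2, the new ingredient being careful bookkeeping around the common kernel $Z$ of $\mathcal{A}_{N,\varepsilon}$ and $\mathcal{A}_N^0$. First I would note that, by \eqref{Ker=Z}, both $e^{-\mathcal{A}_{N,\varepsilon}t}$ and $e^{-\mathcal{A}_N^0 t}$ restrict to the identity on $Z$, so that
$$e^{-\mathcal{A}_{N,\varepsilon}t}-e^{-\mathcal{A}_N^0 t}=\bigl(e^{-\mathcal{A}_{N,\varepsilon}t}-e^{-\mathcal{A}_N^0 t}\bigr)\mathcal{P},$$
where $\mathcal{P}$ is the orthogonal projection onto $\mathcal{H}(\mathcal{O})$. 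Since the decomposition $L_2(\mathcal{O};\mathbb{C}^n)=Z\oplus\mathcal{H}(\mathcal{O})$ reduces both operators and the parts acting in $\mathcal{H}(\mathcal{O})$ have spectra contained in $[c_\flat,\infty)$ by \eqref{<b_N,e<}, the Dunford--Riesz formula applied to these parts gives, for $t>0$,
$$e^{-\mathcal{A}_{N,\varepsilon}t}\mathcal{P}=-\frac{1}{2\pi i}\int_\gamma e^{-\zeta t}(\mathcal{A}_{N,\varepsilon}-\zeta I)^{-1}\mathcal{P}\,d\zeta,$$
where $\gamma$ is the same $45^\circ$ contour as in the proof of Theorem 3.2 but crossing the real axis at $c_\flat/2$, so that $\gamma$ encloses $[c_\flat,\infty)$ while avoiding the point $\zeta=0$ at which the full resolvents are singular. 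Writing the analogous representation for $\mathcal{A}_N^0$ and using that the difference of the full resolvents already annihilates $Z$ (both equal $-\zeta^{-1}I$ there), I obtain
$$e^{-\mathcal{A}_{N,\varepsilon}t}-e^{-\mathcal{A}_N^0 t}=-\frac{1}{2\pi i}\int_\gamma e^{-\zeta t}\bigl((\mathcal{A}_{N,\varepsilon}-\zeta I)^{-1}-(\mathcal{A}_N^0-\zeta I)^{-1}\bigr)\,d\zeta.$$

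Next I would estimate the integrand on $\gamma$ uniformly in $\arg\zeta$ by Theorem 6.1. For $\zeta\in\gamma$ with $|\zeta|$ below a constant $\check c$ depending only on $c_\flat$, I apply assertion $2^\circ$, noting that $\vartheta=\arg(\zeta-c_\flat)$ stays away from $0$ and $2\pi$ on $\gamma$, so that $c(\vartheta)\leqslant\sqrt 2$ and $\rho_\flat(\zeta)$ is bounded on the contour; for $\zeta\in\gamma$ with $|\zeta|>\check c$, I use assertion $1^\circ$ together with the bound $|\sin\phi|\geqslant 5^{-1/2}$ valid on $\gamma$. Combining the two regimes as in \eqref{ots_zeta_levee_c2}--\eqref{Raznost res na gamma} yields
$$\bigl\|(\mathcal{A}_{N,\varepsilon}-\zeta I)^{-1}-(\mathcal{A}_N^0-\zeta I)^{-1}\bigr\|_{L_2\to L_2}\leqslant\widehat{\mathcal C}_{12}\bigl(|\zeta|^{-1/2}\varepsilon+\varepsilon^2\bigr),\quad\zeta\in\gamma,\quad 0<\varepsilon\leqslant\varepsilon_1,$$
with $\widehat{\mathcal C}_{12}$ depending only on the extended data \eqref{data2} (this is where $\widetilde{\mathfrak C}_1$ and $q$ enter, via assertion $2^\circ$). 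Substituting this bound into the contour integral and evaluating the elementary integrals $\int_\gamma|e^{-\zeta t}|\,|\zeta|^{-1/2}\,|d\zeta|$ and $\int_\gamma|e^{-\zeta t}|\,|d\zeta|$, which produce factors $\Gamma(1/2)\,t^{-1/2}e^{-c_\flat t/2}$ and $\sim t^{-1}e^{-c_\flat t/2}$ respectively, I get
$$\bigl\|e^{-\mathcal{A}_{N,\varepsilon}t}-e^{-\mathcal{A}_N^0 t}\bigr\|_{L_2(\mathcal{O})\to L_2(\mathcal{O})}\leqslant\mathcal C\bigl(\varepsilon t^{-1/2}+\varepsilon^2 t^{-1}\bigr)e^{-c_\flat t/2}.$$
For $t\geqslant\varepsilon^2$ the term $\varepsilon^2 t^{-1}$ is dominated by $\varepsilon t^{-1/2}$ and $\varepsilon t^{-1/2}\leqslant\sqrt 2\,\varepsilon(t+\varepsilon^2)^{-1/2}$, which gives \eqref{Th exp_N L2->L2} on that range. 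For $0\leqslant t\leqslant\varepsilon^2$ I would use instead the crude bound $\|e^{-\mathcal{A}_{N,\varepsilon}t}-e^{-\mathcal{A}_N^0 t}\|\leqslant 2e^{-c_\flat t}$ coming from \eqref{7.1} and \eqref{7.3} (the $Z$-parts cancel), together with $2e^{-c_\flat t}\leqslant 2\sqrt 2\,\varepsilon(t+\varepsilon^2)^{-1/2}e^{-c_\flat t/2}$, which holds there. Taking $\mathcal C_{12}$ to be the maximum of the resulting constants completes the proof.

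The only genuinely new point compared with the Dirichlet case is the placement of the contour: one must choose $\gamma$ to encircle $[c_\flat,\infty)$ without enclosing or touching $\zeta=0$, and observe that both the exponential difference and the resolvent difference are automatically supported on $\mathcal{H}(\mathcal{O})$, so that the restriction of the Dunford--Riesz formula to $\mathcal{H}(\mathcal{O})$ applies verbatim. I do not expect this to be a serious obstacle; the remaining work --- the contour split, the uniform resolvent bound on $\gamma$, and the elementary integral estimates --- is identical to that in the proof of Theorem 3.2. The only care needed is to verify that every constant depends only on the extended data \eqref{data2}.
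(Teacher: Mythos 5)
Your proposal is correct and follows essentially the same route as the paper: cancellation of the parts on $Z={\rm Ker}\,\mathcal{A}_{N,\varepsilon}={\rm Ker}\,\mathcal{A}_N^0$ via the projection $\mathcal{P}$, the contour representation over the $45^\circ$ contour crossing the real axis at $c_\flat/2$, the split of the contour with assertions $2^\circ$ and $1^\circ$ of Theorem 6.1 yielding the uniform bound $\widehat{\mathcal C}_{12}(|\zeta|^{-1/2}\varepsilon+\varepsilon^2)$, and the elementary treatment of the regimes $t\geqslant\varepsilon^2$ and $t\leqslant\varepsilon^2$ exactly as in Theorem 3.2. Your explicit remark that the resolvent difference is analytic at $\zeta=0$ because both resolvents act as $-\zeta^{-1}I$ on $Z$ is the (implicit) justification the paper uses for placing the contour around $[c_\flat,\infty)$ only, so no gap remains.
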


\begin{proof}
Since $I-\mathcal{P}$ is the orthogonal projection of $L_2(\mathcal{O};\mathbb{C}^n)$ onto $Z$, by (\ref{Ker=Z}), we have
\begin{equation*}
e^{-\mathcal{A}_{N,\varepsilon}t}(I-\mathcal{P})=e^{-\mathcal{A}_N^0 t}(I-\mathcal{P}) =I-\mathcal{P}.
\end{equation*}
Hence,
\begin{equation}
\label{exp-exp=(exp-exp)P}
e^{-\mathcal{A}_{N,\varepsilon}t}-e^{-\mathcal{A}_N^0t}=\left(e^{-\mathcal{A}_{N,\varepsilon}t}-e^{-\mathcal{A}_N^0t}\right)\mathcal{P}.
\end{equation}
Together with \eqref{<b_N,e<} this implies
\begin{equation}
\label{tozd exp Neumann}
e^{-\mathcal{A}_{N,\varepsilon}t}-e^{-\mathcal{A}_N^0t} =-\frac{1}{2\pi i}\int _{\widetilde{\gamma}} e^{-\zeta t}\left( (\mathcal{A}_{N,\varepsilon}-\zeta I)^{-1}-(\mathcal{A}_N^0-\zeta I)^{-1}\right)\,d\zeta .
\end{equation}
Here $\widetilde{\gamma}\subset \mathbb{C}$ is the positively oriented contour consisting of two rays:
\begin{equation*}
\begin{split}
\widetilde{\gamma}&=\lbrace \zeta\in \mathbb{C} : \textnormal{Im}\,\zeta\geqslant 0,\, \textnormal{Re}\,\zeta=\textnormal{Im}\,\zeta +c_\flat /2\rbrace \\
&\cup \lbrace \zeta\in \mathbb{C} : \textnormal{Im}\,\zeta <0,\, \textnormal{Re}\,\zeta =-\textnormal{Im}\,\zeta +c_\flat /2\rbrace .
\end{split}
\end{equation*}

Denote $\check{c}_\flat :=\max\lbrace 1;\sqrt{5} c_\flat /2\rbrace$.
For $\zeta \in \widetilde{\gamma}$ and $|\zeta| \leqslant \check{c}_\flat$ we apply estimate
(\ref{6.2a}). For $\zeta \in \widetilde{\gamma}$ and $|\zeta| > \check{c}_\flat$ we use (\ref{th6.1_1}).
Similarly to the proof of Theorem 3.2, this implies (\ref{Th exp_N L2->L2}).
\end{proof}

\subsection*{7.3. Approximation of the operator $e^{-\mathcal{A}_{N,\varepsilon}t}$ in the $(L_2 \to H^1)$-norm.}
We introduce the operator
\begin{equation}
\label{K_N(t;eps)}
\mathcal{K}_N(t;\eps)=R_\mathcal{O}[\Lambda ^\varepsilon]S_\varepsilon b(\mathbf{D})P_\mathcal{O}e^{-\mathcal{A}_N^0 t}\mathcal{P}.
\end{equation}
The continuity of the operator $\mathcal{K}_N(t;\eps)$ from $L_2(\mathcal{O};{\mathbb C}^n)$ to $H^1(\mathcal{O};{\mathbb C}^n)$
for $t>0$ is checked similarly to the continuity of the operator (\ref{K_D(t,e)}).

Using Theorem 6.2, we obtain the following result.

\begin{theorem}
Suppose that the assumptions of Theorem \textnormal{6.2} are satisfied.
Let $\mathcal{K}_N(t;\eps)$ be the operator \textnormal{(\ref{K_N(t;eps)})}.
Then for $0<\varepsilon \leqslant \varepsilon _1$ and $t>0$ we have
\begin{align}
\label{Th exn_N L2->H1}
\begin{split}
\Vert &e^{-\mathcal{A}_{N,\varepsilon}t}-e^{-\mathcal{A}_N^0 t}-\varepsilon \mathcal{K}_N(t;\eps)\Vert _{L_2(\mathcal{O})\rightarrow H^1(\mathcal{O})}\\
&\leqslant \mathcal{C}_{13}(\varepsilon ^{1/2}t^{-3/4}+\varepsilon t^{-1})e^{- c_\flat t/2},
\end{split}\\
\label{Th potoki_N}
\begin{split}
\Vert &g^\varepsilon b(\mathbf{D})e^{-\mathcal{A}_{N,\varepsilon}t}-\widetilde{g}^\varepsilon S_\varepsilon b(\mathbf{D})P_\mathcal{O}e^{-\mathcal{A}_N^0 t}\mathcal{P}\Vert _{L_2(\mathcal{O})\rightarrow L_2(\mathcal{O})}\\
&\leqslant \widetilde{\mathcal{C}}_{13}(\varepsilon ^{1/2}t^{-3/4}+\varepsilon t^{-1})e^{- c_\flat t/2}.
\end{split}
\end{align}
The constants $\mathcal{C}_{13}$ and $\widetilde{\mathcal{C}}_{13}$
depend only on the data \textnormal{\eqref{data2}}.
\end{theorem}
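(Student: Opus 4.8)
The plan is to run the argument in complete parallel to the proof of Theorem~3.3, with the contour $\gamma$ replaced by the contour $\widetilde{\gamma}$ introduced in the proof of Theorem~7.2, the resolvent bounds of Theorem~2.3 replaced by those of Theorem~6.2, and the projection $\mathcal{P}$ inserted on the right throughout. First I would note that, since $e^{-\mathcal{A}_{N,\varepsilon}t}(I-\mathcal{P})=e^{-\mathcal{A}_N^0 t}(I-\mathcal{P})=I-\mathcal{P}$ (as established in the proof of Theorem~7.2) and $b(\mathbf{D})$ annihilates $Z$, each of the three operators in \eqref{Th exn_N L2->H1} and each of the two in \eqref{Th potoki_N} is unchanged under right multiplication by $\mathcal{P}$; on the invariant subspace $\mathcal{H}(\mathcal{O})$ the spectra of $\mathcal{A}_{N,\varepsilon}$ and $\mathcal{A}_N^0$ lie in $[c_\flat,\infty)$ by \eqref{<b_N,e<}, and $\widetilde{\gamma}$, which crosses the real axis at $c_\flat/2>0$, encloses this ray while leaving the point $\zeta=0$ to its left. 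Hence one has the representation
\begin{equation*}
e^{-\mathcal{A}_{N,\varepsilon}t}-e^{-\mathcal{A}_N^0 t}-\varepsilon \mathcal{K}_N(t;\varepsilon)
=-\frac{1}{2\pi i}\int _{\widetilde{\gamma}} e^{-\zeta t}\bigl( (\mathcal{A}_{N,\varepsilon}-\zeta I)^{-1}-(\mathcal{A}_N^0-\zeta I)^{-1}-\varepsilon K_N(\varepsilon;\zeta )\mathcal{P}\bigr)\,d\zeta ,
\end{equation*}
since $-\tfrac{1}{2\pi i}\int_{\widetilde{\gamma}}e^{-\zeta t}(\mathcal{A}_N^0-\zeta I)^{-1}\mathcal{P}\,d\zeta=e^{-\mathcal{A}_N^0 t}\mathcal{P}$, so that the $\zeta$-independent operator $R_\mathcal{O}[\Lambda^\varepsilon]S_\varepsilon b(\mathbf{D})P_\mathcal{O}$ pulled out of the last term makes it integrate precisely to $\varepsilon\mathcal{K}_N(t;\varepsilon)$. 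An entirely analogous identity for the flux has the two operators of \eqref{4.11d} as its integrand.

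Next I would estimate the integrand on $\widetilde{\gamma}$ uniformly in $\mathrm{arg}\,\zeta$, following the passage \eqref{ots_zeta_levee_c2}--\eqref{Raznost res na gamma}. For $\zeta\in\widetilde{\gamma}$ with $|\zeta|\le\check{c}_\flat:=\max\{1;\sqrt{5}\,c_\flat/2\}$ the quantities $c(\vartheta)$ and $\rho_\flat(\zeta)$ are bounded by constants depending only on the data, so estimate \eqref{4.11c} of Theorem~6.2, after the routine coarsening, gives a bound of the form $\widehat{\mathcal{C}}_{13}(\varepsilon^{1/2}|\zeta|^{-1/4}+\varepsilon)$; for $|\zeta|>\check{c}_\flat$ one has $|\sin\phi|\ge 5^{-1/2}$, hence $c(\phi)\le\sqrt{5}$, and \eqref{4.11a} gives the same type of bound. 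Thus
\begin{equation*}
\bigl\Vert (\mathcal{A}_{N,\varepsilon}-\zeta I)^{-1}-(\mathcal{A}_N^0-\zeta I)^{-1}-\varepsilon K_N(\varepsilon;\zeta )\mathcal{P}\bigr\Vert _{L_2\to H^1}\le \widehat{\mathcal{C}}_{13}\bigl(\varepsilon^{1/2}|\zeta|^{-1/4}+\varepsilon\bigr),\quad \zeta\in\widetilde{\gamma},
\end{equation*}
and the analogous bound, built from \eqref{4.11b} and \eqref{4.11d}, for the flux integrand. Substituting into the contour integrals and using $\mathrm{Re}\,\zeta=|\mathrm{Im}\,\zeta|+c_\flat/2$ on $\widetilde{\gamma}$, the factor $e^{-c_\flat t/2}$ comes out and $\int_{\widetilde{\gamma}}|e^{-\zeta t}|\,|\zeta|^{-1/4}\,|d\zeta|$ and $\int_{\widetilde{\gamma}}|e^{-\zeta t}|\,|d\zeta|$ are controlled by $C\Gamma(3/4)t^{-3/4}$ and $Ct^{-1}$ respectively, exactly as in the proof of Theorem~3.3. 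This produces \eqref{Th exn_N L2->H1}; the flux identity with \eqref{4.11b} and \eqref{4.11d} produces \eqref{Th potoki_N}.

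Since the analytic core — the two-parameter resolvent approximations with explicit $\zeta$-dependence — is supplied by Theorem~6.2, the only genuinely new ingredient is the bookkeeping with the projection $\mathcal{P}$: verifying that right multiplication by $\mathcal{P}$ legitimizes the contour-integral representation on $\mathcal{H}(\mathcal{O})$, that the singularity of $(\mathcal{A}_N^0-\zeta I)^{-1}$ at $\zeta=0$ plays no role because $\widetilde{\gamma}$ separates $0$ from the spectrum of the part of $\mathcal{A}_N^0$ in $\mathcal{H}(\mathcal{O})$, and that the corrector term reproduces $\varepsilon\mathcal{K}_N(t;\varepsilon)$ with $\mathcal{P}$ in the correct place. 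I expect this bookkeeping, rather than any estimate, to be the only delicate point; everything else is a transcription of §3.2--3.3.
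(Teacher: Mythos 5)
Your proposal is correct and follows essentially the same route as the paper: the contour representation over $\widetilde{\gamma}$ with the projection $\mathcal{P}$ inserted in the corrector term, the identity reducing the integrand to the $\mathcal{P}$-free expressions of Theorem 6.2 (recorded in the paper as \eqref{7.7b} and \eqref{7.8a}), and the split of the contour at $\check{c}_\flat$ with the coarsened bounds, exactly as in the passage \eqref{ots_zeta_levee_c2}--\eqref{Raznost res na gamma}. The only slip is the claim that each individual exponential is unchanged under right multiplication by $\mathcal{P}$ (only their difference is, since $e^{-\mathcal{A}_{N,\varepsilon}t}(I-\mathcal{P})=e^{-\mathcal{A}_N^0t}(I-\mathcal{P})=I-\mathcal{P}$), but that is precisely the cancellation you invoke and use, so the argument stands.
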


\begin{proof}
Similarly to \eqref{tozd exp Neumann}, we have
\begin{equation}
\label{7.6a}
\begin{split}
&e^{-\mathcal{A}_{N,\varepsilon}t}-e^{-\mathcal{A}_N^0t} - \varepsilon \mathcal{K}_N(t;\eps)
\\
&=-\frac{1}{2\pi i}\int _{\widetilde{\gamma}} e^{-\zeta t}\left( (\mathcal{A}_{N,\varepsilon}-\zeta I)^{-1}-(\mathcal{A}_N^0-\zeta I)^{-1}
- \varepsilon {K}_N(\eps;\zeta) {\mathcal P}\right)\,d\zeta .
\end{split}
\end{equation}
Here ${K}_N(\eps;\zeta)$ is the operator (\ref{K_N(e)}). Note that, by (\ref{Ker=Z}),
\begin{equation}
\label{7.7b}
\begin{split}
&(\mathcal{A}_{N,\varepsilon}-\zeta I)^{-1}-(\mathcal{A}_N^0-\zeta I)^{-1}
- \varepsilon {K}_N(\eps;\zeta) {\mathcal P}
\\
&=
\left((\mathcal{A}_{N,\varepsilon}-\zeta I)^{-1}-(\mathcal{A}_N^0-\zeta I)^{-1}
- \varepsilon {K}_N(\eps;\zeta)\right) {\mathcal P}.
\end{split}
\end{equation}

For $\zeta \in \widetilde{\gamma}$ and $\vert \zeta \vert \leqslant \check{c}_\flat $
we apply estimate \eqref{4.11c}. For $\zeta \in \widetilde{\gamma}$ and $\vert \zeta \vert >  \check{c}_\flat $
we use \eqref{4.11a} and (\ref{7.7b}). Similarly to the proof of Theorem~3.3 this implies (\ref{Th exn_N L2->H1}).

Let us discuss the proof of estimate \eqref{Th potoki_N}. Since $I-\mathcal{P}$ is the orthogonal projection of
$L_2(\mathcal{O};\mathbb{C}^n)$ onto $Z=\textnormal{Ker}\,b(\mathbf{D})=\textnormal{Ker}\,\mathcal{A}_{N,\varepsilon}$, then
\begin{equation}
\label{potok_N-potot_N*P}
g^\varepsilon b(\mathbf{D})e^{-\mathcal{A}_{N,\varepsilon}t}=g^\varepsilon b(\mathbf{D})e^{-\mathcal{A}_{N,\varepsilon}t}\mathcal{P}.
\end{equation}
Hence,
\begin{equation}
\label{7.8_0}
\begin{split}
&g^\varepsilon b(\mathbf{D})e^{-\mathcal{A}_{N,\varepsilon}t}-\widetilde{g}^\varepsilon S_\varepsilon b(\mathbf{D})P_\mathcal{O} e^{-\mathcal{A}_N^0 t}\mathcal{P}\\
&=-\frac{1}{2\pi i}\int _{\widetilde{\gamma}}e^{-\zeta t}\left(g^\varepsilon b(\mathbf{D})(\mathcal{A}_{N,\varepsilon}-\zeta I)^{-1}-\widetilde{g}^\varepsilon S_\varepsilon b(\mathbf{D})P_\mathcal{O}(\mathcal{A}_N^0-\zeta I)^{-1}\right) \mathcal{P}\,d\zeta .
\end{split}
\end{equation}
Besides, we have
\begin{equation}
\label{7.8a}
\begin{split}
&g^\varepsilon b(\mathbf{D})(\mathcal{A}_{N,\varepsilon}-\zeta I)^{-1}-\widetilde{g}^\varepsilon S_\varepsilon b(\mathbf{D})P_\mathcal{O}(\mathcal{A}_N^0-\zeta I)^{-1}\mathcal{P}
\\
&=
\left(g^\varepsilon b(\mathbf{D})(\mathcal{A}_{N,\varepsilon}-\zeta I)^{-1}-\widetilde{g}^\varepsilon S_\varepsilon b(\mathbf{D})P_\mathcal{O}(\mathcal{A}_N^0-\zeta I)^{-1}\right)\mathcal{P}.
\end{split}
\end{equation}
For $\zeta \in \widetilde{\gamma}$ and $\vert \zeta \vert \leqslant \check{c}_\flat $ we apply \eqref{4.11d} and \eqref{7.8a}.
For $\zeta \in \widetilde{\gamma}$ and $\vert \zeta \vert > \check{c}_\flat $ we rely on \eqref{4.11b}.
This leads to  \eqref{Th potoki_N}.
\end{proof}

\begin{remark}\textnormal{Taking Remark 6.3 into account, it is possible to prove estimates of the form
\eqref{Th exp_N L2->L2}, \eqref{Th exn_N L2->H1}, and \eqref{Th potoki_N} with $e^{- c_\flat t/2}$ replaced by
$e^{- (\mu_2^0 - \kappa) t}$, where $\kappa >0$ is arbitrarily small number.
Indeed, for sufficiently small $\varepsilon$ one can transform the contour of integration so that
it will intersect the real axis at the point $\mu _2^0-\kappa$.
Herewith, all constants in estimates of the form \eqref{Th exp_N L2->L2}, \eqref{Th exn_N L2->H1}, and \eqref{Th potoki_N}
will depend on $\kappa$.
}
\end{remark}

\subsection*{7.4. Estimates for small $t$.}
Note that for $0<t<\varepsilon ^2$ instead of Theorem 7.3 it is better
to use the following simple statement (which is valid, however, for all $t>0$).

\begin{proposition}
Suppose that the assumptions of Theorem \textnormal{6.1} are satisfied. Then for $t>0$ and $\varepsilon > 0$ we have
\begin{align}
\label{proposition 5.4-1}
&\Vert e^{-\mathcal{A}_{N,\varepsilon}t}-e^{-\mathcal{A}_N^0 t}\Vert _{L_2(\mathcal{O})\rightarrow H^1(\mathcal{O})}\leqslant \mathcal{C}_{14}t^{-1/2}e^{-c_\flat t/2},\\
\label{proposition 5.4-2}
&\Vert g^\varepsilon b(\mathbf{D})e^{-\mathcal{A}_{N,\varepsilon}t}\Vert _{L_2(\mathcal{O})\rightarrow L_2(\mathcal{O})}\leqslant \widetilde{\mathcal{C}}_{14}t^{-1/2}e^{-c_\flat t/2},\\
\label{proposition 5.4-3}
&\Vert g^0 b(\mathbf{D})e^{-\mathcal{A}_N^0 t}\Vert _{L_2(\mathcal{O})\rightarrow L_2(\mathcal{O})}\leqslant \widetilde{\mathcal{C}}_{14}t^{-1/2}e^{- c_\flat t/2} .
\end{align}
Here $\mathcal{C}_{14}=2\widetilde{\mathfrak C}_1^{\,- 1/2}\Vert g^{-1}\Vert ^{1/2}_{L_\infty}$ and $\widetilde{\mathcal{C}}_{14}=\Vert g\Vert ^{1/2}_{L_\infty}$.
\end{proposition}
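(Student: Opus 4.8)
The plan is to obtain Proposition~7.6 as the Neumann counterpart of Proposition~3.5, deducing all three bounds directly from Lemma~7.1 and from the fact that $Z$ is the common kernel of $\mathcal{A}_{N,\varepsilon}$ and $\mathcal{A}_N^0$ (see \eqref{Ker=Z}); no contour integration is needed here, since these estimates hold for all $t>0$ and serve merely as the natural substitute for Theorem~7.3 when $t$ is small. The one structural point is that each exponential must be paired with the orthogonal projection $\mathcal{P}$, so that the relevant spectral supremum runs over $[c_\flat,\infty)$ rather than $[0,\infty)$; this is precisely what produces the decaying factor $e^{-c_\flat t/2}$.

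For the first estimate \eqref{proposition 5.4-1}, I would start from the identity $e^{-\mathcal{A}_{N,\varepsilon}t}-e^{-\mathcal{A}_N^0t}=\bigl(e^{-\mathcal{A}_{N,\varepsilon}t}-e^{-\mathcal{A}_N^0t}\bigr)\mathcal{P}$ already recorded in the proof of Theorem~7.2, and then apply the triangle inequality together with \eqref{7.2} and \eqref{7.4}. This gives the bound $2c_\flat^{-1/2}t^{-1/2}e^{-c_\flat t/2}$; since $c_\flat=\Vert g^{-1}\Vert_{L_\infty}^{-1}\widetilde{\mathfrak C}_1$ we have $2c_\flat^{-1/2}=2\widetilde{\mathfrak C}_1^{\,-1/2}\Vert g^{-1}\Vert_{L_\infty}^{1/2}=\mathcal{C}_{14}$, which is exactly the claimed constant.

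For \eqref{proposition 5.4-2}, the key pointwise inequality is $|g^\varepsilon(\mathbf{x})\boldsymbol{\eta}|^2\leqslant\Vert g\Vert_{L_\infty}\langle g^\varepsilon(\mathbf{x})\boldsymbol{\eta},\boldsymbol{\eta}\rangle$, valid for any Hermitian positive definite $g^\varepsilon$; integrating over $\mathcal{O}$ it yields $\Vert g^\varepsilon b(\mathbf{D})\mathbf{u}\Vert_{L_2(\mathcal{O})}^2\leqslant\Vert g\Vert_{L_\infty}\,a_{N,\varepsilon}[\mathbf{u},\mathbf{u}]=\Vert g\Vert_{L_\infty}\Vert\mathcal{A}_{N,\varepsilon}^{1/2}\mathbf{u}\Vert_{L_2(\mathcal{O})}^2$. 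Taking $\mathbf{u}=e^{-\mathcal{A}_{N,\varepsilon}t}\mathbf{f}$, using that $b(\mathbf{D})$ annihilates $Z$ so that $g^\varepsilon b(\mathbf{D})e^{-\mathcal{A}_{N,\varepsilon}t}=g^\varepsilon b(\mathbf{D})e^{-\mathcal{A}_{N,\varepsilon}t}\mathcal{P}$, and then invoking \eqref{7.7}, I obtain \eqref{proposition 5.4-2} with $\widetilde{\mathcal{C}}_{14}=\Vert g\Vert_{L_\infty}^{1/2}$. Estimate \eqref{proposition 5.4-3} is identical after replacing $g^\varepsilon$ by the constant matrix $g^0$, $a_{N,\varepsilon}$ by $a_N^0$, using $\vert g^0\vert\leqslant\Vert g\Vert_{L_\infty}$, and invoking \eqref{7.8} in place of \eqref{7.7}.

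I do not expect a genuine obstacle: the argument is essentially a verbatim transcription of the relevant steps of Lemma~3.1 and Proposition~3.5 into the Neumann setting. The only thing requiring a moment's care is the bookkeeping of $\mathcal{P}$ — namely verifying $e^{-\mathcal{A}_{N,\varepsilon}t}(I-\mathcal{P})=e^{-\mathcal{A}_N^0t}(I-\mathcal{P})=I-\mathcal{P}$ and $b(\mathbf{D})(I-\mathcal{P})=0$ — so that every operator occurring may be legitimately composed with $\mathcal{P}$ before the spectral functional calculus is applied.
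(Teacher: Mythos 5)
Your argument is correct and coincides with the paper's own proof: \eqref{proposition 5.4-1} from \eqref{exp-exp=(exp-exp)P}, \eqref{7.2}, \eqref{7.4} and the expression for $c_\flat$; \eqref{proposition 5.4-2} from \eqref{potok_N-potot_N*P}, the bound $\Vert g^\varepsilon b(\mathbf{D})\mathbf{u}\Vert_{L_2}^2\leqslant\Vert g\Vert_{L_\infty}a_{N,\varepsilon}[\mathbf{u},\mathbf{u}]$ and \eqref{7.7}; and \eqref{proposition 5.4-3} from \eqref{|g^0|, |g^0^_1|<=} and \eqref{7.8}. The constants also come out exactly as stated.
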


\begin{proof}
Estimate \eqref{proposition 5.4-1} follows from \eqref{7.2}, \eqref{7.4},
\eqref{exp-exp=(exp-exp)P}, and expression for the constant $c_\flat$.

By \eqref{potok_N-potot_N*P},
\begin{equation*}
\Vert g^\varepsilon b(\mathbf{D})e^{-\mathcal{A}_{N,\varepsilon}t}\Vert _{L_2(\mathcal{O})\rightarrow L_2(\mathcal{O})} \leqslant \Vert g\Vert ^{1/2}_{L_\infty}\Vert \mathcal{A}_{N,\varepsilon}^{1/2}e^{-\mathcal{A}_{N,\varepsilon}t}\mathcal{P}\Vert _{L_2(\mathcal{O})\rightarrow L_2(\mathcal{O})}.
\end{equation*}
Together with \eqref{7.7} this implies  (\ref{proposition 5.4-2}).
In a similar way, we check estimate (\ref{proposition 5.4-3}), taking \eqref{|g^0|, |g^0^_1|<=} and  (\ref{7.8}) into account.
\end{proof}

\subsection*{7.5. The case where $\Lambda \in L_\infty$.}
Suppose that the matrix-valued function $\Lambda (\mathbf{x})$ satisfies Condition 2.5.
Then the operator
\begin{equation}
\label{K_N^0(t;e)}
\mathcal{K}_N^0(t;\eps)=[\Lambda ^\varepsilon ] b(\mathbf{D})e^{-\mathcal{A}_N^0 t}
\end{equation}
is continuous from $L_2(\mathcal{O};\mathbb{C}^n)$ to $H^1(\mathcal{O};\mathbb{C}^n)$ for $t>0$.

\begin{theorem}
Suppose that the assumptions of Theorem \textnormal{6.1} are satisfied.
Suppose that Condition \textnormal{2.5} is satisfied.
Let $\mathcal{K}_N^0(t;\eps)$ be the operator \textnormal{(\ref{K_N^0(t;e)})}.
Let $\widetilde{g}$ be the matrix-valued function \eqref{tilde g}.
Then for $0<\varepsilon \leqslant \varepsilon _1$ and $t>0$ we have
\begin{align*}
&\Vert e^{-\mathcal{A}_{N,\varepsilon}t}-e^{-\mathcal{A}_N^0 t}-\varepsilon \mathcal{K}_N^0 (t;\eps)\Vert _{L_2(\mathcal{O})\rightarrow H^1(\mathcal{O})}\leqslant \mathcal{C}_{15}(\varepsilon ^{1/2}t^{-3/4}+\varepsilon t^{-1})e^{-c_\flat t/2},\\
&\Vert g^\varepsilon b(\mathbf{D})e^{-\mathcal{A}_{N,\varepsilon}t}-\widetilde{g}^\varepsilon b(\mathbf{D})e^{-\mathcal{A}_N^0 t}\Vert _{L_2(\mathcal{O})\rightarrow L_2(\mathcal{O})}\leqslant \widetilde{\mathcal{C}}_{15}(\varepsilon ^{1/2}t^{-3/4}+\varepsilon t^{-1})e^{-c_\flat t/2}.
\end{align*}
The constants $\mathcal{C}_{15}$ and $\widetilde{\mathcal{C}}_{15}$
depend only on the data \textnormal{\eqref{data2}} and $\Vert \Lambda \Vert _{L_\infty}$.
\end{theorem}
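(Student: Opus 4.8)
The plan is to follow the proof of Theorem~7.3 essentially verbatim, using Theorem~6.4 in place of Theorem~6.2. First I would record that the correctors in both statements coincide with their own $\mathcal{P}$-compressions: since $b(\mathbf{D})$ annihilates $Z=\mathrm{Ker}\,\mathcal{A}_N^0$ while $e^{-\mathcal{A}_N^0 t}$ and $(\mathcal{A}_N^0-\zeta I)^{-1}$ act as scalars on $Z$, we have $\mathcal{K}_N^0(t;\varepsilon)=\mathcal{K}_N^0(t;\varepsilon)\mathcal{P}$ and $K_N^0(\varepsilon;\zeta)=K_N^0(\varepsilon;\zeta)\mathcal{P}$, and the resolvent difference $(\mathcal{A}_{N,\varepsilon}-\zeta I)^{-1}-(\mathcal{A}_N^0-\zeta I)^{-1}$ also vanishes on $Z$. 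Consequently, arguing as for \eqref{tozd exp Neumann} and \eqref{7.6a} with the contour $\widetilde{\gamma}$ crossing the real axis at $c_\flat/2$, one gets
\begin{equation*}
\begin{split}
&e^{-\mathcal{A}_{N,\varepsilon}t}-e^{-\mathcal{A}_N^0t}-\varepsilon\mathcal{K}_N^0(t;\varepsilon)
\\
&=-\frac{1}{2\pi i}\int_{\widetilde{\gamma}}e^{-\zeta t}\left((\mathcal{A}_{N,\varepsilon}-\zeta I)^{-1}-(\mathcal{A}_N^0-\zeta I)^{-1}-\varepsilon K_N^0(\varepsilon;\zeta)\right)d\zeta,
\end{split}
\end{equation*}
together with the analogous identity for $g^\varepsilon b(\mathbf{D})e^{-\mathcal{A}_{N,\varepsilon}t}-\widetilde{g}^\varepsilon b(\mathbf{D})e^{-\mathcal{A}_N^0 t}$, whose integrand is $g^\varepsilon b(\mathbf{D})(\mathcal{A}_{N,\varepsilon}-\zeta I)^{-1}-\widetilde{g}^\varepsilon b(\mathbf{D})(\mathcal{A}_N^0-\zeta I)^{-1}$ (here one invokes \eqref{potok_N-potot_N*P}).

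Next I would bound the integrands uniformly on $\widetilde{\gamma}$, repeating the split of \eqref{ots_zeta_levee_c2}--\eqref{Raznost res na gamma}. Put $\check{c}_\flat=\max\{1;\sqrt5\,c_\flat/2\}$. For $\zeta\in\widetilde{\gamma}$ with $|\zeta|\le\check{c}_\flat$ one has $\vartheta\in(\pi/4,7\pi/4)$ and $\rho_\flat(\zeta)\le 2\max\{1;8c_\flat^{-2}\}$, so part~$2^\circ$ of Theorem~6.4 bounds the $(L_2\to H^1)$-norm of the integrand by a constant times $\varepsilon^{1/2}$, which we coarsen to a constant times $\varepsilon^{1/2}|\zeta|^{-1/4}$ using $\varepsilon\le\varepsilon^{1/2}|\zeta|^{-1/4}\check{c}_\flat^{1/4}$ (valid since $\varepsilon^2|\zeta|\le\check{c}_\flat$); for $\zeta\in\widetilde{\gamma}$ with $|\zeta|>\check{c}_\flat$ one has $c(\phi)\le\sqrt5$, and part~$1^\circ$ of Theorem~6.4 bounds it by a constant times $\varepsilon^{1/2}|\zeta|^{-1/4}+\varepsilon$. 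Hence
\begin{equation*}
\left\|(\mathcal{A}_{N,\varepsilon}-\zeta I)^{-1}-(\mathcal{A}_N^0-\zeta I)^{-1}-\varepsilon K_N^0(\varepsilon;\zeta)\right\|_{L_2\to H^1}\le\widehat{\mathcal{C}}_{15}\bigl(\varepsilon^{1/2}|\zeta|^{-1/4}+\varepsilon\bigr),\quad\zeta\in\widetilde{\gamma},
\end{equation*}
with $\widehat{\mathcal{C}}_{15}$ depending only on \eqref{data2} and $\|\Lambda\|_{L_\infty}$, and similarly for the flux combination. Plugging these into the contour representations and using $\int_{\widetilde{\gamma}}|e^{-\zeta t}|\,|\zeta|^{-1/4}\,|d\zeta|\le C\,t^{-3/4}\Gamma(3/4)e^{-c_\flat t/2}$ together with $\int_{\widetilde{\gamma}}|e^{-\zeta t}|\,|d\zeta|\le C\,t^{-1}e^{-c_\flat t/2}$, I obtain the two asserted estimates, with $\mathcal{C}_{15}$ and $\widetilde{\mathcal{C}}_{15}$ depending only on \eqref{data2} and $\|\Lambda\|_{L_\infty}$.

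There is no real obstacle here: every analytic ingredient, namely the two-parameter resolvent approximations valid under Condition~2.5, is already furnished by Theorem~6.4, and the argument above is word-for-word the proof of Theorem~7.3 with Theorem~6.2 replaced by Theorem~6.4. The only point that deserves a line of care is checking that $K_N^0(\varepsilon;\zeta)$ and $\mathcal{K}_N^0(t;\varepsilon)$ automatically annihilate $Z$, so that the integrand in the contour formula is precisely the quantity estimated in Theorem~6.4($2^\circ$) (which, unlike Theorem~6.2($2^\circ$), is stated without the projection $\mathcal{P}$); the remaining work is the routine bookkeeping of constants across the split of $\widetilde{\gamma}$ and the Gamma-function integrals, which I would omit, simply referring to the proof of Theorem~7.3.
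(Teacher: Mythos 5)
Your proposal is correct and is exactly the argument the paper intends: its proof of this theorem is the one-line remark that one repeats the proof of Theorem~7.3 with Theorem~6.4 in place of Theorem~6.2, and your write-up supplies precisely that, including the one genuinely new point (that $\mathcal{K}_N^0$, $K_N^0$ and the resolvent difference all annihilate $Z$, so the contour formula through $c_\flat/2$ applies and the $\mathcal{P}$-free estimates of Theorem~6.4($2^\circ$) can be used directly). The remaining contour splitting and $\Gamma$-function bookkeeping match the paper's Theorems~3.2, 3.3 and 7.3.
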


\begin{proof}
The proof is based on application of Theorem 6.4 and repeats the proof of Theorem~7.3 with some simplifications.
\end{proof}

\subsection*{7.6. The case of the smooth boundary.}
It is also possible to remove the smoothing operator $S_\eps$ in the corrector under the assumption of
additional smoothness of the boundary. In this subsection we consider the case where $d \geqslant 3$,
because for $d \leqslant 2$ Theorem 7.6 is applicable (see Proposition~2.7($1^\circ$)).

\begin{lemma}
Suppose that $k \geqslant 2$ is integer. Let ${\mathcal O} \subset {\mathbb R}^d$ be a bounded domain with the boundary
$\partial {\mathcal O}$ of class $C^{k-1,1}$.
Then for $t>0$ the operator $e^{-\mathcal{A}^0_{N}t}{\mathcal P}$ is continuous from $L_2({\mathcal O};{\mathbb C}^n)$
to $H^s({\mathcal O};{\mathbb C}^n)$, $0\leqslant s \leqslant k$, and the following estimate is valid:
\begin{equation}
\label{7.*1}
\|e^{-\mathcal{A}^0_{N}t} {\mathcal P} \|_{L_2({\mathcal O}) \to H^s({\mathcal O})}
\leqslant \widehat{\mathscr{C}}_s  t^{-s/2} e^{-c_{\flat}t/2} ,\quad t>0.
\end{equation}
The constant $\widehat{\mathscr{C}}_s$ depends only on
$s$,  $\alpha _0$, $\alpha _1$, $\Vert g\Vert _{L_\infty}$, $\Vert g^{-1}\Vert _{L_\infty}$,
the constants ${\mathfrak C}_1$ and ${\mathfrak C}_2$ from the inequality \textnormal{(\ref{Garding})},
the constant $\widetilde{\mathfrak C}_1$ from \textnormal{(\ref{b(D)u>= in H_perp})}, and the domain $\mathcal{O}$.
\end{lemma}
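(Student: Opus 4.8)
The plan is to follow the proof of Lemma~3.7, inserting the orthogonal projection ${\mathcal P}$ throughout and replacing the positive definite operator $\mathcal{A}_D^0$ by the part ${\mathcal B}$ of $\mathcal{A}_N^0$ acting in its invariant subspace $\mathcal{H}(\mathcal{O})$; by \eqref{<b_N,e<} the operator ${\mathcal B}$ is positive definite with lower bound $c_\flat$. First I would reduce to integer values of $s$: for non-integer $s\in[0,k]$ estimate~\eqref{7.*1} follows by interpolation between the integer endpoints, while for $s=0,1,2$ it is already contained in Lemma~7.1 (see \eqref{7.3}, \eqref{7.4}, \eqref{7.5}). Hence it remains to establish \eqref{7.*1} for integer $s$ with $2\leqslant s\leqslant k$.

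The central step is the auxiliary bound $\| {\mathcal B}^{-s/2}{\mathcal P}\|_{L_2(\mathcal{O})\to H^s(\mathcal{O})}\leqslant \widetilde{\mathscr{C}}_s$, with $\widetilde{\mathscr{C}}_s$ depending only on the parameters listed in the statement. I would obtain it from two ingredients. First, \eqref{<b_N,e<} gives that ${\mathcal B}^{-1/2}{\mathcal P}$ is continuous from $L_2(\mathcal{O};\mathbb{C}^n)$ to $H^1_\perp(\mathcal{O};\mathbb{C}^n)\subset H^1(\mathcal{O};\mathbb{C}^n)$: for $\mathbf{u}={\mathcal B}^{-1/2}{\mathcal P}\mathbf{g}$ one has $a_N^0[\mathbf{u},\mathbf{u}]=\| {\mathcal P}\mathbf{g}\|_{L_2}^2$, whence $\| \mathbf{u}\|_{H^1}^2\leqslant c_\flat^{-1}\| \mathbf{g}\|_{L_2}^2$. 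Second, elliptic regularity for the strongly elliptic Neumann problem (see \cite[Chapter~4]{McL}) shows that, for integer $\sigma\geqslant 0$ with $\partial\mathcal{O}\in C^{\sigma+1,1}$, the operator $(\mathcal{A}_N^0+I)^{-1}$ maps $H^\sigma(\mathcal{O};\mathbb{C}^n)$ continuously into $H^{\sigma+2}(\mathcal{O};\mathbb{C}^n)$, the case $\sigma=0$ being \eqref{5.6a}. Since $(\mathcal{A}_N^0+I)^{-1}$ leaves $\mathcal{H}(\mathcal{O})$ invariant and coincides there with $({\mathcal B}+I)^{-1}$, and since $\| {\mathcal B}^{-1}\|\leqslant c_\flat^{-1}$, the identity ${\mathcal B}^{-1}=({\mathcal B}+I)^{-1}(I+{\mathcal B}^{-1})$ on $\mathcal{H}(\mathcal{O})$ together with an induction on $\sigma$ yields that ${\mathcal B}^{-1}$ maps $H^\sigma(\mathcal{O};\mathbb{C}^n)\cap\mathcal{H}(\mathcal{O})$ continuously into $H^{\sigma+2}(\mathcal{O};\mathbb{C}^n)$ for every integer $\sigma\geqslant 0$ with $\partial\mathcal{O}\in C^{\sigma+1,1}$. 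Composing these maps (starting, when $s$ is odd, from ${\mathcal B}^{-1/2}{\mathcal P}$ and, when $s$ is even, from ${\mathcal P}$) gives the asserted continuity of ${\mathcal B}^{-s/2}{\mathcal P}$; the strongest smoothness required along the chain is $\partial\mathcal{O}\in C^{s-1,1}$, which holds because $s\leqslant k$.

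Finally, on $\mathcal{H}(\mathcal{O})$ I would write $e^{-\mathcal{A}_N^0 t}{\mathcal P}={\mathcal B}^{-s/2}{\mathcal P}\cdot {\mathcal B}^{s/2}e^{-\mathcal{A}_N^0 t}{\mathcal P}$ and estimate the second factor by the spectral bound
\begin{equation*}
\bigl\| {\mathcal B}^{s/2}e^{-\mathcal{A}_N^0 t}{\mathcal P}\bigr\|_{L_2(\mathcal{O})\to L_2(\mathcal{O})}\leqslant \sup_{\mu\geqslant c_\flat}\mu^{s/2}e^{-\mu t}\leqslant (s/e)^{s/2}\,t^{-s/2}e^{-c_\flat t/2},
\end{equation*}
exactly as in \eqref{3.32a}; combining this with the bound for ${\mathcal B}^{-s/2}{\mathcal P}$ proves \eqref{7.*1} with $\widehat{\mathscr{C}}_s=\widetilde{\mathscr{C}}_s\,(s/e)^{s/2}$. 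The only genuinely new point compared with Lemma~3.7 is the propagation of Sobolev regularity through ${\mathcal B}^{-1}$ even though $\mathcal{A}_N^0$ is not boundedly invertible on $L_2(\mathcal{O};\mathbb{C}^n)$; I expect this to be the step requiring the most care, the rest being a transcription of the argument of Lemma~3.7 with ${\mathcal P}$ inserted.
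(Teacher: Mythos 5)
Your proposal is correct and follows essentially the same route as the paper's proof: reduction to integer $s$ by interpolation, the cases $s=0,1,2$ taken from Lemma~7.1, the key bound $\| (\mathcal{A}^0_{N})^{-s/2} {\mathcal P} \|_{L_2({\mathcal O}) \to H^s({\mathcal O})} \leqslant \widetilde{\mathscr{C}}_s$ obtained from elliptic regularity for the Neumann problem, and the spectral estimate as in \eqref{3.32a} giving $\widehat{\mathscr{C}}_s=\widetilde{\mathscr{C}}_s (s/e)^{s/2}$. The only difference is that you spell out, via the identity $\mathcal{B}^{-1}=(\mathcal{B}+I)^{-1}(I+\mathcal{B}^{-1})$ on the reducing subspace $\mathcal{H}(\mathcal{O})$, the bootstrap through $(\mathcal{A}_N^0+I)^{-1}$ that handles the kernel of $\mathcal{A}_N^0$ — a detail the paper attributes directly to \cite{McL}.
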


\begin{proof}
The proof is similar to that of Lemma 3.7.
It suffices to check estimate \eqref{7.*1} for integer $s\in [0,k]$.
For $s=0,1,2$ estimate \eqref{7.*1} has been already proved (see Lemma 7.1).

So, let $s$ be integer and let $2 \leqslant s \leqslant k$.
By  [McL, Chapter 4], we conclude that the operator $({\mathcal A}_N^0)^{-s/2} {\mathcal P}$
is continuous from $L_2({\mathcal O};{\mathbb C}^n)$ to
$H^{s}({\mathcal O};{\mathbb C}^n)$. Herewith,
\begin{equation}
\label{7.*2}
\| (\mathcal{A}^0_{N})^{-s/2} {\mathcal P} \|_{L_2({\mathcal O}) \to H^s({\mathcal O})} \leqslant \widetilde{\mathscr{C}}_s,
\end{equation}
where the constant $\widetilde{\mathscr{C}}_s$ depends only on
$s$,  $\alpha _0$, $\alpha _1$, $\Vert g\Vert _{L_\infty}$, $\Vert g^{-1}\Vert _{L_\infty}$,
the constants ${\mathfrak C}_1$ and ${\mathfrak C}_2$ from the inequality \textnormal{(\ref{Garding})},
the constant $\widetilde{\mathfrak C}_1$ from \textnormal{(\ref{b(D)u>= in H_perp})}, and the domain $\mathcal{O}$.
Similarly to \eqref{3.32a}, this implies
\eqref{7.*1} with $\widehat{\mathscr{C}}_s=\widetilde{\mathscr{C}}_s  (s/e)^{s/2}$.
\end{proof}

Using the properties of the matrix-valued function $\Lambda({\mathbf x})$ and the operator $S_\eps$,
it is possible to deduce the estimate for the difference of the correctors
\eqref{K_N(t;eps)} and \eqref{K_N^0(t;e)} from Lemma 7.7 under the assumption of additional smoothness
of the boundary.

\begin{lemma}
Let $d \geqslant 3$.
Suppose that ${\mathcal O} \subset {\mathbb R}^d$ is a bounded domain with the boundary
$\partial {\mathcal O}$ of class $C^{d/2,1}$ if $d$ is even and of class $C^{(d+1)/2,1}$ if $d$ is odd.
Let $\mathcal{K}_N(t;\varepsilon)$ be the operator \eqref{K_N(t;eps)}, and let
$\mathcal{K}_N^0(t;\varepsilon)$ be the operator \eqref{K_N^0(t;e)}.
Then
\begin{equation}
\label{7.*3}
\| \mathcal{K}_N(t;\varepsilon) - \mathcal{K}_N^0(t;\varepsilon)
\|_{L_2({\mathcal O}) \to H^1({\mathcal O})}
\leqslant \check{\mathscr C}_d   (t^{-1} +  t^{-d/4 -1/2}) e^{-c_{\flat}t/2},\quad t>0.
\end{equation}
The constant $\check{\mathscr C}_d$
depends only on $d$,  $m$,
$\alpha _0$, $\alpha _1$, $\Vert g\Vert _{L_\infty}$, $\Vert g^{-1}\Vert _{L_\infty}$,
the constants ${\mathfrak C}_1$ and ${\mathfrak C}_2$ from the inequality \textnormal{(\ref{Garding})},
the estimate $\widetilde{\mathfrak C}_1$ from \textnormal{(\ref{b(D)u>= in H_perp})}, and the domain $\mathcal{O}$.
\end{lemma}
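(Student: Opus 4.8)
The plan is to mimic the proof of the corresponding Dirichlet statement (Lemma~3.8), replacing $\mathcal{A}^0_D$ by $\mathcal{A}^0_N {\mathcal P}$ throughout and using Lemma~7.7 in place of Lemma~3.7. Write the difference of correctors as
\begin{equation*}
\mathcal{K}_N(t;\varepsilon) - \mathcal{K}_N^0(t;\varepsilon)
= \bigl( R_{\mathcal O}[\Lambda^\varepsilon] S_\varepsilon b(\D) P_{\mathcal O} - [\Lambda^\varepsilon] b(\D)\bigr) e^{-\mathcal{A}^0_N t}{\mathcal P},
\end{equation*}
and note first that on functions supported in ${\mathcal O}$ one has $b(\D) P_{\mathcal O} e^{-\mathcal{A}^0_N t}{\mathcal P} = (b(\D) e^{-\mathcal{A}^0_N t}{\mathcal P})$ in the interior, so the operator $\mathcal{K}_N - \mathcal{K}^0_N$ is controlled by $[\Lambda^\varepsilon](S_\varepsilon - I) b(\D) P_{\mathcal O} e^{-\mathcal{A}^0_N t}{\mathcal P}$ plus the contribution of $S_\varepsilon$ acting on the part of $P_{\mathcal O} e^{-\mathcal{A}^0_N t}{\mathcal P}$ near $\partial{\mathcal O}$. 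The first would normally be handled by combining Lemma~1.9 (the Steklov estimate $\|(S_\varepsilon - I)\mathbf v\|_{L_2} \leq \varepsilon r_1 \|\D\mathbf v\|_{L_2}$) with Lemma~1.7 to absorb the multiplication by $\Lambda^\varepsilon$, but since we need an $H^1$-estimate of the product, one must differentiate: using Corollary~1.6 and the identity $\D^\alpha S_\varepsilon = S_\varepsilon \D^\alpha$, one reduces to estimating $\|\Lambda^\varepsilon (S_\varepsilon - I) \D b(\D) P_{\mathcal O} e^{-\mathcal{A}^0_N t}{\mathcal P}\|_{L_2}$ and the $\varepsilon^{-1}$-weighted $L_2$-norm of $(S_\varepsilon - I) b(\D) P_{\mathcal O} e^{-\mathcal{A}^0_N t}{\mathcal P}$. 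Each factor $(S_\varepsilon - I)$ contributes a factor $\varepsilon$ times one extra derivative, so altogether one needs $b(\D) P_{\mathcal O} e^{-\mathcal{A}^0_N t}{\mathcal P}$ to lie in $H^{s}$ with $s$ large enough that the $\varepsilon$-powers balance the worst term; this is where the restriction $\partial{\mathcal O} \in C^{d/2,1}$ (resp. $C^{(d+1)/2,1}$) enters, exactly so that Lemma~7.7 gives boundedness of $e^{-\mathcal{A}^0_N t}{\mathcal P}$ into $H^{d/2+1}$ with the quantitative bound $t^{-(d/2+1)/2} e^{-c_\flat t/2}$.

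First I would reduce to the following two estimates, valid for $\mathbf v \in H^k({\mathbb R}^d;{\mathbb C}^m)$ with $k$ sufficiently large (here $k = \lceil d/2\rceil$):
\begin{equation*}
\| R_{\mathcal O}[\Lambda^\varepsilon] (S_\varepsilon - I) \mathbf v\|_{H^1({\mathcal O})}
\leq C\bigl( \|\mathbf v\|_{L_2} + \varepsilon \|\D\mathbf v\|_{H^1} + \dots + \varepsilon^{k-1}\|\D\mathbf v\|_{H^{k-1}}\bigr),
\end{equation*}
and the analogous bound for the extra boundary-layer term coming from $S_\varepsilon$ mapping functions supported away from $\partial{\mathcal O}$ into ${\mathcal O}$ — here one uses that $S_\varepsilon$ only ``reaches'' a distance $\varepsilon r_1$, so after applying the extension operator $P_{\mathcal O}$ and Lemma~1.7 the contribution is again of the above type. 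Then apply this with $\mathbf v = b(\D) P_{\mathcal O} e^{-\mathcal{A}^0_N t}{\mathcal P}$: by \eqref{PO} and Lemma~7.7, $\|\D \mathbf v\|_{H^{j}} \leq C_j \|e^{-\mathcal{A}^0_N t}{\mathcal P}\|_{L_2 \to H^{j+2}} \leq C_j' t^{-(j+2)/2} e^{-c_\flat t/2}$ for $0 \leq j \leq d/2 - 1$, and $\|\mathbf v\|_{L_2} \leq C t^{-1/2} e^{-c_\flat t/2}$ by \eqref{7.2}. Substituting, the worst $\varepsilon$-power is $\varepsilon^{d/2-1}$ multiplying $t^{-(d/2+1)/2}$; together with the leading term $t^{-1}$ this produces the bound $\check{\mathscr C}_d (t^{-1} + t^{-d/4-1/2}) e^{-c_\flat t/2}$ once $\varepsilon$ is absorbed (note $0 < \varepsilon \leq \varepsilon_1 \leq 1$, so all intermediate $\varepsilon$-powers are dominated and only the two extreme terms survive after coarsening).

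The main obstacle is the careful bookkeeping in the first reduction: one must differentiate the product $\Lambda^\varepsilon (S_\varepsilon - I)\mathbf v$ via the Leibniz rule, and each derivative landing on $\Lambda^\varepsilon$ costs a factor $\varepsilon^{-1}|\D\Lambda|^\varepsilon$, which is only in $L_2(\Omega)$ (not $L_\infty$), so one must invoke Lemma~1.5 / Corollary~1.6 rather than a naive $L_\infty$ bound — and then the factor $(S_\varepsilon - I)$ must be exploited to recover the lost power of $\varepsilon$ together with a gained derivative on $\mathbf v$. Balancing these powers and checking that no term worse than $\varepsilon^{d/2-1} t^{-(d/2+1)/2} e^{-c_\flat t/2}$ appears is the delicate point; everything else is a routine combination of Lemmas~1.5--1.9, \eqref{PO}, and Lemma~7.7, and is entirely parallel to the Dirichlet case, so I would present the Dirichlet argument in the Appendix and indicate that the Neumann case differs only in using Lemma~7.7 and the projection ${\mathcal P}$.
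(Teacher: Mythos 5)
Your overall skeleton (reduce to $\Lambda^\varepsilon (S_\varepsilon - I) b(\mathbf D)\widetilde{\mathbf u}_0$ with $\widetilde{\mathbf u}_0 = P_{\mathcal O}e^{-\mathcal A_N^0 t}\mathcal P\boldsymbol\varphi$, then invoke Lemma~7.7 for the $H^{d/2+1}$-bound $t^{-(d/2+1)/2}e^{-c_\flat t/2}$) matches the paper, and your final exponents are correct. But there is a genuine gap in the middle: you propose to control the $H^1$-norm of the product $\Lambda^\varepsilon\mathbf w$ via ``Lemma~1.5 / Corollary~1.6.'' Corollary~1.6 explicitly assumes Condition~2.5 ($\Lambda\in L_\infty$), which is precisely what this lemma is designed to avoid; and Lemma~1.5 by itself does not close the argument either, because its right-hand side still contains $\int|\Lambda^\varepsilon|^2|\mathbf D\mathbf w|^2$, i.e.\ another product of $\Lambda^\varepsilon$ with an $L_2$-function, which you cannot bound without either $\Lambda\in L_\infty$ or some substitute. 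Iterating $(S_\varepsilon - I)$ to trade powers of $\varepsilon$ for derivatives, as you suggest, never terminates this recursion: each step regenerates a term $\|\Lambda^\varepsilon(\cdot)\|_{L_2}$ with $\Lambda^\varepsilon$ still present.

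The missing ingredient is the multiplicative Sobolev estimate for $\Lambda$ (Lemma~9.1 in the Appendix): since $\Lambda\in\widetilde H^1(\Omega)\hookrightarrow L_{2d/(d-2)}(\Omega)$ and $H^{d/2-1}\hookrightarrow L_d$ on each cell $\Omega+\mathbf a$, H\"older's inequality cell by cell gives $\|\Lambda^\varepsilon\mathbf v\|_{L_2(\mathbb R^d)}\leqslant C\|\mathbf v\|_{H^{d/2-1}(\mathbb R^d)}$, and combining this with Lemma~1.5 yields $\|\Lambda^\varepsilon\mathbf v\|_{H^1(\mathbb R^d)}\leqslant C^{(1)}\varepsilon^{-1}\|\mathbf v\|_{L_2}+C^{(2)}\|\mathbf v\|_{H^{d/2}}$. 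This is where the $d/2$ derivatives of $b(\mathbf D)\widetilde{\mathbf u}_0$ — and hence the smoothness assumption on $\partial\mathcal O$ — really come from, not from repeated applications of the Steklov gain; in the actual proof $(S_\varepsilon - I)$ is used only once (in the $\varepsilon^{-1}\|\cdot\|_{L_2}$ term, via Lemma~1.8), while in the $H^{d/2}$ term it is simply bounded by $2$ in operator norm. Two minor further points: the reduction to $\Lambda^\varepsilon(S_\varepsilon-I)b(\mathbf D)\widetilde{\mathbf u}_0$ is an exact identity on $\mathcal O$ (since $b(\mathbf D)\widetilde{\mathbf u}_0=b(\mathbf D)\mathbf u_0$ there), so your extra ``boundary-layer'' contribution of $S_\varepsilon$ near $\partial\mathcal O$ does not arise; and the intermediate $t$-powers are absorbed because $t^{-a}\leqslant t^{-1}+t^{-d/4-1/2}$ for $1\leqslant a\leqslant d/4+1/2$, not because of leftover factors of $\varepsilon$.
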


Lemma 7.8 and Theorem 7.3 imply the following result.

\begin{theorem}
Suppose that the assumptions of Theorem \textnormal{6.1} are satisfied,
and let $d \geqslant 3$. Suppose that the domain ${\mathcal O}$ satisfies the assumptions of Lemma~\textnormal{7.8}.
Let $\mathcal{K}_N^0(t;\varepsilon)$ be the operator \textnormal{(\ref{K_N^0(t;e)})}.
Let $\widetilde{g}$ be the matrix-valued function \eqref{tilde g}.
Then for \hbox{$0<\varepsilon \leqslant \varepsilon _1$} and $t>0$ we have
\begin{equation}
\label{7.*4}
\begin{split}
\Vert &e^{-\mathcal{A}_{N,\varepsilon}t}-e^{-\mathcal{A}_N^0t}-\varepsilon \mathcal{K}_N^0(t;\varepsilon)\Vert _{L_2(\mathcal{O})\rightarrow H^1(\mathcal{O})}
\\
&\leqslant
{\mathscr C}^*_{d}(\varepsilon ^{1/2}t^{-3/4}+\varepsilon t^{-1} + \varepsilon t^{-d/4 -1/2})e^{-c_{\flat}t/2},
\end{split}
\end{equation}
\begin{equation}
\label{7.*5}
\begin{split}
\Vert &g^\varepsilon b(\mathbf{D})e^{-\mathcal{A}_{N,\varepsilon}t}-\widetilde{g}^\varepsilon b(\mathbf{D})e^{-\mathcal{A}_N^0t}\Vert _{L_2(\mathcal{O})\rightarrow L_2(\mathcal{O})}
\\
&\leqslant
\widetilde{\mathscr C}^*_{d}(\varepsilon ^{1/2}t^{-3/4}+\varepsilon t^{-1} + \varepsilon t^{-d/4 -1/2})
e^{-c_{\flat}t/2}.
\end{split}
\end{equation}
The constants ${\mathscr C}^*_{d}$ and $\widetilde{\mathscr C}^*_{d}$ depend only on the data  \textnormal{\eqref{data2}}.
\end{theorem}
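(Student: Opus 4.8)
The plan is to reduce Theorem~7.9 to two results already in hand: the approximation of the exponential with the smoothed corrector $\mathcal{K}_N(t;\varepsilon)$ (Theorem~7.3) and the bound for the difference of correctors (Lemma~7.8). For $t>0$ and $0<\varepsilon\leqslant\varepsilon_1$ I would write the telescoping identity
\begin{equation*}
e^{-\mathcal{A}_{N,\varepsilon}t}-e^{-\mathcal{A}_N^0t}-\varepsilon\mathcal{K}_N^0(t;\varepsilon)
=\bigl(e^{-\mathcal{A}_{N,\varepsilon}t}-e^{-\mathcal{A}_N^0t}-\varepsilon\mathcal{K}_N(t;\varepsilon)\bigr)
+\varepsilon\bigl(\mathcal{K}_N(t;\varepsilon)-\mathcal{K}_N^0(t;\varepsilon)\bigr),
\end{equation*}
estimate the $(L_2\to H^1)$-norm of the first summand by $\mathcal{C}_{13}(\varepsilon^{1/2}t^{-3/4}+\varepsilon t^{-1})e^{-c_\flat t/2}$ using \eqref{Th exn_N L2->H1}, and the $(L_2\to H^1)$-norm of the second summand by $\check{\mathscr C}_d(\varepsilon t^{-1}+\varepsilon t^{-d/4-1/2})e^{-c_\flat t/2}$ using \eqref{7.*3}. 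Adding the two bounds merges the $\varepsilon t^{-1}$-terms and yields \eqref{7.*4} with ${\mathscr C}^*_d=\mathcal{C}_{13}+\check{\mathscr C}_d$.

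For the fluxes \eqref{7.*5} I would use the same idea adapted to the smoothed flux. By \eqref{potok_N-potot_N*P} one has $g^\varepsilon b(\mathbf{D})e^{-\mathcal{A}_{N,\varepsilon}t}=g^\varepsilon b(\mathbf{D})e^{-\mathcal{A}_{N,\varepsilon}t}\mathcal{P}$; moreover $b(\mathbf{D})e^{-\mathcal{A}_N^0t}=b(\mathbf{D})e^{-\mathcal{A}_N^0t}\mathcal{P}$ since $I-\mathcal{P}$ is the orthogonal projection onto $Z=\mathrm{Ker}\,b(\mathbf{D})$, and $R_\mathcal{O}b(\mathbf{D})P_\mathcal{O}$ acts as $b(\mathbf{D})$ on $\mathcal{O}$. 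Hence
\begin{equation*}
g^\varepsilon b(\mathbf{D})e^{-\mathcal{A}_{N,\varepsilon}t}-\widetilde{g}^\varepsilon b(\mathbf{D})e^{-\mathcal{A}_N^0t}
=\bigl(g^\varepsilon b(\mathbf{D})e^{-\mathcal{A}_{N,\varepsilon}t}-\widetilde{g}^\varepsilon S_\varepsilon b(\mathbf{D})P_\mathcal{O}e^{-\mathcal{A}_N^0t}\mathcal{P}\bigr)
+R_\mathcal{O}\widetilde{g}^\varepsilon(S_\varepsilon-I)b(\mathbf{D})P_\mathcal{O}e^{-\mathcal{A}_N^0t}\mathcal{P}.
\end{equation*}
I would bound the first term by \eqref{Th potoki_N}. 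For the second term I would argue exactly as in the proof of Lemma~7.8, namely its flux counterpart: split $\widetilde{g}^\varepsilon(S_\varepsilon-I)=\widetilde{g}^\varepsilon S_\varepsilon-\widetilde{g}^\varepsilon$, control $[\widetilde{g}^\varepsilon]S_\varepsilon$ on $L_2$ via Lemma~1.7, and absorb the factor $(S_\varepsilon-I)$ against the extra smoothness of $e^{-\mathcal{A}_N^0t}\mathcal{P}$ supplied by \eqref{7.*1} (Lemma~7.7) together with Lemma~1.8; this produces precisely the factor $(\varepsilon t^{-1}+\varepsilon t^{-d/4-1/2})e^{-c_\flat t/2}$ and hence \eqref{7.*5}.

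The combinations above are routine; the substance sits in Lemma~7.8 and in the flux version invoked in the last step. The main obstacle there — and the reason the weight degrades to $t^{-d/4-1/2}$ while the boundary smoothness required grows with $d$ — is that each application of a Steklov-smoothing estimate (Lemmas~1.7, 1.8) trades one power of $\varepsilon$ for one derivative of $e^{-\mathcal{A}_N^0t}\mathcal{P}$, whereas near $t=0$ that operator is only $H^s$-smoothing with a $t^{-s/2}$ blow-up (Lemma~7.7). Keeping track of how many derivatives can be afforded, of the periodicity and $L_2(\Omega)$-integrability of $\Lambda$ and $\widetilde{g}$ rather than their boundedness, and of the cut-offs needed to stay inside $\mathcal{O}$, is the delicate bookkeeping carried out in the Appendix; once it is done, Theorem~7.9 follows immediately from the two displayed splittings, in complete parallel with the Dirichlet case of Theorem~3.9.
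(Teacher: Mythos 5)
Your derivation of \eqref{7.*4} is exactly the paper's: telescope through $\mathcal{K}_N(t;\varepsilon)$, apply \eqref{Th exn_N L2->H1} and \eqref{7.*3}, and take ${\mathscr C}^*_d=\mathcal{C}_{13}+\check{\mathscr C}_d$. That half is fine.

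The flux estimate \eqref{7.*5} is where your route diverges from the paper's and, as written, has a gap. Your decomposition leaves you with the remainder $R_\mathcal{O}\,\widetilde{g}^\varepsilon(S_\varepsilon-I)\,b(\mathbf{D})P_\mathcal{O}e^{-\mathcal{A}_N^0t}\mathcal{P}$, and you propose to handle it by splitting $\widetilde{g}^\varepsilon(S_\varepsilon-I)=\widetilde{g}^\varepsilon S_\varepsilon-\widetilde{g}^\varepsilon$ and using Lemmas 1.7 and 1.8. This cannot work as stated: Lemma 1.7 controls $[\widetilde{g}^\varepsilon]S_\varepsilon$ on $L_2$ but yields no factor of $\varepsilon$, while the detached term $[\widetilde{g}^\varepsilon]$ is \emph{not} a bounded multiplier on $L_2$ — one only knows $\widetilde{g}\in L_2(\Omega)$, and for $d\geqslant 3$ the H\"older/Sobolev argument of Lemma 9.1 would require $\widetilde{g}\in L_{2d/(d-2)}(\Omega)$, which is not available. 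So there is no ``flux counterpart of Lemma 7.8'' in the paper's toolkit, and the factor $(S_\varepsilon-I)$, once separated from $\widetilde{g}^\varepsilon$, can no longer be traded for $\varepsilon$ via Lemma 1.8. The paper avoids forming this term altogether: it applies $g^\varepsilon b(\mathbf{D})$ to the already-proved estimate \eqref{7.*4}, uses the Leibniz identity \eqref{*.6a} so that $g^\varepsilon(b(\mathbf{D})\Lambda)^\varepsilon b(\mathbf{D})e^{-\mathcal{A}_N^0t}+g^\varepsilon b(\mathbf{D})e^{-\mathcal{A}_N^0t}$ recombines into $\widetilde{g}^\varepsilon b(\mathbf{D})e^{-\mathcal{A}_N^0t}$ by the definition \eqref{tilde g}, and bounds the single leftover cross term $\varepsilon\sum_{i,j}g^\varepsilon b_i\Lambda^\varepsilon b_jD_iD_je^{-\mathcal{A}_N^0t}\mathcal{P}$ by Corollary 9.2 and Lemma 7.7, as in \eqref{*.100}. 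Your route could be repaired by writing $\widetilde{g}^\varepsilon=g^\varepsilon+g^\varepsilon(b(\mathbf{D})\Lambda)^\varepsilon$ and estimating $(b(\mathbf{D})\Lambda)^\varepsilon(S_\varepsilon-I)\mathbf{v}$ via Lemma 1.5 combined with Lemma 9.1($1^\circ$), but that is essentially reconstructing the paper's argument rather than the shortcut you describe.
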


The proofs of Lemma 7.8 and Theorem 7.9 are postponed until Appendix (see \S 9).
As in the case of the Dirichlet condition, it is convenient to apply Theorem 7.9, if $t$ is separated from zero.

\begin{remark}
\textnormal{In Lemma 7.8, instead of the smoothness condition on $\partial \mathcal{O}$, one could impose an implicit
condition on the domain:  suppose that a bounded domain $\mathcal{O}$ with Lipschitz boundary is such that
estimate \eqref{7.*2} with $s=d/2+1$ is valid.
For such domain all the assertions of Lemma 7.8 and Theorem 7.9 are true.}
\end{remark}

\subsection*{7.7. Special cases.} Special cases are distinguished
by the arguments similar to the arguments of Subsection~3.7.
Theorems 7.3, 7.6 and Propositions 1.2, 1.3, 2.7($3^\circ$)  imply the following result.

\begin{proposition}
Suppose that the assumptions of Theorem \textnormal{7.3} are satisfied.

\noindent
$1^\circ$. Let $g^0=\overline{g}$, i.~e., relations \textnormal{(\ref{overline-g})} are satisfied.
Then $\mathcal{K}_N(t;\eps)=0$, and for \hbox{$0<\varepsilon \leqslant \varepsilon _1$} we have
\begin{equation*}
\Vert e^{-\mathcal{A}_{N,\varepsilon}t}-e^{-\mathcal{A}_N^0 t}\Vert _{L_2(\mathcal{O})\rightarrow H^1(\mathcal{O})}\leqslant \mathcal{C}_{13}(\varepsilon ^{1/2}t^{-3/4}+\varepsilon t^{-1})e^{-c_\flat t/2},\quad t>0.
\end{equation*}

\noindent
$2^\circ$. Let $g^0=\underline{g}$, i.~e., relations \textnormal{(\ref{underline-g})} are satisfied.
Then for \hbox{$0<\varepsilon\leqslant \varepsilon _1$} and $t>0$ we have
\begin{equation*}
\Vert g^\varepsilon b(\mathbf{D})e^{-\mathcal{A}_{N,\varepsilon}t}-g^0 b(\mathbf{D})e^{-\mathcal{A}_N^0t}\Vert _{L_2(\mathcal{O})\rightarrow L_2(\mathcal{O})}\leqslant \widetilde{\mathcal{C}}_{15}(\varepsilon ^{1/2}t^{-3/4}+\varepsilon t^{-1})
e^{-c_\flat t/2}.
\end{equation*}
\end{proposition}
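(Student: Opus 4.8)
The plan is to obtain both assertions as immediate corollaries of the exponential estimates of \S7, exactly as Propositions~3.11 and~3.12 were obtained from Theorems~3.3 and~3.6 in the Dirichlet setting. The only real content is to identify, in each of the two special cases, what happens to the corrector \eqref{K_N(t;eps)} and to the matrix-valued function $\widetilde{g}$ from \eqref{tilde g}; after that the estimates are literally the right-hand sides of the theorems of \S7 with an obvious simplification.

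For assertion $1^\circ$ I would argue as follows. First invoke Proposition~1.2: the hypothesis $g^0=\overline{g}$ is equivalent to the relations \eqref{overline-g}, i.e.\ $b(\D)^*\mathbf{g}_k(\x)=0$ for every column $\mathbf{g}_k$ of $g(\x)$, which is the same as $b(\D)^*g(\x)\mathbf{1}_m=0$. Hence the zero matrix-valued function satisfies the cell problem \eqref{Lambda_problem} (together with the zero-mean normalization), and by uniqueness of the $\Gamma$-periodic solution of \eqref{Lambda_problem} we conclude $\Lambda\equiv 0$. Substituting $\Lambda=0$ into \eqref{K_N(t;eps)} gives $\mathcal{K}_N(t;\eps)=0$, so estimate \eqref{Th exn_N L2->H1} of Theorem~7.3 becomes exactly the asserted inequality, with the same constant $\mathcal{C}_{13}$.

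For assertion $2^\circ$ I would use Proposition~1.3: $g^0=\underline{g}$ is equivalent to the relations \eqref{underline-g} for the columns of $g(\x)^{-1}$. As noted in Subsection~3.7, these relations force the matrix-valued function $\widetilde{g}$ of \eqref{tilde g} to be constant, equal to $g^0=\underline{g}$; this purely algebraic fact I would take from \cite[Chapter~3]{BSu}. Moreover, by Proposition~2.7($3^\circ$), the same relations guarantee that Condition~2.5 holds, so Theorem~7.6 applies. Its flux estimate states
\[
\Vert g^\varepsilon b(\mathbf{D})e^{-\mathcal{A}_{N,\varepsilon}t}-\widetilde{g}^\varepsilon b(\mathbf{D})e^{-\mathcal{A}_N^0 t}\Vert _{L_2(\mathcal{O})\rightarrow L_2(\mathcal{O})}\leqslant \widetilde{\mathcal{C}}_{15}(\varepsilon ^{1/2}t^{-3/4}+\varepsilon t^{-1})e^{-c_\flat t/2},
\]
and since $\widetilde{g}$ is constant we have $\widetilde{g}^\varepsilon=g^0$, which turns the left-hand side into $\Vert g^\varepsilon b(\mathbf{D})e^{-\mathcal{A}_{N,\varepsilon}t}-g^0 b(\mathbf{D})e^{-\mathcal{A}_N^0 t}\Vert_{L_2(\mathcal{O})\rightarrow L_2(\mathcal{O})}$, i.e.\ precisely the quantity to be estimated, with the same constant $\widetilde{\mathcal{C}}_{15}$.

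There is no serious obstacle here: both parts are direct consequences of results already proved. The only two points deserving a line of justification are the vanishing of $\Lambda$ when $g^0=\overline{g}$ and the constancy of $\widetilde{g}$ when $g^0=\underline{g}$ — both are standard properties of the effective matrix recorded in \cite{BSu}, and the second one is already used in the Dirichlet chapter. I would keep the constants of Theorems~7.3 and~7.6 unchanged and phrase the whole proof in a couple of sentences referring back to those theorems and to Propositions~1.2, 1.3, and~2.7.
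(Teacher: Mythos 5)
Your proposal is correct and follows exactly the route the paper takes: the paper derives Proposition~7.11 from Theorems~7.3 and~7.6 together with Propositions~1.2, 1.3, and~2.7($3^\circ$), citing the identical reasoning already spelled out for the Dirichlet case in Subsection~3.7 (vanishing of $\Lambda$ when $g^0=\overline{g}$, constancy of $\widetilde{g}$ and validity of Condition~2.5 when $g^0=\underline{g}$). Nothing is missing.
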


\subsection*{7.8. Estimates in a strictly interior subdomain.}
In a strictly interior subdomain $\mathcal{O}'$ of the domain $\mathcal{O}$,
using Theorem 6.5, we obtain estimates of sharp order $O(\varepsilon)$ (for a fixed $t>0$).

\begin{theorem}
Suppose that the assumptions of Theorem \textnormal{7.3} are satisfied. Let $\mathcal{O}'$ be a strictly interior subdomain of the domain
$\mathcal{O}$, and let $\delta =\textnormal{dist}\,\lbrace \mathcal{O}';\partial \mathcal{O}\rbrace$.
Then for $0<\varepsilon\leqslant\varepsilon _1$ and $t>0$ we have
\begin{align}
\label{7.29a}
\begin{split}
\Vert
&e^{-\mathcal{A}_{N,\varepsilon}t}-e^{-\mathcal{A}_N^0 t}-\varepsilon\mathcal{K}_N(t;\varepsilon)\Vert _{L_2(\mathcal{O})\rightarrow H^1(\mathcal{O}')}
\\
&\leqslant (\mathcal{C}_{16}\delta ^{-1}+\mathcal{C}_{17})\varepsilon t^{-1}e^{-c_\flat t/2},
\end{split}
\\
\begin{split}
\Vert
&g^\varepsilon b(\mathbf{D})e^{-\mathcal{A}_{N,\varepsilon}t}-\widetilde{g}^\varepsilon S_\varepsilon b(\mathbf{D})P_\mathcal{O}e^{-\mathcal{A}_N^0 t}\mathcal{P}\Vert _{L_2(\mathcal{O})\rightarrow L_2(\mathcal{O}')}
\\
&\leqslant (\widetilde{\mathcal{C}}_{16}\delta ^{-1}+\widetilde{\mathcal{C}}_{17})\varepsilon t^{-1}e^{-c_\flat t/2}.
\end{split}\nonumber
\end{align}
The constants $\mathcal{C}_{16}$, $\mathcal{C}_{17}$, $\widetilde{\mathcal{C}}_{16}$, and $\widetilde{\mathcal{C}}_{17}$
depend only on the data \textnormal{\eqref{data2}}.
\end{theorem}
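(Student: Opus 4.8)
The plan is to follow the pattern of the proof of Theorem~3.13, now using Theorem~6.5 in place of Theorem~2.9. First I would represent both sides of the asserted inequality by contour integrals over the contour $\widetilde{\gamma}$ introduced in the proof of Theorem~7.2: by \eqref{7.6a} and \eqref{7.7b},
\[
e^{-\mathcal{A}_{N,\varepsilon}t}-e^{-\mathcal{A}_N^0t}-\varepsilon\mathcal{K}_N(t;\varepsilon)
= -\frac{1}{2\pi i}\int_{\widetilde{\gamma}} e^{-\zeta t}\Bigl((\mathcal{A}_{N,\varepsilon}-\zeta I)^{-1}-(\mathcal{A}_N^0-\zeta I)^{-1}-\varepsilon K_N(\varepsilon;\zeta)\Bigr)\mathcal{P}\,d\zeta ,
\]
and similarly, by \eqref{7.8_0} and \eqref{7.8a}, for $g^\varepsilon b(\mathbf{D})e^{-\mathcal{A}_{N,\varepsilon}t}-\widetilde{g}^\varepsilon S_\varepsilon b(\mathbf{D})P_\mathcal{O}e^{-\mathcal{A}_N^0t}\mathcal{P}$. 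Recall that the projection $\mathcal{P}$ may be inserted anywhere, since $\Ker\mathcal{A}_{N,\varepsilon}=\Ker\mathcal{A}_N^0=Z$.

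Second, I would estimate the operator under the integral sign uniformly for $\zeta\in\widetilde{\gamma}$. Put $\check{c}_\flat=\max\{1;\sqrt5\,c_\flat/2\}$. For $\zeta\in\widetilde{\gamma}$ with $|\zeta|\leqslant\check{c}_\flat$ I apply assertion~$2^\circ$ of Theorem~6.5 together with the bounds $c(\vartheta)\leqslant 2^{1/2}$ and $\rho_\flat(\zeta)\leqslant 2\max\{1;8c_\flat^{-2}\}$ valid on $\widetilde{\gamma}$ (checked exactly as the corresponding bounds on $\gamma$ in the proof of Theorem~7.2), so that $\widehat{\rho}_\flat(\zeta)$ and $c(\vartheta)^{1/2}\rho_\flat(\zeta)^{5/4}$ are bounded by constants depending only on the data~\eqref{data2}. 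For $\zeta\in\widetilde{\gamma}$ with $|\zeta|>\check{c}_\flat$ one has $|\sin\phi|\geqslant 5^{-1/2}$, i.e. $c(\phi)\leqslant 5^{1/2}$, and assertion~$1^\circ$ of Theorem~6.5 (combined with \eqref{7.7b} to reinsert $\mathcal{P}$) gives a bound of the same kind. Thus, for all $\zeta\in\widetilde{\gamma}$,
\[
\bigl\|\bigl((\mathcal{A}_{N,\varepsilon}-\zeta I)^{-1}-(\mathcal{A}_N^0-\zeta I)^{-1}-\varepsilon K_N(\varepsilon;\zeta)\bigr)\mathcal{P}\bigr\|_{L_2(\mathcal{O})\to H^1(\mathcal{O}')}\leqslant(\widehat{\mathcal{C}}_{16}\delta^{-1}+\widehat{\mathcal{C}}_{17})\varepsilon ,
\]
and the analogous uniform bound for the flux expression follows from \eqref{7.8a} and the flux estimates of Theorem~6.5.

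Third, I would insert this uniform estimate into the contour integral. Since $\mathrm{Re}\,\zeta=|\mathrm{Im}\,\zeta|+c_\flat/2$ on $\widetilde{\gamma}$, we have $|e^{-\zeta t}|=e^{-t(|\mathrm{Im}\,\zeta|+c_\flat/2)}$ and $|d\zeta|=\sqrt2\,d(\mathrm{Im}\,\zeta)$ on each of the two rays, so that $\frac{1}{2\pi}\int_{\widetilde{\gamma}}|e^{-\zeta t}|\,|d\zeta|=\frac{\sqrt2}{\pi}\,t^{-1}e^{-c_\flat t/2}$. Combining this with the uniform bound above yields \eqref{7.29a} with $\mathcal{C}_{16}=\tfrac{\sqrt2}{\pi}\widehat{\mathcal{C}}_{16}$, $\mathcal{C}_{17}=\tfrac{\sqrt2}{\pi}\widehat{\mathcal{C}}_{17}$, and the flux estimate is obtained the same way. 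I expect no genuine obstacle here: the only points requiring care are verifying that the geometric factors $c(\phi)$, $c(\vartheta)$, $\rho_\flat$, $\widehat{\rho}_\flat$ remain bounded on $\widetilde{\gamma}$ by constants controlled by \eqref{data2}, and observing that — unlike in the $H^1(\mathcal{O})$-estimate \eqref{Th exn_N L2->H1} — the resolvent bound of Theorem~6.5 in a strictly interior subdomain carries no decay in $|\zeta|$, so the weight $e^{-\mathrm{Re}\,\zeta\,t}$ produces exactly the factor $t^{-1}$ instead of $t^{-3/4}$.
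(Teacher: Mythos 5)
Your proposal is correct and follows essentially the same route as the paper: the paper's proof of this theorem is exactly the argument you describe, namely the contour representations \eqref{7.6a}, \eqref{7.7b}, \eqref{7.8_0}, \eqref{7.8a} over $\widetilde{\gamma}$, the uniform bounds $c(\vartheta)\leqslant 2^{1/2}$, $\rho_\flat(\zeta)\leqslant 2\max\{1;8c_\flat^{-2}\}$, $c(\phi)\leqslant 5^{1/2}$ combined with Theorem~6.5, and integration of $e^{-\zeta t}$ along the contour producing the factor $\varepsilon t^{-1}e^{-c_\flat t/2}$.
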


\begin{proof}
The proof is similar to that of Theorem 7.3 and relies on Theorem 6.5 and relations \eqref{7.6a}, \eqref{7.7b}, \eqref{7.8_0},
and \eqref{7.8a}.
\end{proof}

In the case where $\Lambda \in L_\infty$, Theorem 6.6 implies the following result.

\begin{theorem}
Suppose that the assumptions of Theorem \textnormal{7.12} are satisfied.
Suppose that the matrix-valued function $\Lambda (\mathbf{x})$ satisfies Condition \textnormal{2.5}.
Let $\mathcal{K}_N^0(t;\varepsilon)$ be the operator \eqref{K_N^0(t;e)}. Then for $0<\varepsilon \leqslant \varepsilon_1$ and $t>0$
we have
\begin{align*}
\begin{split}
&\Vert
e^{-\mathcal{A}_{N,\varepsilon}t}-e^{-\mathcal{A}_N^0 t}-\varepsilon\mathcal{K}_N^0(t;\varepsilon)\Vert _{L_2(\mathcal{O})\rightarrow H^1(\mathcal{O}')}
\leqslant (\mathcal{C}_{16}\delta ^{-1}+\check{\mathcal{C}}_{17})\varepsilon t^{-1}e^{-c_\flat t/2},
\end{split}
\\
\begin{split}
&\Vert g^\varepsilon b(\mathbf{D})e^{-\mathcal{A}_{N,\varepsilon}t}-\widetilde{g}^\varepsilon  b(\mathbf{D})e^{-\mathcal{A}_N^0 t}\Vert _{L_2(\mathcal{O})\rightarrow L_2(\mathcal{O}')}
\leqslant (\widetilde{\mathcal{C}}_{16}\delta ^{-1}+\widehat{\mathcal{C}}_{17})\varepsilon t^{-1}e^{-c_\flat t/2}.
\end{split}
\end{align*}
The constants $\mathcal{C}_{16}$ and $\widetilde{\mathcal{C}}_{16}$ are the same as in Theorem
\textnormal{7.12}. The constants $\check{\mathcal{C}}_{17}$ and $\widehat{\mathcal{C}}_{17}$ depend only on the data
\textnormal{\eqref{data2}} and $\|\Lambda\|_{L_\infty}$.
\end{theorem}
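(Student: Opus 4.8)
The proof will follow the pattern of Theorem~7.12, with Theorem~6.5 replaced by Theorem~6.6 and the corrector $K_N(\eps;\zeta)$ replaced by $K_N^0(\eps;\zeta)$ from \eqref{K_n^0(e)}. First I would record the relevant contour–integral identities. Since $b(\mathbf{D})$ annihilates $Z$, the operator $\mathcal{K}_N^0(t;\eps)=[\Lambda^\eps]b(\mathbf{D})e^{-\mathcal{A}_N^0 t}$ satisfies $\mathcal{K}_N^0(t;\eps)=\mathcal{K}_N^0(t;\eps)\mathcal{P}$, and likewise $K_N^0(\eps;\zeta)=K_N^0(\eps;\zeta)\mathcal{P}$ and $\widetilde{g}^\eps b(\mathbf{D})(\mathcal{A}_N^0-\zeta I)^{-1}=\widetilde{g}^\eps b(\mathbf{D})(\mathcal{A}_N^0-\zeta I)^{-1}\mathcal{P}$. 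Combining this with \eqref{exp-exp=(exp-exp)P} and the representations of $e^{-\mathcal{A}_{N,\eps}t}\mathcal{P}$, $e^{-\mathcal{A}_N^0 t}\mathcal{P}$ over the contour $\widetilde{\gamma}$ introduced in the proof of Theorem~7.2, I obtain
\begin{equation*}
e^{-\mathcal{A}_{N,\eps}t}-e^{-\mathcal{A}_N^0 t}-\eps\mathcal{K}_N^0(t;\eps)
=-\frac{1}{2\pi i}\int_{\widetilde{\gamma}}e^{-\zeta t}\Bigl((\mathcal{A}_{N,\eps}-\zeta I)^{-1}-(\mathcal{A}_N^0-\zeta I)^{-1}-\eps K_N^0(\eps;\zeta)\Bigr)d\zeta ,
\end{equation*}
together with the analogous identity for the fluxes, obtained from \eqref{potok_N-potot_N*P} and the equality $b(\mathbf{D})e^{-\mathcal{A}_N^0 t}=b(\mathbf{D})e^{-\mathcal{A}_N^0 t}\mathcal{P}$.

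Next I would estimate the integrand uniformly on $\widetilde{\gamma}$ via Theorem~6.6. Put $\check{c}_\flat=\max\{1;\sqrt5\,c_\flat/2\}$. For $\zeta\in\widetilde{\gamma}$ with $|\zeta|\le\check{c}_\flat$ I apply assertion~$2^\circ$ of Theorem~6.6: exactly as on $\gamma$ in the proof of Theorem~3.2 and on $\widetilde{\gamma}$ in Theorems~7.2, 7.3, 7.12, one has $\vartheta\in(\pi/4,7\pi/4)$, hence $c(\vartheta)\le\sqrt2$, and $|\zeta-c_\flat|$ is bounded below by a constant depending only on $c_\flat$, so $\rho_\flat(\zeta)$, $\widehat{\rho}_\flat(\zeta)$ and $c(\vartheta)^{1/2}\rho_\flat(\zeta)^{5/4}$ are all bounded by constants of the required type. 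For $\zeta\in\widetilde{\gamma}$ with $|\zeta|>\check{c}_\flat$ I apply assertion~$1^\circ$ of Theorem~6.6, using $|\sin\phi|\ge 5^{-1/2}$, i.e. $c(\phi)\le\sqrt5$, on that part of the contour. In both cases the $L_2(\mathcal{O})\to H^1(\mathcal{O}')$–norm of the integrand is bounded by $(\mathcal{C}_{16}'\delta^{-1}+\check{\mathcal{C}}_{17}')\eps$, where $\mathcal{C}_{16}'$ depends only on the data \eqref{data2} because in Theorem~6.6 the $\delta^{-1}$–terms carry the same constants $\mathcal{C}_8$, $\mathcal{C}_{10}$ as in Theorem~6.5, while $\check{\mathcal{C}}_{17}'$ additionally depends on $\|\Lambda\|_{L_\infty}$; the flux integrand is treated the same way in the $L_2(\mathcal{O})\to L_2(\mathcal{O}')$–norm.

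Finally I would perform the contour integration. Parametrizing $\zeta=c_\flat/2+s\pm is$, $s\ge0$, gives $|d\zeta|=\sqrt2\,ds$ and $|e^{-\zeta t}|=e^{-(s+c_\flat/2)t}$, so $\int_{\widetilde{\gamma}}|e^{-\zeta t}|\,|d\zeta|=2\sqrt2\,t^{-1}e^{-c_\flat t/2}$. Multiplying by the uniform bound on the integrand yields the two asserted estimates, with $\mathcal{C}_{16}$ and $\widetilde{\mathcal{C}}_{16}$ identical to the constants of Theorem~7.12 and $\check{\mathcal{C}}_{17}$, $\widehat{\mathcal{C}}_{17}$ depending only on \eqref{data2} and $\|\Lambda\|_{L_\infty}$. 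The computation is essentially routine once Theorem~6.6 is in hand; the only delicate points are the book‑keeping that lets $\mathcal{P}$ be inserted or removed freely (a consequence of $b(\mathbf{D})Z=0$ and \eqref{Ker=Z}) and the verification that $\widehat{\rho}_\flat(\zeta)$ and the various powers of $c(\vartheta)$ and $\rho_\flat(\zeta)$ stay bounded along $\widetilde{\gamma}$ — but these bounds were already established on $\widetilde{\gamma}$ in the proofs of Theorems~7.2 and~7.3, so no new obstacle arises, and the most careful step is merely making sure that $\mathcal{C}_{16}$ (and $\widetilde{\mathcal{C}}_{16}$) comes out equal to the constant of Theorem~7.12.
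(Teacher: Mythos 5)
Your proposal is correct and follows essentially the same route as the paper: the paper proves Theorem 7.13 exactly by combining the contour representation over $\widetilde{\gamma}$ (relations \eqref{7.6a}, \eqref{7.7b}, \eqref{7.8_0}, \eqref{7.8a}, with $K_N$ replaced by $K_N^0$ and the projection $\mathcal{P}$ absorbed via $Z=\mathrm{Ker}\,b(\mathbf{D})$) with Theorem 6.6 in place of Theorem 6.5, splitting the contour at $|\zeta|=\check{c}_\flat$ and using the same bounds $c(\vartheta)\leqslant\sqrt2$, $\rho_\flat(\zeta)\leqslant 2\max\{1;8c_\flat^{-2}\}$, $c(\phi)\leqslant\sqrt5$ already established for Theorems 7.2 and 7.12. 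Your bookkeeping of the constants (the $\delta^{-1}$-coefficients inherited unchanged from $\mathcal{C}_{8}$, $\mathcal{C}_{10}$, hence equal to $\mathcal{C}_{16}$, $\widetilde{\mathcal{C}}_{16}$) and the evaluation of $\int_{\widetilde{\gamma}}|e^{-\zeta t}|\,|d\zeta|$ are both accurate.
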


It is possible to remove the smoothing operator $S_\eps$ from the corrector in estimates of Theorem 7.12 without Condition 2.5.
We consider the case $d \geqslant 3$ (in the opposite case, by Proposition 2.7($1^\circ$), Theorem 7.13 is applicable).
For $t>0$ the operator $e^{-\mathcal{A}^0_{N}t}{\mathcal P}$ is continuous from $L_2({\mathcal O};{\mathbb C}^n)$
to $H^2({\mathcal O};{\mathbb C}^n)$, and estimate \eqref{7.5} holds.
Moreover, the following property of ,,interior regularity'' is true:
for $t>0$ the operator $e^{-\mathcal{A}^0_{N}t} {\mathcal P}$ is continuous from $L_2({\mathcal O};{\mathbb C}^n)$
to $H^s({\mathcal O}';{\mathbb C}^n)$ for any integer $s \geqslant 3$. We have
\begin{equation}
\label{apriori_N}
\|e^{-\mathcal{A}^0_{N}t} {\mathcal P}\|_{L_2({\mathcal O}) \to H^s({\mathcal O}')}
\leqslant {\mathscr C}'_s  t^{-1/2}(\delta^{-2} + t^{-1})^{(s-1)/2} e^{-c_{\flat}t/2} ,\quad t>0.
\end{equation}
The constant ${\mathscr C}'_s$ depends on $s$, $d$, $\alpha _0$, $\alpha _1$, $\Vert g\Vert _{L_\infty}$, $\Vert g^{-1}\Vert _{L_\infty}$,
the constants ${\mathfrak C}_1$ and ${\mathfrak C}_2$ from the inequality \eqref{Garding}, the constant
$\widetilde{\mathfrak C}_1$ from \eqref{b(D)u>= in H_perp}, and the domain $\mathcal{O}$.

The way of the proof of estimates \eqref{apriori_N} is the same as in the case of the operator ${\mathcal A}_D^0$; see comments in Subsection 3.8.
The difference is that instead of Lemma 3.1 one should use Lemma 7.1.

The following statement is deduced from  \eqref{apriori_N}
by using the properties of the matrix-valued function $\Lambda({\mathbf x})$ and the operator $S_\eps$.

\begin{lemma}
Suppose that the assumptions of Theorem \textnormal{7.12} are satisfied, and let $d\geqslant 3$.
Let $\mathcal{K}_N^0(t;\varepsilon)$ be the operator \eqref{K_N^0(t;e)}.
Let $r_1$ be defined by  \textnormal{(\ref{r})}. Then for $0<\varepsilon \leqslant (4r_1)^{-1}\delta$ and
$t>0$ we have
\begin{equation}
\label{K_minus_K_N}
\| \mathcal{K}_N(t;\varepsilon) - \mathcal{K}_N^0(t;\varepsilon)
\|_{L_2({\mathcal O}) \to H^1({\mathcal O}')}
\leqslant {\mathscr C}''_d   (t^{-1} +  t^{-1/2}(\delta^{-2} + t^{-1})^{d/4})e^{-c_{\flat}t/2} .
\end{equation}
The constant ${{\mathscr C}}''_d$ depends only on $d$,
$\alpha _0$, $\alpha _1$, $\Vert g\Vert _{L_\infty}$, $\Vert g^{-1}\Vert _{L_\infty}$,
the constants ${\mathfrak C}_1$ and ${\mathfrak C}_2$ from the inequality \eqref{Garding}, the constant
$\widetilde{\mathfrak C}_1$ from \eqref{b(D)u>= in H_perp}, and the domain $\mathcal{O}$.
\end{lemma}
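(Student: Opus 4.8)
The plan is to follow the scheme of the proof of Lemma~3.15, working now with $\mathcal{A}_N^0$, the projection $\mathcal{P}$, the interior regularity bound \eqref{apriori_N} and Lemma~7.1 in place of $\mathcal{A}_D^0$, \eqref{apriori} and Lemma~3.1. First I would rewrite the difference of correctors. Since $b(\mathbf{D})$ annihilates $Z=\mathrm{Ker}\,b(\mathbf{D})$ and $e^{-\mathcal{A}_N^0 t}$ restricts to the identity on $Z$, we have $b(\mathbf{D})e^{-\mathcal{A}_N^0 t}=b(\mathbf{D})e^{-\mathcal{A}_N^0 t}\mathcal{P}$; as $b(\mathbf{D})$ is a differential (hence local) operator and $R_\mathcal{O}P_\mathcal{O}$ is the identity, $\mathcal{K}_N^0(t;\varepsilon)$ coincides on $\mathcal{O}$ with $R_\mathcal{O}[\Lambda^\varepsilon]b(\mathbf{D})P_\mathcal{O}e^{-\mathcal{A}_N^0 t}\mathcal{P}$. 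Therefore
\[
\mathcal{K}_N(t;\varepsilon)-\mathcal{K}_N^0(t;\varepsilon)=R_\mathcal{O}[\Lambda^\varepsilon](S_\varepsilon-I)b(\mathbf{D})P_\mathcal{O}e^{-\mathcal{A}_N^0 t}\mathcal{P}.
\]

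Put $\mathbf{v}:=b(\mathbf{D})P_\mathcal{O}e^{-\mathcal{A}_N^0 t}\mathcal{P}\boldsymbol{\varphi}$; on any subdomain compactly contained in $\mathcal{O}$ this equals $b(\mathbf{D})e^{-\mathcal{A}_N^0 t}\mathcal{P}\boldsymbol{\varphi}$, so by \eqref{apriori_N} (together with \eqref{7.4}, \eqref{7.5}) it is smooth there and $\|\mathbf{v}\|_{H^k(\mathcal{O}''')}\leq C t^{-1/2}(\delta^{-2}+t^{-1})^{k/2}e^{-c_\flat t/2}\|\boldsymbol{\varphi}\|_{L_2(\mathcal{O})}$ for every integer $k\geq0$, where $\mathcal{O}'\Subset\mathcal{O}''\Subset\mathcal{O}'''\Subset\mathcal{O}$ are chosen with mutual distances bounded below by a fixed multiple of $\delta$. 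Fix a cut-off $\chi\in C_0^\infty(\mathcal{O}''')$ with $\chi\equiv1$ on $\mathcal{O}''$ and $|\mathbf{D}^\alpha\chi|\leq C_\alpha\delta^{-|\alpha|}$. Because $\varepsilon\leq(4r_1)^{-1}\delta$, the Steklov average at a point of $\mathcal{O}'$ only involves values in $\mathcal{O}''$, so on $\mathcal{O}'$ we may replace $\mathbf{v}$ by $\chi\mathbf{v}\in C_0^\infty(\mathbb{R}^d;\mathbb{C}^m)$; recombining the $\delta$-weights from $\chi$ via $\delta^{-1}\leq(\delta^{-2}+t^{-1})^{1/2}$ gives $\|\chi\mathbf{v}\|_{H^k(\mathbb{R}^d)}\leq C t^{-1/2}(\delta^{-2}+t^{-1})^{k/2}e^{-c_\flat t/2}\|\boldsymbol{\varphi}\|_{L_2(\mathcal{O})}$. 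It then remains to estimate $\|[\Lambda^\varepsilon]\mathbf{w}\|_{H^1(\mathbb{R}^d)}$ with $\mathbf{w}:=(S_\varepsilon-I)(\chi\mathbf{v})$.

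For the core estimate, differentiate: $\mathbf{D}([\Lambda^\varepsilon]\mathbf{w})=\varepsilon^{-1}(\mathbf{D}\Lambda)^\varepsilon\mathbf{w}+\Lambda^\varepsilon\mathbf{D}\mathbf{w}$. The only dangerous term is the first, owing to the factor $\varepsilon^{-1}$; here I would invoke Lemma~1.5 applied to $\mathbf{w}$ to bound $\|(\mathbf{D}\Lambda)^\varepsilon\mathbf{w}\|_{L_2}$ by $\beta_1^{1/2}\|\mathbf{w}\|_{L_2}+\beta_2^{1/2}\varepsilon\,\bigl\||\Lambda^\varepsilon|\,|\mathbf{D}\mathbf{w}|\bigr\|_{L_2}$, so that after division by $\varepsilon$ the first piece is $\varepsilon^{-1}\|(S_\varepsilon-I)(\chi\mathbf{v})\|_{L_2}\leq r_1\|\mathbf{D}(\chi\mathbf{v})\|_{L_2}$ by Lemma~1.8 and the $\varepsilon^{-1}$ has disappeared from the remainder. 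To treat the products of the (in general) unbounded $\Lambda^\varepsilon$ and $(\mathbf{D}\Lambda)^\varepsilon$ with $\mathbf{w}$, $\mathbf{D}\mathbf{w}$ I would \emph{not} use an $L_\infty$ bound but Hölder's inequality with exponents $\tfrac{2d}{d-2}$ and $d$ (this is where $d\geq3$ enters): by the Sobolev embedding $H^1(\Omega)\hookrightarrow L_{2d/(d-2)}(\Omega)$, periodicity and \eqref{M} one has $\|\Lambda^\varepsilon\|_{L_{2d/(d-2)}(\mathcal{O}''')}\leq C_\mathcal{O}M$ and $\|(\mathbf{D}\Lambda)^\varepsilon\|_{L_2(\mathcal{O}''')}\leq C_\mathcal{O}M$, while $S_\varepsilon$ is a contraction on each $L_p$, so $\|\mathbf{D}\mathbf{w}\|_{L_d}\leq2\|\mathbf{D}(\chi\mathbf{v})\|_{L_d}\leq C\|\chi\mathbf{v}\|_{H^{d/2}}$ and $\|\mathbf{w}\|_{L_d}\leq C\|\chi\mathbf{v}\|_{H^{d/2-1}}$. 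Collecting these with the bound on $\|\chi\mathbf{v}\|_{H^k}$ (the power $(\delta^{-2}+t^{-1})^{d/4}$ stemming from $H^{d/2}$ of $\chi\mathbf{v}$, and $\|\mathbf{D}(\chi\mathbf{v})\|_{L_2}$ together with the $L_2$-part contributing the milder $t^{-1}$ term) yields \eqref{K_minus_K_N}.

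The main obstacle is the critical-Sobolev bookkeeping in the last step: one must extract from the products with $\Lambda^\varepsilon$ only $d/2$ (respectively $d/2-1$) derivatives of $\chi\mathbf{v}$, using $H^1(\Omega)$ at its critical exponent rather than the embedding $H^s\hookrightarrow L_\infty$, $s>d/2$, so that exactly the power $(\delta^{-2}+t^{-1})^{d/4}$—and not a larger one—appears; for odd $d$ this order is fractional, so \eqref{apriori_N} has to be used at two consecutive integer orders and interpolated. The reductions in the first two steps, and the disposal of the dangerous term via Lemmas~1.5 and~1.8, are routine. As with Lemma~3.15, I would relegate the full computation to the Appendix.
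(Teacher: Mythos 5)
Your argument is correct and follows essentially the same route as the paper's proof of Lemma 7.14 (which reduces to Lemma 3.15's scheme): localization by a cut-off with $\delta$-controlled derivatives, the smoothing estimate of Lemma 1.8 together with Lemma 1.5 for the $\varepsilon^{-1}(\mathbf{D}\Lambda)^\varepsilon$ term, the multiplicative property of $\Lambda^\varepsilon$ at the critical order $H^{d/2}$ (your H\"older argument with exponents $2d/(d-2)$ and $d$ is exactly the content and proof of the paper's Lemma 9.1), and the estimates \eqref{7.4}, \eqref{7.5}, \eqref{apriori_N} with interpolation for odd $d$. The only deviations are organizational — you place the cut-off inside $(S_\varepsilon-I)$ (using $\varepsilon\leqslant(4r_1)^{-1}\delta$ to commute it on $\mathcal{O}'$, where the paper instead uses its Lemma 9.3), which produces an extra $\delta^{-1}t^{-1/2}$ contribution that is absorbed into the right-hand side with domain-dependent constants, as allowed.
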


Lemma 7.14 and Theorem 7.12 imply the following result.

\begin{theorem}
Suppose that the assumptions of Theorem \textnormal{7.12} are satisfied, and let $d \geqslant 3$.
Let $\mathcal{K}_N^0(t;\varepsilon)$ be the operator \eqref{K_N^0(t;e)}.
Then for $0<\varepsilon\leqslant \min\{\varepsilon _1;(4r_1)^{-1}\delta\}$ and $t>0$ we have
\begin{align}
\label{7.100}
&\Vert  e^{-\mathcal{A}_{N,\varepsilon}t}-e^{-\mathcal{A}_N^0t}-\varepsilon \mathcal{K}_N^0(t;\varepsilon)\Vert _{L_2(\mathcal{O})\rightarrow H^1(\mathcal{O}')}\leqslant
{\mathscr C}_d \eps h_d(\delta;t) e^{-c_{\flat}t/2},
\\
\label{7.33}
&\Vert g^\varepsilon b(\mathbf{D})e^{-\mathcal{A}_{N,\varepsilon}t}-\widetilde{g}^\varepsilon
b(\mathbf{D}) e^{-\mathcal{A}_N^0 t}\Vert _{L_2(\mathcal{O})\rightarrow L_2(\mathcal{O}')}
\leqslant
\widetilde{{\mathscr C}}_d \eps h_d(\delta;t) e^{-c_{\flat}t/2}.
\end{align}
Here $h_d(\delta;t)$ is defined by \eqref{h_d(delta;t)}.
The constants ${{\mathscr C}}_d$ and $\widetilde{{\mathscr C}}_d$ depend on the data \textnormal{\eqref{data2}}.
\end{theorem}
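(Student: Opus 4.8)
The plan is to obtain Theorem~7.16 by combining Theorem~7.12 with Lemma~7.14 via the triangle inequality, in complete parallel with the passage from Theorem~3.13 to Theorem~3.16 in the Dirichlet case. Fix $0<\varepsilon\leqslant\min\{\varepsilon_1;(4r_1)^{-1}\delta\}$ and $t>0$, and write $e^{-\mathcal{A}_{N,\varepsilon}t}-e^{-\mathcal{A}_N^0t}-\varepsilon\mathcal{K}_N^0(t;\varepsilon)$ as the sum of $e^{-\mathcal{A}_{N,\varepsilon}t}-e^{-\mathcal{A}_N^0t}-\varepsilon\mathcal{K}_N(t;\varepsilon)$ and $\varepsilon\bigl(\mathcal{K}_N(t;\varepsilon)-\mathcal{K}_N^0(t;\varepsilon)\bigr)$. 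Passing to the $(L_2(\mathcal{O})\to H^1(\mathcal{O}'))$-norm, I would estimate the first summand by \eqref{7.29a} (legitimate since $\varepsilon\leqslant\varepsilon_1$) and the second one by \eqref{K_minus_K_N} (which requires $\varepsilon\leqslant(4r_1)^{-1}\delta$). Using the elementary majorizations $t^{-1}\leqslant t^{-1}(\delta^{-1}+1)$ and $(\mathcal{C}_{16}\delta^{-1}+\mathcal{C}_{17})t^{-1}\leqslant\max\{\mathcal{C}_{16},\mathcal{C}_{17}\}\,t^{-1}(\delta^{-1}+1)$, both right-hand sides are dominated by $\varepsilon h_d(\delta;t)e^{-c_\flat t/2}$ up to a constant, and collecting them yields \eqref{7.100} with, say, ${\mathscr C}_d=\max\{\mathcal{C}_{16},\mathcal{C}_{17}\}+{\mathscr C}''_d$.

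For the flux estimate \eqref{7.33} I would argue along the same lines, with one preliminary observation. Since $e^{-\mathcal{A}_N^0t}$ commutes with $\mathcal{P}$ and $b(\mathbf{D})$ annihilates $Z=\Ran(I-\mathcal{P})$, one has $b(\mathbf{D})e^{-\mathcal{A}_N^0t}=b(\mathbf{D})e^{-\mathcal{A}_N^0t}\mathcal{P}$, so the target $\widetilde{g}^\varepsilon b(\mathbf{D})e^{-\mathcal{A}_N^0t}$ in \eqref{7.33} already carries the projection $\mathcal{P}$ implicitly and can be compared with the target $\widetilde{g}^\varepsilon S_\varepsilon b(\mathbf{D})P_\mathcal{O}e^{-\mathcal{A}_N^0t}\mathcal{P}$ of Theorem~7.12. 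Moreover, for $\mathbf{x}\in\mathcal{O}'$ the Steklov average $(S_\varepsilon\cdot)(\mathbf{x})$ uses only values in the ball $\{|\mathbf{y}-\mathbf{x}|\leqslant\varepsilon r_1\}$, which lies inside $\mathcal{O}$ once $\varepsilon r_1<\delta$; there $P_\mathcal{O}$ acts as the identity, so on $\mathcal{O}'$ the difference of the two targets equals $\widetilde{g}^\varepsilon(S_\varepsilon-I)b(\mathbf{D})e^{-\mathcal{A}_N^0t}\mathcal{P}$. I would estimate the $(L_2(\mathcal{O})\to L_2(\mathcal{O}'))$-norm of this operator by the same device used for Lemma~7.14: rewrite $(S_\varepsilon-I)$ through its integral representation (cf. Lemma~1.8) to gain a factor $\varepsilon$, then control the oscillating factor $\widetilde g^\varepsilon$ paired with a smoothing operator by Lemma~1.7, and absorb the extra derivatives of $e^{-\mathcal{A}_N^0t}\mathcal{P}$ via the interior regularity bound \eqref{apriori_N}. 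Combined with the flux estimate of Theorem~7.12 and the same coarsening into $h_d(\delta;t)$, this gives \eqref{7.33}.

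The genuinely substantial part is not the triangle inequality but the flux-side bound for $\widetilde{g}^\varepsilon(S_\varepsilon-I)b(\mathbf{D})e^{-\mathcal{A}_N^0t}\mathcal{P}$ (and, already inside Lemma~7.14, the analogous bound with $\Lambda^\varepsilon$ in place of $\widetilde g^\varepsilon$): here one cannot invoke Corollary~1.6 because Condition~2.5 is not assumed, so $\Lambda$ is only known to lie in $\widetilde H^1(\Omega)$, and the $H^1$-norm must be handled by iterating Lemma~1.5 against the Steklov estimates. This is exactly where the growth exponent $d/4$ and the restriction $d\geqslant3$ enter, since the iteration has to be carried out until the number of derivatives of $e^{-\mathcal{A}_N^0t}\mathcal{P}$ reaches the order $\sim d/2+1$ supplied by \eqref{apriori_N}. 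Everything else — the commutation identities, the Neumann reduction by $\mathcal{P}$, and the bookkeeping of constants — is routine and parallels \S3.
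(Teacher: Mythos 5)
Your treatment of \eqref{7.100} is correct and coincides with the paper's: the bound follows by the triangle inequality from \eqref{7.29a} and \eqref{K_minus_K_N}, with the same coarsening into $h_d(\delta;t)$ and the same type of constant.

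For the flux estimate \eqref{7.33}, however, your route has a genuine gap. You reduce the problem to bounding the operator $[\widetilde g^\varepsilon](S_\varepsilon-I)b(\mathbf{D})e^{-\mathcal{A}_N^0t}\mathcal{P}$ in the $(L_2(\mathcal{O})\to L_2(\mathcal{O}'))$-norm and propose to gain a factor $\varepsilon$ from the integral representation of $S_\varepsilon-I$ and then ``control $\widetilde g^\varepsilon$ paired with a smoothing operator by Lemma~1.7''. But Lemma~1.7 controls $[f^\varepsilon]S_\varepsilon$ only; after the fundamental-theorem-of-calculus representation the averages occur at the scales $s\varepsilon$, $s\in(0,1)$, and the analogous bound for $[f^\varepsilon]$ composed with an average at scale $s\varepsilon$ degenerates like $s^{-d/2}$ as $s\to 0$, so the $s$-integral diverges for $d\geqslant 2$; and the unsmoothed part $\widetilde g^\varepsilon\, b(\mathbf{D})e^{-\mathcal{A}_N^0t}\mathcal{P}$ is not covered at all, since Condition~2.5 is not assumed and $\widetilde g=g(b(\mathbf{D})\Lambda+\mathbf{1}_m)$ is only in $L_2(\Omega)$: a cell-wise H\"older argument would require $b(\mathbf{D})u_0$ to be locally in $L_\infty$, i.e.\ more than $d/2$ interior derivatives, which overshoots the exponent $d/4$ in $h_d(\delta;t)$ (and the endpoint embedding $H^{d/2}\subset L_\infty$ fails), so the claimed rate is not reached this way. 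The paper avoids forming $[\widetilde g^\varepsilon]$ against an unsmoothed function altogether: it deduces \eqref{7.33} from the already proved \eqref{7.100} via the Leibniz identity of the type \eqref{*.6a}, namely $g^\varepsilon b(\mathbf{D})\bigl(\varepsilon\Lambda^\varepsilon b(\mathbf{D})e^{-\mathcal{A}_N^0t}\bigr)=g^\varepsilon(b(\mathbf{D})\Lambda)^\varepsilon b(\mathbf{D})e^{-\mathcal{A}_N^0t}+\varepsilon\sum_{i,j}g^\varepsilon b_i\Lambda^\varepsilon b_jD_iD_je^{-\mathcal{A}_N^0t}$, so that the flux difference equals $g^\varepsilon b(\mathbf{D})$ applied to the error already estimated in $H^1(\mathcal{O}')$ plus a term whose only oscillating multiplier is $\Lambda^\varepsilon\in H^1(\Omega)$; the latter is handled by \eqref{9.200}, Corollary~9.2 (multiplication by $\Lambda^\varepsilon$ from $H^{l-1}$ to $L_2$, $l=d/2$), the cut-off lemmas, and the regularity bounds \eqref{7.5} and \eqref{apriori_N}. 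To repair your argument you would essentially have to perform this same integration by parts (or prove a new, uniform-in-scale version of Lemma~1.7), so as written the flux half does not go through.
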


The proofs of Lemma 7.14 and Theorem 7.15 are postponed until Appendix (see \S 9).
It is convenient to apply Theorem 7.15 if $t$ is separated from zero.

\section*{\S 8. Homogenization of the solutions of the second initial boundary value problem}
\setcounter{section}{8}
\setcounter{equation}{0}
\setcounter{theorem}{0}

In this section, we apply the results of \S 7 to homogenization of the solutions of the second initial boundary value problem for
parabolic systems with rapidly oscillating coefficients.

\subsection*{8.1. The problem for the homogeneous parabolic equation.}
We start with the case of homogeneous equation and nonhomogeneous initial condition.
We are interested in the behaviour for small $\varepsilon$ of the generalized solution of the problem
\begin{equation}
\label{second problem}
\begin{cases} \frac{\partial \mathbf{u}_\varepsilon (\mathbf{x},t)}{\partial t}=-b(\mathbf{D})^*g^\varepsilon (\mathbf{x})b(\mathbf{D})
\mathbf{u}_\varepsilon (\mathbf{x}, t),\quad\mathbf{x}\in \mathcal{O},\;t>0,\\
\mathbf{u}_\varepsilon(\mathbf{x},0)=\boldsymbol{\varphi}(\mathbf{x}),\quad \mathbf{x}\in\mathcal{O},\\
\partial _{\boldsymbol{\nu}}^\varepsilon \mathbf{u}_\varepsilon(\cdot ,t)\vert _{\partial \mathcal{O}}=0,\quad t>0.
\end{cases}
\end{equation}
Here $\boldsymbol{\varphi}\in L_2(\mathcal{O};\mathbb{C}^n)$. The symbol  $\partial _{\boldsymbol{\nu}}^\varepsilon$ stands for the conormal
derivative corresponding to the operator $b(\mathbf{D})^*g^\varepsilon (\mathbf{x})b(\mathbf{D})$.
The formal differential expression for $\partial _{\boldsymbol{\nu}}^\varepsilon$ is given by
$\partial _{\boldsymbol{\nu}}^\varepsilon \mathbf{u}(\mathbf{x}):=b(\boldsymbol{\nu}(\mathbf{x}))^*g^\varepsilon (\mathbf{x})b(\nabla)\mathbf{u}(\mathbf{x})$.

The solution of problem \eqref{second problem} is represented as $\mathbf{u}_\varepsilon(\cdot ,t)=e^{-\mathcal{A}_{N,\varepsilon}t}\boldsymbol{\varphi}$. Thus, the question about homogenization of solutions
is reduced to the question about approximation of the operator $e^{-\mathcal{A}_{N,\varepsilon}t}$ studied
in \S 7.

The corresponding effective problem has the form
\begin{equation}
\label{eff. second problem}
\begin{cases}
\frac{\partial \mathbf{u}_0 (\mathbf{x},t)}{\partial t}=-b(\mathbf{D})^*g^0b(\mathbf{D})
\mathbf{u}_0(\mathbf{x},t),\quad \mathbf{x}\in \mathcal{O},\; t>0,\\
\mathbf{u}_0(\mathbf{x},0)=\boldsymbol{\varphi}(\mathbf{x}),\quad \mathbf{x}\in \mathcal{O},\\
\partial _{\boldsymbol{\nu}}^0\mathbf{u}_0 (\cdot ,t)\vert _{\partial \mathcal{O}}=0,\quad t>0.
\end{cases}
\end{equation}

We have $\mathbf{u}_\varepsilon (\cdot ,t)=e^{-\mathcal{A}_{N,\varepsilon}t}\boldsymbol{\varphi}$, $\mathbf{u}_0(\cdot ,t)=e^{-\mathcal{A}_N^0 t}\boldsymbol{\varphi}$. Therefore, Theorems 7.2 and 7.3 together with the identity $e^{-\mathcal{A}_N^0t}\mathcal{P}\boldsymbol{\varphi}=\mathcal{P}e^{-\mathcal{A}_N^0 t}\boldsymbol{\varphi}=\mathcal{P}\mathbf{u}_0(\cdot ,t)$ imply the following result.

\begin{theorem}
Suppose that the assumptions of Theorem \textnormal{6.2} are satisfied.
Let $\mathbf{u}_\varepsilon$ and $\mathbf{u}_0$ be the solutions of problems \textnormal{(\ref{second problem}) and (\ref{eff. second problem})},
respectively, where $\boldsymbol{\varphi}\in L_2(\mathcal{O};\mathbb{C}^n)$. Then for $0<\varepsilon \leqslant \varepsilon _1$
and $t\geqslant 0$ we have
\begin{equation*}
\Vert \mathbf{u}_\varepsilon (\cdot ,t)-\mathbf{u}_0(\cdot ,t)\Vert _{L_2(\mathcal{O})}\leqslant \mathcal{C}_{12}\varepsilon (t+\varepsilon^2)^{-1/2}e^{-c_\flat  t/2}\Vert \boldsymbol{\varphi}\Vert _{L_2(\mathcal{O})}.
\end{equation*}
Denote $\widehat{\mathbf{u}}_0:=P_\mathcal{O}\mathcal{P}\mathbf{u}_0$,
$\mathbf{p}_\varepsilon :=g^\varepsilon b(\mathbf{D})\mathbf{u}_\varepsilon$.
Then for $0<\varepsilon \leqslant \varepsilon  _1$ and $t>0$ we have
\begin{align*}
\begin{split}
\Vert &\mathbf{u}_\varepsilon (\cdot ,t)-\mathbf{u}_0(\cdot ,t)-\varepsilon \Lambda ^\varepsilon S_\varepsilon b(\mathbf{D})\widehat{\mathbf{u}}_0(\cdot ,t)\Vert _{H^1(\mathcal{O})}\\
&\leqslant \mathcal{C}_{13}(\varepsilon ^{1/2}t^{-3/4}+\varepsilon t^{-1})e^{-c_\flat  t/2}\Vert \boldsymbol{\varphi}\Vert _{L_2(\mathcal{O})},
\end{split}
\\
\begin{split}
\Vert &\mathbf{p}_\varepsilon (\cdot ,t)-\widetilde{g}^\varepsilon S_\varepsilon b(\mathbf{D})\widehat{\mathbf{u}}_0(\cdot ,t)\Vert _{L_2(\mathcal{O})}
\leqslant \widetilde{\mathcal{C}}_{13}(\varepsilon ^{1/2}t^{-3/4}+\varepsilon t^{-1})e^{-c_\flat  t/2}\Vert \boldsymbol{\varphi}\Vert _{L_2(\mathcal{O})}.
\end{split}
\end{align*}
\end{theorem}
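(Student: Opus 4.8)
The plan is to transcribe the operator-norm results of \S\,7 into statements about the solutions, using the representations $\mathbf{u}_\varepsilon(\cdot,t)=e^{-\mathcal{A}_{N,\varepsilon}t}\boldsymbol{\varphi}$ and $\mathbf{u}_0(\cdot,t)=e^{-\mathcal{A}_N^0 t}\boldsymbol{\varphi}$ of the generalized solutions of \eqref{second problem} and \eqref{eff. second problem}. For the $L_2$-estimate I would simply write $\mathbf{u}_\varepsilon(\cdot,t)-\mathbf{u}_0(\cdot,t)=\bigl(e^{-\mathcal{A}_{N,\varepsilon}t}-e^{-\mathcal{A}_N^0 t}\bigr)\boldsymbol{\varphi}$ and apply estimate \eqref{Th exp_N L2->L2} of Theorem~7.2 to the vector $\boldsymbol{\varphi}$; this gives the first inequality with the same constant $\mathcal{C}_{12}$ and the same exponential factor $e^{-c_\flat t/2}$, valid for all $t\geqslant 0$.

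For the $H^1$-approximation the only point to verify is that the corrector of Theorem~7.3, evaluated at $\boldsymbol{\varphi}$, equals $\varepsilon\Lambda^\varepsilon S_\varepsilon b(\mathbf{D})\widehat{\mathbf{u}}_0(\cdot,t)$. From the definition \eqref{K_N(t;eps)} we have $\varepsilon\mathcal{K}_N(t;\varepsilon)\boldsymbol{\varphi}=\varepsilon R_\mathcal{O}[\Lambda^\varepsilon]S_\varepsilon b(\mathbf{D})P_\mathcal{O}\,e^{-\mathcal{A}_N^0 t}\mathcal{P}\boldsymbol{\varphi}$; since the orthogonal decomposition $L_2(\mathcal{O};\mathbb{C}^n)=Z\oplus\mathcal{H}(\mathcal{O})$ reduces $\mathcal{A}_N^0$, the projection $\mathcal{P}$ commutes with $e^{-\mathcal{A}_N^0 t}$, so $e^{-\mathcal{A}_N^0 t}\mathcal{P}\boldsymbol{\varphi}=\mathcal{P}\,e^{-\mathcal{A}_N^0 t}\boldsymbol{\varphi}=\mathcal{P}\mathbf{u}_0(\cdot,t)$ and hence $P_\mathcal{O}e^{-\mathcal{A}_N^0 t}\mathcal{P}\boldsymbol{\varphi}=P_\mathcal{O}\mathcal{P}\mathbf{u}_0(\cdot,t)=\widehat{\mathbf{u}}_0(\cdot,t)$. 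Substituting this into \eqref{Th exn_N L2->H1} yields the second inequality. The flux bound follows in the same manner: $\mathbf{p}_\varepsilon(\cdot,t)=g^\varepsilon b(\mathbf{D})e^{-\mathcal{A}_{N,\varepsilon}t}\boldsymbol{\varphi}$, and by the identity just noted $\widetilde{g}^\varepsilon S_\varepsilon b(\mathbf{D})P_\mathcal{O}e^{-\mathcal{A}_N^0 t}\mathcal{P}\boldsymbol{\varphi}=\widetilde{g}^\varepsilon S_\varepsilon b(\mathbf{D})\widehat{\mathbf{u}}_0(\cdot,t)$, so estimate \eqref{Th potoki_N} of Theorem~7.3 applied to $\boldsymbol{\varphi}$ gives the third inequality.

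There is essentially no genuine difficulty here; the result is a direct corollary of Theorems~7.2 and~7.3, and the only mild subtlety worth spelling out is the bookkeeping with $\mathcal{P}$: the corrector and the flux approximation are built from $e^{-\mathcal{A}_N^0 t}\mathcal{P}$ rather than $e^{-\mathcal{A}_N^0 t}$, which is precisely why the object $\widehat{\mathbf{u}}_0=P_\mathcal{O}\mathcal{P}\mathbf{u}_0$ (and not $P_\mathcal{O}\mathbf{u}_0$) is the one that must appear in the statement. All constants depend only on the data \eqref{data2}, inherited verbatim from \S\,7, as is the decay factor $e^{-c_\flat t/2}$.
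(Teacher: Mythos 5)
Your proposal is correct and follows exactly the paper's own route: the theorem is stated there as a direct corollary of Theorems 7.2 and 7.3 combined with the identity $e^{-\mathcal{A}_N^0 t}\mathcal{P}\boldsymbol{\varphi}=\mathcal{P}e^{-\mathcal{A}_N^0 t}\boldsymbol{\varphi}=\mathcal{P}\mathbf{u}_0(\cdot,t)$, which is precisely the commutation of $\mathcal{P}$ with the semigroup that you justify via the reducing decomposition $L_2(\mathcal{O};\mathbb{C}^n)=Z\oplus\mathcal{H}(\mathcal{O})$. Your bookkeeping with $\widehat{\mathbf{u}}_0=P_\mathcal{O}\mathcal{P}\mathbf{u}_0$ matches the paper's.
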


In the case where $\Lambda \in L_\infty$ Theorem 7.6 leads to the following result.

\begin{theorem}
Suppose that the assumptions of Theorem \textnormal{8.1} are satisfied. Suppose that Condition \textnormal{2.5} is satisfied.
Then for $0<\varepsilon \leqslant \varepsilon_1$ and $t>0$ we have
\begin{align*}
\begin{split}
\Vert &\mathbf{u}_\varepsilon (\cdot ,t)-\mathbf{u}_0 (\cdot ,t)-\varepsilon \Lambda ^\varepsilon b(\mathbf{D})\mathbf{u}_0 (\cdot ,t)\Vert _{H^1(\mathcal{O})}\\
&\leqslant\mathcal{C}_{15}(\varepsilon ^{1/2}t^{-3/4}+\varepsilon t^{-1})
e^{-c_\flat  t/2}\Vert \boldsymbol{\varphi}\Vert _{L_2(\mathcal{O})},
\end{split}
\\
\begin{split}
\Vert  &\mathbf{p}_\varepsilon (\cdot ,t)-\widetilde{g}^\varepsilon b(\mathbf{D})\mathbf{u}_0(\cdot ,t)\Vert _{L_2(\mathcal{O})}
\leqslant \widetilde{\mathcal{C}}_{15} (\varepsilon ^{1/2}t^{-3/4}+\varepsilon t^{-1})e^{-c_\flat  t/2}\Vert \boldsymbol{\varphi}\Vert _{L_2(\mathcal{O})}.
\end{split}
\end{align*}
\end{theorem}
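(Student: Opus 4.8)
The plan is to obtain Theorem~8.2 as an immediate corollary of Theorem~7.6, transcribing the operator-norm estimates of \S7 into statements about the solutions of problems \eqref{second problem} and \eqref{eff. second problem}. First I would recall the representations $\mathbf{u}_\varepsilon(\cdot,t)=e^{-\mathcal{A}_{N,\varepsilon}t}\boldsymbol{\varphi}$ and $\mathbf{u}_0(\cdot,t)=e^{-\mathcal{A}_N^0 t}\boldsymbol{\varphi}$, whence $\mathbf{p}_\varepsilon(\cdot,t)=g^\varepsilon b(\mathbf{D})e^{-\mathcal{A}_{N,\varepsilon}t}\boldsymbol{\varphi}$. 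The key identifications are that the corrector operator $\mathcal{K}_N^0(t;\varepsilon)$ of \eqref{K_N^0(t;e)}, applied to the initial datum, satisfies $\mathcal{K}_N^0(t;\varepsilon)\boldsymbol{\varphi}=[\Lambda^\varepsilon]b(\mathbf{D})e^{-\mathcal{A}_N^0 t}\boldsymbol{\varphi}=\Lambda^\varepsilon b(\mathbf{D})\mathbf{u}_0(\cdot,t)$, and likewise $\widetilde{g}^\varepsilon b(\mathbf{D})e^{-\mathcal{A}_N^0 t}\boldsymbol{\varphi}=\widetilde{g}^\varepsilon b(\mathbf{D})\mathbf{u}_0(\cdot,t)$. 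Thus the two error terms appearing in the statement are exactly the images under $\boldsymbol{\varphi}$ of the operator differences estimated in Theorem~7.6.

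Next I would verify that the hypotheses of Theorem~7.6 hold here: the assumptions of Theorem~8.1 include those of Theorem~6.2, which in turn include those of Theorem~6.1, and Theorem~8.2 additionally imposes Condition~2.5. Condition~2.5 is precisely what licenses the use of the simpler corrector $\mathcal{K}_N^0(t;\varepsilon)$ --- without the Steklov smoothing $S_\varepsilon$ and without the projection $\mathcal{P}$ --- in place of the general corrector $\mathcal{K}_N(t;\varepsilon)$ of \eqref{K_N(t;eps)}. With this in place, evaluating the first inequality of Theorem~7.6 on $\boldsymbol{\varphi}$ and bounding by the operator norm times $\Vert\boldsymbol{\varphi}\Vert_{L_2(\mathcal{O})}$ gives
\[
\Vert \mathbf{u}_\varepsilon(\cdot,t)-\mathbf{u}_0(\cdot,t)-\varepsilon\Lambda^\varepsilon b(\mathbf{D})\mathbf{u}_0(\cdot,t)\Vert_{H^1(\mathcal{O})}\leqslant \mathcal{C}_{15}(\varepsilon^{1/2}t^{-3/4}+\varepsilon t^{-1})e^{-c_\flat t/2}\Vert\boldsymbol{\varphi}\Vert_{L_2(\mathcal{O})},
\]
and the second inequality of Theorem~7.6, evaluated on the same $\boldsymbol{\varphi}$, yields the asserted bound for $\mathbf{p}_\varepsilon-\widetilde{g}^\varepsilon b(\mathbf{D})\mathbf{u}_0$ with constant $\widetilde{\mathcal{C}}_{15}$. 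The constants and the decay rate $c_\flat$, together with their dependence on the data \eqref{data2} and on $\Vert\Lambda\Vert_{L_\infty}$, are inherited verbatim from Theorem~7.6.

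Since no new analysis is involved, there is no real obstacle; the only care needed is the bookkeeping of the correspondences between the PDE objects and the operator-theoretic objects of \S7 --- in particular checking that $\varepsilon\mathcal{K}_N^0(t;\varepsilon)\boldsymbol{\varphi}$ is literally the corrector term $\varepsilon\Lambda^\varepsilon b(\mathbf{D})\mathbf{u}_0$ displayed in Theorem~8.2, and that the flux $\mathbf{p}_\varepsilon$ is correctly matched with $g^\varepsilon b(\mathbf{D})e^{-\mathcal{A}_{N,\varepsilon}t}$. One may additionally remark, along the lines of Remark~7.4, that for sufficiently small $\varepsilon$ the rate $e^{-c_\flat t/2}$ can be replaced by $e^{-(\mu_2^0-\kappa)t}$ with $\kappa>0$ arbitrarily small, at the price of $\kappa$-dependent constants.
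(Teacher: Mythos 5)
Your proposal is correct and coincides with the paper's own (implicit) proof: Theorem~8.2 is stated there precisely as an immediate consequence of Theorem~7.6, obtained by evaluating the operator estimates on $\boldsymbol{\varphi}$ and using the identifications $\mathcal{K}_N^0(t;\varepsilon)\boldsymbol{\varphi}=\Lambda^\varepsilon b(\mathbf{D})\mathbf{u}_0(\cdot,t)$ and $\mathbf{p}_\varepsilon=g^\varepsilon b(\mathbf{D})e^{-\mathcal{A}_{N,\varepsilon}t}\boldsymbol{\varphi}$. The bookkeeping of hypotheses (Theorem~8.1 $\Rightarrow$ Theorem~6.2 $\Rightarrow$ Theorem~6.1, plus Condition~2.5) is exactly as you describe.
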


In the case of sufficiently smooth boundary from Theorem 7.9 we obtain the following result.

\begin{theorem}
Suppose that the assumptions of Theorem~{\rm 8.1} are satisfied, and let $d\geqslant 3$.
Suppose that $\partial \mathcal{O}\in C^{d/2,1}$ if $d$ is even and  $\partial\mathcal{O}\in C^{(d+1)/2,1}$ if $d$ is odd.
Then for $0<\varepsilon \leqslant \varepsilon _1$ and $t>0$ we have
\begin{align*}
\begin{split}
\Vert &\mathbf{u}_\varepsilon (\cdot ,t)-\mathbf{u}_0(\cdot ,t)-\varepsilon \Lambda ^\varepsilon b(\mathbf{D})\mathbf{u}_0(\cdot ,t)\Vert _{H^1(\mathcal{O})}\\
&\leqslant \mathscr{C}_d^*(\varepsilon ^{1/2}t^{-3/4}+\varepsilon t^{-1}+\varepsilon t^{-d/4-1/2})e^{-c_\flat t/2}\Vert \boldsymbol{\varphi}\Vert _{L_2(\mathcal{O})},
\end{split}
\\
\begin{split}
\Vert &\mathbf{p}_\varepsilon (\cdot ,t)-\widetilde{g}^\varepsilon b(\mathbf{D})\mathbf{u}_0(\cdot ,t)\Vert _{L_2(\mathcal{O})}\\
&\leqslant \widetilde{\mathscr{C}}_d^*(\varepsilon ^{1/2}t^{-3/4}+\varepsilon t^{-1}+\varepsilon t^{-d/4-1/2})e^{-c_\flat t/2}\Vert \boldsymbol{\varphi}\Vert _{L_2(\mathcal{O})}.
\end{split}
\end{align*}
\end{theorem}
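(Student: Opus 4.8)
The plan is to deduce Theorem~8.4 directly from the operator-level estimates of Theorem~7.9, exactly as Theorems~8.1--8.3 follow from Theorems~7.2, 7.3 and 7.6. First I would write the generalized solutions as $\mathbf{u}_\varepsilon(\cdot,t)=e^{-\mathcal{A}_{N,\varepsilon}t}\boldsymbol{\varphi}$ and $\mathbf{u}_0(\cdot,t)=e^{-\mathcal{A}_N^0 t}\boldsymbol{\varphi}$, and observe that the correcting term is precisely the action of the operator corrector: since $b(\mathbf{D})$ annihilates $Z=\mathrm{Ker}\,b(\mathbf{D})=\mathrm{Ker}\,\mathcal{A}_N^0$ (see \eqref{Ker=Z}), one has $b(\mathbf{D})e^{-\mathcal{A}_N^0 t}=b(\mathbf{D})e^{-\mathcal{A}_N^0 t}\mathcal{P}$, so that $\varepsilon\Lambda^\varepsilon b(\mathbf{D})\mathbf{u}_0(\cdot,t)=\varepsilon\mathcal{K}_N^0(t;\varepsilon)\boldsymbol{\varphi}$, with $\mathcal{K}_N^0(t;\varepsilon)$ the operator \eqref{K_N^0(t;e)}. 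Likewise $\mathbf{p}_\varepsilon(\cdot,t)=g^\varepsilon b(\mathbf{D})e^{-\mathcal{A}_{N,\varepsilon}t}\boldsymbol{\varphi}$ and $\widetilde{g}^\varepsilon b(\mathbf{D})\mathbf{u}_0(\cdot,t)=\widetilde{g}^\varepsilon b(\mathbf{D})e^{-\mathcal{A}_N^0 t}\boldsymbol{\varphi}$, so the flux statement is again an operator estimate evaluated at $\boldsymbol{\varphi}$.

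Next I would verify that the hypotheses of Theorem~7.9 hold under the present assumptions: the assumptions of Theorem~8.1 include those of Theorem~6.2, hence of Theorem~6.1 (in particular Condition~5.1), the restriction $d\geqslant 3$ is imposed, and the smoothness requirement on $\partial\mathcal{O}$ stated here is exactly the hypothesis of Lemma~7.8 and Theorem~7.9. It then suffices to apply the two estimates \eqref{7.*4} and \eqref{7.*5} of Theorem~7.9 to the vector $\boldsymbol{\varphi}\in L_2(\mathcal{O};\mathbb{C}^n)$: the first gives the bound for $\|\mathbf{u}_\varepsilon-\mathbf{u}_0-\varepsilon\Lambda^\varepsilon b(\mathbf{D})\mathbf{u}_0\|_{H^1(\mathcal{O})}$ with $\mathscr{C}_d^*$ the constant of Theorem~7.9, and the second gives the bound for $\|\mathbf{p}_\varepsilon-\widetilde{g}^\varepsilon b(\mathbf{D})\mathbf{u}_0\|_{L_2(\mathcal{O})}$ with $\widetilde{\mathscr{C}}_d^*$; the exponential factor $e^{-c_\flat t/2}$ comes from the lower bound \eqref{<b_N,e<} on the forms restricted to $H^1_\perp(\mathcal{O};\mathbb{C}^n)$, which governs the decay of $e^{-\mathcal{A}_{N,\varepsilon}t}\mathcal{P}$ and $e^{-\mathcal{A}_N^0 t}\mathcal{P}$.

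Since the statement is merely a restatement of Theorem~7.9 at the level of solutions, there is essentially no obstacle remaining at this point; the substantive work lies entirely in Theorem~7.9 and Lemma~7.8, proved in the Appendix. There the main point is to estimate the difference $\mathcal{K}_N(t;\varepsilon)-\mathcal{K}_N^0(t;\varepsilon)$ between the smoothed corrector \eqref{K_N(t;eps)} and the unsmoothed one \eqref{K_N^0(t;e)}, using Lemma~7.7 (higher-order Sobolev regularity of $e^{-\mathcal{A}_N^0 t}\mathcal{P}$ under the extra smoothness of $\partial\mathcal{O}$), together with Lemma~1.5 and Corollary~1.6 on $\Lambda^\varepsilon$ and Lemma~1.8 on $S_\varepsilon-I$, and then to combine the resulting bound with Theorem~7.3; this is where the growing factor $t^{-d/4-1/2}$ (valid for all $t>0$, with no restriction $t\geqslant\varepsilon^2$ needed) originates. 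The only care needed in the present argument is bookkeeping of the constants, which depend on the data \eqref{data2}.
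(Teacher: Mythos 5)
Your proposal is correct and follows the same route as the paper: the theorem is obtained by evaluating the operator estimates \eqref{7.*4} and \eqref{7.*5} of Theorem~7.9 at $\boldsymbol{\varphi}$, using $\mathbf{u}_\varepsilon(\cdot,t)=e^{-\mathcal{A}_{N,\varepsilon}t}\boldsymbol{\varphi}$, $\mathbf{u}_0(\cdot,t)=e^{-\mathcal{A}_N^0 t}\boldsymbol{\varphi}$ and the identification $\varepsilon\Lambda^\varepsilon b(\mathbf{D})\mathbf{u}_0(\cdot,t)=\varepsilon\mathcal{K}_N^0(t;\varepsilon)\boldsymbol{\varphi}$. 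Your verification of the hypotheses and your remarks on where the factor $t^{-d/4-1/2}$ originates (Lemma~7.8 via Lemma~7.7) accurately reflect the substantive work, which indeed resides in Theorem~7.9 and the Appendix rather than in this deduction.
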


Special cases are distinguished with the help of Proposition 7.11.

\begin{proposition}
Suppose that the assumptions of Theorem \textnormal{8.1} are satisfied.

\noindent
$1^\circ$. If $g^0=\overline{g}$, i.~e., relations \textnormal{(\ref{overline-g})} are satisfied, then
for $0< \eps \leqslant \eps_1$ and $t>0$ we have
$$
\Vert \mathbf{u}_\varepsilon (\cdot ,t)-\mathbf{u}_0(\cdot ,t)\Vert _{H^1(\mathcal{O})}\leqslant
\mathcal{C}_{13}(\varepsilon^{1/2} t^{-3/4}+\varepsilon t^{-1}) e^{-c_\flat  t/2}
\Vert \boldsymbol{\varphi}\Vert _{L_2(\mathcal{O})}.
$$

\noindent
$2^\circ$. If $g^0=\underline{g}$, i.~e., relations \textnormal{(\ref{underline-g})} are satisfied, then for
\hbox{$0< \eps \leqslant \eps_1$} and $t>0$ we have
$$
\Vert \mathbf{p}_\varepsilon (\cdot ,t)-\mathbf{p}_0(\cdot ,t)\Vert _{L_2(\mathcal{O})}\leqslant
\widetilde{\mathcal{C}}_{15}(\varepsilon^{1/2} t^{-3/4}+\varepsilon t^{-1})e^{-c_\flat  t/2}
\Vert \boldsymbol{\varphi}\Vert _{L_2(\mathcal{O})},
$$
where $\mathbf{p}_0 = g^0 b(\D) \mathbf{u}_0$.
\end{proposition}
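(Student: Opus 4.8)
The plan is to deduce both assertions directly from Proposition~7.11 by substituting the explicit form of the solutions. Recall from Subsection~8.1 that $\mathbf{u}_\varepsilon(\cdot,t)=e^{-\mathcal{A}_{N,\varepsilon}t}\boldsymbol{\varphi}$ and $\mathbf{u}_0(\cdot,t)=e^{-\mathcal{A}_N^0 t}\boldsymbol{\varphi}$, so that the homogenization statements for $\boldsymbol{\varphi}\in L_2(\mathcal{O};\mathbb{C}^n)$ are obtained simply by applying the corresponding operator-norm estimates of \S7 to the initial datum. Hence the only thing to verify is that the structural hypotheses $g^0=\overline{g}$ and $g^0=\underline{g}$ place us in the two special situations already treated in Proposition~7.11.

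For assertion $1^\circ$, I would first invoke Proposition~1.2: the identity $g^0=\overline{g}$ is equivalent to relations \textnormal{(\ref{overline-g})}, and under \textnormal{(\ref{overline-g})} the $\Gamma$-periodic solution $\Lambda$ of problem \textnormal{(\ref{Lambda_problem})} vanishes. Consequently the corrector $\mathcal{K}_N(t;\varepsilon)$ defined in \textnormal{(\ref{K_N(t;eps)})} is identically zero, and Proposition~7.11($1^\circ$) gives
\begin{equation*}
\Vert e^{-\mathcal{A}_{N,\varepsilon}t}-e^{-\mathcal{A}_N^0 t}\Vert_{L_2(\mathcal{O})\rightarrow H^1(\mathcal{O})}\leqslant \mathcal{C}_{13}(\varepsilon^{1/2}t^{-3/4}+\varepsilon t^{-1})e^{-c_\flat t/2},\quad t>0.
\end{equation*}
Applying this bound to $\boldsymbol{\varphi}$ yields the asserted estimate for $\Vert\mathbf{u}_\varepsilon(\cdot,t)-\mathbf{u}_0(\cdot,t)\Vert_{H^1(\mathcal{O})}$ with the same constant $\mathcal{C}_{13}$.

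For assertion $2^\circ$, I would use Proposition~1.3: the identity $g^0=\underline{g}$ is equivalent to relations \textnormal{(\ref{underline-g})}, under which the matrix-valued function $\widetilde{g}$ from \textnormal{(\ref{tilde g})} is constant and equals $g^0=\underline{g}$; moreover, by Proposition~2.7($3^\circ$), Condition~2.5 holds automatically, so Theorem~7.6 and hence Proposition~7.11($2^\circ$) are applicable. Proposition~7.11($2^\circ$) then reads
\begin{equation*}
\Vert g^\varepsilon b(\mathbf{D})e^{-\mathcal{A}_{N,\varepsilon}t}-g^0 b(\mathbf{D})e^{-\mathcal{A}_N^0 t}\Vert_{L_2(\mathcal{O})\rightarrow L_2(\mathcal{O})}\leqslant \widetilde{\mathcal{C}}_{15}(\varepsilon^{1/2}t^{-3/4}+\varepsilon t^{-1})e^{-c_\flat t/2},
\end{equation*}
and evaluating at $\boldsymbol{\varphi}$, together with the definitions $\mathbf{p}_\varepsilon=g^\varepsilon b(\mathbf{D})\mathbf{u}_\varepsilon$ and $\mathbf{p}_0=g^0 b(\mathbf{D})\mathbf{u}_0$, produces the required bound on $\Vert\mathbf{p}_\varepsilon(\cdot,t)-\mathbf{p}_0(\cdot,t)\Vert_{L_2(\mathcal{O})}$.

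As for the hard part: there is essentially no analytic obstacle, since all of the content is already packaged in \S7, and — unlike the nonhomogeneous statements of \S8 — no splitting of the time interval near $t=\varepsilon^2$ is needed because the relevant estimates of Proposition~7.11 already hold for all $t>0$. The one point deserving a line of care is the passage from the flux approximation of Theorem~7.3, which a priori features $\widetilde{g}^\varepsilon S_\varepsilon b(\mathbf{D})P_\mathcal{O}e^{-\mathcal{A}_N^0 t}\mathcal{P}$, to the clean expression $g^0 b(\mathbf{D})e^{-\mathcal{A}_N^0 t}$ appearing in the statement; this reduction rests on the constancy of $\widetilde{g}$ under \textnormal{(\ref{underline-g})}, on Condition~2.5 which removes $S_\varepsilon$, and on the fact that $b(\mathbf{D})$ annihilates $Z=\Ker b(\mathbf{D})$, so that $b(\mathbf{D})e^{-\mathcal{A}_N^0 t}=b(\mathbf{D})e^{-\mathcal{A}_N^0 t}\mathcal{P}$. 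Since all of this has already been incorporated into Proposition~7.11, invoking that proposition suffices.
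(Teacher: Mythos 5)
Your argument is correct and coincides with the paper's own route: the paper obtains this proposition simply by applying the operator-norm estimates of Proposition~7.11 (whose hypotheses are checked exactly as you do, via Propositions~1.2, 1.3 and 2.7($3^\circ$)) to the initial datum, using $\mathbf{u}_\varepsilon(\cdot,t)=e^{-\mathcal{A}_{N,\varepsilon}t}\boldsymbol{\varphi}$ and $\mathbf{u}_0(\cdot,t)=e^{-\mathcal{A}_N^0 t}\boldsymbol{\varphi}$. No further argument is needed.
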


Approximation of the solutions in a strictly interior subdomain is deduced from Theorem 7.12.

\begin{theorem}
Suppose that the assumptions of Theorem \textnormal{8.1} are satisfied.
Let $\mathcal{O}'$ be a strictly interior subdomain of the domain $\mathcal{O}$, and let $\delta =\textnormal{dist}\,\lbrace \mathcal{O}';\partial \mathcal{O}\rbrace $. Then for $0<\varepsilon\leqslant \varepsilon _1$ and $t>0$ we have
\begin{align*}
\begin{split}
\Vert &\mathbf{u}_\varepsilon (\cdot ,t)-\mathbf{u}_0(\cdot ,t)-\varepsilon \Lambda ^\varepsilon S_\varepsilon b(\mathbf{D})\widehat{\mathbf{u}}_0(\cdot ,t)\Vert _{H^1(\mathcal{O}')}\\
&\leqslant (\mathcal{C}_{16}\delta ^{-1}+\mathcal{C}_{17})\varepsilon t^{-1}e^{-c_\flat  t/2}\Vert \boldsymbol{\varphi}\Vert _{L_2(\mathcal{O})} ,
\end{split}\\
\begin{split}
\Vert &\mathbf{p}_\varepsilon (\cdot ,t)-\widetilde{g}^\varepsilon S_\varepsilon b(\mathbf{D})\widehat{\mathbf{u}}_0(\cdot ,t)\Vert _{L_2(\mathcal{O}')}
\leqslant (\widetilde{\mathcal{C}}_{16}\delta ^{-1}+\widetilde{\mathcal{C}}_{17})\varepsilon
t^{-1}e^{-c_\flat  t/2}\Vert \boldsymbol{\varphi}\Vert _{L_2(\mathcal{O})}.
\end{split}
\end{align*}
\end{theorem}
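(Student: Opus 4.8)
The plan is to deduce the statement directly from Theorem~7.12, after rewriting the solutions and the corrector term in terms of the operator exponentials. First I would recall that the solutions of problems \eqref{second problem} and \eqref{eff. second problem} are $\mathbf{u}_\varepsilon(\cdot,t)=e^{-\mathcal{A}_{N,\varepsilon}t}\boldsymbol{\varphi}$ and $\mathbf{u}_0(\cdot,t)=e^{-\mathcal{A}_N^0 t}\boldsymbol{\varphi}$, so that the flux is $\mathbf{p}_\varepsilon(\cdot,t)=g^\varepsilon b(\mathbf{D})e^{-\mathcal{A}_{N,\varepsilon}t}\boldsymbol{\varphi}$.

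Next I would identify $\widehat{\mathbf{u}}_0(\cdot,t)=P_\mathcal{O}\mathcal{P}\mathbf{u}_0(\cdot,t)$. Since the orthogonal decomposition $L_2(\mathcal{O};\mathbb{C}^n)=Z\oplus\mathcal{H}(\mathcal{O})$ reduces the operator $\mathcal{A}_N^0$, the projection $\mathcal{P}$ commutes with $e^{-\mathcal{A}_N^0 t}$; hence $\mathcal{P}\mathbf{u}_0(\cdot,t)=e^{-\mathcal{A}_N^0 t}\mathcal{P}\boldsymbol{\varphi}$ and therefore $\widehat{\mathbf{u}}_0(\cdot,t)=P_\mathcal{O}e^{-\mathcal{A}_N^0 t}\mathcal{P}\boldsymbol{\varphi}$. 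Consequently
\[
\varepsilon\Lambda^\varepsilon S_\varepsilon b(\mathbf{D})\widehat{\mathbf{u}}_0(\cdot,t)
= \varepsilon R_\mathcal{O}[\Lambda^\varepsilon]S_\varepsilon b(\mathbf{D})P_\mathcal{O}e^{-\mathcal{A}_N^0 t}\mathcal{P}\boldsymbol{\varphi}
= \varepsilon\mathcal{K}_N(t;\varepsilon)\boldsymbol{\varphi},
\]
where $\mathcal{K}_N(t;\varepsilon)$ is the operator \eqref{K_N(t;eps)}, and similarly $\widetilde{g}^\varepsilon S_\varepsilon b(\mathbf{D})\widehat{\mathbf{u}}_0(\cdot,t)=\widetilde{g}^\varepsilon S_\varepsilon b(\mathbf{D})P_\mathcal{O}e^{-\mathcal{A}_N^0 t}\mathcal{P}\boldsymbol{\varphi}$.

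Then it remains to apply Theorem~7.12 to $\boldsymbol{\varphi}$. With the identities above, the left-hand side of the first asserted inequality equals $\|(e^{-\mathcal{A}_{N,\varepsilon}t}-e^{-\mathcal{A}_N^0 t}-\varepsilon\mathcal{K}_N(t;\varepsilon))\boldsymbol{\varphi}\|_{H^1(\mathcal{O}')}$, which by estimate \eqref{7.29a} does not exceed $(\mathcal{C}_{16}\delta^{-1}+\mathcal{C}_{17})\varepsilon t^{-1}e^{-c_\flat t/2}\|\boldsymbol{\varphi}\|_{L_2(\mathcal{O})}$; the inequality for the flux follows in exactly the same manner from the second estimate of Theorem~7.12, recalling that $\mathbf{p}_\varepsilon(\cdot,t)=g^\varepsilon b(\mathbf{D})e^{-\mathcal{A}_{N,\varepsilon}t}\boldsymbol{\varphi}$. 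Both estimates hold for $0<\varepsilon\leqslant\varepsilon_1$ and $t>0$, as required.

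I do not anticipate any genuine difficulty: the statement is a routine corollary of Theorem~7.12. The only point that deserves a moment of care is the commutation relation $\mathcal{P}e^{-\mathcal{A}_N^0 t}=e^{-\mathcal{A}_N^0 t}\mathcal{P}$, which rests on \eqref{Ker=Z} and the reducing decomposition of $\mathcal{A}_N^0$, together with the verification that the operator built from $\widehat{\mathbf{u}}_0$ coincides exactly with $\mathcal{K}_N(t;\varepsilon)$ from \eqref{K_N(t;eps)}.
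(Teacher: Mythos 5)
Your proposal is correct and is exactly the paper's route: the paper states this theorem as a direct corollary of Theorem~7.12, using the representation $\mathbf{u}_\varepsilon(\cdot,t)=e^{-\mathcal{A}_{N,\varepsilon}t}\boldsymbol{\varphi}$, $\mathbf{u}_0(\cdot,t)=e^{-\mathcal{A}_N^0 t}\boldsymbol{\varphi}$ and the identity $e^{-\mathcal{A}_N^0 t}\mathcal{P}\boldsymbol{\varphi}=\mathcal{P}\mathbf{u}_0(\cdot,t)$ to recognize $\varepsilon\Lambda^\varepsilon S_\varepsilon b(\mathbf{D})\widehat{\mathbf{u}}_0$ as $\varepsilon\mathcal{K}_N(t;\varepsilon)\boldsymbol{\varphi}$. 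Your explicit verification of the commutation of $\mathcal{P}$ with the semigroup is the one point of care, and you have handled it properly.
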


In the case where $\Lambda \in L_\infty$ Theorem 7.13 implies the following result.

\begin{theorem}
Suppose that the assumptions of Theorem \textnormal{8.5} are satisfied. Suppose that the matrix-valued
function $\Lambda (\mathbf{x})$ satisfies Condition \textnormal{2.5}. Then for \hbox{$0<\varepsilon \leqslant \varepsilon _1$} and $t>0$ we have
\begin{align*}
\begin{split}
\Vert &\mathbf{u}_\varepsilon (\cdot ,t)-\mathbf{u}_0(\cdot ,t)-\varepsilon \Lambda ^\varepsilon b(\mathbf{D})\mathbf{u}_0(\cdot ,t)\Vert _{H^1(\mathcal{O}')}\\
&\leqslant (\mathcal{C}_{16}\delta ^{-1}+\check{\mathcal{C}}_{17})\varepsilon t^{-1}e^{-c_\flat  t/2}\Vert \boldsymbol{\varphi}\Vert _{L_2(\mathcal{O})} ,
\end{split}\\
\begin{split}
\Vert &\mathbf{p}_\varepsilon (\cdot ,t)-\widetilde{g}^\varepsilon  b(\mathbf{D})\mathbf{u}_0(\cdot ,t)\Vert _{L_2(\mathcal{O}')}
\leqslant (\widetilde{\mathcal{C}}_{16}\delta ^{-1}+\widehat{\mathcal{C}}_{17})\varepsilon t^{-1}e^{-c_\flat  t/2}\Vert \boldsymbol{\varphi}\Vert _{L_2(\mathcal{O})}.
\end{split}
\end{align*}
\end{theorem}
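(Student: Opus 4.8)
The plan is to read this statement off from Theorem~7.13 by evaluating its operator-norm estimates on the initial datum $\boldsymbol{\varphi}$. First I would record the solution formulas: the parabolic equation in problem~\eqref{second problem} has no source term, so its generalized solution is $\mathbf{u}_\varepsilon(\cdot,t) = e^{-\mathcal{A}_{N,\varepsilon}t}\boldsymbol{\varphi}$, and the solution of the effective problem~\eqref{eff. second problem} is $\mathbf{u}_0(\cdot,t) = e^{-\mathcal{A}_N^0 t}\boldsymbol{\varphi}$; moreover $\mathbf{p}_\varepsilon(\cdot,t) = g^\varepsilon b(\mathbf{D})\mathbf{u}_\varepsilon(\cdot,t) = g^\varepsilon b(\mathbf{D})e^{-\mathcal{A}_{N,\varepsilon}t}\boldsymbol{\varphi}$. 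These identities already reduce the whole question to the operator estimates established in \S7.

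Second, I would match the corrector and comparison terms. Under Condition~2.5 the operator $\mathcal{K}_N^0(t;\varepsilon)$ from~\eqref{K_N^0(t;e)} satisfies $\mathcal{K}_N^0(t;\varepsilon)\boldsymbol{\varphi} = [\Lambda^\varepsilon] b(\mathbf{D}) e^{-\mathcal{A}_N^0 t}\boldsymbol{\varphi} = \Lambda^\varepsilon b(\mathbf{D})\mathbf{u}_0(\cdot,t)$, which is precisely the corrector in the first assertion; similarly $\widetilde{g}^\varepsilon b(\mathbf{D}) e^{-\mathcal{A}_N^0 t}\boldsymbol{\varphi} = \widetilde{g}^\varepsilon b(\mathbf{D})\mathbf{u}_0(\cdot,t)$, the comparison flux in the second assertion.

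Third, I would check that the hypotheses imposed here---the assumptions of Theorem~8.5, which are those of Theorem~8.1 and hence of Theorem~6.2, together with the strictly interior subdomain $\mathcal{O}'$ and Condition~2.5---coincide with the hypotheses of Theorem~7.13. Applying the $(L_2(\mathcal{O})\to H^1(\mathcal{O}'))$-estimate of Theorem~7.13 to $\boldsymbol{\varphi}$ gives
\[
\| \mathbf{u}_\varepsilon(\cdot,t) - \mathbf{u}_0(\cdot,t) - \varepsilon\Lambda^\varepsilon b(\mathbf{D})\mathbf{u}_0(\cdot,t) \|_{H^1(\mathcal{O}')} \leqslant (\mathcal{C}_{16}\delta^{-1} + \check{\mathcal{C}}_{17})\,\varepsilon\, t^{-1} e^{-c_\flat t/2}\,\|\boldsymbol{\varphi}\|_{L_2(\mathcal{O})},
\]
and the flux estimate of Theorem~7.13, applied to the same $\boldsymbol{\varphi}$, yields the bound on $\|\mathbf{p}_\varepsilon(\cdot,t) - \widetilde{g}^\varepsilon b(\mathbf{D})\mathbf{u}_0(\cdot,t)\|_{L_2(\mathcal{O}')}$ with constant $\widetilde{\mathcal{C}}_{16}\delta^{-1} + \widehat{\mathcal{C}}_{17}$. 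This is the whole proof.

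There is essentially no analytic obstacle here: all the substance lies inside Theorem~7.13, which in turn rests on Theorem~6.6 and the proof scheme of Theorem~7.12. The only point needing a little care is the bookkeeping of constants---confirming that $\mathcal{C}_{16}$, $\check{\mathcal{C}}_{17}$, $\widetilde{\mathcal{C}}_{16}$, $\widehat{\mathcal{C}}_{17}$ are literally the constants of Theorem~7.13, that they depend only on the data~\eqref{data2} (and on $\|\Lambda\|_{L_\infty}$ for the two refined ones), and that the chain of hypotheses from Theorem~8.5 back through Theorem~8.1 to Theorem~6.2, and separately down from Theorem~7.12 to Theorem~6.2, is indeed intact so that Theorem~7.13 applies verbatim.
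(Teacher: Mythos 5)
Your proposal is correct and is exactly the paper's argument: the paper derives this theorem by applying Theorem 7.13 to $\boldsymbol{\varphi}$, using $\mathbf{u}_\varepsilon(\cdot,t)=e^{-\mathcal{A}_{N,\varepsilon}t}\boldsymbol{\varphi}$, $\mathbf{u}_0(\cdot,t)=e^{-\mathcal{A}_N^0 t}\boldsymbol{\varphi}$, and the identification $\mathcal{K}_N^0(t;\varepsilon)\boldsymbol{\varphi}=\Lambda^\varepsilon b(\mathbf{D})\mathbf{u}_0(\cdot,t)$. The hypothesis chain and constants match as you describe.
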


Finally, for $d\geqslant 3$ Theorem 7.15 implies the following result.

\begin{theorem}
Let $d\geqslant 3$. Under the assumptions of Theorem~{\rm 8.5} for $0<\varepsilon\leqslant \min \lbrace \varepsilon _1;(4r_1)^{-1}\delta \rbrace $ and $t>0$ we have
\begin{align*}
\begin{split}
\Vert &\mathbf{u}_\varepsilon (\cdot ,t)-\mathbf{u}_0(\cdot ,t)-\varepsilon \Lambda ^\varepsilon b(\mathbf{D})\mathbf{u}_0(\cdot ,t)\Vert _{H^1(\mathcal{O}')}\leqslant \mathscr{C}_d\varepsilon h_d(\delta ;t )e^{-c_\flat t/2}\Vert  \boldsymbol{\varphi}\Vert _{L_2(\mathcal{O})},
\end{split}
\\
\begin{split}
\Vert &\mathbf{p}_\varepsilon (\cdot ,t)-\widetilde{g}^\varepsilon b(\mathbf{D})\mathbf{u}_0(\cdot ,t)\Vert _{L_2(\mathcal{O}')}\leqslant \widetilde{\mathscr{C}}_d\varepsilon h_d(\delta ;t )e^{-c_\flat t/2}\Vert  \boldsymbol{\varphi}\Vert _{L_2(\mathcal{O})}.
\end{split}
\end{align*}
Here $h_d(\delta ;t )$ is defined by \textnormal{\eqref{h_d(delta;t)}}.
\end{theorem}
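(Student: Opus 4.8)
The plan is to read the statement off directly from Theorem~7.15, into which all of the analytic work has already been packaged (the interior-regularity estimate~\eqref{apriori_N}, Lemma~7.14 on the difference of the correctors, and the two-parameter resolvent approximation of Theorem~6.5). First I would recall that for $\boldsymbol{\varphi}\in L_2(\mathcal{O};\mathbb{C}^n)$ the generalized solution of problem~\eqref{second problem} is $\mathbf{u}_\varepsilon(\cdot,t)=e^{-\mathcal{A}_{N,\varepsilon}t}\boldsymbol{\varphi}$ and the solution of the effective problem~\eqref{eff. second problem} is $\mathbf{u}_0(\cdot,t)=e^{-\mathcal{A}_N^0 t}\boldsymbol{\varphi}$, so the difference $\mathbf{u}_\varepsilon-\mathbf{u}_0$ is the action of the operator $e^{-\mathcal{A}_{N,\varepsilon}t}-e^{-\mathcal{A}_N^0 t}$ on $\boldsymbol{\varphi}$; I would also note that the hypotheses of Theorem~8.5 are precisely the ones required by Theorem~7.15. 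Next I would identify the corrector term: since $b(\mathbf{D})e^{-\mathcal{A}_N^0 t}=b(\mathbf{D})e^{-\mathcal{A}_N^0 t}\mathcal{P}$ in view of $\Ker\mathcal{A}_N^0=Z=\Ker b(\mathbf{D})$ (cf.~\eqref{Ker=Z}), one has $\varepsilon\Lambda^\varepsilon b(\mathbf{D})\mathbf{u}_0(\cdot,t)=\varepsilon[\Lambda^\varepsilon]b(\mathbf{D})e^{-\mathcal{A}_N^0 t}\boldsymbol{\varphi}=\varepsilon\mathcal{K}_N^0(t;\varepsilon)\boldsymbol{\varphi}$ with $\mathcal{K}_N^0(t;\varepsilon)$ the operator~\eqref{K_N^0(t;e)}; no orthogonal projection $\mathcal{P}$ is needed in the corrector, which is the feature distinguishing this statement from Theorems~8.1 and~8.6.

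With these identifications in hand, the first inequality of the theorem is obtained by applying the operator bound~\eqref{7.100} of Theorem~7.15 to the element $\boldsymbol{\varphi}$ and estimating $\|(e^{-\mathcal{A}_{N,\varepsilon}t}-e^{-\mathcal{A}_N^0 t}-\varepsilon\mathcal{K}_N^0(t;\varepsilon))\boldsymbol{\varphi}\|_{H^1(\mathcal{O}')}$ by the operator norm times $\|\boldsymbol{\varphi}\|_{L_2(\mathcal{O})}$. For the flux I would write $\mathbf{p}_\varepsilon(\cdot,t)=g^\varepsilon b(\mathbf{D})e^{-\mathcal{A}_{N,\varepsilon}t}\boldsymbol{\varphi}$ and $\widetilde{g}^\varepsilon b(\mathbf{D})\mathbf{u}_0(\cdot,t)=\widetilde{g}^\varepsilon b(\mathbf{D})e^{-\mathcal{A}_N^0 t}\boldsymbol{\varphi}$, after which the second inequality follows in the same manner from the operator estimate~\eqref{7.33}. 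The admissible range $0<\varepsilon\leqslant\min\{\varepsilon_1;(4r_1)^{-1}\delta\}$, the dependence of the constants $\mathscr{C}_d,\widetilde{\mathscr{C}}_d$ on the data~\eqref{data2}, and the time/decay factor $h_d(\delta;t)e^{-c_\flat t/2}$ are all inherited verbatim from Theorem~7.15.

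Since the whole burden of the estimate already lies in Theorem~7.15 (and, through it, in Lemma~7.14 and the interior-regularity bound~\eqref{apriori_N}, whose proofs are deferred to the Appendix), there is essentially no obstacle at this stage: the argument is exactly the routine ``apply the operator-norm estimate to the initial datum'' reduction used for all the homogenization corollaries of~\S 8. The only point worth a line of care is the verification that the corrector carries no projection $\mathcal{P}$, which rests on the identity $b(\mathbf{D})(I-\mathcal{P})=0$; once this is observed, the proof is a one-paragraph deduction.
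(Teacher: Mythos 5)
Your proposal is correct and follows exactly the paper's route: the theorem is stated there as an immediate corollary of Theorem~7.15, obtained by applying the operator-norm estimates \eqref{7.100} and \eqref{7.33} to the initial datum $\boldsymbol{\varphi}$ after identifying $\mathbf{u}_\varepsilon(\cdot,t)=e^{-\mathcal{A}_{N,\varepsilon}t}\boldsymbol{\varphi}$, $\mathbf{u}_0(\cdot,t)=e^{-\mathcal{A}_N^0t}\boldsymbol{\varphi}$, and $\varepsilon\Lambda^\varepsilon b(\mathbf{D})\mathbf{u}_0(\cdot,t)=\varepsilon\mathcal{K}_N^0(t;\varepsilon)\boldsymbol{\varphi}$. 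The only stray remark is your claim that the absence of $\mathcal{P}$ distinguishes this from Theorem~8.6, whose corrector has the same form; this is immaterial to the argument.
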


\subsection*{8.2. The problem for nonhomogeneous parabolic equation. Approximation of the solutions in $L_2({\mathcal O};{\mathbb C}^n)$ and
${\mathfrak H}_p(T)$ }
Now, we consider the problem
\begin{equation}
\label{non-homogeneous Neumann}
\begin{cases}
\frac{\partial \mathbf{u}_\varepsilon (\mathbf{x},t)}{\partial t}=-b(\mathbf{D})^*g^\varepsilon (\mathbf{x})b(\mathbf{D})
\mathbf{u}_\varepsilon (\mathbf{x},t)+\mathbf{F}(\mathbf{x},t),\quad \mathbf{x}\in \mathcal{O},\quad 0<t<T,\\
\mathbf{u}_\varepsilon (\mathbf{x},0)=\boldsymbol{\varphi}(\mathbf{x}),\quad \mathbf{x}\in \mathcal{O},\\
\partial _{\boldsymbol{\nu}}^\varepsilon \mathbf{u}_\varepsilon (\cdot ,t)\vert _{\partial \mathcal{O}}=0,\quad 0<t<T.
\end{cases}
\end{equation}
Here $\boldsymbol{\varphi}\in L_2(\mathcal{O};\mathbb{C}^n)$ and $\mathbf{F}\in \mathfrak{H}_p (T)=L_p((0,T);L_2(\mathcal{O};\mathbb{C}^n))$, $0<T\leqslant \infty$, for some $1\leqslant p\leqslant \infty$.

The corresponding effective problem is given by
\begin{equation}
\label{non-homogeneous eff. Neumann}
\begin{cases}
\frac{\partial \mathbf{u}_0(\mathbf{x},t)}{\partial t}=-b(\mathbf{D})^*g^0b(\mathbf{D})
\mathbf{u}_0(\mathbf{x},t)+\mathbf{F}(\mathbf{x},t),\quad \mathbf{x}\in \mathcal{O},\quad 0<t<T,\\
\mathbf{u}_0(\mathbf{x},0)=\boldsymbol{\varphi}(\mathbf{x}),\quad \mathbf{x}\in \mathcal{O},\\
\partial _{\boldsymbol{\nu}}^0 \mathbf{u}_0(\cdot ,t)\vert _{\partial \mathcal{O}}=0,\quad 0<t<T.
\end{cases}
\end{equation}

We have $\mathbf{u}_\varepsilon (\cdot ,t)=e^{-\mathcal{A}_{N,\varepsilon}t}\boldsymbol{\varphi}(\cdot )+\int _0^t e^{-\mathcal{A}_{N,\varepsilon}(t-\widetilde{t})}\mathbf{F}(\cdot ,\widetilde{t})\,d\widetilde{t}$. Similarly, $\mathbf{u}_0(\cdot ,t)=e^{-\mathcal{A}_N^0 t}\boldsymbol{\varphi}(\cdot)+\int _0^t e^{-\mathcal{A}_N^0(t-\widetilde{t})}\mathbf{F}(\cdot ,\widetilde{t})\,d\widetilde{t}$.
By analogy with the proof of Theorem 4.8, we deduce the following result from Theorem 7.2.

\begin{theorem}
Suppose that the assumptions of Theorem \textnormal{6.1} are satisfied.
Let $\mathbf{u}_\varepsilon$ be the solution of problem \textnormal{(\ref{non-homogeneous Neumann})},
where $\boldsymbol{\varphi}\in L_2(\mathcal{O};\mathbb{C}^n)$ and $\mathbf{F}\in \mathfrak{H}_p(T)$, $0<T\leqslant \infty$, with some
$1<p\leqslant \infty$. Let $\mathbf{u}_0$ be the solution of the effective problem \textnormal{(\ref{non-homogeneous eff. Neumann})}.
Then for $0<\varepsilon \leqslant \varepsilon _1$ and $0<t<T$ we have
\begin{equation*}
\Vert \mathbf{u}_\varepsilon (\cdot ,t)-\mathbf{u}_0(\cdot ,t)\Vert _{L_2(\mathcal{O})}
\leqslant \mathcal{C}_{12}\varepsilon (t+\varepsilon ^2)^{-1/2}e^{-c_\flat  t/2}\Vert \boldsymbol{\varphi}\Vert _{L_2(\mathcal{O})}
+ {\mathfrak c}_p {\theta} (\varepsilon ,p)\Vert \mathbf{F}\Vert _{\mathfrak{H}_p(t)}.
\end{equation*}
Here ${\theta}(\eps ,p)$ is defined by \eqref{theta (varepsilon ,p)}. The constant ${\mathfrak c}_p$ depends only on the data
\textnormal{\eqref{data2}} and $p$.
\end{theorem}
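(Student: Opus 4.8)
The plan is to follow the proof of Theorem~4.8 verbatim, replacing every Dirichlet ingredient by its Neumann counterpart. First I would write the Duhamel representations $\mathbf{u}_\varepsilon(\cdot,t) = e^{-\mathcal{A}_{N,\varepsilon}t}\boldsymbol{\varphi}(\cdot) + \int_0^t e^{-\mathcal{A}_{N,\varepsilon}(t-\widetilde{t})}\mathbf{F}(\cdot,\widetilde{t})\,d\widetilde{t}$ and the analogous formula for $\mathbf{u}_0$ with $\mathcal{A}_N^0$ in place of $\mathcal{A}_{N,\varepsilon}$, subtract them, and take the $L_2(\mathcal{O})$-norm. Applying Theorem~7.2 to $e^{-\mathcal{A}_{N,\varepsilon}s}-e^{-\mathcal{A}_N^0 s}$, both with $s=t$ acting on $\boldsymbol{\varphi}$ and with $s=t-\widetilde{t}$ under the integral sign (legitimate since Theorem~7.2 holds for all $s\geqslant 0$), I would obtain
\[
\Vert \mathbf{u}_\varepsilon(\cdot,t) - \mathbf{u}_0(\cdot,t)\Vert_{L_2(\mathcal{O})} \leqslant \mathcal{C}_{12}\varepsilon(t+\varepsilon^2)^{-1/2}e^{-c_\flat t/2}\Vert\boldsymbol{\varphi}\Vert_{L_2(\mathcal{O})} + \mathcal{C}_{12}\varepsilon\,\mathcal{L}_\flat(\varepsilon;t;\mathbf{F}),
\]
where $\mathcal{L}_\flat(\varepsilon;t;\mathbf{F}) := \int_0^t e^{-c_\flat(t-\widetilde{t})/2}(\varepsilon^2+t-\widetilde{t})^{-1/2}\Vert\mathbf{F}(\cdot,\widetilde{t})\Vert_{L_2(\mathcal{O})}\,d\widetilde{t}$ is the exact analog of the quantity $\mathcal{L}(\varepsilon;t;\mathbf{F})$ from \eqref{I(e,t)=}.

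Next I would bound the integral term by the H\"older inequality in the variable $\widetilde{t}$, getting $\mathcal{L}_\flat(\varepsilon;t;\mathbf{F}) \leqslant \Vert\mathbf{F}\Vert_{\mathfrak{H}_p(t)}\,\mathcal{I}_{p'}^\flat(\varepsilon;t)^{1/p'}$ with $\mathcal{I}_{p'}^\flat(\varepsilon;t) = \int_0^t(\varepsilon^2+\tau)^{-p'/2}e^{-c_\flat p'\tau/2}\,d\tau$ and $p^{-1}+(p')^{-1}=1$. Then I would estimate $\mathcal{I}_{p'}^\flat$ exactly as in \eqref{1<p<2}--\eqref{I 2<p}: for $1<p<2$ (so $p'>2$) one drops the exponential and integrates, obtaining $\mathcal{I}_{p'}^\flat \leqslant (p'/2-1)^{-1}\varepsilon^{2-p'}$ and hence a contribution of order $\varepsilon^{2-2/p}$; for $p=2$ one splits the $\tau$-range into $(0,1)$ and $(1,\infty)$, obtaining $\mathcal{I}_2^\flat \leqslant 2|\ln\varepsilon| + \ln 2 + c_\flat^{-1}$ and hence a factor $\varepsilon(|\ln\varepsilon|+1)^{1/2}$; for $2<p\leqslant\infty$ (so $1\leqslant p'<2$) one extends the integral to $(0,\infty)$ and recognizes a Gamma function, $\mathcal{I}_{p'}^\flat \leqslant (c_\flat p'/2)^{p'/2-1}\Gamma(1-p'/2)$, hence a factor $\varepsilon$. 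Collecting the three cases reproduces precisely the function $\theta(\varepsilon,p)$ of \eqref{theta (varepsilon ,p)}, and multiplying through by $\mathcal{C}_{12}\varepsilon$ gives the asserted estimate with $\mathfrak{c}_p$ assembled from $\mathcal{C}_{12}$, $c_\flat$, and $p$ in the same way as $c_p$ in \eqref{c_p}. Since $\mathcal{C}_{12}$ depends only on the data \eqref{data2} (Theorem~7.2) and $c_\flat = \Vert g^{-1}\Vert_{L_\infty}^{-1}\widetilde{\mathfrak{C}}_1$ is controlled by the same data, the constant $\mathfrak{c}_p$ depends only on \eqref{data2} and $p$.

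I do not expect any genuine obstacle: the argument is structurally identical to that of Theorem~4.8, and the only points requiring attention are bookkeeping ones, namely carrying the decay rate $c_\flat$ (instead of $c_*$) through all the time integrals and noting that convergence of $\mathcal{I}_{p'}^\flat$ near $\tau=0$ requires $p'<2$, which is exactly the source of the restriction $1<p\leqslant\infty$ in the statement.
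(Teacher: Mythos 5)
Your proposal is correct and is exactly the paper's intended argument: the paper proves Theorem 8.8 precisely "by analogy with the proof of Theorem 4.8," i.e., via the Duhamel formulas, Theorem 7.2 applied under the integral, H\"older's inequality, and the three-case estimate of $\mathcal{I}_{p'}^{\flat}$ with $c_*$ replaced by $c_\flat$. (Only your closing aside is slightly off: the restriction $p>1$ comes from needing $p'<\infty$ in the H\"older step, not from $p'<2$; the integral $\mathcal{I}_{p'}^{\flat}$ converges near $\tau=0$ for every finite $p'$ thanks to the $\varepsilon^2$ regularization, and $p'<2$ is only what separates the case $2<p\leqslant\infty$ from the others.)
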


The following result can be proved with the help of Theorem 7.2 by analogy with the proof of Theorem 4.9.

\begin{theorem}
Suppose that the assumptions of Theorem \textnormal{6.1} are satisfied. Let $\mathbf{u}_\varepsilon$ be the solution
of problem \eqref{non-homogeneous Neumann}, and let $\mathbf{u}_0$ be the solution of problem \eqref{non-homogeneous eff. Neumann}
with $\boldsymbol{\varphi}\in L_2(\mathcal{O};\mathbb{C}^n)$ and $\mathbf{F}\in \mathfrak{H}_p(T)$, $0<T\leqslant \infty$, for some
$1\leqslant p <\infty$. Then for $0<\varepsilon \leqslant \varepsilon _1$ we have
\begin{equation*}
\Vert \mathbf{u}_\varepsilon -\mathbf{u}_0\Vert _{\mathfrak{H}_p(T)}
\leqslant {\mathfrak c}_{p'} {\Theta}(\varepsilon ,p)\Vert \boldsymbol{\varphi}\Vert _{L_2(\mathcal{O})}+
\mathcal{C}_{18} \varepsilon\Vert \mathbf{F}\Vert _{\mathfrak{H}_p(T)}.
\end{equation*}
Here ${\Theta}(\varepsilon ,p)$ is defined by \eqref{vartheta (varepsilon,p)}, and $\mathcal{C}_{18}=\mathcal{C}_{12} (2\pi)^{1/2} c_\flat^{-1/2}$.
\end{theorem}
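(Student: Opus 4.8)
The plan is to mimic the proof of Theorem~4.9, systematically replacing the Dirichlet data $(\mathcal{A}_{D,\varepsilon},\mathcal{A}_D^0,C_{12},c_*)$ by the Neumann data $(\mathcal{A}_{N,\varepsilon},\mathcal{A}_N^0,\mathcal{C}_{12},c_\flat)$ and invoking Theorem~7.2 in place of Theorem~3.2. First I would write the Duhamel formulas
\begin{equation*}
\mathbf{u}_\varepsilon(\cdot,t)=e^{-\mathcal{A}_{N,\varepsilon}t}\boldsymbol{\varphi}(\cdot)+\int_0^t e^{-\mathcal{A}_{N,\varepsilon}(t-\widetilde{t})}\mathbf{F}(\cdot,\widetilde{t})\,d\widetilde{t},
\end{equation*}
and the analogous one for $\mathbf{u}_0$ with $\mathcal{A}_N^0$, subtract them, and use the identity $e^{-\mathcal{A}_{N,\varepsilon}s}-e^{-\mathcal{A}_N^0 s}=\bigl(e^{-\mathcal{A}_{N,\varepsilon}s}-e^{-\mathcal{A}_N^0 s}\bigr)\mathcal{P}$ from \eqref{exp-exp=(exp-exp)P} (a consequence of \eqref{Ker=Z}); in particular this shows that $\mathbf{u}_\varepsilon-\mathbf{u}_0$ takes values in $\mathcal{H}(\mathcal{O})$ for each $t$, so the $\mathfrak{H}_p(T)$-norm of the difference is finite even for $T=\infty$, $Z\neq\{0\}$, although the individual solutions need not belong to $\mathfrak{H}_p(T)$. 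Applying the exponential estimate \eqref{Th exp_N L2->L2} I obtain the pointwise-in-$t$ bound --- the Neumann analogue of \eqref{u_e-u_0<= I+}, \eqref{I(e,t)=} ---
\begin{equation*}
\|\mathbf{u}_\varepsilon(\cdot,t)-\mathbf{u}_0(\cdot,t)\|_{L_2(\mathcal{O})}\leqslant \mathcal{C}_{12}\varepsilon(t+\varepsilon^2)^{-1/2}e^{-c_\flat t/2}\|\boldsymbol{\varphi}\|_{L_2(\mathcal{O})}+\mathcal{C}_{12}\varepsilon\,\mathcal{L}(\varepsilon;t;\mathbf{F}),
\end{equation*}
where here $\mathcal{L}(\varepsilon;t;\mathbf{F})=\int_0^t e^{-c_\flat(t-\widetilde{t})/2}(\varepsilon^2+t-\widetilde{t})^{-1/2}\|\mathbf{F}(\cdot,\widetilde{t})\|_{L_2(\mathcal{O})}\,d\widetilde{t}$.

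Then I would take the $L_p((0,T))$-norm in $t$ and split into the $\boldsymbol{\varphi}$-contribution $\mathcal{C}_{12}\varepsilon\|\boldsymbol{\varphi}\|_{L_2(\mathcal{O})}\bigl(\int_0^T(\varepsilon^2+t)^{-p/2}e^{-pc_\flat t/2}\,dt\bigr)^{1/p}$ and the $\mathbf{F}$-contribution $\mathcal{C}_{12}\varepsilon\bigl(\int_0^T\mathcal{L}(\varepsilon;t;\mathbf{F})^p\,dt\bigr)^{1/p}$. The first is controlled by the computations in the proof of Theorem~4.6: estimates \eqref{1<p<2}--\eqref{I 2<p}, used with the roles of $p$ and $p'$ interchanged and $c_*$ replaced by $c_\flat$, bound the integral $\int_0^T(\varepsilon^2+t)^{-p/2}e^{-pc_\flat t/2}\,dt$ by a constant for $1\leqslant p<2$, by a constant times $|\ln\varepsilon|+1$ for $p=2$, and by a constant times $\varepsilon^{2-p}$ for $2<p<\infty$; after multiplication by $\varepsilon$ this is exactly $\mathfrak{c}_{p'}\Theta(\varepsilon,p)\|\boldsymbol{\varphi}\|_{L_2(\mathcal{O})}$, with $\Theta(\varepsilon,p)=\theta(\varepsilon,p')$ as in \eqref{vartheta (varepsilon,p)} and $\mathfrak{c}_{p'}$ the constant of Theorem~8.9 taken with $p$ replaced by $p'$.

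For the $\mathbf{F}$-contribution I would reproduce the concluding argument of the proof of Theorem~4.9. For $p=1$, inserting the definition of $\mathcal{L}$ and interchanging the order of integration gives $\int_0^T\mathcal{L}(\varepsilon;t;\mathbf{F})\,dt\leqslant\bigl(\sup_{\widetilde{t}\in(0,T)}\int_{\widetilde{t}}^T e^{-c_\flat(t-\widetilde{t})/2}(\varepsilon^2+t-\widetilde{t})^{-1/2}\,dt\bigr)\|\mathbf{F}\|_{\mathfrak{H}_1(T)}\leqslant(2\pi/c_\flat)^{1/2}\|\mathbf{F}\|_{\mathfrak{H}_1(T)}$, using $\int_0^\infty e^{-c_\flat\tau/2}\tau^{-1/2}\,d\tau=\Gamma(1/2)(c_\flat/2)^{-1/2}$. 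For $1<p<\infty$ one first applies the H\"older inequality inside $\mathcal{L}$ (as in \eqref{93}), estimating the weight factor $\bigl(\int_0^t e^{-c_\flat(t-\widetilde{t})/2}(\varepsilon^2+t-\widetilde{t})^{-1/2}\,d\widetilde{t}\bigr)^{1/p'}$ by $(2\pi/c_\flat)^{1/(2p')}$, then raises to the $p$-th power, integrates in $t$, and interchanges the order of integration once more to get $\int_0^T\mathcal{L}(\varepsilon;t;\mathbf{F})^p\,dt\leqslant(2\pi/c_\flat)^{p/2}\|\mathbf{F}\|_{\mathfrak{H}_p(T)}^p$. Thus the $\mathbf{F}$-contribution is at most $\mathcal{C}_{12}(2\pi)^{1/2}c_\flat^{-1/2}\varepsilon\|\mathbf{F}\|_{\mathfrak{H}_p(T)}=\mathcal{C}_{18}\varepsilon\|\mathbf{F}\|_{\mathfrak{H}_p(T)}$, and adding the two contributions gives the assertion. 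Since the argument is a verbatim transcription of the Dirichlet case, there is essentially no obstacle; the one point that really needs attention --- and the natural place for an error --- is the remark in the first paragraph that the $Z$-components of $\mathbf{u}_\varepsilon$ and $\mathbf{u}_0$ coincide and cancel, so that the difference lies in $\mathcal{H}(\mathcal{O})$ and the $\mathfrak{H}_p(T)$-norm is finite even when $T=\infty$.
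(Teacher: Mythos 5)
Your proposal is correct and is exactly the argument the paper intends: the paper gives no separate proof of this theorem, stating only that it "can be proved with the help of Theorem 7.2 by analogy with the proof of Theorem 4.9," and your transcription of that proof with $(\mathcal{A}_{N,\varepsilon},\mathcal{A}_N^0,\mathcal{C}_{12},c_\flat)$ in place of the Dirichlet data, together with the projection identity from \eqref{exp-exp=(exp-exp)P}, is precisely what is required. Your added observation that the $Z$-components of the two Duhamel formulas coincide and cancel (so the difference lies in $\mathcal{H}(\mathcal{O})$ and the estimate makes sense even for $T=\infty$) is a correct point that the paper leaves implicit.
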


\subsection*{8.3. Approximation of the solutions in $H^1({\mathcal O};{\mathbb C}^n)$}
Now we obtain approximation of the solution $\mathbf{u}_\varepsilon $ of problem (\ref{non-homogeneous Neumann})
in $H^1(\mathcal{O};\mathbb{C}^n)$.
By analogy with the proof of Theorem 4.11, we deduce the following result from Theorem 7.3 and Proposition 7.5.

\begin{theorem}
Suppose that the assumptions of Theorem \textnormal{6.2} are satisfied. Let $\mathbf{u}_\varepsilon$ be the solution
of problem \textnormal{(\ref{non-homogeneous Neumann})}, where $\boldsymbol{\varphi}\in L_2(\mathcal{O};\mathbb{C}^n)$ and
$\mathbf{F}\in \mathfrak{H}_p(T)$, $0<T\leqslant \infty$, for some $2<p\leqslant \infty$. Let $\mathbf{u}_0$ be the
solution of the effective problem \textnormal{(\ref{non-homogeneous eff. Neumann})}.
Assuming that $t\geqslant \varepsilon ^2$, denote
\begin{equation}
\label{w_eps neumann}
\mathbf{w}_\varepsilon (\cdot ,t):=e^{-\mathcal{A}_N^0 \varepsilon ^2}\mathbf{u}_0(\cdot ,t-\varepsilon ^2)=e^{-\mathcal{A}_N^0 t}\boldsymbol{\varphi}(\cdot)+\int _0 ^{t-\varepsilon ^2}e^{-\mathcal{A}_N^0 (t-\widetilde{t})}\mathbf{F}(\cdot ,\widetilde{t})\,d\widetilde{t}.
\end{equation}
We put $\widehat{\mathbf{w}}_\varepsilon(\cdot ,t) :=P_\mathcal{O}\mathcal{P}\mathbf{w}_\varepsilon (\cdot ,t) $,
$\mathbf{p}_\varepsilon =g^\varepsilon b(\mathbf{D})\mathbf{u}_\varepsilon$.
Then for $0<\varepsilon \leqslant \varepsilon _1$ and $\varepsilon ^2\leqslant t<T$ we have
\begin{equation*}
\begin{split}
\Vert &\mathbf{u}_\varepsilon (\cdot ,t)-\mathbf{u}_0(\cdot ,t)-\varepsilon \Lambda ^\varepsilon S_\varepsilon b(\mathbf{D})\widehat{\mathbf{w}}_\varepsilon (\cdot ,t)\Vert _{H^1(\mathcal{O})}\\
&\leqslant 2\mathcal{C}_{13}\varepsilon ^{1/2}t^{-3/4}e^{-c_\flat t/2}\Vert \boldsymbol{\varphi}\Vert _{L_2(\mathcal{O})}+
\check{\mathfrak c}_p {\rho}(\varepsilon ,p)\Vert \mathbf{F}\Vert_{\mathfrak{H}_p(t)},
\end{split}
\end{equation*}
\begin{equation*}
\begin{split}
\Vert &\mathbf{p}_\varepsilon (\cdot ,t)-\widetilde{g}^\varepsilon S_\varepsilon b(\mathbf{D})\widehat{\mathbf{w}}_\varepsilon (\cdot ,t)\Vert _{L_2(\mathcal{O})}\\
&\leqslant 2\widetilde{\mathcal{C}}_{13}\varepsilon ^{1/2}t^{-3/4}e^{-c_\flat t/2}\Vert \boldsymbol{\varphi}\Vert _{L_2(\mathcal{O})}+
\widetilde{\mathfrak c}_p {\rho}(\varepsilon ,p)\Vert \mathbf{F}\Vert _{\mathfrak{H}_p(t)}.
\end{split}
\end{equation*}
Here ${\rho}(\eps ,p)$ is defined by \eqref{rho}.
The constants $\check{\mathfrak c}_p$ and $\widetilde{\mathfrak c}_p$
depend only on the data \textnormal{\eqref{data2}} and $p$.
\end{theorem}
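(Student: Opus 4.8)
The plan is to transcribe the proof of Theorem~4.11 to the Neumann setting, replacing Theorem~3.3 by Theorem~7.3, Proposition~3.5 by Proposition~7.5, and working throughout with the projection $\mathcal{P}$. The starting point is the Duhamel representation $\mathbf{u}_\varepsilon(\cdot,t) = e^{-\mathcal{A}_{N,\varepsilon}t}\boldsymbol{\varphi} + \int_0^t e^{-\mathcal{A}_{N,\varepsilon}(t-\widetilde t)}\mathbf{F}(\cdot,\widetilde t)\,d\widetilde t$ together with the analogous identity for $\mathbf{u}_0$. Since estimate \eqref{Th exn_N L2->H1} deteriorates as $t\to 0$, for $\varepsilon^2\leqslant t<T$ I would split the Duhamel integral over the intervals $(0,t-\varepsilon^2)$ and $(t-\varepsilon^2,t)$, treating the first part with the corrector via \eqref{Th exn_N L2->H1}, \eqref{Th potoki_N} and the second part without corrector via \eqref{proposition 5.4-1}, \eqref{proposition 5.4-2}.

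The first point to check is the bookkeeping identity assembling the corrector. Since $\mathcal{P}$ reduces $\mathcal{A}_N^0$ one has $e^{-\mathcal{A}_N^0 s}\mathcal{P} = \mathcal{P}e^{-\mathcal{A}_N^0 s}$, and, because $P_\mathcal{O}e^{-\mathcal{A}_N^0 s}\mathcal{P}$ is continuous into $H^1(\mathbb{R}^d;\mathbb{C}^m)$, the operator $R_\mathcal{O}[\Lambda^\varepsilon]S_\varepsilon b(\mathbf{D})P_\mathcal{O}$ may be pulled out of the integral; this gives $\mathcal{K}_N(t;\varepsilon)\boldsymbol{\varphi} + \int_0^{t-\varepsilon^2}\mathcal{K}_N(t-\widetilde t;\varepsilon)\mathbf{F}(\cdot,\widetilde t)\,d\widetilde t = R_\mathcal{O}[\Lambda^\varepsilon]S_\varepsilon b(\mathbf{D})P_\mathcal{O}\mathcal{P}\mathbf{w}_\varepsilon(\cdot,t)$ with $\mathbf{w}_\varepsilon$ defined by \eqref{w_eps neumann}. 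Recalling $\widehat{\mathbf{w}}_\varepsilon = P_\mathcal{O}\mathcal{P}\mathbf{w}_\varepsilon$, it follows that $\mathbf{u}_\varepsilon - \mathbf{u}_0 - \varepsilon\Lambda^\varepsilon S_\varepsilon b(\mathbf{D})\widehat{\mathbf{w}}_\varepsilon$ is the sum of three terms $\mathcal{J}_1(t;\varepsilon;\boldsymbol{\varphi})$, $\mathcal{J}_2(t;\varepsilon;\mathbf{F})$, $\mathcal{J}_3(t;\varepsilon;\mathbf{F})$ of exactly the form of \eqref{315}, with $\mathcal{A}_{D,\bullet}$, $\mathcal{K}_D$ replaced by $\mathcal{A}_{N,\bullet}$, $\mathcal{K}_N$.

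Then I would estimate the three terms as in the Dirichlet case. For $\mathcal{J}_1$, apply \eqref{Th exn_N L2->H1} and $\varepsilon t^{-1}\leqslant\varepsilon^{1/2}t^{-3/4}$ (valid for $t\geqslant\varepsilon^2$) to get $\mathcal{J}_1\leqslant 2\mathcal{C}_{13}\varepsilon^{1/2}t^{-3/4}e^{-c_\flat t/2}\|\boldsymbol{\varphi}\|_{L_2(\mathcal{O})}$. For $\mathcal{J}_2$, on $\widetilde t\in(0,t-\varepsilon^2)$ one has $t-\widetilde t\geqslant\varepsilon^2$, hence $\varepsilon(t-\widetilde t)^{-1}\leqslant\varepsilon^{1/2}(t-\widetilde t)^{-3/4}$, so \eqref{Th exn_N L2->H1} followed by the H\"older inequality bounds $\mathcal{J}_2$ by $2\mathcal{C}_{13}\varepsilon^{1/2}\|\mathbf{F}\|_{\mathfrak{H}_p(t)}I_{p'}(t;\varepsilon)^{1/p'}$ with $I_{p'}(t;\varepsilon) = \int_{\varepsilon^2}^t\tau^{-3p'/4}e^{-c_\flat p'\tau/2}\,d\tau$; the estimates \eqref{I-2_2<p<4}--\eqref{I2_1319} (with $c_*$ replaced by $c_\flat$) produce the factor $\rho(\varepsilon,p)$ of \eqref{rho}. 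For $\mathcal{J}_3$, use \eqref{proposition 5.4-1} and the H\"older inequality to get a bound $\mathcal{C}_{14}(1-p'/2)^{-1/p'}\varepsilon^{1-2/p}\|\mathbf{F}\|_{\mathfrak{H}_p(t)}$, and note $\varepsilon^{1-2/p}\leqslant\varepsilon^{1/2}$ for $p\geqslant 4$, so this is absorbed into the $\rho(\varepsilon,p)$-term. Summing gives the first estimate with $\check{\mathfrak c}_p = 2\mathcal{C}_{13}\nu_p^{1/p'} + \mathcal{C}_{14}(1-p'/2)^{-1/p'}$. The flux estimate is obtained in the same way, starting from $g^\varepsilon b(\mathbf{D})e^{-\mathcal{A}_{N,\varepsilon}t} = g^\varepsilon b(\mathbf{D})e^{-\mathcal{A}_{N,\varepsilon}t}\mathcal{P}$ (since $I-\mathcal{P}$ projects onto $Z=\operatorname{Ker}b(\mathbf{D})$), using \eqref{Th potoki_N} in place of \eqref{Th exn_N L2->H1} and \eqref{proposition 5.4-2} in place of \eqref{proposition 5.4-1}, with $\widetilde{\mathfrak c}_p = 2\widetilde{\mathcal{C}}_{13}\nu_p^{1/p'} + \widetilde{\mathcal{C}}_{14}(1-p'/2)^{-1/p'}$.

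I do not expect a genuine obstacle here: the argument is a routine transcription of the proof of Theorem~4.11. The only place requiring care is the algebraic identity in the second paragraph — verifying that the projections $\mathcal{P}$ and the operators $P_\mathcal{O}$, $R_\mathcal{O}$ compose correctly so that the split corrected Duhamel integral really collapses to $\varepsilon\Lambda^\varepsilon S_\varepsilon b(\mathbf{D})\widehat{\mathbf{w}}_\varepsilon$ — together with matching the exponents of $\varepsilon$ in the three regimes $2<p<4$, $p=4$, $4<p\leqslant\infty$ against the definition \eqref{rho} of $\rho(\varepsilon,p)$.
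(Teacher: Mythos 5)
Your proposal is correct and follows exactly the route the paper takes: the paper proves Theorem~8.10 ``by analogy with the proof of Theorem~4.11,'' using Theorem~7.3 and Proposition~7.5, which is precisely your splitting of the Duhamel integral at $t-\varepsilon^2$ together with the observation that the projection $\mathcal{P}$ commutes with $e^{-\mathcal{A}_N^0 s}$ so the corrected terms collapse to $\varepsilon\Lambda^\varepsilon S_\varepsilon b(\mathbf{D})\widehat{\mathbf{w}}_\varepsilon$. The bookkeeping of the three regimes of $\rho(\varepsilon,p)$ and the replacement of $c_*$ by $c_\flat$ in $\nu_p$ are handled correctly.
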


In the case where $\Lambda \in L_\infty$ Theorem 7.6 and Proposition 7.5 imply the following result similar to Theorem 4.12.

\begin{theorem}
Suppose that the assumptions of Theorem \textnormal{8.10} are satisfied. Suppose that the matrix-valued function $\Lambda (\mathbf{x})$ satisfies Condition \textnormal{2.5}. Then for \hbox{$0<\varepsilon \leqslant \varepsilon _1$} and $\varepsilon ^2\leqslant t<T$ we have
\begin{align*}
\begin{split}
\Vert &\mathbf{u}_\varepsilon (\cdot ,t)-\mathbf{u}_0(\cdot ,t)-\varepsilon \Lambda ^\varepsilon b(\mathbf{D})\mathbf{w}_\varepsilon (\cdot ,t)\Vert _{H^1(\mathcal{O})}\\
&\leqslant 2\mathcal{C}_{15}\varepsilon ^{1/2}t^{-3/4}e^{-c_\flat t/2}\Vert \boldsymbol{\varphi}\Vert _{L_2(\mathcal{O})}
+{\mathfrak c}_p' \rho(\varepsilon ,p)\Vert \mathbf{F}\Vert _{\mathfrak{H}_p(t)},
\end{split}\\
\begin{split}
\Vert &\mathbf{p}_\varepsilon (\cdot ,t)-\widetilde{g}^\varepsilon b(\mathbf{D})\mathbf{w}_\varepsilon (\cdot ,t)\Vert _{L_2(\mathcal{O})}\\
&\leqslant 2\widetilde{\mathcal{C}}_{15}\varepsilon ^{1/2}t^{-3/4}e^{-c_\flat t/2}\Vert \boldsymbol{\varphi}\Vert _{L_2(\mathcal{O})} +
{\mathfrak c}_p'' {\rho}(\eps ,p)\Vert \mathbf{F}\Vert _{\mathfrak{H}_p(t)} .
\end{split}
\end{align*}
The constants ${\mathfrak c}_p'$ and ${\mathfrak c}''_p$ depend only on the data \textnormal{\eqref{data2}},  $\|\Lambda\|_{L_\infty}$, and $p$.
\end{theorem}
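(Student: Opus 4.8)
The statement is the $\Lambda\in L_\infty$ analogue, in the Neumann setting, of Theorem~4.12, and I would prove it by the same scheme as Theorem~4.11/4.12, feeding in the exponential estimates of \S7 in place of those of \S3 and carrying the orthogonal projection $\mathcal{P}$ along. First I would write the Duhamel formula $\mathbf{u}_\varepsilon(\cdot,t)=e^{-\mathcal{A}_{N,\varepsilon}t}\boldsymbol{\varphi}+\int_0^t e^{-\mathcal{A}_{N,\varepsilon}(t-\widetilde t)}\mathbf{F}(\cdot,\widetilde t)\,d\widetilde t$ and the analogous one for $\mathbf{u}_0$, recall the definition \eqref{w_eps neumann} of $\mathbf{w}_\varepsilon$ and that $\mathcal{K}_N^0(t;\varepsilon)=[\Lambda^\varepsilon]b(\mathbf{D})e^{-\mathcal{A}_N^0 t}$ by \eqref{K_N^0(t;e)}, so that, since $b(\mathbf{D})$ commutes with the time integral, $\varepsilon\Lambda^\varepsilon b(\mathbf{D})\mathbf{w}_\varepsilon(\cdot,t)=\varepsilon\mathcal{K}_N^0(t;\varepsilon)\boldsymbol{\varphi}+\varepsilon\int_0^{t-\varepsilon^2}\mathcal{K}_N^0(t-\widetilde t;\varepsilon)\mathbf{F}(\cdot,\widetilde t)\,d\widetilde t$. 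Using $t\geqslant\varepsilon^2$ to split the Duhamel integral over $(0,t-\varepsilon^2)$ and $(t-\varepsilon^2,t)$, the left-hand side of the first asserted inequality is bounded by $\mathcal{J}_1+\mathcal{J}_2+\mathcal{J}_3$, where $\mathcal{J}_1=\|(e^{-\mathcal{A}_{N,\varepsilon}t}-e^{-\mathcal{A}_N^0 t}-\varepsilon\mathcal{K}_N^0(t;\varepsilon))\boldsymbol{\varphi}\|_{H^1(\mathcal{O})}$, $\mathcal{J}_2=\int_0^{t-\varepsilon^2}\|(e^{-\mathcal{A}_{N,\varepsilon}(t-\widetilde t)}-e^{-\mathcal{A}_N^0(t-\widetilde t)}-\varepsilon\mathcal{K}_N^0(t-\widetilde t;\varepsilon))\mathbf{F}(\cdot,\widetilde t)\|_{H^1(\mathcal{O})}\,d\widetilde t$, and $\mathcal{J}_3=\int_{t-\varepsilon^2}^t\|(e^{-\mathcal{A}_{N,\varepsilon}(t-\widetilde t)}-e^{-\mathcal{A}_N^0(t-\widetilde t)})\mathbf{F}(\cdot,\widetilde t)\|_{H^1(\mathcal{O})}\,d\widetilde t$.

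Then I would estimate the three terms. For $\mathcal{J}_1$: Theorem~7.6 together with $\varepsilon t^{-1}\leqslant\varepsilon^{1/2}t^{-3/4}$ for $t\geqslant\varepsilon^2$ gives $\mathcal{J}_1\leqslant 2\mathcal{C}_{15}\varepsilon^{1/2}t^{-3/4}e^{-c_\flat t/2}\|\boldsymbol{\varphi}\|_{L_2(\mathcal{O})}$. For $\mathcal{J}_2$: on the range $t-\widetilde t\geqslant\varepsilon^2$, so Theorem~7.6 and $\varepsilon(t-\widetilde t)^{-1}\leqslant\varepsilon^{1/2}(t-\widetilde t)^{-3/4}$ give $\mathcal{J}_2\leqslant 2\mathcal{C}_{15}\varepsilon^{1/2}\int_0^{t-\varepsilon^2}(t-\widetilde t)^{-3/4}e^{-c_\flat(t-\widetilde t)/2}\|\mathbf{F}(\cdot,\widetilde t)\|_{L_2(\mathcal{O})}\,d\widetilde t$, and the H\"older inequality in $\widetilde t$ yields $2\mathcal{C}_{15}\varepsilon^{1/2}\|\mathbf{F}\|_{\mathfrak{H}_p(t)}\,I_{p'}(t;\varepsilon)^{1/p'}$ with $I_{p'}(t;\varepsilon)=\int_{\varepsilon^2}^t\tau^{-3p'/4}e^{-c_\flat p'\tau/2}\,d\tau$; controlling $I_{p'}$ exactly as in \eqref{I-2_2<p<4}--\eqref{I2_1319} (with $c_*$ replaced by $c_\flat$) over the subranges $2<p<4$, $p=4$, $4<p\leqslant\infty$ reproduces the weight $\rho(\varepsilon,p)$ of \eqref{rho} and gives $\mathcal{J}_2\leqslant 2\mathcal{C}_{15}\nu_p^{1/p'}\rho(\varepsilon,p)\|\mathbf{F}\|_{\mathfrak{H}_p(t)}$ with $\nu_p$ from \eqref{nu_p}. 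For $\mathcal{J}_3$: Proposition~7.5 (estimate \eqref{proposition 5.4-1}) and H\"older give $\mathcal{J}_3\leqslant\mathcal{C}_{14}(1-p'/2)^{-1/p'}\varepsilon^{1-2/p}\|\mathbf{F}\|_{\mathfrak{H}_p(t)}$, and $\varepsilon^{1-2/p}\leqslant\rho(\varepsilon,p)$ for all $2<p\leqslant\infty$. Summing produces the first inequality with ${\mathfrak c}_p'=2\mathcal{C}_{15}\nu_p^{1/p'}+\mathcal{C}_{14}(1-p'/2)^{-1/p'}$; for $p=\infty$ one replaces the $\widetilde t$-integrals of $\mathbf{F}$ by the corresponding $\esssup$, as in the proof of Theorem~4.11.

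For the flux estimate I would argue identically after first invoking \eqref{potok_N-potot_N*P}, i.e. $g^\varepsilon b(\mathbf{D})e^{-\mathcal{A}_{N,\varepsilon}t}=g^\varepsilon b(\mathbf{D})e^{-\mathcal{A}_{N,\varepsilon}t}\mathcal{P}$, and noting that $\widetilde g^\varepsilon b(\mathbf{D})e^{-\mathcal{A}_N^0(t-\widetilde t)}=\widetilde g^\varepsilon b(\mathbf{D})e^{-\mathcal{A}_N^0(t-\widetilde t)}\mathcal{P}$ automatically (since $b(\mathbf{D})$ annihilates $Z$), so that $\widetilde g^\varepsilon b(\mathbf{D})\mathbf{w}_\varepsilon(\cdot,t)=\widetilde g^\varepsilon b(\mathbf{D})e^{-\mathcal{A}_N^0 t}\boldsymbol{\varphi}+\int_0^{t-\varepsilon^2}\widetilde g^\varepsilon b(\mathbf{D})e^{-\mathcal{A}_N^0(t-\widetilde t)}\mathbf{F}(\cdot,\widetilde t)\,d\widetilde t$. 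Splitting the integral as before, the first two residuals are bounded by the flux estimate of Theorem~7.6 and the third by \eqref{proposition 5.4-2} of Proposition~7.5, with the same arithmetic, giving the second inequality with ${\mathfrak c}_p''=2\widetilde{\mathcal{C}}_{15}\nu_p^{1/p'}+\widetilde{\mathcal{C}}_{14}(1-p'/2)^{-1/p'}$.

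\textbf{Expected main obstacle.} There is no substantial difficulty here; the only points demanding care are bookkeeping ones: (a) verifying that $\varepsilon\Lambda^\varepsilon b(\mathbf{D})\mathbf{w}_\varepsilon$ splits \emph{exactly} into the initial-data corrector plus the truncated-Duhamel corrector, so that $\mathcal{J}_1,\mathcal{J}_2,\mathcal{J}_3$ are precisely the quantities to which Theorem~7.6 and Proposition~7.5 apply; (b) keeping the projection $\mathcal{P}$ in the correct place in the flux identity (using $e^{-\mathcal{A}_{N,\varepsilon}t}(I-\mathcal{P})=e^{-\mathcal{A}_N^0 t}(I-\mathcal{P})=I-\mathcal{P}$ and $b(\mathbf{D})(I-\mathcal{P})\equiv 0$ on the integrands); and (c) checking that the threshold behaviour of $I_{p'}(t;\varepsilon)$ across $2<p<4$, $p=4$, $4<p\leqslant\infty$ reproduces the stated weight $\rho(\varepsilon,p)$.
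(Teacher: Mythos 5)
Your proposal is correct and follows exactly the route the paper intends: the paper proves Theorem 8.11 by declaring it analogous to Theorems 4.11--4.12, i.e.\ by the Duhamel splitting over $(0,t-\varepsilon^2)$ and $(t-\varepsilon^2,t)$ with Theorem 7.6 applied to the first two pieces and Proposition 7.5 to the third, which is precisely your $\mathcal{J}_1+\mathcal{J}_2+\mathcal{J}_3$ argument. Your handling of the projection $\mathcal{P}$ via $b(\mathbf{D})(I-\mathcal{P})=0$ and \eqref{potok_N-potot_N*P}, and the verification that $I_{p'}$ reproduces $\rho(\varepsilon,p)$, match the paper's bookkeeping.
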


In the case of additional smoothness of the boundary Theorem 7.9 implies the following result (cf. Proposition 4.13).

\begin{proposition}
Suppose that the assumptions of Theorem~{\rm 8.10} are satisfied. Let $d=3$ and $p=\infty$.
Suppose that $\partial \mathcal{O}\in C^{2,1}$. Then for $0<\varepsilon \leqslant \varepsilon _1$ and $\varepsilon ^2\leqslant t <T$
we have
\begin{align*}
\begin{split}
\Vert &\mathbf{u}_\varepsilon (\cdot ,t)-\mathbf{u}_0(\cdot ,t)-\varepsilon \Lambda ^\varepsilon b(\mathbf{D})\mathbf{w}_\varepsilon (\cdot ,t)\Vert _{H^1(\mathcal{O})}\\
&\leqslant 2\mathscr{C}_3^*(\varepsilon ^{1/2}t^{-3/4}+\varepsilon t^{-5/4})e^{-c_\flat t/2}\Vert \boldsymbol{\varphi}\Vert _{L_2(\mathcal{O})}+\mathfrak{c}'\varepsilon ^{1/2}\Vert \mathbf{F}\Vert _{\mathfrak{H}_\infty(t)},
\end{split}
\\
\begin{split}
\Vert &\mathbf{p}_\varepsilon (\cdot ,t)-\widetilde{g}^\varepsilon b(\mathbf{D})\mathbf{w}_\varepsilon (\cdot ,t)\Vert _{L_2(\mathcal{O})}\\
&\leqslant 2 \widetilde{\mathscr{C}}_3^*(\varepsilon ^{1/2}t^{-3/4}+\varepsilon t^{-5/4})e^{-c_\flat t/2}\Vert \boldsymbol{\varphi}\Vert _{L_2(\mathcal{O})}+\widetilde{\mathfrak{c}}'\varepsilon ^{1/2}\Vert \mathbf{F}\Vert _{\mathfrak{H}_\infty(t)}.
\end{split}
\end{align*}
The constants $\mathfrak{c}'$ and $\widetilde{\mathfrak{c}}'$ depend only on the data \textnormal{\eqref{data2}}.
\end{proposition}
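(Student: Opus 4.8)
The plan is to follow the scheme used for Theorem~8.10 (itself modelled on Theorem~4.11), replacing the appeal to Theorem~7.3 by Theorem~7.9, which in dimension $d=3$ removes the Steklov smoothing from the corrector at the price of an additional factor $\varepsilon t^{-d/4-1/2}=\varepsilon t^{-5/4}$. Starting from the Duhamel formula for $\mathbf{u}_\varepsilon$ and $\mathbf{u}_0$ (as in \eqref{w_eps neumann}) and from the identity $\varepsilon\Lambda^\varepsilon b(\mathbf{D})\mathbf{w}_\varepsilon(\cdot,t)=\varepsilon\mathcal{K}_N^0(t;\varepsilon)\boldsymbol{\varphi}+\varepsilon\int_0^{t-\varepsilon^2}\mathcal{K}_N^0(t-\widetilde t;\varepsilon)\mathbf{F}(\cdot,\widetilde t)\,d\widetilde t$, with $\mathcal{K}_N^0$ as in \eqref{K_N^0(t;e)} and using $b(\mathbf{D})e^{-\mathcal{A}_N^0 s}=b(\mathbf{D})e^{-\mathcal{A}_N^0 s}\mathcal{P}$ (so that no projection is needed in the corrector), I would split $\mathbf{u}_\varepsilon-\mathbf{u}_0-\varepsilon\Lambda^\varepsilon b(\mathbf{D})\mathbf{w}_\varepsilon$ into three pieces: the initial-data piece $(e^{-\mathcal{A}_{N,\varepsilon}t}-e^{-\mathcal{A}_N^0 t}-\varepsilon\mathcal{K}_N^0(t;\varepsilon))\boldsymbol{\varphi}$, the ``long'' integral over $(0,t-\varepsilon^2)$ of $(e^{-\mathcal{A}_{N,\varepsilon}(t-\widetilde t)}-e^{-\mathcal{A}_N^0(t-\widetilde t)}-\varepsilon\mathcal{K}_N^0(t-\widetilde t;\varepsilon))\mathbf{F}(\cdot,\widetilde t)$, and the ``short'' integral over $(t-\varepsilon^2,t)$ of $(e^{-\mathcal{A}_{N,\varepsilon}(t-\widetilde t)}-e^{-\mathcal{A}_N^0(t-\widetilde t)})\mathbf{F}(\cdot,\widetilde t)$.

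For the initial-data piece I would apply estimate \eqref{7.*4} of Theorem~7.9 with time $t$: for $d=3$ it gives $\mathscr{C}_3^*(\varepsilon^{1/2}t^{-3/4}+\varepsilon t^{-1}+\varepsilon t^{-5/4})e^{-c_\flat t/2}\|\boldsymbol{\varphi}\|_{L_2(\mathcal{O})}$, and since $t\geqslant\varepsilon^2$ forces $\varepsilon t^{-1}\leqslant\varepsilon^{1/2}t^{-3/4}$, this is at most $2\mathscr{C}_3^*(\varepsilon^{1/2}t^{-3/4}+\varepsilon t^{-5/4})e^{-c_\flat t/2}\|\boldsymbol{\varphi}\|_{L_2(\mathcal{O})}$, exactly the first term in the assertion. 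For the long integral I would again use \eqref{7.*4} with time $t-\widetilde t\geqslant\varepsilon^2$, so that the $H^1$-norm of the integrand is at most $\mathscr{C}_3^*(2\varepsilon^{1/2}(t-\widetilde t)^{-3/4}+\varepsilon(t-\widetilde t)^{-5/4})e^{-c_\flat(t-\widetilde t)/2}\|\mathbf{F}\|_{\mathfrak{H}_\infty(t)}$; integrating in $\widetilde t$ (equivalently in $s=t-\widetilde t$ over $(\varepsilon^2,t)$) and using $\int_0^\infty s^{-3/4}e^{-c_\flat s/2}\,ds<\infty$ together with $\int_{\varepsilon^2}^\infty\varepsilon s^{-5/4}\,ds=4\varepsilon^{1/2}$ yields a bound of the form $C\varepsilon^{1/2}\|\mathbf{F}\|_{\mathfrak{H}_\infty(t)}$. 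For the short integral I would use estimate \eqref{proposition 5.4-1} of Proposition~7.5, which bounds $\|(e^{-\mathcal{A}_{N,\varepsilon}(t-\widetilde t)}-e^{-\mathcal{A}_N^0(t-\widetilde t)})\mathbf{F}(\cdot,\widetilde t)\|_{H^1(\mathcal{O})}$ by $\mathcal{C}_{14}(t-\widetilde t)^{-1/2}\|\mathbf{F}\|_{\mathfrak{H}_\infty(t)}$, and then $\int_0^{\varepsilon^2}s^{-1/2}\,ds=2\varepsilon$ gives $2\mathcal{C}_{14}\varepsilon\|\mathbf{F}\|_{\mathfrak{H}_\infty(t)}\leqslant2\mathcal{C}_{14}\varepsilon^{1/2}\|\mathbf{F}\|_{\mathfrak{H}_\infty(t)}$ for $\varepsilon\leqslant1$. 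Summing the three contributions produces the first inequality with $\mathfrak{c}'$ depending only on the data \eqref{data2}. The flux inequality would be obtained identically: decompose $\mathbf{p}_\varepsilon-\widetilde g^\varepsilon b(\mathbf{D})\mathbf{w}_\varepsilon$ in the same three-piece way, use estimate \eqref{7.*5} of Theorem~7.9 for the first two pieces and estimate \eqref{proposition 5.4-2} of Proposition~7.5 for the short integral, and apply the same elementary integral bounds.

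The computation is essentially routine; the only point worth isolating — and the real reason the statement is confined to $d=3$ and $p=\infty$ — is the integrability of the $\varepsilon(t-\widetilde t)^{-d/4-1/2}$ singularity coming from Theorem~7.9. In $d=3$ the exponent equals $5/4>1$, so $\int_{\varepsilon^2}^\infty\varepsilon s^{-5/4}\,ds$ is of order $\varepsilon^{1/2}$, which is precisely the principal order of the remaining terms; for $d\geqslant4$, or for $p<\infty$ (where a H\"older step inserts an exponent $p'>1$ in front of the singularity), the corresponding integral is only $O(1)$ and no smallness survives. I therefore expect no genuine obstacle beyond tracking which regime still yields a useful bound.
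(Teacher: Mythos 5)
Your proposal is correct and follows essentially the same route as the paper: the paper proves Proposition 8.12 by the same three-piece Duhamel decomposition used for Theorems 4.11 and 8.10, with Theorem 7.9 (estimates \eqref{7.*4}, \eqref{7.*5}) applied to the initial-data term and the integral over $(0,t-\varepsilon^2)$, and Proposition 7.5 applied to the integral over $(t-\varepsilon^2,t)$. Your integral computations (in particular $\int_{\varepsilon^2}^{\infty}\varepsilon s^{-5/4}\,ds=4\varepsilon^{1/2}$, which is exactly why the result is restricted to $d=3$, $p=\infty$) and the observation that $b(\mathbf{D})e^{-\mathcal{A}_N^0 s}=b(\mathbf{D})e^{-\mathcal{A}_N^0 s}\mathcal{P}$ removes the need for the projection in the corrector match the paper's argument.
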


Now we distinguish special cases. The following statement is proved with the help of
Propositions 7.5, 7.11 and Theorem 8.10, by analogy with the proof of Proposition 4.14.

\begin{proposition}
Suppose that the assumptions of Theorem \textnormal{8.10} are satisfied.

\noindent
$1^\circ$. If $g^0=\overline{g}$, i.~e., relations \textnormal{(\ref{overline-g})} are satisfied, then for
$0<\varepsilon\leqslant \varepsilon _1$ and $\varepsilon ^2\leqslant t<T$ we have
\begin{equation*}
\begin{split}
\Vert \mathbf{u}_\varepsilon (\cdot ,t)-\mathbf{u}_0(\cdot ,t)\Vert _{H^1(\mathcal{O})}
&\leqslant 2\mathcal{C}_{13}\varepsilon ^{1/2}t^{-3/4}
e^{-c_\flat t/2}\Vert \boldsymbol{\varphi}\Vert _{L_2(\mathcal{O})}\\
&+ \check{\mathfrak c}_p {\rho}(\eps ,p)\Vert \mathbf{F}(\cdot ,t)\Vert _{\mathfrak{H}_p(t)}.
\end{split}
\end{equation*}

\noindent
$2^\circ$. If $g^0=\underline{g}$, i.~e., relations \textnormal{(\ref{underline-g})} are satisfied, then for
\hbox{$0<\varepsilon \leqslant \varepsilon _1$} and $\varepsilon ^2\leqslant t<T$ we have
\begin{equation*}
\Vert \mathbf{p}_\varepsilon (\cdot ,t)-\mathbf{p}_0(\cdot ,t)\Vert _{L_2(\mathcal{O})}\leqslant 2\widetilde{\mathcal{C}}_{15}\varepsilon ^{1/2}t^{-3/4}e^{-c_\flat t/2}\Vert \boldsymbol{\varphi}\Vert _{L_2(\mathcal{O})}+{\mathfrak c}_p'''{\rho}(\varepsilon ,p)\Vert \mathbf{F}\Vert _{\mathfrak{H}_p(t)},
\end{equation*}
where $\mathbf{p}_0=g^0b(\mathbf{D})\mathbf{u}_0$. The constant ${\mathfrak c}_p'''$ depends only on the data
\textnormal{\eqref{data2}} and $p$.
\end{proposition}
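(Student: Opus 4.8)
The plan is to obtain Proposition 8.14 as the Neumann counterpart of Proposition 4.14, combining the special-case facts of Propositions 1.2 and 1.3 (and, for assertion $2^\circ$, Proposition 2.7($3^\circ$)) with the exponential estimates of Subsection 7.4 (Proposition 7.5) and Subsection 7.7 (Proposition 7.11), together with the Duhamel representations of $\mathbf{u}_\varepsilon$ and $\mathbf{u}_0$ recorded in Subsection 8.2.

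For assertion $1^\circ$ I would argue exactly as in Proposition 4.14($1^\circ$): by Proposition 1.2 the identity $g^0=\overline{g}$ is equivalent to relations \eqref{overline-g}, and these force the $\Gamma$-periodic solution $\Lambda$ of \eqref{Lambda_problem} to vanish identically. Hence the corrector $\varepsilon\Lambda^\varepsilon S_\varepsilon b(\mathbf{D})\widehat{\mathbf{w}}_\varepsilon$ occurring in the estimate of Theorem 8.10 is identically zero, and that theorem delivers at once the asserted bound for $\|\mathbf{u}_\varepsilon(\cdot,t)-\mathbf{u}_0(\cdot,t)\|_{H^1(\mathcal{O})}$ with the same constant $\check{\mathfrak c}_p$ and the same gauge $\rho(\varepsilon,p)$ from \eqref{rho}.

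For assertion $2^\circ$ I would first note that, by Proposition 1.3, $g^0=\underline{g}$ is equivalent to relations \eqref{underline-g}, whence the matrix-valued function \eqref{tilde g} reduces to the constant $\widetilde{g}=g^0$; in particular $\mathbf{p}_0=g^0b(\mathbf{D})\mathbf{u}_0$ is precisely the object approximated in Proposition 7.11($2^\circ$) (which applies since Condition 2.5 holds automatically by Proposition 2.7($3^\circ$)). Using $\mathbf{u}_\varepsilon(\cdot,t)=e^{-\mathcal{A}_{N,\varepsilon}t}\boldsymbol{\varphi}+\int_0^t e^{-\mathcal{A}_{N,\varepsilon}(t-\widetilde{t})}\mathbf{F}(\cdot,\widetilde{t})\,d\widetilde{t}$ and its analogue for $\mathbf{u}_0$, I would write
\begin{equation*}
\begin{split}
\|\mathbf{p}_\varepsilon(\cdot,t)-\mathbf{p}_0(\cdot,t)\|_{L_2(\mathcal{O})}
&\leqslant \left\|\bigl(g^\varepsilon b(\mathbf{D})e^{-\mathcal{A}_{N,\varepsilon}t}-g^0 b(\mathbf{D})e^{-\mathcal{A}_N^0 t}\bigr)\boldsymbol{\varphi}\right\|_{L_2(\mathcal{O})}\\
&\quad +\int_0^t\left\|\bigl(g^\varepsilon b(\mathbf{D})e^{-\mathcal{A}_{N,\varepsilon}(t-\widetilde{t})}-g^0 b(\mathbf{D})e^{-\mathcal{A}_N^0(t-\widetilde{t})}\bigr)\mathbf{F}(\cdot,\widetilde{t})\right\|_{L_2(\mathcal{O})}\,d\widetilde{t},
\end{split}
\end{equation*}
estimate the first summand by Proposition 7.11($2^\circ$) (using $\varepsilon t^{-1}\leqslant\varepsilon^{1/2}t^{-3/4}$ for $t\geqslant\varepsilon^2$ to get the factor $2\widetilde{\mathcal{C}}_{15}\varepsilon^{1/2}t^{-3/4}e^{-c_\flat t/2}$), and split the convolution at $\widetilde{t}=t-\varepsilon^2$: on $(0,t-\varepsilon^2)$ apply Proposition 7.11($2^\circ$) together with $\varepsilon(t-\widetilde{t})^{-1}\leqslant\varepsilon^{1/2}(t-\widetilde{t})^{-3/4}$, and on $(t-\varepsilon^2,t)$ bound the two fluxes separately by \eqref{proposition 5.4-2} and \eqref{proposition 5.4-3}, which contribute only an integrable $(t-\widetilde{t})^{-1/2}$ singularity. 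The H\"older inequality applied to each of the two integrals, as in \eqref{J_2}--\eqref{J_2_p_infty} and \eqref{J_3}--\eqref{J_3<=} in the proof of Theorem 4.11, then yields $\mathfrak{c}_p'''\rho(\varepsilon,p)\|\mathbf{F}\|_{\mathfrak{H}_p(t)}$ with the explicit constant $\mathfrak{c}_p'''=2\widetilde{\mathcal{C}}_{15}\nu_p^{1/p'}+2\widetilde{\mathcal{C}}_{14}(1-p'/2)^{-1/p'}$, $\nu_p$ being defined in \eqref{nu_p}.

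The only step requiring care — and it is identical to the one already met in Proposition 4.14($2^\circ$) — is the near-diagonal slice $t-\widetilde{t}<\varepsilon^2$ of the convolution, where the flux approximation of Proposition 7.11($2^\circ$) degenerates like $(t-\widetilde{t})^{-3/4}$ and must be replaced by the crude uniform bound of Proposition 7.5; one then has to verify that the residual $\widetilde{t}$-integral, after H\"older, reproduces exactly the exponent built into $\rho(\varepsilon,p)$ in \eqref{rho}, including the logarithmic correction at $p=4$. Everything else is routine bookkeeping, parallel to the Dirichlet argument verbatim.
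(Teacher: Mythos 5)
Your proposal is correct and follows essentially the same route as the paper, which proves this proposition by invoking Propositions 7.5 and 7.11 and Theorem 8.10 "by analogy with the proof of Proposition 4.14": assertion $1^\circ$ is immediate from Theorem 8.10 once $\Lambda=0$, and assertion $2^\circ$ uses the Duhamel representation split at $\widetilde{t}=t-\varepsilon^2$, with Proposition 7.11($2^\circ$) on the long interval and the crude flux bounds \eqref{proposition 5.4-2}, \eqref{proposition 5.4-3} near the diagonal. Your constant $\mathfrak{c}_p'''=2\widetilde{\mathcal{C}}_{15}\nu_p^{1/p'}+2\widetilde{\mathcal{C}}_{14}(1-p'/2)^{-1/p'}$ matches the structure of the Dirichlet analogue exactly.
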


\subsection*{8.4. Approximation of the solutions in a strictly interior subdomain}
Approximation of the solutions in a strictly interior subdomain can be derived from Theorem 7.12 (cf. Theorem 4.15).

\begin{theorem}
Suppose that the assumptions of Theorem \textnormal{8.10} are satisfied.
Let $\mathcal{O}'$ be a strictly interior subdomain of the domain $\mathcal{O}$, and let $\delta =\textnormal{dist}\,\lbrace\mathcal{O}';\partial \mathcal{O}\rbrace $. Then for $0<\varepsilon \leqslant\varepsilon _1$ and $\varepsilon ^2\leqslant t<T$ we have
\begin{align*}
\begin{split}
\Vert &\mathbf{u}_\varepsilon (\cdot ,t)-\mathbf{u}_0(\cdot ,t)-\varepsilon \Lambda ^\varepsilon S_\varepsilon b(\mathbf{D})\widehat{\mathbf{w}}_\varepsilon (\cdot ,t)\Vert _{H^1(\mathcal{O}')}\\
&\leqslant (\mathcal{C}_{16}\delta ^{-1}+\mathcal{C}_{17})\varepsilon t^{-1}e^{-c_\flat t/2}\Vert \boldsymbol{\varphi}\Vert _{L_2(\mathcal{O})}+({\rm k}_p\delta^{-1}+ {\rm k}_p')\sigma(\varepsilon ,p) \Vert \mathbf{F}\Vert _{\mathfrak{H}_p(t)},
\end{split}
\\
\begin{split}
\Vert &\mathbf{p}_\varepsilon (\cdot ,t)-\widetilde{g}^\varepsilon S_\varepsilon b(\mathbf{D})\widehat{\mathbf{w}}_\varepsilon (\cdot ,t)\Vert _{L_2(\mathcal{O}')}\\
&\leqslant (\widetilde{\mathcal{C}}_{16}\delta ^{-1}+ \widetilde{\mathcal{C}}_{17})\varepsilon t^{-1}e^{-c_\flat t/2}
\Vert \boldsymbol{\varphi}\Vert _{L_2(\mathcal{O})}+ (\widetilde{\rm k}_p\delta^{-1}+ \widetilde{\rm k}_p'){\sigma}(\varepsilon ,p  )\Vert \mathbf{F}\Vert _{\mathfrak{H}_p(t)}.
\end{split}
\end{align*}
Here $\sigma(\varepsilon ,p )$ is defined by \eqref{sigma (varepsilon ,p,delta)}.
The constants ${\rm k}_p$,  ${\rm k}_p'$, $\widetilde{\rm k}_p$, and $\widetilde{\rm k}_p'$
depend only on the data \textnormal{\eqref{data2}} and $p$.
\end{theorem}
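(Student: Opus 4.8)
The plan is to mimic the proof of Theorem~4.15, replacing the Dirichlet ingredients by their Neumann analogues: Theorem~7.12 in place of Theorems~2.9/3.13 and Proposition~7.5 in place of Proposition~3.5. First I would write the Duhamel representations
\[
\mathbf{u}_\varepsilon(\cdot,t)=e^{-\mathcal{A}_{N,\varepsilon}t}\boldsymbol{\varphi}+\int_0^t e^{-\mathcal{A}_{N,\varepsilon}(t-\widetilde t)}\mathbf{F}(\cdot,\widetilde t)\,d\widetilde t,
\]
and the same identity with $\mathcal{A}_N^0$ for $\mathbf{u}_0$. Subtracting and inserting the correction term $\varepsilon\Lambda^\varepsilon S_\varepsilon b(\mathbf{D})\widehat{\mathbf{w}}_\varepsilon(\cdot,t)$, and using \eqref{w_eps neumann} together with $\widehat{\mathbf{w}}_\varepsilon=P_\mathcal{O}\mathcal{P}\mathbf{w}_\varepsilon$ (so that the projection $\mathcal{P}$ already built into $\mathcal{K}_N(t;\varepsilon)$ is matched), one bounds the $H^1(\mathcal{O}')$-norm by three terms: $\mathcal{J}_1$, involving $\bigl(e^{-\mathcal{A}_{N,\varepsilon}t}-e^{-\mathcal{A}_N^0 t}-\varepsilon\mathcal{K}_N(t;\varepsilon)\bigr)\boldsymbol{\varphi}$; $\mathcal{J}_2$, the integral over $(0,t-\varepsilon^2)$ of $\bigl(e^{-\mathcal{A}_{N,\varepsilon}(t-\widetilde t)}-e^{-\mathcal{A}_N^0(t-\widetilde t)}-\varepsilon\mathcal{K}_N(t-\widetilde t;\varepsilon)\bigr)\mathbf{F}(\cdot,\widetilde t)$; and $\mathcal{J}_3$, the integral over $(t-\varepsilon^2,t)$ of $\bigl(e^{-\mathcal{A}_{N,\varepsilon}(t-\widetilde t)}-e^{-\mathcal{A}_N^0(t-\widetilde t)}\bigr)\mathbf{F}(\cdot,\widetilde t)$; here the corrector contributes nothing on the last interval because $\mathbf{w}_\varepsilon$ was frozen at time $t-\varepsilon^2$.

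Next I would estimate the three terms. For $\mathcal{J}_1$ the first estimate of Theorem~7.12 gives $\mathcal{J}_1\le(\mathcal{C}_{16}\delta^{-1}+\mathcal{C}_{17})\varepsilon t^{-1}e^{-c_\flat t/2}\|\boldsymbol{\varphi}\|_{L_2(\mathcal{O})}$ for $t\ge\varepsilon^2$. For $\mathcal{J}_2$, estimate \eqref{7.29a} bounds the operator norm on $(0,t-\varepsilon^2)$ by $(\mathcal{C}_{16}\delta^{-1}+\mathcal{C}_{17})\varepsilon(t-\widetilde t)^{-1}e^{-c_\flat(t-\widetilde t)/2}$; substituting $\tau=t-\widetilde t$ and applying the Hölder inequality yields
\[
\mathcal{J}_2\le(\mathcal{C}_{16}\delta^{-1}+\mathcal{C}_{17})\,\varepsilon\,\|\mathbf{F}\|_{\mathfrak{H}_p(t)}\Bigl(\int_{\varepsilon^2}^{\infty}\tau^{-p'}e^{-c_\flat p'\tau/2}\,d\tau\Bigr)^{1/p'},
\]
and the last integral is $O(\varepsilon^{2-2p'})$ for $2<p<\infty$ and $O(|\ln\varepsilon|+1)$ for $p=\infty$, so $\mathcal{J}_2\le C(\mathcal{C}_{16}\delta^{-1}+\mathcal{C}_{17})\sigma(\varepsilon,p)\|\mathbf{F}\|_{\mathfrak{H}_p(t)}$ with $\sigma$ from \eqref{sigma (varepsilon ,p,delta)}. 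For $\mathcal{J}_3$ I use \eqref{proposition 5.4-1}, namely $\|e^{-\mathcal{A}_{N,\varepsilon}(t-\widetilde t)}-e^{-\mathcal{A}_N^0(t-\widetilde t)}\|_{L_2\to H^1}\le\mathcal{C}_{14}(t-\widetilde t)^{-1/2}e^{-c_\flat(t-\widetilde t)/2}$, and Hölder over $(t-\varepsilon^2,t)$, giving $\mathcal{J}_3\le C\,\mathcal{C}_{14}\varepsilon^{1-2/p}\|\mathbf{F}\|_{\mathfrak{H}_p(t)}$, again dominated by $\sigma(\varepsilon,p)\|\mathbf{F}\|_{\mathfrak{H}_p(t)}$. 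Collecting $\mathcal{J}_1,\mathcal{J}_2,\mathcal{J}_3$ produces the first asserted inequality, with ${\rm k}_p,{\rm k}_p'$ formed from the constants above.

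The flux estimate is obtained the same way: one subtracts $\widetilde g^\varepsilon S_\varepsilon b(\mathbf{D})\widehat{\mathbf{w}}_\varepsilon$ from $\mathbf{p}_\varepsilon=g^\varepsilon b(\mathbf{D})\mathbf{u}_\varepsilon$, uses $g^\varepsilon b(\mathbf{D})e^{-\mathcal{A}_{N,\varepsilon}t}=g^\varepsilon b(\mathbf{D})e^{-\mathcal{A}_{N,\varepsilon}t}\mathcal{P}$ to recover the projection, splits the Duhamel integral as before, applies the second estimate of Theorem~7.12 to the $\boldsymbol{\varphi}$-term and on $(0,t-\varepsilon^2)$, and applies \eqref{proposition 5.4-2} on $(t-\varepsilon^2,t)$. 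I expect the main obstacle to be the bookkeeping of the singular time integrals: the interior $H^1$-bound in Theorem~7.12 decays only like $\varepsilon t^{-1}$, so the convolution with $\mathbf{F}$ over $(0,t-\varepsilon^2)$ is a near-critical integral whose truncation at $\tau=\varepsilon^2$ must be tracked carefully to reproduce exactly the weight $\sigma(\varepsilon,p)$, including the logarithmic factor at $p=\infty$; one must also keep the projection $\mathcal{P}$ consistently attached, so that $\widehat{\mathbf{w}}_\varepsilon$, which contains $P_\mathcal{O}\mathcal{P}$, is paired correctly with the corrector $\mathcal{K}_N$ and with the identity $e^{-\mathcal{A}_{N,\varepsilon}t}-e^{-\mathcal{A}_N^0t}=\bigl(e^{-\mathcal{A}_{N,\varepsilon}t}-e^{-\mathcal{A}_N^0t}\bigr)\mathcal{P}$.
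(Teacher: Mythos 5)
Your proposal is correct and follows exactly the route the paper intends: the paper proves this theorem only by reference ("can be derived from Theorem 7.12, cf. Theorem 4.15"), and your argument is the Neumann transcription of the proof of Theorem 4.11/4.15 — the same three-term Duhamel splitting at $t-\varepsilon^2$, Theorem 7.12 on the $\boldsymbol{\varphi}$-term and on $(0,t-\varepsilon^2)$, Proposition 7.5 on $(t-\varepsilon^2,t)$, with the projection $\mathcal{P}$ matched via $\widehat{\mathbf{w}}_\varepsilon=P_\mathcal{O}\mathcal{P}\mathbf{w}_\varepsilon$. The integral computations reproducing $\sigma(\varepsilon,p)$, including the logarithm at $p=\infty$, are carried out correctly.
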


In the case where $\Lambda \in L_\infty$ from Theorem 7.13 we deduce the following result similar to Theorem 4.16.

\begin{theorem}
Suppose that the assumptions of Theorem \textnormal{8.14} are satisfied. Suppose that the matrix-valued function $\Lambda (\mathbf{x})$
satisfies Condition \textnormal{2.5}. For $0<\varepsilon\leqslant \varepsilon _1$ and $\varepsilon ^2\leqslant t<T$
we have
\begin{align*}
\begin{split}
\Vert &\mathbf{u}_\varepsilon (\cdot ,t)-\mathbf{u}_0(\cdot ,t)-\varepsilon \Lambda ^\varepsilon  b(\mathbf{D})\mathbf{w}_\varepsilon (\cdot ,t)\Vert _{H^1(\mathcal{O}')}\\
&\leqslant (\mathcal{C}_{16} \delta ^{-1}+\check{\mathcal{C}}_{17})\varepsilon t^{-1}
e^{-c_\flat t/2} \Vert \boldsymbol{\varphi}\Vert _{L_2(\mathcal{O})}+({\rm k}_p \delta^{-1}+ \check{\rm k}_p')
\sigma(\varepsilon ,p) \Vert \mathbf{F}\Vert _{\mathfrak{H}_p(t)},
\end{split}
\\
\begin{split}
\Vert &\mathbf{p}_\varepsilon (\cdot ,t)-\widetilde{g}^\varepsilon  b(\mathbf{D})\mathbf{w}_\varepsilon (\cdot ,t)\Vert _{L_2(\mathcal{O}')}\\
&\leqslant (\widetilde{\mathcal{C}}_{16}\delta ^{-1}+\widehat{\mathcal{C}}_{17})
\varepsilon t^{-1}e^{-c_\flat t/2}\Vert \boldsymbol{\varphi}\Vert _{L_2(\mathcal{O})}+
(\widetilde{\rm k}_p\delta^{-1}+ \widehat{\rm k}_p') \sigma(\varepsilon ,p )\Vert \mathbf{F}\Vert _{\mathfrak{H}_p(t)}.
\end{split}
\end{align*}
The constants ${\rm k}_p$ and $\widetilde{\rm k}_p$ are the same as in Theorem \textnormal{8.14}.
The constants $\check{\rm k}_p'$ and $\widehat{\rm k}_p'$ depend only on the data \textnormal{\eqref{data2}}, $\|\Lambda\|_{L_\infty}$, and $p$.
\end{theorem}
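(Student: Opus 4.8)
The plan is to transcribe, almost line for line, the proof of Theorem~4.16 (the Dirichlet counterpart), substituting the Neumann inputs: Theorem~7.13 in place of Theorem~3.14 and Proposition~7.5 in place of Proposition~3.5, and keeping $c_\flat$ in the exponential factors. First I would write the Duhamel representations $\mathbf{u}_\varepsilon(\cdot,t)=e^{-\mathcal{A}_{N,\varepsilon}t}\boldsymbol{\varphi}+\int_0^t e^{-\mathcal{A}_{N,\varepsilon}(t-\widetilde{t})}\mathbf{F}(\cdot,\widetilde{t})\,d\widetilde{t}$ and $\mathbf{u}_0(\cdot,t)=e^{-\mathcal{A}_N^0 t}\boldsymbol{\varphi}+\int_0^t e^{-\mathcal{A}_N^0(t-\widetilde{t})}\mathbf{F}(\cdot,\widetilde{t})\,d\widetilde{t}$, recall definition \eqref{w_eps neumann} of $\mathbf{w}_\varepsilon$, and note that, by that definition, $\varepsilon\Lambda^\varepsilon b(\mathbf{D})\mathbf{w}_\varepsilon(\cdot,t)=\varepsilon\mathcal{K}_N^0(t;\varepsilon)\boldsymbol{\varphi}+\int_0^{t-\varepsilon^2}\varepsilon\mathcal{K}_N^0(t-\widetilde{t};\varepsilon)\mathbf{F}(\cdot,\widetilde{t})\,d\widetilde{t}$, where $\mathcal{K}_N^0$ is the operator \eqref{K_N^0(t;e)}. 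Subtracting and splitting the $\mathbf{F}$-integral at $\widetilde{t}=t-\varepsilon^2$ gives, exactly as in \eqref{315}, the bound $\|\mathbf{u}_\varepsilon-\mathbf{u}_0-\varepsilon\Lambda^\varepsilon b(\mathbf{D})\mathbf{w}_\varepsilon\|_{H^1(\mathcal{O}')}\leqslant \mathcal{J}_1+\mathcal{J}_2+\mathcal{J}_3$, where $\mathcal{J}_1=\|(e^{-\mathcal{A}_{N,\varepsilon}t}-e^{-\mathcal{A}_N^0 t}-\varepsilon\mathcal{K}_N^0(t;\varepsilon))\boldsymbol{\varphi}\|_{H^1(\mathcal{O}')}$, $\mathcal{J}_2$ is the integral over $(0,t-\varepsilon^2)$ of $\|(e^{-\mathcal{A}_{N,\varepsilon}(t-\widetilde{t})}-e^{-\mathcal{A}_N^0(t-\widetilde{t})}-\varepsilon\mathcal{K}_N^0(t-\widetilde{t};\varepsilon))\mathbf{F}(\cdot,\widetilde{t})\|_{H^1(\mathcal{O}')}$, and $\mathcal{J}_3$ is the integral over $(t-\varepsilon^2,t)$ of $\|(e^{-\mathcal{A}_{N,\varepsilon}(t-\widetilde{t})}-e^{-\mathcal{A}_N^0(t-\widetilde{t})})\mathbf{F}(\cdot,\widetilde{t})\|_{H^1(\mathcal{O}')}$ (no corrector on the short interval).

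Next I would estimate the three terms. The term $\mathcal{J}_1$ is bounded by $(\mathcal{C}_{16}\delta^{-1}+\check{\mathcal{C}}_{17})\varepsilon t^{-1}e^{-c_\flat t/2}\|\boldsymbol{\varphi}\|_{L_2(\mathcal{O})}$, which is precisely assertion~$2^\circ$ of Theorem~7.13. For $\mathcal{J}_2$ one uses $t-\widetilde{t}\geqslant\varepsilon^2$ on $(0,t-\varepsilon^2)$ and again Theorem~7.13 at time $t-\widetilde{t}$, so that $\mathcal{J}_2\leqslant(\mathcal{C}_{16}\delta^{-1}+\check{\mathcal{C}}_{17})\varepsilon\int_0^{t-\varepsilon^2}(t-\widetilde{t})^{-1}e^{-c_\flat(t-\widetilde{t})/2}\|\mathbf{F}(\cdot,\widetilde{t})\|_{L_2(\mathcal{O})}\,d\widetilde{t}$; Hölder's inequality with exponents $p,p'$ and the change of variable $\tau=t-\widetilde{t}$ reduce matters to the integral $\int_{\varepsilon^2}^{t}\tau^{-p'}e^{-c_\flat p'\tau/2}\,d\tau$, which is of order $\varepsilon^{2-2p'}$ for $1<p'<2$ and of order $|\ln\varepsilon|+1$ for $p'=1$, so that after multiplication by $\varepsilon$ one gets exactly the weight $\sigma(\varepsilon,p)$ of \eqref{sigma (varepsilon ,p,delta)}. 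For $\mathcal{J}_3$ one invokes estimate \eqref{proposition 5.4-1} of Proposition~7.5, so $\mathcal{J}_3\leqslant\mathcal{C}_{14}\int_{t-\varepsilon^2}^{t}(t-\widetilde{t})^{-1/2}\|\mathbf{F}(\cdot,\widetilde{t})\|_{L_2(\mathcal{O})}\,d\widetilde{t}$; Hölder's inequality and the convergent integral $\int_0^{\varepsilon^2}\tau^{-p'/2}\,d\tau$ (finite since $p'<2$) give a bound of order $\varepsilon^{1-2/p}$ for $2<p<\infty$ and order $\varepsilon$ for $p=\infty$, in both cases dominated by $\sigma(\varepsilon,p)$. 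Collecting the three bounds yields the first inequality of the theorem, with $\mathrm{k}_p$ carrying the $\delta^{-1}$-parts (coming from $\mathcal{C}_{16}$) and $\check{\mathrm{k}}_p'$ carrying the remainder (from $\check{\mathcal{C}}_{17}$ and $\mathcal{C}_{14}$).

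The flux estimate is obtained by the identical three-piece decomposition of $\mathbf{p}_\varepsilon-\widetilde{g}^\varepsilon b(\mathbf{D})\mathbf{w}_\varepsilon$, now using the flux part of Theorem~7.13 for $\mathcal{J}_1$ and $\mathcal{J}_2$ and estimate \eqref{proposition 5.4-2} of Proposition~7.5 for $\mathcal{J}_3$; the same auxiliary integrals produce the same weights, giving the second inequality with constants $\widetilde{\mathrm{k}}_p$, $\widehat{\mathrm{k}}_p'$. I do not expect any genuine obstacle here, since the argument is a word-for-word analogue of the proof of Theorem~4.16; the only point requiring a little care is the bookkeeping of the Hölder exponents, and it is precisely the hypothesis $p>2$ (hence $p'<2$) that makes $\int_0^{\varepsilon^2}\tau^{-p'/2}\,d\tau$ finite while $\int_{\varepsilon^2}\tau^{-p'}\,d\tau$ diverges with the correct power of $\varepsilon$ — the same reason the restriction $2<p\leqslant\infty$ is inherited from Theorem~8.10.
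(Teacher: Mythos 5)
Your proposal is correct and is exactly the argument the paper intends: the paper gives no detailed proof of this theorem, deferring to Theorem 7.13 and Proposition 7.5 via the analogy with Theorems 4.11/4.16, and your three-term Duhamel decomposition with the split at $\widetilde{t}=t-\varepsilon^2$, the H\"older bookkeeping producing $\sigma(\varepsilon,p)$, and the attribution of the $\delta^{-1}$-part to $\mathcal{C}_{16}$ (hence $\mathrm{k}_p$) and the rest to $\check{\mathcal{C}}_{17}$ and $\mathcal{C}_{14}$ (hence $\check{\mathrm{k}}_p'$) reproduces that argument faithfully.
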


\subsection*{8.5. Approximation of the solutions in ${\mathfrak G}_p(T)=L_p((0,T);H^1(\mathcal{O};\mathbb{C}^n))$}

By analogy with the proof of Theorem 4.18, it is easy to check the following result.

\begin{theorem}
Suppose that the assumptions of Theorem \textnormal{6.2} are satisfied.
Let $\mathbf{u}_\varepsilon$ and $\mathbf{u}_0$ be the solutions of problems \textnormal{(\ref{non-homogeneous Neumann})} and \textnormal{(\ref{non-homogeneous eff. Neumann})}, respectively, where $\boldsymbol{\varphi}\in L_2(\mathcal{O};\mathbb{C}^n)$ and
$\mathbf{F}\in \mathfrak{H}_p(T)$, $0<T\leqslant \infty$.
Suppose that for $\varepsilon ^2\leqslant t<T$ the function $\mathbf{w}_\varepsilon (\cdot ,t)$ is defined by \eqref{w_eps neumann}.
For $0\leqslant t<\varepsilon ^2$ we put $\mathbf{w}_\varepsilon (\cdot ,t)=0$.
Denote $\widehat{\mathbf{w}}_\varepsilon := P_\mathcal{O}\mathcal{P}\mathbf{w}_\varepsilon (\cdot ,t)$. Let
$\mathbf{p}_\varepsilon = {g}^\varepsilon b(\mathbf{D}) {\mathbf{u}}_\varepsilon$.

\noindent $1^\circ$. Let $1\leqslant p<2$. Then for $0<\varepsilon \leqslant \varepsilon _1$ we have
\begin{align*}
&\Vert \mathbf{u}_\varepsilon -\mathbf{u}_0-\varepsilon \Lambda ^\varepsilon S_\varepsilon b(\mathbf{D})\widehat{\mathbf{w}}_\varepsilon \Vert _{\mathfrak{G}_p(T)}
\leqslant \kappa_p' {\alpha}(\varepsilon ,p)\Vert \boldsymbol{\varphi}\Vert _{L_2(\mathcal{O})}+\mathcal{C}_{19}\varepsilon ^{1/2}\Vert \mathbf{F}\Vert _{\mathfrak{H}_p(T)},\\
&\Vert \mathbf{p}_\varepsilon -\widetilde{g}^\varepsilon S_\varepsilon b(\mathbf{D})\widehat{\mathbf{w}}_\varepsilon \Vert _{L_p((0,T);L_2(\mathcal{O}))}
\leqslant  \kappa_p'' {\alpha}(\varepsilon ,p)\Vert \boldsymbol{\varphi}\Vert _{L_2(\mathcal{O})}+
\widetilde{\mathcal{C}}_{19}\varepsilon ^{1/2}\Vert \mathbf{F}\Vert _{\mathfrak{H}_p(T)}.
\end{align*}
Here ${\alpha}(\varepsilon ,p)$ is defined by \textnormal{\eqref{alpha(e,p)}}.
The constants $\mathcal{C}_{19}$ and $\widetilde{\mathcal{C}}_{19}$ depend only on the data \textnormal{\eqref{data2}}.
The constants $\kappa_p'$ and $\kappa_p''$ depend on the same parameters and on $p$.

\noindent $2^\circ$. Let $\boldsymbol{\varphi}=0$, and let $1\leqslant p \leqslant \infty$.
Then for $0<\varepsilon \leqslant \varepsilon _1$ we have
\begin{align*}
&\Vert \mathbf{u}_\varepsilon -\mathbf{u}_0-\varepsilon \Lambda ^\varepsilon S_\varepsilon b(\mathbf{D})\widehat{\mathbf{w}}_\varepsilon \Vert _{\mathfrak{G}_p(T)}
\leqslant\mathcal{C}_{19}\varepsilon ^{1/2}\Vert \mathbf{F}\Vert _{\mathfrak{H}_p(T)},\\
&\Vert \mathbf{p}_\varepsilon -\widetilde{g}^\varepsilon S_\varepsilon b(\mathbf{D})\widehat{\mathbf{w}}_\varepsilon
\Vert _{L_p((0,T);L_2(\mathcal{O}))}
\leqslant  \widetilde{\mathcal{C}}_{19}\varepsilon ^{1/2}\Vert \mathbf{F}\Vert _{\mathfrak{H}_p(T)}.
\end{align*}
\end{theorem}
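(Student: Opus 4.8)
The plan is to transcribe the proof of Theorem~4.18, replacing each Dirichlet ingredient by its Neumann analog: Theorem~7.3 plays the role of Theorem~3.3, Proposition~7.5 the role of Proposition~3.5, and the identities \eqref{exp-exp=(exp-exp)P}, \eqref{7.6a}, \eqref{7.7b}, \eqref{potok_N-potot_N*P}, \eqref{7.8_0}, \eqref{7.8a} replace their counterparts from Chapter~1. First I would record the Duhamel representations $\mathbf{u}_\varepsilon(\cdot,t)=e^{-\mathcal{A}_{N,\varepsilon}t}\boldsymbol{\varphi}+\int_0^t e^{-\mathcal{A}_{N,\varepsilon}(t-\widetilde{t})}\mathbf{F}(\cdot,\widetilde{t})\,d\widetilde{t}$ and the analogous formula for $\mathbf{u}_0$, subtract $\varepsilon\Lambda^\varepsilon S_\varepsilon b(\mathbf{D})\widehat{\mathbf{w}}_\varepsilon$, and split the outer $t$-integration defining the $\mathfrak{G}_p(T)$-norm into the ranges $(0,\varepsilon^2)$ and $(\varepsilon^2,T)$, splitting on the second range the inner $\widetilde{t}$-integration into $(0,t-\varepsilon^2)$ and $(t-\varepsilon^2,t)$. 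This produces five terms $\mathcal{T}_1,\dots,\mathcal{T}_5$ exactly as in \eqref{Th3.15_proof_1}, with $\mathcal{T}_1,\mathcal{T}_2$ carrying $\boldsymbol{\varphi}$ and $\mathcal{T}_3,\mathcal{T}_4,\mathcal{T}_5$ carrying $\mathbf{F}$.

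On the short intervals the corrector is absent (since $\mathbf{w}_\varepsilon=0$ on $(0,\varepsilon^2)$), so $\mathcal{T}_1,\mathcal{T}_3,\mathcal{T}_5$ are estimated by the crude bound \eqref{proposition 5.4-1} of Proposition~7.5; the elementary integrals $\int_0^{\varepsilon^2}t^{-p/2}\,dt$ (convergent for $1\le p<2$) and $\int_0^t(t-\widetilde{t})^{-1/2}\,d\widetilde{t}\le 2\varepsilon$ for $t\le\varepsilon^2$ yield, after the H\"older inequality and Fubini as in \eqref{mathcal_J_1}, \eqref{J3^p<=}, \eqref{J5<=}, contributions bounded by $C\varepsilon^{1/2}\|\boldsymbol{\varphi}\|_{L_2(\mathcal{O})}$ and $C\varepsilon\|\mathbf{F}\|_{\mathfrak{H}_p(T)}$, respectively. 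For $\mathcal{T}_2$ and $\mathcal{T}_4$ I would apply estimate \eqref{Th exn_N L2->H1} of Theorem~7.3, using $\varepsilon t^{-1}\le\varepsilon^{1/2}t^{-3/4}$ for $t\ge\varepsilon^2$, then the H\"older inequality in $\widetilde{t}$ and Fubini; the remaining time integrals are $\int_{\varepsilon^2}^T t^{-3p/4}e^{-pc_\flat t/2}\,dt$ and $\int_0^{t-\varepsilon^2}(t-\widetilde{t})^{-3/4}e^{-c_\flat(t-\widetilde{t})/2}\,d\widetilde{t}$, i.e. the integrals estimated in \eqref{I-2_2<p<4}--\eqref{I2_1319} and their analogues, which give the factor $\alpha(\varepsilon,p)=\rho(\varepsilon,p')$ on $\boldsymbol{\varphi}$ (for $1\le p<2$) and the factor $\varepsilon^{1/2}$ on $\mathbf{F}$ (for all $1\le p<\infty$; for $\boldsymbol{\varphi}=0$ also $p=\infty$). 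The case $p=\infty$ with $\boldsymbol{\varphi}=0$ is treated separately, splitting the $\esssup$ over $(0,\varepsilon^2)$ and $(\varepsilon^2,T)$ and invoking Theorem~8.10 with $p=\infty$ on the latter and \eqref{proposition 5.4-1} on the former, exactly as in \eqref{Prop3.16_proof_1}--\eqref{Prop3.16_proof_3}.

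The flux estimates are obtained by the same decomposition applied to $\mathbf{p}_\varepsilon=g^\varepsilon b(\mathbf{D})e^{-\mathcal{A}_{N,\varepsilon}t}\boldsymbol{\varphi}+\int_0^t g^\varepsilon b(\mathbf{D})e^{-\mathcal{A}_{N,\varepsilon}(t-\widetilde{t})}\mathbf{F}(\cdot,\widetilde{t})\,d\widetilde{t}$, using identity \eqref{potok_N-potot_N*P} together with estimates \eqref{Th potoki_N} of Theorem~7.3 and \eqref{proposition 5.4-2} of Proposition~7.5 in place of their $H^1$-counterparts. Collecting the bounds on $\mathcal{T}_1,\dots,\mathcal{T}_5$ and on their flux versions gives the stated inequalities, with $\mathcal{C}_{19}=4\mathcal{C}_{14}+2\mathcal{C}_{13}(c_\flat/2)^{-1/4}\Gamma(1/4)$, $\kappa_p'=\mathcal{C}_{14}(1-p/2)^{-1/p}+2\mathcal{C}_{13}\nu_{p'}^{1/p}$, and analogous expressions for $\widetilde{\mathcal{C}}_{19}$ and $\kappa_p''$; the dependence on the data \eqref{data2} and on $p$ is then as claimed, since all Neumann constants appearing depend precisely on \eqref{data2}.

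I do not expect a genuine obstacle: the proof is a mechanical adaptation of that of Theorem~4.18. The one point that requires attention is the bookkeeping of the projection $\mathcal{P}$ hidden in the Neumann corrector $\mathcal{K}_N(t;\varepsilon)=R_\mathcal{O}[\Lambda^\varepsilon]S_\varepsilon b(\mathbf{D})P_\mathcal{O}e^{-\mathcal{A}_N^0 t}\mathcal{P}$ and in the flux corrector of \eqref{Th potoki_N}: one must verify that applying $\mathcal{K}_N(\cdot;\varepsilon)$ to $\boldsymbol{\varphi}$ and to the Duhamel integrand reproduces exactly $\varepsilon\Lambda^\varepsilon S_\varepsilon b(\mathbf{D})\widehat{\mathbf{w}}_\varepsilon$ with $\widehat{\mathbf{w}}_\varepsilon=P_\mathcal{O}\mathcal{P}\mathbf{w}_\varepsilon$. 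This holds because $e^{-\mathcal{A}_N^0 t}$ commutes with $\mathcal{P}$, because $(I-\mathcal{P})$ drops out of the difference $e^{-\mathcal{A}_{N,\varepsilon}t}-e^{-\mathcal{A}_N^0 t}$ by \eqref{exp-exp=(exp-exp)P} and out of the flux by \eqref{potok_N-potot_N*P}, and because $\mathbf{w}_\varepsilon$ is built from $e^{-\mathcal{A}_N^0(\cdot)}$ acting on $\boldsymbol{\varphi}$ and $\mathbf{F}$ as in \eqref{w_eps neumann}; once this is noted, every step of the Dirichlet argument carries over verbatim.
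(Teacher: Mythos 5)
Your proposal is correct and follows essentially the same route as the paper, which itself proves Theorem 8.16 "by analogy with the proof of Theorem 4.18" using exactly the Neumann substitutes you list (Theorem 7.3 for Theorem 3.3, Proposition 7.5 for Proposition 3.5, and the identities involving the projection $\mathcal{P}$). Your attention to the bookkeeping of $\mathcal{P}$ in the corrector and in the flux — the only genuinely non-mechanical point of the adaptation — is exactly the right place to be careful, and your resolution of it is correct.
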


Under the additional assumption that $\Lambda \in L_\infty$ we obtain the following theorem similar to Theorem 4.19.

\begin{theorem}
Suppose that the assumptions of Theorem \textnormal{8.16} and Condition \textnormal{2.5} are satisfied.

\noindent $1^\circ$. Let $1\leqslant p<2$. Then for $0<\varepsilon \leqslant \varepsilon _1$ we have
\begin{align*}
&\Vert \mathbf{u}_\varepsilon -\mathbf{u}_0-\varepsilon \Lambda ^\varepsilon  b(\mathbf{D}) {\mathbf{w}}_\varepsilon \Vert _{\mathfrak{G}_p(T)}
\leqslant \mathfrak{k}_p {\alpha}(\varepsilon ,p)\Vert \boldsymbol{\varphi}\Vert _{L_2(\mathcal{O})}+
\mathcal{C}_{20}\varepsilon ^{1/2}\Vert \mathbf{F}\Vert _{\mathfrak{H}_p(T)},
\\
&\Vert \mathbf{p}_\varepsilon -\widetilde{g}^\varepsilon b(\mathbf{D}) {\mathbf{w}}_\varepsilon \Vert _{L_p((0,T);L_2(\mathcal{O}))}
\leqslant  \widetilde{\mathfrak{k}}_p {\alpha}(\varepsilon ,p)\Vert \boldsymbol{\varphi}\Vert _{L_2(\mathcal{O})}+
\widetilde{\mathcal{C}}_{20}\varepsilon ^{1/2}\Vert \mathbf{F}\Vert _{\mathfrak{H}_p(T)}.
\end{align*}
The constants $\mathcal{C}_{20}$ and $\widetilde{\mathcal{C}}_{20}$ depend only on the data \textnormal{\eqref{data2}}
and $\|\Lambda \|_{L_\infty}$.
The constants $\mathfrak{k}_p$ and $\widetilde{\mathfrak{k}}_p$ depend on the same parameters and on $p$.

\noindent $2^\circ$. Let $\boldsymbol{\varphi}=0$, and let $1\leqslant p \leqslant \infty$.
Then for $0<\varepsilon \leqslant \varepsilon _1$ we have
\begin{align*}
&\Vert \mathbf{u}_\varepsilon -\mathbf{u}_0-\varepsilon \Lambda ^\varepsilon b(\mathbf{D}) {\mathbf{w}}_\varepsilon \Vert _{\mathfrak{G}_p(T)}
\leqslant\mathcal{C}_{20}\varepsilon ^{1/2}\Vert \mathbf{F}\Vert _{\mathfrak{H}_p(T)},
\\
&\Vert \mathbf{p}_\varepsilon -\widetilde{g}^\varepsilon b(\mathbf{D}) {\mathbf{w}}_\varepsilon
\Vert _{L_p((0,T);L_2(\mathcal{O}))}
\leqslant  \widetilde{\mathcal{C}}_{20}\varepsilon ^{1/2}\Vert \mathbf{F}\Vert _{\mathfrak{H}_p(T)}.
\end{align*}
\end{theorem}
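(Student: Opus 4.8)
The plan is to transcribe, step for step, the proof of Theorem 8.16 (equivalently, the proof of Theorem 4.18 in Chapter~1), replacing the corrector estimate \eqref{Th exn_N L2->H1} involving the smoothing operator by the first inequality of Theorem~7.6 (whose corrector is $\mathcal{K}_N^0$), the flux estimate \eqref{Th potoki_N} by the second inequality of Theorem~7.6, and the small-$t$ bound \eqref{small_t}--type ingredients by those of Proposition~7.5, i.e.\ \eqref{proposition 5.4-1}, \eqref{proposition 5.4-2}, \eqref{proposition 5.4-3}. Condition~2.5 ensures that $\mathcal{K}_N^0(t;\varepsilon)=[\Lambda^\varepsilon]b(\mathbf{D})e^{-\mathcal{A}_N^0 t}$ from \eqref{K_N^0(t;e)} is a continuous operator on the spaces involved, so all the pieces of the decomposition below are well defined.

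First I would write the Duhamel formulas $\mathbf{u}_\varepsilon(\cdot,t)=e^{-\mathcal{A}_{N,\varepsilon}t}\boldsymbol{\varphi}+\int_0^t e^{-\mathcal{A}_{N,\varepsilon}(t-\widetilde t)}\mathbf{F}(\cdot,\widetilde t)\,d\widetilde t$, the analogous identity for $\mathbf{u}_0$ with $\mathcal{A}_N^0$, and representation \eqref{w_eps neumann} for $\mathbf{w}_\varepsilon$; applying $\varepsilon\Lambda^\varepsilon b(\mathbf{D})$ to the latter gives $\varepsilon\Lambda^\varepsilon b(\mathbf{D})\mathbf{w}_\varepsilon(\cdot,t)=\varepsilon\mathcal{K}_N^0(t;\varepsilon)\boldsymbol{\varphi}+\int_0^{t-\varepsilon^2}\varepsilon\mathcal{K}_N^0(t-\widetilde t;\varepsilon)\mathbf{F}(\cdot,\widetilde t)\,d\widetilde t$. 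Here I would record that neither the projection $\mathcal{P}$ nor the extension operator $P_\mathcal{O}$ appears in the corrector: since $\mathrm{Ker}\,b(\mathbf{D})=Z=\mathrm{Ker}\,\mathcal{A}_N^0$ by \eqref{Ker=Z}, one has $b(\mathbf{D})e^{-\mathcal{A}_N^0 s}=b(\mathbf{D})e^{-\mathcal{A}_N^0 s}\mathcal{P}$, and $\Lambda\in L_\infty$ makes multiplication by $\Lambda^\varepsilon$ bounded on $H^1$ without extension; correspondingly the $\boldsymbol{\varphi}$-part of $\mathbf{u}_\varepsilon-\mathbf{u}_0$ equals $(e^{-\mathcal{A}_{N,\varepsilon}t}-e^{-\mathcal{A}_N^0 t})\mathcal{P}\boldsymbol{\varphi}$ by \eqref{exp-exp=(exp-exp)P}. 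For $\varepsilon^2\le t<T$ I would then split $\mathbf{u}_\varepsilon-\mathbf{u}_0-\varepsilon\Lambda^\varepsilon b(\mathbf{D})\mathbf{w}_\varepsilon$ into five terms exactly as in \eqref{Th3.15_proof_1}--\eqref{T1}: the contribution of $\boldsymbol{\varphi}$ over $(0,\varepsilon^2)$ and over $(\varepsilon^2,T)$, and the contribution of the Duhamel integral split into $\int_0^{\varepsilon^2}$ and, for $t\in(\varepsilon^2,T)$, $\int_0^{t-\varepsilon^2}$ together with $\int_{t-\varepsilon^2}^t$; on the last interval the corrector makes no contribution because $\mathbf{w}_\varepsilon$ integrates only up to $t-\varepsilon^2$.

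The first, third and fifth terms are bounded by \eqref{proposition 5.4-1} together with the elementary time integrals $\int_0^{\varepsilon^2}t^{-p/2}\,dt$ and $\int_0^t(t-\widetilde t)^{-1/2}\,d\widetilde t\le 2\varepsilon$ (valid for $t\le\varepsilon^2$); the second and fourth terms are bounded by the first inequality of Theorem~7.6, using $\varepsilon s^{-1}\le\varepsilon^{1/2}s^{-3/4}$ for $s\ge\varepsilon^2$, followed by the H\"older inequality in $\widetilde t$ and a change of the order of integration over $\{0<\widetilde t<t<T\}$. The resulting one-dimensional integrals $\int_{\varepsilon^2}^T t^{-3p/4}e^{-pc_\flat t/2}\,dt$ (and their counterparts for the Duhamel term) are precisely the integrals already evaluated in the proofs of Theorems~4.11 and~4.18, with $c_*$ replaced by $c_\flat$: combined with the prefactor $\varepsilon^{1/2}$ they reproduce the factor $\alpha(\varepsilon,p)$ of \eqref{alpha(e,p)} in front of $\|\boldsymbol{\varphi}\|_{L_2(\mathcal{O})}$, while the $\mathbf{F}$-contributions sum to $C_{20}\varepsilon^{1/2}\|\mathbf{F}\|_{\mathfrak{H}_p(T)}$. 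The case $0<T<\varepsilon^2$ is simpler and uses only Proposition~7.5. For assertion~$2^\circ$ with $\boldsymbol{\varphi}=0$ the first two terms vanish, the bounds for the remaining three are uniform in $p$ for $1\le p<\infty$ and give $C_{20}\varepsilon^{1/2}\|\mathbf{F}\|_{\mathfrak{H}_p(T)}$ directly, and for $p=\infty$ I would estimate the essential supremum over $t$ by splitting at $\varepsilon^2$ as in \eqref{Prop3.16_proof_1}--\eqref{Prop3.16_proof_3}, the tail $t\in(\varepsilon^2,T)$ being controlled by the first inequality of Theorem~7.6 and the splitting of the Duhamel integral at $t-\varepsilon^2$. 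The flux statements follow verbatim from the same scheme with \eqref{proposition 5.4-2}, \eqref{proposition 5.4-3} and the second inequality of Theorem~7.6. Since every analytic ingredient has already been carried out in Chapter~1, I do not expect a genuine obstacle; the only points needing (routine) attention are the verification that the corrector without $\mathcal{P}$ and $P_\mathcal{O}$ is the correct one in the $\Lambda\in L_\infty$ setting, the $p=\infty$ endpoint of assertion~$2^\circ$, and the bookkeeping of the constants, which now also depend on $\|\Lambda\|_{L_\infty}$ through $\mathcal{C}_{15}$, $\widetilde{\mathcal{C}}_{15}$, hence on the data \eqref{data2} and $\|\Lambda\|_{L_\infty}$ as claimed.
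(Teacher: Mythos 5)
Your proposal is correct and follows the paper's own route: the paper proves this theorem by declaring it "similar to Theorem 4.19," i.e., by repeating the five-term decomposition of the proof of Theorem 4.18/8.16 with the corrector and flux estimates of Theorem 7.6 in place of Theorem 7.3 and with Proposition 7.5 supplying the small-$t$ bounds, exactly as you describe. Your explicit remarks on why $\mathcal{P}$ and $P_{\mathcal O}$ drop out of the corrector under Condition 2.5 and on the $p=\infty$ endpoint are the right points to check and are handled correctly.
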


The following result is similar to Proposition 4.20 and is deduced from Theorem 7.9.

\begin{proposition}
Suppose that the assumptions of Theorem~{\rm 8.16} are satisfied, and let $d=3$. Suppose that $\partial\mathcal{O}\in C^{2,1}$.

\noindent $1^\circ$. Let $1\leqslant p <4/3$. Then for $0<\varepsilon \leqslant\varepsilon _1$ we have
\begin{align*}
\begin{split}
\Vert &\mathbf{u}_\varepsilon -\mathbf{u}_0 -\varepsilon \Lambda ^\varepsilon b(\mathbf{D})\mathbf{w}_\varepsilon \Vert _{\mathfrak{G}_p(T)}
\leqslant \mathfrak{k}_p'\varepsilon ^{2/p-3/2}\Vert \boldsymbol{\varphi}\Vert _{L_2(\mathcal{O})}
+\mathcal{C}_{21}\varepsilon ^{1/2}\Vert \mathbf{F}\Vert _{\mathfrak{H}_p(T)},
\end{split}
\\
\begin{split}
\Vert &\mathbf{p}_\varepsilon -\widetilde{g}^\varepsilon b(\mathbf{D})\mathbf{w}_\varepsilon \Vert _{L_p((0,T);L_2(\mathcal{O}))}
\leqslant \widetilde{\mathfrak{k}}_p'\varepsilon ^{2/p-3/2}\Vert \boldsymbol{\varphi}\Vert _{L_2(\mathcal{O})}
+\widetilde{\mathcal{C}}_{21}\varepsilon ^{1/2}\Vert \mathbf{F}\Vert _{\mathfrak{H}_p(T)}.
\end{split}
\end{align*}
The constants $\mathcal{C}_{21}$ and $\widetilde{\mathcal{C}}_{21}$ depend only on the data \textnormal{\eqref{data2}}.
The constants $\mathfrak{k}_p'$ and $\widetilde{\mathfrak{k}}_p'$ depend on the same parameters and $p$.

\noindent $2^\circ$. Let $\boldsymbol{\varphi }=0$, and let $1\leqslant p\leqslant \infty$.
Then for $0<\varepsilon \leqslant \varepsilon _1$ we have
\begin{align*}
\begin{split}
\Vert &\mathbf{u}_\varepsilon -\mathbf{u}_0 -\varepsilon \Lambda ^\varepsilon b(\mathbf{D})\mathbf{w}_\varepsilon \Vert _{\mathfrak{G}_p(T)}
\leqslant \mathcal{C}_{21}\varepsilon ^{1/2}\Vert \mathbf{F}\Vert _{\mathfrak{H}_p(T)},
\end{split}
\\
\begin{split}
\Vert &\mathbf{p}_\varepsilon -\widetilde{g}^\varepsilon b(\mathbf{D})\mathbf{w}_\varepsilon \Vert _{L_p((0,T);L_2(\mathcal{O}))}
\leqslant
\widetilde{\mathcal{C}}_{21}\varepsilon ^{1/2}\Vert \mathbf{F}\Vert _{\mathfrak{H}_p(T)}.
\end{split}
\end{align*}
\end{proposition}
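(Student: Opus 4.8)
The plan is to repeat the scheme used to prove Theorems 4.18 and 8.16, the one novelty being that the role of Theorem 7.3 is now played by Theorem 7.9, which under the present smoothness hypothesis on $\partial\mathcal{O}$ replaces the smoothed corrector by the unsmoothed $\varepsilon\mathcal{K}_N^0(t;\varepsilon)$ at the cost of the extra singular summand $\varepsilon t^{-d/4-1/2}$ in the right-hand side; for $d=3$ this summand is $\varepsilon t^{-5/4}$. Using $\mathbf{u}_\varepsilon(\cdot,t) = e^{-\mathcal{A}_{N,\varepsilon}t}\boldsymbol{\varphi} + \int_0^t e^{-\mathcal{A}_{N,\varepsilon}(t-\widetilde{t})}\mathbf{F}(\cdot,\widetilde{t})\,d\widetilde{t}$, the analogous formula for $\mathbf{u}_0$, and $\varepsilon\Lambda^\varepsilon b(\mathbf{D})\mathbf{w}_\varepsilon(\cdot,t) = \varepsilon\mathcal{K}_N^0(t;\varepsilon)\boldsymbol{\varphi} + \varepsilon\int_0^{t-\varepsilon^2}\mathcal{K}_N^0(t-\widetilde{t};\varepsilon)\mathbf{F}(\cdot,\widetilde{t})\,d\widetilde{t}$ for $t\ge\varepsilon^2$ (with $\mathbf{w}_\varepsilon = 0$ for $0\le t<\varepsilon^2$), I would split $\mathbf{u}_\varepsilon - \mathbf{u}_0 - \varepsilon\Lambda^\varepsilon b(\mathbf{D})\mathbf{w}_\varepsilon$ into an initial-data part and a right-hand-side part, and, exactly as in the passage \eqref{Th3.15_proof_1}--\eqref{J4<=}, cut the outer $t$-integral at $t=\varepsilon^2$ and the inner $\widetilde{t}$-integral at $\widetilde{t}=t-\varepsilon^2$.

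For the initial-data part I would estimate the piece over $(0,\varepsilon^2)$ by the crude bound \eqref{proposition 5.4-1} of Proposition 7.5 (contributing $O(\varepsilon^{2/p-1})\|\boldsymbol{\varphi}\|_{L_2(\mathcal{O})}$ in $\mathfrak{G}_p$ for $p<2$) and the piece over $(\varepsilon^2,T)$ by \eqref{7.*4} of Theorem 7.9. Since $\varepsilon t^{-1}\le\varepsilon^{1/2}t^{-3/4}$ on $t\ge\varepsilon^2$, the $H^1$-norm at time $t$ of the latter piece is $\lesssim(\varepsilon^{1/2}t^{-3/4} + \varepsilon t^{-5/4})e^{-c_\flat t/2}\|\boldsymbol{\varphi}\|$; taking its $L_p((\varepsilon^2,T))$-norm, the term $\varepsilon^{1/2}t^{-3/4}$ gives $O(\varepsilon^{1/2})$ (the time integral is finite since $3p/4<1$ when $p<4/3$), while the term $\varepsilon t^{-5/4}$ gives $\varepsilon\cdot O(\varepsilon^{2/p-5/2}) = O(\varepsilon^{2/p-3/2})$, the time integral $\int_{\varepsilon^2}^T t^{-5p/4}e^{-c_\flat pt/2}\,dt$ being of order $\varepsilon^{2-5p/2}$; this last term is the dominant one for $p\ge1$ and it tends to $0$ precisely because $p<4/3$ forces $2/p-3/2>0$. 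This is the source both of the coefficient $\mathfrak{k}_p'\varepsilon^{2/p-3/2}$ and of the restriction $1\le p<4/3$ in $1^\circ$.

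For the right-hand-side part I would bound the inner integrand over $(0,t-\varepsilon^2)$ by \eqref{7.*4} and over $(t-\varepsilon^2,t)$ by \eqref{proposition 5.4-1}, then apply the H\"older inequality in $\widetilde{t}$ and interchange the order of integration, just as in \eqref{J3^p<=}--\eqref{J4<=}. The contributions of $\varepsilon^{1/2}(t-\widetilde{t})^{-3/4}$ and $\varepsilon(t-\widetilde{t})^{-1}$ are $O(\varepsilon^{1/2}\|\mathbf{F}\|_{\mathfrak{H}_p(T)})$ as in Theorem 4.18, and the new contribution of $\varepsilon(t-\widetilde{t})^{-5/4}$ is controlled through $\varepsilon\int_{\varepsilon^2}^\infty\tau^{-5/4}e^{-c_\flat\tau/2}\,d\tau = O(\varepsilon\cdot\varepsilon^{-1/2}) = O(\varepsilon^{1/2})$ — a convolution bound which, after the H\"older splitting and the interchange of integrations, produces a power of $\varepsilon$ independent of $p$; the $(t-\varepsilon^2,t)$-piece contributes $O(\varepsilon\|\mathbf{F}\|_{\mathfrak{H}_p(T)})$. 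Collecting terms gives $1^\circ$. For $\boldsymbol{\varphi}=0$ only the right-hand-side part survives, no time integral of the type $\int t^{-3p/4}\,dt$ with $p\ge4/3$ is needed, and the same bounds hold for all $1\le p\le\infty$ (for $p=\infty$ one splits into the essential suprema over $(0,\varepsilon^2)$ and over $(\varepsilon^2,T)$ as in the proof of Theorem 4.18); this is $2^\circ$. The flux estimates follow verbatim with \eqref{7.*4}, \eqref{proposition 5.4-1} replaced by \eqref{7.*5}, \eqref{proposition 5.4-2}--\eqref{proposition 5.4-3}.

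The only real difficulty is bookkeeping: one has to check that each of the several time convolutions generated by the extra $\varepsilon t^{-5/4}$ singularity is indeed $O(\varepsilon^{2/p-3/2})$ in the initial-data part and $O(\varepsilon^{1/2})$ in the right-hand-side part, and, in particular, that $p=4/3$ is the sharp threshold beyond which the factor $\varepsilon^{2/p-3/2}$ stops tending to zero. Nothing conceptually new beyond Theorem 7.9 and Proposition 7.5 is required.
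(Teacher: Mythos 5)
Your proposal is correct and follows essentially the same route as the paper, which proves this proposition only by reference: it is ``deduced from Theorem 7.9'' along the lines of Proposition 4.20, whose proof in turn repeats the scheme of Theorem 4.18 (splitting at $t=\varepsilon^2$ and $\widetilde t = t-\varepsilon^2$, H\"older, interchange of integration, with Proposition 7.5 handling the short-time pieces). Your bookkeeping of the extra $\varepsilon t^{-5/4}$ singularity — yielding $\varepsilon^{2/p-3/2}$ in the initial-data part (hence the threshold $p<4/3$) and $O(\varepsilon^{1/2})$ in the convolution with $\mathbf F$ — is exactly what the intended argument requires.
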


\subsection*{8.6. Approximation of the solutions in $L_p((0,T);H^1(\mathcal{O}';\mathbb{C}^n))$.}
The following result is similar to Theorem 4.21 and can be checked by using Theorem 7.12 and Proposition 7.5.

\begin{theorem}
Suppose that the assumptions of Theorem {\rm 8.16} are satisfied. Let $\mathcal{O}'$ be a strictly interior subdomain of the domain $\mathcal{O}$,
and let $\delta =\textnormal{dist}\,\lbrace \mathcal{O}';\partial \mathcal{O}\rbrace $.

\noindent
$1^\circ .$ Let $1\leqslant p<2$. Then for $0<\varepsilon \leqslant \varepsilon _1$ we have
\begin{align*}
\begin{split}
\Vert &\mathbf{u}_\varepsilon -\mathbf{u}_0-\varepsilon \Lambda ^\varepsilon S_\varepsilon b(\mathbf{D})\widehat{\mathbf{w}}_\varepsilon \Vert _{L_p((0,T);H^1(\mathcal{O}'))}\\
&\leqslant ({\rm k}_{p'}\delta ^{-1}+{\rm k}'_{p'}) \tau(\varepsilon,p)
\Vert \boldsymbol{\varphi}\Vert _{L_2(\mathcal{O})}+({\mathcal C}_{22}' \delta ^{-1}+{\mathcal C}_{22}'' )\varepsilon (\vert \ln \varepsilon\vert +1)\Vert \mathbf{F}\Vert _{\mathfrak{H}_p(T)},
\end{split}
\\
\begin{split}
\Vert &\mathbf{p}_\varepsilon -\widetilde{g}^\varepsilon S_\varepsilon b(\mathbf{D})\widehat{\mathbf{w}}_\varepsilon \Vert _{L_p((0,T);L_2(\mathcal{O}'))}\\
&\leqslant (\widetilde{{\rm k}}_{p'}\delta ^{-1}+\widetilde{{\rm k}}_{p'}') \tau(\varepsilon,p) \Vert \boldsymbol{\varphi}\Vert _{L_2(\mathcal{O})}
+(\widetilde{\mathcal C}_{22}' \delta ^{-1}+\widetilde{\mathcal C}_{22}'' )\varepsilon (\vert \ln \varepsilon \vert +1)\Vert \mathbf{F}\Vert _{\mathfrak{H}_p(T)}.
\end{split}
\end{align*}
Here $\tau(\varepsilon,p)$ is defined by \textnormal{(\ref{tau (varepsilon ,p)})}.

\noindent
$2^\circ .$ Let $\boldsymbol{\varphi}=0$, and let $1\leqslant p\leqslant \infty$. Then for $0<\varepsilon \leqslant \varepsilon _1$ we have
\begin{align*}
\begin{split}
\Vert &\mathbf{u}_\varepsilon -\mathbf{u}_0-\varepsilon \Lambda ^\varepsilon S_\varepsilon b(\mathbf{D})\widehat{\mathbf{w}}_\varepsilon \Vert _{L_p((0,T);H^1(\mathcal{O}'))}\\
&\leqslant ({\mathcal C}_{22}' \delta ^{-1}+{\mathcal C}_{22}'' )\varepsilon (\vert \ln \varepsilon\vert +1)
\Vert \mathbf{F}\Vert _{\mathfrak{H}_p(T)},
\end{split}
\\
\Vert &\mathbf{p}_\varepsilon -\widetilde{g}^\varepsilon S_\varepsilon b(\mathbf{D})\widehat{\mathbf{w}}_\varepsilon \Vert _{L_p((0,T);L_2(\mathcal{O}'))}
\leqslant (\widetilde{\mathcal C}_{22}' \delta ^{-1}+\widetilde{\mathcal C}_{22}'' )
\varepsilon (\vert \ln \varepsilon \vert +1)\Vert \mathbf{F}\Vert _{\mathfrak{H}_p(T)}.
\end{align*}
The constants ${\mathcal C}_{22}'$, ${\mathcal C}_{22}''$, $\widetilde{\mathcal C}_{22}'$, and $\widetilde{\mathcal C}_{22}''$
depend only on the data \textnormal{\eqref{data2}}.
The constants  ${\rm k}_{p'}$, ${\rm k}_{p'}'$, $\widetilde{{\rm k}}_{p'}$, and $\widetilde{{\rm k}}'_{p'}$ are the same as in Theorem~{\rm 8.14}
\textnormal{(}with $p$ replaced by $p'$\textnormal{)}.
\end{theorem}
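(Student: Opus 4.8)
The plan is to follow the scheme of the proof of Theorem~4.18 (and of its strictly interior counterpart Theorem~4.21), replacing the Dirichlet ingredients by the Neumann ones: Theorem~7.12 and Proposition~7.5 will play the roles of Theorem~3.13 and Proposition~3.5. First I would record the algebraic facts that let the projection $\mathcal{P}$ enter smoothly. Since $I-\mathcal{P}$ is the orthogonal projection onto $Z=\textnormal{Ker}\,b(\mathbf{D})=\textnormal{Ker}\,\mathcal{A}_{N,\varepsilon}=\textnormal{Ker}\,\mathcal{A}_N^0$, one has $e^{-\mathcal{A}_{N,\varepsilon}t}-e^{-\mathcal{A}_N^0t}=(e^{-\mathcal{A}_{N,\varepsilon}t}-e^{-\mathcal{A}_N^0t})\mathcal{P}$, $g^\varepsilon b(\mathbf{D})e^{-\mathcal{A}_{N,\varepsilon}t}=g^\varepsilon b(\mathbf{D})e^{-\mathcal{A}_{N,\varepsilon}t}\mathcal{P}$, and $e^{-\mathcal{A}_N^0t}\mathcal{P}=\mathcal{P}e^{-\mathcal{A}_N^0t}$; in particular the corrector $\mathcal{K}_N(t;\varepsilon)=R_\mathcal{O}[\Lambda^\varepsilon]S_\varepsilon b(\mathbf{D})P_\mathcal{O}e^{-\mathcal{A}_N^0t}\mathcal{P}$ already carries $\mathcal{P}$, so that $\varepsilon\Lambda^\varepsilon S_\varepsilon b(\mathbf{D})\widehat{\mathbf{w}}_\varepsilon$ with $\widehat{\mathbf{w}}_\varepsilon=P_\mathcal{O}\mathcal{P}\mathbf{w}_\varepsilon$ and $\mathbf{w}_\varepsilon$ as in \eqref{w_eps neumann} is exactly this corrector evaluated on $\mathbf{w}_\varepsilon$. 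Then, using the Duhamel formulas for $\mathbf{u}_\varepsilon$ and $\mathbf{u}_0$, for $\varepsilon^2\leqslant t<T$ I would write the error $\mathbf{u}_\varepsilon(\cdot,t)-\mathbf{u}_0(\cdot,t)-\varepsilon\Lambda^\varepsilon S_\varepsilon b(\mathbf{D})\widehat{\mathbf{w}}_\varepsilon(\cdot,t)$ as the sum of the initial term $(e^{-\mathcal{A}_{N,\varepsilon}t}-e^{-\mathcal{A}_N^0t}-\varepsilon\mathcal{K}_N(t;\varepsilon))\boldsymbol{\varphi}$, the integral over $(0,t-\varepsilon^2)$ of $(e^{-\mathcal{A}_{N,\varepsilon}(t-\widetilde t)}-e^{-\mathcal{A}_N^0(t-\widetilde t)}-\varepsilon\mathcal{K}_N(t-\widetilde t;\varepsilon))\mathbf{F}(\cdot,\widetilde t)$, and the integral over $(t-\varepsilon^2,t)$ of $(e^{-\mathcal{A}_{N,\varepsilon}(t-\widetilde t)}-e^{-\mathcal{A}_N^0(t-\widetilde t)})\mathbf{F}(\cdot,\widetilde t)$; the flux version is decomposed the same way, with $\mathcal{K}_N$ replaced by $\widetilde{g}^\varepsilon S_\varepsilon b(\mathbf{D})P_\mathcal{O}e^{-\mathcal{A}_N^0(\cdot)}\mathcal{P}$ and the last integrand by $g^\varepsilon b(\mathbf{D})e^{-\mathcal{A}_{N,\varepsilon}(t-\widetilde t)}\mathbf{F}(\cdot,\widetilde t)$. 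For $0\leqslant t<\varepsilon^2$, where $\mathbf{w}_\varepsilon\equiv0$, the correctors are dropped and $\mathbf{u}_\varepsilon-\mathbf{u}_0$ (resp.\ the flux) is treated directly.

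Next I would insert the pointwise-in-$t$ estimates supplied by the already proved results. Theorem~7.12 bounds the $(L_2(\mathcal{O})\to H^1(\mathcal{O}'))$-norm of each integrand carrying a corrector by $(\mathcal{C}_{16}\delta^{-1}+\mathcal{C}_{17})\varepsilon s^{-1}e^{-c_\flat s/2}$ with $s=t$, resp.\ $s=t-\widetilde t$, and the flux estimate of Theorem~7.12 does the same for the flux; Proposition~7.5 (estimates \eqref{proposition 5.4-1}, \eqref{proposition 5.4-2}) bounds the remaining short-time integrand by $\mathcal{C}_{14}(t-\widetilde t)^{-1/2}e^{-c_\flat(t-\widetilde t)/2}$, resp.\ by $\widetilde{\mathcal{C}}_{14}(t-\widetilde t)^{-1/2}e^{-c_\flat(t-\widetilde t)/2}$. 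It then remains to take the $L_p((0,T);\cdot)$-norm in $t$. For the initial term, after integrating the $p$-th power in $t$, this reduces to $\int_0^{\varepsilon^2}t^{-p/2}\,dt$ (finite for $1\leqslant p<2$) coming from Proposition~7.5 on $(0,\varepsilon^2)$ and to $\varepsilon^p\int_{\varepsilon^2}^{\infty}t^{-p}e^{-pc_\flat t/2}\,dt$ coming from Theorem~7.12 on $(\varepsilon^2,T)$; raising to the power $1/p$, these will produce exactly the threshold function $\tau(\varepsilon,p)$ of \eqref{tau (varepsilon ,p)} (the logarithm in the case $p=1$ coming from the second integral) and, from Theorem~7.12, the factor $\delta^{-1}$, i.e.\ the coefficient $({\rm k}_{p'}\delta^{-1}+{\rm k}_{p'}')\tau(\varepsilon,p)$ at $\|\boldsymbol{\varphi}\|_{L_2(\mathcal{O})}$, inherited from Theorem~8.14 with $p$ replaced by $p'$. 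For the two convolution terms I would apply Hölder's inequality in $\widetilde t$ and then change the order of integration exactly as in \eqref{93} and \eqref{J3^p<=}--\eqref{J4<=}: the piece over $(0,t-\varepsilon^2)$ generates $\int_{\varepsilon^2}^{T}s^{-1}e^{-c_\flat s/2}\,ds\lesssim|\ln\varepsilon|+1$, giving the coefficient $(\mathcal{C}_{22}'\delta^{-1}+\mathcal{C}_{22}'')\varepsilon(|\ln\varepsilon|+1)$ at $\|\mathbf{F}\|_{\mathfrak{H}_p(T)}$, while the piece over $(t-\varepsilon^2,t)$ generates $\int_0^{\varepsilon^2}s^{-1/2}\,ds\lesssim\varepsilon$ and is absorbed into the same term. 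For $0<T<\varepsilon^2$ the estimates simplify and rely only on Proposition~7.5. This yields assertion~$1^\circ$ for both the solution and the flux.

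For assertion~$2^\circ$ with $\boldsymbol{\varphi}=0$ the initial terms vanish identically; for $1\leqslant p<\infty$ the convolution estimates above apply verbatim, and for $p=\infty$ I would split $\esssup_{t\in(0,T)}$ into $\esssup_{t\in(0,\varepsilon^2)}$ and $\esssup_{t\in(\varepsilon^2,T)}$, estimating the first by Proposition~7.5 (the kernel $(t-\widetilde t)^{-1/2}$ integrates to $2\varepsilon$ over $(0,t)$ with $t<\varepsilon^2$) and the second by Theorem~8.14 in the case $p=\infty$ together with $\int_{\varepsilon^2}^{t}s^{-1}e^{-c_\flat s/2}\,ds\lesssim|\ln\varepsilon|+1$, precisely as in \eqref{Prop3.16_proof_1}--\eqref{Prop3.16_proof_3}. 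The resulting constants $\mathcal{C}_{22}',\mathcal{C}_{22}'',\widetilde{\mathcal{C}}_{22}',\widetilde{\mathcal{C}}_{22}''$ will depend only on the data \eqref{data2}, and ${\rm k}_{p'},{\rm k}_{p'}',\widetilde{{\rm k}}_{p'},\widetilde{{\rm k}}'_{p'}$ will be those of Theorem~8.14 with $p$ replaced by $p'$.

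The hard part is not any individual estimate but the bookkeeping of the borderline kernel $\varepsilon s^{-1}e^{-c_\flat s/2}$ furnished by the strictly interior resolvent bound of Theorem~6.5: since $s^{-1}$ is not uniformly integrable near $s=0$, the convolution with $\mathbf{F}$ cannot be bounded naively, and it is exactly this failure that forces the $\varepsilon^2$-cutoff in the definition of $\mathbf{w}_\varepsilon$ and produces the logarithmic loss $(|\ln\varepsilon|+1)$. The delicate point to verify is that, after Hölder's inequality and Fubini's theorem, the truncated convolution and the short-time remainder combine into precisely $\varepsilon(|\ln\varepsilon|+1)\|\mathbf{F}\|_{\mathfrak{H}_p(T)}$ (and not a worse power of $\varepsilon$), and that the $t$-integrals over $(0,\varepsilon^2)$ and $(\varepsilon^2,T)$ of the initial term assemble into the single threshold function $\tau(\varepsilon,p)$ of \eqref{tau (varepsilon ,p)} with constants of the stated dependence.
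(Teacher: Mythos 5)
Your proposal is correct and follows exactly the route the paper indicates for this theorem: the Duhamel decomposition with the $\varepsilon^2$-cutoff as in the proof of Theorem~4.18, with the strictly interior pointwise-in-$t$ bounds of Theorem~7.12 and the short-time bounds of Proposition~7.5 inserted in place of Theorems~3.13/3.3 and Proposition~3.5, and the projection $\mathcal{P}$ handled via $\mathrm{Ker}\,\mathcal{A}_{N,\varepsilon}=\mathrm{Ker}\,\mathcal{A}_N^0=Z$. The bookkeeping of the borderline kernel $\varepsilon s^{-1}e^{-c_\flat s/2}$ and the resulting factors $\tau(\varepsilon,p)$ and $\varepsilon(|\ln\varepsilon|+1)$ is carried out as the paper intends.
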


Finally, in the case where $\Lambda \in L_\infty$ Theorem 7.13 and Proposition 7.5 imply the following result
similar to Theorem 4.22.

\begin{theorem}
Suppose that the assumptions of Theorem {\rm 8.19} are satisfied.
Suppose that the matrix-valued function $\Lambda (\mathbf{x})$ satisfies Condition {\rm 2.5}.

\noindent
$1^\circ .$ Let $1\leqslant p<2$. Then for $0<\varepsilon \leqslant \varepsilon _1$ we have
\begin{align*}
\begin{split}
\Vert &\mathbf{u}_\varepsilon -\mathbf{u}_0-\varepsilon \Lambda ^\varepsilon  b(\mathbf{D})\mathbf{w}_\varepsilon \Vert _{L_p((0,T);H^1(\mathcal{O}'))}\\
&\leqslant ({\rm k}_{p'}\delta ^{-1}+\check{{\rm k}}'_{p'}) \tau(\varepsilon,p) \Vert \boldsymbol{\varphi}\Vert _{L_2(\mathcal{O})}
+({\mathcal C}_{22}' \delta ^{-1}+ \check{\mathcal C}_{22}'' )\varepsilon (\vert \ln \varepsilon\vert +1)\Vert \mathbf{F}\Vert _{\mathfrak{H}_p(T)},
\end{split}
\\
\begin{split}
\Vert &\mathbf{p}_\varepsilon -\widetilde{g}^\varepsilon  b(\mathbf{D})\mathbf{w}_\varepsilon \Vert _{L_p((0,T);L_2(\mathcal{O}'))}\\
&\leqslant (\widetilde{{\rm k}}_{p'}\delta ^{-1}+\widehat{{\rm k}}_{p'}')  \tau(\varepsilon,p) \Vert \boldsymbol{\varphi}\Vert _{L_2(\mathcal{O})}
+(\widetilde{\mathcal C}_{22}' \delta ^{-1}+\widehat{\mathcal C}_{22}'' )\varepsilon (\vert \ln \varepsilon \vert +1)
\Vert \mathbf{F}\Vert _{\mathfrak{H}_p(T)}.
\end{split}
\end{align*}

\noindent
$2^\circ .$ Let $\boldsymbol{\varphi}=0$, and let $1\leqslant p\leqslant \infty$. Then for $0<\varepsilon \leqslant \varepsilon _1$ we have
\begin{align*}
\begin{split}
\Vert &\mathbf{u}_\varepsilon -\mathbf{u}_0-\varepsilon \Lambda ^\varepsilon  b(\mathbf{D})\mathbf{w}_\varepsilon \Vert _{L_p((0,T);H^1(\mathcal{O}'))}\\
&\leqslant ({\mathcal C}_{22}' \delta ^{-1}+ \check{\mathcal C}_{22}'' )\varepsilon (\vert \ln \varepsilon\vert +1)\Vert \mathbf{F}\Vert _{\mathfrak{H}_p(T)},
\end{split}
\\
\Vert &\mathbf{p}_\varepsilon -\widetilde{g}^\varepsilon  b(\mathbf{D})\mathbf{w}_\varepsilon \Vert _{L_p((0,T);L_2(\mathcal{O}'))}
\leqslant (\widetilde{\mathcal C}_{22}' \delta ^{-1}+\widehat{\mathcal C}_{22}'' )
\varepsilon (\vert \ln \varepsilon \vert +1)\Vert \mathbf{F}\Vert _{\mathfrak{H}_p(T)}.
\end{align*}
The constants ${\mathcal C}_{22}'$ and $\widetilde{\mathcal C}_{22}'$
are the same as in Theorem {\rm 8.19}.
The constants $\check{\mathcal C}_{22}''$ and $\widehat{\mathcal C}_{22}''$
depend only on the initial data \textnormal{\eqref{data2}} and $\|\Lambda\|_{L_\infty}$.
The constants  ${\rm k}_{p'}$, $\check{{\rm k}}_{p'}'$, $\widetilde{{\rm k}}_{p'}$, and $\widehat{{\rm k}}'_{p'}$ are the same as in
Theorem {\rm 8.15} \textnormal{(}with $p$ replaced by $p'$\textnormal{)}.
\end{theorem}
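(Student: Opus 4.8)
The plan is to follow the proof of Theorem~4.18 (and of its strictly-interior refinements, Theorems~4.21 and~4.22), replacing the Dirichlet ingredients by their Neumann analogues: the operator-exponential bounds of Theorem~7.13 and the short-time estimates \eqref{proposition 5.4-1}--\eqref{proposition 5.4-3} of Proposition~7.5, in place of Theorem~3.14 and Proposition~3.5. Since Condition~2.5 is assumed, the corrector is the $S_\varepsilon$-free operator $\mathcal{K}_N^0(t;\varepsilon)$ from \eqref{K_N^0(t;e)}; moreover, because $b(\mathbf{D})(I-\mathcal{P})=0$, so that $b(\mathbf{D})\mathcal{P}=b(\mathbf{D})$, the projection $\mathcal{P}$ occurring in the Neumann statements disappears from the correction term, and by \eqref{w_eps neumann} one has $\varepsilon\Lambda^\varepsilon b(\mathbf{D})\mathbf{w}_\varepsilon(\cdot,t)=\varepsilon\mathcal{K}_N^0(t;\varepsilon)\boldsymbol{\varphi}+\varepsilon\int_0^{t-\varepsilon^2}\mathcal{K}_N^0(t-\widetilde t;\varepsilon)\mathbf{F}(\cdot,\widetilde t)\,d\widetilde t$ for $t\ge\varepsilon^2$ and $=0$ for $0\le t<\varepsilon^2$.

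First I would write out the Duhamel representations of $\mathbf{u}_\varepsilon$ and $\mathbf{u}_0$, subtract, and (assuming $T\ge\varepsilon^2$) split the $L_p((0,T);H^1(\mathcal{O}'))$-norm of $\mathbf{u}_\varepsilon-\mathbf{u}_0-\varepsilon\Lambda^\varepsilon b(\mathbf{D})\mathbf{w}_\varepsilon$ by cutting the $t$-integral at $\varepsilon^2$ and, on $(\varepsilon^2,T)$, cutting the inner convolution at $\widetilde t=t-\varepsilon^2$; this produces five terms $\mathcal{T}_1,\dots,\mathcal{T}_5$, just as in the proof of Theorem~4.18. The initial-data terms $\mathcal{T}_1$ (over $(0,\varepsilon^2)$, no corrector) and $\mathcal{T}_2$ (over $(\varepsilon^2,T)$, with corrector) are bounded by \eqref{proposition 5.4-1} and by Theorem~7.13 respectively — this is where the restriction $1\le p<2$ in assertion~$1^\circ$ enters, to make $\int_0^{\varepsilon^2}t^{-p/2}\,dt$ finite — and together they yield the $\|\boldsymbol{\varphi}\|_{L_2(\mathcal{O})}$-term with the factor $\tau(\varepsilon,p)=\sigma(\varepsilon,p')$, the constants being those of Theorem~8.15 with $p$ replaced by $p'$. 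The ``diagonal'' source terms $\mathcal{T}_3,\mathcal{T}_5$ are estimated by \eqref{proposition 5.4-1} and the H\"older inequality, using $\int_0^t(t-\widetilde t)^{-1/2}\,d\widetilde t\le2\varepsilon$ on intervals of length $\varepsilon^2$; they are $O(\varepsilon\|\mathbf{F}\|_{\mathfrak{H}_p(T)})$. The main source term $\mathcal{T}_4$ is estimated via Theorem~7.13 and H\"older; changing the order of integration and using $\int_{\varepsilon^2}^t s^{-1}e^{-c_\flat s/2}\,ds\lesssim|\ln\varepsilon|+1$ produces the factor $\varepsilon(|\ln\varepsilon|+1)$ on $\|\mathbf{F}\|_{\mathfrak{H}_p(T)}$, with the $\delta^{-1}$-dependence inherited from the constant $\mathcal{C}_{16}\delta^{-1}+\check{\mathcal{C}}_{17}$ of Theorem~7.13. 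Collecting the five bounds — and handling the trivial case $0<T<\varepsilon^2$ separately with Proposition~7.5 alone — gives assertion~$1^\circ$ for the solutions; the flux estimate is obtained in exactly the same way, now using the flux bounds of Theorem~7.13 together with \eqref{proposition 5.4-2}, \eqref{proposition 5.4-3}, and the identity \eqref{potok_N-potot_N*P}.

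For assertion~$2^\circ$ ($\boldsymbol{\varphi}=0$) the terms $\mathcal{T}_1,\mathcal{T}_2$ vanish, so for $1\le p<\infty$ the bounds already obtained for $\mathcal{T}_3,\mathcal{T}_4,\mathcal{T}_5$ — which carry no upper restriction on $p$ — give the result at once. For $p=\infty$ one passes to the essential supremum over $t\in(0,T)$, again splitting at $\varepsilon^2$: on $(\varepsilon^2,T)$ one invokes Theorem~8.15 with $p=\infty$ (so that $\sigma(\varepsilon,\infty)=\varepsilon(|\ln\varepsilon|+1)$), and on $(0,\varepsilon^2)$ one bounds $\|\mathbf{u}_\varepsilon(\cdot,t)-\mathbf{u}_0(\cdot,t)\|_{H^1(\mathcal{O}')}$ by $\mathcal{C}_{14}\int_0^t(t-\widetilde t)^{-1/2}\|\mathbf{F}(\cdot,\widetilde t)\|_{L_2}\,d\widetilde t\le2\varepsilon\mathcal{C}_{14}\|\mathbf{F}\|_{\mathfrak{H}_\infty(T)}$ via \eqref{proposition 5.4-1}, exactly as in the proof of Theorem~4.18($2^\circ$). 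I do not expect a genuine obstacle: all the analytic substance is already packaged in Theorems~7.12--7.13 and Proposition~7.5. The only points demanding care are the bookkeeping of the numerous constants, checking that the $\varepsilon^2$-splitting of the convolution in $\mathcal{T}_4$ really produces the logarithmic factor, and the relabelling $p\leftrightarrow p'$ in the identification $\tau(\varepsilon,p)=\sigma(\varepsilon,p')$ and in the constants borrowed from Theorem~8.15.
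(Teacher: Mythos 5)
Your proposal is correct and is exactly the argument the paper intends: the paper gives no detailed proof of this theorem, stating only that it follows from Theorem~7.13 and Proposition~7.5 in the same way that Theorem~4.22 follows from Theorem~3.14 and Proposition~3.5 (i.e.\ by the five-term splitting $\mathcal{T}_1,\dots,\mathcal{T}_5$ of the proof of Theorem~4.18). Your reconstruction of that omitted proof — including the role of $1\leqslant p<2$ for the $\boldsymbol{\varphi}$-terms, the identity $\tau(\varepsilon,p)=\sigma(\varepsilon,p')$, the logarithmic factor arising from $\int_{\varepsilon^2}^{t}s^{-1}e^{-c_\flat s/2}\,ds$ in $\mathcal{T}_4$, the disappearance of $\mathcal{P}$ via $b(\mathbf{D})\mathcal{P}=b(\mathbf{D})$, and the separate treatment of $p=\infty$ in assertion $2^\circ$ — is accurate.
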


\section*{Appendix}
\section*{\S 9. Removal of the smoothing operator}
\setcounter{section}{9}
\setcounter{equation}{0}
\setcounter{theorem}{0}

In \S 9, we assume that $d \geqslant 3$ and prove the statements about removing the smoothing operator $S_\eps$
in the corrector in the case of sufficiently smooth boundary (Lemma 3.8 and Theorem 3.9 in the case of the Dirichlet condition,
Lemma 7.8 and Theorem 7.9 in the case of the Neumann condition) and  in a strictly interior subdomain (Lemma 3.15 and Theorem 3.16
in the case of the Dirichlet condition, Lemma 7.14 and Theorem 7.15 in the case of the Neumann condition).

\subsection*{9.1.  The multiplicative property of the matrix-valued function $\Lambda({\mathbf x})$.}

\begin{lemma}
Suppose that the matrix-valued function $\Lambda({\mathbf x})$ is the $\Gamma$-periodic solution of problem
\textnormal{(\ref{Lambda_problem})}. Let $d\geqslant 3$, and let $l=d/2$.

\noindent $1^\circ$. For $0< \varepsilon \leqslant 1$
and for any ${\mathbf v} \in H^{l-1}({\mathbb R}^d; {\mathbb C}^m)$ we have
$\Lambda^\varepsilon {\mathbf v} \in L_2({\mathbb R}^d;{\mathbb C}^n)$, and
\begin{equation}
\label{9.0}
\| \Lambda^\varepsilon {\mathbf v}\|^2_{L_2({\mathbb R}^d)}
 \leqslant C^{(0)} \| {\mathbf v}\|_{H^{l-1}({\mathbb R}^d)}^2.
\end{equation}

\noindent $2^\circ$. For $0< \varepsilon \leqslant 1$
and for any ${\mathbf v} \in H^{l}({\mathbb R}^d; {\mathbb C}^m)$ we have
$\Lambda^\varepsilon {\mathbf v} \in H^1({\mathbb R}^d;{\mathbb C}^n)$, and
\begin{equation}
\label{9.1}
\| \Lambda^\varepsilon {\mathbf v}\|_{H^1({\mathbb R}^d)} \leqslant  C^{(1)} \varepsilon^{-1}
\| {\mathbf v}\|_{L_2({\mathbb R}^d)} + C^{(2)}  \|{\mathbf v}\|_{H^l({\mathbb R}^d)}.
\end{equation}
 The constants $C^{(0)}$, $C^{(1)}$, and $C^{(2)}$ depend on  $m$, $d$, $\alpha_0$, $\alpha_1$,
$\|g\|_{L_\infty}$, $\|g^{-1}\|_{L_\infty}$, and the parameters of the lattice $\Gamma$.
\end{lemma}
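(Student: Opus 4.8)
The plan is to deduce both assertions from the Sobolev embedding $H^{d/2-1}\hookrightarrow L_d$ (valid in dimension $d\geqslant 3$, since $1/d=1/2-(d/2-1)/d$) combined with the scaling structure of the rapidly oscillating periodic factor $\Lambda^\varepsilon$; for assertion $2^\circ$ the only extra ingredient is Lemma 1.5. Throughout one works first with $\mathbf v\in C_0^\infty({\mathbb R}^d;{\mathbb C}^m)$ and then passes to $H^{l-1}$, resp. $H^{l}$, by density, which is legitimate because the passage is controlled by the very inequalities being proved.

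For $1^\circ$ I would first record that, since $d\geqslant 3$ and $\Lambda\in\widetilde{H}^1(\Omega)$, the Sobolev embedding on the bounded Lipschitz cell $\Omega$ gives $\Lambda\in L_{2^*}(\Omega)$ with $2^*=2d/(d-2)$, whence $|\Lambda|^2\in L_{d/(d-2)}(\Omega)$; moreover $\|\Lambda\|_{L_{2^*}(\Omega)}\leqslant C\|\Lambda\|_{H^1(\Omega)}\leqslant CM$ by \eqref{M}. Next I would tile ${\mathbb R}^d$ (up to a null set) by the dilated cells $Q_{\mathbf a}=\varepsilon(\Omega+{\mathbf a})$, ${\mathbf a}\in\Gamma$. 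On each cell, Hölder's inequality with the conjugate exponents $d/(d-2)$ and $d/2$ gives
\[
\int_{Q_{\mathbf a}}|\Lambda^\varepsilon|^2|{\mathbf v}|^2\,d{\mathbf x}\leqslant \|\Lambda^\varepsilon\|^2_{L_{2^*}(Q_{\mathbf a})}\,\|{\mathbf v}\|^2_{L_d(Q_{\mathbf a})}.
\]
By periodicity and the substitution ${\mathbf x}=\varepsilon{\mathbf y}$ one gets $\|\Lambda^\varepsilon\|^2_{L_{2^*}(Q_{\mathbf a})}=\varepsilon^{\,d-2}\|\Lambda\|^2_{L_{2^*}(\Omega)}$, while transporting the fixed Sobolev inequality on $\Omega$ to $Q_{\mathbf a}$ by the same dilation yields $\|{\mathbf v}\|^2_{L_d(Q_{\mathbf a})}\leqslant C^2\varepsilon^{\,2-d}\|{\mathbf v}\|^2_{H^{l-1}(Q_{\mathbf a})}$ (for even $d$ the $H^{l-1}$-norm is the sum of the $L_2$-norms of the derivatives of order $\leqslant l-1$, and the dilation factors $\varepsilon^{2|\alpha|}\leqslant 1$ are harmless; for odd $d$ one uses the Gagliardo seminorm, which scales the same way). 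The powers of $\varepsilon$ cancel, and since the cells $Q_{\mathbf a}$ are pairwise disjoint, summing over ${\mathbf a}\in\Gamma$ gives $\sum_{\mathbf a}\|{\mathbf v}\|^2_{H^{l-1}(Q_{\mathbf a})}\leqslant\|{\mathbf v}\|^2_{H^{l-1}({\mathbb R}^d)}$, which is \eqref{9.0} with $C^{(0)}$ of order $(CM)^2$.

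For $2^\circ$ I would write $D_l(\Lambda^\varepsilon{\mathbf v})=\varepsilon^{-1}(D_l\Lambda)^\varepsilon{\mathbf v}+\Lambda^\varepsilon D_l{\mathbf v}$, $l=1,\dots,d$. The terms $\|\Lambda^\varepsilon{\mathbf v}\|_{L_2}$ and $\|\Lambda^\varepsilon D_l{\mathbf v}\|_{L_2}$ are estimated by $1^\circ$ applied to ${\mathbf v}$ and to $D_l{\mathbf v}\in H^{l-1}$, and contribute a term bounded by $C\|{\mathbf v}\|_{H^l}$. For the singular term, Lemma 1.5 gives
\[
\varepsilon^{-2}\!\int_{{\mathbb R}^d}\!|({\mathbf D}\Lambda)^\varepsilon{\mathbf v}|^2\,d{\mathbf x}\leqslant \beta_1\varepsilon^{-2}\|{\mathbf v}\|^2_{L_2({\mathbb R}^d)}+\beta_2\!\int_{{\mathbb R}^d}\!|\Lambda^\varepsilon|^2|{\mathbf D}{\mathbf v}|^2\,d{\mathbf x},
\]
and here the $\varepsilon^2$ of Lemma 1.5 has cancelled the $\varepsilon^{-2}$ on the second summand, which is in turn bounded by $\beta_2 C^{(0)}\|{\mathbf v}\|^2_{H^l}$ by $1^\circ$ applied to ${\mathbf D}{\mathbf v}$. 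Taking square roots and collecting the terms yields \eqref{9.1} with $C^{(1)}$ proportional to $\beta_1^{1/2}$ and $C^{(2)}$ depending on $C^{(0)}$ and $\beta_2$; the dependence of the constants on $m,d,\alpha_0,\alpha_1,\|g\|_{L_\infty},\|g^{-1}\|_{L_\infty}$ and $\Gamma$ is inherited from \eqref{M} and from Lemma 1.5.

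The delicate point is the $\varepsilon$-bookkeeping in $1^\circ$: the cells $Q_{\mathbf a}$ shrink as $\varepsilon\to0$, so the Sobolev constant on a single cell degenerates like $\varepsilon^{-(d-2)/2}$, but the $L_{2^*}$-norm of $\Lambda^\varepsilon$ over that cell is simultaneously small, of order $\varepsilon^{(d-2)/2}$, and periodicity makes the two powers cancel exactly — this is the only place where the hypothesis $d\geqslant 3$ (equivalently $H^1(\Omega)\hookrightarrow L_{2^*}(\Omega)$) enters. The remaining care, routine but worth noting, concerns odd $d$: one must verify that the fractional norms $H^{l-1}$, $H^l$ localize correctly to the cells, i.e. that the sum over ${\mathbf a}$ of the Gagliardo double integrals over $Q_{\mathbf a}\times Q_{\mathbf a}$ is dominated by the full double integral over ${\mathbb R}^d\times{\mathbb R}^d$, which it is.
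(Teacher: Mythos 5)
Your proof is correct and follows essentially the same route as the paper: H\"older's inequality cell by cell with the exponents $2^*=2d/(d-2)$ and $d$, the embeddings $H^1(\Omega)\subset L_{2^*}(\Omega)$ and $H^{l-1}(\Omega)\subset L_d(\Omega)$ together with periodicity (so the powers of $\varepsilon$ cancel), and then, for $2^\circ$, the product rule combined with Lemma 1.5 and assertion $1^\circ$ applied to $\mathbf{D}\mathbf{v}$. The only difference is cosmetic bookkeeping: the paper first rescales $\mathbf{x}=\varepsilon\mathbf{y}$ and works on unit cells, while you work on the dilated cells $\varepsilon(\Omega+\mathbf{a})$ and track the $\varepsilon$-powers directly.
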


\begin{proof}
It suffices to check \eqref{9.0} and \eqref{9.1} for  ${\mathbf v} \in C_0^\infty({\mathbb R}^d; {\mathbb C}^m)$.
Substituting ${\mathbf x}= \varepsilon {\mathbf y}$, $\varepsilon^{d/2}{\mathbf v}({\mathbf x})={\mathbf V}({\mathbf y})$,
we have
\begin{equation}
\label{9.2}
\begin{aligned}
\| \Lambda^\varepsilon {\mathbf v}\|^2_{L_2({\mathbb R}^d)} \leqslant
\int_{{\mathbb R}^d} |\Lambda(\varepsilon^{-1} {\mathbf x})|^2
|{\mathbf v}({\mathbf x})|^2 \,d{\mathbf x} =
\int_{{\mathbb R}^d} |\Lambda({\mathbf y})|^2
|{\mathbf V}({\mathbf y})|^2 \,d{\mathbf y}
\\
=
\sum_{{\mathbf a}\in \Gamma} \int_{\Omega + {\mathbf a}}|\Lambda({\mathbf y})|^2
|{\mathbf V}({\mathbf y})|^2 \,d{\mathbf y} \leqslant
\sum_{{\mathbf a}\in \Gamma}   \|\Lambda\|^2_{L_{2 r}(\Omega)} \| {\mathbf V}\|_{L_{2 r'}(\Omega + {\mathbf a})}^2,
\end{aligned}
\end{equation}
where $r^{-1}+(r')^{-1}=1$. The number $r$ is chosen so that the continuous embedding
$H^1(\Omega) \subset L_{2r}(\Omega)$ is valid, i.~e., $r= d (d-2)^{-1}$. Then
\begin{equation}
\label{9.3}
\|\Lambda\|_{L_{2r}(\Omega)}^2 \leqslant c_\Omega \|\Lambda\|^2_{H^1(\Omega)},
\end{equation}
where the constant $c_\Omega$ depends only on the dimension $d$ and the lattice $\Gamma$.
For $r$ chosen above we have $2r'=d$. From the continuity of the embedding
$H^{l-1}(\Omega) \subset L_{d}(\Omega)$ it follows that
\begin{equation}
\label{9.4}
\| {\mathbf V}\|_{L_{d}(\Omega + {\mathbf a})}^2
 \leqslant c_\Omega'
\| {\mathbf V}\|_{H^{l-1}(\Omega + {\mathbf a})}^2,
\end{equation}
where the constant $c_\Omega'$ depends only on the dimension $d$ and the lattice $\Gamma$.
Now, relations \eqref{9.2}--\eqref{9.4} imply that
\begin{equation}
\label{9.5}
\int_{{\mathbb R}^d} |\Lambda(\varepsilon^{-1} {\mathbf x})|^2
|{\mathbf v}({\mathbf x})|^2 \,d{\mathbf x}
 \leqslant c_\Omega c_\Omega' \|\Lambda\|^2_{H^1(\Omega)}
\| {\mathbf V}\|_{H^{l-1}({\mathbb R}^d)}^2.
\end{equation}

Obviously, for $0< \varepsilon \leqslant 1$ we have
$
\| {\mathbf V}\|_{H^{l-1}({\mathbb R}^d)} \leqslant \| {\mathbf v}\|_{H^{l-1}({\mathbb R}^d)}.
$
Combining this with (\ref{9.5}) and \eqref{M}, we obtain
\begin{equation}
\label{9.6}
\int_{{\mathbb R}^d} |\Lambda(\varepsilon^{-1} {\mathbf x})|^2
|{\mathbf v}({\mathbf x})|^2 \,d{\mathbf x}
 \leqslant c_\Omega c_\Omega'  M^2 \| {\mathbf v}\|_{H^{l-1}({\mathbb R}^d)}^2, \quad
{\mathbf v} \in C_0^\infty({\mathbb R}^d; {\mathbb C}^m),
\end{equation}
which proves \eqref{9.0} with $C^{(0)}= c_\Omega c_\Omega'  M^2$.

Next, by Lemma 1.5,
\begin{equation}
\label{9.7}
\begin{split}
&\| {\mathbf D}(\Lambda^\varepsilon {\mathbf v})\|^2_{L_2({\mathbb R}^d)}
 \\
&\leqslant 2 \varepsilon^{-2}
\int_{{\mathbb R}^d} | ({\mathbf D}\Lambda)^\varepsilon ({\mathbf x}) {\mathbf v}({\mathbf x})  |^2
\,d{\mathbf x} +
2 \int_{{\mathbb R}^d} | \Lambda^\varepsilon ({\mathbf x})|^2 | {\mathbf D}{\mathbf v}({\mathbf x})  |^2
\,d{\mathbf x}
\\
 &\leqslant 2 \beta_1 \varepsilon^{-2}
 \int_{{\mathbb R}^d} |{\mathbf v}({\mathbf x})  |^2\,d{\mathbf x}
+ 2 (1+\beta_2)\int_{{\mathbb R}^d} | \Lambda^\varepsilon ({\mathbf x})|^2 | {\mathbf D}{\mathbf v}({\mathbf x})  |^2
\,d{\mathbf x}.
\end{split}
\end{equation}
From \eqref{9.6} (with ${\mathbf v}$ replaced by the derivatives $\partial_j {\mathbf v}$) it follows that
\begin{equation}
\label{9.8}
\int_{{\mathbb R}^d} | \Lambda^\varepsilon ({\mathbf x})|^2 | {\mathbf D}{\mathbf v}({\mathbf x})  |^2
\,d{\mathbf x}
\leqslant c_\Omega c_\Omega'  M^2 \| {\mathbf v}\|_{H^{l}({\mathbb R}^d)}^2, \quad
{\mathbf v} \in C_0^\infty({\mathbb R}^d; {\mathbb C}^m).
\end{equation}

As a result, relations \eqref{9.6}--\eqref{9.8} imply \eqref{9.1} with the constants
$C^{(1)}=(2\beta_1)^{1/2}$ and $C^{(2)}=M (3+2 \beta_2)^{1/2} (c_\Omega c_\Omega')^{1/2}$.
\end{proof}

Using the extension operator $P_{\mathcal O}$ satisfying estimates \eqref{PO},
we deduce the following statement from Lemma 9.1($1^\circ$).

\begin{corollary}
Suppose that the assumptions of Lemma \textnormal{9.1} are satisfied.
Then the operator $[\Lambda^\eps]$ of multiplication by the matrix-valued function $\Lambda^\eps({\mathbf x})$
is continuous from $H^{l-1}({\mathcal O};{\mathbb C}^m)$ to $L_2({\mathcal O};{\mathbb C}^n)$,
and
\begin{equation*}
\|[\Lambda^\eps] \|_{ H^{l-1}({\mathcal O}) \to L_2({\mathcal O})}
\leqslant (C^{(0)})^{1/2} C^{(l-1)}_{\mathcal O}.
\end{equation*}
\end{corollary}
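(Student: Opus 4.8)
The plan is to deduce the statement directly from Lemma~9.1($1^\circ$) by means of the extension operator $P_{\mathcal O}$. First I would fix an arbitrary ${\mathbf u}\in H^{l-1}({\mathcal O};{\mathbb C}^m)$ and consider its extension $P_{\mathcal O}{\mathbf u}\in H^{l-1}({\mathbb R}^d;{\mathbb C}^m)$; by \eqref{PO} we have $\|P_{\mathcal O}{\mathbf u}\|_{H^{l-1}({\mathbb R}^d)}\leqslant C^{(l-1)}_{\mathcal O}\|{\mathbf u}\|_{H^{l-1}({\mathcal O})}$. Since multiplication by the matrix-valued function $\Lambda^\eps$ commutes with the restriction of functions to ${\mathcal O}$, one has $\Lambda^\eps{\mathbf u}=R_{\mathcal O}\bigl(\Lambda^\eps P_{\mathcal O}{\mathbf u}\bigr)$, so it suffices to estimate the $L_2({\mathbb R}^d)$-norm of $\Lambda^\eps P_{\mathcal O}{\mathbf u}$.

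Next I would apply Lemma~9.1($1^\circ$) with ${\mathbf v}=P_{\mathcal O}{\mathbf u}$: this gives $\Lambda^\eps P_{\mathcal O}{\mathbf u}\in L_2({\mathbb R}^d;{\mathbb C}^n)$ together with the bound $\|\Lambda^\eps P_{\mathcal O}{\mathbf u}\|_{L_2({\mathbb R}^d)}\leqslant (C^{(0)})^{1/2}\|P_{\mathcal O}{\mathbf u}\|_{H^{l-1}({\mathbb R}^d)}$ (see \eqref{9.0}). Chaining this with the previous inequality and using $\|R_{\mathcal O}{\mathbf w}\|_{L_2({\mathcal O})}\leqslant\|{\mathbf w}\|_{L_2({\mathbb R}^d)}$, we obtain $\Lambda^\eps{\mathbf u}\in L_2({\mathcal O};{\mathbb C}^n)$ and
\begin{equation*}
\|\Lambda^\eps{\mathbf u}\|_{L_2({\mathcal O})}\leqslant (C^{(0)})^{1/2}C^{(l-1)}_{\mathcal O}\|{\mathbf u}\|_{H^{l-1}({\mathcal O})},
\end{equation*}
which is precisely the asserted estimate for the operator $[\Lambda^\eps]$.

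There is no real obstacle here: the corollary is an immediate consequence of Lemma~9.1($1^\circ$) and the boundedness~\eqref{PO} of the ``universal'' extension operator $P_{\mathcal O}$. The only point worth keeping in mind is that the operator norm $C^{(l-1)}_{\mathcal O}$ of $P_{\mathcal O}$ on $H^{l-1}$ enters the final constant, so the resulting bound depends, in addition to the parameters listed in Lemma~9.1, also on the domain ${\mathcal O}$ through this extension constant.
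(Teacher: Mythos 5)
Your proof is correct and follows exactly the route the paper indicates: extend by $P_{\mathcal O}$, apply estimate \eqref{9.0} of Lemma~9.1($1^\circ$) to $\mathbf{v}=P_{\mathcal O}\mathbf{u}$, restrict back to $\mathcal{O}$, and combine with \eqref{PO}. Nothing is missing.
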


\subsection*{9.2. Proof of Lemma 3.8.}
Suppose that the assumptions of Lemma 3.8 are satisfied.
Let ${\mathbf u}_0(\cdot,t)= e^{-{\mathcal A}_D^0 t} {\boldsymbol \varphi}(\cdot)$,
where ${\boldsymbol \varphi} \in L_2({\mathcal O}; {\mathbb C}^n)$.
We put $\widetilde{\mathbf u}_0(\cdot,t)= P_{\mathcal O}{\mathbf u}_0(\cdot,t)$.
According to \eqref{K_D(t,e)} and \eqref{K_D^0(t,e)},
${\mathcal K}_D(t;\varepsilon) {\boldsymbol \varphi}= \Lambda^\varepsilon S_\varepsilon b({\mathbf D})  \widetilde{\mathbf u}_0$ and
${\mathcal K}_D^0(t;\varepsilon) {\boldsymbol \varphi}= \Lambda^\varepsilon b({\mathbf D})  {\mathbf u}_0$.
We have to estimate the following function
\begin{equation}
\label{9.8aa}
\begin{split}
\| {\mathcal K}_D(t;\varepsilon) {\boldsymbol \varphi}-{\mathcal K}^0_D(t;\varepsilon) {\boldsymbol \varphi}
\|_{H^1({\mathcal O})}
&=
\| \Lambda^\varepsilon \left((S_\varepsilon -I) b({\mathbf D})  \widetilde{\mathbf u}_0\right) (\cdot,t) \|_{H^1({\mathcal O})}
\\
&\leqslant
\| \Lambda^\varepsilon \left((S_\varepsilon -I) b({\mathbf D})  \widetilde{\mathbf u}_0\right) (\cdot,t) \|_{H^1({\mathbb R}^d)}.
\end{split}
\end{equation}
By Lemma 9.1($2^\circ$), we have
\begin{equation}
\label{9.8b}
\begin{split}
\| &{\mathcal K}_D(t;\varepsilon) {\boldsymbol \varphi}-{\mathcal K}^0_D(t;\varepsilon) {\boldsymbol \varphi}
\|_{H^1({\mathcal O})}
\\
&\leqslant
C^{(1)} \varepsilon^{-1} \| \left((S_\varepsilon -I) b({\mathbf D})  \widetilde{\mathbf u}_0\right) (\cdot,t) \|_{L_2({\mathbb R}^d)}
\\
&+ C^{(2)}  \| \left((S_\varepsilon -I) b({\mathbf D})  \widetilde{\mathbf u}_0\right) (\cdot,t) \|_{H^l({\mathbb R}^d)},\quad l=d/2.
\end{split}
\end{equation}

The first term in the right-hand side of \eqref{9.8b} is estimated by using Lemma 1.8 and
\eqref{<b^*b<}, \eqref{PO}, \eqref{3.5}:
\begin{equation}
\label{9.8c}
\begin{split}
&\varepsilon^{-1}
\|  \left((S_\varepsilon -I) b({\mathbf D})  \widetilde{\mathbf u}_0\right) (\cdot,t) \|_{L_2({\mathbb R}^d)}
\\
&\leqslant
r_1 \|{\mathbf D}  b({\mathbf D})  \widetilde{\mathbf u}_0(\cdot,t) \|_{L_2({\mathbb R}^d)}
\leqslant
r_1 \alpha_1^{1/2} \|  \widetilde{\mathbf u}_0(\cdot,t) \|_{H^2({\mathbb R}^d)}
\\
&\leqslant
r_1 \alpha_1^{1/2} C_{\mathcal O}^{(2)} \| {\mathbf u}_0(\cdot,t) \|_{H^2({\mathcal O})}
\leqslant
C^{(3)} t^{-1} e^{-c_* t/2}
 \|{\boldsymbol \varphi}\|_{L_2({\mathcal O})},
\end{split}
\end{equation}
where $C^{(3)}= r_1 \alpha_1^{1/2} C_{\mathcal O}^{(2)} \widehat{c}$.

In order to estimate the second term in the right-hand side of  \eqref{9.8b}, we apply
the inequality $\|S_\eps\|_{H^l({\mathbb R}^d) \to H^l({\mathbb R}^d)}
\leqslant 1$ and \eqref{<b^*b<}:
\begin{equation}
\label{9.8d}
\begin{split}
\| & \left((S_\varepsilon -I) b({\mathbf D})  \widetilde{\mathbf u}_0\right) (\cdot,t) \|_{H^l({\mathbb R}^d)}
\leqslant 2 \|  b({\mathbf D})  \widetilde{\mathbf u}_0 (\cdot,t) \|_{H^l({\mathbb R}^d)}
\\
&\leqslant 2 \alpha_1^{1/2} \|  \widetilde{\mathbf u}_0 (\cdot,t) \|_{H^{l+1}({\mathbb R}^d)}.
\end{split}
\end{equation}
By \eqref{PO} and Lemma 3.7,
\begin{equation}
\label{9.8e}
\begin{split}
\|  \widetilde{\mathbf u}_0 (\cdot,t) \|_{H^{l+1}({\mathbb R}^d)}
&\leqslant
C_{\mathcal O}^{(l+1)}\| {\mathbf u}_0 (\cdot,t) \|_{H^{l+1}({\mathcal O})}
\\
&\leqslant
C_{\mathcal O}^{(l+1)}
\widehat{\mathrm C}_{l+1} t^{-(l+1)/2} e^{-c_* t/2} \| {\boldsymbol \varphi}\|_{L_2({\mathcal O})}.
\end{split}
\end{equation}
Relations \eqref{9.8d} and \eqref{9.8e} imply that
\begin{equation}
\label{9.8h}
\| \left((S_\varepsilon -I) b({\mathbf D})  \widetilde{\mathbf u}_0\right) (\cdot,t) \|_{H^l({\mathbb R}^d)}
\leqslant
 C^{(4)} t^{-(l+1)/2} e^{-c_* t/2} \| {\boldsymbol \varphi}\|_{L_2({\mathcal O})},
\end{equation}
where $C^{(4)}= 2 \alpha_1^{1/2} C_{\mathcal O}^{(l+1)}\widehat{\mathrm C}_{l+1}$.

As a result, relations
\eqref{9.8b}, \eqref{9.8c}, and \eqref{9.8h} imply that
\begin{equation*}
\begin{split}
\| &{\mathcal K}_D(t;\varepsilon) {\boldsymbol \varphi}-{\mathcal K}^0_D(t;\varepsilon) {\boldsymbol \varphi}
\|_{H^1({\mathcal O})}
\\
&\leqslant
\left( C^{(1)} C^{(3)}  t^{-1} +
 C^{(2)} C^{(4)} t^{-(l+1)/2} \right) e^{-c_* t/2} \| {\boldsymbol \varphi}\|_{L_2({\mathcal O})},
\end{split}
\end{equation*}
which proves (\ref{*.3}) with the constant $\check{\mathrm C}_d=\max\{C^{(1)} C^{(3)};C^{(2)} C^{(4)}\}$.
This completes the proof of Lemma 3.8.

\subsection*{9.3. Proof of Theorem 3.9.}
Inequality \eqref{*.4} follows directly from \eqref{Th_exp_korrector} and \eqref{*.3}.
Herewith, ${\mathrm C}^*_{d}= C_{13}+\check{\mathrm C}_d$.

Let us check \eqref{*.5}. From \eqref{*.4} and \eqref{b(D)=}, \eqref{|b_l|<=} it follows that
\begin{equation}
\label{*.6}
\begin{split}
&\Vert g^\varepsilon b({\mathbf D}) \left(e^{-\mathcal{A}_{D,\varepsilon}t}- e^{-\mathcal{A}_D^0t}
-\varepsilon \Lambda^\varepsilon b({\mathbf D})  e^{-\mathcal{A}_D^0t} \right) \Vert _{L_2(\mathcal{O})\rightarrow L_2(\mathcal{O})}
\\
&\leqslant
\|g\|_{L_\infty} (d \alpha_1)^{1/2} {\mathrm C}^*_{d} (\varepsilon ^{1/2}t^{-3/4}+\varepsilon t^{-1} + \varepsilon t^{-d/4 -1/2})e^{-c_{*}t/2}.
\end{split}
\end{equation}
According to \eqref{b(D)=},
\begin{equation}
\label{*.6a}
\begin{split}
&g^\varepsilon b({\mathbf D}) \left(\varepsilon \Lambda^\varepsilon b({\mathbf D})  e^{-\mathcal{A}_D^0t} \right)
\\
&=g^\varepsilon (b({\mathbf D})  \Lambda)^\varepsilon b({\mathbf D})  e^{-\mathcal{A}_D^0t}
+ \varepsilon \sum_{i,j=1}^d g^\varepsilon b_i  \Lambda^\varepsilon b_j D_i D_j  e^{-\mathcal{A}_D^0t}.
\end{split}
\end{equation}
The norm of the second summand in the right-hand side of  \eqref{*.6a}
is estimated with the help of (1.5), Corollary 9.2 and Lemma 3.7:
\begin{equation}
\label{*.100}
\begin{split}
&\varepsilon \biggl\| \sum_{i,j=1}^d g^\varepsilon b_i  \Lambda^\varepsilon b_j D_i D_j  e^{-\mathcal{A}_D^0t}
\biggr\|_{L_2(\mathcal{O})\rightarrow L_2(\mathcal{O})}
\\
&\leqslant \eps \|g\|_{L_\infty} \alpha_1 d (C^{(0)})^{1/2}  C^{(l-1)}_{\mathcal O}
\| e^{-\mathcal{A}_D^0t}\|_{L_2(\mathcal{O})\rightarrow H^{l+1}(\mathcal{O})}
\\
&\leqslant C^{(5)} \eps t^{-(l+1)/2} e^{-c_{*}t/2},
\end{split}
\end{equation}
where $l=d/2$ and $C^{(5)}=\|g\|_{L_\infty} \alpha_1 d (C^{(0)})^{1/2} C^{(l-1)}_{\mathcal O} \widehat{\mathrm C}_{l+1}$.

Now, relations \eqref{*.6}--\eqref{*.100} and (1.9) imply \eqref{*.5} with the constant
$\widetilde{\mathrm C}^*_{d}= \|g\|_{L_\infty} (d \alpha_1)^{1/2} {\mathrm C}^*_{d}+C^{(5)}$.
This completes the proof of Theorem 3.9.

\subsection*{9.4. Proof of Lemma 7.8.}
We prove Lemma 7.8 by analogy with the proof of Lemma 3.8.
Let ${\boldsymbol \varphi} \in L_2({\mathcal O}; {\mathbb C}^n)$, and let
$\check{\boldsymbol \varphi}= {\mathcal P}{\boldsymbol \varphi}$. We put
${\mathbf u}_0(\cdot,t)= e^{-{\mathcal A}_N^0 t} \check{\boldsymbol \varphi}(\cdot)$ and
$\widetilde{\mathbf u}_0(\cdot,t)= P_{\mathcal O}{\mathbf u}_0(\cdot,t)$.
According to \eqref{K_N(t;eps)} and \eqref{K_N^0(t;e)},
${\mathcal K}_N(t;\varepsilon) {\boldsymbol \varphi}= \Lambda^\varepsilon S_\varepsilon b({\mathbf D})  \widetilde{\mathbf u}_0$ and
${\mathcal K}_N^0(t;\varepsilon) {\boldsymbol \varphi}= \Lambda^\varepsilon b({\mathbf D})  {\mathbf u}_0$.
We have taken into account that $b({\mathbf D}) e^{-{\mathcal A}_N^0 t} = b({\mathbf D}) e^{-{\mathcal A}_N^0 t} {\mathcal P}$.

Similarly to \eqref{9.8aa}, we obtain
\begin{equation*}
\| {\mathcal K}_N(t;\varepsilon) {\boldsymbol \varphi}-{\mathcal K}^0_N(t;\varepsilon) {\boldsymbol \varphi}
\|_{H^1({\mathcal O})}
\leqslant
 \| \Lambda^\eps \left((S_\varepsilon -I) b({\mathbf D})  \widetilde{\mathbf u}_0\right) (\cdot,t) \|_{H^1({\mathbb R}^d)}.
\end{equation*}
Further considerations are similar to \eqref{9.8b}--\eqref{9.8h};
instead of  \eqref{3.5} we use \eqref{7.5}, instead of Lemma 3.7 we apply Lemma 7.7.
This yields
\begin{equation*}
\begin{split}
\| &{\mathcal K}_N(t;\varepsilon) {\boldsymbol \varphi}-{\mathcal K}^0_N(t;\varepsilon) {\boldsymbol \varphi}
\|_{H^1({\mathcal O})}
\\
&\leqslant
\left( C^{(1)} C^{(3)}_N  t^{-1} +
 C^{(2)} C^{(4)}_N t^{-(l+1)/2} \right) e^{-c_\flat t/2} \| {\boldsymbol \varphi}\|_{L_2({\mathcal O})},
\end{split}
\end{equation*}
where $C^{(3)}_N= r_1 \alpha_1^{1/2} C_{\mathcal O}^{(2)} \check{c}^\circ$ and
 $C^{(4)}_N= 2 \alpha_1^{1/2} C_{\mathcal O}^{(l+1)} \widehat{\mathscr C}_{l+1}$.
We arrive at (\ref{7.*3}) with the constant $\check{\mathscr C}_d=\max\{C^{(1)} C_N^{(3)};C^{(2)} C_N^{(4)}\}$.
Lemma 7.8 is proved.

\subsection*{9.5. Proof of Theorem 7.9.}
Arguments are similar to the proof of Theorem 3.9.
Inequality \eqref{7.*4} follows directly from \eqref{Th exn_N L2->H1} and \eqref{7.*3}.
Herewith, ${\mathscr C}^*_{d}= {\mathcal C}_{13}+\check{\mathscr C}_d$.

Inequality \eqref{7.*5} is deduced from \eqref{7.*4} by analogy with  \eqref{*.6}--\eqref{*.100}.
In addition, we use the identity
\begin{equation}
\label{9.200}
\varepsilon \sum_{i,j=1}^d g^\varepsilon b_i  \Lambda^\varepsilon b_j D_i D_j  e^{-\mathcal{A}_N^0t}
= \varepsilon \sum_{i,j=1}^d g^\varepsilon b_i  \Lambda^\varepsilon b_j D_i D_j  e^{-\mathcal{A}_N^0t} {\mathcal P}
\end{equation}
and estimate this term with the help of Corollary 9.2 and Lemma 7.7. We arrive at
\eqref{7.*5} with the constant
$\widetilde{\mathscr C}^*_{d}= \|g\|_{L_\infty} (d \alpha_1)^{1/2} {\mathscr C}^*_{d}+
\|g\|_{L_\infty} \alpha_1 d (C^{(0)})^{1/2} C^{(l-1)}_{\mathcal O} \widehat{\mathscr C}_{l+1}$.
This completes the proof of Theorem 7.9.

\subsection*{9.6. One property of the operator $S_\eps$.}
We proceed to estimates in a strictly interior subdomain and start with one simple property of the operator
$S_\eps$.

Let ${\mathcal O}'$ be a strictly interior subdomain of the domain ${\mathcal O}$, and let $\delta = {\rm dist}\,\{{\mathcal O}';
\partial {\mathcal O}\}$. Denote
$$
{\mathcal O}''=\{ {\mathbf x}\in {\mathcal O}:\ {\rm dist}\,\{{\mathbf x};
\partial {\mathcal O}\} > \delta/2\},\quad
{\mathcal O}'''=\{ {\mathbf x}\in {\mathcal O}:\ {\rm dist}\,\{{\mathbf x};
\partial {\mathcal O}\} > \delta/4\}.
$$

\begin{lemma}
Let $S_\eps$ be the operator \textnormal{(\ref{S_e})}. Let $r_1$ be defined by \textnormal{(\ref{r})}.
Then for $0< \eps \leqslant (4r_1)^{-1}\delta$ and for any function  ${\mathbf v} \in H^{s}({\mathbb R}^d; {\mathbb C}^m)$,
$s \in {\mathbb Z}_+$, we have
\begin{equation*}
\| S_\eps  {\mathbf v}\|_{H^s({\mathcal O}'')} \leqslant
\|  {\mathbf v}\|_{H^s({\mathcal O}''')}.
\end{equation*}
\end{lemma}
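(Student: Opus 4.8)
The key point is geometric: the Steklov kernel in \eqref{S_e} averages ${\mathbf v}$ over translates by $\eps{\mathbf z}$ with ${\mathbf z}\in\Omega$, and since $\Omega$ is centered at the origin with $2r_1=\operatorname{diam}\Omega$, we have $|\eps{\mathbf z}|\leqslant \eps r_1$ for all ${\mathbf z}\in\Omega$. Thus under the hypothesis $0<\eps\leqslant (4r_1)^{-1}\delta$ we get $|\eps{\mathbf z}|\leqslant \delta/4$. First I would record the consequence: if ${\mathbf x}\in{\mathcal O}''$, i.e.\ ${\rm dist}\,\{{\mathbf x};\partial{\mathcal O}\}>\delta/2$, then for every ${\mathbf z}\in\Omega$ the point ${\mathbf x}-\eps{\mathbf z}$ satisfies ${\rm dist}\,\{{\mathbf x}-\eps{\mathbf z};\partial{\mathcal O}\}>\delta/2-\delta/4=\delta/4$, hence ${\mathbf x}-\eps{\mathbf z}\in{\mathcal O}'''$. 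This is the only place where the restriction on $\eps$ enters.

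Next I would reduce to the $L_2$-case for derivatives. Since ${\mathbf v}\in H^s({\mathbb R}^d;{\mathbb C}^m)$ with $s\in{\mathbb Z}_+$, we have $D^\alpha S_\eps{\mathbf v}=S_\eps D^\alpha{\mathbf v}$ for every multiindex $\alpha$ with $|\alpha|\leqslant s$ (as already noted in Subsection~1.5), so it suffices to prove $\|S_\eps{\mathbf w}\|_{L_2({\mathcal O}'')}\leqslant \|{\mathbf w}\|_{L_2({\mathcal O}''')}$ for ${\mathbf w}\in L_2({\mathbb R}^d;{\mathbb C}^m)$ and then sum the squares over $|\alpha|\leqslant s$ with ${\mathbf w}=D^\alpha{\mathbf v}$. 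For the $L_2$-bound I would apply the Cauchy--Schwarz (Jensen) inequality to the average:
\[
|(S_\eps{\mathbf w})({\mathbf x})|^2\leqslant |\Omega|^{-1}\int_\Omega |{\mathbf w}({\mathbf x}-\eps{\mathbf z})|^2\,d{\mathbf z},\qquad {\mathbf x}\in{\mathcal O}'' .
\]
Integrating over ${\mathbf x}\in{\mathcal O}''$, applying Fubini, and using the inclusion ${\mathbf x}-\eps{\mathbf z}\in{\mathcal O}'''$ together with the change of variables ${\mathbf y}={\mathbf x}-\eps{\mathbf z}$ (whose Jacobian is $1$) for fixed ${\mathbf z}$, one gets
\[
\int_{{\mathcal O}''}|(S_\eps{\mathbf w})({\mathbf x})|^2\,d{\mathbf x}
\leqslant |\Omega|^{-1}\int_\Omega d{\mathbf z}\int_{{\mathcal O}'''}|{\mathbf w}({\mathbf y})|^2\,d{\mathbf y}
=\|{\mathbf w}\|_{L_2({\mathcal O}''')}^2 .
\]
Taking square roots gives the claimed $L_2$-inequality, and summing over $|\alpha|\leqslant s$ yields $\|S_\eps{\mathbf v}\|_{H^s({\mathcal O}'')}\leqslant\|{\mathbf v}\|_{H^s({\mathcal O}''')}$.

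Since every step is elementary, there is no serious obstacle; the only thing needing care is the bookkeeping of the three nested subdomains ${\mathcal O}''\subset{\mathcal O}'''\subset{\mathcal O}$ and the verification that the constraint $\eps\leqslant(4r_1)^{-1}\delta$ is exactly what keeps the averaging footprint inside ${\mathcal O}'''$, together with a clean justification of Fubini and the translation change of variables (both legitimate for the nonnegative integrand above). I would present the argument first for $s=0$ and then remark that the general case follows verbatim after replacing ${\mathbf v}$ by $D^\alpha{\mathbf v}$, exploiting the commutation $S_\eps D^\alpha=D^\alpha S_\eps$.
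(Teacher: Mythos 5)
Your proof is correct and follows essentially the same route as the paper's: the pointwise Cauchy--Schwarz bound on the average, the observation that $\eps r_1\leqslant\delta/4$ forces ${\mathbf x}-\eps{\mathbf z}\in{\mathcal O}'''$ for ${\mathbf x}\in{\mathcal O}''$, ${\mathbf z}\in\Omega$, and then Fubini with the unit-Jacobian translation, applied to each derivative ${\mathbf D}^\alpha{\mathbf v}$ via the commutation $S_\eps{\mathbf D}^\alpha={\mathbf D}^\alpha S_\eps$. The only difference is presentational (you treat $s=0$ first and then sum over $|\alpha|\leqslant s$, whereas the paper does all orders in one display).
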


\begin{proof}
According to (\ref{S_e}),
\begin{equation}
\label{10.2}
\begin{split}
\| S_\eps  {\mathbf v}\|^2_{H^s({\mathcal O}'')}
&=
|\Omega|^{-2} \sum_{|\alpha| \leqslant s} \int_{{\mathcal O}''} d{\mathbf x}\left| \int_{\Omega} {\mathbf D}^\alpha
{\mathbf v}({\mathbf x} - \eps {\mathbf z}) \, d{\mathbf z}\right|^2
\\
&\leqslant
|\Omega|^{-1} \sum_{|\alpha| \leqslant s} \int_{{\mathcal O}''} d{\mathbf x} \int_{\Omega} \left|{\mathbf D}^\alpha
{\mathbf v}({\mathbf x} - \eps {\mathbf z})\right|^2 d{\mathbf z}.
\end{split}
\end{equation}
Since $0< \eps r_1 \leqslant \delta/4$, then for ${\mathbf x} \in {\mathcal O}''$ and ${\mathbf z}\in \Omega$
we have ${\mathbf x} - \eps {\mathbf z}\in {\mathcal O}'''$.
Then, changing the order of integration in \eqref{10.2}, we obtain
\begin{equation*}
\| S_\eps  {\mathbf v}\|^2_{H^s({\mathcal O}'')}
\leqslant
 \sum_{|\alpha| \leqslant s} \int_{{\mathcal O}'''} d{\mathbf x}
\left|\mathbf{D}^\alpha {\mathbf v}({\mathbf x} )\right|^2 \,d{\mathbf x} = \|  {\mathbf v}\|_{H^s({\mathcal O}''')}^2.
\end{equation*}
\end{proof}

\subsection*{9.7. The cut-off function $\chi({\mathbf x})$.}
We fix a smooth cut-off function $\chi({\mathbf x})$ such that
\begin{equation}
\label{10.3}
\begin{split}
&\chi \in C^\infty_0({\mathbb R}^d),\quad 0 \leqslant \chi({\mathbf x}) \leqslant 1; \quad \chi({\mathbf x})=1,\ {\mathbf x}\in {\mathcal O}' ;
\\
&{\rm supp}\,\chi \subset   {\mathcal O}'';\quad |{\mathbf D}^\alpha \chi({\mathbf x})| \leqslant \varsigma_s \delta^{-s}, \ |\alpha|=s,\
\forall s\in {\mathbb N}.
\end{split}
\end{equation}
The constants $\varsigma_s$ depend only on $d$, $s$, and the domain ${\mathcal O}$.

\begin{lemma}
Suppose that the function $\chi({\mathbf x})$ satisfies conditions \textnormal{(\ref{10.3})}.
Let $k\in {\mathbb Z}_+$.

\noindent
$1^\circ$. For any function  ${\mathbf v} \in H^{k}({\mathbb R}^d; {\mathbb C}^m)$
we have
\begin{equation}
\label{10.4}
\| \chi {\mathbf v}\|_{H^k({\mathbb R}^d)}
\leqslant
C_k^{(6)} \sum_{s=0}^k \delta^{-(k-s)} \|{\mathbf v}\|_{H^{s}({\mathcal O}'')},
\end{equation}
where the constant  $C_k^{(6)}$ depends only on $d$, $k$, and the domain ${\mathcal O}$.

\noindent
$2^\circ$. For any function  ${\mathbf v} \in H^{k+1}({\mathbb R}^d; {\mathbb C}^m)$
we have
\begin{equation}
\label{10.5}
\begin{split}
\| \chi {\mathbf v}\|_{H^{k+1/2}({\mathbb R}^d)}
&\leqslant
C_{k+1/2}^{(6)} \biggl( \sum_{s=0}^{k+1} \delta^{-(k+1 -s)} \|{\mathbf v}\|_{H^{s}({\mathcal O}'')} \biggr)^{1/2}
\\
&\times \biggl( \sum_{\sigma=0}^{k} \delta^{-(k -\sigma)} \|{\mathbf v}\|_{H^{\sigma}({\mathcal O}'')} \biggr)^{1/2},
\end{split}
\end{equation}
where the constant  $C_{k+1/2}^{(6)}$ depends only on $d$, $k$, and the domain ${\mathcal O}$.
\end{lemma}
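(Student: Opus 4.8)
The plan is to derive part $1^\circ$ directly from the Leibniz rule, and part $2^\circ$ from part $1^\circ$ combined with an elementary interpolation inequality on ${\mathbb R}^d$.

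For part $1^\circ$, I would fix ${\mathbf v}\in H^k({\mathbb R}^d;{\mathbb C}^m)$ and a multiindex $\alpha$ with $|\alpha|\le k$, and expand
\begin{equation*}
{\mathbf D}^\alpha(\chi{\mathbf v})=\sum_{\beta\le\alpha}\binom{\alpha}{\beta}({\mathbf D}^\beta\chi)({\mathbf D}^{\alpha-\beta}{\mathbf v}).
\end{equation*}
By \eqref{10.3}, $|{\mathbf D}^\beta\chi|\le\varsigma_{|\beta|}\delta^{-|\beta|}$ and ${\mathbf D}^\beta\chi$ is supported in ${\mathcal O}''$, so each summand satisfies $\|({\mathbf D}^\beta\chi)({\mathbf D}^{\alpha-\beta}{\mathbf v})\|_{L_2({\mathbb R}^d)}\le\varsigma_{|\beta|}\delta^{-|\beta|}\|{\mathbf v}\|_{H^{|\alpha-\beta|}({\mathcal O}'')}$, and one has $|\alpha-\beta|=|\alpha|-|\beta|$ since $\beta\le\alpha$. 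Collecting the terms by the order $s=|\alpha|-|\beta|$ of the derivative falling on ${\mathbf v}$, I would re-express $\delta^{-|\beta|}=\delta^{-(k-s)}\,\delta^{(k-s)-|\beta|}\le\delta^{-(k-s)}\max\{1,(\mathrm{diam}\,{\mathcal O})^{k}\}$, which is legitimate because $(k-s)-|\beta|=k-|\alpha|\ge0$ and $\delta\le\mathrm{diam}\,{\mathcal O}$. Summing $\|{\mathbf D}^\alpha(\chi{\mathbf v})\|_{L_2({\mathbb R}^d)}$ over all $\alpha$ with $|\alpha|\le k$ then gives \eqref{10.4}, the constant $C_k^{(6)}$ absorbing the binomial coefficients, the $\varsigma_j$, the number of multiindices and the diameter factor, so that it depends only on $d$, $k$, and ${\mathcal O}$.

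For part $2^\circ$, I would use that, with the Fourier normalization $\|w\|_{H^s({\mathbb R}^d)}^2=\int_{{\mathbb R}^d}(1+|\xi|^2)^s|\widehat w(\xi)|^2\,d\xi$, the Cauchy--Schwarz inequality applied to the factorization $(1+|\xi|^2)^{k+1/2}=\bigl((1+|\xi|^2)^{k}\bigr)^{1/2}\bigl((1+|\xi|^2)^{k+1}\bigr)^{1/2}$ yields the interpolation bound $\|w\|_{H^{k+1/2}({\mathbb R}^d)}\le\|w\|_{H^{k}({\mathbb R}^d)}^{1/2}\|w\|_{H^{k+1}({\mathbb R}^d)}^{1/2}$ with a constant depending only on $d$ and $k$ (it equals $1$ for the above normalization, and passing to an equivalent norm costs only a $d,k$-dependent factor). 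Applying this with $w=\chi{\mathbf v}$, which lies in $H^{k+1}({\mathbb R}^d)$ since ${\mathbf v}\in H^{k+1}$ and $\chi\in C_0^\infty$, and then estimating $\|\chi{\mathbf v}\|_{H^{k}}$ and $\|\chi{\mathbf v}\|_{H^{k+1}}$ by part $1^\circ$ (with exponents $k$ and $k+1$, respectively), multiplying the resulting square roots produces \eqref{10.5} with $C_{k+1/2}^{(6)}=(C_k^{(6)}C_{k+1}^{(6)})^{1/2}$ up to the interpolation constant.

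Both steps are routine, and I do not expect a genuine obstacle. The only point that deserves care is the bookkeeping of the powers of $\delta$ in part $1^\circ$: one must rewrite $\delta^{-|\beta|}$ in terms of $\delta^{-(k-s)}$ and push the remaining \emph{non-negative} power of $\delta$ into the constant via $\delta\le\mathrm{diam}\,{\mathcal O}$, so that the estimate holds uniformly in $\delta\in(0,\mathrm{diam}\,{\mathcal O}]$ rather than merely for $\delta\le1$.
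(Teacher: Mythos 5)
Your proposal is correct and follows essentially the same route as the paper: inequality \eqref{10.4} via the Leibniz formula together with the bounds \eqref{10.3} on the derivatives of $\chi$ (your bookkeeping of the powers of $\delta$, absorbing the nonnegative leftover power via $\delta\leqslant\mathrm{diam}\,\mathcal{O}$, is a valid way to make the constant depend only on $d$, $k$, $\mathcal{O}$), and inequality \eqref{10.5} by applying the interpolation bound $\Vert \mathbf{w}\Vert_{H^{k+1/2}(\mathbb{R}^d)}^2\leqslant \Vert \mathbf{w}\Vert_{H^{k+1}(\mathbb{R}^d)}\Vert \mathbf{w}\Vert_{H^{k}(\mathbb{R}^d)}$ to $\mathbf{w}=\chi\mathbf{v}$ and then invoking part $1^\circ$ with exponents $k$ and $k+1$, which is exactly the paper's argument.
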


\begin{proof}
Inequality \eqref{10.4} follows from the Leibniz formula for derivatives of the product
$\chi {\mathbf v}$ and estimates for derivatives of $\chi$ (see \eqref{10.3}).
To check \eqref{10.5}, in addition one should use the obvious inequality
$$
\| {\mathbf w} \|_{H^{k+1/2}({\mathbb R}^d)}^2 \leqslant
\| {\mathbf w} \|_{H^{k+1}({\mathbb R}^d)} \|  {\mathbf w} \|_{H^{k}({\mathbb R}^d)},\quad
{\mathbf w} \in H^{k+1}({\mathbb R}^d; {\mathbb C}^m).
$$
\end{proof}

\subsection*{9.8. Proof of Lemma 3.15.}
Suppose that the assumptions of Lemma 3.15 are satisfied.
Let ${\mathbf u}_0(\cdot,t)= e^{-{\mathcal A}_D^0 t} {\boldsymbol \varphi}(\cdot)$,
where ${\boldsymbol \varphi} \in L_2({\mathcal O}; {\mathbb C}^n)$. By  \eqref{<a_D,0<} and \eqref{3.8},
\begin{equation}
\label{9.8ab}
\begin{split}
\| {\mathbf D}{\mathbf u}_0(\cdot,t)\|_{L_2({\mathcal O})} &\leqslant
c_0^{-1/2} \|({\mathcal A}_D^0)^{1/2} e^{-{\mathcal A}_D^0 t} \|_{L_2({\mathcal O})\to L_2({\mathcal O})}
 \|{\boldsymbol \varphi}\|_{L_2({\mathcal O})}
\\
&\leqslant
  c_0^{-1/2} t^{-1/2} e^{-c_* t/2}
 \|{\boldsymbol \varphi}\|_{L_2({\mathcal O})}.
\end{split}
\end{equation}

From \eqref{3.5} it follows that
\begin{equation}
\label{9.8a}
\|{\mathbf u}_0(\cdot,t)\|_{H^2({\mathcal O})}
\leqslant
\widehat{c} t^{-1} e^{-c_* t/2}
 \|{\boldsymbol \varphi}\|_{L_2({\mathcal O})}.
\end{equation}
Let $\widetilde{\mathbf u}_0= P_{\mathcal O} {\mathbf u}_0$.
According to \eqref{K_D(t,e)} and \eqref{K_D^0(t,e)},
${\mathcal K}_D(t;\varepsilon) {\boldsymbol \varphi}= \Lambda^\varepsilon S_\varepsilon b({\mathbf D})  \widetilde{\mathbf u}_0$ and
${\mathcal K}_D^0(t;\varepsilon) {\boldsymbol \varphi}= \Lambda^\varepsilon b({\mathbf D})  {\mathbf u}_0$.

We have to estimate the function
\begin{equation}
\label{9.11}
\| {\mathcal K}_D(t;\varepsilon) {\boldsymbol \varphi}-{\mathcal K}^0_D(t;\varepsilon) {\boldsymbol \varphi}
\|_{H^1({\mathcal O}')}
\leqslant
\| \Lambda^\varepsilon \chi \left((S_\varepsilon -I) b({\mathbf D})  \widetilde{\mathbf u}_0\right)(\cdot,t) \|_{H^1({\mathbb R}^d)}.
\end{equation}

By Lemma 9.1($2^\circ$),
\begin{equation}
\label{9.12}
\begin{split}
\| &\Lambda^\varepsilon \chi \left((S_\varepsilon -I) b({\mathbf D})  \widetilde{\mathbf u}_0\right)(\cdot,t) \|_{H^1({\mathbb R}^d)}
\\
&\leqslant
C^{(1)} \varepsilon^{-1}
\|  \chi \left((S_\varepsilon -I) b({\mathbf D})  \widetilde{\mathbf u}_0\right)(\cdot,t) \|_{L_2({\mathbb R}^d)}
\\
&+ C^{(2)}
\|  \chi \left((S_\varepsilon -I) b({\mathbf D})  \widetilde{\mathbf u}_0\right) (\cdot,t) \|_{H^l({\mathbb R}^d)},\quad l=d/2.
\end{split}
\end{equation}

The first term in the right-hand side of \eqref{9.12} is estimated with the help of Lemma 1.8 and
\eqref{<b^*b<}, \eqref{PO}, \eqref{9.8a}. Similarly to \eqref{9.8c}, we have
\begin{equation}
\label{9.13}
\begin{split}
\varepsilon^{-1}
\|  \chi \left((S_\varepsilon -I) b({\mathbf D})  \widetilde{\mathbf u}_0\right) (\cdot,t) \|_{L_2({\mathbb R}^d)}
&\leqslant
\varepsilon^{-1}
\|  \left( (S_\varepsilon -I) b({\mathbf D})  \widetilde{\mathbf u}_0\right) (\cdot,t) \|_{L_2({\mathbb R}^d)}
\\
&\leqslant
C^{(3)} t^{-1} e^{-c_* t/2}  \|{\boldsymbol \varphi}\|_{L_2({\mathcal O})}.
\end{split}
\end{equation}

We proceed to the second term in the right-hand side of \eqref{9.12}. Obviously,
\begin{equation}
\label{9.13a}
\begin{split}
\|  \chi \left((S_\varepsilon -I) b({\mathbf D})  \widetilde{\mathbf u}_0\right) (\cdot,t) \|_{H^l({\mathbb R}^d)}
&\leqslant
\|  \chi \left(S_\varepsilon b({\mathbf D})  \widetilde{\mathbf u}_0\right) (\cdot,t) \|_{H^l({\mathbb R}^d)}\\
&+
\|  \chi  b({\mathbf D})  \widetilde{\mathbf u}_0 (\cdot,t) \|_{H^l({\mathbb R}^d)}.
\end{split}
\end{equation}

To estimate the second term in the right-hand side of \eqref{9.13a}, we apply Lemma 9.4 and \eqref{b(D)=}, \eqref{|b_l|<=}.
In the case of integer $l=d/2$ (i.~e., $d$ is even) we have
\begin{equation}
\label{9.15}
\| \chi   b({\mathbf D})  \widetilde{\mathbf u}_0(\cdot,t)\|_{H^l({\mathbb R}^d)}
\leqslant
C_l^{(6)} (d\alpha_1)^{1/2}\sum_{s=0}^l \delta^{-(l-s)} \| {\mathbf D} {\mathbf u}_0(\cdot,t) \|_{H^{s}({\mathcal O}'')}.
\end{equation}
In the case of half-integer $l=d/2=k+1/2$ (i.~e., $d$ is odd) we have
\begin{equation}
\label{9.15a}
\begin{split}
\| \chi   b({\mathbf D})  \widetilde{\mathbf u}_0(\cdot,t)\|_{H^l({\mathbb R}^d)}
&\leqslant
C_l^{(6)} (d\alpha_1)^{1/2}
\biggl(\sum_{s=0}^{k+1} \delta^{-(k+1-s)} \| {\mathbf D} {\mathbf u}_0(\cdot,t) \|_{H^{s}({\mathcal O}'')}\biggr)^{1/2}
\\
&\times \biggl(\sum_{\sigma=0}^{k} \delta^{-(k-\sigma)} \| {\mathbf D} {\mathbf u}_0(\cdot,t) \|_{H^{\sigma}({\mathcal O}'')}\biggr)^{1/2}.
\end{split}
\end{equation}

The norms of ${\mathbf D} {\mathbf u}_0(\cdot,t)$ in $L_2({\mathcal O})$ and $H^1({\mathcal O})$
have been estimated in \eqref{9.8ab} and \eqref{9.8a}. By \eqref{apriori} (with ${\mathcal O}'$ replaced by ${\mathcal O}''$),
\begin{equation}
\label{9.16}
\| {\mathbf D}{\mathbf u}_0(\cdot,t)\|_{H^{s}({\mathcal O}'')}
\leqslant
{\rm C}'_{s+1} 2^s t^{-1/2} (\delta^{-2} + t^{-1})^{s/2}   e^{-c_* t/2}
 \|{\boldsymbol \varphi}\|_{L_2({\mathcal O})},\quad s \geqslant 2.
\end{equation}
Using \eqref{9.8ab}, \eqref{9.8a}, and \eqref{9.15}--\eqref{9.16}, we arrive at
\begin{equation}
\label{9.17}
\| \chi   b({\mathbf D})  \widetilde{\mathbf u}_0(\cdot,t)\|_{H^l({\mathbb R}^d)}
\leqslant
C^{(7)} t^{-1/2} (\delta^{-2} + t^{-1})^{d/4} e^{-c_* t/2}
 \|{\boldsymbol \varphi}\|_{L_2({\mathcal O})}.
\end{equation}
The constant $C^{(7)}$ depends on $d$, $\alpha_0$, $\alpha_1$, $\|g\|_{L_\infty}$, $\|g^{-1}\|_{L_\infty}$, and the domain $\mathcal O$.

To estimate the first term in the right-hand side of \eqref{9.13a}, we apply Lemmas 9.4 and 9.3.
Assume that $0< \eps \leqslant (4r_1)^{-1}\delta$. Taking  \eqref{b(D)=} and \eqref{|b_l|<=} into account,
in the case of integer $l$ we have
\begin{equation}
\label{9.18}
\begin{split}
\|  \chi \left(S_\varepsilon b({\mathbf D})  \widetilde{\mathbf u}_0\right) (\cdot,t) \|_{H^l({\mathbb R}^d)}
&\leqslant
C_l^{(6)} \sum_{s=0}^l \delta^{-(l-s)} \| (S_\eps b({\mathbf D}) \widetilde{\mathbf u}_0)(\cdot,t) \|_{H^{s}({\mathcal O}'')}
\\
&\leqslant
C_l^{(6)} \sum_{s=0}^l \delta^{-(l-s)} \|  b({\mathbf D}) {\mathbf u}_0(\cdot,t) \|_{H^{s}({\mathcal O}''')}
\\
&\leqslant
C_l^{(6)} (d \alpha_1)^{1/2}\sum_{s=0}^l \delta^{-(l-s)} \|  {\mathbf D} {\mathbf u}_0(\cdot,t) \|_{H^{s}({\mathcal O}''')}.
\end{split}
\end{equation}
The norms of ${\mathbf D} {\mathbf u}_0(\cdot,t)$ in $L_2({\mathcal O})$ and $H^1({\mathcal O})$
have been estimated in \eqref{9.8ab} and \eqref{9.8a}. By \eqref{apriori} (with ${\mathcal O}'$ replaced by ${\mathcal O}'''$), we have
\begin{equation}
\label{9.16a}
\| {\mathbf D}{\mathbf u}_0(\cdot,t)\|_{H^{s}({\mathcal O}''')}
\leqslant
{\rm C}'_{s+1} 4^s t^{-1/2} (\delta^{-2} + t^{-1})^{s/2}   e^{-c_* t/2}
 \|{\boldsymbol \varphi}\|_{L_2({\mathcal O})},\quad s \geqslant 2.
\end{equation}
From \eqref{9.8ab}, \eqref{9.8a}, \eqref{9.18}, and \eqref{9.16a} it follows that
\begin{equation}
\label{9.19}
\|  \chi \left(S_\varepsilon b({\mathbf D})  \widetilde{\mathbf u}_0\right) (\cdot,t) \|_{H^l({\mathbb R}^d)}
\leqslant
C^{(8)} t^{-1/2} (\delta^{-2} + t^{-1})^{d/4} e^{-c_* t/2}
 \|{\boldsymbol \varphi}\|_{L_2({\mathcal O})}.
\end{equation}
The constant $C^{(8)}$ depends on $d$, $\alpha_0$, $\alpha_1$, $\|g\|_{L_\infty}$, $\|g^{-1}\|_{L_\infty}$,
and the domain $\mathcal O$.
The case of half-integer $l$ is considered similarly.

As a result, relations \eqref{9.11}--\eqref{9.13a}, \eqref{9.17}, and \eqref{9.19} yield
\begin{equation}
\label{9.20}
\begin{split}
\|& {\mathcal K}_D(t;\varepsilon) {\boldsymbol \varphi}-{\mathcal K}^0_D(t;\varepsilon) {\boldsymbol \varphi}
\|_{H^1({\mathcal O}')}
\leqslant
C^{(1)}C^{(3)} t^{-1}  e^{-c_* t/2}  \|{\boldsymbol \varphi}\|_{L_2({\mathcal O})}
\\
&+
C^{(2)}(C^{(7)}+ C^{(8)}) t^{-1/2} (\delta^{-2} + t^{-1})^{d/4} e^{-c_* t/2}
 \|{\boldsymbol \varphi}\|_{L_2({\mathcal O})}.
\end{split}
\end{equation}
This implies \eqref{K_minus_K} with the constant
${\rm C}_d'' = \max \{C^{(1)}C^{(3)} ;C^{(2)}(C^{(7)}+ C^{(8)})\}$ and completes the proof of Lemma 3.15.

\subsection*{9.9. Proof of Theorem 3.16.}
Inequality \eqref{3.26} follows directly from \eqref{th3.8} and \eqref{K_minus_K};
herewith,  ${\mathrm C}_d = \max \{C_{16}; C_{17}\} + {\mathrm C}_d''$.

Let us check \eqref{3.27}.
From \eqref{3.26} and \eqref{b(D)=}, \eqref{|b_l|<=} it follows that
\begin{equation}
\label{*.101}
\begin{split}
&\Vert g^\varepsilon b({\mathbf D}) \left(e^{-\mathcal{A}_{D,\varepsilon}t}- e^{-\mathcal{A}_D^0t}
-\varepsilon \Lambda^\varepsilon b({\mathbf D})  e^{-\mathcal{A}_D^0t} \right) \Vert _{L_2(\mathcal{O})\rightarrow L_2(\mathcal{O}')}
\\
&\leqslant
\|g\|_{L_\infty} (d \alpha_1)^{1/2} {\mathrm C}_{d}
\varepsilon h_d(\delta;t) e^{-c_{*}t/2}.
\end{split}
\end{equation}
We apply the identity \eqref{*.6a}.
The norm of the second term in the right-hand side of  \eqref{*.6a}
is estimated with the help of (1.5) and Lemma 9.1($1^\circ$):
\begin{equation}
\label{*.102}
\begin{split}
&\varepsilon \biggl\| \sum_{i,j=1}^d g^\varepsilon b_i  \Lambda^\varepsilon b_j D_i D_j  e^{-\mathcal{A}_D^0t}
\biggr\|_{L_2(\mathcal{O})\rightarrow L_2(\mathcal{O}')}
\\
&\leqslant \eps \|g\|_{L_\infty} \alpha_1^{1/2}
\sum_{i,j=1}^d  \|  \Lambda^\varepsilon b_j \chi D_i D_j  e^{-\mathcal{A}_D^0t}
\|_{L_2(\mathcal{O})\rightarrow L_2(\mathbb{R}^d)}
\\
&\leqslant \eps \|g\|_{L_\infty} \alpha_1 (C^{(0)})^{1/2}
\sum_{i,j=1}^d  \|   \chi D_i D_j  e^{-\mathcal{A}_D^0t}
\|_{L_2(\mathcal{O})\rightarrow H^{l-1}(\mathbb{R}^d)},\quad l=d/2.
\end{split}
\end{equation}
Next, we apply Lemma 9.4. For the case of integer $l$ we obtain
\begin{equation}
\label{*.103}
\begin{split}
&\sum_{i,j=1}^d  \|   \chi D_i D_j  e^{-\mathcal{A}_D^0t}
\|_{L_2(\mathcal{O})\rightarrow H^{l-1}(\mathbb{R}^d)}
\\
&\leqslant d C^{(6)}_{l-1}
\sum_{s=0}^{l-1} \delta^{-(l-1-s)} \| e^{-\mathcal{A}_D^0t} \|_{L_2(\mathcal{O})\rightarrow H^{s+2}(\mathcal{O}'')}.
\end{split}
\end{equation}
The norm $\| e^{-\mathcal{A}_D^0 t} \|_{L_2(\mathcal{O})\rightarrow H^{2}(\mathcal{O})}$
has been estimated in  \eqref{3.5}. For $s \geqslant 1$, by
\eqref{apriori} (with ${\mathcal O}'$ replaced by ${\mathcal O}''$), we have
\begin{equation}
\label{*.104}
 \| e^{-\mathcal{A}_D^0t} \|_{L_2(\mathcal{O})\rightarrow H^{s+2}(\mathcal{O}'')}
\leqslant
{\mathrm C}'_{s+2} 2^{s+1} t^{-1/2} (\delta^{-2} + t^{-1})^{(s+1)/2} e^{-c_{*}t/2}.
\end{equation}
From \eqref{*.102}--\eqref{*.104} and  \eqref{3.5} it follows that
\begin{equation}
\label{*.105}
\begin{split}
&\varepsilon \biggl\| \sum_{i,j=1}^d g^\varepsilon b_i  \Lambda^\varepsilon b_j D_i D_j  e^{-\mathcal{A}_D^0t}
\biggr\|_{L_2(\mathcal{O})\rightarrow L_2(\mathcal{O}')}
\\
&\leqslant C^{(9)} \eps t^{-1/2} (\delta^{-2} + t^{-1})^{d/4} e^{-c_{*}t/2},
\end{split}
\end{equation}
where the constant $C^{(9)}$ depends only on
$d$, $\alpha_0$, $\alpha_1$, $\|g\|_{L_\infty}$, $\|g^{-1}\|_{L_\infty}$, and the domain $\mathcal{O}$.
In the case of half-integer $l$ inequality \eqref{*.105} is checked similarly by using Lemma 9.4($2^\circ$).

As a result, relations \eqref{*.6a},
 \eqref{*.101},  \eqref{*.105}, and (1.9) imply \eqref{3.27} with the constant
$\widetilde{\mathrm C}_{d}= \|g\|_{L_\infty} (d \alpha_1)^{1/2} {\mathrm C}_{d}+C^{(9)}$.
This completes the proof of Theorem 3.16.

\subsection*{9.10. Proof of Lemma 7.14.}
Arguments are similar to the proof of Lemma 3.15.
As in Subsection 9.4, let ${\boldsymbol \varphi} \in L_2({\mathcal O}; {\mathbb C}^n)$ and let
$\check{\boldsymbol \varphi}= {\mathcal P}{\boldsymbol \varphi}$. We put
${\mathbf u}_0(\cdot,t)= e^{-{\mathcal A}_N^0 t} \check{\boldsymbol \varphi}(\cdot)$,
$\widetilde{\mathbf u}_0(\cdot,t)= P_{\mathcal O}{\mathbf u}_0(\cdot,t)$.
According to \eqref{K_N(t;eps)} and \eqref{K_N^0(t;e)},
${\mathcal K}_N(t;\varepsilon) {\boldsymbol \varphi}= \Lambda^\varepsilon S_\varepsilon b({\mathbf D})  \widetilde{\mathbf u}_0$ and
${\mathcal K}_N^0(t;\varepsilon) {\boldsymbol \varphi}= \Lambda^\varepsilon b({\mathbf D})  {\mathbf u}_0$.

Similarly to \eqref{9.11} we obtain
\begin{equation*}
\| {\mathcal K}_N(t;\varepsilon) {\boldsymbol \varphi}-{\mathcal K}^0_N(t;\varepsilon) {\boldsymbol \varphi}
\|_{H^1({\mathcal O}')}
\leqslant
 \| \Lambda^\eps \chi \left((S_\varepsilon -I) b({\mathbf D})  \widetilde{\mathbf u}_0\right) (\cdot,t) \|_{H^1({\mathbb R}^d)}.
\end{equation*}
Further considerations are similar to \eqref{9.12}--\eqref{9.20};
one should use estimates \eqref{7.4}, \eqref{7.5}, and \eqref{apriori_N}.
This implies \eqref{K_minus_K_N}.

\subsection*{9.11. Proof of Theorem 7.15.}
Arguments are similar to the proof of Theorem 3.16.
Inequality \eqref{7.100}
follows directly from  \eqref{7.29a} and \eqref{K_minus_K_N}.
Herewith, ${\mathscr C}_d = \max\{{\mathcal C}_{16}; {\mathcal C}_{17}\} + {\mathscr C}_d''$.

Inequality \eqref{7.33} is deduced from \eqref{7.100} by analogy with \eqref{*.101}--\eqref{*.105}.
In addition, one should take \eqref{9.200} into account and apply estimates \eqref{7.5} and \eqref{apriori_N}.

\end{document}